\newcommand{\proquestmode}{}
\title{Optimization over Nonnegative and Convex Polynomials with and without Semidefinite Programming}
\author{Georgina Hall}
\newcommand\scalemath[2]{\scalebox{#1}{\mbox{\ensuremath{\displaystyle #2}}}} 
\newcommand{\coefx}{\textup{coef}}
\newcommand{\conex}{\textup{cone}}
\newcommand{\coef}[1]{\coefx (#1)}
\newcommand{\cone}[1]{\conex(#1)}
\newcommand{\R}{\mathbb{R}}
\newcommand{\Z}{\mathbb{Z}}
\renewcommand{\S}{\mathcal{S}}
\newcommand{\U}{\mathcal{U}}
\newtheorem{theorem}{Theorem}[section]
\newtheorem{lemma}[theorem]{Lemma}
\newtheorem{prop}[theorem]{Proposition}
\newtheorem{corollary}[theorem]{Corollary}
\newtheorem{definition}[theorem]{Definition}
\newtheorem{remark}[theorem]{Remark}
\newtheorem{example}[theorem]{Example}
\newtheorem{proposition}[theorem]{Proposition}
\newtheorem{openprob}[theorem]{Open Problem}
\def\S{\mathcal{S}}
\newcommand{\vv}[1] {\mathbf{#1}}
\def\rot#1{\rotatebox{90}{#1}}
\abstract{
The problem of optimizing over the cone of nonnegative polynomials is a fundamental problem in computational mathematics, with applications to polynomial optimization, control, machine learning, game theory, and combinatorics, among others. A number of breakthrough papers in the early 2000s showed that this problem, long thought to be out of reach, could be tackled by using sum of squares programming. This technique however has proved to be expensive for large-scale problems, as it involves solving large semidefinite programs (SDPs).

In the first part of this thesis, we present two methods for approximately solving large-scale sum of squares programs that dispense altogether with semidefinite programming and only involve solving a sequence of \emph{linear or second order cone programs} generated in an adaptive fashion. We then focus on the problem of finding tight lower bounds on polynomial optimization problems (POPs), a fundamental task in this area that is most commonly handled through the use of SDP-based sum of squares hierarchies (e.g., due to Lasserre and Parrilo). In contrast to previous approaches, we provide the first theoretical framework for constructing converging hierarchies of lower bounds on POPs whose computation simply requires the ability to multiply certain fixed polynomials together and to check nonnegativity of the coefficients of their product.

In the second part of this thesis, we focus on the theory and applications of the problem of optimizing over \emph{convex} polynomials, a subcase of the problem of optimizing over nonnegative polynomials. On the theoretical side, we show that the problem of testing whether a cubic polynomial is convex over a box is NP-hard. This result is minimal in the degree of the polynomial and complements previously-known results on checking convexity of a polynomial globally. We also study norms generated by convex forms and provide an SDP hierarchy for optimizing over them. This requires an extension of a result of Reznick on sum of squares representation of positive definite forms to positive definite biforms. On the application side, we study a problem of interest to robotics and motion planning, which involves modeling complex environments with simpler representations. In this setup, we are interested in containing 3D-point clouds within polynomial sublevel sets of minimum volume. We also study two applications in machine learning: the first is multivariate monotone regression, which is motivated by some applications in pricing; the second concerns a specific subclass of optimization problems called difference of convex (DC) programs, which appear naturally in machine learning problems. We show how our techniques can be used to optimally reformulate DC programs in order to speed up some of the best-known algorithms used for solving them.
}
\begin{document}

\makefrontmatter
%\singlespacing

% If you've disabled frontmatter, you can insert the toc manually
%\tableofcontents\clearpage

% \include lets us split up the document (and each include starts a new page):

%\singlespacing

\chapter{Introduction\label{ch:intro}}

This thesis concerns itself broadly with the problem of \emph{optimizing over nonnegative polynomials}. In its simplest form, this problem involves (i) decision variables that are the coefficients of a multivariate polynomial of a given degree, (ii) an objective function that is linear in the coefficients, (iii) constraints that are affine in the coefficients, and (iv) a constraint that the multivariate polynomial be nonnegative over a \emph{closed basic semialgebraic set}, i.e., a set defined by a finite number of polynomial inequalities. We write:
\begin{equation}\label{eq:opt.nonneg}
\begin{aligned}
&\min_{\overrightarrow{p}} &&c(\overrightarrow{p})\\
&\text{subject to } &&A(\overrightarrow{p})=b\\
& &&p(x)\geq 0, \forall x \text{ such that } g_i(x)\geq 0, i=1,\ldots,s,
\end{aligned}
\end{equation}
where $\overrightarrow{p}$ here denotes the coefficients of a multivariate polynomial $p:\mathbb{R}^n \rightarrow \mathbb{R}$ of some degree $d$, $c$ is a linear functional over the coefficients of $p$, $A$ is a linear map that maps the coefficients of $p$ to $\mathbb{R}^m$, $b$ is a vector in $\mathbb{R}^m$, and $g_i, i=1,\ldots,s$, are multivariate polynomials.

This problem appears under different forms in a wide range of applications. One such application is \emph{polynomial optimization}, which is the problem of minimizing a polynomial function over a closed basic semialgebraic set: 
\begin{equation*}
\begin{aligned}
&\min_{x\in \mathbb{R}^n} &&p(x)\\
&\text{subject to } &&g_i(x)\geq 0, i=1,\ldots,s.
\end{aligned}
\end{equation*}
Indeed, the optimal value of this problem is equivalent to the largest lower bound on $p$ over the set $\{x\in \mathbb{R}^n|~ g_i(x)\leq 0,i=1,\ldots,s\}$. In other words, we can find the optimal value of the problem above by solving the following ``dual'' problem:
\begin{equation*}
\begin{aligned}
&\max_{\gamma} &&\gamma\\
&\text{subject to } &&p(x)-\gamma \geq 0, \forall x \text{ such that } g_i(x)\geq 0, i=1,\ldots,s.
\end{aligned}
\end{equation*}
This is exactly a problem of optimizing over nonnegative polynomials. Polynomial optimization problems, or POPs, feature in different areas: in power engineering via the optimal power flow problem~\cite{OPF_survey}, in discrete and combinatorial optimization~\cite{laurent2009sums, Stability_number_SOS}, in economics and game theory \cite{sturmfels2002solving}, and in distance geometry~\cite{nie2009sum}, just to name a few. Other applications of the problem of optimizing over nonnegative polynomials appear in control, in particular for searching for Lyapunov functions for dynamical systems~\cite{PabloAliHSCC,PhD:Parrilo, PositivePolyInControlBook, AAA_PhD}, robotics~\cite{ahmadiOR_letters}, and machine learning and statistics \cite{lasserre2009moments}, among other areas.

All these applications motivate the question as to whether (\ref{eq:opt.nonneg}) can be solved efficiently. The answer is unfortunately negative in general. In fact, simply testing whether a given polynomial of degree-4 is nonnegative over $\mathbb{R}^n$ is NP-hard~\cite{nonnegativity_NP_hard}. Past work has hence focused on replacing the nonnegativity condition in (\ref{eq:opt.nonneg}) with a stronger, but more tractable, condition. The idea is that the optimization problem thus obtained can be efficiently solved and upper bounds on the optimal value of (\ref{eq:opt.nonneg}) can be obtained (note that the set over which we would be optimizing would be an inner approximation of the initial feasible set). 

%We remark that lower bounds are of particular interest as they are not always easy to obtain, in contrast to upper bounds, which can be easily found if one has access to a feasible solution to the problem. 

A well-known sufficient condition for (global) nonnegativity of a polynomial $p$ is that it be a \emph{sum of squares} (sos), i.e., that it have a decomposition of the form
$$p(x)=\sum_i q_i(x)^2,$$ where $q_i$ are polynomials. Sum of squares polynomials have a long history that dates back at least to the end of the $19^{th}$ century. In 1888, Hilbert showed that not all nonnegative polynomials are sums of squares by proving that these two notions are only equivalent when some conditions on the degree of the polynomial at hand and the number of its variables are met \cite{Hilbert_1888}. His proof was not constructive and it would be another 80 years before the first example of a nonnegative but non-sum of squares polynomial would be presented by Motzkin \cite{MotzkinSOS}. Hilbert's research on sum of squares polynomials led him to include a related question in the list of so-called ``Hilbert problems'', a famous list of 23 open questions, that Hilbert put forward in the year 1900. His $17^{th}$ problem poses the question as to whether every nonnegative polynomial can be written as the ratio of two sums of squares polynomials. This was answered affirmatively by Artin~\cite{artin1927} in 1927. 

The beginning of the $21^{st}$ century brought with it a renewed interest in sum of squares polynomials, but from the optimization community this time, rather than the pure mathematics community. This was largely due to the discovery that sum of squares polynomials and \emph{semidefinite programming} are intimately related \cite{NesterovSquared,sdprelax,lasserre2001}. We remind the reader that semidefinite programming (SDP) is a class of optimization problems where one optimizes a linear objective function over the intersection of the cone of positive semidefinite matrices and an affine subspace, i.e., a problem of the type
\begin{equation}\label{eq:SDP}
\begin{aligned}
&\min_{X \in S^{n \times n}} &&tr(CX)\\
&\text{s.t. } &&tr(A_iX)=b_i, i=1,\ldots,m\\
& &&X\succeq 0,
\end{aligned}
\end{equation}
where $S^{n \times n}$ denotes the set of $n \times n$ symmetric matrices, $tr$ denotes the trace of a matrix, and $C,A_i,b_i$ are input matrices of size respectively $n \times n, n \times n, 1\times 1.$ Semidefinite programming comprises a large class of problems (including, e.g., all linear programs), and can be solved to arbitrary accuracy in polynomial time using interior point methods. (For a more detailed description of semidefinite programming and its applications, we refer the reader to \cite{vandenberghe1996semidefinite}.) The key result linking semidefinite programming and sos polynomials is the following: a polynomial $p$ of degree $2d$ is sos if and only if it can be written as$$p(x)=z(x)^TQz(x),$$ for some positive semidefinite matrix $Q$. Here, $z(x)=(1,x_1,\ldots,x_n,x_1x_2,\ldots,x_n^d)$ is the vector of standard monomials of degree $ \leq d$. Such a matrix $Q$ is sometimes called the \emph{Gram} matrix of the polynomial $p$ and it is of size $\binom{n+d}{d}$ if $p$ is of degree $2d$ and has $n$ variables. This result implies that one can optimize over the cone of sos polynomials of fixed degree in polynomial time to arbitrary accuracy. Indeed, searching for the coefficients of a polynomial $p$ subject to the constraint that $p$ be sos can be rewritten as the problem of searching for a positive semidefinite matrix $Q$ whose entries can be expressed as linear combinations of the coefficients of $p$ (this is a consequence of the fact that two polynomials are equal everywhere if and only if their coefficients are equal). In other words, any \emph{sos program}, i.e., a linear optimization problem over the intersection of the cone of sos polynomials with an affine subspace, can be recast as an SDP. (We remark that it is also true that any SDP can be written as an sos program---in fact, this sos program need only involve quadratic polynomials.)

How can sos programs be used to solve problems like (\ref{eq:opt.nonneg})? It turns out that one can produce certificates of nonnegativity of a polynomial $p$ over a closed basic semialgebraic set $$S\mathrel{\mathop{:}}=\{x\in \mathbb{R}^n|~g_i(x)\geq 0, i=1,\ldots,s\}$$ via sum of squares polynomials. Such certificates are called Positivstellens\"atze. We briefly mention one such Positivstellensatz here to illustrate the point we aim to make. Other Positivstellens\"atze as well as additional context is given in Chapter \ref{ch:positiv} of this thesis. The following Positivstellensatz is due to Putinar \cite{putinar1993positive}: under a technical assumption slightly stronger than compactness of $S$ (see Theorem \ref{th:putinar} for the exact statement), if $p$ is positive on $S$, then there exist sos polynomials $\sigma_0,\ldots,\sigma_s$ such that $$p(x)=\sigma_0(x)+\sum_{i=1}^s \sigma_i(x)g_i(x).$$ (Conversely, it is clear that if such a representation holds, then $p$ must be nonnegative on $S$.) Hence, one can replace the condition that $p$ be nonnegative over $S$ in (\ref{eq:opt.nonneg}) by a ``Putinar certificate'' and obtain the following optimization problem:
\begin{equation}\label{eq:sos.prog}
\begin{aligned}
&\min_{\overrightarrow{p},\overrightarrow{\sigma_0},\ldots,\overrightarrow{\sigma_s}} &&C(\overrightarrow{p})\\
&\text{s.t. } &&A(\overrightarrow{p})=b\\
& &&p(x)=\sigma_0(x)+\sum_{i=1}^{s} \sigma_i(x)\cdot g_i(x)\\
& &&\sigma_i(x) \text{ sos for } i=0,\ldots,s.
\end{aligned}
\end{equation}
Note that when the degrees of the polynomials $\sigma_i$ are fixed, this problem is an sos program, which can be recast as an SDP and solved in polynomial time to arbitrary accuracy. This provides an upper bound on the optimal value of (\ref{eq:opt.nonneg}). As the degree of the sos polynomials increases, one obtains a sequence of upperbounds on the optimal value of (\ref{eq:opt.nonneg}) that is nonincreasing. Putinar's Positivstellensatz tells us that if one keeps increasing the degrees of the sos polynomials, one will eventually (and maybe asymptotically) recover the optimal value of (\ref{eq:opt.nonneg}), the caveat being that the degrees needed to recover this optimal value are not known a priori. 
%The sequence of semidefinite programs generated thus is termed a \emph{converging hierarchy of sos programs}. 

We remark that the semidefinite programs arising in this hierarchy can be quite large, particularly if the number of variables and the degrees of $\sigma_i$ are high. Hence, they can be quite slow to solve as semidefinite programs are arguably the most expensive class of convex optimization problems to solve, with a running time that grows quickly with the dimension of the problem \cite{vandenberghe1996semidefinite}.

As a consequence, recent research has focused on making sum of squares optimization more scalable. One research direction has focused on exploiting structure in SDPs \cite{cifuentes2016exploiting,de2010exploiting, Symmetry_groups_Gatermann_Pablo,riener2013exploiting,vallentin2009symmetry,zheng2017fast} or developing new solvers that scale more favorably compared to interior point methods \cite{bertsimas2013accelerated,li2015qsdpnal, nie2012regularization,zhao2010newton}. Another direction involves finding cheaper alternatives to semidefinite programming that rely, e.g., on linear programming or second order cone programming. This, as well as methods to derive certificates of positivity over closed basic semialgebraic sets from certificates of global positivity, is the focus of the first part of this thesis. The second part of this thesis focuses on a special case of the problem of optimizing over nonnegative polynomials: that of optimizing over \emph{convex} polynomials, and applications thereof. In the next section, we describe the contents and contributions of each part of this thesis more precisely.

\section{Outline of this thesis}

\paragraph{Part I: LP, SOCP, and Optimization-Free Approaches to Semidefinite and Sum of Squares Programming.} The first part of this thesis focuses on linear programming, second order cone programming, and optimization-free alternatives to sums of squares (and semidefinite) programming. 

Chapter \ref{ch:bp} and Chapter \ref{ch:cholesky} are computational in nature and propose new algorithms for approximately solving semidefinite programs. These rely on generating and solving adaptive and improving sequences of linear programs and second order cone programs. 

%In this paper, we show that such hierarchies could in fact be designed from much more limited Positivstellens\"atze dating back to the early 20th century that only certify positivity of a polynomial globally. More precisely, we show that any inner approximation to the cone of positive homogeneous polynomials that is arbitrarily tight can be turned into a converging hierarchy for general polynomial optimization problems with compact feasible sets. This in particular leads to a semidefinite programming-based hierarchy that relies solely on Artin's solution to Hilbert's 17th problem. We also use a classical result of Poly\'a on global positivity of even forms to construct an ``optimization-free'' converging hierarchy for general {\gh polynomial optimization problems (POPs)} with compact feasible sets. This hierarchy only requires polynomial multiplication and checking nonnegativity of coefficients of certain fixed polynomials. As a corollary, we obtain new linear programming and second-order cone programming-based hierarchies for POPs that rely on the recently introduced concepts of dsos (diagonally dominant sum of squares) and  sdsos (scaled diagonally dominant sum of squares) polynomials. {\gh At this stage, we would like to emphasize that the scope of the methods presented here is purely theoretical.}

Chapter \ref{ch:positiv} is theoretical in nature: we show that any inner approximation to the cone of nonnegative homogeneous polynomials that is arbitrarily tight can be turned into a converging hierarchy for general polynomial optimization problems with compact feasible sets. We also use a classical result of Poly\'a on global positivity of even forms to construct an ``optimization-free'' converging hierarchy for general polynomial optimization problems (POPs) with compact feasible sets. This hierarchy only requires polynomial multiplication and checking nonnegativity of coefficients of certain fixed polynomials that arise as products.

%and focuses on understanding the minimal conditions under which a hierarchy of optimization problems that converge to the optimal value of (\ref{eq:opt.nonneg}) (such as the one presented above) can be produced. In particular, we propose new certificates of nonnegativity of polynomials over semialgebraic sets which do not involve sum of squares polynomials. Hence, certifying nonnegativity of a polynomial over a basic semialgebraic set does not necessarily require solving a semidefinite program anymore --- we show in fact that one can produce certificates that dispense with optimization altogether. 

We emphasize that the goals in Chapters \ref{ch:bp}, \ref{ch:cholesky}, and \ref{ch:positiv} are different, though they both work with more tractable (but smaller) subclasses of nonnegative polynomials than sum of squares polynomials. For the first two chapters, the goal is to solve in a fast and more efficient manner the sos program given in (\ref{eq:sos.prog}) approximately. In the third chapter, the goal is to provide new converging hierarchies for POPs with compact feasible sets that rely on simpler certificates.

\paragraph{Part II: Optimizing over convex polynomials.} The second part of the thesis focuses on an important subcase of the problem of optimizing over the cone of nonnegative polynomials: that of optimizing over the cone of convex polynomials. The relationship between nonnegative polynomials and convex polynomials may not be obvious at first sight but it be seen easily, e.g., as a consequence of the second-order characterization of convexity: a polynomial $p$ is convex if and only if its Hessian matrix $H(x)$ is positive semidefinite for all $x \in \mathbb{R}^n$. This is in turn equivalent to requiring that the polynomial  $y^TH(x)y$ in $2n$ variables $(x,y)$ be nonnegative. Hence, just as the notion of sum of squares was used as a surrogate for nonnegativity, one can define the notion of \emph{sum of squares-convexity} (\emph{sos-convexity}), i.e., $y^TH(x)y$ be sos, as a surrogate for convexity. One can then replace any constraint requiring that a polynomial be convex, by a requirement that it be sos-convex. The program thus obtained will be an sos program which can be recast as an SDP. Chapters \ref{ch:dc}, \ref{ch:polynorms}, \ref{ch:vikas}, and \ref{ch:mihaela} all present different theoretical and applied questions around this problem. 

In Chapter \ref{ch:dc}, this framework is used for a theoretical study of optimization problems known as \emph{difference of convex (dc) programs}, i.e., optimization problems where both the objective and the constraints are given as a difference of convex functions. Restricting ourselves to polynomial functions, we are able to show that any polynomial can be written as the difference of two convex polynomial functions and that such a decomposition can be found efficiently. As this decomposition is non-unique, we then consider the problem of finding a decomposition that is optimized for the performance of the most-widely used heuristic for solving dc programs. 

In Chapter \ref{ch:polynorms}, we are interested in understanding when a homogeneous polynomial $p$ of degree $2d$ generates a norm. We show that the $2d^{th}$ root of any strictly convex polynomial is a norm. Such norms are termed \emph{polynomial norms}. We show that they can approximate any norm to arbitrary accuracy. We also show that the problem of testing whether a polynomial of degree $4$ gives rise to a polynomial norm is NP-hard. We consequently provide SDP-based hierarchies to test membership to and optimize over the set of polynomial norms. Some applications in statistics and dynamical systems are also discussed. 

 In Chapter \ref{ch:vikas}, we consider a problem that arises frequently in motion planning and robotics: that of modeling complex objects in an environment with simpler representations. The goal here is to contain a cloud of 3D-points within shapes of minimum volume described by polynomial sublevel sets. A new heuristic for minimizing the volume of these sets is introduced, and by appropriately parametrizing these sublevel sets, one is also able to control their convexity.
 
 In Chapter \ref{ch:mihaela}, we consider an important application in statistics and machine learning: that of \emph{shape-constrained regression}. In this setup, unlike unconstrained regression, we are not solely interested in fitting a (polynomial) regressor to data so as to minimize a convex loss function such as least-squares error. We are also interested in imposing shape constraints, such as monotonicity and convexity to our regressor over a certain region. Motivated by this problem, we study the computational complexity of testing convexity or monotonicity of a polynomial over a box and show that this is NP-hard already for cubic polynomials. The NP-hardness results presented in this chapter are of independent interest, and in the case of convexity are a follow-up result to the main theorem in \cite{ahmadi2013complete} which shows that it is NP-hard to test whether a quartic polynomial is convex \emph{globally}. These computational complexity considerations motivate us to further study semidefinite approximations of the notions of monotonicity and convexity.  We prove that any $C^1$ (resp. $C^2$) function with given monotonicity (resp. convexity) properties can be approximated arbitrarily well by a polynomial function with the same properties, whose monotonicity (resp. convexity) are moreover certified via sum of squares proofs.
 
 Finally, we remark that for the convenience of the reader, each chapter is written to be completely self-contained.

\section{Related publications}
The material presented in this thesis is based on the following papers.

\paragraph{Chapter 2.} A. A. Ahmadi, S. Dash, and G. Hall. {\it Optimization over structured subsets of positive semidefinite matrices via column generation} (2017). In Discrete Optimization, 24, pp. 129-151.

\paragraph{Chapter 3.} A. A. Ahmadi and G. Hall. \emph{Sum of squares basis pursuit with linear and second order cone programming} (2016). In Algebraic and Geometric Methods in Discrete Mathematics, Contemporary Mathematics.

\paragraph{Chapter 4.} A. A. Ahmadi and G. Hall. \emph{On the construction of converging hierarchies for polynomial optimization based on certificates of global positivity} (2017). Under second round of review in Mathematics of Operations Research.

\paragraph{Chapter 5.} A. A. Ahmadi and G. Hall. \emph{DC decomposition of nonconvex polynomials with algebraic techniques} (2015). In Mathematical Programming, 6, pp.1-26.

\paragraph{Chapter 6.} A. A. Ahmadi, E. de Klerk, and G. Hall. \emph{Polynomial norms} (2017). Under review. Available at ArXiv:1704.07462.

\paragraph{Chapter 7.} A. A. Ahmadi, G. Hall, A. Makadia, A., and V. Sindhwani. \emph{Geometry of 3D environments and sum of squares polynomials} (2017). In the proceedings of Robotics: Science and Systems. 

\paragraph{Chapter 8.} A. A. Ahmadi, M. Curmei, G. Hall. \emph{Nonnegative polynomials and shape-constrained regression} (2018). In preparation.\\

\noindent In addition to these papers, the following papers were written during the graduate studies of the author but are not included in this thesis.\\

A. A. Ahmadi, G. Hall, A. Papachristodoulou, J. Saunderson, and Y. Zheng. \emph{Improving efficiency and scalability of sum of squares optimization:recent advances and limitations} (2017). In the proceedings of  the 56th Conference on Decision and Control. \\

E. Abbe, A. Bandeira, and G. Hall. \emph{Exact recovery in the stochastic block model} (2016). In IEEE Transactions on Information Theory, vol. 62, no. 1.

\part{LP, SOCP, and Optimization-Free Approaches to Semidefinite\\ and Sum of Squares Programming}

\chapter{Optimization over Structured Subsets of Positive Semidefinite Matrices via Column Generation} \label{ch:bp}

%\keywords{Column generation, conic optimization, sum of squares programming }

%============================================================
\section{Introduction}
%============================================================

Semidefinite programming is a powerful tool in optimization that is used in many different contexts, perhaps most notably to obtain strong bounds on discrete optimization problems or nonconvex polynomial programs. 
% including polynomial optimization, and obtaining bounds for combinatorial optimization problems, which we discuss in this chapter.
One difficulty in applying semidefinite programming is that state-of-the-art general-purpose solvers often cannot solve very large instances reliably and in a reasonable amount of time. As a result, at relatively large scales, one has to resort either to specialized solution techniques and algorithms that employ problem structure, or to easier optimization problems that lead to weaker bounds.
We will focus on the latter approach in this chapter.

%for problems where the associated semidefinite program (SDP) is simply intractable or unacceptably slow for current solvers.

At a high level, our goal is to not solve semidefinite programs (SDPs) to optimality, but rather replace them with cheaper conic relaxations---\emph{linear and second order cone relaxations} to be precise---that return useful bounds quickly. Throughout the chapter, we will aim to find lower bounds (for minimization problems); i.e., bounds that certify the distance of a candidate solution to optimality. Fast, good-quality lower bounds are especially important in the context of branch-and-bound schemes, where one needs to strike a delicate balance between the time spent on bounding and the time spent on branching, in order to keep the overall solution time low. Currently, in commercial integer programming solvers, almost all lower bounding approaches using branch-and-bound schemes exclusively produce linear inequalities.
%With the current state of affairs in branch-and-bound technology, almost all cutting-plane approaches that produce lower bounds on industrially-sized integer programs are exclusively based on linear programming (LP).
%, or occasionally second order cone programming (SOCP). 
Even though semidefinite cuts are known to be stronger, they are often too expensive to be used even at the root node of a branch-and-bound tree.
%at the root node of branch-and-bound techniques for integer programming. Because of this, 
%many practitioners are wary of using SDP and as a result
%
Because of this, many high-performance solvers, e.g., IBM ILOG CPLEX ~\cite{cplex} and Gurobi \cite{gurobi}, do not even provide an SDP solver and instead solely work with LP and SOCP relaxations. Our goal in this chapter is to offer some tools that exploit the power of SDP-based cuts, while staying entirely in the realm of LP and SOCP. We apply these tools to classical problems in both nonconvex polynomial optimization and discrete optimization. %[*Sanjeeb, confirm what I say about state of BB is OK.*]

%In the field of integer programming for instance, the cutting-plane approaches used on industrial problems are almost exclusively based on linear programming (LP) or second order cone programming (SOCP).

%In the field of sum of squares optimization, however, a sound alternative to sos programming that can avoid SDP and take advantage of the existing mature and high-performance LP/SOCP solvers is lacking. This is precisely what we propose to achieve here.

Techniques that provide lower bounds on minimization problems are precisely those that certify nonnegativity of objective functions on feasible sets. To see this, note that a scalar $\gamma$ is a lower bound on the minimum value of a function $f:\mathbb{R}^n\rightarrow\mathbb{R}$ on a set $K\subseteq\mathbb{R}^n,$ if and only if $f(x)-\gamma\geq 0$ for all $x\in K$. As most discrete optimization problems (including those in the complexity class NP) can be written as polynomial optimization problems, the problem of certifying nonnegativity of polynomial functions, either globally or on basic semialgebraic sets, is a fundamental one. A polynomial $p(x)\mathrel{\mathop:}=p(x_1,\ldots,x_n)$ is said to be \emph{nonnegative}, if $p(x)\geq0$ for all $x\in\mathbb{R}^n$. Unfortunately, even in this unconstrained setting, the problem of testing nonnegativity of a polynomial $p$ is NP-hard even when its degree equals four. This is an immediate corollary of the fact that checking if a symmetric matrix $M$ is copositive---i.e., if $x^TMx\geq 0, \forall x\geq 0$---is NP-hard.\footnote{Weak NP-hardness of testing matrix copositivity is originally proven by Murty and Kabadi~\cite{nonnegativity_NP_hard}; its strong NP-hardness is apparent from the work of de Klerk and Pasechnik~\cite{dp}.} Indeed, $M$ is copositive if and only if the homogeneous quartic polynomial $p(x)=\sum_{i,j}M_{ij}x_i^2x_j^2$ is nonnegative.

%A powerful sufficient condition for a polynomial to be nonnegative is for it to be a sum of squares of other

%The problem of minimizing a polynomial $p(x)$ where $x = (x_1, \ldots, x_n)$ is a vector of $n$ variables can be represented as the problem of finding the largest number $\lambda$ such that $p(x) - \lambda \geq 0$ for all $x \in \R^n$.
%The problem of verifying if a polynomial $p(x)$ in $n$ variables is nonnegative -- i.e., $p(x)$ satisfies $p(x) \geq 0$ for all $x \in \R^n$ -- is NP-complete for quartic polynomials (this follows from the result of Murty and Kabadi \cite{mk} showing NP-completeness of matrix copositivity verification), and can be hard to solve in practice when the degree is four or more.

Despite this computational complexity barrier, there has been great success in using sum of squares (SOS) programming~\cite{sdprelax},~\cite{lasserre_moment},~\cite{NesterovSquared} to obtain certificates of nonnegativity of polynomials in practical settings. It is known from Artin's solution~\cite{Artin_Hilbert17} to Hilbert's 17th problem that a polynomial $p(x)$ is nonnegative if and only if
\begin{equation}\label{eq:sos} p(x) = \frac{\sum_{i=1}^t q_i^2(x)}{\sum_{i=1}^r g_i^2(x)} \Leftrightarrow (\sum_{i=1}^r g_i^2(x))p(x) = \sum_{i=1}^t q_i^2(x)
\end{equation} for some polynomials $q_1, \ldots, q_t, g_1,\ldots,g_r$.
When $p$ is a quadratic polynomial, then the polynomials $g_i$ are not needed and the polynomials $ q_i$ can be assumed to be linear functions. In this case, by
writing $p(x)$ as
\[ p(x) = \left(\begin{array}{c} 1 \\ x \end{array}\right)^T Q \left(\begin{array}{c} 1 \\ x \end{array}\right), \]
where $Q$ is an $(n+1) \times (n+1)$ symmetric matrix, checking nonnegativity of $p(x)$
reduces to checking the nonnegativity of the eigenvalues of $Q$; i.e., checking if $Q$ is positive semidefinite.

More generally, if the degrees of $q_i$ and $ g_i$ are fixed in (\ref{eq:sos}), then checking for a representation of $p$ of the form in (\ref{eq:sos}) reduces to solving an SDP, whose size depends on the dimension of $x$, and the degrees of $p, q_i$ and $ g_i$ \cite{sdprelax}.
This insight has led to significant progress in certifying nonnegativity of polynomials arising in many areas. In practice, the ``first level'' of the SOS hierarchy is often the one used, where the polynomials $g_i$ are left out and one simply checks if $p$ is a sum of squares of other polynomials. In this case already, because of the numerical difficulty of solving large SDPs, the polynomials that can be certified to be nonnegative usually do not have very high degrees or very many variables. For example, finding a sum of squares certificate that a given quartic polynomial over $n$ variables is nonnegative requires solving an SDP involving roughly $O(n^4)$ constraints and a positive semidefinite matrix variable of size $O(n^2) \times O(n^2)$. Even for a handful of or a dozen variables, the underlying semidefinite constraints prove to be expensive. Indeed, in the absence of additional structure, most examples in the literature have less than 10 variables.

Recently other systematic approaches to certifying nonnegativity of polynomials have been proposed which lead to less expensive optimization problems than semidefinite programming problems. In particular, Ahmadi and Majumdar~\cite{isos_journal},~\cite{dsos_ciss14} introduce ``DSOS and SDSOS'' optimization techniques, which replace semidefinite programs arising in the nonnegativity certification problem by linear programs and second-order cone programs. Instead of optimizing over the cone of sum of squares polynomials, the authors optimize over two subsets which they call ``diagonally dominant sum of squares'' and ``scaled diagonally dominant sum of squares'' polynomials (see Section~\ref{subsec:dsos.sdsos} for formal definitions). In the language of semidefinite programming, this translates to solving optimization problems over the cone of diagonally dominant matrices and scaled diagonally dominant matrices. These can be done by LP and SOCP respectively. The authors have had notable success with these techniques in different applications. For instance, they are able to run these relaxations for polynomial optimization problems of degree 4 in 70 variables in the order of a few minutes. They have also used their techniques to push the size limits of some SOS problems in controls; examples include stabilizing a model of a humanoid robot with 30 state variables and 14 control inputs~\cite{majumdar2014control}, or exploring the real-time applications of SOS techniques in problems such as collision-free autonomous motion planning~\cite{ahmadiOR_letters}.

Motivated by these results, our goal in this chapter is to start with DSOS and SDSOS techniques and improve on them. By exploiting ideas from column generation in large-scale linear programming, and by appropriately interpreting the DSOS and SDSOS constraints, we produce several iterative LP and SOCP-based algorithms that improve the quality of the bounds obtained from the DSOS and SDSOS relaxations. Geometrically, this amounts to optimizing over structured subsets of sum of squares polynomials that are larger than the sets of diagonally dominant or scaled diagonally dominant sum of squares polynomials. For semidefinite programming, this is equivalent to optimizing over structured subsets of the cone of positive semidefinite matrices. An important distinction to make between the DSOS/SDSOS/SOS approaches and our approach, is that our approximations iteratively get larger in the direction of the given objective function, unlike the DSOS, SDSOS, and SOS approaches which all try to inner approximate the set of nonnegative polynomials \emph{irrespective} of any particular direction.

{ In related literature, Krishnan and Mitchell use linear programming techniques to approximately solve SDPs by taking a semi-infinite LP representation of the SDP and applying column generation \cite{krishnan2006semidefinite}. In addition, Kim and Kojima solve second order cone relaxations of SDPs which are closely related to the dual of an SDSOS program in the case of quadratic programming \cite{kim2003exact}; see Section \ref{sec:cg.overview} for further discussion of these two papers.}
	
The organization of the rest of the chapter is as follows.
In the next section, we review relevant notation, and discuss the prior literature on { DSOS and SDSOS programming.}
% two techniques which motivate our column generation ideas.
In Section~\ref{sec:cg.overview}, we give a high-level overview of our column generation approaches in the context of a general SDP.
In Section~\ref{sec:poly.opt}, we describe an application of our ideas to nonconvex polynomial optimization and {present} computational experiments with certain column generation implementations.
{In Section~\ref{sec:stable.set}, we apply our column generation approach to approximate a copositive program arising from a specific discrete optimization application (namely the stable set problem).}
All the work in these sections can be viewed as providing techniques to optimize over subsets of positive semidefinite matrices.
We then conclude in Section~\ref{sec:future} with some future directions, and discuss ideas for column generation which allow one to go beyond subsets of positive semidefinite matrices in the case of polynomial {and copositive } optimization.

%Instead of optimizing a linear function of a matrix variable subject to linear constraints and the restriction that the matrix is positive semidefinite, they replace the positive semidefiniteness condition by either linear constraints (DSOS programming) or second-order cone constraints (SDSOS programming) which are sufficient for positive semidefinitess but are not necessary.
%In other words, they optimize over subsets of the semidefinite cone which are computationally easier to handle. One subset is the set of diagonally dominant matrices, which can be specified via linear constraints on the matrix components, and another is the scaled diagonally dominant matrices, which need second-order cone constraints.
%They thereby solve much simpler optimization problems than SDPs but get weaker lower bounds on the minimum value of a polynomial.

%In this paper, we use the general technique of column generation used to solve very large LPs to extend the DSOS programming and SDSOS programming ideas in Ahmadi and Majumdar \cite{am}.
%In particular, we consider optimization problems defined over different classes of matrices which form strict subsets of all positive semidefinite matrices but contain the class of diagonally dominant matrices.
%We only solve linear programs, but these are harder than the ones solved in \cite{am} but give better bounds for polynomial optimization.
%We also study whether our ideas lead to better bounds compared to DSOS and SDSOS programming in the context of combinatorial optimization problems.

%============================================================
\section{Preliminaries}\label{sec:prelims}
%============================================================
Let us first introduce some notation on matrices. We denote the set of real symmetric $n\times n$ matrices by $S_n$. Given two matrices $A$ and $B$ in $S_n$, we denote their matrix inner product by $A\cdot B := \sum_{i,j}A_{ij}B_{ij} = \mbox{ Trace}(AB)$. The set of symmetric matrices with nonnegative entries is denoted by $N_n$. A symmetric matrix $A$ is \emph{positive semidefinite} (psd) if $x^TAx\geq 0$ for all $x\in\mathbb{R}^n$; this will be denoted by the standard notation $A\succeq 0$, and our notation for the set of $n\times n$ psd matrices is $P_n$. A matrix $A$ is \emph{copositive} if $x^TAx\geq 0$ for all $x\geq 0$. The set of copositive matrices is denoted by $C_n$. All three sets $N_n,P_n,C_n$ are convex cones and we have the obvious inclusion $N_n+P_n\subseteq C_n$. This inclusion is strict if $n\geq 5$~\cite{burer2009difference},~\cite{burer2012copositive}. For a cone $\mathcal{K}$ of matrices in $S_n$, we define its dual cone $\mathcal{K}^*$ as $\{Y \in \S_n: Y\cdot X \geq 0, \ \forall X \in \mathcal{K}\}$.

For a vector variable $x \in \R^n$ and a vector $q \in \Z^n_+$, let a monomial in $x$ be denoted as $x^q\mathrel{\mathop:}= \Pi_{i=1}^n x_i^{q_i}$, and let its degree be $\sum_{i=1}^n q_i$.
A polynomial is said to be \emph{homogeneous} or a \emph{form} if all of its monomials have the same degree. A form $p(x)$ in $n$ variables is nonnegative if $p(x)\geq 0$ for all $x\in\mathbb{R}^n$, or equivalently for all $x$ on the unit sphere in $\mathbb{R}^n$. The set of {\emph{nonnegative}} (or positive semidefinite) forms in $n$ variables and degree $d$ is denoted by $PSD_{n,d}$. A form $p(x)$ is a \emph{sum of squares} (sos) if it can be written as $p(x)=\sum_{i=1}^r q_i^2(x)$ for some forms $q_1,\ldots,q_r$. The set of sos forms in $n$ variables and degree $d$ is {a cone} denoted by $SOS_{n,d}$. We have the obvious inclusion $SOS_{n,d}\subseteq PSD_{n,d}$, which is strict unless $d=2$, or $n=2$, or $(n,d)=(3,4)$~\cite{Hilbert_1888}. Let $z(x,d)$ be the vector of all monomials of degree exactly $d$; it is well known that a form $p$ of degree $2d$ is sos if and only if it can be written as {$p(x)=z^T(x,d)Qz(x,d)$,} for some psd matrix $Q$~\cite{sdprelax},~\cite{PhD:Parrilo}. The size of the matrix $Q$, which is often called the \emph{Gram matrix}, is ${n+d-1 \choose d} \times {n+d-1 \choose d}$. At the price of imposing a semidefinite constraint of this size, one obtains the very useful ability to search and optimize over the convex cone of sos forms via semidefinite programming.

%Let $N_n, S_n, C_n$, stand for the cones of $n\times n$ symmetric nonnegative matrices, diagonally dominant matrices, positive semidefinite matrices (PSD cone) and copositive matrices, respectively.
%
%$D\in D_n$ is a diagonally dominant matrix if $D_{ii} \geq \sum_{j\neq i}|D_{ij}|$ for $i=1, \ldots, n$.
%For a cone of matrices $\mathcal{C} \subseteq \R^{n \times n}$, the dual cone $\mathcal{C}^*$ is defined as $\{Y \in \R^{n\times n}: Y\cdot X \geq 0 \ \forall X \in \mathcal{C}\}$.
%It is well-known that (i) $D_n \subseteq S_n \subseteq C_n$; (ii) $N_n \subseteq C_n$ which also implies that $S_n + N_n \subseteq C_n$; (iii) 
%$S_n^* = S_n$.
%Let $e_1, \ldots e_n$ be the unit vectors in $\R^n$. Let $U_k \subset \R^{n \times n}$ be the set of matrices defined as
%\[ U_k = \{uu^T : u \mbox{ has at most } k \mbox{ nonzero components, each equal to} \pm 1\}. \]
%Clearly $U_k$ is a finite set for each $k=1, \ldots, n$ and it is not hard to see that $D_n = \cone{U_2}$ and therefore $D_n^* = \{X \in \R^{n\times n}: X \cdot V \geq 0 \ \forall V \in U_2\}$.
%Furthermore as $D_n \subseteq S_n$, we have $S_n \subseteq D_n^*$.

\subsection{DSOS and SDSOS optimization}
\label{subsec:dsos.sdsos}

In order to alleviate the problem of scalability posed by the SDPs arising from sum of squares programs, Ahmadi and Majumdar~\cite{isos_journal},~\cite{dsos_ciss14}\footnote{The work in~\cite{isos_journal} is currently in preparation for submission; the one in~\cite{dsos_ciss14} is a shorter conference version of~\cite{isos_journal} which has already appeared. The presentation of the current chapter is meant to be self-contained.} recently introduced similar-purpose LP and SOCP-based optimization problems that they refer to as \emph{DSOS and SDSOS programs}. Since we will be building on these concepts, we briefly review their relevant aspects to make our chapter self-contained.

%In order to address the problem of scalability posed by SDP, we have recently introduced~\cite{isos_journal},~\cite{dsos_ciss14} alternatives to SOS programming that lead to linear programs (LPs) and second order cone programs (SOCPs). 

The idea in~\cite{isos_journal},~\cite{dsos_ciss14}  is to replace the condition that the Gram matrix $Q$ be positive semidefinite with stronger but cheaper conditions in the hope of obtaining more efficient inner approximations to the cone $SOS_{n,d}$. Two such conditions come from the concepts of \emph{diagonally dominant} and \emph{scaled diagonally dominant} matrices in linear algebra. We recall these definitions below.

\begin{definition}\label{def:dd.sdd}
A symmetric matrix $A=(a_{ij})$ is \emph{diagonally dominant} (dd) if $a_{ii} \geq \sum_{j \neq i} |a_{ij}|$ for all $i$. We say that $A$ is \emph{scaled diagonally dominant} (sdd) if there exists a diagonal matrix $D$, with positive diagonal entries, such that $DAD$ is diagonally dominant.
\end{definition}

We refer to the set of $n \times n$ dd (resp. sdd) matrices as $DD_n$ (resp. $SDD_n$). The following inclusions are a consequence of Gershgorin's circle theorem:
$$DD_n\subseteq SDD_n\subseteq P_n.$$

We now use these matrices to introduce the cones of ``dsos'' and ``sdsos'' forms and some of their generalizations, which all constitute special subsets of the cone of nonnegative forms. We remark that in the interest of brevity, we do not give the original definitions of dsos and sdsos polynomials as they appear in~\cite{isos_journal} (as sos polynomials of a particular structure), but rather an equivalent characterization of them that is more useful for our purposes. The equivalence is proven in~\cite{isos_journal}. 

%We now introduce some cones that are inner approximations of the cone of nonnegative polynomials and that lend themselves to LP and SOCP. In analogy with the representation of sos polynomials in terms of psd matrices (Theorem \ref{thm:sos.sdp}), we define the \emph{dsos} and \emph{sdsos} polynomials in terms of dd and sdd matrices respectively.

 \begin{definition}[\cite{isos_journal,dsos_ciss14}] \label{def:dsos.sdsos.rdsos.rsdsos}
	Recall that $z(x,d)$ denotes the vector of all monomials of degree exactly $d$. A form $p(x)$ of degree $2d$ is said to be
	
	\begin{enumerate}[(i)]
		\item  \emph{diagonally-dominant-sum-of-squares} (dsos) if it admits a representation as\\ $p(x)=z^T(x,d)Qz(x,d)$, where $Q$ is a dd matrix,
		\item  \emph{scaled-diagonally-dominant-sum-of-squares} (sdsos) if it admits a representation as\\ $p(x)=z^T(x,d)Qz(x,d)$, where $Q$ is an sdd matrix,
		\item \emph{$r$-diagonally-dominant-sum-of-squares} ($r$-dsos) if there exists a positive integer $r$ such that \\$p(x) (\sum_{i=1}^n x_i^2)^r$ is dsos,
			\item \emph{$r$-scaled diagonally-dominant-sum-of-squares} ($r$-sdsos) if there exists a positive integer $r$ such that \\$p(x)  (\sum_{i=1}^n x_i^2)^r$ is sdsos.
	\end{enumerate}
\end{definition}
We denote the cone of forms in $n$ variables and degree $d$ that are dsos, sdsos, $r$-dsos, and $r$-sdsos by $DSOS_{n,d}$, $SDSOS_{n,d}$, $rDSOS_{n,d}$, and $rSDSOS_{n,d}$ respectively. The following inclusion relations are straightforward: $$DSOS_{n,d}\subseteq SDSOS_{n,d}\subseteq SOS_{n,d}\subseteq PSD_{n,d},$$
$$rDSOS_{n,d}\subseteq rSDSOS_{n,d}\subseteq PSD_{n,d}, \forall r.$$

The multiplier $(\sum_{i=1}^n x_i^2)^r$ should be thought of as a special denominator in the Artin-type representation in (\ref{eq:sos}). By appealing to some theorems of real algebraic geometry, it is shown in~\cite{isos_journal} that under some conditions, as the power $r$ increases, the sets $rDSOS_{n,d}$ (and hence $rSDSOS_{n,d}$) fill up the entire cone $PSD_{n,d}.$ We will mostly be concerned with the cones $DSOS_{n,d}$ and $SDSOS_{n,d}$, which correspond to the case where $r=0$. From the point of view of optimization, our interest in all of these algebraic notions stems from the following {theorem.}

\begin{theorem}[\cite{isos_journal,dsos_ciss14}]	For any integer $r\geq 0$, the cone $rDSOS_{n,d}$ is polyhedral and the cone $rSDSOS_{n,d}$ has a second order cone representation. Moreover, for any fixed $d$ and $r$, {one can optimize a linear function over $rDSOS_{n,d}$ (resp. $rSDSOS_{n,d}$) by solving a linear program (resp. second order cone program) of size polynomial in $n$.}
% can be done with linear programming (resp. second order cone programming) of size polynomial in $n$.
\end{theorem}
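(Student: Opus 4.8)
The plan is to reduce the entire statement to two facts about matrices: the cone $DD_n$ of diagonally dominant matrices is polyhedral and admits a linear programming description of size polynomial in $n$, and the cone $SDD_n$ of scaled diagonally dominant matrices admits a second order cone description of size polynomial in $n$. Everything else will follow from two general principles: (a) an affine image, and an affine preimage, of a polyhedron is again a polyhedron (Fourier--Motzkin / Minkowski--Weyl), and likewise a (pre)image of a set given by an explicit second order cone representation inherits such a representation; and (b) for fixed $d$ and $r$, all the relevant monomial and coefficient spaces have dimension $\binom{n+k-1}{k}$ for some constant $k$, which is polynomial in $n$.

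First I would establish the two matrix facts. For $DD_n$: writing $a_{ii}\geq\sum_{j\neq i}|a_{ij}|$ as the finite family of linear inequalities $a_{ii}\geq\sum_{j\neq i}\epsilon_j a_{ij}$ over all sign patterns $\epsilon\in\{-1,1\}^{n-1}$ already shows $DD_n$ is polyhedral; to keep the size polynomial I would instead introduce auxiliary variables $b_{ij}$ with $b_{ij}\geq a_{ij}$, $b_{ij}\geq -a_{ij}$, and $a_{ii}\geq\sum_{j\neq i}b_{ij}$, giving an LP in $O(n^2)$ variables and $O(n^2)$ inequalities whose projection onto the $a$-variables is exactly $DD_n$. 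For $SDD_n$ I would use the fact that $A\in SDD_n$ if and only if $A=\sum_{i<j}M^{ij}$, where each $M^{ij}\in S_n$ vanishes outside its $\{i,j\}\times\{i,j\}$ principal submatrix and that $2\times 2$ submatrix is positive semidefinite; since a $2\times 2$ constraint $\left(\begin{smallmatrix} a & b\\ b & c\end{smallmatrix}\right)\succeq 0$ is equivalent to the (rotated, hence standard) second order cone constraint $\left\|(\,a-c,\ 2b\,)\right\|\leq a+c$, this exhibits $SDD_n$ as the projection of a set cut out by $O(n^2)$ three-dimensional second order cone constraints and $O(n^2)$ linear equations. The decomposition lemma here (every sdd matrix is a sum of such $2\times 2$-supported psd matrices) is the one genuinely non-routine step; it follows by distributing the diagonally dominant structure of $DAD$ over $2\times 2$ principal blocks and conjugating back by $D^{-1}$.

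Next I would lift these facts to forms. Fix the monomial vector $z=z(x,\delta)$ of the appropriate degree and let $N=\binom{n+\delta-1}{\delta}$ be its length. The assignment $Q\mapsto(\text{coefficient vector of }z^TQz)$ is a fixed linear map $L$ from $S_N$ into the space $V$ of coefficient vectors of forms of the relevant even degree, because two polynomials agree everywhere if and only if their coefficient vectors agree. Hence $DSOS_{n,d}=\{\vec p:\exists\,Q\in DD_N,\ \vec p=L(Q)\}$, i.e., the projection onto the $\vec p$-coordinates of the polyhedron $\{(\vec p,Q)\in V\times S_N:\vec p=L(Q)\}\cap(V\times DD_N)$; a projection of a polyhedron is a polyhedron, so $DSOS_{n,d}$ is polyhedral, and inserting the polynomial-size LP description of $DD_N$ yields a polynomial-size description. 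Replacing $DD_N$ by the SOCP description of $SDD_N$ gives the claim for $SDSOS_{n,d}$. For general $r\geq 1$, multiplication by the fixed form $(\sum_i x_i^2)^r$ is a fixed linear map $M_r$ between coefficient spaces, and by Definition~\ref{def:dsos.sdsos.rdsos.rsdsos} a form is $r$-dsos (resp. $r$-sdsos) exactly when its image under $M_r$ lies in the $DSOS$ (resp. $SDSOS$) cone of the (fixed, larger) degree; thus $rDSOS_{n,d}=M_r^{-1}(DSOS)$ is an affine preimage of a polyhedron, hence polyhedral, and $rSDSOS_{n,d}=M_r^{-1}(SDSOS)$ inherits a second order cone representation. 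Since the degree only increases by the constant $2r$, all these descriptions have size polynomial in $n$.

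Finally, optimizing a linear functional $c(\vec p)$ over $rDSOS_{n,d}$ (resp. $rSDSOS_{n,d}$) becomes, in the extended variables, $\min c(\vec p)$ subject to $M_r(\vec p)=L(Q)$ and $Q\in DD_{N'}$ (resp. $Q\in SDD_{N'}$), with $N'$ polynomial in $n$ and with $DD_{N'}$, $SDD_{N'}$ replaced by their polynomial-size LP/SOCP descriptions from the first step; this is a linear program (resp. second order cone program) of size polynomial in $n$, as claimed. I expect the $SDD_n$ decomposition lemma to be the only part demanding actual proof; the remainder is bookkeeping together with the standard facts that projections and affine preimages preserve polyhedrality and second order cone representability.
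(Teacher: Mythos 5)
Your proposal is correct and follows essentially the same route as the paper: linear equality constraints linking the coefficients of $p(x)(\sum_i x_i^2)^r$ to the entries of the Gram matrix $Q$, a (polynomial-size) linear-inequality description of diagonal dominance, and the $2\times 2$-supported psd decomposition of sdd matrices imposed via rotated quadratic cone constraints. The only difference is cosmetic: you spell out the auxiliary-variable LP for $DD_n$ and the affine-preimage bookkeeping for general $r$, which the paper leaves implicit, while both you and the paper take the sdd $=$ sum-of-$2\times 2$-psd equivalence as the quoted key lemma from the cited work.
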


The ``LP part'' of this theorem is not hard to see. The equality $p(x)(\sum_{i=1}^n x_i^2)^r={ z^T(x,d)}Qz(x,d)$ gives rise to linear equality constraints between the coefficients of $p$ and the entries of the matrix $Q$ (whose size is polynomial in $n$ for fixed $d$ and $r$). The requirement of diagonal dominance on the matrix $Q$ can also be described by linear inequality constraints on $Q$. The ``SOCP part'' of the statement comes from the fact, shown in~\cite{isos_journal}, that a matrix $A$ is sdd if and only if it can be expressed as 	$$A = \sum_{i< j} M_{2 \times 2}^{ij},$$
where each $ M_{2 \times 2}^{ij}$ is an $n\times n$ symmetric matrix with zeros everywhere except for four entries $M_{ii}, M_{ij}, M_{ji}, M_{jj}$, which must make the $2\times 2$ matrix $\begin{bmatrix} M_{ii} & M_{ij}  \\ M_{ji} & M_{jj} \end{bmatrix}$ symmetric and positive semidefinite. These constraints are \emph{rotated quadratic cone} constraints and can be imposed using SOCP~\cite{alizadeh},~\cite{socp_boyd}:
$$M_{ii}\geq 0, ~\Bigl\lvert\Bigl\lvert\begin{pmatrix}
2M_{ij}\\M_{ii}-M_{jj}
\end{pmatrix}\Bigl\lvert\Bigl\lvert \leq M_{ii}+M_{jj}.$$

We refer to optimization problems with a linear objective posed over the convex cones $DSOS_{n,d}$, $SDSOS_{n,d}$, and $SOS_{n,d}$ as DSOS programs, SDSOS programs, and SOS programs respectively. In general, quality of approximation decreases, while scalability increases, as we go from SOS to SDSOS to DSOS programs. Depending on the size of the application at hand, one may choose one approach over the other. \\
{ In related work, Ben-Tal and Nemirovski \cite{ben2001polyhedral} and Vielma, Ahmed and Nemhauser \cite{vielma2010mixed} approximate SOCPs by LPs and produce approximation guarantees.}
%{\ghtwo We remark that in related work, the idea of approximating more complicated convex programs by computationally cheaper ones has been considered by other authors. See, e.g., the work of Ben-Tal and Nemirovski \cite{ben2001polyhedral} and the work of Vielma, Ahmed and Nemhauser \cite{vielma2010mixed}.}

%In this chapter, we will be using SOS optimization (Section~\ref{sec:jamming}) and SDSOS optimization (Sections~\ref{sec:barriers} and~\ref{sec:quadrotor}) in our numerical experiments. The reader is referred to~\cite{dsos_cdc14,dsos_ciss14,Ahmadi14} for many numerical examples involving DSOS optimization. We also remark in passing that SDSOS or even DSOS programming enjoy many of the same theoretical (asymptotic) guarantees of SOS programming---results of this nature are proven in~\cite{Ahmadi14}.

%============================================================
\section{Column generation for inner approximation of positive semidefinite cones}\label{sec:cg.overview}
%============================================================

%Column generation~\cite{barnhart1998branch},~\cite{desaulniers2006column} is a technique employed extensively in large-scale, industrial linear and integer programming problems. The idea is to use only a subset of the optimization variables (or dual constraints) and bring in new ones only if they improve the objective function. These algorithms produce a sequence of iterative LPs, often with very smart warm-start and resolve strategies. But what does this have to with nonnegative polynomials?

In this section, we describe a natural approach to apply techniques from the theory of column generation~\cite{barnhart1998branch},~\cite{desaulniers2006column} in large-scale { optimization} %programming 
to the problem of optimizing over nonnegative polynomials.  Here is the rough idea: We can think of all SOS/SDSOS/DSOS approaches as ways of proving that a polynomial is nonnegative by writing it as a nonnegative linear combination of certain ``atom'' polynomials that are already known to be nonnegative. For SOS, these atoms are all the squares (there are infinitely many). For DSOS, there is actually a finite number of atoms {corresponding to the extreme rays of the cone of diagonally} dominant matrices (see Theorem~\ref{thm:dsos.corners} below). For SDSOS, once again we have infinitely many atoms, but with a specific structure which is amenable to an SOCP representation. Now the column generation idea is to start with a certain ``cheap'' subset of atoms (columns) and only add new ones---one or a limited number in each iteration---if they improve our desired objective function. This results in a sequence of monotonically improving bounds; we stop the column generation procedure when we are happy with the quality of the bound, or when we have consumed a predetermined budget on time.

In the LP case, after the addition of one or a few new atoms, one can obtain the new optimal solution from the previous solution in much less time than required to solve the new problem from scratch. However, as we show with some examples in this chapter, even if one were to resolve the problems from scratch after each iteration (as we do for all of our SOCPs and some of our LPs), the overall procedure is still relatively fast. This is because in each iteration, with the introduction of a constant number $k$ of new atoms, the problem size essentially increases only by $k$ new variables and/or $k$ new constraints. This is in contrast to other types of hierarchies---such as the rDSOS and rSDSOS hierarchies of Definition~\ref{def:dsos.sdsos.rdsos.rsdsos}---that blow up in size by a factor that depends on the dimension in each iteration. 

In the next two subsections we make this general idea more precise. While our focus in this section is on column generation for general SDPs, the next two sections show how the techniques are useful for approximation of SOS programs for polynomial optimization (Section~\ref{sec:poly.opt}), and copositive programs for discrete optimization (Section~\ref{sec:stable.set}). 

\subsection{LP-based column generation}\label{subsec:LP.based.CG}
Consider a general SDP
\begin{equation}\label{eq:sdp}
\begin{aligned} 
\max_{y\in\mathbb{R}^m} &  \quad  b^Ty\\
\text{s.t. } & \quad C-\sum_{i=1}^m y_iA_i\succeq 0,
\end{aligned}
\end{equation} 
with $b\in\mathbb{R}^m, C,A_i\in S_n$ as input, and its dual

\begin{equation}\label{eq:dual.sdp}
\begin{aligned} 
\min_{X\in S_n} &  \quad  C\cdot X\\
\text{s.t. } & \quad A_i\cdot X=b_i, \ i=1,\ldots, m,\\
\  & \quad X\succeq 0.\\
\end{aligned}
\end{equation} 

Our goal is to inner approximate the feasible set of (\ref{eq:sdp}) by increasingly larger polyhedral sets. We consider LPs of the form

\begin{equation}\label{eq:LP.inner}
\begin{aligned} 
\max_{y,\alpha} &  \quad  b^Ty\\
\text{s.t. } & \quad C-\sum_{i=1}^m y_iA_i=\sum_{i=1}^t \alpha_i B_i,\\
\ & \quad \alpha_i\geq 0,\ i=1,\ldots,t.
\end{aligned}
\end{equation} 

Here, the matrices $B_1,\ldots, B_t\in P_n$ are some fixed set of positive semidefinite matrices (our psd ``atoms''). To expand our inner approximation, we will continually add to this list of matrices. This is done by considering the dual LP

\begin{equation}\label{eq:dual.of.LP.inner}
\begin{aligned} 
\min_{X\in S_n} &  \quad  C\cdot X\\
\text{s.t. } & \quad A_i\cdot X=b_i, \ i=1,\ldots, m,\\
\  & \quad X\cdot B_i \geq 0, \ i=1,\ldots, t,
\end{aligned}
\end{equation} 
which in fact gives a polyhedral outer approximation (i.e., relaxation) of the spectrahedral feasible set of the SDP in (\ref{eq:dual.sdp}). If the optimal solution $X^*$ of the LP in (\ref{eq:dual.of.LP.inner}) is already psd, then we are done and have found the optimal value of our SDP. If not, we can use the violation of positive semidefiniteness to extract one (or more) new psd atoms $B_{j}$. Adding such atoms to (\ref{eq:LP.inner}) is called {\em column generation}, and the problem of finding such atoms is called the {\em pricing subproblem}. (On the other hand, if one starts off with an LP of the form (\ref{eq:dual.of.LP.inner}) as an approximation of (\ref{eq:dual.sdp}), then the approach of adding inequalities to the LP iteratively that are violated by the current solution is called a {\em cutting plane} approach, and the associated problem of finding violated constraints is called the {\em separation subproblem}.)
The simplest idea for pricing is to look at the eigenvectors $v_j$ of $X^*$ that correspond to negative eigenvalues. From each of them, one can generate a rank-one psd atom $B_j=v_jv_j^T$, which can be added with a new variable (``column'') $\alpha_j$ to the primal LP in (\ref{eq:LP.inner}), and as a new constraint (``cut'') to the dual LP in (\ref{eq:dual.of.LP.inner}). The subproblem can then be defined as getting the most negative eigenvector, which is equivalent to minimizing the quadratic form $x^TX^*x$ over the unit sphere $\{x|\ ||x||=1\}$. Other possible strategies are discussed later in the chapter.

This LP-based column generation idea is rather straightforward, but what does it have to do with DSOS optimization? The connection comes from the extreme-ray description of the cone of diagonally dominant matrices, which allows us to interpret a DSOS program as a particular and effective way of obtaining $n^2$ initial psd atoms.
%To make these ideas more concrete, we start by giving the extreme-ray description of the cone of diagonally dominant matrices. 

Let $\mathcal{U}_{n,k}$ denote the set of vectors in $\mathbb{R}^n$ which have at most $k$ nonzero components, each equal to $\pm 1$, and define $U_{n,k} \subset S_n$ to be the set of matrices 
%\[ U_{n,k} = \{uu^T : u\in\mathbb{R}^n  \mbox{ has at most } k \mbox{ nonzero components, each %equal to} \pm 1\}. \]
\[ U_{n,k} \mathrel{\mathop:}= \{uu^T : u\in\mathcal{U}_{n,k}\}. \]

For a finite set of matrices $T = \{T_1, \ldots, T_t\}$, let $$\cone{T} \mathrel{\mathop:}= \{\sum_{i=1}^t \alpha_i T_i : \alpha_1, \ldots, \alpha_t \geq 0\}.$$ 

%Clearly $U_k$ is a finite set for each $k=1, \ldots, n$ and it is not hard to see that $D_n = \cone{U_2}$ and therefore $D_n^* = \{X \in \R^{n\times n}: X \cdot V \geq 0 \ \forall V \in U_2\}$.

\begin{theorem}[Barker and Carlson~\cite{dd_extreme_rays}]\label{thm:dsos.corners}
$DD_n = \cone{U_{n,2}}.$
\end{theorem}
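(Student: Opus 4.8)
The plan is to prove the two inclusions separately, the easy one being $\cone{U_{n,2}} \subseteq DD_n$ and the substantive one being $DD_n \subseteq \cone{U_{n,2}}$. For the easy direction, I would simply note that each generator $uu^T$ with $u \in \mathcal{U}_{n,2}$ is diagonally dominant: if $u = \pm e_i$ then $uu^T$ is a diagonal matrix with a single $1$ on the diagonal, which is trivially dd; if $u = \pm e_i \pm e_j$ then $uu^T$ has $1$'s in the $(i,i)$ and $(j,j)$ entries, $\pm 1$ in the $(i,j)$ and $(j,i)$ entries, and zeros elsewhere, so the defining inequality $a_{ii} \geq \sum_{k \neq i} |a_{ik}|$ reads $1 \geq 1$ and holds with equality. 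Since $DD_n$ is a convex cone (the defining inequalities are homogeneous and closed under addition), any nonnegative combination of such matrices stays in $DD_n$, giving $\cone{U_{n,2}} \subseteq DD_n$.

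For the reverse inclusion, let $A = (a_{ij}) \in DD_n$ be arbitrary. The idea is to construct an explicit nonnegative decomposition of $A$ into the rank-one atoms. First I would handle the off-diagonal entries: for each pair $i < j$ with $a_{ij} \neq 0$, write $\operatorname{sign}(a_{ij}) = s_{ij} \in \{+1,-1\}$ and peel off the term $|a_{ij}| \, (e_i + s_{ij} e_j)(e_i + s_{ij} e_j)^T$. Note $(e_i + s_{ij} e_j)(e_i + s_{ij} e_j)^T$ has diagonal entries $1$ in positions $i,j$ and off-diagonal entry $s_{ij}$ in position $(i,j)$, so summing $|a_{ij}|$ times this over all pairs reproduces exactly the off-diagonal part of $A$, and contributes $\sum_{j \neq i} |a_{ij}|$ to the $(i,i)$ diagonal entry. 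After subtracting all these terms from $A$, the remainder is the diagonal matrix $\operatorname{diag}(r_1, \ldots, r_n)$ with $r_i = a_{ii} - \sum_{j \neq i} |a_{ij}|$. The diagonal dominance hypothesis says precisely $r_i \geq 0$ for all $i$, so this remainder equals $\sum_i r_i \, e_i e_i^T$, a nonnegative combination of the atoms $e_i e_i^T \in U_{n,2}$. Combining, $A = \sum_{i<j} |a_{ij}| (e_i + s_{ij} e_j)(e_i + s_{ij} e_j)^T + \sum_i r_i e_i e_i^T$ exhibits $A \in \cone{U_{n,2}}$.

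The only mild subtlety — and the place I would be careful — is bookkeeping in the off-diagonal step: one must check that the rank-one matrices $(e_i + s_{ij} e_j)(e_i + s_{ij}e_j)^T$ for distinct pairs $\{i,j\}$ overlap only on the diagonal, so that their weighted sum contributes nothing spurious to any off-diagonal position $(k,l)$ with $\{k,l\} \neq \{i,j\}$; this is immediate since each such matrix is supported on the $2 \times 2$ principal submatrix indexed by $\{i,j\}$. One should also note the degenerate case $a_{ij} = 0$, where the corresponding term is simply omitted (or, equivalently, carries weight $0$). No genuine obstacle arises; the entire argument is a direct verification, and the content of the theorem is exactly that the diagonal-dominance inequalities are what make the leftover diagonal nonnegative.
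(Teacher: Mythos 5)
Your proof is correct. The paper itself does not prove this statement---it is quoted from Barker and Carlson \cite{dd_extreme_rays} without proof---but your argument is the standard one: the generators $uu^T$ with $u\in\mathcal{U}_{n,2}$ are easily checked to be dd and $DD_n$ is a convex cone (closure under addition follows from the triangle inequality applied to the off-diagonal absolute values), while for the converse your explicit decomposition $A=\sum_{i<j}|a_{ij}|(e_i+s_{ij}e_j)(e_i+s_{ij}e_j)^T+\sum_i r_i e_ie_i^T$ with $r_i=a_{ii}-\sum_{j\neq i}|a_{ij}|\geq 0$ is exactly what diagonal dominance provides. Nothing is missing; note only that this establishes the cone equality claimed in the theorem, whereas the further remark in the text that these $n^2$ matrices are precisely the extreme rays would require an additional (easy) argument that none of them is a conic combination of the others.
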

This theorem tells us that $DD_n$ has exactly $n^2$ extreme rays. It also leads to a convenient representation of the dual cone: $$DD_n^* = \{X \in S_n: v_i^TXv_i \geq 0, \ \mbox{for all vectors $v_i$ with at most 2 nonzero components, each equal to $\pm$1} \}.$$

Throughout the chapter, we will be initializing our LPs with the DSOS bound; i.e., our initial set of psd atoms $B_i$ will be the $n^2$ rank-one matrices $u_iu_i^T$ in $U_{n,2}$. This is because this bound is often cheap and effective. Moreover, it guarantees feasibility of our initial LPs (see Theorems~\ref{thm:polyopt.dsos.bound.finite} and \ref{thm:DSOSfinite}), which is {important} for starting column generation. One also readily sees that the DSOS bound can be improved if we were to instead optimize over the cone $U_{n,3}$, which has $n^3$ atoms. However, in settings that we are interested in, we cannot afford to include all these atoms; instead, we will have pricing subproblems that try to pick a useful subset (see Section~\ref{sec:poly.opt}).

We remark that an LP-based column generation idea similar to the one in this section is described in \cite{krishnan2006semidefinite}, where it is used as a subroutine for solving the maxcut problem. The method is comparable to ours inasmuch { as some columns are generated using the eigenvalue pricing subproblem.  However, contrary to us, additional columns specific to max cut are also added to the primal.} The initialization step is also differently done, as the matrices $B_i$ in (\ref{eq:LP.inner}) are initially taken to be in $U_{n,1}$ and not in $U_{n,2}$. (This is equivalent to requiring the matrix $C-\sum_{i=1}^m y_iA_i$ to be diagonal instead of diagonally dominant in (\ref{eq:LP.inner}).)

{ Another related work is \cite{sherali2002enhancing}. In this chapter, the initial LP relaxation is obtained via RLT (Reformulation-Linearization Techniques) as opposed to our diagonally dominant relaxation. The cuts are then generated by taking vectors which violate positive semidefiniteness of the optimal solution as in (\ref{eq:dual.of.LP.inner}). The separation subproblem that is solved though is different than the ones discussed here and relies on an $LU$ decomposition of the solution matrix. }

\subsection{SOCP-based column generation}\label{subsec:socp.based.CG}

In a similar vein, we present an SOCP-based column generation algorithm that in our experience often does much better than the LP-based approach. The idea is once again to optimize over structured subsets of the positive semidefinite cone that are SOCP representable and that are larger than the set $SDD_n$ of scaled diagonally dominant matrices. This will be achieved by working with the following SOCP

\begin{equation}\label{eq:SOCP.inner} {
\begin{aligned} 
\max_{y\in\mathbb{R}^m,a_i^j} &  \quad  b^Ty\\
\text{s.t. } & \quad C-\sum_{i=1}^m y_iA_i=\sum_{i=1}^t V_i \begin{pmatrix} a_i^1 & a_i^2 \\ a_i^2 & a_i^3 \end{pmatrix} V_i^T,\\
\ & \quad \begin{pmatrix} a_i^1 & a_i^2 \\ a_i^2 & a_i^3 \end{pmatrix} \succeq 0,\ i=1,\ldots,t.
\end{aligned}}
\end{equation} 

Here, the positive semidefiniteness constraints on the $2 \times 2$ matrices can be imposed via rotated quadratic cone constraints as explained in Section~\ref{subsec:dsos.sdsos}. {The $n\times 2$ matrices $V_i$ are fixed for all $i=1, \ldots, t$.} Note that this is a direct generalization of the LP in (\ref{eq:LP.inner}), in the case where the atoms $B_i$ are rank-one. To generate a new SOCP atom, we work with the dual of (\ref{eq:SOCP.inner}):

\begin{equation}\label{eq:dual.of.SOCP.inner}
\begin{aligned} 
\min_{X\in S_n} &  \quad  C\cdot X\\
\text{s.t. } & \quad A_i\cdot X=b_i, \ i=1,\ldots, m,\\
\  & \quad V_i^TXV_i \succeq 0, \ i=1,\ldots, t.
\end{aligned}
\end{equation} 

Once again, if the optimal solution $X^*$ is psd, we have solved our SDP exactly; if not, we can use $X^*$ to produce new SOCP-based cuts. For example, by placing the two eigenvectors of $X^*$ corresponding to its two most negative eigenvalues as the columns of an $n\times 2$ matrix $V_{t+1}$, we have produced a new useful atom. (Of course, we can also choose to add more pairs of eigenvectors and add multiple atoms.) As in the LP case, by construction, our bound can only improve in every iteration. 

%{\ghprime In fact, in some cases, the optimal value obtained via the dual SOCP relaxation given in (\ref{eq:dual.of.SOCP.inner}) exactly coincides with the optimal value obtained via the SDP dual given in (\ref{eq:dual.sdp}). See, e.g., \cite{kim2003exact} where it is shown that for a particular class of QCQPs (Quadratically Constrained Quadratic Programs), both the SDP relaxation and the SOCP relaxation are exact.}

We will always be initializing our SOCP iterations with the SDSOS bound. It is not hard to see that this corresponds to the case where we have ${n \choose 2}$ initial $n \times 2$ atoms $V_i$, which have zeros everywhere, except for a 1 in the first column in position $j$ and a 1 in the second column in position { $k> j$}. We denote the set of all such $n\times 2$ matrices by $\mathcal{V}_{n,2}$.

The {first} step of our procedure is carried out already in  \cite{kim2003exact} for approximating solutions to QCQPs. Furthermore, the work in \cite{kim2003exact} shows that for a particular class of QCQPs, its SDP relaxation and its SOCP relaxation (written respectively in the form of (\ref{eq:dual.sdp}) and (\ref{eq:dual.of.SOCP.inner})) are exact.

% $n(n-1)$ initial $n \times 2$ atoms $V_i$, with each column containing exactly one 1, in any position.  (The matrices where both 1s are positioned on the same row are redundant as they can be obtained as a linear combination of other matrices in the set.) 

\begin{figure}[h]
	\begin{center}
		\mbox{
			\subfigure[LP starting with DSOS and adding 5 atoms.]
			{\label{subfig:dsos.iters}\scalebox{0.5}{\includegraphics{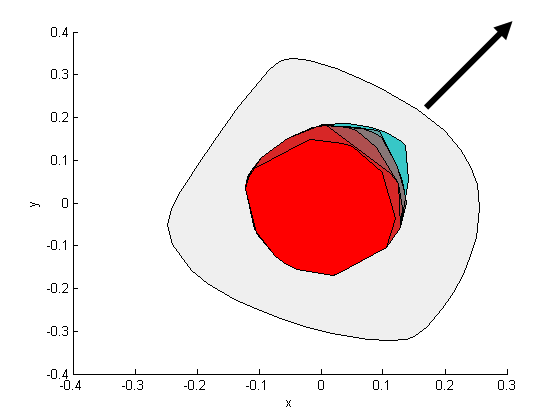}}}}
		\mbox{
			\subfigure[SOCP starting with SDSOS and adding 5 atoms.]
			{\label{subfig:sdsos.iters}\scalebox{0.5}{\includegraphics{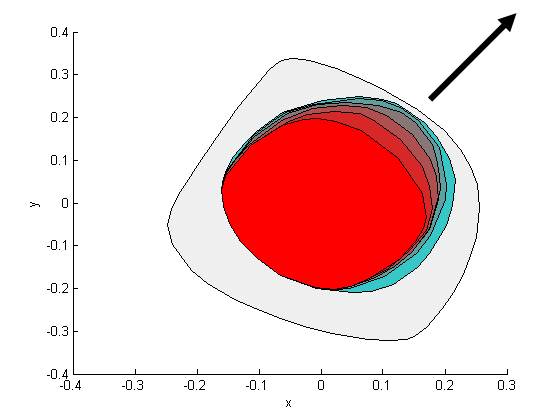}}}
		}
		
		\caption{LP and SOCP-based column generation for inner approximation of a spectrahedron.}
		\label{fig:dsos.sdsos.5atoms}
	\end{center}
	\vspace{-20pt}
\end{figure}

Figure~\ref{fig:dsos.sdsos.5atoms} shows an example of both the LP and SOCP column generation procedures. We produced two $10\times 10$ random symmetric matrices $E$ and $F$. The outer most set is the feasible set of an SDP with the constraint $I+xE+yF\succeq 0.$ (Here, $I$ is the $10\times 10$ identity matrix.) The SDP wishes to maximize $x+y$ over this set. The innermost set in Figure~\ref{subfig:dsos.iters} is the polyhedral set where $I+xE+yF$ is dd. The innermost set in Figure~\ref{subfig:sdsos.iters} is the SOCP-representable set where $I+xE+yF$ is sdd. In both cases, we do 5 iterations of column generation that expand these sets by introducing one new atom at a time. These atoms come from the most negative eigenvector (resp. the two most negative eigenvectors) of the dual optimal solution as explained above. Note that in both cases, we are growing our approximation of the positive semidefinite cone in the direction that we care about (the northeast). This is in contrast to algebraic hierarchies based on ``positive multipliers'' (see the rDSOS and rSDSOS hierarchies in Definition~\ref{def:dsos.sdsos.rdsos.rsdsos} for example), which completely ignore the objective function.

\section{Nonconvex polynomial optimization}\label{sec:poly.opt}

In this section, we apply the ideas described in the previous section to sum of squares algorithms for nonconvex polynomial optimization. In particular, we consider the NP-hard problem of minimizing a form (of degree $\geq 4$) on the sphere. Recall that $z(x,d)$ is the vector of all monomials in $n$ variables with degree $d$.
Let $p(x)$ be a form with $n$ variables and even degree $2d$, and let $\coef{p}$ be the vector of its coefficients with the monomial ordering given by $z(x,2d)$.
% the vector with the same dimension as $z(x,2d)$ defined as follows: if $p(x)$ contains a monomial, then the corresponding component of $\coef{p}$ has a value equal to the coefficient of the monomial.
Thus $p(x)$ can be viewed as $\coef{p}^T z(x,2d)$. Let $s(x) \mathrel{\mathop:}= (\sum_{i=1}^n x_i^2)^d$. With this notation, the problem of minimizing a form $p$ on the unit sphere can be written as
% in this section that we are trying to optimize a form $p(x)$ (with $n$ variables and even degree $2d$) over the unit sphere (minimizing a general polynomial over $\R^n$ is equivalent to this problem) which is equivalent to 

\begin{eqnarray} &\underset{\lambda}{\max} &\lambda \nonumber \\ 
	&\textup{s.t.} & p(x) - \lambda s(x) \geq 0, \forall x\in\mathbb{R}^n. \label{eq:max.lambda}
\end{eqnarray}
%This problem is equivalent to minimizing a general (non-homogeneous) polynomial over $\mathbb{R}^n$.
With the SOS programming approach, the following  SDP is solved to get the largest scalar $\lambda$ and an SOS certificate proving that $p(x) -\lambda s(x)$ is nonnegative: 
\begin{eqnarray} &\underset{\lambda,Y}{\max} & \lambda \nonumber\\
	& \textup{s.t.} &  p(x) - \lambda s(x) = z^T(x,d) Y z(x,d), \label{eq:SDPform}\\
	&&  Y \succeq 0. \nonumber
\end{eqnarray}
The sum of squares certificate is directly read from an eigenvalue decomposition of the solution $Y$ to the SDP above and has the form $$p(x)-\lambda s(x) \geq \sum_{i} (z^T(x,d)u_i)^2,$$ where
$Y = \sum_{i} u_iu_i^T$. 
%If $Y \succeq 0$ does not hold, we do not have a valid SOS certificate.
Since all sos polynomials are nonnegative, the optimal value of the SDP in (\ref{eq:SDPform}) is a lower bound to the optimal value of the optimization problem in (\ref{eq:max.lambda}). Unfortunately, before solving the SDP, we do not have access to the vectors $u_i$ in the decomposition of the optimal matrix $Y$. However, the fact that such vectors exist hints at how we should go about replacing $P_n$ by a polyhedral restriction in (\ref{eq:SDPform}): If the constraint $Y \succeq 0$ is changed to %On the other hand, if we replace the constraint $Y \succeq 0$ by 
\begin{equation}\label{eq:Y=sum.alpha.u.u'}
{ Y = \sum_{u \in \U} \alpha_u uu^T, \alpha_u \geq 0},
\end{equation}
where {$\U$ is a finite set}, then (\ref{eq:SDPform}) becomes an LP.
%then for any finite $\U$, we replace $P_n$ by a polyhedral restriction. 
This is one interpretation of Ahmadi and Majumdar's work in \cite{isos_journal,dsos_ciss14} where they replace $P_n$ by $DD_n$. Indeed, this is equivalent to taking $\U=\U_{n,2}$ in (\ref{eq:Y=sum.alpha.u.u'}), as shown in Theorem \ref{thm:dsos.corners}. 
We are interested in extending their results by replacing $P_n$ by larger restrictions than $DD_n$. A natural candidate for example would be obtained by changing $\U_{n,2}$ to $\U_{n,3}$. However, although $\U_{n,3}$ is finite, it contains a very large set of vectors even for small values of $n$ and $d$. For instance, when $n=30$ and $d = 4$, $\U_{n,3}$ has over 66 million elements.
Therefore we use column generation ideas to iteratively expand $\U$ in a manageable fashion. To initialize our procedure, {we would like to start} with good enough atoms to have a feasible LP. The following result guarantees that replacing $Y \succeq 0$ with $Y \in DD_n$ always yields an initial feasible LP in the setting that we are interested in.
\begin{theorem}\label{thm:polyopt.dsos.bound.finite}
For any form $p$ of degree $2d$, there exists $\lambda\in\mathbb{R}$ such that $p(x)-\lambda (\sum_{i=1}^n x_i^2)^d$ is dsos.
\end{theorem}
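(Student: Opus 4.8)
The plan is to exhibit an explicit diagonally dominant Gram matrix for $p(x) - \lambda(\sum_{i=1}^n x_i^2)^d$ once $\lambda$ is taken sufficiently negative. The starting observation is that, by the multinomial theorem, $(\sum_{i=1}^n x_i^2)^d = \sum_{|\alpha|=d}\binom{d}{\alpha}x^{2\alpha}$ with $\binom{d}{\alpha} = d!/(\alpha_1!\cdots\alpha_n!)$. Hence, letting $z(x,d)$ be the vector of all monomials of degree exactly $d$, we have $(\sum_{i=1}^n x_i^2)^d = z^T(x,d)Dz(x,d)$, where $D$ is the diagonal matrix whose $\alpha$-th diagonal entry is $\binom{d}{\alpha}$. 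The point I would emphasize here is that every such multinomial coefficient is at least $1$, so $D$ is diagonal with \emph{strictly positive} diagonal.

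Next I would record that $p$, being a form of degree $2d$, admits a (not necessarily unique, not necessarily psd) Gram representation $p(x) = z^T(x,d)Pz(x,d)$ for some symmetric matrix $P$: each monomial $x^\beta$ with $|\beta|=2d$ factors as $x^\alpha x^\gamma$ with $|\alpha|=|\gamma|=d$, so every coefficient of $p$ can be assigned to an entry (or entries) of $P$. Combining the two representations gives
\[
p(x) - \lambda\Big(\sum_{i=1}^n x_i^2\Big)^d = z^T(x,d)\big(P - \lambda D\big)z(x,d),
\]
so it suffices to choose $\lambda$ so that $P - \lambda D$ is diagonally dominant, which by definition makes the left-hand side dsos.

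Finally I would argue that such a $\lambda$ exists. Adding $-\lambda D$ to $P$ leaves all off-diagonal entries of $P$ unchanged and replaces the $\alpha$-th diagonal entry by $P_{\alpha\alpha} - \lambda D_{\alpha\alpha}$. Because $D_{\alpha\alpha} > 0$ for every $\alpha$, as $\lambda \to -\infty$ each diagonal entry $P_{\alpha\alpha} - \lambda D_{\alpha\alpha}$ tends to $+\infty$ while the corresponding row-sum $\sum_{\gamma \neq \alpha}|P_{\alpha\gamma}|$ stays fixed; since there are only finitely many rows, a single sufficiently negative $\lambda$ makes $P_{\alpha\alpha} - \lambda D_{\alpha\alpha} \geq \sum_{\gamma\neq\alpha}|P_{\alpha\gamma}|$ hold for all $\alpha$ at once. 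For any such $\lambda$, $P - \lambda D$ is dd and the claim follows.

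The argument is essentially a one-liner once the right Gram representations are in place; the only step requiring (minor) care is the first one — checking that $(\sum_{i=1}^n x_i^2)^d$ has a Gram matrix with strictly positive diagonal — since diagonal dominance of $P - \lambda D$ would break down if some $D_{\alpha\alpha}$ vanished. I do not anticipate any real obstacle here. An equivalent, slightly more conceptual phrasing would be to observe that $(\sum_{i=1}^n x_i^2)^d$ lies in the interior of $DSOS_{n,2d}$ relative to the space of forms of degree $2d$, so that a large negative multiple of it absorbs the fixed form $p$.
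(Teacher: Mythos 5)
Your proof is correct and rests on the same key fact as the paper's: $(\sum_{i=1}^n x_i^2)^d$ has a diagonal Gram matrix with strictly positive entries (equivalently, it lies in the interior of $DSOS_{n,2d}$), so a sufficiently large negative $\lambda$ absorbs any fixed Gram matrix of $p$. The paper phrases this as an interiority/convexity argument ($(1-\alpha)s+\alpha p$ is dsos for small $\alpha$, then rescale using the cone property), while you make the diagonally dominant Gram matrix $P-\lambda D$ explicit, but the two are the same argument in different clothing—as you yourself note in your closing remark.
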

\begin{proof}
  As before, let $s(x) = (\sum_{i=1}^n x_i^2)^d$.
We observe that the form $s(x)$ is strictly in the interior of $DSOS_{n,2d}$. Indeed, by expanding out the expression we see that we can write $s(x)$ as $z^T(x,d)Qz(x,d)$, where $Q$ is a diagonal matrix with all diagonal entries positive. So $Q$ is in the interior of $DD_{{n+d-1 \choose d}}$, and hence $s(x)$ is in the interior of $DSOS_{n,2d}$. { This implies that for $\alpha>0$ small enough, the form $$(1-\alpha)s(x)+\alpha p(x)$$ will be dsos. Since $DSOS_{n,2d}$ is a cone, the form $$\frac{(1-\alpha)}{\alpha}s(x)+ p(x)$$ will also be dsos. By taking $\lambda$ to be smaller than or equal to} $-\frac{1-\alpha}{\alpha}$, the claim is established. 
\end{proof}

As $DD_n\subseteq SDD_n$, the theorem above implies that replacing $Y \succeq 0$ with $Y \in SDD_n$ also yields an initial feasible SOCP. Motivated in part by this theorem, we will always start our LP-based iterative process with the restriction that $Y \in DD_n$. Let us now explain how we improve on this approximation via column generation.

Suppose we have a set $\U$ of vectors in $\mathbb{R}^n$, whose outerproducts form all of the rank-one psd atoms that we want to consider. This set could be finite but very large, or even infinite. For our purposes $\U$ always includes $\U_{n,2}$, as we initialize our algorithm with the dsos relaxation. Let us consider first the case where $\U$ is finite: $\U = \{u_1, \ldots, u_t\}.$ Then the problem that we are interested in solving is
 %and add elements to $\U$  in each iteration (which in the first iteration corresponds to the set of all vectors such that their outerproduct is in $U_{n,2}$), via column generation. 
%Let us describe this last step more precisely. 
%$\cone{U_{n,3}}$ (and other larger cones) via column generation.
%Thus by solving a series of LPs, we can get an SOS certificate that $p(x) - \lambda s(x)$ for some $\lambda$. We now make this idea more precise.

%Let $z(x,2d)$ have $m$ monomials and consider the problem
\begin{eqnarray*} &\underset{\lambda,\alpha_j}{\max} & \lambda \\
	& \textup{s.t.} &  p(x) - \lambda s(x) = z^T(x,d) Y z(x,d), \\
	&&  Y = \sum_{j=1}^t\alpha_j u_ju_j^T, \  \alpha_j \geq 0 \textup{ for } j=1, \ldots, t.
\end{eqnarray*}
Suppose $z(x,2d)$ has $m$ monomials and let the $i$th monomial in $p(x)$ have coefficient $b_i$, i.e., $\coef{p}= (b_1, \ldots, b_m)^T$.
%
% Let $A_i$ be the matrix such that $A_i \cdot Y = b_i$ implies that $
%z^T(x,d) Y z(x,d)$ has the coefficient $b_i$ for the $i$th monomial in $z(x,2d)$.
Also let $s_i$ be the $i$th entry in $\coef{s(x)}$. We rewrite the previous problem as
%To get the largest value of $\lambda$ such that $p(x) - \lambda$ has an SOS representation, we can solve the problem:
\begin{eqnarray*} &\underset{\lambda,\alpha_j}{\max} & \lambda \\
	& \textup{s.t.} &  A_i \cdot Y + \lambda s_i = b_i \textup{ for } i =1, \ldots, m,\\
	&&  Y = \sum_{j=1}^t \alpha_j u_ju_j^T, \  \alpha_j \geq 0 \textup{ for } j=1, \ldots, t.
\end{eqnarray*}
where $A_i$ is a matrix that collects entries of $Y$ {that contribute to the $i^{th}$ monomial in $z(x,2d)$,} when $z^T(x,d) Y z(x,d)$ is expanded out. 
%Letting $Y_j = u_ju_j^T$, the above is equivalent to
{ The above is equivalent to}
\begin{eqnarray} &\underset{\lambda,\alpha_j}{\max} & \lambda  \nonumber \\
	& \textup{s.t.}&  \sum_j \alpha_j(A_i \cdot {u_ju_j^T}) + \lambda s_i = b_i \ \textup{ for } i=1, \ldots, m, \label{eq:polycol}\\ 
	&& \alpha_j \geq 0 \textup{ for } j=1, \ldots, t. \nonumber
\end{eqnarray}
The dual problem is 
\begin{eqnarray*} &\underset{\mu}{\min} & \sum_{i=1}^m \mu_ib_i  \\
	& \textup{s.t.}&  (\sum_{i=1}^m \mu_iA_i) \cdot{ u_ju_j^T} \geq 0,\ j=1,\ldots,t, \\
	&&  \sum_{i=1}^m \mu_is_i = 1.
\end{eqnarray*}
In the column generation framework, suppose we consider only a subset of the primal LP variables corresponding to  the matrices ${u_1u_1^T}, \ldots, {u_ku_k^T}$ for some $k < t$ (call this the reduced primal problem).
Let $(\bar\alpha_1, \ldots, \bar\alpha_k)$ stand for an optimal solution of the reduced primal problem and let $\bar \mu = (\bar\mu_1, \ldots, \bar\mu_m)$ stand for an optimal dual solution. If we have 
\begin{equation}\label{eq:sepsub}  (\sum_{i=1}^m \bar{\mu}_iA_i) \cdot {u_ju_j^T} \geq 0 \textup{ for } j=k+1, \ldots, t,\end{equation}
then $\bar\mu$ is an optimal dual solution for the original larger primal problem with columns $1,\ldots,t$. In other words, if we simply set $\alpha_{k+1} = \cdots = \alpha_t = 0$, then the solution of the reduced primal problem becomes a solution of the original primal problem.
On the other hand, if (\ref{eq:sepsub}) is not true, then suppose the condition is violated for some ${u_lu_l^T}$.
We can augment the reduced primal problem by adding the variable $\alpha_l$, and repeat this process.

% under some nondegeneracy assumptions, the solution of this augmented problem will differ from the solution of the previous reduced primal problem and we can repeat this process till (\ref{eq:sepsub}) is true.

Let $B = \sum_{i=1}^m \bar{\mu}_iA_i$.
We can test if (\ref{eq:sepsub}) is false by solving the {\em pricing subproblem}:
\begin{equation}\label{eq:sepsubu}
	\min_{u \in \U} u^TBu.
\end{equation}
If $u^TBu < 0$, then there is an element $u$ in $\U$ such that the matrix $uu^T$ violates the dual constraint written in (\ref{eq:sepsub}). Problem (\ref{eq:sepsubu}) may or may not be easy to solve depending on the set $\U.$ 
For example, an ambitious column generation strategy to improve on dsos (i.e., $\U=\U_{n,2}$), would be to take $\U=\U_{n,n}$; i.e., the set all vectors in $\mathbb{R}^n$ consisting of zeros, ones, and minus ones.
%Therefore, another way of generalizing DSOS programming is by letting $\U$ be a structured set of vectors containing $\U_2$. 
In this case, the pricing problem (\ref{eq:sepsubu})
becomes
\[ \min_{u \in \{0,\pm 1\}^n} u^T B u. \]
Unfortunately, the above problem generalizes the quadratic unconstrained boolean optimization problem (QUBO) and is NP-hard. Nevertheless, there are good heuristics for this problem (see e.g.,~\cite{boros2007local},\cite{dash2013note}) that can be used to find near optimal solutions very fast.
%However, it is very closely related to the Ising Model problem, and it is well-known that though it is hard to obtain a provably optimal
%solution to the above two NP-hard problems, one can often employ heuristics to obtain near optimal solutions very fast~\cite{boros2007local},\cite{dash2013note}.
%Thus one can try to heuristically solve the above pricing problem.
While we did not pursue this pricing subproblem, we did consider optimizing over $\U_{n,3}$. We refer to the vectors in $\U_{n,3}$ as ``triples'' for obvious reasons and generally refer to the process of adding atoms drawn from $U_{n,3}$ as optimizing over ``triples''.

% Indeed, this is a polynomially solvable pricing problem, though still an expensive one.
% Our strategy for managing this subproblem on larger instances is described in the next subsection.

%Our implementation is somewhat straightforward and can be obviously improved, yet we are able to demonstrate that optimizing over triples improves over the best bounds obtained by Ahmadi and Majumdar in a similar amount of time (see Section~\ref{subsec:comput_experim_poly}). 

%Our problem instances are again fully dense and generated in exactly the same way as the $n=10$ example of the previous subsection.

Even though one can theoretically solve (\ref{eq:sepsubu}) with $\U=\U_{n,3}$ in polynomial time by simple enumeration of $n^3$ elements, this is very impractical. %, i.e., the separation problem for triples by simple enumeration, this is impractical because of the large number of triples as noted earlier.
Our simple implementation is a partial enumeration and is implemented as follows. We iterate through the triples (in a fixed order), and test to see whether the condition $u^TBu \geq 0$ is violated by a given triple $u$, and collect such violating triples in a list.
We terminate the iteration when we collect a fixed number of violating triples (say $t_1$). We then sort the violating triples by increasing values of $u^TBu$ (remember, these values are all negative for the violating triples) and select the $t_2$ most violated triples (or fewer if less than $t_2$ are violated overall) and add them to our current set of atoms.
In a subsequent iteration we start off enumerating triples right after the last triple enumerated in the current iteration so that we do not repeatedly scan only the same subset of triples. Although our implementation is somewhat straightforward and can be obviously improved, we are able to demonstrate that optimizing over triples improves over the best bounds obtained by Ahmadi and Majumdar in a similar amount of time (see Section~\ref{subsec:comput_experim_poly}). 

We can also have pricing subproblems where the set $\U$ is infinite. Consider e.g. the case $\U=\mathbb{R}^n$ in (\ref{eq:sepsubu}). In this case, if there is a feasible solution with a negative objective value, then the problem is clearly unbounded below. Hence, we look for a solution with the smallest value of ``violation'' of the dual constraint divided by the norm of the violating matrix. In other words, we want the expression $u^TBu / \textup{norm}(uu^T)$ to be as small as possible, where $\textup{norm}$ is the Euclidean
norm of the vector consisting of all entries of $uu^T$. This is the same as minimizing $u^TBu/||u||^2$. The eigenvector corresponding to the smallest eigenvalue yields such a minimizing solution. This is the motivation behind the strategy described in the previous section for our LP column generation scheme. In this case, we can use a similar strategy for our SOCP column generation scheme. We replace $Y \succeq 0$ by $Y\in SDD_n$ in (\ref{eq:SDPform}) and iteratively expand $SDD_n$ by using the ``two most negative eigenvector technique'' described in Section \ref{subsec:socp.based.CG}.

% We note though that for large matrices $B,$ computation of eigenvectors can be expensive, even though polynomially solvable.

%In such a case, it is standard to restrict the norm of a solution matrix $uu^T$ in the pricing subproblem. It is also common in integer programming to not just work with any solution of the pricing subproblem, but with one
%with the smallest value of ``violation'' of the dual constraint $B \cdot X \geq 0$ divided by the norm of the violating matrix.
%In other words, we want the expression $u^TBu / \textup{norm}(uu^T)$ to be as small as possible, where $\textup{norm}$ is the euclidean
%norm of the vector consisting of all entries of $uu^T$. This is the same as minimizing $u^TBu/||u||^2$; of course, if $\U = \R^n$, then the eigenvector corresponding to the smallest eigenvalue yields such a minimizing solution. This was the strategy briefly described in the previous section. We note though that for large matrices $B,$ computation of eigenvectors can be expensive, even though polynomially solvable.

% we have a polynomial-time pricing subproblem, while still generalizing DSOS programming.

\subsection{Experiments with a 10-variable quartic}\label{subsec:10-var.quartic}

We illustrate the behaviour of these different strategies on an example. Let $p(x)$ be a degree-four form defined on 10 variables, where the components of $\coef{p}$ are drawn independently at random from the normal distribution $\mathcal{N}(0,1)$.
Thus $d=2$ and $n=10$, and the form $p(x)$ is `fully dense' in the sense that $\coef{p}$ has essentially all nonzero components.
In Figure~\ref{fig:errorfig}, we show how the lower bound on the optimal value of $p(x)$ over the unit sphere changes per iteration for different methods. The $x$-axis shows the number of iterations of the column generation algorithm, i.e., the number of times columns are added and the LP (or SOCP) is resolved. The $y$-axis shows the lower bound obtained from each LP or SOCP. Each curve represents one way of adding columns. The three horizontal lines (from top to bottom) represent, respectively, the SDP bound, the 1SDSOS bound and the 1DSOS bound. The curve DSOS$_k$ gives the bound obtained by solving LPs, where the first LP has $Y \in DD_n$ and subsequent columns are generated from a single eigenvector corresponding to the most negative eigenvalue of the dual optimal solution as described in Section~\ref{subsec:LP.based.CG}. The LP triples curve also corresponds to an LP sequence, but this time the columns that are added are taken from $U_{n,3}$ and are more than one in each iteration (see the next subsection). This bound saturates when constraints coming from all elements of $U_{n,3}$ are satisfied. Finally, the curve SDSOS$_k$ gives the bound obtained by SOCP-based column generation as explained just above.
\begin{figure}[H]
\centering
\includegraphics[scale=0.7,trim={3cm 8.5cm 3cm 9cm},clip]{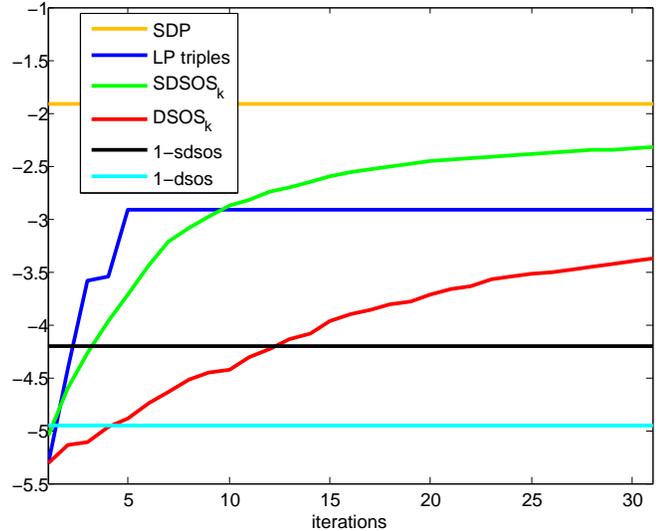}
\caption{Lower bounds for a polynomial of degree 4 in 10 variables obtained via LP and SOCP based column generation}
\label{fig:errorfig}
\end{figure}

\subsection{ Larger computational experiments}\label{subsec:comput_experim_poly}

In this section, we consider larger problem instances ranging from 15 variables to 40 variables: these instances are again fully dense and generated in exactly the same way as the $n=10$ example of the previous subsection. However, contrary to the previous subsection, we only apply our ``triples'' column generation strategy here. This is because the eigenvector-based column generation strategy is too computationally expensive for these problems as we discuss below.
%In this section, we consider larger problem instances ranging from 15 variables to 40 variables and we only apply our ``triples'' column generation strategy. As we discuss below, the eigenvector-based column generation strategy is too computationally expensive for these problems. Our problem instances are again fully dense and generated in exactly the same way as the $n=10$ example of the previous subsection.

To solve the triples pricing subproblem with our partial enumeration strategy, we set $t_1$ to 300,000 and $t_2$ to 5000. Thus in each iteration, we find up to 300,000 violated triples, and add up to 5000 of them. In other words, we augment our LP by up to 5000 columns in each iteration. This is somewhat unusual as in practice at most a few dozen columns are added in each iteration. The logic for this is that primal simplex is very fast in reoptimizing an LP when a small number of additional columns are added to an LP whose optimal basis is known. However, in our context, we observed that the associated LPs are very hard for the simplex routines inside our LP solver (CPLEX 12.4) and take much more time than CPLEX's interior point solver. We therefore use CPLEX's interior point (``barrier'') solver not only for the initial LP but for subsequent LPs after adding columns. Because interior point solvers do not benefit significantly from warm starts, each LP takes a similar amount of time to solve as the initial LP, and therefore it makes sense to add a large number of columns in each iteration to amortize the time for each expensive solve over many columns. 

%In our experiments with stable set (Section~\ref{sec:stable.set}), the LPs are easier to solve ; in that context $t_2$ is smaller.

%In general we observe that even though theoretically the bounds obtained in the first case match the semidefinite programming bounds, in practice one gets worse bounds in the same amount of time.
%This is partially because the matrices $uu^T$ where $u$ is an eigenvector of $B$ are much denser than the elements of $U_{n,3}$.
%For the same reason optimizing over $\U_n$ is very expensive and we do not report computing times with $\U_n$.

Table~\ref{tab:min_hom_opts} is taken from the work of Ahmadi and Majumdar~\cite{isos_journal}, where they report lower bounds on the minimum value of fourth-degree forms on the unit sphere obtained using different methods, and the respective computing times (in seconds).

\begin{table}[H]
\small
\begin{center}
	\scalebox{0.95}{
  \begin{tabular}{| c | c | c | c | c | c | c | c | c| c| c|}
    \hline 
 & \multicolumn{2}{|c|}{n=15}  &  \multicolumn{2}{|c|}{n=20} & \multicolumn{2}{|c|}{n=25} &  \multicolumn{2}{|c|}{n=30} & \multicolumn{2}{|c|}{n=40}\\
\cline{2-11}
& bd & t(s) & bd & t(s) & bd & t(s) & bd & t(s) & bd & t(s) \\
\hline
 DSOS & -10.96 & 0.38  & -18.012 & 0.74 & -26.45 & 15.51 &  -36.85 & 7.88 &  -62.30 & 10.68  \\ \hline
 SDSOS  &  -10.43 & 0.53 &  -17.33 & 1.06 &  -25.79 & 8.72 & -36.04 & 5.65 &  -61.25 & 18.66   \\ \hline
1DSOS &  -9.22 & 6.26 &  -15.72 & 37.98 &  -23.58 & 369.08 & NA & NA  & NA & NA  \\ \hline
1SDSOS  & -8.97 & 14.39 & -15.29 & 82.30 & -23.14 & 538.54 & NA & NA & NA & NA   \\ \hline
 SOS & -3.26 & 5.60 & -3.58 & 82.22 & -3.71 & 1068.66 & NA & NA & NA & NA   \\ \hline
  \end{tabular}}
    \caption{Comparison of optimal values in \cite{isos_journal} for lower bounding a quartic form on the sphere for varying dimension, along with run times (in seconds). These results are obtained on a 3.4 GHz Windows computer with 16 GB of memory. }
  \label{tab:min_hom_opts}
  \end{center}
\end{table}
In Table~\ref{tab:min_triples}, we give our bounds for the { same problem} instances. We report two bounds, obtained at two different times (if applicable).
In the first case ({ rows} labeled R1), the time taken by 1SDSOS in Table~\ref{tab:min_hom_opts} is taken as a limit, and we report the bound from the last column generation iteration occuring { before this time limit}; the 1SDSOS bound is the best non-SDP bound reported in the experiments of Ahmadi and Majumdar. In the {rows} labeled as R2, we take 600 seconds as a limit and report the last bound obtained before this limit.
In a couple of instances ($n=15$ and $n=20$), our column generation algorithm terminates before the 600 second limit, and we report the termination time in this case.
%\begin{table}[H]
%\small
%\begin{center}
%  \begin{tabular}{ | c | c | c | c | c | c | c | c | c | c| c| c| }
%    \hline 
%\  &  \multicolumn{2}{|c|}{n=15}  &  \multicolumn{2}{|c|}{n=20} & \multicolumn{2}{|c|}{n=25} &  \multicolumn{2}{|c|}{n=30} & \multicolumn{2}{|c|}{n=40} \\
%%\cline{2-11}
%\cline{2-11}
%\ & C1 & C2 & C1 & C2 & C1 & C2 & C1 & C2 & C1 & C2 \\
%
%\hline
%time (sec) & 10.96 & 31.19 & 70.70 & 471.39 & 508.63 & 600 &  N/A & 600 & N/A  & 600 \\
%%\hline
%%1-SDSOS bound & -8.97 & \   & -15.29  & \  & -23.14 & \  & N/A & \  & N/A & \   \\
%
%\hline
% 
%triples bound & -6.20 & -5.57  & -12.38  & -9.02 & -20.08 & -20.08 & N/A & -32.38 & N/A & -35.14  \\ \hline
%\end{tabular}
%    \caption{Lower bounds on the optimal value of a form on the sphere for varying degrees of polynomials using Triples on a 2.33 GHz Linux machine with 32 GB of memory.}
%  \label{tab:min_triples}
%  \end{center}
%\end{table}

\begin{table}[H]
	\small
	\begin{center}
		\begin{tabular}{ | c | c | c | c | c | c | c | c | c | c| c| c| }
			\hline 
			\  &  \multicolumn{2}{|c|}{n=15}  &  \multicolumn{2}{|c|}{n=20} & \multicolumn{2}{|c|}{n=25} &  \multicolumn{2}{|c|}{n=30} & \multicolumn{2}{|c|}{n=40} \\
			%\cline{2-11}
			\cline{2-11}
			\ & bd & t(s) & bd & t(s) & bd & t(s) & bd & t(s) & bd & t(s) \\
			
			\hline
			R1 & -6.20 & 10.96 &-12.38 & 70.70 & -20.08 & 508.63 &  N/A & N/A & N/A  & N/A \\
			%\hline
			%1-SDSOS bound & -8.97 & \   & -15.29  & \  & -23.14 & \  & N/A & \  & N/A & \   \\
			
			\hline
			
		R2 & -5.57 & 31.19 &-9.02 & 471.39 & -20.08 & 600 &-32.28 & 600 & -35.14 & 600  \\ \hline
		\end{tabular}
		\caption{Lower bounds on the optimal value of a form on the sphere for varying degrees of polynomials using Triples on a 2.33 GHz Linux machine with 32 GB of memory.}
		\label{tab:min_triples}
	\end{center}
\end{table}

{ We observe that in the same amount of time (and even on a slightly slower machine), we are able to consistently beat the 1SDSOS bound, which is the strongest non-SDP bound produced in~\cite{isos_journal}. We also experimented with the eigenvalue pricing subproblem in the LP case, with a time limit of 600 seconds. For $n=25$, we obtain a bound of $-23.46$ after adding only $33$ columns in 600 seconds. For $n=40$, we  are only able to add 6 columns and the lower bound obtained is $-61.49$. Note that this bound is worse than the triples bound given in Table \ref{tab:min_triples}. The main reason for being able to add so few columns in the time limit is that each column is almost fully dense (the LPs for n=25 have 20,475 rows, and 123,410 rows for $n=40$). Thus, the LPs obtained are very hard to solve after a few iterations and become harder with increasing $n$. As a consequence, we did not experiment with the eigenvalue pricing subproblem in the SOCP case as it is likely to be even more computationally intensive.}

\section{Inner approximations of copositive programs and the maximum stable set problem}\label{sec:stable.set}
%============================================================

Semidefinite programming has been used extensively for approximation of NP-hard combinatorial optimization problems. One such example is finding the \emph{stability number} of a graph. A stable set (or independent set) of a graph $G=(V,E)$ is a set of nodes of $G$, no two of which are adjacent. The size of the largest stable set of a graph $G$ is called the stability number (or independent set number) of $G$ and is denoted by $\alpha(G).$  Throughout, {$G$} is taken to be an undirected, unweighted graph on $n$ nodes. It is known that the problem of testing if $\alpha(G)$ is greater than a given integer $k$ is NP-hard \cite{Karp}. Furthermore, the stability number cannot be approximated { to a factor of} $n^{1-\epsilon}$ for any $\epsilon >0$ unless P$=$NP \cite{HaastadJohan}. The natural integer programming formulation of this problem is given by
\begin{equation} \label{IPformulation}
\begin{aligned}
\alpha(G)=&\underset{x_i}{\max} \sum_{i=1}^n x_i\\
&\text{s.t. } x_i+x_j \leq 1, \forall (i,j) \in E,\\
&x_i \in \{0,1\}, \forall i=1,\ldots,n.
\end{aligned}
\end{equation}
Although this optimization problem is intractable, there are several computationally-tractable relaxations that provide upper bounds on the stability number of a graph. For example, the obvious LP relaxation of (\ref{IPformulation}) can be obtained by relaxing the constraint $x_i \in \{0,1\}$ to $x_i \in [0,1]$:
\begin{equation} \label{LPformulation}
\begin{aligned}
LP(G)=&\underset{x_i}{\max} \sum_i x_i\\
&\text{s.t. } x_i+x_j \leq 1, \forall (i,j) \in E,\\
&x_i \in [0,1], \forall i=1,\ldots,n.
\end{aligned}
\end{equation}
{This bound can be improved upon by adding the so-called \emph{clique inequalities} to the LP, which are of the form $x_{i_1}+x_{i_2}+\ldots+x_{i_k} \leq 1$ when nodes $(i_1,i_2,\ldots,i_k)$ form a clique in $G$. Let $C_k$ be the set of all $k$-clique inequalities in $G$. This leads to a hierarchy of LP relaxations:}
\begin{equation}\label{LPkformulation}
\begin{aligned}
LP_k(G)=&\max \sum_i x_i,\\
&x_i \in [0,1], \forall i=1,\ldots,n,\\
& C_2,\ldots,C_k \text{ are satisfied.}
\end{aligned}
\end{equation}
Notice that for $k=2,$ this simply corresponds to (\ref{LPformulation}), in other words, $LP_2(G)=LP(G)$.

{In addition to LPs, there are also semidefinite programming (SDP)} relaxations that provide upper bounds to the stability number. {The most well-known} is perhaps the Lov\'{a}sz theta number $\vartheta(G)$ \cite{Lovasz}, which is defined as the optimal value of the following SDP:
\begin{equation}\label{Lovasz}
\begin{aligned}
\vartheta(G)\mathrel{\mathop{:}}= &\underset{X}{\max } ~J\cdot X\\
&\text{s.t. } I\cdot X=1,\\
&X_{i,j}=0, \forall (i,j) \in E\\
&X \in P_n.
\end{aligned}
\end{equation}
Here $J$ is the all-ones matrix and $I$ is the identity matrix of size $n$.
The Lov\'{a}sz theta number is known to always give at least as good of an upper bound as the LP in (\ref{LPformulation}), even with the addition of clique inequalities of all sizes (there are exponentially many); see, e.g., \cite[Section 6.5.2]{LaurentVall} for a proof. In other words,
$$\vartheta(G) \leq LP_k(G), \forall k.$$

An alternative SDP relaxation for stable set is due to de Klerk and Pasechnik. In \cite{dp}, they show that the stability number can be obtained through a conic linear program over the set of copositive matrices. Namely,
\begin{equation} \label{Copos}
\begin{aligned}
\alpha(G)=&\min_{\lambda} \lambda\\
&\text{s.t. } \lambda(I+A)-J \in C_n,
\end{aligned}
\end{equation}
where $A$ is the adjacency matrix of $G$.
Replacing $C_n$ by the restriction $P_n + N_n$, {one} obtains the aforementioned relaxation through the following SDP
\begin{equation}\label{CoposSDPRelax}
\begin{aligned}
SDP(G)\mathrel{\mathop{:}}= &\min_{\lambda,X} \lambda\\
&\text{s.t. } \lambda(I+A)-J \geq X,\\
&X \in P_n.  
\end{aligned}
\end{equation}
This latter SDP is more expensive to solve than the Lov\'{a}sz SDP (\ref{Lovasz}), but the bound that it obtains is always at least as good (and sometimes strictly better). 
%In fact, it is known that in some cases, $SDP(G)$ strictly improves on $\vartheta(G)$.
A proof of this statement is given in \cite[Lemma 5.2]{dp}, where it is shown that (\ref{CoposSDPRelax}) is an equivalent formulation of an SDP of Schrijver~\cite{sch}, which produces stronger upper bounds than (\ref{Lovasz}).

Another reason for the interest in the copositive approach is that it allows for well-known SDP and LP hierarchies---developed respectively by Parrilo \cite[Section 5]{PhD:Parrilo} and de Klerk and Pasechnik \cite{dp}---that produce a sequence of improving bounds on the stability number. In fact, by appealing to Positivstellensatz results of P\'{o}lya \cite{Polya}, and Powers and Reznick \cite{PR}, de Klerk and Pasechnik show that their LP hierarchy produces the exact stability number in $\alpha^2(G)$ number of steps \cite[Theorem 4.1]{dp}. This immediately implies the same result for stronger hierarchies, such as the SDP hierarchy of Parrilo \cite{PhD:Parrilo}, or the rDSOS and rSDSOS hierarchies of Ahmadi and Majumdar \cite{isos_journal}.

One notable difficulty with the use of copositivity-based SDP relaxations such as (\ref{CoposSDPRelax}) in applications is scalibility. For example, it takes more than 5 hours to solve (\ref{CoposSDPRelax}) when the input is a randomly generated Erd\'{o}s-Renyi graph with 300 nodes and edge probability $p=0.8$. \footnote{The solver in this case is MOSEK~\cite{mosek} and the machine used has 3.4GHz speed and 16GB RAM; see Table \ref{SOCPbounds} for more results. The solution time with the popular SDP solver SeDuMi~\cite{sedumi} e.g. would be several times larger.} Hence, instead of using (\ref{CoposSDPRelax}), we will solve a sequence of LPs/SOCPs generated in an iterative fashion. These easier optimization problems will provide upper bounds on the stability number in a more reasonable amount of time, though they will be weaker than the ones obtained via (\ref{CoposSDPRelax}). 

%We now describe the process through which we iteratively construct the LPs and the SOCPs used to upper bound the stability number.

We will derive both our LP and SOCP sequences from formulation (\ref{Copos}) of the stability number. To obtain the first LP in the sequence, we replace $C_n$ by $DD_n+N_n$ (instead of replacing $C_n$ by $P_n+N_n$ as was done in (\ref{CoposSDPRelax})) and get
\begin{equation}\label{CoposDDRelax}
\begin{aligned} 
DSOS_1(G)\mathrel{\mathop{:}}= &\min_{\lambda,X} \lambda\\
&\text{s.t. } \lambda(I+A)-J \geq X,\\
&X \in DD_n. 
\end{aligned}
\end{equation}
%It is easy to see that this is an LP as optimizing over the set of diagonally dominant matrices can be done using linear programming (see Theorem \ref{blabla}). Furthermore,
This is an LP whose optimal value is a valid { upper bound} on the stability number as $DD_n \subseteq P_n$.

\begin{theorem}\label{thm:DSOSfinite}
	The LP in (\ref{CoposDDRelax}) is always feasible. %$DSOS_1(G)$ is always finite.
\end{theorem}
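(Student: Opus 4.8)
The plan is to exhibit an explicit feasible point of the linear program in \eqref{CoposDDRelax}. The key observation is that, since $DD_n \subseteq P_n$ and we are free to choose $X$, it suffices to find a large enough scalar $\lambda$ and a diagonally dominant matrix $X$ with $\lambda(I+A) - J \geq X$ entrywise. First I would isolate a natural choice of $X$: take $X = 0$, the zero matrix, which is trivially diagonally dominant (all off-diagonal entries vanish, so $0 = X_{ii} \geq \sum_{j\neq i} |X_{ij}| = 0$ holds for every $i$). With this choice the feasibility condition collapses to requiring $\lambda(I+A) - J \geq 0$ entrywise, i.e. every entry of $\lambda(I+A)$ must be at least the corresponding entry of $J$, which is $1$.

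Next I would check this entrywise inequality directly. The entries of $I+A$ are $1$ on the diagonal (from $I$), $1$ on off-diagonal positions corresponding to edges of $G$ (from $A$), and $0$ on off-diagonal positions corresponding to non-edges. Hence $\lambda(I+A)$ has entries equal to $\lambda$ on the diagonal and on edge positions, and $0$ on non-edge positions. The constraint $\lambda(I+A) - J \geq 0$ therefore requires $\lambda \geq 1$ at the diagonal and edge positions, and $0 \geq 1$ at the non-edge positions — which fails whenever $G$ is not complete. So $X=0$ does \emph{not} work in general, and the subtlety of the statement is exactly that one must use the slack in $X$ to absorb the $-1$'s coming from $J$ at the non-edge positions.

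The fix is to allow $X$ to have negative off-diagonal entries while keeping it diagonally dominant by giving it large positive diagonal entries. Concretely, I would try $X = \lambda(I+A) - J$ itself and verify that for $\lambda$ large this matrix is diagonally dominant: its diagonal entries are $\lambda - 1$, its off-diagonal entries are $\lambda - 1$ on edges and $-1$ on non-edges, so the diagonal-dominance condition at row $i$ reads
\[
\lambda - 1 \;\geq\; \sum_{j \neq i,\, (i,j)\in E} |\lambda - 1| \;+\; \sum_{j\neq i,\, (i,j)\notin E} 1 \;=\; (\lambda-1)\deg(i) + \big(n-1-\deg(i)\big).
\]
This inequality clearly cannot hold for large $\lambda$ if $\deg(i) \geq 1$, so that naive choice also fails — the diagonal grows only linearly in $\lambda$ but so does the off-diagonal sum over edges. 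The genuinely correct construction is to pick $X$ \emph{diagonal}: let $X = t I$ for a scalar $t$. Then $X \in DD_n$ for any $t \geq 0$, and the constraint $\lambda(I+A) - J \geq tI$ becomes: $\lambda - 1 \geq t$ on the diagonal, $\lambda - 1 \geq 0$ on edge positions, and $-1 \geq 0$ on non-edge positions — again failing at non-edges. So even a diagonal $X$ does not suffice, and this is the main obstacle: \emph{no choice of $X$ can make the $(i,j)$ entry of $\lambda(I+A)-J$ nonnegative at a non-edge}, because that entry is identically $-1$ regardless of $\lambda$, and $X \geq$ that entry is automatically satisfied there. I realize I have the inequality direction backwards: the constraint is $\lambda(I+A) - J \geq X$, meaning $X$ is \emph{below} $\lambda(I+A)-J$ entrywise, so at non-edge positions we only need $X_{ij} \leq -1$, which is easy. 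Thus the right approach is: choose $\lambda$ large, set $X_{ij} = -1$ at non-edge positions (including making it as negative as needed), $X_{ij} = \lambda - 1$ or anything $\leq \lambda-1$ at edge positions, $X_{ii} = \lambda - 1$ or less, and then \emph{add a sufficiently large multiple of $I$ is not allowed since it would break $X \leq \lambda(I+A)-J$}; instead one keeps $X$ as the matrix with $-1$ in all off-diagonal slots (not just non-edges) and a large diagonal $D$ with $D_{ii} = \lambda - 1$, giving diagonal dominance precisely when $\lambda - 1 \geq n - 1$, i.e. $\lambda \geq n$. I would verify: with $X_{ii} = \lambda-1$, $X_{ij} = -1$ for $i \neq j$, diagonal dominance needs $\lambda - 1 \geq (n-1)$; and $X \leq \lambda(I+A)-J$ needs $-1 \leq (\lambda(I+A)-J)_{ij}$, which is $-1 \leq \lambda - 1$ on edges (true for $\lambda \geq 0$) and $-1 \leq -1$ on non-edges (true), and $\lambda - 1 \leq \lambda - 1$ on the diagonal (true). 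Hence $\lambda = n$, $X$ as above, is feasible, proving the claim. The main obstacle is simply spotting this correctly-signed construction; once the candidate $(\lambda, X)$ is written down, both the diagonal-dominance check and the entrywise inequality check are immediate.
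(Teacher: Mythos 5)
Your final construction is correct and is essentially the paper's own argument: the paper also exhibits an explicit feasible pair, taking $\lambda = n-\min_i d_i + 1$ and a diagonally dominant matrix with diagonal $\lambda-1$, off-diagonal entries $-1$ at non-edges and $0$ at edges, with the nonnegative slack absorbed at the edge positions. Your choice of $\lambda = n$ with all off-diagonal entries equal to $-1$ is just a slightly looser version of the same feasible point, so the verification goes through identically; the preliminary dead ends in your write-up are harmless since the closing construction stands on its own.
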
 
\begin{proof} 
	We need to show that for any $n\times n$ adjacency matrix $A$, there exists a diagonally dominant matrix $D$, a nonnegative matrix $N$, and a scalar $\lambda$ such that 
	\begin{align} \label{lemma:eq}
	\lambda(I+A)-J=D+N.
	\end{align} 
	 Notice first that $\lambda(I+A)-J$ is a matrix with $\lambda -1$ on the diagonal and at entry $(i,j)$, if $(i,j)$ is an edge in the graph, and with $-1$ at entry $(i,j)$ if $(i,j)$ is not an edge in the graph. If we denote by $d_i$ the degree of node $i$, then let us take $\lambda=n-\min_i d_i+1$ and $D$ a matrix with diagonal entries $\lambda-1$ and off-diagonal entries equal to $0$ if there is an edge, and $-1$ if not. This matrix is diagonally dominant as there are at most $n-\min_i d_i$ minus ones on each row. Furthermore, if we take $N$ to be a matrix with $\lambda-1$ at the entries $(i,j)$ where $(i,j)$ is an edge in the graph, then (\ref{lemma:eq}) is satisfied and $N\geq0$.
\end{proof}
%We will see later that this lemma is crucial for the column generation process to be well defined.\\

Feasibility of this LP is important for us as it allows us to initiate column generation. By contrast, if we were to replace the diagonal dominance constraint by a diagonal constraint for example, the LP could fail to be feasible. This fact has been observed by de Klerk and Pasechnik in \cite{dp} and Bomze and de Klerk in \cite{BdK}.

To generate the next LP in the sequence via column generation, we think of the extreme-ray description of the set of diagonally dominant matrices as explained in Section~\ref{sec:cg.overview}. Theorem~\ref{thm:dsos.corners} tells us that these are given by the matrices in $U_{n,2}$ and so we can rewrite (\ref{CoposDDRelax}) as
\begin{equation}\label{CoposDDRelaxCorners}
\begin{aligned} 
DSOS_1(G)\mathrel{\mathop{:}}= &\min_{\lambda,\alpha_i} \lambda\\
&\text{s.t. } \lambda(I+A)-J \geq X,\\
&X=\sum_{u_iu_i^T\in {U}_{n,2}} \alpha_i u_iu_i^T,\\
&\alpha_i \geq 0,\  i=1,\ldots,n^2.
\end{aligned}
\end{equation}
%Notice that we have $n^2$ variables $\alpha_i$. Even though the cardinality of $\mathcal{U}_{n,2}$ is $2n^2$, these vectors will produce only $n^2$ different matrices $u_iu_i^T$; in other words, $|U_{n,2}|=n^2$.

The column generation procedure aims to add new matrix atoms to the existing set $U_{n,2}$ in such a way that the current bound $DSOS_1$ improves. There are numerous ways of choosing these atoms. We focus first on the cutting plane approach based on eigenvectors. The dual of (\ref{CoposDDRelaxCorners}) is the LP
\begin{equation}\label{CoposDDRelaxCornersDual}
\begin{aligned} 
DSOS_1(G)\mathrel{\mathop{:}}= &\max_{X} J\cdot X,\\
&\text{s.t. } (A+I) \cdot X=1,\\
&X\geq 0,\\
& (u_iu_i^T) \cdot X \geq 0, \forall u_iu_i^T \in {U}_{n,2}.
\end{aligned}
\end{equation}
%The strategy is then the following.
If our optimal solution $X^*$ to (\ref{CoposDDRelaxCornersDual}) is positive semidefinite, then we are obtaining the best bound we can possibly produce, which is the SDP bound of (\ref{CoposSDPRelax}). If this is not the case however, we pick our atom matrix to be the outer product $uu^T$ of the eigenvector $u$ corresponding to the most negative eigenvalue of $X^*$. The optimal value of the LP
\begin{equation}\label{iterativeLP}
\begin{aligned} 
DSOS_2(G)\mathrel{\mathop{:}}= &\max_{X} J \cdot X,\\
&\text{s.t. } (A+I) \cdot X=1,\\
&X\geq 0,\\
& (u_iu_i^T) \cdot X \geq 0, \forall u_iu_i^T \in {U}_{n,2},\\
& (uu^T) \cdot X \geq 0
\end{aligned}
\end{equation}
that we derive is guaranteed to be no worse than $DSOS_1$ as the feasible set of (\ref{iterativeLP}) is smaller than the feasible set of (\ref{CoposDDRelaxCornersDual}). Under mild nondegeneracy assumptions (satisfied, e.g., by uniqueness of the optimal solution to (\ref{CoposDDRelaxCornersDual})), the new bound will be strictly better. By reiterating the same process, we create a sequence of LPs whose optimal values $DSOS_1, DSOS_2, \ldots$ are a nonincreasing sequence of upper bounds on the stability number.

 %portance of Lemma \ref{Lemma:DSOSfinite}. Indeed, if (\ref{CoposDDRelaxCorners}) was infeasible (i.e., the upper bound it produces is infinity), (\ref{CoposDDRelaxCornersDual}) would be infeasible, and we would not be able to generate the matrix atom needed for the construction of the next LP. In particular, this could have been the case if we had required that our variable $X$ be diagonal in (\ref{CoposDDRelax}) rather than diagonally dominant, as observed by de Klerk and Pasechnik in \cite{dp} and Bomze and de Klerk in \cite{BdK}.\\

Generating the sequence of SOCPs is done in an analogous way. Instead of replacing the constraint $X \in P_n$ in (\ref{CoposSDPRelax}) by $X \in DD_n$, we replace it by $X \in SDD_n$ and get
\begin{equation}\label{CoposSDDRelax}
\begin{aligned} 
SDSOS_1(G)\mathrel{\mathop{:}}= &\min_{\lambda,X} \lambda\\
&\text{s.t. } \lambda(I+A)-J \geq X,\\
&X \in SDD_n.
\end{aligned}
\end{equation}
Once again, we need to reformulate the problem in such a way that the set of scaled diagonally dominant matrices is described as some combination of psd ``atom" matrices. In this case, we can write any matrix $X \in SDD_n$ as $$X=\sum_{V_i\in\mathcal{V}_{n,2}}V_i\begin{pmatrix} a_i^1 & a_i^2 \\ a_i^2 & a_i^3\end{pmatrix}V_i^T,$$ where $a_i^1,a_i^2, a_i^3$ are variables making the $2\times 2$ matrix psd, and the $V_i$'s are our atoms. Recall from Section~\ref{sec:cg.overview} that the set $\mathcal{V}_{n,2}$ consists of all $n\times 2$ matrices which have zeros everywhere, except for a 1 in the first column in position $j$ and a 1 in the second column in position $k\neq j$. This gives rise to an equivalent formulation of (\ref{CoposSDDRelax}):
\begin{equation}\label{CoposSDDRelaxCorners}
\begin{aligned} 
SDSOS_1(G)\mathrel{\mathop{:}}= &\min_{\lambda,a_i^j} \lambda\\
&\text{s.t. } \lambda(I+A)-J \geq X\\
&X=\sum_{V_i\in\mathcal{V}_{n,2}}V_i\begin{pmatrix} a_i^1 & a_i^2 \\ a_i^2 & a_i^3\end{pmatrix}V_i^T\\
&\begin{pmatrix} a_i^1 & a_i^2 \\ a_i^2 & a_i^3\end{pmatrix} \succeq 0, \  i=1,\ldots,\binom{n}{2}.
\end{aligned}
\end{equation}
Just like the LP case, we now want to generate one (or more) $n \times 2$ matrix $V$ to add to the set $\{V_i\}_i$ so that the bound $SDSOS_1$ improves. We do this again by using a cutting plane approach originating from the dual of (\ref{CoposSDDRelaxCorners}):
\begin{equation}\label{CoposSDDRelaxCornersDual}
\begin{aligned} 
SDSOS_1(G)\mathrel{\mathop{:}}= &\max_{X} J \cdot X\\
&\text{s.t. } (A+I) \cdot X=1,\\
&X\geq 0,\\
& V_i^T\cdot X V_i \succeq 0, \ i=1,\ldots,\binom{n}{2}.
\end{aligned}
\end{equation} 
%As $DSOS_1(G)$ is always finite and $SDSOS_1(G)\leq DSOS_1(G)$, it follows that $SDSOS_1(G)$ is always finite as well. This entails that an optimal solution $X^*$ to (\ref{CoposSDDRelaxCornersDual}) exists.

Note that strong duality holds between this primal-dual pair as it is easy to check that both problems are strictly feasible. We then take our new atom to be $$V=(w_1 ~w_2),$$ where $w_1$ and $w_2$ are two eigenvectors corresponding to the two most negative eigenvalues of $X^*$, the optimal solution of (\ref{CoposSDDRelaxCornersDual}). If $X^*$ only has one negative eigenvalue, we add a linear constraint to our problem; if $X^* \succeq 0$, then the bound obtained is identical to the one obtained through SDP (\ref{CoposSDPRelax}) and we cannot hope to improve. Our next iterate is therefore
\begin{equation}\label{iterativeSOCP}
\begin{aligned} 
SDSOS_2(G)\mathrel{\mathop{:}}= &\max_{X} J \cdot X\\
&\text{s.t. } (A+I) \cdot X=1,\\
&X\geq 0,\\
& V_i^T\cdot X V_i \succeq 0, \ i=1,\ldots,\binom{n}{2},\\
& V^T\cdot X V \succeq 0.
\end{aligned}
\end{equation} 
Note that the optimization problems generated iteratively in this fashion always remain SOCPs and their optimal values form a nonincreasing sequence of upper bounds on the stability number.

To illustrate the column generation method for both LPs and SOCPs, we consider the complement of the Petersen graph as shown in Figure \ref{fig:Petersen} as an example. The stability number of this graph is 2 and one of its maximum stable sets is designated by the two white nodes. In Figure \ref{Petersenbounds}, we compare the upper bound obtained via (\ref{CoposSDPRelax}) and the bounds obtained using the iterative LPs and SOCPs as described in (\ref{iterativeLP}) and (\ref{iterativeSOCP}). 

\begin{figure}[h]
	\begin{center}
		\mbox{
			\subfigure[The complement of Petersen Graph]
			{\label{fig:Petersen}\scalebox{0.25}{\includegraphics[trim={1cm 0cm 2cm 1cm},clip]{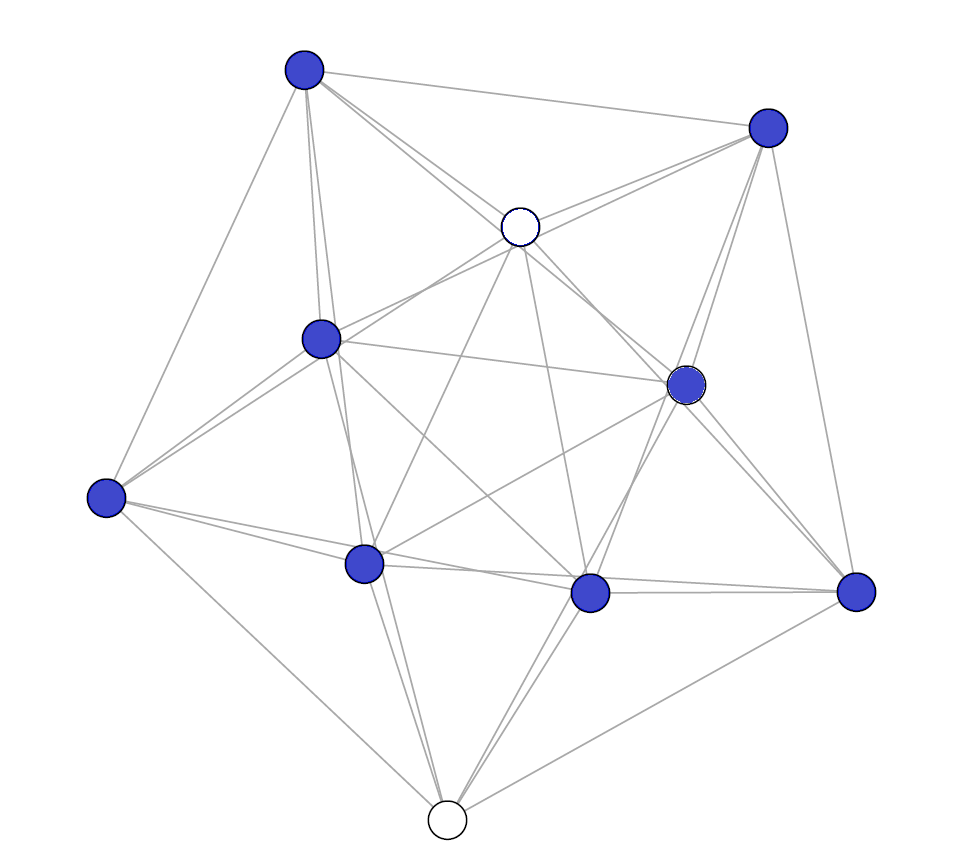}}}}
		\mbox{
			\subfigure[Upper bounds on the stable set number $\alpha(G)$]
			{\label{Petersenbounds}\scalebox{0.5}{\includegraphics[trim={3.5cm 8.5cm 3.5cm 9cm},clip]{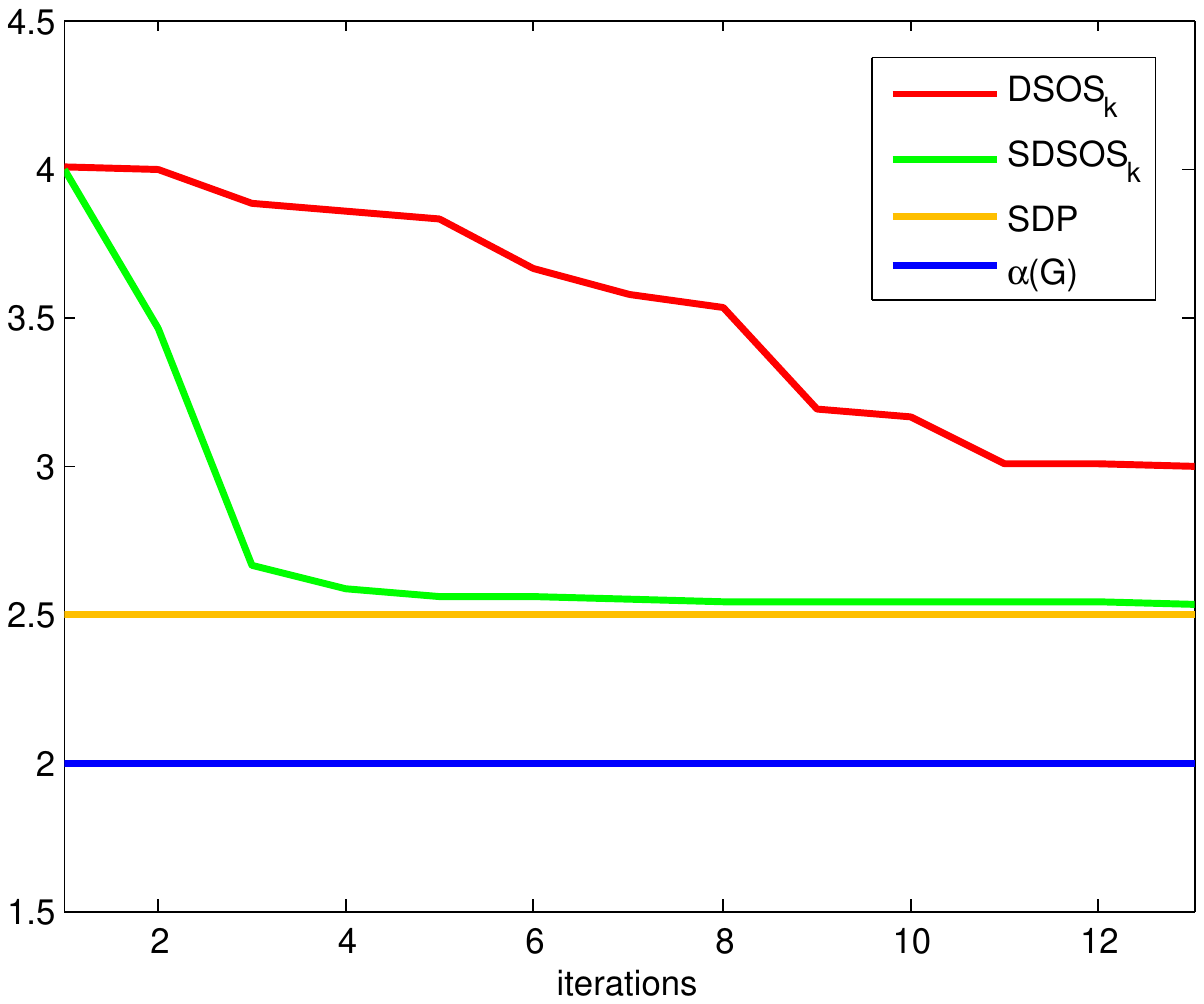}}}
		}
		
		\caption{Bounds obtained through SDP (\ref{CoposSDPRelax}) and iterative SOCPs and LPs for the complement of the Petersen graph.}
		\label{fig:test}
	\end{center}
	\vspace{-20pt}
\end{figure}

%
%\begin{figure}[H]
%	\centering
%	\begin{subfigure}{.5\textwidth}
%		\centering
%		\includegraphics[scale=0.55,trim={3cm 2.5cm 2cm 2.5cm},clip]{prettygraph}
%		\caption{The complement of Petersen Graph}
%		\label{fig:Petersen}
%	\end{subfigure}%
%	\begin{subfigure}{.5\textwidth}
%		\centering
%		\includegraphics[scale=0.65,trim={3.5cm 8.5cm 3.5cm 9cm},clip]{PetersenBounds.pdf}
%		\caption{Upper bounds on the stable set number $\alpha(G)$}
%		\label{Petersenbounds}
%	\end{subfigure}
%	\caption{Bounds obtained through SDP (\ref{CoposSDPRelax}) and iterative SOCPs and LPs for the complement of the Petersen graph}
%	\label{fig:test}
%\end{figure}

Note that it takes 3 iterations for the SOCP sequence to produce an { upper bound} strictly within one unit of the actual stable set number (which would immediately tell us the value of $\alpha$), whereas it takes 13 iterations for the LP sequence to do the same. It is also interesting to compare the sequence of LPs/SOCPs obtained through column generation to the sequence that one could obtain using the concept of $r$-dsos/$r$-sdsos polynomials. Indeed, LP (\ref{CoposDDRelax}) (resp. SOCP (\ref{CoposSDDRelax})) can be written in polynomial form as 
\begin{equation}
\begin{aligned}
DSOS_1(G) \text{ (resp. $SDSOS_1(G)$)}=&\min_{\lambda} \lambda\\
&\text{s.t. } \begin{pmatrix} x_1^2 \\ \vdots \\ x_n^2 \end{pmatrix}^T (\lambda (I+A) -J) \begin{pmatrix} x_1^2 \\ \vdots \\ x_n^2 \end{pmatrix} \text{ is dsos (resp. sdsos).}
%\begin{pmatrix}x_1^2  end{pmatrix} (\lambda(I+A)-J) \begin{pmatrix}x_1^2 \\ \vdots \\ x_n^2 \end{pmatrix} \text{ is dsos (resp. sdsos)}
\end{aligned}
\end{equation}
Iteration $k$ in the sequence of LPs/SOCPs would then correspond to requiring that this polynomial be $k$-dsos or $k$-sdsos. For this particular example, we give the 1-dsos, 2-dsos, 1-sdsos and 2-sdsos bounds in Table \ref{Table1dsos1sdsos}. 

\begin{table}[h]
	\centering
	\begin{tabular}{|c|c|c|}
		\hline
		Iteration & $r$-dsos bounds & $r$-sdsos bounds\\
		\hline
		$r=0$ & 4.00 & 4.00 \\
		\hline
		$r=1$ & 2.71 & 2.52\\
		\hline 
		$r=2$ & 2.50 & 2.50 \\
		\hline
	\end{tabular}
	\caption{Bounds obtained through rDSOS and rSDSOS hierarchies.}
	\label{Table1dsos1sdsos}
	
\end{table}

Though this sequence of LPs/SOCPs gives strong upper bounds, each iteration is more expensive than the iterations done in the column generation approach. Indeed, in each of the column generation iterations, only one constraint is added to our problem, whereas in the rDSOS/rSDSOS hierarchies, the number of constraints is roughly multiplied by $n^2$ at each iteration.

%
%\begin{figure}[H]\label{Petersen}
%\centering
%\includegraphics[scale=0.55,trim={2.7cm 2.7cm 2.7cm 2.7cm},clip]{prettygraph}
%\caption{Complement of Petersen Graph}
%\end{figure}
%%\begin{table}\label{Petersenbounds}
%\centering
%\begin{tabular}{c|c||c|c}
%            \hline \hline
%	\multicolumn{4}{c}{SDP bound}\\
%\hline
%	\multicolumn{4}{c}{2.5}\\
%\hline \hline
%	\multicolumn{4}{c}{LP bounds} \\
%            \cline{1-4}
%             Iteration & Bound & Iteration & Bound  \\
%            \hline
%1 & 4 & 8 & 3.529\\
% 2 & 3.997 &  9 & 3.193\\\
%3 & 3.885 & 10 & 3.166\\
%4 & 3.854 & 11 & 3.006\\
%5 & 3.825 & 12 & 3.003\\
%6  &  3.665 & 13 & 2.99\\
%7 & 3.573 &  &\\
%\hline
%        \end{tabular}
%\end{table}

Finally, we investigate how these techniques perform on graphs with a large number of nodes, where the SDP bound cannot be found in a reasonable amount of time. The graphs we test these techniques on are Erd\"os-R\'{e}nyi graphs $ER(n,p)$; i.e. graphs on $n$ nodes where an edge is added between each pair of nodes independently and with probability $p$. In our case, we take $n$ to be between $150$ and $300$, and $p$ to be either $0.3$ or $0.8$ so as to experiment with both medium and high density graphs.\footnote{All instances used for these tests are available online at \url{http://aaa.princeton.edu/software}.} 

%In very sparse graphs, LP based methods usually perform much better than SDP based methods.

In Table \ref{SOCPbounds}, we present the results of the iterative SOCP procedure and contrast them with the SDP bounds. The third column of the table contains the SOCP upper bound obtained through (\ref{CoposSDDRelaxCornersDual}); the solver time needed to obtain this bound is given in the fourth column. The fifth and sixth columns correspond respectively to the SOCP iterative bounds obtained after 5 mins solving time and 10 mins solving time. Finally, the last two columns chart the SDP bound obtained from (\ref{CoposSDPRelax}) and the time in seconds needed to solve the SDP. All SOCP and SDP experiments were done using Matlab, the solver MOSEK \cite{mosek}, the SPOTLESS toolbox \cite{SPOT_Megretski}, and a computer with 3.4 GHz speed and 16 GB RAM.

\begin{table}[H] 
	\centering
	\scalebox{0.85}{
	\begin{tabular}{c|c|cc|c|c|cc}
		\hline
		n & p & $SDSOS_1$ & time (s) & $SDSOS_k$ (5 mins) & $SDSOS_k$ (10 mins) &$SDP(G)$ & time (s)\\
		\hline
		150 & 0.3 & 105.70 & 1.05 & 39.93 & 37.00 & 20.43 & 221.13\\
		150 & 0.8 & 31.78 & 1.07 & 9.96& 9.43 & 6.02 & 206.28\\
		200 & 0.3 & 140.47 & 1.84& 70.15& 56.37 & 23.73 & 1,086.42\\
		200 & 0.8 & 40.92 & 2.07 & 12.29 & 11.60 & 6.45 & 896.84\\
		250 & 0.3 & 176.25 & 3.51 & 111.63 & 92.93 & 26.78 & 4,284.01 \\
		250 & 0.8 & 51.87 & 3.90 & 17.25 & 15.39 & 7.18 & 3,503.79\\
		300 & 0.3 & 210.32 & 5.69 & 151.71 & 134.14 & 29.13 & 32,300.60\\
		300 & 0.8 & 60.97 & 5.73 & 19.53 & 17.24 & 7.65 & 20,586.02\\
		\hline
	\end{tabular}}
	\caption{SDP bounds and iterative SOCP bounds obtained on ER(n,p) graphs.}
	\label{SOCPbounds}
\end{table} 

From the table, we note that it is better to run the SDP rather than the SOCPs for small $n$, as the bounds obtained are better and the times taken to do so are comparable. However, as $n$ gets bigger, the SOCPs become valuable as they provide good upper bounds in reasonable amounts of time. For example, for $n=300$ and $p=0.8$, the SOCP obtains a bound that is only twice as big as the SDP bound, but it does so 30 times faster. {The sparser graphs don't do as well, a trend that we will also observe in Table \ref{LPbounds}.}
%The sparser graphs don't do as well, but the bounds obtained in $1/30^{th}$ of the time are still only within 4 times the ones obtained with the SDP. 
Finally, notice that the improvement in the first 5 mins is significantly better than the improvement in the last 5 mins. This is partly due to the fact that the SOCPs generated at the beginning are sparser, and hence faster to solve. 

In Table \ref{LPbounds}, we present the results of the iterative LP procedure used on the same instances. All LP results were obtained using a computer with 2.3 GHz speed and 32GB RAM and the solver CPLEX 12.4 \cite{cplex}. The third and fourth columns in the table contain the LP bound obtained with (\ref{CoposDDRelaxCornersDual}) and the solver time taken to do so. Columns 5 and 6 correspond to the LP iterative bounds obtained after 5 mins solving time and 10 mins solving time using the eigenvector-based column generation technique (see discussion around (\ref{iterativeLP})). The seventh and eighth columns are the standard LP bounds obtained using (\ref{LPkformulation}) and the time taken to obtain the bound. Finally, the last column gives bounds obtained by column generation using  ``triples'', as described in Section \ref{subsec:comput_experim_poly}. In this case, we take $t_1=300,000$ and $t_2=500$.

\begin{table}[H] 
	\centering
	\scalebox{0.85}{
	\begin{tabular}{c|c|cc|c|c|cc|c}
		\hline
		n & p & $ DSOS_1 $ & time (s) & $DSOS_k$ (5m) & $DSOS_k$ (10m) & $LP_2 $& time (s) &  $LP_{triples}$ (10m)\\
		\hline
		150 & 0.3 & 117 & $<1$ & 110.64 &  110.26 & 75 & $<1$ & 89.00 \\
		150 & 0.8 & 46 & $<1$ & 24.65 & 19.13 & 75 & $<1$ &23.64  \\
		200 & 0.3 & 157 & $<1 $& 147.12 &  146.71 & 100 & $<1$ & 129.82\\
		200 & 0.8 & 54 & $<1$ & 39.27 & 36.01 & 100 & $<1$ & 30.43 \\
		250 & 0.3 & 194 & $<1$ & 184.89 & 184.31 & 125 & $<1$ & 168.00\\
		250 & 0.8 &  68 & $<1$ & 55.01 & 53.18 & 125 & $<1$ & 40.19\\
		300 & 0.3 & 230 & $<1$ & 222.43 & 221.56 & 150 & $<1$&  205.00 \\
		300 & 0.8 & 78 & $<1$ & 65.77 & 64.84 &  150 & $<1$& 60.00\\
		\hline
	\end{tabular}}
	\caption{LP bounds obtained on the same $ER(n,p)$ graphs.}
	\label{LPbounds}
\end{table} 

We note that in this case the upper bound with triples via column generation does better for this range of $n$ than eigenvector-based column generation in the same amount of time. Furthermore, the iterative LP scheme seems to perform better in the dense regime. In particular, the first iteration does significantly better than the standard LP for $p=0.8$, even though both LPs are of similar size. This would remain true even if the 3-clique inequalities were added as in (\ref{LPkformulation}), since the optimal value of $LP_3$ is always at least $n/3$. This is because the vector $(\frac{1}{3},\ldots,\frac{1}{3})$ is feasible to the LP in (\ref{LPkformulation}) with $k=3$. Note that this LP would have order $n^3$ constraints, which is more expensive than our LP. On the contrary, for sparse regimes, the standard LP, {which hardly takes any time to solve,} gives better bounds than ours.

Overall, the high-level conclusion is that running the SDP is worthwhile for small sizes of the graph. As the number of nodes increases, column generation becomes valuable, providing upper bounds in a reasonable amount of time. Contrasting Tables \ref{SOCPbounds} and \ref{LPbounds}, our initial experiments seem to show that the iterative SOCP bounds are better than the ones obtained using the iterative LPs. It may be valuable to experiment with different approaches to column generation however, as the technique used to generate the new atoms seems to impact the bounds obtained.

\section{Conclusions and future research}\label{sec:future}

For many problems of discrete and polynomial optimization, there are hierarchies of SDP-based sum of squares algorithms that produce provably optimal bounds in the limit~\cite{sdprelax},~\cite{lasserre_moment}. However, these hierarchies can often be expensive computationally. In this chapter, we were interested in problem sizes where even the first level of the hierarchy is too expensive, and hence we resorted to algorithms that replace the underlying SDPs with LPs or SOCPs. We built on the recent work of Ahmadi and Majumdar on DSOS and SDSOS optimization~\cite{isos_journal},~\cite{dsos_ciss14}, which serves exactly this purpose. { We showed that by using ideas from linear programming column generation,} the performance of their algorithms is improvable. We did this by iteratively optimizing over increasingly larger structured subsets of the cone of positive semidefinite matrices, without resorting to the more expensive rDSOS and rSDSOS hierarchies.

%In this chapter, based on the inner representation of the cone of diagonally dominant matrices, we proposed an extension of the DSOS and SDOS-programming ideas developed in 
%By considering specific, structured subsets of the semidefinite cone, and using column generation to optimize over these structured cones (or using cutting plane approaches to optimize over analogous structured relaxations of the semidefinite cone), 

%we demonstrated that one could get better bounds than with DSOS or SDSO programming without resorting to the next levels of the associated heirarchies or solving semidefinite programs, and by spending a reasonable amount of extra time.

There is certainly a lot of room to improve our column generation algorithms. In particular, we only experimented with a few types of pricing subproblems and particular strategies for solving them. {The success of column generation often comes} from good ``engineering'', which fine-tunes the algorithms to the problem at hand. Developing warm-start strategies for our iterative SOCPs for example, would be a very useful problem to work on in the future.

% we feel that our column generation ideas need to be applied differently to different problems, based on problem characteristics. Much more computational experiments are needed to master a good ``engineering'' understanding of the choice of the pricing subproblems and efficient strategies for solving them. This 

%Our separation algorithms are not best possible, and we believe it should be possible to speed up our column generation algorithms with a more careful implementation.

Here is another interesting research direction, which for illustrative purposes we outline for the problem studied in Section~\ref{sec:poly.opt}; i.e., minimizing a form on the sphere. Recall that given a form $p$ of degree $2d$, we are trying to find the largest $\lambda$ such that $p(x)-\lambda (\sum_{i=1}^n x_i^2)^d$ is a sum of squares. Instead of solving this sum of squares program, we looked for the largest $\lambda$ for which we could write $p(x)-\lambda$ as a conic combination of a certain set of nonnegative polynomials. These polynomials for us were always either a single square or a sum of squares of polynomials.
%Each column in our column generation framework represents a very simple nonnegative polynomial, namely one that is a square of a linear combination of the monomials of degree $d$,
%In the context of polynomial optimization, we do not need to restrict ourself to obtaining a sum of squares representation via column generation. Each column in our column generation framework represents a very simple nonnegative polynomial, namely one that is a square of a linear combination of the monomials of degree $d$, i.e., it has the form $(z(x,d) \cdot u)^2$.
%In other words, we are trying to find a representation of $p(x)-\lambda s(x)$ as $\sum_{i} \alpha_i p_i(x)^2$ where $p_i(x)$ is a polynomial of degree $d/2$ as we know that $p_i(x)^2$ is nonnegative.
%
There are polynomials, however, that are nonnegative but not representable as a sum of squares. Two classic examples~\cite{MotzkinSOS},~\cite{Choi_Lam_extremalPSDforms} are the Motzkin polynomial
\[ M(x,y,z) = x^6 + y^4z^2 + y^2z^4 - 3x^2y^2z^2, \]
and the Choi-Lam polynomial
\[ CL(w,x,y,z) = w^4 + x^2y^2 + y^2z^2 + x^2z^2 - 4wxyz. \]

%%There are polynomials, however,  such as the Motzkin polynomial \cite{MotzkinSOS_temp}
%\[ M(x,y,z) = x^6 + y^4z^2 + y^2z^4 - 3x^2y^2z^2 \] that are nonnegative but not representable as a sum of squares of other polynomials, i.e., not {\em SOS-representable}.
%Similarly, the Choi-Lam polynomial \cite{Choi_Lam_extremalPSDforms}
%\[ CL(w,x,y,z) = w^4 + x^2y^2 + y^2z^2 + x^2z^2 - 4wxyz \]
%is nonnegative for all $(w,x,y,z) \in \R^4$ but is not SOS-representable.

Either of these polynomials can be shown to be nonnegative using the arithmetic mean-geometric mean (am-gm) inequality, which states
%inequality of arithmetic means and geometric means (we call it the am-gm inequality) which states
that if $x_1, \ldots, x_k \in \R$, then 
\[ x_1, \ldots, x_k \geq 0 \Rightarrow (\sum_{i=1}^k x_i)/k \geq (\Pi_{i=1}^k x_i)^{\frac{1}{k}}. \]
For example, in the case of the Motzkin polynomial, it is clear that the monomials $x^6, y^4z^2$ and $y^2z^4$ are nonnegative for all $x,y,z \in \R$, and letting $x_1, x_2, x_3$ stand for these monomials respectively, the am-gm inequality  implies that
\[ x^6 + y^4z^2 + y^2z^4 \geq 3x^2y^2z^2 \mbox{ for all } x,y,z \in \R. \]
These polynomials are known to be extreme in the cone of nonnegative polynomials and they cannot be written as a sum of squares (sos)~\cite{Reznick}.

%In other words, these polynomials all lie outside the cone of SOS-representable polynomials.
%Furthemore, they are known to be extreme in the cone of nonnegative polynomials.

It would be interesting to study the separation problems associated with using such non-sos polynomials in column generation. We briefly present one separation algorithm for a family of polynomials whose nonnegativity is provable through the am-gm inequality and includes the Motzkin and Choi-Lam polynomials. This will be a relatively easy-to-solve integer program in itself, whose goal is to find a polynomial $q$ amongst this family which is to be added as our new ``nonnegative atom''.

The family of $n$-variate polynomials under consideration consists of polynomials with only $k+1$ nonzero coefficients, with $k$ of them equal to one, and one equal to $-k$. (Notice that the Motzkin and the Choi-Lam polynomials are of this form with $k$ equal to three and four respectively.) Let $m$ be the number of monomials in $p$. Given a dual vector $\mu$ of (\ref{eq:polycol}) of dimension $m$, one can check if there exists a nonnegative degree $2d$ polynomial $q(x)$ in our family such that $\mu \cdot \coef{q(x)} < 0.$ This can be done by solving the following integer program (we assume that $p(x) = \sum_{i=1}^m x^{\alpha_i}$):
\begin{eqnarray}
	\underset{c,y}{\min} && \sum_{i=1}^m  \mu_ic_i - \sum_{i=1}^m k\mu_iy_i \\\mbox{ s.t.} 
	&&\sum_{i:\alpha_i \mbox{ is even}} \alpha_i c_i = k \sum_{i=1}^m \alpha_iy_i, \nonumber \\
	&&\sum_{i=1}^m c_i = k, \nonumber\\
	&&\sum_{i=1}^m  y_i = 1, \nonumber\\
	&& c_i \in \{0,1\}, y_i \in \{0,1\}, i=1,\ldots, m, c_i=0 \mbox{ if } \alpha_i \mbox{ is not even.}\nonumber
\end{eqnarray}
Here, we have $\alpha_i\in\mathbb{N}^n$  and the variables $c_i, y_i$ form the coefficients of the polynomial $q(x) = \sum_{i=1}^m c_ix^{\alpha_i} - k\sum_{i=1}^m y_i x^{\alpha_i}$. The above integer program has $2m$ variables, but only $n+2$ constraints (not counting the integer constraints).     
If a polynomial $q(x)$ with a negative objective value is found, then one can add it as a new atom for column generation. In our specific randomly generated polynomial optimization examples,
such polynomials did not seem to help in our preliminary experiments. Nevertheless, it would be interesting to consider other instances and problem structures.

Similarly, in the column generation approach to obtaining inner approximations of the copositive cone, one need not stick to positive {semidefinite} matrices. It is known that the $5\times 5$ ``Horn matrix''~\cite{burer2009difference} for example is extreme in the copositive cone but cannot be written as the sum of a nonnegative and a positive semidefinite matrix.
One could define a separation problem for a family of Horn-like matrices and add them in a column generation approach. Exploring such strategies is left for future research.

%\bibliographystyle{abbrv}
%\bibliography{ch-basis_pursuit/pablo_amirali,ch-basis_pursuit/elib}

%\bibliographystyle{plain}
%\begin{thebibliography}{10}
%
%\bibitem{am}
%A. A. Ahmadi and A. Majumdar. 
%DSOS and SDSOS optimization: More tractable alternatives to SOS optimization.
%{\em Manuscript.}
%
%\bibitem{cl}
%  M. D. Choi and T. Y. Lam, Extremal positive semidefinite forms, {\em Math. Ann.} {\bf 231} 1--8 (1977).
%  
%\bibitem{dp}
%E. de Klerk and D. V. Pasechnik.  Approximation of the stability number of a graph via copositive programming. {\em SIAM Journal on Optimization}, {\bf 12}(4), 875-- 892 (2002).
%
%\bibitem{mot}
%T.S. Motzkin, The arithmetic-geometric inequality. Proc. Symposium on Inequalities (ed. O. Shisha), Academic Press, New York, 1967, pp. 205-–224.
%
%\bibitem{mk}
%K. G. Murty and S. N. Kabadi. Some NP-complete problems in quadratic and nonlinear
%programming. {\em Mathematical Programming}, {\bf 39} 117--129 (1987).
%  
%\bibitem{pp}
%P. A. Parrilo. Semidefinite programming relaxations for semialgebraic problems. Mathematical Programming {\bf 96} 293--320 (2003).
%\end{thebibliography}

\chapter{Sum of Squares Basis Pursuit with Linear and Second Order Cone Programming}\label{ch:cholesky}

\section{Introduction}
In recent years, semidefinite programming~\cite{vandenberghe1996semidefinite} and sum of squares optimization~\cite{sdprelax, lasserre_moment, NesterovSquared} have proven to be powerful techniques for tackling a diverse set of problems in applied and computational mathematics. The reason for this, at a high level, is that several fundamental problems arising in discrete and polynomial optimization~\cite{laurent2009sums, Stability_number_SOS, ahmadiOR_letters} or the theory of dynamical systems~\cite{PhD:Parrilo, PositivePolyInControlBook, AAA_PhD} can be cast as linear optimization problems over the cone of nonnegative polynomials. This observation puts forward the need for efficient conditions on the coefficients $c_\alpha\mathrel{\mathop:}=c_{\alpha_1,\ldots,\alpha_n}$ of a multivariate polynomial $$p(x)=\sum_{\alpha } c_{\alpha_1,\ldots,\alpha_n} x_1^{\alpha_1}\ldots x_n^{\alpha_n}$$
that ensure the inequality $p(x)\geq 0,$ for all $x\in\mathbb{R}^n$. If $p$ is a quadratic function, $p(x)=x^TQx+2c^Tx+b,$ then nonnegativity of $p$ is equivalent to the $(n+1)\times (n+1)$ symmetric matrix $$\begin{pmatrix}
Q & c\\ c^T & b
\end{pmatrix}$$
being positive semidefinite and this constraint can be imposed by semidefinite programming. For higher degrees, however, imposing nonnegativity of polynomials is in general an intractable computational task. In fact, even checking if a given quartic polynomial is nonnegative is NP-hard~\cite{copositivity_NPhard}. A particularly popular and seemingly powerful sufficient condition for a polynomial $p$ to be nonnegative is for it to decompose as a sum of squares of other polynomials:
$$p(x)=\sum_i q_i^2(x).$$

This condition is attractive for several reasons. From a computational perspective, for fixed-degree polynomials, a sum of squares decomposition can be checked (or imposed as a constraint) by solving a semidefinite program of size polynomial in the number of variables. From a representational perspective, such a decomposition \emph{certifies} nonnegativity of $p$ in terms of an easily verifiable algebraic identity. From a practical perspective, the so-called ``sum of squares relaxation'' is well-known to produce powerful (often exact) bounds on optimization problems that involve nonnegative polynomials; see, e.g.,~\cite{Minimize_poly_Pablo}. The reason for this is that constructing examples of nonnegative polynomials that are not sums of squares in relatively low dimensions and degrees seems to be a difficult task\footnote{See~\cite{Reznick} for explicit examples of nonnegative polynomials that are not sums of squares.}, especially when additional structure arising from applications is required.

We have recently been interested in leveraging the attractive features of semidefinite programs (SDPs) and sum of squares (SOS) programs, while solving much simpler classes of convex optimization problems, namely \emph{linear programs} (LPs) and \emph{second order cone programs} (SOCPs). Such a research direction can potentially lead to a better understanding of the relative power of different classes of convex relaxations. It also has obvious practical motivations as simpler convex programs come with algorithms that have better scalability and improved numerical conditioning properties. This chapter is a step in this research direction. We present a scheme for solving a sequence of LPs or SOCPs that provide increasingly accurate approximations to the optimal value and the optimal solution of a semidefinite (or a sum of squares) program. With the algorithms that we propose, one can use one of many mature LP/SOCP solvers such as~\cite{cplex, gurobi, mosek}, including simplex-based LP solvers, to obtain reasonable approximations to the optimal values of these more difficult convex optimization problems.

%Aside from practical motivations such as better scalability and improved numerical conditioning, such a research direction can potentially lead to a better understanding of the relative power of different classes of convex relaxations.

The intuition behind our approach is easy to describe with a contrived example.
%
%Consider the matrix $$A=\begin{pmatrix}3 &4&2\\ 4& 22& 6\\ 2& 6& 10
%\end{pmatrix},$$
%which is positive semidefinite, although this may not be immediately evident. We can diagonalize the matrix by writing it in a different basis: $$A=LDL^T=\begin{pmatrix} 1& 0 & 0 \\ \frac{4}{3} &1 & 0\\ \frac{2}{3} &\frac{1}{5} & 1 \end{pmatrix}\begin{pmatrix} 3 & 0 & 0 \\ 0 & 16\frac{2}{3} & 0 \\ 0 & 0 & 8 \end{pmatrix}\begin{pmatrix} 1 &\frac{4}{3} & \frac{2}{3} \\ 0 &1 & \frac{1}{5} \\ 0 & 1 & 1 \end{pmatrix}.$$ 
%
%If we think of the quadratic form $x^TAx$, by a change of basis we mean the transformation that sends the monomial basis $x=(x_1,\ldots,x_n)^T$ to the new basis $L^Tx$. If we had access to $L$, then proving nonnegativity of our quadratic form (i.e., positive semidefiniteness of $A$) would be equivalent to nonnegativity of the diagonal entires of $D$, which is a \emph{linear} condition in $D$.
%
Suppose we would like to show that the degree-4 polynomial 
\begin{equation}\nonumber
\begin{array}{lll}
p(x)&=&x_1^4-6x_1^3x_2+2x_1^3x_3+6x_1^2x_3^2+9x_1^2x_2^2-6x_1^2x_2x_3-14x_1x_2x_3^2+4x_1x_3^3
\\ \ &\ &+5x_3^4-7x_2^2x_3^2+16x_2^4
\end{array}
\end{equation}
has a sum of squares decomposition. One way to do this is to attempt to write $p$ as $${ p(x)=z^T(x)Qz(x),}$$
where
\begin{equation}\label{eq:z(x).quadatic}
z(x)=(x_1^2,x_1x_2,x_2^2,x_1x_3,x_2x_3,x_3^2)^T
\end{equation}  
is the standard (homogeneous) monomial basis of degree 2 and the matrix $Q$, often called the \emph{Gram matrix}, is symmetric and positive semidefinite. The search for such a $Q$ can be done with semidefinite programming; one feasible solution e.g. is as follows:
%One feasible solution to the SDP in (\ref{eq:p=z'Qz}) is given by 

$$Q=\begin{pmatrix}
1 & -3 & 0 & 1 & 0 & 2 \\
-3 & 9 & 0 & -3 & 0 & -6 \\
0 & 0 & 16 & 0 & 0 & -4 \\
1 & -3 & 0 & 2 & -1 & 2 \\
0 & 0 & 0 & -1 & 1 & 0 \\
2 & -6 & 4 & 2 & 0 & 5 
\end{pmatrix}.$$
Suppose now that instead of the basis $z$ in (\ref{eq:z(x).quadatic}), we pick a different basis 
%$$\tilde{z}(x)=(x_1^2-3x_1x_2+x_1x_3+2x_3^2, x_1x_3-x_2x_3, 4x_2^2-x_3^2)^T.$$
\begin{equation}\label{eq:ztilde}
{\tilde{z}(x)=(2x_1^2-6x_1x_2+2x_1x_3+2x_3^2, \quad x_1x_3-x_2x_3, \quad x_2^2-\frac{1}{4}x_3^2)^T.}
\end{equation}
With this new basis, we can get a sum of squares decomposition of $p$ by writing it as 
$$p(x)=\tilde{z}^T(x) \begin{pmatrix}
\frac{1}{2} & 0 & 0 \\ 0& 1 & 0 \\ 0 & 0 & 4.
\end{pmatrix}  \tilde{z}(x).$$
In effect, by using a better basis, we have simplified the Gram matrix and made it diagonal. When the Gram matrix is diagonal, its positive semidefiniteness can be imposed as a \emph{linear} constraint (diagonals should be nonnegative).

Of course, the catch here is that we do not have access to the magic basis $\tilde{z}(x)$ in (\ref{eq:ztilde}) a priori. Our goal will hence be to ``pursue'' this basis (or other good bases) by starting with an arbitrary basis (typically the standard monomial basis), and then iteratively improving it by solving a sequence of LPs or SOCPs and performing some efficient matrix decomposition tasks in the process. Unlike the intentionally simplified example we gave above, we will not ever require our Gram matrices to be diagonal. This requirement is too strong and would frequently lead to our LPs and SOCPs being infeasible. The underlying reason for this is that the cone of diagonal matrices is not full dimensional in the cone of positive semidefinite matrices. Instead, we will be after bases that allow the Gram matrix to be \emph{diagonally dominant} or \emph{scaled diagonally dominant} (see Definition~\ref{def:dd.sdd}). The use of these matrices in polynomial optimization has recently been proposed by Ahmadi and Majumdar~\cite{isos_journal, dsos_ciss14}. We will be building on and improving upon their results in this chapter.

\subsection{Organization of this chapter}
The organization of the rest of the chapter is as follows. In Section~\ref{sec:prelims}, we introduce some notation and briefly review the concepts of ``dsos and sdsos polynomials'' which are used later as the first step of an iterative algorithm that we propose in Section~\ref{sec:basis.pursuit}. In this section, we explain how we inner approximate semidefinite (Subsection~\ref{subsec:InnerApprox}) and sum of squares (Subsection~\ref{subsec:polyopt}) cones with LP and SOCP-based cones by iteratively changing bases. In Subsection~\ref{subsec:Corners}, we give a different interpretation of our LPs in terms of their corner description as opposed to their facet description. Subsection~\ref{subsec:OuterApprox} is about duality, which is useful for iteratively outer approximating semidefinite or sum of squares cones.

In Section~\ref{sec:StableSet}, we apply our algorithms to the Lov\'{a}sz semidefinite relaxation of the maximum stable set problem. It is shown numerically that our LPs and SOCPs converge to the SDP optimal value in very few iterations and outperform some other well-known LP relaxations on a family of randomly generated examples. In Section~\ref{sec:Partition}, we consider the partition problem from discrete optimization. As opposed to the stable set problem, the quality of our relaxations here is rather poor. In fact, even the sum of squares relaxation fails on some completely trivial instances. We show this empirically on random instances, and formally prove it on one representative example (Subsection~\ref{subsec:sos.refutability}). The reason for this failure is existence of a certain family of quartic polynomials that are nonnegative but not sums of squares.

%After briefly reviewing some algebraic concepts and notation in Section \ref{sec:prelims}, we give a high-level overview of our basis pursuit approach applied to a general SDP in Section \ref{sec:basis.pursuit}. Ways of obtaining both lower and upper bounds on the optimal solution of the SDP are presented. Section\ref{sec:StableSet} showcases an application of these techniques to the well-known problem of finding the stability number of a graph, with promising computational results. Finally, Section \ref{sec:Partition} describes another potential application of the basis pursuit approach to discrete optimization: the partition problem. Building on computational results obtained, we study the effectiveness of algebraic methods in refuting partition instances.

%============================================================
\section{Preliminaries}\label{sec:prelims}
%============================================================
%Let us first introduce some notation on matrices.
We denote the set of real symmetric $n\times n$ matrices by $S_n$. Given two matrices $A$ and $B$ in $S_n$, their standard matrix inner product is denoted by $A\cdot B := \sum_{i,j}A_{ij}B_{ij} = \mbox{ Trace}(AB)$.
%The set of symmetric matrices with nonnegative entries is denoted by $N_n$.
A symmetric matrix $A$ is \emph{positive semidefinite} (psd) if $x^TAx\geq 0$ for all $x\in\mathbb{R}^n$; this will be denoted by the standard notation $A\succeq 0$, and our notation for the set of $n\times n$ psd matrices is $P_n$. We say that $A$ is \emph{positive definite} (pd) if $x^TAx> 0$ for all $x\neq 0$.
%A matrix $A$ is \emph{copositive} if $x^TAx\geq 0$ for all $x\geq 0$. The set of copositive matrices is denoted by $C_n$. All three sets $N_n,P_n,C_n$ are convex cones and we have the obvious inclusion $N_n+P_n\subseteq C_n$. This inclusion is strict if $n\geq 5$~\cite{burer2009difference, burer2012copositive}.
Any psd matrix $A$ has an upper triangular Cholesky factor $U=\text{chol}(A)$ satisfying $A=U^TU$. When $A$ is pd, the Cholesky factor is unique and has positive diagonal entries.
%We use the notation $U=\text{chol}(A)$.
For a cone of matrices in $S_n$, we define its dual cone $\mathcal{K}^*$ as $\{Y \in S_n: Y\cdot X \geq 0, \ \forall X \in \mathcal{K}\}$.

For a vector variable $x \in \mathbb{R}^n$ and a vector {$s \in \mathbb{Z}^n_+$, let a monomial in $x$ be denoted as $x^s= \Pi_{i=1}^n x_i^{s_i}$ which by definition has degree $\sum_{i=1}^n s_i$.}
A polynomial is said to be \emph{homogeneous} or a \emph{form} if all of its monomials have the same degree. A form $p(x)$ in $n$ variables is nonnegative if $p(x)\geq 0$ for all $x\in\mathbb{R}^n$, or equivalently for all $x$ on the unit sphere in $\mathbb{R}^n$. The set of nonnegative (or positive semidefinite) forms in $n$ variables and degree $d$ is denoted by $PSD_{n,d}$. A form $p(x)$ is a \emph{sum of squares} (sos) if it can be written as $p(x)=\sum_{i=1}^r q_i^2(x)$ for some forms $q_1,\ldots,q_r$. The set of sos forms in $n$ variables and degree $d$ is denoted by $SOS_{n,d}$. We have the obvious inclusion $SOS_{n,d}\subseteq PSD_{n,d}$, which is strict unless $d=2$, or $n=2$, or $(n,d)=(3,4)$~\cite{Hilbert_1888}. Let $z(x,d)$ be the vector of all monomials of degree exactly $d$; it is well known that a form $p$ of degree $2d$ is sos if and only if it can be written as $p(x)=z^T(x,d)Qz(x,d)$, for some psd matrix $Q$~\cite{sdprelax, PhD:Parrilo}. An SOS optimization problem is the problem of minimizing a linear function over the intersection of the convex cone $SOS_{n,d}$ with an affine subspace. The previous statement implies that SOS optimization problems can be cast as semidefinite programs.

%The size of the matrix $Q$, which is often called the \emph{Gram matrix}, is ${n+d-1 \choose d} \times {n+d-1 \choose d}$. At the price of imposing a semidefinite constraint of this size, one obtains the very useful ability to search and optimize over the convex cone of sos forms via semidefinite programming.

%Let $N_n, S_n, C_n$, stand for the cones of $n\times n$ symmetric nonnegative matrices, diagonally dominant matrices, positive semidefinite matrices (PSD cone) and copositive matrices, respectively.
%
%$D\in D_n$ is a diagonally dominant matrix if $D_{ii} \geq \sum_{j\neq i}|D_{ij}|$ for $i=1, \ldots, n$.
%For a cone of matrices $\mathcal{C} \subseteq \R^{n \times n}$, the dual cone $\mathcal{C}^*$ is defined as $\{Y \in \R^{n\times n}: Y\cdot X \geq 0 \ \forall X \in \mathcal{C}\}$.
%It is well-known that (i) $D_n \subseteq S_n \subseteq C_n$; (ii) $N_n \subseteq C_n$ which also implies that $S_n + N_n \subseteq C_n$; (iii) 
%$S_n^* = S_n$.
%Let $e_1, \ldots e_n$ be the unit vectors in $\R^n$. Let $U_k \subset \R^{n \times n}$ be the set of matrices defined as
%\[ U_k = \{uu^T : u \mbox{ has at most } k \mbox{ nonzero components, each equal to} \pm 1\}. \]
%Clearly $U_k$ is a finite set for each $k=1, \ldots, n$ and it is not hard to see that $D_n = \cone{U_2}$ and therefore $D_n^* = \{X \in \R^{n\times n}: X \cdot V \geq 0 \ \forall V \in U_2\}$.
%Furthermore as $D_n \subseteq S_n$, we have $S_n \subseteq D_n^*$.

\subsection{DSOS and SDSOS optimization}
\label{subsec:dsos.sdsos}

In recent work, Ahmadi and Majumdar introduce more scalable alternatives to SOS optimization that they refer to as \emph{DSOS and SDSOS programs}~\cite{isos_journal,dsos_ciss14}\footnote{The work in~\cite{isos_journal} is currently in preparation for submission; the one in~\cite{dsos_ciss14} is a shorter conference version of~\cite{isos_journal} which has already appeared. The presentation of the current chapter is meant to be self-contained.}. Instead of semidefinite programming, these optimization problems can be cast as linear and second order cone programs respectively. Since we will be building on these concepts, we briefly review their relevant aspects to make our chapter self-contained.

%In order to alleviate the problem of scalability posed by the SDPs arising from sum of squares programs, Ahmadi and Majumdar recently introduced similar-purpose LP and SOCP-based optimization problems that they refer to as \emph{DSOS and SDSOS programs}. 

%In order to address the problem of scalability posed by SDP, we have recently introduced~\cite{isos_journal, dsos_ciss14} alternatives to SOS programming that lead to linear programs (LPs) and second order cone programs (SOCPs). 

The idea in~\cite{isos_journal, dsos_ciss14}  is to replace the condition that the Gram matrix $Q$ be positive semidefinite with stronger but cheaper conditions in the hope of obtaining more efficient inner approximations to the cone $SOS_{n,d}$. Two such conditions come from the concepts of \emph{diagonally dominant} and \emph{scaled diagonally dominant} matrices in linear algebra. We recall these definitions below.

\begin{definition}\label{def:dd.sdd}
	A symmetric matrix $A$ is \emph{diagonally dominant} (dd) if $a_{ii} \geq \sum_{j \neq i} |a_{ij}|$ for all $i$. We say that $A$ is \emph{scaled diagonally dominant} (sdd) if there exists a diagonal matrix $D$, with positive diagonal entries, which makes $DAD$ diagonally dominant.
\end{definition}

We refer to the set of $n \times n$ dd (resp. sdd) matrices as $DD_n$ (resp. $SDD_n$). The following inclusions are a consequence of Gershgorin's circle theorem {\cite{gersh}}:
$$DD_n\subseteq SDD_n\subseteq P_n.$$

Whenever it is clear from the context, we may drop the subscript $n$ from our notation. We now use these matrices to introduce the cones of ``dsos'' and ``sdsos'' forms which constitute special subsets of the cone of sos forms. We remark that in the interest of brevity, we do not give the original definition of dsos and sdsos polynomials as it appears in~\cite{isos_journal} (as sos polynomials of a particular structure), but rather an equivalent characterization of them that is more useful for our purposes. The equivalence is proven in~\cite{isos_journal}. 

%We now introduce some cones that are inner approximations of the cone of nonnegative polynomials and that lend themselves to LP and SOCP. In analogy with the representation of sos polynomials in terms of psd matrices (Theorem \ref{thm:sos.sdp}), we define the \emph{dsos} and \emph{sdsos} polynomials in terms of dd and sdd matrices respectively.

\begin{definition}[\cite{isos_journal,dsos_ciss14}] \label{def:dsos.sdsos.rdsos.rsdsos}
	Recall that $z(x,d)$ denotes the vector of all monomials of degree exactly $d$. A form $p(x)$ of degree $2d$ is said to be
	
	\begin{itemize}
		\item  \emph{diagonally-dominant-sum-of-squares} (dsos) if it admits a representation as $p(x)=z^T(x,d)Qz(x,d)$, where $Q$ is a dd matrix.
		\item  \emph{scaled-diagonally-dominant-sum-of-squares} (sdsos) if it admits a representation as $p(x)=z^T(x,d)Qz(x,d)$, where $Q$ is an sdd matrix.
		%\item \emph{$r$-diagonally-dominant-sum-of-squares} ($r$-dsos) if there exists a positive integer $r$ such that \\$p(x) (\sum_{i=1}^n x_i^2)^r$ is dsos.
		%	\item \emph{$r$-scaled diagonally-dominant-sum-of-squares} ($r$-sdsos) if there exists a positive integer $r$ such that \\$p(x)  (\sum_{i=1}^n x_i^2)^r$ is sdsos.\footnote{Although we gave all the definitions for forms (i.e., homogeneous polynomials), they are no different for non-homogeneous polynomials, once one dehomogenizes the monomial vector $z(x,d)$ by setting one of its variables equal to 1.}
	\end{itemize}
\end{definition}

The definitions for non-homogeneous polynomials are exactly the same, except that we replace the vector of monomials of degree exactly $d$ with the vector of monomials of degree $\leq d$. We observe that a quadratic form $x^TQx$ is dsos/sdsos/sos if and only if the matrix $Q$ is dd/sdd/psd. Let us denote the cone of forms in $n$ variables and degree $d$ that are dsos and sdsos by $DSOS_{n,d}$, $SDSOS_{n,d}$. The following inclusion relations are straightforward: $$DSOS_{n,d}\subseteq SDSOS_{n,d}\subseteq SOS_{n,d}\subseteq PSD_{n,d}.$$

%The multiplier $(\sum_{i=1}^n x_i^2)^r$ should be thought of as a special denominator in the Artin-type representation in (\ref{eq:sos}). 

%The use of the multiplier $(\sum_{i=1}^n x_i^2)^r$ is motivated by a result of Reznick~\cite{Reznick_Unif_denominator} related to Hilbert's 17th problem, which states that if a form is positive definite (i.e., strictly positive on the unit sphere), then there exists and integer $r$ such that $p\cdot (\sum_{i=1}^n x_i^2)^r$ is a sum of squares.

%By appealing to some theorems of real algebraic geometry, it is shown in~\cite{isos_journal} that under some conditions, as the power $r$ increases, the sets $rDSOS_{n,d}$ (and hence $rSDSOS_{n,d}$) fill up the entire cone $PSD_{n,d}.$ We will mostly be concerned with the cones $DSOS_{n,d}$ and $SDSOS_{n,d}$, which correspond to the case where $r=0$. 

From the point of view of optimization, our interest in all of these algebraic notions stems from the following theorem.

\begin{theorem}[\cite{isos_journal,dsos_ciss14}] For any fixed $d$, optimization over the cones $DSOS_{n,d}$ (resp. $SDSOS_{n,d}$) can be done with linear programming (resp. second order cone programming) of size polynomial in $n$.
\end{theorem}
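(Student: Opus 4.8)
The plan is to reduce each optimization problem to a standard conic program and then bound its size. Fix the degree of the forms; since dsos/sdsos forms have even degree, write it as $2d$, so that the relevant monomial vector is $z(x,d)$, of length $N := \binom{n+d-1}{d}$, and note that $N$ is a polynomial in $n$ of degree $d$ once $d$ is fixed. \emph{First} I would make the Gram representation explicit. By definition, $p \in DSOS_{n,2d}$ (resp.\ $p \in SDSOS_{n,2d}$) if and only if there is a symmetric $N\times N$ matrix $Q \in DD_N$ (resp.\ $Q \in SDD_N$) with $p(x) = z^T(x,d) Q z(x,d)$. Expanding the right-hand side and equating coefficients of like monomials — using that two polynomials are equal iff their coefficient vectors agree — turns this identity into a system of linear equations: for each monomial $x^\gamma$ of degree $2d$, the coefficient of $x^\gamma$ in $p$ equals $\sum_{\alpha+\beta=\gamma} Q_{\alpha\beta}$. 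This is $\binom{n+2d-1}{2d} = O(n^{2d})$ equations in the $\binom{N+1}{2}$ entries of $Q$ (and in the coefficients of $p$, which are themselves affine in the decision variables of the ambient optimization problem). So it remains only to encode $Q \in DD_N$ and $Q \in SDD_N$ by LP and SOCP constraints, respectively.

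\emph{For the DSOS case}, the diagonal-dominance condition $Q_{ii} \ge \sum_{j\ne i} |Q_{ij}|$ is linearized in the standard way: introduce auxiliary scalars $T_{ij}$ for $i\ne j$, and impose the linear inequalities $T_{ij} \ge Q_{ij}$, $T_{ij} \ge -Q_{ij}$, and $Q_{ii} \ge \sum_{j\ne i} T_{ij}$ for all $i$. This adds $O(N^2)$ variables and $O(N^2)$ inequalities, all linear. Hence minimizing a linear functional of the coefficients of $p$ over $p\in DSOS_{n,2d}$ (together with any further affine constraints of the ambient problem) becomes a linear program whose numbers of variables and constraints are polynomial in $n$; this proves the DSOS part.

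\emph{For the SDSOS case}, I would use the characterization of scaled diagonally dominant matrices established by Ahmadi and Majumdar~\cite{isos_journal,dsos_ciss14}: $Q \in SDD_N$ if and only if $Q = \sum_{i<j} M^{ij}$, where each $M^{ij} \in S_N$ has zeros outside the four entries indexed by $\{i,j\}$ and the $2\times 2$ submatrix $\begin{pmatrix} M^{ij}_{ii} & M^{ij}_{ij} \\ M^{ij}_{ij} & M^{ij}_{jj} \end{pmatrix}$ is positive semidefinite. Positive semidefiniteness of a symmetric $2\times 2$ matrix $\begin{pmatrix} a & b \\ b & c \end{pmatrix}$ is exactly the rotated second order cone constraint $a \ge 0$, $\left\| \begin{pmatrix} 2b \\ a - c \end{pmatrix} \right\| \le a+c$. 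There are $\binom{N}{2} = O(N^2)$ blocks $M^{ij}$, each contributing three scalar variables and one rotated-cone constraint; adding the linear equations that enforce $Q = \sum_{i<j} M^{ij}$ together with the coefficient-matching equations above yields a second order cone program of size polynomial in $n$. This proves the SDSOS part.

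The argument is essentially bookkeeping, and the one point I would be careful about — the \emph{main obstacle}, such as it is — is the size accounting: verifying that $N = \binom{n+d-1}{d}$, the number of degree-$2d$ monomials, the number of $2\times 2$ blocks, and all the auxiliary variables are bounded by polynomials in $n$ once $d$ is fixed, and that the coefficient-matching step is a genuinely finite linear system (just the standard fact that a polynomial vanishes identically iff all of its coefficients vanish). The absolute-value linearization and the $2\times 2$/rotated-cone reformulation are routine, and the $SDD$-as-sum-of-blocks identity may be quoted from the cited work.
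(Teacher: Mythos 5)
Your proposal is correct and follows essentially the same route as the paper: equating coefficients in the Gram representation $p(x)=z^T(x,d)Qz(x,d)$ to get affine constraints, encoding diagonal dominance by linear inequalities (your auxiliary variables $T_{ij}$ are the standard explicit way to do what the paper only asserts), and using the cited decomposition of sdd matrices into $2\times 2$ psd blocks imposed via rotated quadratic cone constraints, with the size bounded by $N=\binom{n+d-1}{d}$ being polynomial in $n$ for fixed degree. No gaps; your extra bookkeeping on the number of equations, blocks, and auxiliary variables only makes the paper's sketch more explicit.
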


The ``LP part'' of this theorem is not hard to see. The equality $p(x)=z^T(x,d)Qz(x,d)$ gives rise to linear equality constraints between the coefficients of $p$ and the entries of the matrix $Q$ (whose size is $\sim n^{\frac{d}{2}}\times n^{\frac{d}{2}}$ and hence polynomial in $n$ for fixed $d$). The requirement of diagonal dominance on the matrix $Q$ can also be described by linear inequality constraints on $Q$. The ``SOCP part'' of the statement comes from the fact, shown in~\cite{isos_journal}, that a matrix $A$ is sdd if and only if it can be expressed as 	\begin{align}\label{eq:SDSOS.def}A = \sum_{i< j} M_{2 \times 2}^{ij},\end{align}
where each $ M_{2 \times 2}^{ij}$ is an $n\times n$ symmetric matrix with zeros everywhere except for four entries $M_{ii}, M_{ij}, M_{ji}, M_{jj}$, which must make the $2\times 2$ matrix $\begin{bmatrix} M_{ii} & M_{ij}  \\ M_{ji} & M_{jj} \end{bmatrix}$ symmetric and positive semidefinite. These constraints are \emph{rotated quadratic cone} constraints and can be imposed using SOCP~\cite{alizadeh, socp_boyd}:
$$M_{ii}\geq 0, ~\Bigl\lvert\Bigl\lvert\begin{pmatrix}
2M_{ij}\\M_{ii}-M_{jj}
\end{pmatrix}\Bigl\lvert\Bigl\lvert \leq M_{ii}+M_{jj}.$$

We refer to linear optimization problems over the convex cones $DSOS_{n,d}$, $SDSOS_{n,d}$, and $SOS_{n,d}$ as DSOS programs, SDSOS programs, and SOS programs respectively. In general, quality of approximation decreases, while scalability increases, as we go from SOS to SDSOS to DSOS programs. What we present next can be thought of as an iterative procedure for moving from DSOS/SDSOS relaxations towards SOS relaxations without increasing the problem size in each step.

%In this chapter, we will be using SOS optimization (Section~\ref{sec:jamming}) and SDSOS optimization (Sections~\ref{sec:barriers} and~\ref{sec:quadrotor}) in our numerical experiments. The reader is referred to~\cite{dsos_cdc14,dsos_ciss14,Ahmadi14} for many numerical examples involving DSOS optimization. We also remark in passing that SDSOS or even DSOS programming enjoy many of the same theoretical (asymptotic) guarantees of SOS programming---results of this nature are proven in~\cite{Ahmadi14}.

\section{Pursuing improved bases}\label{sec:basis.pursuit}

Throughout this section, we consider the standard SDP
\begin{equation} \label{eq:genericSDP}
\begin{aligned}
SOS^* &\mathrel{\mathop{:}}=\min_{X\in S_n} C \cdot X\\
&\text{s.t. } A_i \cdot X=b_i, i=1,\ldots,m,\\
&\quad \quad \quad \ \ X \succeq 0,
\end{aligned}
\end{equation}
which we assume to have an optimal solution. We denote the optimal value by $SOS^*$ since we think of a semidefinite program as a sum of squares program over quadratic forms (recall that $PSD_{n,2}=SOS_{n,2}$). This is so we do not have to introduce additional notation to distinguish between degree-2 and higher degree SOS programs. The main goal of this section is to construct sequences of LPs and SOCPs that generate bounds on the optimal value of (\ref{eq:genericSDP}). Section  \ref{subsec:InnerApprox} focuses on providing upper bounds on (\ref{eq:genericSDP}) while Section \ref{subsec:OuterApprox} focuses on lower bounds.

%At optimality $X^*$ is psd, and it has a Cholesky decomposition $U^*$ that verifies $$X^*=U^{*T}U^*.$$ The main goal of this section is to construct sequences of LPs and SOCPs that generate bounds on the optimal value of (\ref{eq:genericSDP}). This is done by creating candidate matrices $U_k$ verifying $X_{k-1}=U_{k}^TU_{k}$ where $X_{k-1}$ is the optimal solution at iterate $k-1$ of the sequence. At each iteration, the bounds obtained through the LPs/SOCPs improve. The hope is then that the sequence of matrices $\{U_k\}_k$ will eventually converge to $U^*$, providing us thereby with the exact SDP bound.

%%%%%%%%%%%%%%%%%%%%%%%%%%%%%%%%%%%%%%%%%%%%%%%%
\subsection{Inner approximations of the psd cone} \label{subsec:InnerApprox}
%%%%%%%%%%%%%%%%%%%%%%%%%%%%%%%%%%%%%%%%%%%%%%%

To obtain upper bounds on (\ref{eq:genericSDP}), we need to replace the constraint $X \succeq 0$ by a stronger condition. In other words, we need to provide \emph{inner approximations} to the set of psd matrices.

First, let us define a family of cones $$DD(U)\mathrel{\mathop{:}}=\{M \in S_n~|~ M=U^TQU \text{ for some dd matrix } Q \},$$parametrized by an $n \times n$ matrix $U$. Optimizing over the set $DD(U)$ is an LP since $U$ is fixed, and the defining constraints are linear in the coefficients of the two unknowns $M$ and $Q$. Furthermore, the matrices in $DD(U)$ are all psd; i.e., $\forall U,$ $DD(U) \subseteq P_n$.

The iteration number $k$ in the sequence of our LPs consists of replacing the condition $X \succeq 0$ by $X \in DD(U_k)$:
\begin{equation}\label{eq:LPChol}
\begin{aligned}
DSOS_k &\mathrel{\mathop{:}}=\min C \cdot X\\
&\text{s.t. } A_i \cdot X=b_i, ~i=1,\ldots,m,\\
&X \in DD(U_k).
\end{aligned}
\end{equation}
To define the sequence $\{U_k\}$, we assume that an optimal solution $X_k$ to (\ref{eq:LPChol}) exists for every iteration. As it will become clear shortly, this assumption will be implied simply by assuming that only the first LP in the sequence is feasible. The sequence $\{U_k\}$ is then given recursively by
\begin{equation}\label{eq:defUk}
\begin{aligned}
U_0&=I\\
U_{k+1}&=\text{chol}(X_k).
\end{aligned}
\end{equation}

Note that the first LP in the sequence optimizes over the set of diagonally dominant matrices as in the work of Ahmadi and Majumdar~\cite{isos_journal,dsos_ciss14}. By defining $U_{k+1}$ as a Cholesky factor of $X_k$, improvement of the optimal value is guaranteed in each iteration. Indeed, as $X_k=U_{k+1}^T I U_{k+1}$, and the identity matrix $I$ is diagonally dominant, we see that $X_{k} \in DD(U_{k+1})$ and hence is feasible for iteration $k+1$. This entails that the optimal value at iteration $k+1$ is at least as good as the optimal value at the previous iteration; i.e., $DSOS_{k+1}\leq DSOS_k$. Since the sequence $\{DSOS_k\}$ is lower bounded by $SOS^*$ and monotonic, it must converge to a limit $DSOS^*\geq SOS^*$. We have been unable to formally rule out the possibility that $DSOS^*>SOS^*$. In all of our numerical experiments, convergence to $SOS^*$ happens (i.e., $DSOS^*=SOS^*$), though the speed of convergence seems to be problem dependent (contrast e.g. the results of Section~\ref{sec:StableSet} with Section~\ref{sec:Partition}). What is easy to show, however, is that if $X_k$ is positive definite\footnote{This would be the case whenever our inner approximation is not touching the boundary of the psd cone in the direction of the objective. As far as numerical computation is concerned, this is of course always the case.}, then the improvement from step $k$ to $k+1$ is actually \emph{strict}.

%\begin{theorem}\label{thm:strict.imp}
%Let $X_k$ (resp. $X_{k+1}$) be an optimal solution of iterate $k$ (resp. $k+1$) of (\ref{eq:LPChol}), and assume that $DSOS_{k+1}>DSOS^*$, where $DSOS^*$ is the limit of the sequence $\{DSOS_k\}_k$ as defined above. Then $$DSOS_{k+1}<DSOS_{k}.$$
%%In other words, improvement is strict at each iteration.
%\end{theorem}
\begin{theorem}\label{thm:strict.imp}
	Let $X_k$ (resp. $X_{k+1}$) be an optimal solution of iterate $k$ (resp. $k+1$) of (\ref{eq:LPChol}) and assume that $X_k$ is pd and $SOS^*<DSOS_{k}$. Then, $$DSOS_{k+1}<DSOS_{k}.$$
	%In other words, improvement is strict at each iteration.
\end{theorem}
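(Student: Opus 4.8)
The plan is to exhibit, for each sufficiently small $\lambda>0$, a point that is feasible for iterate $k+1$ of (\ref{eq:LPChol}) and whose objective value lies strictly below $DSOS_k$. As the text already notes, $X_k=U_{k+1}^T I\, U_{k+1}\in DD(U_{k+1})$, so $X_k$ is feasible for iterate $k+1$ and $DSOS_{k+1}\le DSOS_k$; all the work is in ruling out equality.

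First I would use the hypothesis $SOS^*<DSOS_k$. Let $X^*$ be an optimal solution of the SDP (\ref{eq:genericSDP}), so $A_i\cdot X^*=b_i$, $X^*\succeq 0$, and $C\cdot X^*=SOS^*<DSOS_k=C\cdot X_k$. Consider the segment $X_\lambda\mathrel{\mathop{:}}=(1-\lambda)X_k+\lambda X^*$ for $\lambda\in[0,1]$. By linearity of the trace inner product, $A_i\cdot X_\lambda=b_i$ for every $\lambda$, and $C\cdot X_\lambda=(1-\lambda)C\cdot X_k+\lambda C\cdot X^*<C\cdot X_k$ for every $\lambda\in(0,1]$. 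So it suffices to show that $X_\lambda\in DD(U_{k+1})$ for all small enough $\lambda>0$.

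Next I would pass to the ``unrotated'' Gram matrix. Since $X_k$ is positive definite, its Cholesky factor $U_{k+1}=\text{chol}(X_k)$ has positive diagonal entries and is therefore invertible, so $Q_\lambda\mathrel{\mathop{:}}=U_{k+1}^{-T}X_\lambda U_{k+1}^{-1}$ is well defined, depends affinely on $\lambda$, and satisfies $X_\lambda=U_{k+1}^T Q_\lambda U_{k+1}$; hence $X_\lambda\in DD(U_{k+1})$ if and only if $Q_\lambda\in DD_n$. At $\lambda=0$ we get $Q_0=U_{k+1}^{-T}(U_{k+1}^T U_{k+1})U_{k+1}^{-1}=I$, which is \emph{strictly} diagonally dominant and hence lies in the interior of $DD_n$ (any symmetric matrix close enough to $I$ has dominant diagonal). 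By continuity of $\lambda\mapsto Q_\lambda$ there is a $\bar\lambda>0$ with $Q_\lambda\in DD_n$ for all $\lambda\in[0,\bar\lambda]$. Choosing any $\lambda\in(0,\bar\lambda]$ yields a point $X_\lambda$ feasible for iterate $k+1$ with $C\cdot X_\lambda<DSOS_k$, so $DSOS_{k+1}\le C\cdot X_\lambda<DSOS_k$, as claimed.

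I do not expect a genuine obstacle here. The positive-definiteness hypothesis enters only through the invertibility of $U_{k+1}$, and the single quantitative ingredient is that $I$ lies in the interior of $DD_n$, which is immediate from strict diagonal dominance. The one point worth stating carefully is that we never need $X^*$ itself to be feasible for iterate $k+1$ (it generally will not be) — only points on the segment joining $X^*$ to $X_k$ that are arbitrarily close to $X_k$, and such points inherit near-identity unrotated Gram matrices by construction.
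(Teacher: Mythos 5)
Your proposal is correct and follows essentially the same route as the paper's own proof: both consider the segment $(1-\lambda)X_k+\lambda X^*$, invert the Cholesky factor $U_{k+1}$ to reduce feasibility to the unrotated Gram matrix $(1-\lambda)I+\lambda U_{k+1}^{-T}X^*U_{k+1}^{-1}$, and use that the identity is strictly diagonally dominant to get membership in $DD_n$ for small $\lambda>0$. The only (harmless) difference is that you spell out the continuity/interiority argument slightly more explicitly than the paper does.
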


\begin{proof}
	We show that for some $\lambda\in(0,1)$, the matrix $\hat{X}\mathrel{\mathop{:}}= (1-\lambda)X_k+\lambda X^*$ is feasible to the LP in iteration number $k+1$. We would then have that $$DSOS_{k+1}\leq C \cdot \hat{X}=(1-\lambda)C \cdot X_k+\lambda C \cdot X^*<{ DSOS_k,}$$
	as we have assumed that $C \cdot X^*=SOS^*<DSOS_k=C \cdot X_k.$ To show feasibility of $\hat{X}$ to LP number $k+1$, note first that as both $X_k$ and $X^*$ satisfy the affine constraints $A_i \cdot X=b_i$, then $\hat{X}$ must also. Since $X_k=U_{k+1}^TU_{k+1}$ and $X_k$ is pd,  $U_{k+1}$ must have positive diagonal entries and is invertible. Let $$X_{k+1}^*\mathrel{\mathbb:}=U_{k+1}^{-T}X^*U_{k+1}^{-1}.$$ For $\lambda$ small enough the matrix $(1-\lambda)I+\lambda X_{k+1}^*$ will be dd since we know the identity matrix is strictly diagonally dominant. Hence, the matrix $$\hat{X}=U_{k+1}^T( (1-\lambda)I+\lambda X_{k+1}^*) U_{k+1}$$ is feasible to LP number $k+1$.
	%Since the identity matrix is strictly diagonally dominant, for $\lambda$ small enough the matrix $(1-\lambda)I+\lambda X_{k+1}^*$ will be dd. Hence, the matrix $$\hat{X}=U_{k+1}^T( (1-\lambda)I+\lambda X_{k+1}^*) U_{k+1}$$ is feasible to LP number $k+1$.
\end{proof}

%Theorem \ref{thm:strict.imp} given below proves an even stronger result: the sequence $\{DSOS_k\}_k$ is in fact \emph{strictly} decreasing. Because the LP optimal value $DSOS_k$ is lower bounded by $SOS^*$ (the SDP optimal value, assumed to be finite), and is monotonically decreasing with $k$, $DSOS_k$ converges to some limit $DSOS^*$. Theoretically, it could be that $DSOS^*$ be strictly greater than the optimal value obtained in SDP (\ref{eq:genericSDP}). In practice however, we seem to always obtain the optimal value of the SDP, though convergence can sometimes be slow. 

%where the matrix $U_k$ is defined recursively. We start with $U_0=I$, so our initial LP is optimizing over the set of diagonally dominant matrices. This LP is lower bounded by $SOS^*$, and we assume that it is feasible. This assumption is satisfied 

%constructed from an optimal solution to the previous problem in such a way that the optimal value at each iteration improves. The sequence can be written
%\begin{equation}\label{eq:LPChol}
%\begin{aligned}
%DSOS_k &\mathrel{\mathop{:}}=\min C \cdot X\\
%&\text{s.t. } A_i \cdot X=b_i, ~\forall i\\
%&X \in DD(U_k).
%\end{aligned}
%\end{equation}
%To define the sequence $\{U_k\}$, we first assume that an optimal solution $X_k$ to (\ref{eq:LPChol}) exists for every iteration $k$. We then take
%\begin{equation}\label{eq:defUk}
%\begin{aligned}
%U_0&=I\\
%U_{k+1}&=\text{chol}(X_k).
%\end{aligned}
%\end{equation}

A few remarks are in order. First, instead of the Cholesky decomposition, we could have worked with some other decompositions such as the LDL decomposition $X_k=LDL^T$ or the spectral decomposition $X_k=H^T\Lambda H$ (where $H$ has the eigenvectors of $X_k$ as columns). Aside from the efficiency of the Cholesky decomposition, the reason we made this choice is that the decomposition allows us to write $X_k$ as $U^TIU$ and the identity matrix $I$ is at the analytic center of the set of diagonally dominant matrices {\cite[Section 8.5.3]{BoydBook}}. Second, the reader should see that feasibility of the first LP implies that all future LPs are feasible and lower bounded. While in most applications that we know of the first LP is automatically feasible (see, e.g., the stable set problem in Section~\ref{sec:StableSet}), sometimes the problem needs to be modified to make this the case. An example where this happens appears in Section~\ref{sec:Partition} (see Theorem~\ref{thm:feas.dsos}), where we apply an SOS relaxation to the partition problem.

Alternatively, one can first apply our iterative procedure to a Phase-I problem
\begin{equation} \label{eq:PhaseI}
\begin{aligned}
\alpha_k &\mathrel{\mathop:}= \min \alpha\\
&\text{s.t. } A_i \cdot X=b_i, i=1,\ldots,m\\
&X+\alpha I \in DD(U_k),
\end{aligned}
\end{equation}
with $U_k$ defined as in (\ref{eq:defUk}). Indeed, for $\alpha$ large enough, the initial problem in (\ref{eq:PhaseI}) (i.e., with $U_0=I$) is feasible. Thus all subsequent iterations are feasible and continually decrease $\alpha$. If for some iteration $k$ we get $\alpha_k \leq 0,$ then we can start the original LP sequence (\ref{eq:LPChol}) with the matrix $U_k$ obtained from the last iteration of the Phase-I algorithm.

In an analogous fashion, we can construct  a sequence of SOCPs that provide upper bounds on $SOS^*$. This time, we define a family of cones $${SDD(U)\mathrel{\mathop{:}}=\{M \in S_n ~|~ M=U^TQU, \text{ for some sdd matrix } Q\},}$$ parameterized again by an $n\times n$ matrix $U$. For any $U$, optimizing over the set $SDD(U)$ is an SOCP and we have $SDD(U)\subseteq P_n$. This leads us to the following iterative SOCP sequence:
\begin{equation} \label{eq:SOCPchol}
\begin{aligned} 
SDSOS_k  &\mathrel{\mathop{:}}= \min C \cdot X\\
&\text{s.t. } A_i \cdot X=b_i, i=1,\ldots,m,\\
&X \in SDD(U_k).
\end{aligned}
\end{equation}
Assuming existence of an optimal solution $X_k$ at each iteration, we can once again define the sequence $\{U_k\}$ iteratively as
\begin{align*}
U_0&=I\\
U_{k+1}&=\text{chol}(X_k).
\end{align*}

{ The previous statements concerning strict improvement of the LP sequence as described in Theorem \ref{thm:strict.imp}, as well as its convergence} carry through for the SOCP sequence. In our experience, our SOCP bounds converge to the SDP optimal value often faster than our LP bounds do. While it is always true that $SDSOS_0\leq DSOS_0$ (as $DD\subseteq SDD$), the inequality can occasionally reverse in future iterations.  

%As in the LP case, the existence of an optimal solution is contingent on the feasibility of the first iteration of (\ref{eq:SOCPchol}). In the case where the first iteration is not feasible, we can also define a Phase-I type algorithm for the SOCP sequence, replacing $X+ \alpha I \in DD(U_k)$ by $X +\alpha I \in SDD(U_k)$ in (\ref{eq:PhaseI}). Finally, the convergence results remain true: in particular, each iteration strictly improves on the previous one. This can be seen through an analoguous proof to the one given in Theorem \ref{thm:strict.imp}.

\begin{figure}[h!]
	\begin{center}
		\mbox{
			\subfigure[LP inner approximations]
			{\label{subfig:DDCones1D}\scalebox{0.46}{\includegraphics{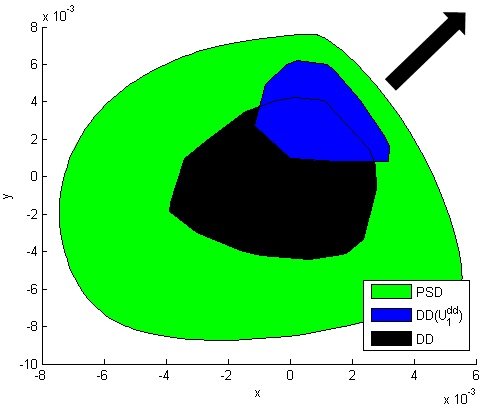}}}}
		\mbox{
			\subfigure[SOCP inner approximations]
			{\label{subfig:SDDCones1D}\scalebox{0.45}{\includegraphics{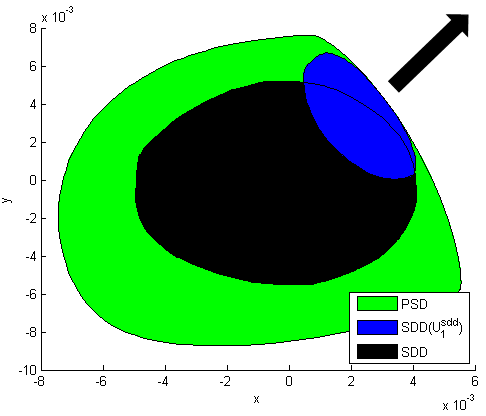}}}
		}
		
		\caption{Improvement after one Cholesky decomposition when maximizing the objective function $x+y$}
		\label{fig:DDSDDCones1D}
	\end{center}
	%\vspace{20pt}
\end{figure}

An illustration of both procedures is given in Figure \ref{fig:DDSDDCones1D}. We generated two random symmetric matrices $A$ and $B$ of size $10 \times 10$. The outermost set is the feasible set of an SDP with the constraint $I+xA+yB \succeq 0$. The goal is to maximize the function $x+y$ over this set. The set labeled $DD$ in Figure \ref{subfig:DDCones1D} (resp. $SDD$ in Figure \ref{subfig:SDDCones1D}) consists of the points $(x,y)$ for which $I+xA+yB$ is dd (resp. sdd). Let ($x_{dd}^*,y_{dd}^*$) (resp. ($x_{sdd}^*,y_{sdd}^*$)) be optimal solutions to the problem of maximizing $x+y$ over these sets. The set labeled $DD(U_1^{dd})$ in Figure \ref{subfig:DDCones1D} (resp. $SDD(U_1^{sdd})$ in Figure \ref{subfig:SDDCones1D}) consists of the points $(x,y)$ for which $I+xA+yB \in DD(U_1^dd)$ (resp. $\in SDD(U_1^{sdd}$)) where $U_1^{dd}$ (resp. $U_1^{sdd}$) corresponds to the Cholesky decomposition of $I+x_{dd}^*A+y_{dd}^*B$ (resp. $I+x_{sdd}^*A+y_{sdd}^*B$). Notice the interesting phenomenon that while the new sets happen to shrink in volume, they expand in the direction that we care about. Already in one iteration, the SOCP gives the perfect bound here.

%In other words, we are improving the set in the direction given by $x+y$. Notice that our inner approximations after one iteration are worse in terms of volume but better in terms of the objective we care about.

%\begin{figure}[H]
%	\begin{center}
%		\mbox{
%			\subfigure{LP inner approximations]
%			{\label{subfig:DDCones}\scalebox{0.46}{\includegraphics{CholDD1D}}}}
%		\mbox{
%			\subfigure[SOCP inner approximations]
%			{\label{subfig:SDDCones}\scalebox{0.45}{\includegraphics{CholeskyconesMax1DSDD}}}
%		}
%		
%		\caption{Improvement in one step in one direction given by $\begin{pmatrix}1 1 \end{pmatrix}^T$}
%		\label{fig:DDSDDCones1D}
%	\end{center}
%	%\vspace{20pt}
%\end{figure}

\begin{figure}[h!]
	\begin{center}
		\mbox{
			\subfigure[LP inner approximations]
			{\label{subfig:DDCones}\scalebox{0.46}{\includegraphics{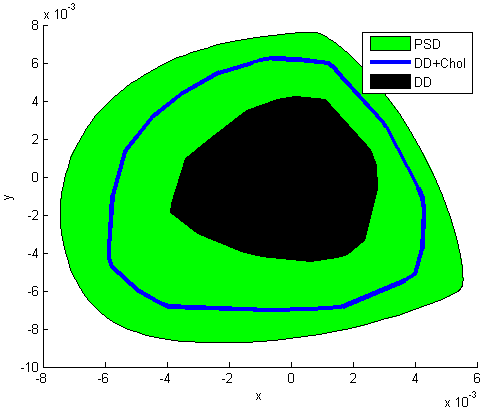}}}}
		\mbox{
			\subfigure[SOCP inner approximations]
			{\label{subfig:SDDCones}\scalebox{0.45}{\includegraphics{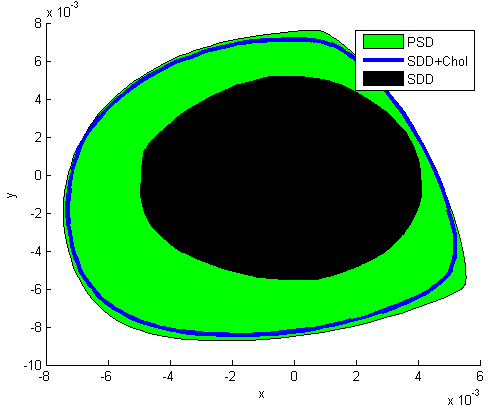}}}
		}
		
		\caption{Improvement in all directions after one Cholesky decomposition}
		\label{fig:DDSDDCones}
	\end{center}
	%\vspace{20pt}
\end{figure}

In Figure \ref{subfig:DDCones}, instead of showing the improvement in just the North-East direction, we show it in all directions. This is done by discretizing a large set of directions $d_i=(d_{i,x},d_{i,y})$ on the unit circle and optimizing along them. More concretely, for each $i$, we maximize $d_{i,x}x+d_{i,x}y$ over the set $I+xA+yB \in DD_n$. We extract an optimal solution every time and construct a matrix $U_{1,d_i}$ from its Cholesky decomposition. We then maximize in the same direction once again but this time over the set $I+xA+yB \in DD(U_{1,d_i})$. The set of all new optimal solutions is what is plotted with the thick blue line in the figure. We proceed in exactly the same way with our SOCPs to produce Figure~\ref{subfig:SDDCones}. Notice that both inner approximations after one iteration improve substantially. The SOCP in particular fills up almost the entire spectrahedron.

%In both Figure \ref{subfig:DDCones} and \ref{subfig:SDDCones}, the innermost and outermost sets are identical to the ones drawn in Figure \ref{fig:DDSDDCones}. Notice that both inner approximations after one iteration improve substantially on the initial inner approximations. This is particularly the case for the SOCP-based algorithm.

\subsection{Inner approximations to the cone of nonnegative polynomials}\label{subsec:polyopt}

%As mentioned in the Introduction, many polynomial optimization problems consist of optimizing over the set of nonnegative polynomials, which is generally inner approximated by the computationally tractable set of sum-of-squares polynomials. Indeed, as seen in Section \ref{sec:prelims}, sum-of-square constraints can be encoded as SDPs. Generating a sequence of LP or SOCP-based bounds on the SDP bound must then be done by inner approximating the psd cone if one wants to obtain valid bounds on the original problem. This is where the algorithms defined in Section \ref{subsec:InnerApprox} come in useful.

A problem domain where inner approximations to semidefinite programs can be useful is in sum of squares programming. This is because the goal of SOS optimization is already to inner approximate the cone of nonnegative polynomials. So by further inner approximating the SOS cone, we will get bounds in the same direction as the SOS bounds.

Let $z(x)$ be the vector of monomials of degree up to $d$. Define a family of cones of degree-$2d$ polynomials $${DSOS(U)\mathrel{\mathop:}=\{p ~|~ p(x)=z^T(x)U^TQUz(x), \text{ for some dd matrix } Q \},}$$ parameterized by an $n\times n$ matrix $U$. We can think of this set as the cone of polynomials that are dsos in the basis $Uz(x)$. If an SOS program has a constraint ``$p$ sos'', we will replace it iteratively by the constraint $p \in DSOS(U_k)$. The sequence of matrices $\{U_k\}$ is again defined recursively with
\begin{align*}
U_0&=I\\
U_{k+1}&=\mbox{chol}(U_k^TQ_kU_k),
%p_k&=\mathrel{\mathop{:}}z(x)^TU_{k+1}^TU_{k+1}z(x).
\end{align*}
where $Q_k$ is an optimal Gram matrix of iteration $k$. 

Likewise, let $${ SDSOS(U)\mathrel{\mathop:}=\{p~|~ p(x)=z(x)^TU^TQUz(x), \text{ for some sdd matrix } Q \}.}$$  This set can also be viewed as the set of polynomials that are sdsos in the basis $Uz(x)$. To construct a sequence of SOCPs that generate improving bounds on the sos optimal value, we replace the constraint $p$ sos by $p \in SDSOS(U_k)$, where $U_k$ is defined as above.

\begin{figure}[h!]
	\begin{center}
		\mbox{
			\subfigure[LP inner approximations]
			{\label{subfig:DSOSCones}\scalebox{0.46}{\includegraphics{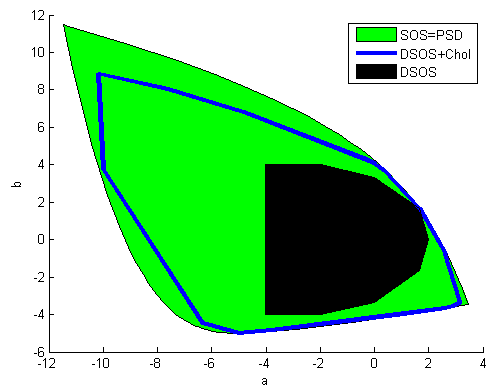}}}}
		\mbox{
			\subfigure[SOCP inner approximations]
			{\label{subfig:SDSOSCones}\scalebox{0.46}{\includegraphics{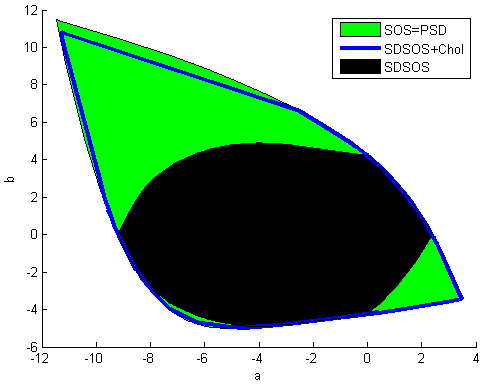}}}
		}
		
		\caption{Improvement in all directions after one Cholesky decomposition}
		\label{fig:DSOSSDSOSCones}
	\end{center}
	%\vspace{20pt}
\end{figure}

In Figure \ref{fig:DSOSSDSOSCones}, we consider a parametric family of polynomials
$$p_{a,b}(x_1,x_2)=2x_1^4+2x_2^4+ax_1^3x_2+(1-a)x_2^2x_2^2+bx_1x_2^3.$$ The outermost set in both figures corresponds to the set of { pairs} $(a,b)$ for which $p_{a,b}$ is sos. As $p_{a,b}$ is a bivariate quartic, this set coincides with the set of $(a,b)$ for which $p_{a,b}$ is nonnegative. The innermost sets in the two subfigures correspond to $(a,b)$ for which $p_{a,b}$ is dsos (resp. sdsos). The { thick blue} lines illustrate the optimal points achieved when maximizing in all directions over the sets obtained from a single Cholesky decomposition. (The details of the procedure are exactly the same as Figure~\ref{fig:DDSDDCones}.) Once again, the inner approximations after one iteration improve substantially over the DSOS and SDSOS approximations.

% In Figure \ref{subfig:DSOSCones}, the intermediate  boundary is obtained by discretizing the set of all possible directions to a finite set $\{d_i\}$. We define $p_{a,b}^{d_i}$ to be an optimal solution of the problem of optimizing in direction $d_i$ over the set $p_{a,b} \in DSOS$. From this polynomial, we construct $U_1^{d_i}$ such that $p_{a,b}^{d_i}=z(x)^TU_1^{d_i~ T}U_1^{d_i}z(x)$. The intermediate boundary corresponds to solutions when optimizing in all directions on the set $DSOS(U_1^{d_i}).$ Once again, the inner approximations after one iteration improve substantially on the initial inner approximations.

\subsection{Extreme-ray interpretation of the change of basis} \label{subsec:Corners}
In this section, we present an alternative but equivalent way of expressing the LP and SOCP-based sequences. This characterization is based on the extreme-ray description of the cone of diagonally dominant/scaled diagonally dominant matrices. It will be particularly useful when we consider outer approximations of the psd cone in Section \ref{subsec:OuterApprox}.

\begin{lemma}[Barker and Carlson \cite{dd_extreme_rays}]\label{lem:dd.corners}
	A symmetric matrix $M$ is diagonally dominant if and only if it can be written as $$M=\sum_{i=1}^{n^2} \alpha_i v_iv_i^T, \alpha_i\geq 0,$$ where $\{v_i\}$ is the set of all {nonzero} vectors in $\mathbb{R}^n$ with at most $2$ nonzero components, each equal to $\pm 1$.
\end{lemma}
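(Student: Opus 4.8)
The plan is to prove both implications directly, treating the reverse (``only if'') direction as an explicit decomposition.

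For the ``if'' direction, I would first observe that each candidate generator $v v^T$ is itself diagonally dominant: if $v = \pm e_j$ then $v v^T$ is a diagonal $0/1$ matrix, and if $v = \pm e_j \pm e_k$ with $j \neq k$ then $v v^T$ has $1$'s in the $(j,j)$ and $(k,k)$ entries and $\pm 1$ in the $(j,k)$ and $(k,j)$ entries, so the diagonal-dominance inequalities $a_{ii} \ge \sum_{\ell \neq i} |a_{i\ell}|$ hold (with equality at rows $j$ and $k$, trivially elsewhere). Since $DD_n$ is a convex cone — it is cut out by the inequalities $a_{ii} \ge \sum_{\ell \neq i} |a_{i\ell}|$, each of which defines a cone — any nonnegative combination $\sum_i \alpha_i v_i v_i^T$ remains in $DD_n$.

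For the ``only if'' direction, given a diagonally dominant $M = (m_{ij})$, I would build the decomposition pair by pair. For each pair $i < j$, set $w_{ij} = e_i - e_j$ if $m_{ij} \le 0$ and $w_{ij} = e_i + e_j$ if $m_{ij} > 0$; then $|m_{ij}|\, w_{ij} w_{ij}^T$ reproduces the $(i,j)$ and $(j,i)$ entries of $M$ exactly while contributing $|m_{ij}|$ to each of the diagonal positions $(i,i)$ and $(j,j)$. Summing over all pairs accounts for every off-diagonal entry of $M$ and places $\sum_{\ell \neq i} |m_{i\ell}|$ on the $i$-th diagonal. The diagonal dominance of $M$ says precisely that $r_i := m_{ii} - \sum_{\ell \neq i}|m_{i\ell}| \ge 0$, so adding $\sum_i r_i\, e_i e_i^T$ corrects the diagonal without disturbing anything off-diagonal, and all coefficients used are nonnegative. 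Finally I would check the count: the vectors $\pm e_i$ give $n$ distinct rank-one matrices, and the four sign choices $\pm e_i \pm e_j$ collapse (since $v v^T = (-v)(-v)^T$) to $2\binom{n}{2} = n(n-1)$ distinct matrices, for a total of $n^2$, matching the statement; coefficients of generators not used are set to $0$.

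I do not expect a genuine obstacle here — the argument is elementary and constructive. The only point requiring care is the bookkeeping in the reverse direction: matching the sign of each off-diagonal entry $m_{ij}$ to the correct generator $e_i \pm e_j$, and verifying that the leftover diagonal slacks $r_i$ are exactly the quantities forced to be nonnegative by the defining inequalities of $DD_n$. One could alternatively invoke the general fact that a closed pointed convex cone equals the conical hull of its extreme rays and then identify the extreme rays of $DD_n$, but the direct construction above is shorter and additionally yields the explicit bound $n^2$ on the number of generators.
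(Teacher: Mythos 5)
Your proof is correct. Note, however, that the paper does not actually prove this statement: both in Chapter 2 (Theorem labeled \texttt{thm:dsos.corners}) and in Chapter 3 (this lemma), the result is quoted from Barker and Carlson and no argument is given, so there is no in-paper proof to compare against. Your self-contained constructive argument fills that gap cleanly: the ``if'' direction correctly verifies that each generator $vv^T$ (for $v=\pm e_j$ or $v=\pm e_j\pm e_k$) satisfies the diagonal-dominance inequalities and that $DD_n$ is a convex cone, so nonnegative combinations stay in $DD_n$; the ``only if'' direction correctly matches the sign of each off-diagonal entry $m_{ij}$ to the generator $e_i+e_j$ or $e_i-e_j$, reproduces the off-diagonal part exactly while loading $\sum_{\ell\neq i}|m_{i\ell}|$ onto the diagonal, and uses the dd slacks $r_i=m_{ii}-\sum_{\ell\neq i}|m_{i\ell}|\ge 0$ with the generators $e_ie_i^T$ to finish; and your count of $n+2\binom{n}{2}=n^2$ distinct rank-one atoms (after identifying $v$ with $-v$) matches the $n^2$ in the statement, which is also consistent with the paper's remark that $DD_n$ has exactly $n^2$ extreme rays.
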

The vectors $v_i$ are the extreme rays of the $DD_n$ cone. This characterization of the set of diagonally dominant matrices leads to a convenient description of the dual cone:
\begin{equation}\label{eq:DD*}
DD^*_n=\{X \in { S_n}~|~ v_i^TXv_i \geq 0, i=1,\ldots,n^2\},
\end{equation}
which we will find to be useful in the next subsection. Using Lemma~\ref{lem:dd.corners}, we can rewrite {the sequence of LPs given in (\ref{eq:LPChol}) as}
%constructed in Section \ref{subsec:InnerApprox}} as 
\begin{equation}\label{eq:LPCorners}
\begin{aligned}
DSOS_k &\mathrel{\mathop{:}}=\min_{X,\alpha_i} C \cdot X\\
&\text{s.t. } A_i \cdot X=b_i,\  i=1,\ldots,m,\\
&X=\sum_{i=1}^{n^2} \alpha_i (U_k^Tv_i)(U_k^Tv_i)^T,\\
&{\alpha_i \geq 0, i=1,\ldots,n^2.}
\end{aligned}
\end{equation}
Let $X_k$ be an optimal solution to the LP in iteration $k$. The sequence of matrices $\{U_k\}$ is defined just as before:
\begin{align*}
U_0&=I\\
U_{k+1}&=\text{chol}(X_k).
\end{align*}

In the first iteration, a linear map is sending (or intuitively ``rotating'') the extreme rays $\{v_iv_i^T\}$ of the dd cone to a new set of extreme rays $\{(U_1^Tv_i)(U_1^Tv_i)^T\}$. This procedure keeps repeating itself without ever changing the number of extreme rays. 

%We see that in each iteration a linear map is sending the previous set of extreme rays $\{\}$ to new ex

%In each iteration, a linear map is created that maps the old extreme rays to new ones, without ever changing their number which stays equal to $n^2$. This is different from recent work done by Ahmadi et al.\cite{AAASDGH} where new elements were added to the existing extreme rays at each iteration.

As the sequence of LPs defined in (\ref{eq:LPCorners}) is equivalent to the sequence defined in (\ref{eq:LPChol}), the optimal value of (\ref{eq:LPCorners}) improves in each iteration. This can be seen directly: Indeed, $X_{k}$ is feasible for iteration $k+1$ of (\ref{eq:LPCorners}) by taking $\alpha_i=1$ when {$v_i$ has} exactly one nonzero entry equal to $1$ and $\alpha_i=0$ otherwise. This automatically implies that $DSOS_{k+1}\leq DSOS_k.$ Moreover, the improvement is strict under the assumptions of Theorem~\ref{thm:strict.imp}.

The set of scaled diagonally dominant matrices can be described in a similar fashion. In fact, from (\ref{eq:SDSOS.def}), we know that any scaled diagonally dominant matrix $M$ can be written as $${M=\sum_{i=1}^{ {n \choose 2} }V_i \Lambda_i V_i^T,}$$
where $V_i$ is an $ n \times 2$ matrix whose columns each contain exactly one nonzero element which is equal to $1,$ and $\Lambda_i$ is a $2 \times 2$ symmetric psd matrix.

This characterization of $SDD_n$ gives an immediate description of the dual cone $$SDD^*_n={\left\{X \in S^n ~|~ V_i^TXV_i \succeq 0, i=1,\ldots, {n \choose 2}\right\}},$$ which will become useful later. Our SOCP sequence in explicit form is then
\begin{equation}\label{eq:socp.corner.repres}
\begin{aligned}
SDSOS_k  &= \min_{X,\Lambda_i} C \cdot X\\
&\text{s.t. } A_i \cdot X=b_i, i=1,\ldots,m, \\
&X =\sum_{i=1}^{n \choose 2} (U_k^TV_i) \Lambda_i (U_k^TV_i)^T,\\
&\Lambda_i \succeq 0.
\end{aligned}
\end{equation}

If $X_k$ is an optimal solution at step $k$, the matrix sequence $\{U_k\}$ is defined as before:
\begin{align*}
U_0&=I\\
U_{k+1}&=\text{chol}(X_k).
\end{align*}

The interpretation of (\ref{eq:socp.corner.repres}) is similar to that of (\ref{eq:LPCorners}).
%, except that we no longer have a finite set of extreme rays.

\subsection{Outer approximations of the psd cone}\label{subsec:OuterApprox}

In Section \ref{subsec:InnerApprox}, we considered inner approximations of the psd cone to obtain upper bounds on (\ref{eq:genericSDP}). In many applications, semidefinite programming is used as a ``relaxation'' to provide outer approximations to some nonconvex sets. This approach is commonly used for relaxing quadratic programs; see, e.g., Section~\ref{sec:StableSet}, where we consider the problem of finding the largest stable set of a graph. In such scenarios, it does not make sense for us to inner approximate the psd cone: to have a valid relaxation, we need to outer approximate it. {This can be easily achieved by working with the dual problems, which we will derive explicitly in this section.}
% Luckily, duality will get this for us for free. In this section, we write down these dual problems explicitly.

%is of no interest to inner approximate the psd cone; in fact, we need to construct outer approximations of it. The bounds obtained with the outer approximations will then constitute valid bounds on the optimal value of the nonconvex problem we are interested in.

Since $P_n \subseteq DD^*_n$, the first iteration in our LP sequence for outer approximation will be
\begin{align*}
DSOSout_0 &\mathrel{\mathop{:}}= \min_{X} C \cdot X\\
&\text{s.t. } A_i \cdot X =b_i, i=1,\ldots,m,\\
&X \in DD_n^*.
\end{align*}
By the description of the dual cone in (\ref{eq:DD*}), we know this can be equivalently written as
\begin{equation} \label{eq:LP0Outer}
\begin{aligned}
DSOSout_0&= \min_{X} C \cdot X\\
&\text{s.t. } A_i \cdot X =b_i, \forall i\\
&v_i^TXv_i \geq 0, i=1,\ldots,n^2,
\end{aligned}
\end{equation}
where the $v_i$'s are the extreme rays of the set of diagonally dominant matrices as described in Section \ref{subsec:Corners}; namely, all vectors with at most two nonzero elements which are either $+1$ or $-1$. Recall that when we were after inner approximations (Subsection~\ref{subsec:InnerApprox}), the next LP in our sequence was generated by replacing the vectors $v_i$ by $U^Tv_i$, where the choice of $U$ was dictated by a Cholesky decomposition of an optimal solution of the previous iterate. %As seen in Theorem \ref{th:strictimp}, this method guarantees strict improvement of the objective value at each iteration. 
In the outer approximation setting, we seemingly do not have access to a psd matrix that would provide us with a Cholesky decomposition.  However, we can simply get this from the dual of (\ref{eq:LP0Outer})
\begin{align*}
DSOSout^d_0 &\mathrel{\mathop{:}}= \max_{y,\alpha} b^Ty\\
&\text{s.t. } C- \sum_{i=1}^m y_iA_i=\sum_{i=1}^{n^2} \alpha_iv_iv_i^T,\\
&\alpha_i\geq 0, i=1,\ldots,n^2,
\end{align*}
by taking $U_1=\text{chol}(C-\sum_i y_i^*A_i)$. We then replace $v_i$ by $U_1^Tv_i$ in (\ref{eq:LP0Outer}) to get the next iterate and proceed. In general, the sequence of LPs can be written as
\begin{align*}
DSOSout_k&= \min_{X} C \cdot X\\
&\text{s.t. } A_i \cdot X =b_i, i=1,\ldots,m,\\
&v_i^TU_kXU_k^Tv_i \geq 0,
\end{align*}
where $\{U_k\}$ is a sequence of matrices defined recursively as
\begin{align*}
U_0 &=I\\
U_k &=\text{chol}\left(C-\sum_i {y_i^{(k-1)}} A_i \right).
\end{align*}
The vector $y^{k-1}$ here is an optimal solution to the dual problem at step $k-1$:
\begin{align*}
DSOSout^d_{k-1} &\mathrel{\mathop{:}}= \max_{y,\alpha} b^Ty\\
&\text{s.t. } C- \sum_{i=1}^m y_iA_i=\sum_{i=1}^{n^2} \alpha_i(U_{k-1}^Tv_i)(U_{k-1}^Tv_i)^T,\\
&\alpha_i\geq 0, i=1,\ldots,n^2.
\end{align*}
This algorithm again strictly improves the objective value at each iteration. Indeed, from LP strong duality, we have $$DSOSout_k=DSOSout_k^d$$ and Theorem \ref{thm:strict.imp} applied to the dual problem states that $$DSOSout_{k-1}^d<DSOSout_{k}^d.$$

The sequence of SOCPs for outer approximation can be constructed in an analogous manner:
\begin{align*}
SDSOSout_k&= \min_{X} C \cdot X\\
&\text{s.t. } A_i \cdot X =b_i, i=1,\ldots,m,\\
&V_i^TU_kXU_k^TV_i \succeq 0, i=1,\ldots,{n \choose 2},
\end{align*}
where $V_i$'s are $n \times 2$ matrices containing exactly one 1 in each column, and $\{U_k\}$ is a sequence of matrices defined as
\begin{align*}
U_0 &=I\\
U_k &=\text{chol}\left(C-\sum_i {y_i^{(k-1)}} A_i \right)
\end{align*}
Here again, the vector {$y^{(k-1)}$} is an optimal solution to the dual SOCP at step $k-1$:
\begin{align*}
SDSOSout^d_{k-1} &\mathrel{\mathop{:}}= \max_{y,\Lambda_i} b^Ty\\
&\text{s.t. } C- \sum_{i=1}^m y_iA_i=\sum_{i=1}^{n \choose 2} (U_{k-1}^Tv_i)\Lambda_i(U_{k-1}^Tv_i)^T,\\
&\Lambda_i\succeq 0, i=1,\ldots,{n \choose 2},
\end{align*}
where each $\Lambda_i$ is a $2 \times 2$ unknown symmetric matrix.

% Similarly to the LP case, Theorem \ref{thm:strict.imp} guarantees that the dual bounds will strictly improve from one iteration to another.

\begin{remark}
	Let us end with some concluding remarks about our algorithm. There are other ways of improving the DSOS and SDSOS bounds. For example, Ahmadi and Majumdar~\cite{isos_journal, majumdar2014control} propose the requirement that $(\sum_{i=1}^n x_i^2)^rp(x)$ be dsos or sdsos as a sufficient condition for nonnegativity of $p$. As $r$ increases, the quality of approximation improves, although the problem size also increases very quickly. Such hierarchies are actually commonly used in the sum of squares optimization literature. But unlike our approach, they do not take into account a particular objective function and may improve the inner approximation to the PSD cone in directions that we do not care about.
	%
	%Unlike our approach, such a hierarchy, which is actually commonly used in the sum of squares optimization literature, does not take into account a particular objective function; it may improve the inner approximation to the PSD cone in directions that we do not care about.
	%
	Nevertheless, these hierarchies have interesting theoretical implications. Under some assumptions, one can prove that as $r\rightarrow\infty$, the underlying convex programs succeed in optimizing over the entire set of nonnegative polynomials; see, e.g.,~\cite{Reznick_Unif_denominator, jesus_polya, PhD:Parrilo, isos_journal}.

	Another approach to improve on the DSOS and SDSOS bounds appears in Chapter~\ref{chap:bp}. {We show there how ideas from column generation in large-scale integer and linear programming can be used to iteratively improve inner approximations to semidefinite cones. The LPs and SOCPs proposed in that work take the objective function into account and increase the problem size after each iteration by a moderate amount.} By contrast, the LPs and SOCPs coming from our Cholesky decompositions in this chapter have exactly the same size in each iteration. We should remark however that the LPs from iteration two and onwards are typically more dense than the initial LP (for DSOS) and slower to solve. A worthwhile future research direction {would be to systematically compare the performance of the two approaches and to explore customized solvers for the LPs and the SOCPs that arise in our algorithms.}

\end{remark}

\section{The maximum stable set problem} \label{sec:StableSet}
A classic problem in discrete optimization is that of finding the stability number of a graph. The graphs under our consideration in this section are all undirected and unweighted. A \emph{stable set} (or \emph{independent set}) of a graph $G=(V,E)$ is a set of nodes of $G$ no two of which are adjacent. The stability number of {$G$}, often denoted by $\alpha(G)$, is the size of its maximum stable set(s). The problem of determining $\alpha$ has many applications in {scheduling (see, e.g., \cite{golumbic2005algorithmic}) and coding theory \cite{Lovasz}}. As an example, the maximum number of final exams that can be scheduled on the same day at a university without requiring any student to take two exams is given by the stability number of a graph. This graph has courses IDs as nodes and an edge between two nodes if and only if there is at least one student registered in both courses. Unfortunately, the problem of testing whether $\alpha(G)$ is greater than a given integer $k$ is well known to be NP-complete~\cite{Karp}. Furthermore, the stability number cannot be approximated within a factor $|V|^{1-\epsilon}$ for any $\epsilon >0$ unless P$=$NP \cite{HaastadJohan}.

A straightforward integer programming formulation of $\alpha(G)$ is given by
\begin{align*}
\alpha(G) &=\max \sum_i x_i\\
&\text{s.t. } x_i+x_j \leq 1, \text{ if } {\{i,j\}} \in E\\
&x_i \in \{0,1\}.
\end{align*} 
The standard LP relaxation for this problem is obtained by changing the binary constraint $x_i \in \{0,1\}$ to the linear constraint $x_i \in [0,1]$:
\begin{equation}\label{eq:standard.LP}
\begin{aligned}
LP &\mathrel{\mathop{:}}=\max \sum_i x_i\\
&\text{s.t. } x_i+x_j \leq 1, \text{ if } {\{i,j\}} \in E\\
&x_i \in [0,1].
\end{aligned} 
\end{equation}
Solving this LP results in an upper bound on the stability number. The quality of this upper bound can be improved by adding the so-called \emph{clique inequalities}. The set of $k$-clique inequalities, denoted by $C_k$, is the set of constraints of the type $x_{i_1}+x_{i_2}+\ldots+x_{i_k}\leq 1$, if $(i_1,\ldots,i_k)$ form a clique (i.e., a complete subgraph) of $G$. Observe that these inequalities must be satisfied for binary solutions to the above LP, but possibly not for fractional ones. Let us define a family of LPs indexed by $k$:
\begin{equation}\label{eq:standard.LP.with.clique.inequ}
\begin{aligned}
LP^k &\mathrel{\mathop{:}}=\max \sum_i x_i\\
&x_i \in [0,1]\\
&C_1,\ldots,C_k \text{ are satisfied.}
\end{aligned} 
\end{equation}
Note that $LP=LP^2$ by construction and $\alpha(G)\leq LP^{k+1}\leq LP^k$ for all $k$. We will be comparing the bound obtained by some of these well-known LPs with those achieved via the new LPs that we propose further below.

A famous semidefinite programming based upper bound on the stability number is due to Lov\'{a}sz~\cite{Lovasz}:
\begin{align*}
\vartheta(G) &\mathrel{\mathop{:}}= \max_X J \cdot X\\
&\text{s.t. } I \cdot X=1\\
&X_{ij}=0, ~\forall {\{i,j\}} \in E\\
&X \succeq 0,
\end{align*}
where $J$ here is the all ones matrix and $I$ is the {identity matrix.} The optimal value $\vartheta(G)$ is called the Lov\'{a}sz theta number of the graph. We have the following inequalities $$\alpha(G)\leq \vartheta(G) \leq LP^k, \ \forall k.$$

The fact that $\alpha(G)\leq \vartheta(G)$ is easily seen by noting that if $S$ is a stable set of maximum size and $1_S$ is its indicator vector, then the rank-one matrix $\frac{1}{|S|} 1_S 1_S^T$ is feasible to the SDP and gives the objective value $|S|$. The other inequality states that this SDP-based bound is stronger than the aforementioned LP bound even with all the clique inequalities added (there are exponentially many). A proof can be found e.g. in \cite[Section 6.5.2]{LaurentVall}.

Our goal here is to obtain LP and SOCP based sequences of upper bounds on the Lov\'{a}sz theta number. To do this, we construct a series of outer approximations of the set of psd matrices as described in Section \ref{subsec:OuterApprox}. The first bound in the sequence of LPs is given by:
\begin{align*}
DSOS_0(G) &\mathrel{\mathop{:}}= \max_X J \cdot X\\
&\text{s.t. } I \cdot X=1\\
&X_{ij}=0, ~\forall { \{i,j\}} \in E\\
&X \in DD_n^*.
\end{align*}

%Notice that this problem is always feasible on the first iteration as one can take $\frac{1}{n}I$ as a solution, hence the algorithm is guaranteed to be feasible for all iterations.

%Through the extreme-ray description of the set of diagonally dominant matrices (see Section~\ref{subsec:OuterApprox}), we see that 

In view of (\ref{eq:DD*}), this LP can be equivalently written as

\begin{equation} \label{eq:DSOSLovasz}
\begin{aligned}
DSOS_0(G) &= \max_X J \cdot X\\
&\text{s.t. } I \cdot X=1\\
&X_{ij}=0, ~\forall { \{i,j\}} \in E\\
&v_i^TXv_i \geq 0, i=1,\ldots,n^2,
\end{aligned}
\end{equation}
where $v_i$ is a vector with at most two nonzero entries, each nonzero entry being either $+1$ or $-1$. {This LP is always feasible (e.g., with $X=\frac{1}{n}I$). Furthermore, it is bounded above.} Indeed, the last constraints in (\ref{eq:DSOSLovasz}) imply in particular that for all $i,j$, we must have $$X_{i,j}\leq \frac{1}{2}(X_{ii}+X_{jj}).$$
This, together with the constraint $I\cdot X=1$, implies that the objective $J\cdot X$ must remain bounded. As a result, the first LP in our iterative sequence will give a finite upper bound on $\alpha$. 

To progress to the next iteration, we will proceed as described in Section \ref{subsec:OuterApprox}. The new basis for solving the problem is obtained through the dual\footnote{The reader should not be confused to see both the primal and the dual as maximization problems. We can make the dual a minimization problem by changing the sign of $y$.} of (\ref{eq:DSOSLovasz}):
%In the inner approximation case, the sequence of LPs is generated as described in Section \ref{Section:bla}: at each iteration, we pick a new linear combination of the vectors $v_i$. This is done through a change of basis dictated by the Cholesky decomposition of the previous solution. As seen in Theorem \ref{Theorem:}, this method guarantees strict improvement of the objective value at each iteration. 
%In the outer approximation case, we seemingly do not have access to a dd matrix that would provide us with the needed Cholesky decomposition.  This can be obtained however through the dual of (\ref{eq:DSOSLovasz}):
\begin{equation}\label{eq:DSOSdualLovasz}
\begin{aligned}
DSOS_0^d(G) &\mathrel{\mathop{:}}=\max y\\
&\text{s.t. } yI+Y-J=\sum_{i=1}^{n^2} \alpha_i v_i v_i^T\\
& Y_{ij}=0 \text{ if } i=j \text{ or } { \{i,j\}} \notin E\\
&\alpha_i \geq 0, i=1,\ldots,n^2.
\end{aligned}
\end{equation}
The second constraint in this problem is equivalent to requiring that $yI+Y-J$ be dd. We  can define $$U_1=\text{chol}(y_0^*I+Y_0^*-J)$$ where $(y_1^*,Y_1^*)$ are optimal solutions to (\ref{eq:DSOSdualLovasz}). We then solve 
\begin{align*}
DSOS_1(G) &\mathrel{\mathop{:}}= \max_X J \cdot X\\
&\text{s.t. } I \cdot X=1\\
&X_{ij}=0, ~\forall { \{i,j\}} \in E\\
&v_i^TU_1XU_1^Tv_i \geq 0, i=1,\ldots,n^2,
\end{align*}
to obtain our next iterate. The idea remains exactly the same for a general iterate $k$: We construct the dual 
\begin{align*}
DSOS_{k}^d(G) &\mathrel{\mathop{:}}=\max y\\
&\text{s.t. } yI+Y-J=\sum_{i=1}^{n^2} \alpha_i U_{k}^T v_i (U_{k}^T v_i)^T\\
& Y_{ij}=0 \text{ if } i=j \text{ or } { \{i,j\}} \notin E\\
&\alpha_i \geq 0, \forall i,
\end{align*}
and define 
\begin{align*}
U_{k+1}\mathrel{\mathop{:}}=\text{chol}(y_k^*+Y_k^*-J), %\cdot U_{k-1},
\end{align*}
where $(y_k^*, Y_k^*)$ is an optimal solution to the dual. The updated primal is then
\begin{equation}\label{eq:DSOSLovaszk}
\begin{aligned}
DSOS_{k+1}(G) &\mathrel{\mathop{:}}= \max_X J \cdot X\\
&\text{s.t. } I \cdot X=1\\
&X_{ij}=0, ~\forall { \{i,j\}} \in E\\
&v_i^TU_{k+1}XU_{k+1}^Tv_i \geq 0, i=1,\ldots,n^2.
\end{aligned}
\end{equation} 
As stated in Section \ref{subsec:OuterApprox}, the optimal values of (\ref{eq:DSOSLovaszk}) are guaranteed to strictly improve as a function of $k$. Note that to get the bounds, we can just work with the dual problems throughout. 

An analoguous technique can be used to obtain a sequence of SOCPs. For the initial iterate, instead of requiring that $X \in DD^*$ in (\ref{eq:DSOSLovasz}), we require that $X \in SDD^*$. This problem must also be bounded and feasible as $$P_n\subseteq SDD^*\subseteq DD^*.$$ Then, for a given iterate $k$, the algorithm consists of solving
\begin{align*}
SDSOS_k(G) &\mathrel{\mathop{:}}=\max_X J \cdot X\\
&\text{s.t. } I \cdot X=1\\
&X_{ij}=0, \forall { \{i,j\}} \in E\\
&V_i^TU_kXU_k^TV_i \succeq 0, i=1,\ldots,{n \choose 2},
\end{align*}
where as explained in Section~\ref{subsec:Corners} each $V_i$ is an $ n \times 2$ matrix whose columns contain exactly one nonzero element which is equal to $1$. The matrix $U_k$ here is fixed and obtained by first constructing the dual SOCP
\begin{align*}
SDSOS_{k}^d(G) &\mathrel{\mathop{:}}=\max y\\
&\text{s.t. } yI+Y-J=\sum_{i=1}^{n\choose 2} U_{k}^T V_i \Lambda_i (U_{k}^T V_i)^T\\
& Y_{ij}=0 \text{ if } i=j \text{ or } { \{i,j\}} \notin E\\
&\Lambda_i \succeq 0, \forall i,
\end{align*}
(each $\Lambda_i$ is a symmetric $2\times 2$ matrix decision variable) and then taking $$U_{k}=\text{chol}(y_k^*I+Y_k^*-J).$$

Once again, one can just work with the dual problems to obtain the bounds.

%As stated in Section \ref{subsec:OuterApprox}, this algorithm is guaranteed to strictly improve as long as strong duality holds. One can check that this is the case as both the primal and the dual are strictly feasible. 

% One can check that this is the case as $X=\frac{1}{n}I$ is strictly feasible for the primal and the primal optimal value is 

%always lower bounded by the stability number **???****cite**.

%and $(Y,y)=(0_n,n+1)$ are strictly feasible for the dual. In other words, 
%$$SDSOS_k^d(G)=SDSOS_k(G).$$
%and we have shown strict improvement of the sequence $\{SDSOS_k\}_k$ in Section \ref{subsec:InnerApprox}.

As our first example, we apply both techniques to the problem of finding the stability number of the complement of the Petersen graph (see Figure \ref{subfig:PetersenGraph}). The exact stability number here is 2 and an example of a maximum stable set is illustrated by the two white nodes in Figure \ref{subfig:PetersenGraph}. The Lov\'asz theta number is 2.5 and has been represented by the continuous line in Figure \ref{subfig:IterDSOSSDSOS}. The dashed lines represent the optimal values of the LP and SOCP-based sequences of approximations for 7 iterations. Notice that already within one iteration, the optimal values are within one unit of the true stability number, which is good enough for knowing the exact bound (the stability number is an integer). From the fifth iteration onwards, they differ from the Lov\'asz theta number by only $10^{-2}$.
%By strong duality, this is equivalent to solving 
%\begin{align*}
%DSOS_2^d(G) &\mathrel{\mathop{:}}=\max y\\
%&\text{s.t. } yI+Y-J=\sum_{i} \alpha_i U_1^Tv_i (U_1^Tv_i)^T\\
%& Y_{ij}=0 \text{ if } i=j \text{ or } { \{i,j\}} \notin E.
%\end{align*}
%as $DSOS_2^d(G)=DSOS_2(G).$ 

\begin{figure}[h!]
	\begin{center}
		\mbox{
			\subfigure[Complement of Petersen graph]
			{\label{subfig:PetersenGraph}\scalebox{0.18}{\includegraphics{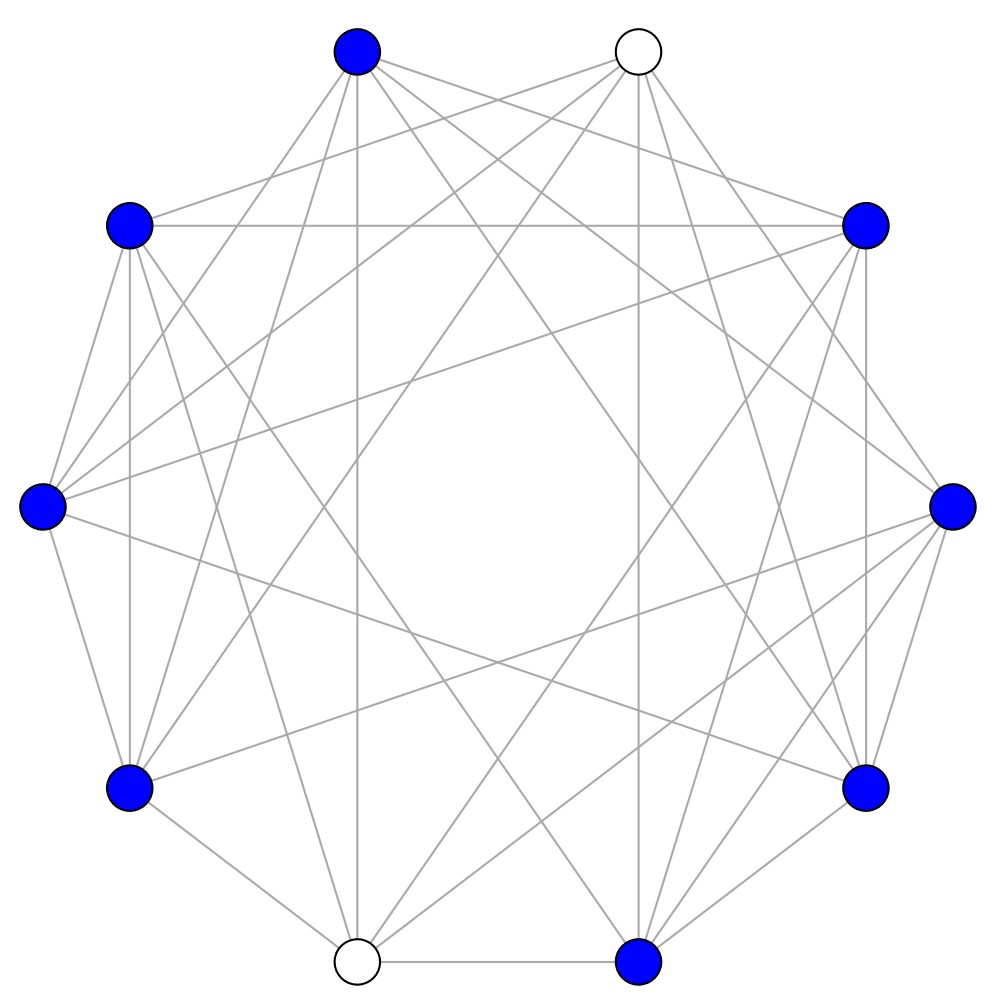}}}}
		\mbox{
			\subfigure[The Lov\'{a}sz theta number and iterative bounds bounds obtained by LP and SOCP]
			{\label{subfig:IterDSOSSDSOS}\scalebox{0.22}{\includegraphics{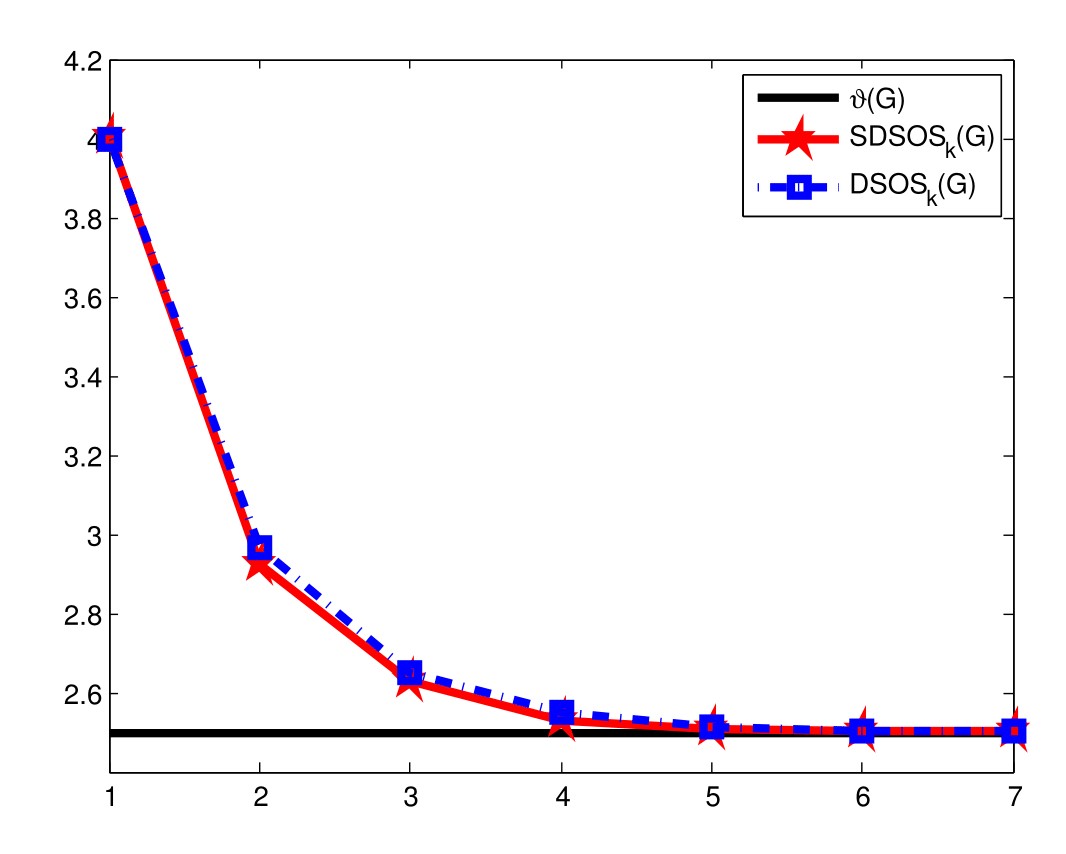}}}
		}
		
		\caption{Upper bounding the stability number of the complement of the Petersen graph}
		\label{fig:dsos.sdsos.Lovasz.Petersen}
	\end{center}
	%\vspace{-20pt}
\end{figure}

\vspace{5mm}

Finally, in Table \ref{table:stable_set_success}, we have generated 100 instances of 20-node Erd\"{o}s-R\'{e}nyi graphs with edge probability $0.5$. For each instance, we compute the bounds from the Lov\'{a}sz SDP, the standard LP in (\ref{eq:standard.LP}), the standard LP with all 3-clique inequalities added ($LP^3$ in (\ref{eq:standard.LP.with.clique.inequ})), and our LP/SOCP iterative sequences. We focus here on iterations 3,4 and 5 because there is no need to go further. We compare our bounds with the standard LP and the standard LP with 3-clique inequalities because they are LPs of roughly the same size. If any of these bounds are within one unit of the true stable set number, we count this as a success and increment the counter. As can be seen in Table \ref{table:stable_set_success}, the Lov\'{a}sz theta number is always within a unit of the stable set number, and so are our LP and SOCP sequences ($DSOS_k,SDSOS_k$) after four or at most five iterations. If we {look just} at the bound after 3 iterations, the success rate of SDSOS is noticeably higher than the success rate of DSOS. Also note that the standard LP with or without the three clique inequalities never succeeds in giving a bound within one unit of $\alpha(G)$.\footnote{All numerical experiments in this chapter have been parsed using either SPOT~\cite{SPOT_Megretski} or YAMIP~\cite{yalmip} and solved using the LP/SOCP/SDP solver of MOSEK~\cite{mosek}.}

%. Furthermore, the LP and SOCP iterative sequences do very well, obtaining bounds within the requisite one unit of the Lov\'{a}sz number ***???*** in 4 iterations on average. One can also note that the SOCP formulation seems to perform slightly better, and that the standard LP with or without the three clique inequalities never succeed in giving a bound within one unit of $\alpha$.

\begin{table}[h!]
	\small
	\begin{tabular}{|c|c|c|c|c|c|c|c|c|}
		\hline
		$\vartheta(G)$  &$LP$ & $LP^3$ & $DSOS_3$ &$DSOS_4$ & $DSOS_5$ & $SDSOS_3$ & $SDSOS_4$ & $SDSOS_5$\\
		\hline
		100\% & 0\% & 0\% & 14\% & 83\% & 100\% & 69\% & 100\% & 100\%\\
		\hline
	\end{tabular}
	\vspace{2mm}
	\caption{Percentage of instances out of 100 where the bound obtained is less than a unit away from the {stability number}}
	\label{table:stable_set_success}
\end{table}
\vspace{0.5mm}

%%%%%%%%%%%%%%%%%%%%%%%%%%%%%%%%%%%%%%%%%%%%%%%
\section{Partition}\label{sec:Partition}
%%%%%%%%%%%%%%%%%%%%%%%%%%%%%%%%%%%%%%%%%%%%%%%

The partition problem is arguably the simplest NP-complete problem to state: Given a list of positive integers $a_1,\ldots,a_n$, is it possible to split them into two sets with equal sums? We say that a partition instance is feasible if the answer is yes (e.g., \{5,2,1,6,3,8,5,4,1,1,10\}) and infeasible if the answer is no (e.g., \{47,20,13,15,36,7,46\}). The partition problem is NP-complete but only weakly. In fact, the problem admits a pseudopolynomial time algorithm based on dynamic programming that can deal with rather large problem sizes efficiently. This algorithm has polynomial running time on instances where the bit size of the integers $a_i$ are bounded by a polynomial in $\log n$~\cite{GareyJohnson_Book}. {In this section, we investigate the performance and mostly limitations of algebraic techniques for refuting feasibility of partition instances.}

Feasibility of a partition instance can always be certified by a short proof (the partition itself). However, unless P=co-NP, we do not expect to always have short certificates of infeasibility.
%
%If an instance is infeasible, however, unless P=co-NP, we do not expect to always have short certificates. 
Nevertheless, we can try to look for such a certificate through a sum of squares decomposition. Indeed, given an instance $a\mathrel{\mathop{:}}=\{a_1,\ldots,a_n\}$, {it is not hard to see\footnote{ This equivalence is apparent in view of the zeros of the polynomial on the right hand side of (\ref{eq:partition.no.eps}) corresponding to a feasible partition.} that} the following equivalence holds:

\begin{align} \label{eq:partition.no.eps}
{ \begin{bmatrix} a \text{ is an infeasible }\\ \text{ partition instance} \end{bmatrix}} \Leftrightarrow p_a(x)\mathrel{\mathop{:}}=\sum_i (x_i^2-1)^2+ (\sum_i a_ix_i)^2>0,~ \forall x\in\mathbb{R}^n.
\end{align}

So if for some $\epsilon >0$ we could prove that $p_a(x)-\epsilon$ is nonnegative, we would have refuted the feasibility of our partition instance.

%We can then use sum-of-squares polynomials to determine whether an instance of partition is feasible.

\begin{definition}\label{def:sos.refut}
	An instance of partition $a_1,\ldots,a_n$ is said to be \emph{sos-refutable} if there exists $\epsilon>0$ such that $p_a(x)-\epsilon$ is sos. 
\end{definition}
Obviously, any instance of partition that is sos-refutable is infeasible. This suggests that we can consider solving the following semidefinite program

%The converse however is false. 

%We will discuss this in more detail at the end of this section. Consider now the optimization problem 
\begin{equation} \label{eq:sospartition}
\begin{aligned}
SOS &\mathrel{\mathop{:}}=\max \ \epsilon \\
&\text{s.t. } q_a(x) \mathrel{\mathop{:}}=p_a(x)-\epsilon \text{ is sos}
\end{aligned}
\end{equation}
and examining its optimal value. Note that the optimal value of this problem is always greater than or equal to zero as $p_a$ is sos by construction. If the optimal value is positive, we have succeeded in proving infeasibility of the partition instance $a$.

%Let $\epsilon^*$ be an optimal solution to (\ref{eq:sospartition}). If $\epsilon^*$ is strictly positive then the instance $a$ is sos-refutable and conversely. 

We would like to define the notions of \emph{dsos-refutable} and \emph{sdsos-refutable} instances analogously by replacing the condition $q_a(x)$ sos by the condition $q_a(x)$ dsos or sdsos. Though (\ref{eq:sospartition}) is guaranteed to always be feasible by taking $\epsilon =0$, this is not necessarily the case for dsos/sdsos versions of (\ref{eq:sospartition}). For example, the optimization problem 
\begin{align}\label{eq:dsos.partition.nonhom}
\max_\epsilon\{\epsilon ~|~ p_a(x) -\epsilon  \text{ dsos}\}
\end{align}
on the instance $\{1,2,2,1,1\}$ is infeasible.\footnote{Under other structures on a polynomial, the same type of problem can arise for sos. For example, consider the Motzkin polynomial~\cite{MotzkinSOS} $M(x_1,x_2)=x_1^2x_2^4+x_2^2x_1^4-3x_1^2x_2^2+1$  which is nonnegative everywhere. The problem $\max_\epsilon\{\epsilon ~|~ M(x) -\epsilon  \text{     sos}\}$ is infeasible.} This is a problem for us as we need the first LP to be feasible to start our iterations. We show, however, that we can get around this issue by modeling the partition problem with homogeneous polynomials.

%  Therefore, defining dsos/sdsos-refutable similarly to sos-refutable would not work, particularly as our end goal is to generate a sequence of LPs (resp. SOCPs) using (\ref{eq:dsos.partition.nonhom}) (resp. (\ref{eq:dsos.partition.nonhom}) with sdsos) as the first iteration. Because of this, we consider a homogeneous version of the definition given above.

\begin{definition}
	Let $p_a$ be as in (\ref{eq:partition.no.eps}). An instance of partition $a_1,\ldots,a_n$ is said to be \emph{dsos-refutable} (resp. \emph{sdsos-refutable}) if there exists $\epsilon>0$ such that the quartic form
	\begin{align}\label{eq:homog}
	q_{a,\epsilon}^h(x)\mathrel{\mathop{:}}=p_a \left( \frac{x}{\left(\frac{1}{n}\sum_i x_i^2\right)^{1/2} }\right)\left( \frac{1}{n}\sum_i  x_i^2\right)^2-\epsilon \left( \frac{1}{n} \sum_i x_i^2 \right)^2
	\end{align}
	is dsos (resp. sdsos).
\end{definition}
Notice that $q_{a,\epsilon}^h$ is indeed a polynomial as it can be equivalently written as
$$ \sum_i x_i^4 +\left(\left (\sum_i a_ix_i\right)^2-2\sum_i x_i^2 \right) \cdot \left( \frac{1}{n}\sum_i  x_i^2\right) +(n-\epsilon) \cdot \left( \frac{1}{n}\sum_i  x_i^2\right)^2.$$ 

What we are doing here is homogenizing a polynomial that does not have odd monomials by multiplying its lower degree monomials with appropriate powers of $\sum_i  x_i^2$. The next theorem tells us how we can relate nonnegativity of this polynomial to feasibility of partition.

%For simplicity, we will take 
%\begin{align} \label{eq:def.homog.no.eps}
%p_a^h(x) \mathrel{\mathop{:}}=p_a \left( \frac{x}{\left(\frac{1}{n}\sum_i x_i^2\right)^{1/2} }\right)\left( \frac{1}{n}\sum_i  x_i^2\right)^2.
%\end{align}

%One can see that this homogenization procedure only works on polynomials that contain no monomials of odd degree, which is the case of $p_a$ and $q_a$.
\begin{theorem} \label{thm:valid.homog}
	A partition instance $a=\{a_1,\ldots,a_n\}$ is infeasible if and only if there exists $\epsilon>0$ for which the quartic form $q_{a,\epsilon}^h(x)$ defined in (\ref{eq:homog}) is nonnegative.
\end{theorem}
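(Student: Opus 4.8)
The plan is to move back and forth between nonnegativity of the quartic \emph{form} $q_{a,\epsilon}^h$ on all of $\R^n$ and nonnegativity of the (inhomogeneous) polynomial $p_a-\epsilon$ on the sphere $S\mathrel{\mathop{:}}=\{x\in\R^n:\sum_i x_i^2=n\}$, and then to invoke the equivalence~(\ref{eq:partition.no.eps}) twice.

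First I would record the identity that drives everything. Since $q_{a,\epsilon}^h$ is homogeneous of degree $4$ (this is visible from the expanded expression displayed just before the theorem), we have $q_{a,\epsilon}^h\geq 0$ on $\R^n$ if and only if $q_{a,\epsilon}^h\geq 0$ on $S$: at $x=0$ the form vanishes, and any $x\neq 0$ can be written as $x=ty$ with $y\mathrel{\mathop{:}}=x\sqrt{n}/\lVert x\rVert\in S$ and $t>0$, whence $q_{a,\epsilon}^h(x)=t^4q_{a,\epsilon}^h(y)$. Moreover, on $S$ one has $\tfrac1n\sum_i x_i^2=1$, so substituting directly into~(\ref{eq:homog}) gives $q_{a,\epsilon}^h(x)=p_a(x)-\epsilon$ for every $x\in S$. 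Combining the two observations: $q_{a,\epsilon}^h\geq 0$ on $\R^n$ if and only if $p_a(x)\geq\epsilon$ for all $x\in S$.

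For the ``only if'' direction, suppose $a$ is infeasible. By~(\ref{eq:partition.no.eps}), $p_a(x)>0$ for all $x\in\R^n$; since $S$ is compact and $p_a$ is continuous, $\epsilon_0\mathrel{\mathop{:}}=\min_{x\in S}p_a(x)>0$. Picking any $\epsilon\in(0,\epsilon_0]$ yields $p_a\geq\epsilon$ on $S$, hence $q_{a,\epsilon}^h\geq 0$ on $\R^n$ by the identity above.

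For the ``if'' direction, suppose $q_{a,\epsilon}^h\geq 0$ on $\R^n$ for some $\epsilon>0$; then $p_a(x)\geq\epsilon>0$ for every $x\in S$. The step that needs care, and which I expect to be the only real obstacle, is that $p_a$ is \emph{not} homogeneous, so positivity on $S$ does not a priori propagate to all of $\R^n$. The special structure of $p_a=\sum_i(x_i^2-1)^2+(\sum_i a_ix_i)^2$ resolves this: $p_a(x)=0$ forces $x_i^2=1$ for all $i$ and $\sum_i a_ix_i=0$, so every real zero of $p_a$ lies in $\{-1,1\}^n\subseteq S$. Since $p_a\geq\epsilon>0$ on $S$, it has no zeros on $S$, hence no zeros at all, i.e.\ $p_a>0$ on $\R^n$; then~(\ref{eq:partition.no.eps}) shows that $a$ is infeasible. (Equivalently, phrased contrapositively: if $a$ were feasible, an equal-sum partition would produce $x^\ast\in\{-1,1\}^n\subseteq S$ with $p_a(x^\ast)=0$, contradicting $p_a\geq\epsilon$ on $S$.) This closes both implications.
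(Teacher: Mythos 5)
Your proof is correct and follows essentially the same route as the paper: the feasible direction is handled by evaluating at the $\pm 1$ partition point, which lies on the sphere $\sum_i x_i^2=n$ where $q_{a,\epsilon}^h=p_a-\epsilon$, and the infeasible direction uses compactness of that sphere to get a positive minimum of $p_a$ there and then homogeneity of the degree-4 form to extend nonnegativity to all of $\mathbb{R}^n$. Your explicit observation that all zeros of $p_a$ lie in $\{-1,1\}^n$ is just a slight elaboration of the same mechanism the paper uses (its footnote on the zeros of $p_a$), so no substantive difference.
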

\begin{proof}
	For ease of reference, let us define 
	\begin{align} \label{eq:def.homog.no.eps}
	p_a^h(x) \mathrel{\mathop{:}}=p_a \left( \frac{x}{\left(\frac{1}{n}\sum_i x_i^2\right)^{1/2} }\right)\left( \frac{1}{n}\sum_i  x_i^2\right)^2.
	\end{align}
	
	Suppose partition is feasible, i.e, the integers $a_1,\ldots,a_n$ can be placed in two sets $\mathcal{S}_1$ and $\mathcal{S}_2$ with equal sums. Let $\bar{x}_i$=1 if $a_i$ is placed in set $\mathcal{S}_1$ and $\bar{x}_i=-1$ if $a_i$ is placed in set $\mathcal{S}_2$. Then $||\bar{x}||_2^2=n$ and $p_a(\bar{x})=0$. This implies that $$p_a^h(\bar{x})=p_a(\bar{x})=0,$$  and hence having $\epsilon>0$ would make $$q_{a,\epsilon}^h(\bar{x})=-\epsilon<0.$$

	%Let $\epsilon>0$  and suppose partition is feasible, i.e., there exists $I \in [1,\ldots,n]$ such that (\ref{eq:def.partition}) holds. Taking $x_i=1, \forall i \in I$ and $x_i=-1, \forall i \in I^C$, we have $||x||_2^2=n$ and $p_a(x)=0$. This entails that $$p_a^h(x)=p_a(x)=0,$$  and $$q_a^h(x)=-\epsilon<0,$$ which concludes the proof of the implication.
	
	If partition is infeasible, then $p_a(x)>0,~ \forall x\in\mathbb{R}^n$. In view of (\ref{eq:def.homog.no.eps}) we see that $p_a^h(x)>0$ on the sphere $\mathbb{S}$ of radius $n$. Since $p_a^h$ is continuous, its minimum $\hat{\epsilon}$ on the compact set $\mathbb{S}$ is achieved and must be positive. So we must have

	% Given (\ref{eq:def.homog.no.eps}), this implies that $p_a^h(x)>0, \forall x \neq 0$ and $p_a^h(x)=0$ for $x=0$. In particular, $p_a^h$ is positive on the sphere $S$ of radius $n$. As $p_a^h$ is continuous and $S$ is bounded and closed, $p_a^h$ achieves its minimum $\epsilon>0$ on $S$. Then
	
	$$q_{a,\hat{\epsilon}}^h(x)=p_a^h(x)-\hat{\epsilon} \left( \frac{1}{n} \sum_i x_i^2 \right)^2 \geq 0, \forall x \in \mathbb{S}.$$
	By homogeneity, this implies that $q_{a,\hat{\epsilon}}^h$ is nonnegative everywhere.
	%
	%As $q_a^h$ is homogeneous, this implies that $q_a^h(x) \geq 0, \forall x \neq 0$. Bearing in mind that $q_a^h(x)=0$ for $x=0$, this concludes the proof.
\end{proof}

Consider now the LP
\begin{equation} \label{eq:dsos.nh}
\begin{aligned}
&\max_{\epsilon}\  \epsilon\\
&\text{s.t. } q_{a,\epsilon}^h(x) \ \text{dsos.}
\end{aligned}
\end{equation}
\begin{theorem}\label{thm:feas.dsos}
	The LP in (\ref{eq:dsos.nh}) is always feasible.
\end{theorem}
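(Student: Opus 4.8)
The plan is to reduce the claim to Theorem~\ref{thm:polyopt.dsos.bound.finite}, which already guarantees that for any form $p$ of degree $2d$ there is a scalar $\lambda$ making $p(x)-\lambda(\sum_{i=1}^n x_i^2)^d$ dsos. The first step is to isolate the $\epsilon$-dependence of $q_{a,\epsilon}^h$. Writing $p_a^h$ as in (\ref{eq:def.homog.no.eps}) (equivalently, as the explicit quartic form displayed just after its definition, obtained by multiplying the even-degree pieces of $p_a$ by appropriate powers of $\sum_i x_i^2$), one has
$$q_{a,\epsilon}^h(x)=p_a^h(x)-\frac{\epsilon}{n^2}\Bigl(\sum_{i=1}^n x_i^2\Bigr)^2,$$
where $p_a^h$ is a \emph{fixed} quartic form that does not depend on $\epsilon$, and the entire $\epsilon$-dependence sits in a scalar multiple of $(\sum_i x_i^2)^2$.

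Next, I would apply Theorem~\ref{thm:polyopt.dsos.bound.finite} to the form $p=p_a^h$ with $d=2$: there exists $\lambda\in\mathbb{R}$ such that $p_a^h(x)-\lambda(\sum_{i=1}^n x_i^2)^2$ is dsos. Setting $\epsilon\mathrel{\mathop{:}}=n^2\lambda$ then yields $q_{a,\epsilon}^h(x)=p_a^h(x)-\lambda(\sum_i x_i^2)^2$, which is dsos by construction, so this $\epsilon$ (together with the corresponding dd Gram matrix) is feasible for the LP in (\ref{eq:dsos.nh}). Alternatively, to keep the argument self-contained, one can rerun the proof idea of Theorem~\ref{thm:polyopt.dsos.bound.finite} directly: expanding $(\sum_i x_i^2)^2=z^T(x,2)\,Q\,z(x,2)$ with $Q$ a positive diagonal matrix shows $(\sum_i x_i^2)^2$ lies in the interior of $DSOS_{n,4}$; hence for $\alpha>0$ small enough $(1-\alpha)(\sum_i x_i^2)^2+\alpha\,p_a^h(x)$ is dsos, and rescaling (the dsos cone being a cone) gives $\tfrac{1-\alpha}{\alpha}(\sum_i x_i^2)^2+p_a^h(x)$ dsos, i.e.\ $q_{a,\epsilon}^h$ is dsos for $\epsilon=-\tfrac{n^2(1-\alpha)}{\alpha}$.

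The only point requiring a bit of care — and it is bookkeeping rather than a real obstacle — is the homogenization: one should verify that $p_a^h$ is an honest quartic form (no odd-degree monomials survive, precisely because $p_a$ has none, which is what makes homogenizing via powers of $\sum_i x_i^2$ rather than an auxiliary variable legitimate) and that the normalizing factors $1/n$ are tracked correctly so that the $\epsilon$-term is exactly proportional to $(\sum_i x_i^2)^2$. Note finally that establishing feasibility of the LP only requires exhibiting \emph{some} $\epsilon$, possibly a large negative one; controlling the sign of $\lambda$ is not needed here and is instead the substance of the separate question of whether the optimal $\epsilon$ is positive, i.e.\ whether the instance is dsos-refutable in the sense of the definition preceding Theorem~\ref{thm:valid.homog}.
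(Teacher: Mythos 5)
Your proof is correct and takes essentially the same route as the paper: the paper's own argument is exactly your self-contained variant (it writes $q_{a,\epsilon}^h=p_a^h-\epsilon h$ with $h(x)=\left(\frac{1}{n}\sum_i x_i^2\right)^2$, notes that $h$ has a strictly diagonally dominant Gram matrix in the basis $z(x,2)$, and takes a convex combination of Gram matrices to produce a sufficiently negative feasible $\epsilon$). Your primary route via Theorem~\ref{thm:polyopt.dsos.bound.finite} is just a repackaging of that same interior-of-the-dsos-cone argument, and you track the $1/n^2$ normalization and the evenness of $p_a$ (so that $p_a^h$ is a genuine quartic form) correctly.
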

\begin{proof}
	Let $h(x)\mathrel{\mathop:}=\left(\frac{1}{n}\sum_ix_i^2\right)^2$ and recall that $z(x,2)$ denotes the vector of all monomials of degree exactly $2$. We can write $$h(x)=z^T(x,2)Q_hz(x,2)$$ where $Q_h$ is in the strict interior of the $DD_n$ cone (i.e., its entries $q_{ij}$ satisfy $q_{ii}>\sum_j |q_{ij}|, \forall i$). Furthermore, let $Q$ be a symmetric matrix such that $p_a^h(x)=z(x,2)^TQz(x,2).$ Then
	$$q_{a,\epsilon}^h(x)=p_a^h(x)-\epsilon h(x)=z(x,2)^T(Q-\epsilon Q_h)z(x,2).$$
	As $Q_h$ is in the strict interior of $DD_n$, $\exists \lambda>0$ such that $$\lambda Q+(1-\lambda) Q_h \text{ is dd.}$$ Taking $\epsilon=-\frac{1-\lambda}{\lambda}$, $Q-\epsilon Q_h$ will be diagonally dominant and $q_{a,\epsilon}^h$ will be dsos. 
\end{proof}
As an immediate consequence, the SOCP
\begin{equation} \label{eq:sdsos.nh}
\begin{aligned}
&\max_{\epsilon} \ \epsilon\\
&\text{s.t. } q_{a,\epsilon}^h(x) \text{ sdsos}
\end{aligned}
\end{equation}
is also always feasible. We can now define our sequence of LPs and SOCPs as we have guaranteed feasibility of the first iteration. This is done following the strategy and notation of Section \ref{subsec:polyopt}:
\begin{equation}\label{eq:LP.sequence}
\begin{aligned}
DSOS_k \text{ (resp. $SDSOS_k$)} &\mathrel{\mathop{:}}= \max_{\epsilon} \ \epsilon\\
&\text{s.t. } q_{a,\epsilon}^h(x) \in DSOS(U_k) \text{ (resp. $SDSOS(U_k)$)},
\end{aligned}
\end{equation}
where $\{U_k\}$ is a sequence of matrices recursively defined with $U_0=I$ and $U_{k+1}$ defined as the Cholesky factor of an optimal dd (resp. sdd) Gram matrix of the optimization problem in iteration $k$.

%
%  using an optimal solution $q_k$ of (\ref{eq:LP.sequence}):
%\begin{align*}
%U_0&=I\\
%q_k &=\mathrel{\mathop{:}}z(x)^TU_{k+1}^TU_{k+1}z(x).
%\end{align*}

We illustrate the performance of these LP and SOCP-based bounds on the infeasible partition instance $\{1,2,2,1,1\}$. The results are in Figure \ref{fig:instance.12211}. We can use the sum of squares relaxation to refute the feasibility of this instance by either solving (\ref{eq:sospartition}) (the ``non-homogenized version'') or solving (\ref{eq:dsos.nh}) with dsos replaced with sos (the ``homogenized version''). Both approaches succeed in refuting this partition instance, though the homogenized version gives a slightly better (more positive) optimal value. As a consequence, we only plot the homogeneous bound, denoted by $SOS_h$, in Figure~\ref{fig:instance.12211}. Notice that the LP and SOCP-based sequences refute the instance from the $6^{th}$ iteration onwards.

\begin{figure}[h!]
	\begin{center}
		\mbox{
			\subfigure[Bounds $SOS_h$, $DSOS_k$ and $SDSOS_k$]
			{\label{subfig:Partition}\scalebox{0.45}{\includegraphics{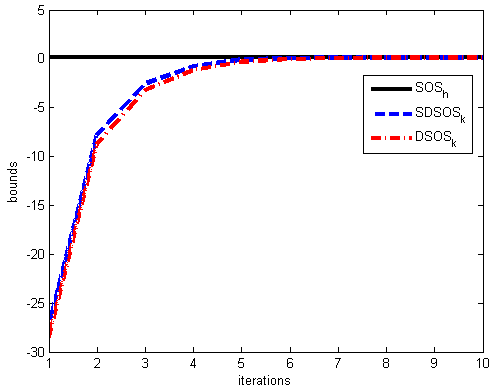}}}}
		\mbox{
			\subfigure[Zoomed-in version of Figure \ref{subfig:Partition}]
			{\label{subfig:PartitionZoom}\scalebox{0.45}{\includegraphics{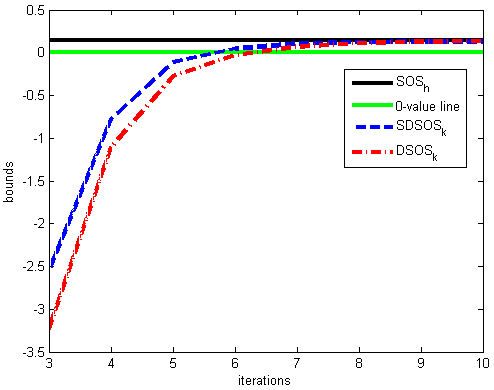}}}
		}
		
		\caption{Bounds obtained on the \{1,2,2,1,1\} instance of the partition problem using SDP, as well as the LP/SOCP-based sequences}
		\label{fig:instance.12211}
	\end{center}
	%\vspace{-20pt}
\end{figure}

As our final experiment, we generate 50 infeasible instances of partition with 6 elements randomly generated between 1 and 15. These instances are \emph{trivially infeasible} because we made sure that $a_1+\cdots+a_6$ is an odd number. %The results are presented in Table \ref{tab:partition.success}.
%For Table \ref{tab:partition.success}, we generated 50 infeasible instances of partition of size $6$. 
In the first column of Table~\ref{tab:partition.success}, we count the number of successes for sos-refutability (non homogeneous version as defined in Definition \ref{def:sos.refut}), where a failure is defined as the optimal value of (\ref{eq:sospartition}) being 0 up to numerical precision. The second column corresponds to the number of successes for sos-refutability (homogeneous version). The last 4 columns show the success rate of the LP and SOCP-based sequences as defined in (\ref{eq:LP.sequence}), after 20 iterations and 40 iterations.
\begin{table}[H]
	\centering
	\begin{tabular}{|c|c|c|c|c|c|}
		\hline
		$SOS$ & $SOS_{h}$ & $DSOS_{20}$ & $DSOS_{40}$ & $SDSOS_{20}$ & $SDSOS_{40}$\\
		\hline
		56\% & 56\% & 12\% & 16 \% & 14\% & 14\%\\
		\hline
	\end{tabular}
	\caption{Rate of success for refutability of infeasible instances of partition}
	\label{tab:partition.success}
\end{table}
From the experiments, the homogeneous and non-homogeneous versions of (\ref{eq:sospartition}) have the same performance in terms of their ability to refute feasibility. However, we observe that they both fail to refute a large number of completely trivial instances! We prove why this is the case for one representative instance in the next section. The LP and SOCP-based sequences also perform poorly and their convergence is much slower than what we observed for the maximum stable set problem in Section~\ref{sec:StableSet}.

\subsection{Failure of the sum of squares relaxation on trivial partition instances.} \label{subsec:sos.refutability}

For complexity reasons, one would expect there to be infeasible instances of partition that are not sos-refutable. What is surprising however is that the sos relaxation is failing on many instances that are totally trivial to refute as the sum of their input integers is odd. We present a proof of this phenomenon on an instance which is arguably the simplest one.\footnote{If we were to instead consider the instance [1,1,1], sos would succeed in refuting it.}

%As mentioned before, it is not true that all infeasible instances of partition are sos-refutable. This could be expected for complexity reasons. Indeed, deciding whether partition is infeasible is an NP-complete problem, whereas we can solve any SDP to arbitrary precision. If all infeasible instances were sos-refutable, i.e. $\exists \epsilon>0$ such that $p(x)-\epsilon$ sos, we could obtain a solution to the SDP that is as close as needed to $\epsilon$ to become positive. This would then be a certificate of infeasibility of the partition instance obtained in polynomial time, thereby contradicting the NP-completeness of partition. What was surprising to us however was the triviality of some of the instances on which the algebraic techniques failed. An example of such an instance is given below.

\begin{proposition} \label{th:infeas.partition.inst}
	The infeasible partition instance $\{1,1,1,1,1\}$ is not sos-refutable.
\end{proposition}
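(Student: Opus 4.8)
The plan is to establish the proposition by producing a \emph{dual certificate}. Write $a=(1,\dots,1)$, so $p_a(x)=\sum_{i=1}^5(x_i^2-1)^2+(\sum_{i=1}^5 x_i)^2$, and recall that $p_a$ itself is sos, so the sos relaxation is feasible with value $0$. I will exhibit a linear functional $L$ on polynomials of degree $\le 4$ in $x_1,\dots,x_5$ with $L(1)=1$, whose moment matrix $M(L)$ (rows and columns indexed by monomials of degree $\le 2$) satisfies $M(L)\succeq 0$, and with $L(p_a)=0$. Given such an $L$ the argument is immediate: if $p_a-\epsilon=z(x)^TQz(x)$ for some $Q\succeq 0$ and $z(x)$ the vector of monomials of degree $\le 2$, then applying $L$ gives $-\epsilon=L(p_a)-\epsilon=Q\cdot M(L)\ge 0$, so no $\epsilon>0$ can work and $p_a$ is not sos-refutable.

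To build $L$, note that since each summand of $p_a$ is a square of a polynomial of degree $\le 2$ and $M(L)\succeq 0$, the requirement $L(p_a)=0$ is equivalent to asking that each $x_i^2-1$ and that $\sum_i x_i$ lie in the kernel of $M(L)$. The condition $x_i^2-1\in\ker M(L)$ means $L(x_i^2 m)=L(m)$ for every monomial $m$ with $\deg m\le 2$; iterating this rule reduces every monomial of degree $\le 4$ to a multilinear one, so $L$ is determined by its values on multilinear monomials $x^S:=\prod_{i\in S}x_i$. Since $p_a$ and the constraints are invariant under permutations of the variables, I will look for an $S_5$-invariant $L$, i.e. one with $L(1)=1$, $L(x_i)=s$, $L(x_ix_j)=p$, $L(x_ix_jx_k)=q$, $L(x_ix_jx_kx_\ell)=r$ for distinct indices. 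Imposing $\sum_i x_i\in\ker M(L)$ against the monomials $1$, $x_k$, $x_kx_\ell$ forces $s=0$, $p=-\tfrac14$, $q=0$, leaving only $r$ free, and then $L(p_a)=\sum_i\big(L(x_i^4)-2L(x_i^2)+L(1)\big)+\big(\sum_i L(x_i^2)+\sum_{i\ne j}L(x_ix_j)\big)=5\cdot 0+(5+20p)=0$ holds automatically.

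It remains to pick $r$ so that $M(L)\succeq 0$. Since the five vectors $x_i^2-1$ lie in $\ker M(L)$, the form $M(L)$ descends to a $16\times 16$ matrix $\widetilde M$ on the span of $\{1\}\cup\{x_i\}\cup\{x_ix_j\}$, whose entries depend only on the symmetric difference of the two index sets. This $\widetilde M$ splits into the $5\times 5$ block on the $x_i$, equal to $\tfrac54 I_5-\tfrac14 J_5\succeq 0$ (its one-dimensional kernel being exactly the $\sum_i x_i$ direction), and an $11\times 11$ block on $\{1\}\cup\{\text{pairs}\}$ whose pair--pair submatrix is $I_{10}-\tfrac14 A+rB$, where $A$ and $B$ are the adjacency matrices of the triangular graph $T(5)=J(5,2)$ and of its complement (the Petersen graph), and where the coupling of the $\{1\}$ coordinate to the pairs is along the all-ones vector, i.e. along the eigenvector of $A$ for its eigenvalue $6$. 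Using the spectrum of $T(5)$ (eigenvalues $6,1,-2$ with multiplicities $1,4,5$) one diagonalizes this $11\times 11$ block: its eigenvalues come out to be $\tfrac34-2r$ with multiplicity $4$, $\tfrac32+r$ with multiplicity $5$, and the two eigenvalues of $\left(\begin{smallmatrix}1 & -\sqrt{10}/4\\ -\sqrt{10}/4 & 3r-\tfrac12\end{smallmatrix}\right)$. All of these are nonnegative precisely when $r=\tfrac38$, so taking $r=\tfrac38$ yields the desired $L$ and completes the proof.

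The step I expect to be the crux is verifying $M(L)\succeq 0$. The argument genuinely uses the restriction to index sets of size $\le 2$: a short Fourier/Krawtchouk computation shows that the data defining $\widetilde M$ cannot be extended to a positive semidefinite matrix indexed by all $2^5$ subsets (the value on the degree-$1$ character is negative for every choice of the remaining free parameter), so the proof really hinges on the $16\times 16$ truncation together with the eigenstructure of $T(5)$. If one prefers to avoid invoking the triangular-graph spectrum, an equivalent route is to write out $\widetilde M$ explicitly with $r=\tfrac38$ and exhibit a Cholesky (Gram) factorization certifying positive semidefiniteness; I would include whichever is shorter in the final write-up.
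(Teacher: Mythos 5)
Your proof is correct and takes essentially the same route as the paper's: you exhibit the very same dual (pseudo-moment) functional — value $-\tfrac14$ on multilinear degree-2 moments and on overlapping-pair products, $\tfrac38$ on disjoint-pair products — whose positive semidefinite moment matrix annihilates $p_a$ and therefore separates $p_a-\epsilon$ from the quartic sos cone. The only difference is presentational: the paper displays the $21\times 21$ matrix and asserts $M\succeq 0$, whereas you derive the certificate from the kernel and symmetry constraints and verify positive semidefiniteness analytically via the spectra of $T(5)$ and the Petersen graph.
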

\begin{proof}
	Let $p_a$ be the polynomial defined in (\ref{eq:partition.no.eps}). To simplify notation, we let $p(x)$ represent $p_a(x)$ for $a=\{1,1,1,1,1\}$. We will show that $p$ is on the boundary of the SOS cone even though we know it is strictly inside the PSD cone. This is done by presenting a dual functional $\mu$ that vanishes on $p,$ takes a nonnegative value on all quartic sos polynomials, and a negative value on $p(x)-\epsilon$ for any $\epsilon>0.$ (See Figure~\ref{fig:proof.partition} for an intuitive illustration of this.)
	
	\begin{figure}[H]
		\centering
		\includegraphics[scale=0.5]{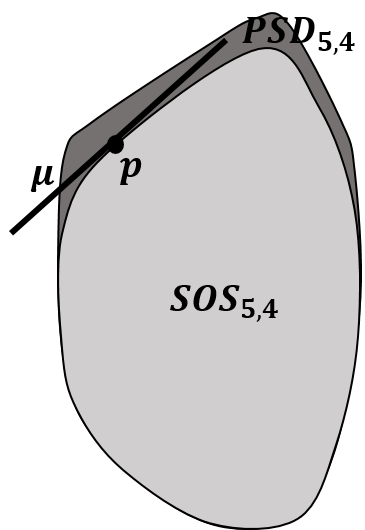}
		\caption{The geometric idea behind the proof of Proposition \ref{th:infeas.partition.inst}}
		\label{fig:proof.partition}
	\end{figure}

	%We will prove that $p(x)-\epsilon$ is not sos by presenting a dual functional $\mu$ that separates it from the set of sos polynomials.  

	The polynomial $p$ when expanded out reads
	\begin{align} \label{eq:p.expanded}
	p(x)=n-\sum_i x_i^2+2\sum_{i<j} x_i x_j +\sum_i x_i^4.
	\end{align}
	Consider the vector of coefficients of $p$ with the ordering as written in (\ref{eq:p.expanded}):
	\setcounter{MaxMatrixCols}{21}
	\scalefont{0.6}
	\begin{align} \label{eq:pvec}
	\overrightarrow{p}=\begin{pmatrix} 5&-1&-1&-1&-1&-1&2&2&2&2&2&2&2&2&2&2&-1&-1&-1&-1&-1
	\end{pmatrix}.
	\end{align}
	\normalsize
	This is a reduced representation of the vector of coefficients of $p$, in that there are many zeros associated with all other monomials of degree less than or equal to 4, which we are not writing out.
	
	%all monomials of degree less than or equal to 4, but only those for which the coefficient of $p$ is nonzero.
	
	Our goal is to find a vector $\mu$ that satisfies
	\begin{align}
	\langle \mu, \overrightarrow{p} \rangle &=0 \nonumber \\
	\langle \mu,\overrightarrow{q} \rangle &\geq 0, \text{ for all $q$ sos of degree 4}. \label{eq:inner.prod.sos.zero}
	\end{align}
	If such a $\mu$ exists and its first element is nonzero (which by rescaling can then be taken to be 1), then $\langle \mu, \overrightarrow{p-\epsilon} \rangle=\langle \mu,\overrightarrow{p} \rangle -\langle \mu, \overrightarrow{\epsilon} \rangle=-\epsilon<0$. This provides us with the required functional that separates $p(x)-\epsilon$ from the set of sos polynomials.
	
	Selecting the same reduced basis as the one used in (\ref{eq:pvec}), we take
	\begin{align*}
	\overrightarrow{\mu_{reduced}}=\begin{pmatrix} 1 & \textbf{1}_5^T & -\frac{1}{4} \cdot \textbf{1}_1^T & \textbf{1}_5^T 
	\end{pmatrix}
	\end{align*}
	where $\textbf{1}_n$ is the all ones vector of size $n$. The subscript ``reduced'' denotes the fact that in $\overrightarrow{\mu_{reduced}}$, only the elements of $\mu$ needed to verify $\langle \mu, \overrightarrow{p}\rangle=0$ are presented. Unlike $\overrightarrow{p}$, the entries of $\mu$ corresponding to the other monomials are not all zero. This can be seen from the entries of the matrix $M$ that appears further down.
	
	We now show how (\ref{eq:inner.prod.sos.zero}) holds. Consider any sos polynomial $q$ of degree less than or equal to 4. We know that it can be written as $$q(x)=z^TQz=\mbox{Tr}~ Q \cdot zz^T,$$ for some $Q \succeq 0$, and a vector of monomials 
	$$z^T=[1,x_1,x_2,\ldots,x_n,x_1^2,\ldots,x_n^2,x_1x_2, \ldots, x_{n-1}x_n].$$
	It is not difficult to see that $$ \langle \mu, \overrightarrow{q} \rangle = \mbox{Tr}~ Q \cdot (zz^T)|_\mu$$
	where by $(zz)^T|_\mu$, we mean a matrix where each monomial in $zz^T$ is replaced with the corresponding element of the vector $\mu$. This yields the matrix
	\scalefont{1}
	\[ M=\begin{pmatrix}
	1&    0&            0 &       0        &     0      &       0         &    1          &   1    &         1   &           1    &         1    &       b &          b &          b &          b &          b &          b &          b &          b &          b &          b  \\    
	0         &    1  &         b &          b &          b &          b &            0     &        0        &     0      &       0     &        0   &          0 &             0       &      0   &          0    &         0        &     0       &      0     &        0       &      0     &        0      \\
	0    &       b &            1 &          b &          b &          b &            0 &            0    &         0      &       0         &    0   &          0            & 0 &            0        &     0          &   0       &      0     &        0   &           0   &           0        &     0      \\
	0   &        b &          b &            1      &     b &          b &            0       &      0          &   0      &       0       &      0       &      0&             0            & 0            & 0           &  0          &   0           &  0          &   0          &   0         &    0      \\
	0  &         b &          b &          b &            1   &        b &            0   &          0   &          0   &          0     &        0  &           0 &            0         &    0          &   0      &       0      &       0        &     0         &    0            & 0         &    0      \\
	0    &       b &          b &          b &          b &            1  &           0     &        0   &          0       &      0  &           0       &      0          &   0        &     0           &  0       &      0          &   0  &           0     &        0        &     0           &  0      \\
	1   &          0   &          0     &        0        &     0      &       0    &         1         &    1       &      1  &           1       &      1  &         b &          b &          b &          b &          b &          b &          b &          b &          b &          b \\     
	1      &       0          &   0        &     0     &        0        &     0     &        1     &        1         &    1       &      1       &      1        &   b &          b &          b &          b &          b &          b &          b &          b &          b &          b    \\  
	1      &       0        &     0    &         0     &        0    &         0       &      1         &    1      &       1           &  1            & 1           &b &          b &          b &          b &          b &          b &          b &          b &          b &          b  \\
	1       &      0       &      0     &        0      &       0      &       0      &       1        &     1        &     1  &           1        &     1    &       b &          b &          b &          b &          b &          b &          b &          b &          b &          b    \\
	1    &         0       &      0      &       0     &        0      &       0    &         1       &      1     &        1     &        1        &     1       &    b &          b &          b &          b &          b &          b &          b &          b &          b &          b    \\
	b &            0    &         0   &          0     &        0         &    0      &     b &          b &          b &          b &          b &            1        &   b &          b &          b &          b &          b &          b &           a &           a &           a\\     
	b &            0    &         0  &           0      &       0      &       0      &     b &          b &          b &          b &          b &          b &            1     &      b &          b &          b &           a &           a &          b &          b &           a   \\  
	b &            0  &           0        &     0        &     0          &   0       &    b &          b &          b &          b &          b &          b &          b &            1    &       b &           a &          b &           a &          b &           a &          b      \\
	b &            0          &   0            & 0      &       0     &        0        &   b &          b &          b &          b &          b &          b &          b &          b &            1          &  a &           a &          b &           a &          b &          b     \\
	b &            0   &          0      &       0       &      0       &      0       &    b &          b &          b &          b &          b &          b &          b &           a &           a &            1        &   b &          b &          b &          b &           a      \\
	b &            0     &        0       &      0      &       0            & 0 &          b &          b &          b &          b &          b &          b &           a &          b &           a &          b &            1    &       b &          b &           a &          b     \\
	b &            0       &      0    &         0   &          0   &          0        &   b &          b &          b &          b &          b &          b &           a &           a &          b &          b &          b &            1     &       a &          b &          b   \\
	b &            0       &      0        &     0      &       0          &   0   &        b &          b &          b &          b &          b &           a &          b &          b &           a &          b &          b &           a &            1       &    b &          b    \\
	b &            0         &    0      &       0      &       0      &       0    &       b &          b &          b &          b &          b &           a &          b &           a &          b &          b &           a &          b &          b &            1        &   b      \\
	b &            0     &        0      &       0     &        0      &       0  &         b &          b &          b &          b &          b &           a &           a &          b &          b &           a &          b &          b &          b &          b &            1      \\
	\end{pmatrix},
	\]
	\normalsize
	where $a=\frac{3}{8}$ and $b=-\frac{1}{4}$. We can check that $M \succeq 0$. This, together with the fact that $Q\succeq 0,$ implies that (\ref{eq:inner.prod.sos.zero}) holds.\footnote{It can be shown in a similar fashion that $\{1,1,1,1,1\}$ is not sos-refutable in the homogeneous formulation of (\ref{eq:homog}) either.}
\end{proof}

{
	\subsection{Open problems}
	
	We showed in the previous subsection that the infeasible partition instance $\{1,1,1,1,1\}$ was not sos-refutable. Many more randomly-generated partition instances that we knew to be infeasible (their sum being odd) also failed to be sos-refutable. This observation motivates the following open problem:
	
	%\paragraph{Open Problem 1} It can easily be seen that $[1,1,1,1,1]$ is infeasible as the sum of its elements is odd. Though this criterion for infeasibility is an easy one to check, such trivial instances are not necessarily sos-refutable as seen above. Is it possible to characterize the odd instances that are sos-refutable? 
	%\paragraph{Open Problem 2} An instance of partition $a_1, \ldots, a_n$ is said to be $r$-sos-refutable for $r \in \mathbb{N}$ if $\exists \epsilon>0$ such that $(p(x)-\epsilon)(\sum_i x_i^2+1)^r$ is sos. Though the instance $\{1,1,1,1,1\}$ is not sos-refutable, it is in fact $1$-sos-refutable. Furthermore, the instance $[1,1,1,1,1,1,1]$ (vector of all ones of length 7) is not sos-refutable, nor $1$-sos-refutable but it is $2$-sos-refutable. If we consider the instance of $n$ ones with $n$ odd, and we define $\tilde{r}$ to be the minimum $r$ such that $[1,1,\ldots,1]$ is $r$-sos-refutable. Is it true that $\tilde{r}$ grows with $n$?

	\paragraph{Open Problem 1} Characterize the set of partition instances $\{a_1,\ldots,a_n\}$ that have an odd sum but are not sos-refutable (see Definition~\ref{def:sos.refut}).
	
	Our second open problem has to do with the power of higher order sos relaxations for refuting feasibility of partition instances.
	\paragraph{Open Problem 2} For a positive integer $r$, let us call a partition instance $\{a_1, \ldots, a_n\}$ \emph{$r$-sos-refutable} if $\exists \epsilon>0$ such that $(p(x)-\epsilon)(\sum_i x_i^2+1)^r$ is sos. Note that this is also a certificate of infeasibility of the instance. Even though the $\{1,1,1,1,1\}$ instance is not sos-refutable, it is $1$-sos-refutable. Furthermore, we have numerically observed that the instance $\{1,1,1,1,1,1,1\}$ (vector of all ones of length 7) is not sos-refutable or $1$-sos-refutable, but it is $2$-sos-refutable. If we consider the instance consisting of $n$ ones with $n$ odd, and define $\tilde{r}$ to be the minimum $r$ such that $\{1,1,\ldots,1\}$ becomes $r$-sos-refutable, is it true that $\tilde{r}$ must grow with $n$?
	
}
%\section{Acknowledgments} We are grateful to Anirudha Majumdar and Sanjeeb Dash for insightful discussions. %Ani and for resolving some of the issues with our code. 

%    Bibliographies can be prepared with BibTeX using amsplain,
%    amsalpha, or (for "historical" overviews) natbib style.
%\bibliographystyle{amsplain} 
%\bibliography{pablo_amirali}
%    Insert the bibliography data here.

\chapter{On the Construction of Converging Hierarchies for Polynomial Optimization Based on Certificates of Global Positivity}\label{ch:positiv}

\section{Introduction}

A polynomial optimization problem (POP) is an optimization problem of the form
\begin{equation}\label{eq:POP}
\begin{aligned}
&\inf_{x \in \mathbb{R}^n} &&p(x) \\
&\text{s.t. } &&g_i(x)\geq 0, ~i=1,\ldots,m,
\end{aligned}
\end{equation}
where $p, g_i,~i=1,\ldots,m,$ are polynomial functions in $n$ variables $x\mathrel{\mathop{:}}=(x_1,\ldots,x_n)$ and with real coefficients. It is well-known that polynomial optimization is a hard problem to solve in general. For example, simply testing whether the optimal value of problem (\ref{eq:POP}) is smaller than or equal to some rational number $k$ is NP-hard already when the objective is quadratic and the constraints are linear \cite{pardalos1991quadratic}. Nevertheless, these problems remain topical due to their numerous applications throughout engineering, operations research, and applied mathematics (see, e.g., \cite{lasserre2009moments,blekherman2012semidefinite,ahmadiOR_letters}). In this chapter, we are interested in obtaining lower bounds on the optimal value of problem (\ref{eq:POP}). We focus on a class of methods which construct hierarchies of tractable convex optimization problems whose optimal values are lowerbounds on the optimal value of (\ref{eq:POP}), with convergence to it as the sequence progresses.  This implies that even though the original POP is nonconvex, one can obtain increasingly accurate lower bounds on its optimal value by solving convex optimization problems. One method for constructing these hierarchies of optimization problems that has gained attention in recent years relies on the use of \emph{Positivstellens\"atze} (see, e.g., \cite{laurent2009sums} for a survey). Positivstellens\"atze are algebraic identities that certify infeasibility of a set of polynomial inequalities, or equivalently\footnote{Note that the set $\{x \in \mathbb{R}^n ~|~ g_1(x) \geq 0, \ldots, g_m(x)\geq 0\}$ is empty if and only if $-g_1(x)>0$ on the set $\{x \in \mathbb{R}^n ~|~ g_2(x) \geq 0, \ldots, g_m(x) \geq 0\}$.}, positivity of a polynomial on a basic semialgebraic set. (Recall that a basic semialgebraic set is a set defined by finitely many polynomial inequalities.) These Positivstellens\"atze can be used to prove lowerbounds on POPs. Indeed, if we denote the feasible set of (\ref{eq:POP}) by $S$, the optimal value of problem (\ref{eq:POP}) is equivalent to 
\begin{equation} \label{eq:gamma.opt}
\begin{aligned}
&\sup_{\gamma} &&\gamma\\
&\text{s.t. } &&p(x)-\gamma \geq 0,~\forall x \in S.
\end{aligned}
\end{equation}
Hence if $\gamma$ is a strict lower bound on (\ref{eq:POP}), we have that $p(x)-\gamma>0$ on $S$, a fact that can be certified using Positivstellens\"atze.  At a conceptual level, hierarchies that provide lower bounds on (\ref{eq:POP}) are constructed thus: we fix the ``size of the certificate'' at each level of the hierarchy and search for the largest $\gamma$ such that the Positivstellens\"atze at hand can certify positivity of $p(x)-\gamma$ over $S$ with a certificate of this size. As the sequence progresses, we increase the size of the certificates allowed, hence obtaining increasingly accurate lower bounds on (\ref{eq:POP}). 

Below, we present three of the better-known Positivstellens\"atze, given respectively by Stengle~\cite{stengle1974nullstellensatz}, Schm\"udgen~\cite{schmudgen1991thek}, and Putinar~\cite{putinar1993positive}. These all rely on {sum of squares} certificates. We recall that a polynomial is a \emph{sum of squares} (sos) if it can be written as a sum of squares of other polynomials. We start with Stengle's Positivstellensatz, which certifies infeasibility of a set of polynomial inequalities. It is sometimes referred to as ``the Positivstellensatz'' in related literature as it requires no assumptions, contrarily to Schm\"udgen and Putinar's theorems which can be viewed as refinements of Stengle's result under additional assumptions.

\begin{theorem}[Stengle's Positivstellensatz~\cite{stengle1974nullstellensatz}]\label{th:stengle}
	The basic semialgebraic set $$S=\{x\in \mathbb{R}^n ~|~ g_1(x)\geq 0,\ldots, g_m(x)\geq 0\}$$ is empty if and only if there exist sum of squares polynomials $s_0(x)$,$s_1(x)$,$\ldots$, $s_m(x)$, $s_{12}(x)$, $s_{13}(x)$,$\ldots$, $s_{123\ldots m}(x)$ such that
	$$-1=s_0(x)+\sum_{i} s_{i}(x)g_{i}(x) +\sum_{\{i,j\}} s_{ij}(x)g_{i}(x)g_{j}(x)+\ldots+s_{123\ldots m}(x)g_{1}(x)\ldots g_{m}(x).$$
\end{theorem}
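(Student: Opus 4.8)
The plan is to prove the trivial implication by direct substitution and the substantive implication by its contrapositive, combining a Zorn's lemma extension of preorderings with the Tarski--Seidenberg transfer principle for real closed fields. For the easy direction, I would fix any $\bar x \in S$: each sum of squares $s_\alpha$ satisfies $s_\alpha(\bar x)\ge 0$, each product $g_{i_1}(\bar x)\cdots g_{i_k}(\bar x)$ is $\ge 0$ because $\bar x\in S$, so the right-hand side of the identity is nonnegative at $\bar x$, contradicting its being identically $-1$; hence $S=\emptyset$. For the hard direction I would first reformulate the conclusion algebraically: let $T\subseteq\mathbb R[x_1,\dots,x_n]$ be the preordering generated by $g_1,\dots,g_m$, i.e. the set of all $\sum_{\alpha\in\{0,1\}^m} s_\alpha\, g_1^{\alpha_1}\cdots g_m^{\alpha_m}$ with the $s_\alpha$ sums of squares. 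Then $T$ is closed under addition and multiplication, contains all squares, and the desired identity is exactly the assertion $-1\in T$; so it suffices to prove that $-1\notin T$ forces $S\ne\emptyset$.

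Assuming $-1\notin T$, a routine Zorn's lemma argument (a union of a chain of preorderings avoiding $-1$ again avoids $-1$) yields a preordering $P\supseteq T$ maximal with respect to $-1\notin P$. The main step is to show such a maximal $P$ is a \emph{prime cone}: $P+P\subseteq P$, $P\cdot P\subseteq P$, $P\cup(-P)=\mathbb R[x]$, and $\mathfrak p\mathrel{\mathop{:}}=P\cap(-P)$ is a prime ideal. Here one uses maximality together with the identity $ab=\big(\tfrac{a+b}{2}\big)^2-\big(\tfrac{a-b}{2}\big)^2$ and the observation that if $a\notin P$ then the preordering obtained by adjoining $-a$ to $P$ must contain $-1$, which after rearrangement pushes $-a$ into $P$. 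Consequently $P$ induces a total order on the integral domain $\mathbb R[x]/\mathfrak p$, hence on its fraction field $K$, an order extending the usual one on $\mathbb R$ and making the image $\bar g_i$ of every $g_i$ nonnegative.

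Next I would pass to a real closure $R$ of the ordered field $(K,\le)$. The tuple $\bar x=(\bar x_1,\dots,\bar x_n)\in R^n$ of images of the coordinate functions then satisfies $g_i(\bar x)\ge 0$ in $R$ for all $i$, so the first-order sentence $\exists y\,(g_1(y)\ge 0\wedge\cdots\wedge g_m(y)\ge 0)$ with coefficients in $\mathbb R$ is true in the real closed field $R\supseteq\mathbb R$. Since $\mathbb R$ is itself real closed, the Tarski--Seidenberg transfer principle (equivalently, model completeness of the theory of real closed fields, or the Artin--Lang homomorphism theorem applied to the finitely generated $\mathbb R$-algebra $\mathbb R[x]/\mathfrak p$ whose fraction field is formally real) implies the same sentence holds in $\mathbb R$. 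Thus $S\ne\emptyset$, contradicting the hypothesis and establishing the contrapositive.

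The hard part will be the prime-cone lemma: proving that a preordering maximal with respect to not containing $-1$ totally orders a suitable quotient ring is where essentially all the content lies, and it requires careful bookkeeping with cones and ideals. The second delicate point is invoking the transfer principle in the correct form; I would either cite quantifier elimination for real closed fields directly or route through Artin--Lang, noting that the latter is itself typically proved via the transfer principle. Everything else --- the easy direction, the Zorn argument, and the existence of a real closure of an ordered field --- is standard and I would treat it briefly.
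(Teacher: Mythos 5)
The paper does not prove this statement at all: Stengle's Positivstellensatz is quoted as a classical result with a citation to Stengle's 1974 paper, so there is no in-paper proof to compare your argument against. On its own merits, your proposal is the standard abstract proof of the (weak) Positivstellensatz as found in the real-algebra literature (Bochnak--Coste--Roy, Prestel--Delzell, Marshall), and its outline is correct: the easy direction by evaluation at a point of $S$; the reformulation of the certificate as $-1\in T$ for the preordering $T$ generated by $g_1,\ldots,g_m$ (your restriction to exponents in $\{0,1\}^m$ is the right normal form, since even powers of the $g_i$ are absorbed into the sos coefficients, matching the statement's indexing by subsets); Zorn's lemma to get a maximal proper preordering $P\supseteq T$; the lemma that such a $P$ is a prime cone, hence orders $\mathbb{R}[x]/\mathfrak{p}$ and its fraction field $K$; and transfer from a real closure $R$ of $K$ back to $\mathbb{R}$ via model completeness of real closed fields (or Artin--Lang). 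You correctly single out the prime-cone lemma as the place where the real work lies, and the maximality trick you indicate (adjoining $-a$, so that $-1\in P-aP$, and rearranging) is exactly how both $P\cup(-P)=\mathbb{R}[x]$ and primality of the support are established. The remaining unproved assertions---that the induced order on $K$ restricts to the usual order on $\mathbb{R}$ (because $P$ contains all squares and $-1\notin P$), and that evaluating $g_i$ at the images of the coordinates recovers the image of $g_i$ in $K$---are routine and would need only a line each in a full write-up. So the proposal is sound; it simply supplies a proof the thesis deliberately outsources to the literature.
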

The next two theorems, due to Schm\"udgen and Putinar, certify positivity of a polynomial $p$ over a basic semialgebraic set $S$. They impose additional compactness assumptions comparatively to Stengle's Positivstellensatz.
\begin{theorem}[Schm\"udgen's Positivstellensatz \cite{schmudgen1991thek}]\label{th:schmudgen}
	Assume that the set $$S=\{x \in \mathbb{R}^n ~|~ g_1(x)\geq 0, \ldots, g_m(x)\geq 0\}$$ is compact. If a polynomial $p$ is positive on $S$, then $$p(x)=s_0(x)+\sum_{i} s_{i}(x)g_{i}(x) +\sum_{\{i,j\}} s_{ij}(x)g_{i}(x)g_{j}(x)+\ldots+s_{123\ldots m}(x)g_{1}(x)\ldots g_{m}(x),$$ where $s_0(x)$,$s_1(x)$,$\ldots$, $s_m(x)$, $s_{12}(x)$, $s_{13}(x)$,$\ldots$, $s_{123\ldots m}(x)$ are sums of squares.
\end{theorem}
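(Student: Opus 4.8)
The plan is to derive Schm\"udgen's Positivstellensatz (Theorem~\ref{th:schmudgen}) from Stengle's Positivstellensatz (Theorem~\ref{th:stengle}) together with an abstract representation theorem for \emph{archimedean} preorderings. Let $T$ denote the preordering generated by $g_1,\ldots,g_m$, i.e.\ the set of polynomials $\sum_{e\in\{0,1\}^m}\sigma_e\,g_1^{e_1}\cdots g_m^{e_m}$ with each $\sigma_e$ a sum of squares; the goal is precisely to show $p\in T$. I would split the argument into two independent parts: (i) an \emph{archimedean representation theorem} (Kadison--Dubois): if $T$ is archimedean --- meaning that for every polynomial $f$ there is an integer $N$ with $N-f\in T$ --- then every $p$ with $p>0$ on $S$ lies in $T$; and (ii) the assertion that compactness of $S$ forces $T$ to be archimedean. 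Granting both, the theorem follows, since a polynomial positive on the compact set $S$ attains there a positive minimum and is thus strictly positive on $S$. (The case $S=\emptyset$ is immediate: Stengle's theorem then gives $-1\in T$, so $T=\mathbb{R}[x]$ and $p\in T$ trivially.)

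For part (i) I would argue by separation. Assuming $p\notin T$, a Hahn--Banach argument on bounded-degree truncations (equivalently, a Zorn's-lemma extension of $T$ to a maximal proper preordering containing $-p$) produces a linear functional $L\colon\mathbb{R}[x]\to\mathbb{R}$ with $L\ge 0$ on $T$, $L(1)=1$, and $L(p)\le 0$. The role of the archimedean hypothesis is to upgrade $L$ to a \emph{ring homomorphism} $\varphi\colon\mathbb{R}[x]\to\mathbb{R}$ with $\varphi(T)\ge 0$ and $\varphi(p)\le 0$: one passes to the ordered residue ring determined by the extended preordering, uses archimedeanness to see it embeds as an archimedean ordered field over $\mathbb{Q}$, and embeds that into $\mathbb{R}$; since an order-preserving field embedding of $\mathbb{R}$ into itself is the identity, $\varphi$ fixes $\mathbb{R}$ and hence is evaluation at some point $x_0\in\mathbb{R}^n$. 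Then $\varphi(g_i)\ge 0$ for all $i$ says $x_0\in S$, while $\varphi(p)\le 0$ says $p(x_0)\le 0$, contradicting $p>0$ on $S$.

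Part (ii) is where I expect the real work to be. It suffices to produce one integer $N$ with $N-\sum_i x_i^2\in T$: a short computation then promotes this to full archimedeanness, using $N-x_i^2=(N-\sum_j x_j^2)+\sum_{j\neq i}x_j^2\in T$, the identity $\pm x_i+\tfrac{N+1}{2}=\tfrac{1}{2}\big((x_i\pm 1)^2+(N-x_i^2)\big)$, and the fact that $\{f:\exists N,\ N\pm f\in T\}$ is a subring of $\mathbb{R}[x]$ containing all constants and coordinate functions, hence all of $\mathbb{R}[x]$. Since $S$ is compact, choosing $N$ with $\|x\|^2<N$ on $S$ makes the basic semialgebraic set $\{g_1\ge 0,\ldots,g_m\ge 0,\ \|x\|^2-N\ge 0\}$ empty, so Stengle's theorem, after collecting powers of $\|x\|^2-N$, yields $q_1\,(N-\|x\|^2)=1+q_0$ with $q_0,q_1\in T$ (and one checks $q_1>0$ and $q_1$ bounded on $S$). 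The genuinely clever, non-formal step --- and the main obstacle --- is to ``clear the multiplier'' $q_1$, i.e.\ to pass from this denominator-bearing certificate to the denominator-free membership $N-\|x\|^2\in T$; this is accomplished by a bootstrapping argument exploiting the boundedness on the compact set $S$ of all polynomials in sight, together with a truncated geometric-series expansion for $1/q_1$. An alternative to this whole algebraic route is Schm\"udgen's original functional-analytic proof: take $L$ as above, form the Hilbert space with inner product $\langle f,g\rangle=L(fg)$, observe that the multiplication operators $X_i$ are symmetric and --- here compactness enters in the very same guise as $N-\|x\|^2\in T$ --- bounded, invoke the spectral theorem for commuting bounded self-adjoint operators to obtain a representing measure $\mu$ supported on $S$, and conclude $L(p)=\int_S p\,d\mu>0$, the desired contradiction.
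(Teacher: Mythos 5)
The paper does not actually prove this statement: Schm\"udgen's Positivstellensatz is quoted as a classical result with a citation to \cite{schmudgen1991thek}, and is used as background in Chapter~\ref{ch:positiv} (the text only remarks that constructive proofs exist in the literature, e.g.\ via Poly\'a's theorem). So there is no in-paper argument to compare yours against; your proposal has to stand on its own as a reconstruction of the known proof.

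As a roadmap, what you describe is the standard algebraic (W\"ormann-style) proof, and its skeleton is sound: the reduction of the theorem to ``$T$ archimedean'' plus the Kadison--Dubois representation theorem is correct, your handling of the empty-$S$ case is fine, and the derivation of $q_1(N-\|x\|^2)=1+q_0$ with $q_0,q_1\in T$ from Stengle's theorem (collecting the terms containing the extra generator $\|x\|^2-N$, which appears only to the first power) is exactly right, as is the observation that $q_1>0$ on $S$. The genuine gap is the one you yourself flag: passing from the certificate $q_1(N-\|x\|^2)=1+q_0$ to the denominator-free membership $N-\|x\|^2\in T$. This step is not a routine manipulation --- it is precisely where the entire content of ``compactness implies archimedean'' lives, and invoking ``boundedness of everything on $S$ plus a truncated geometric series for $1/q_1$'' does not yet constitute an argument: boundedness on $S$ gives pointwise inequalities, whereas what is needed is membership in the preordering $T$, and bridging that gap requires a nontrivial algebraic lemma (W\"ormann's inductive bootstrapping on the ring of $T$-bounded elements, or, in other treatments, an appeal to Poly\'a's theorem as in the references the paper cites). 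Also, within part (i), the upgrade of the separating linear functional $L$ to a ring homomorphism is not automatic from archimedeanness alone; the clean route is the Zorn's-lemma extension to a maximal proper preordering and the analysis of its support, which you mention parenthetically but should make the primary argument. In short: correct strategy, accurate identification of where the difficulty sits, but the decisive step (and hence the theorem itself) is asserted rather than proved.
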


\begin{theorem}[Putinar's Positivstellensatz~\cite{putinar1993positive}]\label{th:putinar}
	Let $$S=\{x \in \mathbb{R}^n ~|~ g_1(x)\geq 0, \ldots, g_m(x)\geq 0\}$$ and assume that $\{g_1,\ldots,g_m\}$ satisfy the Archimedean property, i.e., there exists $N \in~\mathbb{N}$ such that $$N-\sum_i x_i^2=\sigma_0(x)+\sigma_1(x) g_1(x)+\ldots+\sigma_m(x) g_m(x),$$ where $\sigma_1(x),\ldots,\sigma_m(x)$ are sums of squares. If a polynomial $p$ is positive on $S$, then $$p(x)=s_0(x)+s_1(x)g_1(x)+\ldots+s_m(x)g_m(x),$$ where $s_1(x),\ldots,s_m(x)$ are sums of squares.
\end{theorem}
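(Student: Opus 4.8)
The plan is to use the classical duality-plus-moment-problem argument (this is essentially Putinar's original route). Write $M:=\{\sigma_0+\sum_{i=1}^m\sigma_i g_i ~:~ \sigma_0,\ldots,\sigma_m \text{ sos}\}$ for the quadratic module generated by $g_1,\ldots,g_m$; the conclusion to be proved is precisely that $p\in M$. The first step is a routine algebraic strengthening of the Archimedean hypothesis: bounding the squares of monomials by induction (using that $M$ is stable under multiplication by squares, together with $N-\sum_j x_j^2\in M$), and then combining via the sos inequality $\pm c\,x^\alpha\le\tfrac12+\tfrac12 c^2 (x^\alpha)^2$, one shows that for every $f\in\R[x]$ there is $k\in\mathbb{N}$ with $k-f\in M$ and $k+f\in M$. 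In particular $1$ is an algebraic interior point of the convex cone $M$, and, by a similar computation, $N^k-(\sum_j x_j^2)^k\in M$ for every $k$.

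\noindent Second, I would argue by contradiction. Suppose $p\notin M$. Since $1$ is an algebraic interior point of the convex cone $M$ and $p\notin M$, the Eidelheit/Hahn--Banach separation theorem on the vector space $\R[x]$ yields a nonzero linear functional $L:\R[x]\to\R$ with $L\ge 0$ on $M$ and $L(p)\le 0$; because $1$ is interior we get $L(1)>0$, and we normalize $L(1)=1$. The crux is to realize $L$ as integration against a Borel probability measure $\mu$ on $\R^n$ supported in $S$. As $M$ contains all squares, $(f,h)\mapsto L(fh)$ is positive semidefinite; quotienting $\R[x]$ by its radical and completing gives a Hilbert space $H$ with cyclic vector $\xi=[1]$ on which each multiplication operator $M_{x_i}$ is symmetric and densely defined. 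Here the Archimedean hypothesis enters decisively: since $N-x_i^2\in M$ (add $\sum_{j\ne i}x_j^2$ to $N-\sum_j x_j^2$), we get $\|M_{x_i}[f]\|^2=L(x_i^2 f^2)\le N\,L(f^2)=N\|[f]\|^2$, so $M_{x_i}$ extends to a bounded self-adjoint operator $X_i$, and the $X_i$ pairwise commute. By the spectral theorem for a tuple of commuting bounded self-adjoint operators there is a joint spectral measure $E$ on $\R^n$; setting $\mu(\cdot):=\langle E(\cdot)\xi,\xi\rangle$ produces a probability measure with $L(f)=\int_{\R^n}f\,d\mu$ for all $f$. The bound $L((\sum_j x_j^2)^k)\le N^k$ forces $\mathrm{supp}\,\mu$ to lie in the ball of radius $\sqrt N$, hence to be compact; and $\int f^2 g_i\,d\mu=L(f^2 g_i)\ge 0$ for all $f$, together with a Stone--Weierstrass approximation of indicator functions on $\mathrm{supp}\,\mu$, forces $g_i\ge 0$ $\mu$-a.e., so $\mathrm{supp}\,\mu\subseteq S$.

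\noindent Third, the contradiction: since $p>0$ on the compact set $\mathrm{supp}\,\mu\subseteq S$ and $\mu$ has total mass $1$, we get $L(p)=\int_{\R^n}p\,d\mu\ge\min_{\mathrm{supp}\,\mu}p>0$, contradicting $L(p)\le 0$. Therefore $p\in M$, which is exactly the asserted representation $p=s_0+\sum_{i=1}^m s_i g_i$ with the $s_i$ sos.

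\noindent I expect the main obstacle to be the second step: turning the abstract separating functional $L$ into a measure supported on $S$. All of the analytic content of the theorem is concentrated there, namely the spectral theorem for a commuting tuple of bounded self-adjoint operators, and the Archimedean condition is used in an essential (non-formal) way to obtain the operator bounds that make it applicable; the passage from ``$\int f^2 g_i\,d\mu\ge 0$ for all $f$'' to ``$g_i\ge 0$ on $\mathrm{supp}\,\mu$'' also needs a small approximation argument. An alternative, more algebraic route would be to first invoke Schm\"udgen's Positivstellensatz (Theorem~\ref{th:schmudgen}), which applies since the Archimedean condition makes $S$ compact, obtaining a representation of $p$ involving the products $g_{i_1}\cdots g_{i_k}$, and then use the Archimedean condition a second time (via a Jacobi--Prestel--type argument) to absorb the higher products into the linear terms; I would nonetheless favor the self-contained functional-analytic proof sketched above.
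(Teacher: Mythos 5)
The paper does not actually prove this statement: Putinar's Positivstellensatz is quoted as a known result with a citation to \cite{putinar1993positive}, so there is no in-paper proof to compare against. Judged on its own, your proposal is the standard (essentially Putinar's original) operator-theoretic proof, and its outline is correct: (i) the Archimedean hypothesis upgrades, by the monomial induction you describe (using that the quadratic module $M$ is closed under addition and under multiplication by sums of squares), to the statement that every polynomial is ``bounded'' modulo $M$, so $1$ is an order unit and an algebraic interior point of $M$; (ii) if $p\notin M$, Eidelheit separation on $\mathbb{R}[x]$ gives $L\neq 0$ with $L\geq 0$ on $M$, $L(p)\leq 0$, $L(1)=1$; (iii) the GNS construction plus the bound $L(x_i^2f^2)\leq N\,L(f^2)$ (from $N-x_i^2\in M$) yields commuting bounded self-adjoint operators, whose joint spectral measure gives a probability measure $\mu$ representing $L$ with compact support contained in $S$; (iv) $p>0$ on $S$ then forces $L(p)>0$, a contradiction. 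The places you flag as needing care are exactly the right ones, and they are all routine: well-definedness of the multiplication operators on the quotient by the null space of $(f,h)\mapsto L(fh)$ follows from the same bound $L(x_i^2f^2)\leq NL(f^2)$; commutativity of the $X_i$ passes from the dense polynomial domain to the closures by boundedness; and the support argument ($\int f^2g_i\,d\mu\geq 0$ for all polynomials $f$ implies $g_i\geq 0$ on $\operatorname{supp}\mu$) needs the uniform polynomial approximation of a bump function on the compact support, as you indicate. Your alternative route via Schm\"udgen's theorem plus a Jacobi--Prestel absorption argument is also legitimate, though it is heavier than needed since Schm\"udgen's result is itself deeper. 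In a full write-up, the only wording I would tighten is ``quotienting $\mathbb{R}[x]$ by its radical'': the quotient is by the null space $\{f:\ L(f^2)=0\}$ of the positive semidefinite form induced by $L$.
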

Note that these three Positivstellens\"atze involve in their expressions sum of squares polynomials of unspecified degree. To construct hierarchies of tractable optimization problems for (\ref{eq:gamma.opt}), we fix this degree: at level $r$, we search for the largest $\gamma$ such that positivity of $p(x)-\gamma$ over $S$ can be certified using the Positivstellens\"atze where the degrees of all sos polynomials are taken to be less than or equal to $2r$. Solving each level of these hierarchies is then a semidefinite program (SDP). This is a consequence of the fact that one can optimize over (or test membership to) the set of sum of squares polynomials of fixed degree using semidefinite programming \cite{sdprelax,PhD:Parrilo,lasserre_moment}. Indeed, a polynomial $p$ of degree $2d$ and in $n$ variables is a sum of squares if and only if there exists a symmetric matrix $Q\succeq 0$ such that $p(x)=z(x)^TQz(x)$, where $z(x)=(1,x_1,\ldots,x_n,\ldots,x_n^d)^T$ is the standard vector of monomials in $n$ variables and of degree less than or equal to $d$. We remark that the hierarchy obtained from Stengle's Positivstellensatz was proposed and analyzed by Parrilo in \cite{sdprelax}; the hierarchy obtained from Putinar's Positivstellensatz was proposed and analyzed by Lasserre in~\cite{lasserre_moment}. There have been more recent works that provide constructive proofs of Schm\"udgen and Putinar's Positivstellens\"atze; see \cite{averkov, schweighofer2002algorithmic, schweighofer2005optimization}. These proofs rely on other Positivstellens\"atze, e.g., a result by Poly\'a (see Theorem \ref{th:polya} below) in \cite{schweighofer2002algorithmic, schweighofer2005optimization}, and the same result by Poly\'a, Farkas' lemma, and Stengle's Positivstellensatz in \cite{averkov}. There has further been an effort to derive complexity bounds for Schm\"udgen and Putinar's Positivstellens\"atze in recent years; see \cite{nie2007complexity,schweighofer2004complexity}.

%We briefly discuss the assumptions required for both Positivstellens\"atze to hold. Theorem \ref{th:schmudgen} requires only compactness whereas Theorem \ref{th:putinar} requires a stronger assumption, namely an algebraic proof of compactness (the so-called Archimedean property). The assumption that we require in the hierarchies that we derive also relates to compactness: we need to know the value of a constant $R$ such that $S \subseteq B(0,R)$, where $B(0,R)$ is the ball of radius $R$ and centered at $0.$
On a historical note, Stengle, Schm\"udgen, and Putinar's Positivstellens\"atze were derived in the latter half of the 20th century. As mentioned previously, they all certify positivity of a polynomial over an arbitrary basic semialgebraic set (modulo compactness assumptions). By contrast, there are Positivstellens\"atze from the early 20th century that certify positivity of a polynomial \emph{globally.} Perhaps the most well-known Positivstellensatz of this type is due to Artin in 1927, in response to Hilbert's 17th problem. Artin shows that any nonnegative polynomial is a sum of squares of rational functions. Here is an equivalent formulation of this statement:

\begin{theorem}[Artin \cite{Artin_Hilbert17}]\label{th:artin}
	For any nonnegative polynomial $p$, there exists an sos polynomial $q$ such that $p\cdot q$ is a sum of squares.
\end{theorem}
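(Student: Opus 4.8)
The plan is to follow the classical route to Hilbert's 17th problem, combining the Artin--Schreier theory of ordered fields with a transfer principle for real closed fields. First I would reduce the statement to showing that a nonnegative polynomial $p$ is a sum of squares of \emph{rational} functions. Indeed, if $p=\sum_i(a_i/b_i)^2$ in the field $K\mathrel{\mathop{:}}=\mathbb{R}(x_1,\ldots,x_n)$ and $b\in\mathbb{R}[x]$ is a common denominator of the $b_i$, then $p\cdot b^2=\sum_i(a_ib/b_i)^2$ is a sum of squares of polynomials, so $q\mathrel{\mathop{:}}=b^2$ (a square, hence sos) works; the degenerate cases $p\equiv 0$ or $p$ already sos are handled by $q=1$. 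So I would assume for contradiction that $p\geq 0$ on $\mathbb{R}^n$ but $p\notin\Sigma$, where $\Sigma$ is the set of finite sums of squares of elements of $K$.

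Next I would record that $K$ is \emph{formally real}, i.e.\ $-1\notin\Sigma$: an identity $-1=\sum_i(a_i/b_i)^2$ clears denominators to $-b^2=\sum_i c_i^2$ in $\mathbb{R}[x]$, and evaluating at a point where $b$ does not vanish equates a negative real with a sum of real squares. The main algebraic step is then that $T\mathrel{\mathop{:}}=\{s_0-p\,s_1 : s_0,s_1\in\Sigma\}$ is a preordering of $K$ with $-1\notin T$: it is closed under sums and products since $p^2\in\Sigma$, and if $-1=s_0-p\,s_1$ then $s_1\neq 0$ (else $-1\in\Sigma$), whence $p=(1+s_0)s_1/s_1^2\in\Sigma$, a contradiction. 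By the Artin--Schreier theorem a preordering avoiding $-1$ extends to the nonnegative cone of some ordering $\leq$ on $K$; since $-p\in T$ and $p\neq 0$, this gives $p<0$ in $\leq$. Passing to the real closure $R$ of $(K,\leq)$, the generators $(x_1,\ldots,x_n)\in K^n\subseteq R^n$ form a point at which $p$ is negative, so the sentence $\exists y_1\cdots\exists y_n\,\big(p(y_1,\ldots,y_n)<0\big)$ is true in $R$.

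To conclude I would invoke Tarski's transfer principle: the first-order theory of real closed fields is complete, so $R$ and $\mathbb{R}$ satisfy the same sentences, and the displayed sentence therefore holds in $\mathbb{R}$, producing a real point $x^\ast$ with $p(x^\ast)<0$ --- contradicting $p\geq 0$ on $\mathbb{R}^n$. Hence $p\in\Sigma$, and the reduction in the first paragraph turns this into the desired conclusion that $p\cdot q$ is a sum of squares for an sos $q$.

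The crux, and the only ingredient that is not elementary bookkeeping, is the transfer step from $R$ to $\mathbb{R}$; equivalently one can package it as the Artin--Lang homomorphism theorem (a finitely generated ordered $\mathbb{R}$-algebra admits an order-compatible $\mathbb{R}$-valued point), but the genuine difficulty is the same. I would also mention an alternative, more self-contained route using results already available in this chapter: apply the emptiness form of Stengle's Positivstellensatz (Theorem~\ref{th:stengle}) to the basic semialgebraic set $\{(x,t)\in\mathbb{R}^{n+1} : -p(x)\geq 0,\ -t\,p(x)-1\geq 0\}$, which is empty precisely because $p\geq 0$ on $\mathbb{R}^n$ (it projects onto $\{x:p(x)<0\}$); the resulting certificate, after the standard reduction among the forms of the Positivstellensatz, yields sos polynomials $\sigma,\tau$ and an integer $k$ with $p\,\sigma=p^{2k}+\tau$, and since $p^{2k}+\tau$ is visibly a sum of squares, $q=\sigma$ works. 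In that route the effort moves entirely to the bootstrapping that extracts this ``$\geq 0$'' identity from the emptiness statement.
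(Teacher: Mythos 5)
The paper does not actually prove Theorem~\ref{th:artin}: it is quoted as a classical result with a citation to Artin's 1927 paper, so there is no internal proof to compare yours against. Judged on its own, your reconstruction is the standard and correct argument for Hilbert's 17th problem, and the reduction in your first paragraph (clear denominators, take $q=b^2$) correctly converts the rational-function statement into the form stated in the theorem. The preordering $T=\{s_0-p\,s_1\}$ is indeed closed under products because $p^2\in\Sigma$, your verification that $-1\notin T$ is right, and Artin--Schreier then yields an ordering in which $p<0$. The only point worth tightening is the transfer step: for a polynomial $p$ with arbitrary real coefficients, the sentence $\exists y\,(p(y)<0)$ carries parameters from $\mathbb{R}$, so completeness of the theory of real closed fields is not literally enough; you need model completeness (i.e., that the inclusion $\mathbb{R}\subseteq R$ is elementary, Tarski's transfer with parameters) or, equivalently, the Artin--Lang specialization theorem, which you do mention as the alternative packaging --- so the gap is one of phrasing rather than substance (and disappears entirely if $p$ has rational coefficients). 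Your second route, via the emptiness form of Stengle's Positivstellensatz applied to $\{(x,t): -p(x)\geq 0,\ -t\,p(x)-1\geq 0\}$, is also sound and is arguably the route most consonant with the machinery this chapter already invokes (Theorem~\ref{th:stengle}); the certificate $p\,\sigma=p^{2k}+\tau$ immediately gives $q=\sigma$, though, as you acknowledge, the ``standard reduction'' from the emptiness certificate to that identity is itself a nontrivial bookkeeping step that would need to be written out if this were to serve as a self-contained proof.
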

To the best of our knowledge, in this area, all converging hierarchies of lower bounds for POPs are based off of Positivstellens\"atze that certify nonnegativity of a polynomial over an arbitrary basic semialgebraic set. In this chapter, we show that in fact, under compactness assumptions, it suffices to have only global certificates of nonnegativity (such as the one given by Artin) to produce a converging hierarchy for general POPs. As a matter of fact, even weaker statements that apply only to globally positive (as opposed to globally nonnegative) forms are enough to derive converging hierarchies for POPs. Examples of such statements are due to Habicht \cite{habicht1939zerlegung} and Reznick \cite{Reznick_Unif_denominator}. With such an additional positivity assumption, more can usually be said about the structure of the polynomial $q$ in Artin's result. Below, we present the result by Reznick.

\begin{theorem}[Reznick \cite{Reznick_Unif_denominator}]\label{th:reznick.uniform}
	For any positive definite form $p$, there exists $r \in \mathbb{N}$ such that $p(x) \cdot (\sum_i x_i^2)^r$ is a sum of squares.	
\end{theorem}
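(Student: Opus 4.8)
Although this is a theorem of Reznick \cite{Reznick_Unif_denominator}, here is the route I would take to prove it. I would \emph{not} try to reduce to P\'olya's theorem: P\'olya certifies positivity of a form on the nonnegative orthant, which is strictly weaker than positive definiteness on all of $\mathbb{R}^n$, and the naive substitution $x_i\mapsto x_i^2$ only yields that $(\sum_i x_i^2)^r\,p(x_1^2,\dots,x_n^2)$ is a sum of monomial squares, not that $(\sum_i x_i^2)^r p(x)$ is SOS. Instead I would prove the stronger conclusion that Reznick actually obtains, namely that $(\sum_i x_i^2)^r p$ is a sum of $2(d+r)$-th powers of \emph{linear} forms (write $2d=\deg p$), because that cone, call it $Q_{2m}$ with $m=d+r$, has a clean self-dual-type description.

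First, normalization: since $p$ is positive definite and homogeneous, it attains a positive minimum $\lambda$ and a maximum $\Lambda$ on the unit sphere, so $\lambda(\sum_i x_i^2)^d\le p(x)\le \Lambda(\sum_i x_i^2)^d$ everywhere; rescale so $\lambda=1$. Next, dualize: equip forms of degree $2m$ with the apolar (Fischer) inner product $\langle\cdot,\cdot\rangle$. Under it one computes $\langle f,(a\cdot x)^{2m}\rangle=(2m)!\,f(a)$, so the dual cone of $Q_{2m}$ is exactly the cone of PSD forms of degree $2m$, and (using that $Q_{2m}$ is closed) $(\sum_i x_i^2)^r p\in Q_{2m}$ is equivalent to $\langle(\sum_i x_i^2)^r p,\mu\rangle\ge 0$ for every PSD form $\mu$ of degree $2m$. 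The structural point is that, under $\langle\cdot,\cdot\rangle$, multiplication by $\sum_i x_i^2$ is precisely the adjoint of the Laplacian $\Delta$; iterating gives $\langle(\sum_i x_i^2)^r p,\mu\rangle=\langle p,\Delta^r\mu\rangle$. So the goal becomes: for $r$ large, $\langle p,\Delta^r\mu\rangle\ge 0$ for all PSD $\mu$ of degree $2(d+r)$.

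For the core estimate I would expand $\mu=\sum_{j}(\sum_i x_i^2)^{j}h_{2(m-j)}$ into its harmonic components $h_i$ (harmonic of degree $i$). Applying $\Delta^{r}$ with $r=m-d$ annihilates every component of harmonic degree exceeding $2d$ and leaves $\Delta^{r}\mu=\sum_{k=0}^{d}a^{(m)}_k\,(\sum_i x_i^2)^{k}\,h_{2(d-k)}$, where the $a^{(m)}_k>0$ are explicit products of eigenvalues of $\Delta$ on the relevant modules. The $k=d$ term is $a^{(m)}_d\,\overline{\mu}\,(\sum_i x_i^2)^d$, with $\overline{\mu}=h_0\ge 0$ the spherical average of the nonnegative form $\mu$; pairing with $p$ gives $a^{(m)}_d\,\overline{\mu}\,\langle p,(\sum_i x_i^2)^d\rangle$, which is nonnegative because $\langle p,(\sum_i x_i^2)^d\rangle=\Delta^d p$ is a positive multiple of the spherical average of $p$, hence positive (this is where positive definiteness, not mere nonnegativity, of $p$ enters). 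It then remains to show that the remaining terms $\sum_{k<d}a^{(m)}_k\langle p,(\sum_i x_i^2)^k h_{2(d-k)}\rangle$ cannot overwhelm this positive contribution for $r$ large: one bounds them by $\|p\|\sum_{k<d}a^{(m)}_k\,\|(\sum_i x_i^2)^k h_{2(d-k)}\|$, uses that the ratio $a^{(m)}_d/a^{(m)}_k$ grows in $m$, and invokes a bound on the higher harmonic components of a nonnegative form in terms of its spherical average.

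The main obstacle is precisely this last comparison: showing that, for a PSD form, the higher harmonic components are dominated — in the quantitative way that survives the $m$-dependent weights $a^{(m)}_k$ — by the spherical average $h_0$, uniformly in $m$. This is genuine harmonic analysis on $S^{n-1}$ (Funk--Hecke and estimates on ultraspherical/Gegenbauer polynomials), and it is the technical heart of Reznick's argument; everything else above is bookkeeping with the apolar product, the Laplacian, and the harmonic decomposition. If one wanted a shorter but less self-contained write-up, one could simply cite Theorem~\ref{th:reznick.uniform} as stated and move on, since in this chapter it is used only as a black box to feed into the converging hierarchy.
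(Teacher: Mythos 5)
First, a point of comparison: the thesis does not prove Theorem~\ref{th:reznick.uniform} at all --- it is quoted from \cite{Reznick_Unif_denominator} and used strictly as a black box (to verify property (b) of Theorem~\ref{th:hierarchy} in the Reznick-cone hierarchy), so your closing remark that one could "simply cite the theorem and move on" is in fact exactly what the paper does. Your sketch is, moreover, not a different route from Reznick's: the apolar pairing, the adjointness of multiplication by $\sum_i x_i^2$ with the Laplacian, and the harmonic decomposition are precisely the ingredients of Reznick's own argument, presented in dual form. (The thesis itself reproduces the primal version of this machinery later, in Chapter~\ref{ch:polynorms}: Proposition~\ref{prop:def.Phi} introduces the operator $\Phi_s$ with $f(D)(\sum_i x_i^2)^s=\Phi_s(f)(\sum_i x_i^2)^{s-d}$, and Proposition~\ref{prop:pd.Phi} is the statement that $\Phi_s^{-1}(f)$ is psd once $s$ exceeds an explicit bound in $\epsilon(f)$ --- which is exactly the dual of the inequality you are after.)

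As a standalone proof, however, your write-up has a genuine gap, and you identify it yourself: the step "the remaining terms cannot overwhelm the positive contribution for $r$ large" is the entire content of the theorem, and it is not established. Concretely, you would need (i) a proof that the eigenvalue ratios $a^{(m)}_d/a^{(m)}_k$ grow with $m$ at an explicit rate, and (ii) a bound on the higher harmonic components $\|(\sum_i x_i^2)^k h_{2(d-k)}\|$ of an arbitrary psd form $\mu$ of degree $2m$ in terms of its spherical average $h_0$ that is \emph{uniform in $m$} (or at worst grows slower than the eigenvalue ratios). Neither is routine: the harmonic components of a nonnegative form can be large relative to its mean, and the whole point of Reznick's Theorem~3.12 in \cite{Reznick_Unif_denominator} is to control this trade-off quantitatively (it is where the $\frac{nd(2d-1)}{4\log 2\,\epsilon(p)}$-type bound, echoed in Proposition~\ref{prop:pd.Phi}, comes from). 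Everything before that point in your sketch --- normalization, duality of the cone of $2m$-th powers of linear forms with the psd cone, annihilation of high harmonic degree by $\Delta^{r}$, positivity of the $k=d$ term via positive definiteness of $p$ --- is correct bookkeeping, but without the quantitative comparison nothing is proved. Either carry out that estimate (essentially re-deriving Reznick's Section~3) or do what the thesis does and cite the result.
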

We show in this chapter that this Positivstellensatz also gives rise to a converging hierarchy for POPs with a compact feasible set similarly to the one generated by Artin's Positivstellensatz.

Through their connections to sums of squares, the two hierarchies obtained using the theorems of Reznick and Artin are semidefinite programming-based. In this chapter, we also derive an ``optimization-free'' converging hierarchy for POPs with compact feasible sets where each level of the hierarchy only requires that we be able to test nonnegativity of the coefficients of a given fixed polynomial. To the best of our knowledge, this is the first converging hierarchy of lower bounds for POPs which does not require that convex optimization problems be solved at each of its levels. To construct this hierarchy, we use a result of Poly\'a \cite{Polya}, which just like Artin's and Reznick's Positivstellens\"atze, certifies global positivity of forms. However this result is restricted to even forms. Recall that a form $p$ is \emph{even} if each of the variables featuring in its individual monomials has an even power. This is equivalent (see \cite[Lemma 2]{de2005equivalence}) to $p$ being invariant under change of sign of each of its coordinates, i.e., \begin{equation*}
p(x_1,\ldots,x_n)=p(-x_1,\ldots,x_n)=\cdots=p(x_1,\ldots,-x_n).
\end{equation*}
\begin{theorem}[Poly\'a \cite{Polya}]\label{th:polya}
	For any positive definite even form $p$, there exists $r \in \mathbb{N}$ such that $p(x) \cdot (\sum_i x_i^2)^r$ has nonnegative coefficients.\footnote{A perhaps better-known but equivalent formulation of this theorem is the following: for any form $h$ that is positive on the standard simplex, there exists $r \in \mathbb{N}$ such that $h(x)\cdot (\sum_i x_i)^r$ has nonnegative coefficients. The two formulations are equivalent by simply letting $p(x)=h(x^2)$.} 
\end{theorem}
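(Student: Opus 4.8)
The plan is to reduce the even-form statement to the classical ``simplex'' formulation of Poly\'a's theorem recalled in the footnote, and then to prove that formulation by a direct computation of coefficients together with a compactness argument.

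\emph{Reduction.} Since $p$ is an even form of degree $2d$, every monomial of $p$ has the shape $c_{2\gamma}\,x_1^{2\gamma_1}\cdots x_n^{2\gamma_n}$ with $|\gamma|=d$, so $p(x)=h(x_1^2,\ldots,x_n^2)$ where $h(y)\mathrel{\mathop{:}}=\sum_{|\gamma|=d}c_{2\gamma}\,y^\gamma$ is a form of degree $d$. Positive definiteness of $p$ forces $h(y)>0$ for every $y\geq 0$ with $y\neq 0$ (such a $y$ equals $(x_1^2,\ldots,x_n^2)$ for some $x\neq 0$), i.e., $h$ is positive on the (closed) standard simplex $\Delta\mathrel{\mathop{:}}=\{y\geq 0:\sum_iy_i=1\}$. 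If we can find $r$ with $(\sum_iy_i)^r h(y)$ having nonnegative coefficients, then substituting $y_i=x_i^2$ shows $(\sum_ix_i^2)^r p(x)$ has nonnegative coefficients, which is the claim. So it remains to prove the simplex version.

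\emph{Coefficient count and rescaling.} Write $h(y)=\sum_{|\alpha|=d}c_\alpha y^\alpha$. Expanding by the multinomial theorem and using $\frac{N!}{(\beta-\alpha)!}=\frac{N!}{\beta!}\prod_i\beta_i(\beta_i-1)\cdots(\beta_i-\alpha_i+1)$, the coefficient of $y^\beta$ in $(\sum_iy_i)^N h(y)$, for $|\beta|=N+d$, equals $\frac{N!}{\beta!}\,G_N(\beta)$ with
\begin{equation*}
G_N(\beta)\mathrel{\mathop{:}}=\sum_{|\alpha|=d}c_\alpha\prod_{i=1}^n\prod_{j=0}^{\alpha_i-1}(\beta_i-j),
\end{equation*}
where terms with $\alpha\not\leq\beta$ drop out automatically. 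Since $N!/\beta!>0$, it is enough to prove $G_N(\beta)>0$ for all nonnegative integer $\beta$ with $|\beta|=N+d$, once $N$ is large. Setting $t_i\mathrel{\mathop{:}}=\beta_i/(N+d)$ (so $t\in\Delta$) and dividing by $(N+d)^d$ gives
\begin{equation*}
\frac{G_N(\beta)}{(N+d)^d}=\sum_{|\alpha|=d}c_\alpha\prod_{i=1}^n\prod_{j=0}^{\alpha_i-1}\Big(t_i-\tfrac{j}{N+d}\Big),
\end{equation*}
a finite sum of terms, each a product of at most $d$ factors of absolute value $\leq 1$ with every factor within $d/(N+d)$ of $t_i$; hence the right-hand side tends to $\sum_\alpha c_\alpha\prod_it_i^{\alpha_i}=h(t)$ \emph{uniformly over $t\in\Delta$}. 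Because $h$ is continuous and strictly positive on the compact set $\Delta$, it is bounded below there by some $m>0$, so for $N$ large we get $G_N(\beta)/(N+d)^d\geq m/2>0$ for every admissible $\beta$. Taking $r=N$ finishes the proof.

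\emph{Main obstacle.} The reduction is routine; the content is the simplex estimate, and its one delicate point is the uniformity of the limit $G_N(\beta)/(N+d)^d\to h(t(\beta))$. When $\beta$ has some coordinates of size $O(1)$ rather than $O(N)$ --- that is, for $\beta$ near the boundary of $\Delta$ --- one might fear that the falling-factorial corrections spoil the approximation; this is also precisely where the hypothesis ``$h>0$ on the \emph{closed} simplex'' (equivalently, positive \emph{definiteness} of $p$ rather than mere positivity on the interior) is indispensable, for otherwise $h(t)$ need not be bounded below by a positive constant. The device that resolves the first issue is to rescale by exactly $(N+d)^d$ before passing to the limit, so that each factor $t_i-j/(N+d)$ is bounded and tends to $t_i$ at a rate $d/(N+d)$ independent of $t_i$. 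Combined with compactness of $\Delta$ and the uniform positive lower bound on $h$, this is what allows a single exponent $r=N$ to work simultaneously for all the coefficients.
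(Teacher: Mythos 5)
Your proof is correct. Note, however, that the paper does not prove this statement at all: Theorem~\ref{th:polya} is quoted as a classical result of Poly\'a with a citation, so there is no in-paper argument to compare against. What you have written is essentially Poly\'a's original proof of the simplex formulation (expand, factor out $N!/\beta!$, observe that the normalized coefficient $G_N(\beta)/(N+d)^d$ is a uniform $O(1/N)$ perturbation of $h(\beta/(N+d))$, and invoke the positive lower bound of $h$ on the compact simplex), preceded by the even-form/simplex translation $p(x)=h(x^2)$ that the paper itself mentions in the footnote and uses again, in the reverse direction, in Lemma~\ref{th:Reznick.even} via the substitution $f(x)=p(\sqrt{x})$. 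Your qualitative compactness step (``take $N$ large enough that the error is below $m/2$'') is exactly what the quantitative Powers--Reznick bound quoted as Theorem~\ref{th:reznick} makes explicit, namely $N> d(2d-1)\|f\|/\lambda-2d$; so your argument is consistent with, and subsumed by, the machinery the paper imports. The only detail you leave implicit is the elementary inequality $|\prod_k a_k-\prod_k b_k|\leq\sum_k|a_k-b_k|$ for factors bounded by $1$ in absolute value, which justifies the uniform convergence claim; spelling it out would make the proof fully self-contained.
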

Our aforementioned hierarchy enables us to obtain faster-converging linear programming (LP) and second-order cone programming (SOCP)-based hierarchies for general POPs with compact feasible sets that rely on the concepts of \emph{dsos} and \emph{sdsos} polynomials. These are recently introduced inner approximations to the set of sos polynomials that have shown much better scalability properties in practice \cite{isos_journal}.

As a final remark, we wish to stress the point that the goal of this chapter is first and foremost theoretical, i.e., to provide methods for constructing converging hierarchies of lower bounds for POPs using as sole building blocks certificates of global positivity. We do not make any claims that these hierarchies can outperform the popular existing hierarchies due, e.g., to Lasserre \cite{lasserre_moment} and Parrilo \cite{sdprelax}. We do believe however that the optimization-free hierarchy presented in Section \ref{subsec:opt.free} could potentially be of interest in large-scale applications where the convex optimization problems appearing in traditional hierarchies are too cumbersome to solve.

\subsection{Outline of the chapter} The chapter is structured as follows. In Section \ref{sec:nonneg.approx}, we show that if one can inner approximate the cone of positive definite forms arbitrarily well (with certain basic properties), then one can produce a converging hierarchy of lower bounds for POPs with compact feasible sets (Theorem~\ref{th:hierarchy}). This relies on a reduction (Theorem~\ref{th:slb}) that reduces the problem of certifying a strict lower bound on a POP to that of proving positivity of a certain form. In Section \ref{sec:sdp.hierarchy}, we see how this result can be used to derive semidefinite programming-based converging hierarchies (Theorems \ref{th:reznick.hierarchy} and \ref{th:artin.hierarchy}) from the Positivstellens\"atze by Artin (Theorem \ref{th:artin}) and Reznick (Theorem \ref{th:reznick.uniform}). In Section~\ref{sec:opt.free.LP.SOCP}, we derive an optimization-free hierarchy (Theorem \ref{th:sms.hierarchy}) from the Positivstellensatz of Poly\'a (Theorem \ref{th:polya}) as well as LP and SOCP-based hierarchies which rely on dsos/sdsos polynomials (Corollary \ref{cor:dsos.sdsos.hierarchy}). We conclude with a few open problems in Section \ref{sec:conclusion}.

\subsection{Notation and basic definitions}

We use the standard notation $A\succeq 0$ to denote that a symmetric matrix $A$ is positive semidefinite. Recall that a \emph{form} is a homogeneous polynomial, i.e., a polynomial whose monomials all have the same degree. We denote the degree of a form $f$ by $deg(f)$. We say that a form $f$ is \emph{nonnegative} (or positive semidefinite) if $f(x) \geq 0$, for all $x \in \mathbb{R}^n$ (we write $f\geq 0$). A form $f$ is \emph{positive definite} (pd) if $f(x) >0,$ for all nonzero $x$ in $\mathbb{R}^n$ (we write $f>0$). Throughout the chapter, we denote the set of forms (resp. the set of nonnegative forms) in $n$ variables and of degree $d$ by $H_{n,d}$ (resp $P_{n,d}$). We denote the ball of radius $R$ and centered at the origin by $B(0,R)$ and the unit sphere in $x$-space, i.e., $\{x\in \mathbb{R}^n~|~||x||_2=1\}$, by $S_x$. We use the shorthand $f(y^2-z^2)$ for $y,z \in \mathbb{R}^n$ to denote $f(y_1^2-z_1^2,\ldots,y_n^2-z_n^2).$ We say that a scalar $\gamma$ is a strict lower bound on (\ref{eq:POP}) if $p(x)>\gamma,~\forall x\in S$. Finally, we ask the reader to carefully read Remark~\ref{rem:notation} which contains the details of a notational overwriting occurring before Theorem \ref{th:hierarchy} and valid from then on throughout the chapter. This overwriting makes the chapter much simpler to parse.

\section{Constructing converging hierarchies for POP using global certificates of positivity}\label{sec:nonneg.approx}

Consider the polynomial optimization problem in (\ref{eq:POP}) and denote its optimal value by $p^*$. Let $d$ be such that $2d$ is the smallest even integer larger than or equal to the maximum degree of $p, g_i, i=1,\ldots,m$. We denote the feasible set of our optimization problem by $$S=\{x \in \mathbb{R}^n~|~g_i(x)\geq 0, i=1,\ldots,m\}$$ and assume that $S$ is contained within a ball of radius $R$. From this, it is easy to provide (possibly very loose) upper bounds on $g_i(x)$ over the set $S$: as $S$ is contained in a ball of radius $R$, we have $|x_i|\leq R$, for all $i=1,\ldots,n$. We then use this to upper bound each monomial in $g_i$ and consequently $g_i$ itself. We use the notation $\eta_i$ to denote these upper bounds, i.e., $g_i(x) \leq \eta_i$, for all $i=1,\ldots,m$ and for all $x\in S$. Similarly, we can provide an upperbound on $-p(x)$. We denote such a bound by $\beta$, i.e., $-p(x) \leq \beta,$ $\forall x \in S.$

The goal of this section is to produce a method for constructing converging hierarchies of lower bounds for POPs if we have access to arbitrarily accurate inner approximations of the set of positive definite forms. The first theorem (Theorem \ref{th:slb}) connects lower bounds on (\ref{eq:POP}) to positive definiteness of a related form. The second theorem (Theorem~\ref{th:hierarchy}) shows how this can be used to derive a hierarchy for POPs.

\begin{theorem}\label{th:slb} Consider the general polynomial optimization problem in (\ref{eq:POP}) and recall that $d$ is such that $2d$ is the smallest even integer larger than or equal to the maximum degree of $p, g_i, i=1,\ldots,m$. Suppose $S \subseteq B(0,R)$ for some positive scalar $R$. Let $\eta_i, i=1,\ldots,m$ (resp.  $\beta$) be any finite upper bounds on $g_i(x), i=1,\ldots,m$ (resp. $-p(x)$).
	
	Then, a scalar $\gamma$ is a strict lower bound on (\ref{eq:POP}) if and only if the homogeneous sum of squares polynomial
	\begin{equation}\label{eq:f.gamma}
	\begin{aligned}
	f_\gamma(x,s,y)\mathrel{\mathop{:}}=&\left( \gamma y^{2d}-y^{2d}p(x/y)-s_0^2y^{2d-2} \right)^2+\sum_{i=1}^m\left(y^{2d}g_i(x/y)-s_i^2 y^{2d-2}\right)^2 \\
	&+\left((R+\sum_{i=1}^m \eta_i +\beta+\gamma)^dy^{2d}-(\sum_{i=1}^n x_i^2+\sum_{i=0}^m s_i^2)^d-s_{m+1}^{2d}\right)^2
	\end{aligned}
	\end{equation} of degree $4d$ and in $n+m+3$ variables $(x_1,\ldots,x_n,s_0,\ldots,s_m,s_{m+1},y)$ is positive definite.
\end{theorem}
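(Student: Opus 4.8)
The plan is to exploit that $f_\gamma$ is, by construction, a sum of three squares of forms of degree $2d$. Since a nonnegative form is positive definite exactly when it has no nonzero real zero, and a sum of squares vanishes exactly when each summand does, positive definiteness of $f_\gamma$ is equivalent to the statement that the three bracketed forms appearing in (\ref{eq:f.gamma}) have no common nonzero real zero in $\mathbb{R}^{n+m+3}$. So I would first record the routine structural facts — $y^{2d}p(x/y)$ and $y^{2d}g_i(x/y)$ are honest polynomials (the power of $y$ stays $\ge 0$ because $\deg p,\deg g_i\le 2d$), each bracket is homogeneous of degree $2d$, hence $f_\gamma$ is a form of degree $4d$ in $n+m+3$ variables and is SOS, as the statement asserts — and then organize the zero-set analysis around a case split on the last variable $y$.

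First I would dispose of the zeros lying on $\{y=0\}$. Setting $y=0$ collapses the third bracket to $-\big(\sum_{i=1}^n x_i^2+\sum_{i=0}^m s_i^2\big)^d-s_{m+1}^{2d}$, a sum of two nonpositive quantities; it vanishes only if $\sum_i x_i^2+\sum_{i=0}^m s_i^2=0$ and $s_{m+1}=0$, i.e.\ only at the origin. Thus $f_\gamma$ has no nonzero zero with $y=0$, for any value of $\gamma$. This is exactly the job of the third square: without it the first two brackets can vanish identically on $\{y=0\}$ (for instance when $p$ and all the $g_i$ have degree strictly less than $2d$), so $f_\gamma$ could never be positive definite.

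The substantive case is $y\ne 0$, which I would handle by rescaling any putative zero so that $y=1$ (legitimate since $f_\gamma$ is a form). At $y=1$ the first two brackets vanish iff $g_i(x)=s_i^2$ for $i=1,\dots,m$ and $p(x)=\gamma-s_0^2$; the first batch forces $g_i(x)\ge 0$, i.e.\ $x\in S$, and the second forces $p(x)=\gamma-s_0^2\le\gamma$. Hence if $\gamma$ is a strict lower bound no zero with $y\ne 0$ can exist, and together with the previous paragraph $f_\gamma$ is positive definite. For the converse I would produce an explicit nonzero zero starting from a point $\hat x\in S$ with $p(\hat x)\le\gamma$: take $y=1$, $x=\hat x$, $s_0=\sqrt{\gamma-p(\hat x)}$, $s_i=\sqrt{g_i(\hat x)}$ for $i=1,\dots,m$ (all real because $\hat x\in S$ and $p(\hat x)\le\gamma$), and finally $s_{m+1}=\big((R+\sum_i\eta_i+\beta+\gamma)^d-(\sum_i\hat x_i^2+\sum_{i=0}^m s_i^2)^d\big)^{1/(2d)}$, which makes all three brackets zero and hence $f_\gamma$ not positive definite.

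The one place I expect to need genuine care — the ``hard part'', such as it is — is checking that this last $s_{m+1}$ is real, i.e.\ that the explicit constant $(R+\sum_i\eta_i+\beta+\gamma)^d$ really dominates $(\sum_i\hat x_i^2+\sum_{i=0}^m s_i^2)^d$ for every admissible $\hat x$. This is where the hypotheses enter in precisely the right slots: $\sum_i\hat x_i^2\le R$ (reading the containment $S\subseteq B(0,R)$ as a bound on $\|x\|^2$ over $S$, or replacing $R$ by $R^2$ if $B(0,R)$ is taken to be the Euclidean ball of radius $R$), $\sum_{i=1}^m g_i(\hat x)\le\sum_i\eta_i$, and $\gamma-p(\hat x)\le\gamma+\beta$, which add up to exactly the chosen constant. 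Once that inequality is established, the two implications combine to give ``$\gamma$ is a strict lower bound on (\ref{eq:POP}) $\iff$ $f_\gamma$ is positive definite,'' which is the assertion of the theorem.
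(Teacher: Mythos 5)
Your proof is correct and follows essentially the same route as the paper's: the paper merely packages the $y\neq 0$ analysis through intermediate feasibility sets (introducing slack variables $s_i$ and the set $T_s$), while you inline that step by analyzing the common zeros of the three squared brackets directly, with the identical case split on $y=0$ versus $y\neq 0$ and the same use of the bound $R+\sum_i\eta_i+\beta+\gamma$ to produce the real slack $s_{m+1}$. No gaps; the reading of $S\subseteq B(0,R)$ as $\sum_i x_i^2\le R$ that you flag is also exactly how the paper uses it.
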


\begin{proof}
	It is easy to see that $\gamma$ is a strict lower bound on (\ref{eq:POP}) if and only if the set $$T\mathrel{\mathop{:}}=\{x \in \mathbb{R}^n ~|~ \gamma- p(x) \geq 0; \quad g_i(x)\geq 0, i=1,\ldots,m;\quad \sum_i x_i^2 \leq R\}$$ is empty. Indeed, if $T$ is nonempty, then there exists a point $\check{x} \in S$ such that $p(\check{x})\leq \gamma$. This implies that $\gamma$ cannot be a strict lower bound on (\ref{eq:POP}). Conversely, if $T$ is empty, the intersection of $S$ with $\{x~|~\gamma-p(x)\geq 0\}$ is empty, which implies that $\forall x \in S$, $p(x)>\gamma$.
	
	We now define the set:
	\begin{equation}\label{eq:def.Ts}
	\begin{aligned}
	T_s=\{(x,s) \in \mathbb{R}^{n+m+2} ~|~ \gamma-p(x)=s_0^2; \quad g_i(x)=s_i^2, i=1,\ldots,m; \phantom\}\\
	\quad \phantom\{(R+\sum_{i=1}^m \eta_i+\beta+\gamma)^d -(\sum_{i=1}^n x_i^2+\sum_{i=0}^m s_i^2)^d-s_{m+1}^{2d}=0\}. 
	\end{aligned}
	\end{equation}
	Note that $T_s$ is empty if and only if $T$ is empty. Indeed, if $T_s$ is nonempty, then there exists $\hat{x} \in \mathbb{R}^n$ and $\hat{s} \in \mathbb{R}^{m+2}$ such that the three sets of equations are satisfied. This obviously implies that $\gamma -p(\hat{x})\geq 0$ and that $g_i(\hat{x})\geq 0$, for all $i=1,\ldots,m.$ It further implies that $\sum_i \hat{x}_i^2 \leq R$ as by assumption, if $\hat{x} \in S$, then $\hat{x}$ is in a ball of radius $R$. 
	Conversely, suppose now that $T$ is nonempty. There exists $\tilde{x}$ such that $\gamma-p(\tilde{x})\geq 0$, $g_i(\tilde{x})\geq 0$ for $i=1,\ldots,m$, and $\sum_i \tilde{x_i}^2 \leq R.$ Hence, there exist $\tilde{s_0},\ldots,\tilde{s_m}$ such that $$\gamma-p(\tilde{x})=\tilde{s_0}^2 \text{ and } g_i(\tilde{x})=\tilde{s_i}^2,~ i=1,\ldots,m.$$ Combining the fact that $\sum_i \tilde{x_i}^2 \leq R$ and the fact that $\eta_i$, $i=1,\ldots,m$ (resp. $\gamma+\beta$) are upperbounds on $g_i$ (resp. $\gamma-p(\tilde{x})$), we obtain: 
	$$R+\sum_{i=1}^m \eta_i+\beta+\gamma \geq \sum_{i=1}^n \tilde{x_i}^2+\sum_{i=0}^m \tilde{s_i}^2.$$  By raising both sides of the inequality to the power $d$, we show the existence of $\tilde{s}_{m+1}$.
	
	We now show that $T_s$ is empty if and only if $f_{\gamma}(x,s,y)$ is positive definite. Suppose that $T_s$ is nonempty, i.e., there exists $(\breve{x},\breve{s}) \in \mathbb{R}^{n+m+2}$ such that the equalities given in (\ref{eq:def.Ts}) hold. Note then that $f_{\gamma}(\breve{x},\breve{s},1)=0$. As $(\breve{x},\breve{s},1)$ is nonzero, this implies that $f_{\gamma}(\breve{x},\breve{s},\breve{y})$ is not positive definite. 
	
	For the converse, assume that $f_{\gamma}(x,s,y)$ is not positive definite. As $f_{\gamma}(x,s,y)$ is a sum of squares and hence nonnegative, this means that there exists nonzero $(\bar{x},\bar{s},\bar{y})$ such that $f(\bar{x},\bar{s},\bar{y})=0$. We proceed in two cases. If $\bar{y} \neq 0$, it is easy to see that $(\bar{x}/\bar{y},\bar{s}/\bar{y}) \in T_s$ and $T_s$ is nonempty. Consider now the case where $\bar{y}=0$. The third square in $f_{\gamma}$ being equal to zero gives us: $$-(\sum_i \bar{x}_i^2+\sum_{i=0}^m \bar{s}_i^2)^d=\bar{s}_{m+1}^{2d}.$$
	This implies that $\bar{s}_{m+1}=0$ and that $\bar{x}_1=\ldots=\bar{x}_m=\bar{s_0}=\ldots=\bar{s}_m=0$ which contradicts the fact that $(\bar{x},\bar{s},\bar{y})$ is nonzero.
\end{proof}

\begin{remark}
	Note that Theorem \ref{th:slb} implies that testing feasibility of a set of polynomial inequalities is no harder than checking whether a homogeneous polynomial that is sos has a zero. Indeed, as mentioned before, the basic semialgebraic set $$\{x~|~g_1(x)\geq 0,\ldots, g_m(x)\geq 0\}$$ is empty if and only if $\gamma=0$ is a strict lower bound on the POP
	\begin{equation*}
	\begin{aligned}
	&\inf_x &&-g_1(x)\\
	&\text{s.t. } &&g_2(x) \geq 0,\ldots,g_{m}(x)\geq 0.
	\end{aligned}
	\end{equation*}
	In principle, this reduction can open up new possibilities for algorithms for testing feasibility of a basic semialgebraic set. For example, the work in \cite{AAA_Cubic_vec_field} shows that positive definiteness of a form $f$ is equivalent to global asymptotic stability of the polynomial vector field $\dot{x}=-\nabla f(x).$ One could as a consequence search for Lyapunov functions, as is done in \cite[Example 2.1.]{AAA_Cubic_vec_field}, to certify positivity of forms. Conversely, simulating trajectories of the above vector field can be used to minimize $f$ and potentially find its nontrivial zeros, which, by our reduction, can be turned into a point that belongs to the basic semialgebraic set at hand. 
	
	We further remark that one can always take the degree of the sos form $f_{\gamma}$ in (\ref{eq:f.gamma}) whose positivity is under consideration to be equal to four. This can be done by changing the general POP in (\ref{eq:POP}) to only have quadratic constraints and a quadratic objective via an iterative introduction of new variables and new constraints in the following fashion: $x_{ij}=x_ix_j$.
	
\end{remark}

\vspace{10pt}

\begin{remark}[Notational remark]\label{rem:notation}
	As a consequence of Theorem~\ref{th:slb}, we now know that certifying lower bounds on (\ref{eq:POP}) is equivalent to proving positivity of the form $f_{\gamma}$ that appears in (\ref{eq:f.gamma}). To simplify notation, we take this form to have $n$ variables and be of degree $2d$ from now on (except for our Positivstellens\"atze in Corollaries \ref{cor:SDP.psatz} and \ref{cor:Positiv.LP} which stand on their own). To connect back to problem (\ref{eq:POP}) and the original notation, the reader should replace every occurrence of $n$ and $d$ in the future as follows: $$n \leftarrow n+m+3, \quad d \leftarrow 2d.$$ Recall that $n$ was previously the dimension of the decision variable of problem (\ref{eq:POP}), $d$ was such that $2d$ is the smallest even integer larger than or equal to the maximum degree of $g_i$ and $p$ in (\ref{eq:POP}), and $m$ was the number of constraints of problem (\ref{eq:POP}).
\end{remark}
\vspace{10pt}

Our next theorem shows that, modulo some technical assumptions, if one can inner approximate the set of positive definite forms arbitrarily well (conditions (a) and (b)), then one can construct a converging hierarchy for POPs.

\begin{theorem} \label{th:hierarchy}
	Let $K_{n,2d}^r$ be a sequence of sets (indexed by $r$) of homogeneous polynomials in $n$ variables and of degree $2d$ with the following properties:
	\begin{enumerate}[(a)]
		\item $K_{n,2d}^r \subseteq P_{n,2d}, \forall r,$ and there exists a pd form $s_{n,2d} \in K_{n,2d}^0.$
		\item If $p>0$, then $\exists r \in \mathbb{N}$ such that $p \in K_{n,2d}^r.$
		\item $K_{n,2d}^r \subseteq K_{n,2d}^{r+1}$, $\forall r$.
		\item If $p \in K_{n,2d}^r$, then $\forall \epsilon \in [0,1]$, $p+\epsilon s_{n,d} \in K_{n,2d}^{r}.$ 
		
	\end{enumerate}
	
	Recall the definition of $f_{\gamma}(z)$ given in (\ref{eq:f.gamma}). Consider the hierarchy of optimization problems indexed by $r$:
	\begin{equation}\label{eq:hierarchy}
	\begin{aligned}
	l_r\mathrel{\mathop{:}}=&\sup_{\gamma} &&\gamma\\
	&\text{s.t. } &&f_{\gamma}(z)-\frac{1}{r}s_{n,2d}(z) \in K_{n,2d}^r.
	\end{aligned}
	\end{equation}
	Then, $l_r \leq p^*$ for all $r$, $\{l_r\}$ is nondecreasing, and $\lim_{r \rightarrow \infty} l_r=p^*.$
	
\end{theorem}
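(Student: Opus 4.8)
The plan is to verify each of the three claims in turn, all of which follow fairly directly from the axioms (a)--(d) on the sequence $K_{n,2d}^r$ together with Theorem~\ref{th:slb}. I will use throughout the notational overwriting of Remark~\ref{rem:notation}, so that $f_\gamma$ is a form in $n$ variables of degree $2d$, and I will abbreviate $s\mathrel{\mathop{:}}=s_{n,2d}$ for the fixed pd form guaranteed by (a).

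First I would show $l_r\leq p^*$ for every $r$. Suppose $\gamma$ is feasible for the $r$-th problem, i.e. $f_\gamma-\tfrac1r s\in K_{n,2d}^r$. By property (a), $K_{n,2d}^r\subseteq P_{n,2d}$, so $f_\gamma-\tfrac1r s\geq 0$, hence $f_\gamma\geq \tfrac1r s>0$ since $s$ is pd; in particular $f_\gamma$ is positive definite. By Theorem~\ref{th:slb}, $\gamma$ is then a strict lower bound on (\ref{eq:POP}), so $\gamma< p^*$. Taking the supremum over feasible $\gamma$ gives $l_r\leq p^*$.

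Next I would establish monotonicity of $\{l_r\}$. Fix $r$ and let $\gamma$ be feasible for problem $r$, so $g\mathrel{\mathop{:}}=f_\gamma-\tfrac1r s\in K_{n,2d}^r$. I want to deduce that $\gamma$ is feasible for problem $r+1$, i.e. that $f_\gamma-\tfrac{1}{r+1}s\in K_{n,2d}^{r+1}$. Write
\begin{equation*}
f_\gamma-\frac{1}{r+1}s=g+\Bigl(\frac1r-\frac1{r+1}\Bigr)s=g+\epsilon s,\qquad \epsilon\mathrel{\mathop{:}}=\frac1r-\frac1{r+1}\in(0,1].
\end{equation*}
By property (d) applied to $g\in K_{n,2d}^r$, we get $g+\epsilon s\in K_{n,2d}^r$, and then property (c) gives $g+\epsilon s\in K_{n,2d}^{r+1}$. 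Hence $\gamma$ is feasible for problem $r+1$, so $l_{r+1}\geq l_r$. (A minor care point: problem $r$ may be infeasible for small $r$, in which case $l_r=-\infty$ by the usual $\sup\emptyset$ convention and the inequality is vacuous; one should note that feasibility for large $r$ is exactly what the convergence argument below supplies.)

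Finally, and this is the step I expect to be the crux, I would prove $\lim_{r\to\infty} l_r=p^*$. Combined with $l_r\leq p^*$ and monotonicity, it suffices to show that for every $\gamma< p^*$ there exists $r$ with $l_r\geq\gamma$, i.e. $\gamma$ is feasible for problem $r$. Since $\gamma<p^*$, it is a strict lower bound on (\ref{eq:POP}), so by Theorem~\ref{th:slb} the form $f_\gamma$ is positive definite. The subtlety is that feasibility of problem $r$ requires $f_\gamma-\tfrac1r s\in K_{n,2d}^r$, not $f_\gamma\in K_{n,2d}^r$; I need the subtracted perturbation to be absorbable. Here is the argument. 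Because $f_\gamma>0$ and $s>0$, both attain positive minima on the unit sphere $S_x$; let $c_1\mathrel{\mathop{:}}=\min_{S_x} f_\gamma>0$ and $c_2\mathrel{\mathop{:}}=\max_{S_x} s$. Then for any $\lambda\in[0,c_1/c_2]$ the form $f_\gamma-\lambda s$ is still $\geq 0$ on $S_x$ and hence (by homogeneity) pd — more precisely $f_\gamma-\lambda s\geq (c_1-\lambda c_2)\,\|x\|^{2d}/\,$(something) $>0$; in particular $h\mathrel{\mathop{:}}=f_\gamma-\tfrac{c_1}{2c_2}s$ is a positive definite form. By property (b), there exists $r_0$ with $h\in K_{n,2d}^{r_0}$. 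Now choose $r\geq r_0$ large enough that $\tfrac1r\leq \tfrac{c_1}{2c_2}$, and write
\begin{equation*}
f_\gamma-\frac1r s = h+\Bigl(\frac{c_1}{2c_2}-\frac1r\Bigr)s=h+\epsilon s,\qquad \epsilon\mathrel{\mathop{:}}=\frac{c_1}{2c_2}-\frac1r\in[0,1],
\end{equation*}
where we may further shrink/scale $s$ or enlarge $r$ so that $\epsilon\in[0,1]$ as required by (d) (alternatively one rescales $s$ at the outset so that $c_1/c_2\le 1$; this is harmless). By property (d), $h+\epsilon s\in K_{n,2d}^{r_0}$, and by property (c), $h+\epsilon s\in K_{n,2d}^{r}$. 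Thus $\gamma$ is feasible for problem $r$, giving $l_r\geq\gamma$. Letting $\gamma\uparrow p^*$ yields $\lim_r l_r\geq p^*$, and with the reverse inequality we conclude $\lim_r l_r=p^*$. The only genuinely delicate bookkeeping is making the two perturbation scalars ($\tfrac1r$ in the hierarchy and the gap produced by property~(b)) compatible with the $[0,1]$ range demanded by property~(d); this is handled by a harmless normalization of $s$ and by taking $r$ large, as above.
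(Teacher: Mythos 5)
Your proof follows essentially the same route as the paper's: the upper bound via property (a) and Theorem~\ref{th:slb}, monotonicity via the identity $f_\gamma-\tfrac{1}{r+1}s=\bigl(f_\gamma-\tfrac1r s\bigr)+\tfrac{1}{r(r+1)}s$ together with (d) and (c), and convergence by perturbing $f_\gamma$ by a small multiple of $s$, invoking (b), and re-absorbing the difference using (d) and (c).

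The only place to tighten is the $\epsilon\in[0,1]$ bookkeeping in the convergence step, and your two suggested remedies do not work as stated: enlarging $r$ makes $\epsilon=\tfrac{c_1}{2c_2}-\tfrac1r$ \emph{larger}, not smaller; and rescaling $s$ is not available, since $s_{n,2d}$ is fixed data of the hierarchy---it appears in the constraint of (\ref{eq:hierarchy}) and in property (d), and a rescaled form $\delta s$ need not lie in $K_{n,2d}^{0}$ (property (a) does not say $K_{n,2d}^{0}$ is a cone). The immediate correct fix is to cap the perturbation threshold at $1$: set $\mu\mathrel{\mathop{:}}=\min\bigl(1,\tfrac{c_1}{2c_2}\bigr)$, or, as the paper does, take $\mu=\tfrac{1}{r'}$ for a positive integer $r'$ chosen (by continuity) so that $f_\gamma-\tfrac{1}{r'}s$ is positive definite. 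Then $h\mathrel{\mathop{:}}=f_\gamma-\mu s$ is still pd, property (b) gives $h\in K_{n,2d}^{r_0}$ for some $r_0$, and for $r\geq\max\bigl(r_0,\lceil 1/\mu\rceil\bigr)$ one has $\epsilon=\mu-\tfrac1r\in[0,1]$, so (d) and (c) finish the argument exactly as you wrote; the paper's choice $\mu=1/r'$ makes $\epsilon\le 1$ automatic, which is why it splits into the two cases $r''\le r'$ and $r'\le r''$ instead.
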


\begin{proof} We first show that the sequence $\{l_r\}$ is upperbounded by $p^*$. Suppose that a scalar $\gamma$ satisfies $$f_{\gamma}(z) -\frac{1}{r}s_{n,2d}(z) \in K_{n,2d}^r.$$ We then have $f_{\gamma}(z)-\frac{1}{r}s_{n,2d}(z) \in P_{n,2d}$ using (a). This implies that $f_{\gamma}(z) \geq \frac{1}{r}s_{n,2d}(z) $, and hence $f_{\gamma}$ is pd as $s_{n,2d}$ is pd. From Theorem \ref{th:slb}, it follows that $\gamma$ has to be a strict lower bound on (\ref{eq:POP}). As any such $\gamma$ satisfies $\gamma < p^*$, we have that $l_r \leq p^*$ for all $r$.\\
	
	We now show monotonicity of the sequence $\{l_r\}$. Let $\gamma$ be such that $$f_{\gamma}(z)-\frac{1}{r} s_{n,2d}(z) \in K_{n,2d}^r.$$ We have the following identity:$$f_{\gamma}(z)-\frac{1}{r+1} s_{n,2d}(z)=f_{\gamma}(z)-\frac{1}{r}s_{n,2d}(z)+\frac{1}{r(r+1)}s_{n,2d}(z).$$
	Now, using the assumption and properties (c) and (d), we conclude that 
	$$f_{\gamma}(z)-\frac{1}{r+1}s_{n,2d}(z) \in K_{n,2d}^{r+1}.$$ This implies that $\{\gamma ~|~ f_{\gamma}(z)-\frac{1}{r}s_{n,2d}(z) \in K_{n,2d}^r\} \subseteq \{\gamma ~|~ f_{\gamma}(z)-\frac{1}{r+1}s_{n,2d}(z) \in K_{n,2d}^{r+1}\}$ and that $l_r \leq l_{r+1}.$\\

	Note that as the sequence $\{l_r\}$ is upperbounded and nondecreasing, it converges. Let us show that the limit of this sequence is $p^*$. To do this, we show that for any strict lower bound $\gamma$ on (\ref{eq:POP}), there exists a positive integer $r$ such that $f_{\gamma}(z)-\frac{1}{r} s_{n,2d}(z) \in K_{n,2d}^r$. By Theorem~\ref{th:slb}, as $\gamma$ is a strict lower bound, $f_{\gamma}(z)$ is positive definite. Hence, by continuity, there exists a positive integer $r'$ such that $f_{\gamma}(z)-\frac{1}{r'}s_{n,2d}(z)$ is positive definite. Using (b), this implies that there exists a positive integer $r''$ such that 
	\begin{equation}\label{eq:f.gam.in.cone}
	f_{\gamma}(z)-\frac{1}{r'}s_{n,2d}(z) \in K_{n,2d}^{r''}.
	\end{equation}
	
	We now proceed in two cases. If $r'' \leq r'$, we take $r=r'$ and use property (c) to conclude. If $r' \leq r''$, we have
	$$f_{\gamma}(z)-\frac{1}{r''}s_{n,2d}(z)=f_{\gamma}(z)-\frac{1}{r'}s_{n,2d}(z)+\frac{r''-r'}{r'\cdot r''}s_{n,2d}(z).$$
	We take $r=r''$ and use (\ref{eq:f.gam.in.cone}) and properties (c) and (d) to conclude.
\end{proof}

\begin{remark} Note that condition (d) is subsumed by the more natural condition that $K_{n,d}^r$ be a convex cone for any $n,d,$ and $r$. However, there are interesting and relevant cones which we cannot prove to be convex though they trivially satisfy condition (d) (see Theorem \ref{th:reznick.hierarchy} for an example).
\end{remark}

\section{Semidefinite programming-based hierarchies obtained from Artin's and Reznick's Positivstellens\"atze}\label{sec:sdp.hierarchy}

In this section, we construct two different semidefinite programming-based hierarchies for POPs using Positivstellens\"atze derived by Artin (Theorem \ref{th:artin}) and Reznick (Theorem \ref{th:reznick.uniform}). To do this, we introduce two sets of cones that we call the Artin and Reznick cones.

\begin{definition}\label{def:reznick.artin.cones}
	We define the \emph{Reznick cone} of level $r$ to be $$R_{n,2d}^{r}\mathrel{\mathop{:}}=\{p \in H_{n,2d}~|~ p(x)\cdot \left(\sum_{i=1}^n x_i^2\right)^r \text{ is sos}\}.$$
	Similarly, we define the \emph{Artin cone} of level $r$ to be $$A_{n,2d}^{r} \mathrel{\mathop{:}}=\{p \in H_{n,2d}~|~ p(x) \cdot q(x) \text{ is sos for some sos form $q$ of degree $2r$} \}.$$
	
\end{definition}

We show that both of these cones produce hierarchies of the type discussed in Theorem \ref{th:hierarchy}. Recall that $p^*$ is the optimal value of problem (\ref{eq:POP}) and that $f_{\gamma}$ is defined as in (\ref{eq:f.gamma}) with the change of notation discussed in Remark \ref{rem:notation}.

\begin{theorem}\label{th:reznick.hierarchy}
	Consider the hierarchy of optimization problems indexed by $r$:
	\begin{equation}\label{eq:reznick.hierarchy}
	\begin{aligned}
	l_r\mathrel{\mathop{:}}=&\sup_{\gamma} &&\gamma\\
	&\text{s.t. } &&f_{\gamma}(z)-\frac{1}{r}(\sum_{i=1}^n z_i^2)^{d} \in R_{n,2d}^r.
	\end{aligned}
	\end{equation}
	Then, $l_r \leq p^*$ for all $r$, $\{l_r\}$ is nondecreasing, and $\lim_{r \rightarrow \infty} l_r=p^*.$
\end{theorem}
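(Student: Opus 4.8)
The plan is to verify that the Reznick cones $R_{n,2d}^r$ satisfy the four properties (a)--(d) of Theorem~\ref{th:hierarchy}, with the distinguished positive definite form taken to be $s_{n,2d}(z) = (\sum_{i=1}^n z_i^2)^d$, and then simply invoke that theorem. This reduces the entire proof to four short checks.

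\textbf{Property (a).} If $p \in R_{n,2d}^r$, then $p(x)\cdot(\sum_i x_i^2)^r$ is sos, hence nonnegative; since $(\sum_i x_i^2)^r > 0$ for $x \neq 0$, we get $p(x) \geq 0$ for all $x$, so $p \in P_{n,2d}$. For the distinguished form, $s_{n,2d}(z) = (\sum_i z_i^2)^d$ is visibly sos (it is a perfect power of a sum of squares of linear forms, e.g. $((\sum z_i^2)^{d/2})^2$ when $d$ is even, and more generally a sum of squares of monomials) and positive definite; it lies in $R_{n,2d}^0$ since multiplying by $(\sum z_i^2)^0 = 1$ leaves it sos.

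\textbf{Property (b).} This is exactly where Reznick's Positivstellensatz (Theorem~\ref{th:reznick.uniform}) is used: if $p > 0$, then there exists $r \in \mathbb{N}$ such that $p(x)\cdot(\sum_i x_i^2)^r$ is sos, which is precisely the statement $p \in R_{n,2d}^r$. \textbf{Property (c):} if $p \in R_{n,2d}^r$, then $p\cdot(\sum_i x_i^2)^r$ is sos, so $p\cdot(\sum_i x_i^2)^{r+1} = \left(p\cdot(\sum_i x_i^2)^r\right)\cdot(\sum_i x_i^2)$ is a product of two sos forms, hence sos, giving $p \in R_{n,2d}^{r+1}$. \textbf{Property (d):} if $p \in R_{n,2d}^r$ and $\epsilon \in [0,1]$, then
\[
\left(p + \epsilon s_{n,2d}\right)\cdot\left(\sum_i x_i^2\right)^r = p\cdot\left(\sum_i x_i^2\right)^r + \epsilon \left(\sum_i x_i^2\right)^{d+r},
\]
which is a sum of the sos form $p\cdot(\sum_i x_i^2)^r$ and $\epsilon$ times the sos form $(\sum_i x_i^2)^{d+r}$; since $\epsilon \geq 0$, this is sos, so $p + \epsilon s_{n,2d} \in R_{n,2d}^r$.

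With (a)--(d) established, Theorem~\ref{th:hierarchy} applied to $K_{n,2d}^r = R_{n,2d}^r$ and $s_{n,2d}(z) = (\sum_i z_i^2)^d$ immediately yields that the $l_r$ defined by \eqref{eq:reznick.hierarchy} satisfy $l_r \leq p^*$ for all $r$, that $\{l_r\}$ is nondecreasing, and that $l_r \to p^*$. I do not anticipate any real obstacle here: the only nontrivial input is Reznick's theorem itself (used for property (b)), and everything else is a routine manipulation of sos forms under multiplication by powers of $\sum_i x_i^2$. One small point worth being careful about is that each level of the hierarchy \eqref{eq:reznick.hierarchy} is indeed an SDP of fixed size once $r$ is fixed — this is because membership of $p\cdot(\sum_i z_i^2)^r$ (a form of degree $2d+2r$ in a fixed number of variables) in the sos cone is a semidefinite feasibility condition, and the constraint is affine in $\gamma$ through the coefficients of $f_\gamma$; but strictly speaking this is a remark about the nature of the hierarchy rather than part of the claimed statement, so it need not be belabored in the proof.
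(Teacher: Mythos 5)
Your proof is correct and follows essentially the same route as the paper: verify properties (a)--(d) of Theorem \ref{th:hierarchy} for the Reznick cones with $s_{n,2d}(z)=(\sum_i z_i^2)^d$, using Reznick's Positivstellensatz for (b), and then invoke that theorem. The only cosmetic difference is that for (d) you expand the product directly, whereas the paper first notes that $R_{n,2d}^r$ is a convex cone containing $(\sum_i z_i^2)^d$; both arguments are immediate and equivalent.
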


\begin{proof}
	It suffices to show that the Reznick cones $R_{n,2d}^r$ satisfy properties (a)-(d) in Theorem \ref{th:hierarchy}. The result will then follow from that theorem. For property (a), it is clear that, as $(\sum_i x_i^2)^r>0$ and $p(x) \cdot (\sum_i x_i^2)^r$ is a sum of squares and hence nonnegative, $p(x)$ must be nonnegative, so $R_{n,2d}^r \subseteq P_{n,2d}.$ Furthermore, the form $s_{n,2d}\mathrel{\mathop{:}}=(\sum_i x_i^2)^{d}$ belongs to $R_{n,2d}^0$ and is positive definite. Property (b) is verified as a consequence of Theorem \ref{th:reznick.uniform}. For (c), note that if $p(x) \cdot (\sum_i x_i^2)^r$ is sos, then $p(x) \cdot (\sum_i x_i^2)^{r+1}$ is sos since the product of two sos polynomials is sos. Finally, for property (d), note that $R_{n,2d}^r$ is a convex cone. Indeed, for any $\lambda \in [0,1]$, $$(\lambda p(x) +(1-\lambda) q(x)) \cdot (\sum_i x_i^2)^r=\lambda p(x) (\sum_i x_i^2)^r+(1-\lambda) q(x) (\sum_i x_i^2)^r $$
	is sos if $p$ and $q$ are in $R_{n,2d}^r$. Combining the fact that $R_{n,2d}^r$ is a convex cone and the fact that $(\sum_i x_i^2)^d \in R_{n,d}^r$, we obtain (d).
\end{proof}

\begin{remark}\label{rem:bisection}
	To solve a fixed level $r$ of the hierarchy given in Theorem \ref{th:reznick.hierarchy}, one must proceed by \emph{bisection} on $\gamma.$ Bisection here would produce a sequence of upper bounds $\{U_k\}$ and lower bounds $\{L_k\}$ on $l_r$ as follows. At iteration $k$, we test whether $\gamma=\frac{U_k+L_k}{2}$ is feasible for (\ref{eq:reznick.hierarchy}). If it is, then we take $L_{k+1}=\frac{U_k+L_k}{2}$ and $U_{k+1}=U_k$. If it is not, we take $U_{k+1}=\frac{U_k+L_k}{2}$ and $L_{k+1}=L_k$. We stop when $|U_{k_\epsilon}-L_{k_\epsilon}|<\epsilon$, where $\epsilon$ is a prescribed accuracy, and the algorithm returns $l_{r,\epsilon}=L_{k_\epsilon}.$ Note that $l_{r}-\epsilon \leq l_{r,\epsilon} \leq l_r$ and that to obtain $l_{r,\epsilon}$, one needs to take a logarithmic (in $\frac{1}{\epsilon}$) number of steps using this method. 
	
	Hence, solving the $r^{th}$ level of this hierarchy using bisection can be done by semidefinite programming. Indeed, for a fixed $r$ and $\gamma$ given by the bisection algorithm, one simply needs to test membership of $$\left( f_{\gamma}(z)-\frac{1}{r}(\sum_i z_i^2)^d\right) \cdot (\sum_i z_i^2)^r$$ to the set of sum of squares polynomials. This amounts to solving a semidefinite program. We remark that all semidefinite programming-based hierarchies available only produce an approximate solution to the optimal value of the SDP solved at level $r$ in polynomial time. This is independent of whether they use bisection (e.g., such as the hierarchy given in Theorem \ref{th:reznick.hierarchy} or the one based on Stengle's Positivstellensatz) or not (e.g., the Lasserre hierarchy).
\end{remark}

\vspace{10pt}

Our next theorem improves on our previous hierarchy by freeing the multiplier $(\sum_{i=1}^n z_i^2)^r$ and taking advantage of our ability to search for an optimal multiplier using semidefinite programming.

\begin{theorem}\label{th:artin.hierarchy} Recall the definition of Artin cones from Definition \ref{def:reznick.artin.cones}. Consider the hierarchy of optimization problems indexed by $r$:
	\begin{equation}\label{eq:artin.hierarchy}
	\begin{aligned}
	l_r\mathrel{\mathop{:}}=&\sup_{\gamma,q} &&\gamma\\
	&\text{s.t. } &&f_{\gamma}(z)-\frac{1}{r}(\sum_{i=1}^n z_i^2)^{d} \in A_{n,2d}^r.
	\end{aligned}
	\end{equation}
	Then, $l_r \leq p^*$ for all $r$, $\{l_r\}$ is nondecreasing, and $\lim_{r \rightarrow \infty} l_r=p^*.$
\end{theorem}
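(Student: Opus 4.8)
The plan is to verify that the Artin cones $A_{n,2d}^r$ satisfy the four properties (a)--(d) of Theorem~\ref{th:hierarchy}, with the designated positive definite form being $s_{n,2d} := (\sum_{i=1}^n z_i^2)^d$. Once this is done, the conclusion ($l_r \leq p^*$, $\{l_r\}$ nondecreasing, $\lim_{r\to\infty} l_r = p^*$) follows immediately from Theorem~\ref{th:hierarchy}, exactly as in the proof of Theorem~\ref{th:reznick.hierarchy}.

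First I would check property (a). If $p \in A_{n,2d}^r$, then $p(x) q(x)$ is sos for some sos form $q$ of degree $2r$. Since $q$ is sos it is nonnegative, and $p(x)q(x) \geq 0$ everywhere; at any point where $q(x) > 0$ this forces $p(x) \geq 0$, and since the zero set of a nonzero sos form has empty interior, continuity gives $p \geq 0$ everywhere, so $A_{n,2d}^r \subseteq P_{n,2d}$. For the existence of a pd element in $A_{n,2d}^0$: take $q \equiv 1$ (an sos form of degree $0$), and note $s_{n,2d} = (\sum_i z_i^2)^d$ is itself sos and positive definite, so $s_{n,2d} \in A_{n,2d}^0$. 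Property (b) is precisely Artin's theorem (Theorem~\ref{th:artin}): if $p > 0$ then in particular $p \geq 0$, so there is an sos form $q$ with $pq$ sos; letting $2r = \deg(q)$ gives $p \in A_{n,2d}^r$. (One should note $q$ can be taken to be a form since $p$ is a form; if necessary one homogenizes, or invokes the homogeneous version of Artin's result.) For property (c), if $p \in A_{n,2d}^r$ with witness $q$ of degree $2r$, then $q(x)(\sum_i x_i^2)$ is an sos form of degree $2r+2$ and $p(x)\cdot q(x)(\sum_i x_i^2) = (p(x)q(x))(\sum_i x_i^2)$ is a product of sos forms, hence sos; thus $p \in A_{n,2d}^{r+1}$, giving $A_{n,2d}^r \subseteq A_{n,2d}^{r+1}$.

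The main obstacle is property (d): if $p \in A_{n,2d}^r$ then $p + \epsilon s_{n,2d} \in A_{n,2d}^r$ for all $\epsilon \in [0,1]$. Unlike the Reznick cones, the Artin cones are \emph{not known to be convex} (the multiplier $q$ may differ for different elements), so we cannot simply invoke convexity as in Theorem~\ref{th:reznick.hierarchy}; this is exactly the situation flagged in the remark following Theorem~\ref{th:hierarchy}. The way around it is to observe that a \emph{single} multiplier works simultaneously for $p$ and for $s_{n,2d}$. Indeed, suppose $p \in A_{n,2d}^r$ with sos witness $q$ of degree $2r$, so $pq$ is sos. Then
\begin{equation*}
(p + \epsilon s_{n,2d}) \cdot q = pq + \epsilon\, s_{n,2d}\, q = pq + \epsilon\, (\textstyle\sum_i z_i^2)^d q.
\end{equation*}
The first term $pq$ is sos by hypothesis; the second term $(\sum_i z_i^2)^d q$ is a product of the sos form $(\sum_i z_i^2)^d$ with the sos form $q$, hence sos, and $\epsilon \geq 0$ preserves this. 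A nonnegative combination of sos forms is sos, so $(p+\epsilon s_{n,2d})q$ is sos with the same multiplier $q$ of degree $2r$, whence $p + \epsilon s_{n,2d} \in A_{n,2d}^r$. This establishes (d) without needing convexity of the cone. With (a)--(d) in hand, Theorem~\ref{th:hierarchy} applies verbatim and the proof is complete. (It is also worth remarking, as the authors likely will, that $A_{n,2d}^r \supseteq R_{n,2d}^r$ for every $r$, so this hierarchy converges at least as fast as the Reznick-based one of Theorem~\ref{th:reznick.hierarchy}, and that each level is again solved by bisection on $\gamma$ with an SDP feasibility test at each bisection step, since searching over $q$ of bounded degree with $f_\gamma q$ sos is an SDP.)
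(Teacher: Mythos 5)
Your proposal is correct and follows essentially the same route as the paper: verify properties (a)--(d) of Theorem~\ref{th:hierarchy} for the Artin cones, using Artin's theorem for (b), multiplication by $\sum_i x_i^2$ for (c), and—crucially—the observation that the \emph{same} multiplier $q$ certifies $(p+\epsilon s_{n,2d})q$ sos for (d), sidestepping the unknown convexity of $A_{n,2d}^r$ exactly as the paper does. Your slightly more careful handling of (a) (dealing with possible zeros of $q$ via density) and the closing remarks on bisection are consistent refinements, not a different argument.
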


\begin{proof}
	Just as the previous theorem, it suffices to show that the Artin cones $A_{n,2d}^r$ satisfy properties (a)-(d) of Theorem \ref{th:hierarchy}. The proof of property (a) follows the proof given for Theorem \ref{th:reznick.hierarchy}. Property (b) is satisfied as a (weaker) consequence of Artin's result (see Theorem \ref{th:artin}). For (c), we have that if $p(x) \cdot q(x)$ is sos for some sos polynomial of degree $2r$, then $p(x) \cdot q(x) \cdot (\sum_i x_i^2)$ is sos, and $q(x)\cdot (\sum_i x_i^2)$ has degree $2(r+1)$. Finally, for (d), suppose that $p \in A_{n,2d}^r$. Then there exists an sos form $q$ such that $p(x) \cdot q(x)$ is sos. We have $$\left(p(x)+\epsilon (\sum_i x_i^2)^{d}\right) \cdot q(x)=p(x)\cdot q(x)+\epsilon (\sum_i x_i^2)^{d} \cdot q(x),$$ which is sos as the product (resp. sum) of two sos polynomials is sos.
\end{proof}

Note that again, for any fixed $r$, the level $r$ of the hierarchy can be solved using bisection which leads to a sequence of semidefinite programs.

Our developments in the past two sections can be phrased in terms of a Positivstellensatz.

\begin{corollary}[A new Positivstellensatz]\label{cor:SDP.psatz}
	Consider the basic semialgebraic set $$S\mathrel{\mathop{:}}=\{x \in \mathbb{R}^n~|~ g_i(x)\geq 0, i=1,\ldots,m\}$$ and a polynomial $p\mathrel{\mathop{:}}=p(x)$. Suppose that $S$ is contained within a ball of radius $R$. Let $\eta_i$ and $\beta$ be any finite upperbounds on $g_i(x)$ and, respectively, $-p(x)$ over the set $S$.\footnote{As discussed at the beginning of Section \ref{sec:nonneg.approx}, such bounds are very easily computable.} Let $d$ be such that $2d$ is the smallest integer larger than or equal to the maximum degree of $p, g_i, i=1,\ldots,m$. Then, $p(x)>0$ for all $x \in S$ if and only if there exists a positive integer $r$ such that
	$$\left(h(x,s,y)-\frac{1}{r} \left(\sum_{i=1}^n x_i^2+\sum_{j =0}^{m+1} s_j^2+y^2\right)^{2d}\right) \cdot \left(\sum_{i=1}^n x_i^2+\sum_{j =0}^{m+1} s_j^2+y^2\right)^{r}$$ is a sum of squares, where the form $h$ in variables $(x_1,\ldots,x_n, s_0,\ldots,s_{m+1},y)$ is as follows:
	\begin{align*}
	h(x,s,y)\mathrel{\mathop{:}}=&\left(y^{2d}p(x/y)+s_0^2y^{2d-2} \right)^2+\sum_{i=1}^m\left(y^{2d}g_i(x/y)-s_i^2 y^{2d-2}\right)^2\\
	&+\left((R+\sum_{i=1}^m \eta_i +\beta)^dy^{2d}-(\sum_{i=1}^n x_i^2+\sum_{i=0}^m s_i^2)^d-s_{m+1}^{2d}\right)^2.
	\end{align*}

\end{corollary}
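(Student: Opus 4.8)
The plan is to read off this Positivstellensatz by combining the reduction of Theorem~\ref{th:slb} with Reznick's theorem (Theorem~\ref{th:reznick.uniform}); concretely, it amounts to the statement that $\gamma=0$ is eventually feasible for the Reznick‑cone hierarchy of Theorem~\ref{th:reznick.hierarchy} applied to the problem $\inf_{x\in S}p(x)$. First I would note that the form $h(x,s,y)$ displayed in the statement is precisely $f_0(x,s,y)$ from~(\ref{eq:f.gamma}) (using $(-a)^2=a^2$ on its first square), and recall from Remark~\ref{rem:notation} that, after the relabeling, $h$ is a sum of squares form of degree $4d$ in the $N:=n+m+3$ variables $w:=(x_1,\dots,x_n,s_0,\dots,s_{m+1},y)$. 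Writing $\|w\|^2:=\sum_{i=1}^n x_i^2+\sum_{j=0}^{m+1}s_j^2+y^2$, the multiplier $(\|w\|^2)^{2d}$ is then a positive definite form of the same degree $4d$. By Theorem~\ref{th:slb}, ``$p(x)>0$ for all $x\in S$'' is exactly ``$\gamma=0$ is a strict lower bound on~(\ref{eq:POP}) with objective $p$'', and this holds if and only if $h$ is positive definite. So it suffices to show that $h$ is positive definite if and only if $\big(h-\tfrac1r(\|w\|^2)^{2d}\big)(\|w\|^2)^r$ is a sum of squares for some $r\in\mathbb{N}$.

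For the ``if'' direction: if that product is sos for some $r$, it is nonnegative on $\mathbb{R}^N$, and since $(\|w\|^2)^r>0$ for all $w\neq 0$ we get $h(w)\ge \tfrac1r(\|w\|^2)^{2d}>0$ for every $w\neq 0$; hence $h$ is positive definite, and $p>0$ on $S$ follows from the equivalence above.

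For the ``only if'' direction, assume $p>0$ on $S$, so $h$ is positive definite. Since $h$ and $(\|w\|^2)^{2d}$ are forms of the same degree and $(\|w\|^2)^{2d}\equiv1$ on the unit sphere while $\min_{\|w\|_2=1}h>0$, the same homogeneity‑plus‑continuity argument used in the proof of Theorem~\ref{th:hierarchy} gives $r_0\in\mathbb{N}$ with $h-\tfrac{1}{r_0}(\|w\|^2)^{2d}$ still positive definite. Applying Theorem~\ref{th:reznick.uniform} to this pd form yields $r_1\in\mathbb{N}$ with $\big(h-\tfrac{1}{r_0}(\|w\|^2)^{2d}\big)(\|w\|^2)^{r_1}$ sos. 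Taking $r:=\max(r_0,r_1)$ and splitting
\[
h-\tfrac1r(\|w\|^2)^{2d}=\Big(h-\tfrac{1}{r_0}(\|w\|^2)^{2d}\Big)+\Big(\tfrac{1}{r_0}-\tfrac1r\Big)(\|w\|^2)^{2d},\qquad \tfrac{1}{r_0}-\tfrac1r\ge 0,
\]
and multiplying through by $(\|w\|^2)^r=(\|w\|^2)^{r_1}(\|w\|^2)^{r-r_1}$, the first summand becomes a product of sos forms (hence sos) and the second a nonnegative multiple of $(\|w\|^2)^{2d+r}$ (hence sos); so the displayed product is sos. Equivalently, $\gamma=0$ lies in the feasible set of the $r$‑th level of the hierarchy~(\ref{eq:reznick.hierarchy}).

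The only delicate point, and the step I would treat most carefully, is that the single integer $r$ must simultaneously be large enough to keep $h-\tfrac1r(\|w\|^2)^{2d}$ positive definite and large enough to serve as the Reznick multiplier exponent; the $\max(r_0,r_1)$ choice together with the convex‑combination splitting above resolves this, using only that Reznick cones are closed under addition, under multiplication by $\|w\|^2$, and contain $(\|w\|^2)^{2d}$ — all of which were verified in the proof of Theorem~\ref{th:reznick.hierarchy}. Apart from this bookkeeping there is no substantive obstacle: the corollary is a direct packaging of Theorems~\ref{th:slb} and~\ref{th:reznick.uniform}.
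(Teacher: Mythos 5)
Your proposal is correct and takes essentially the same route as the paper: the paper's proof of this corollary simply cites the arguments in the proofs of Theorem~\ref{th:slb} and Theorem~\ref{th:reznick.hierarchy} specialized to $\gamma=0$, which is exactly what you spell out (reduction to positive definiteness of $h=f_0$, then the continuity/Reznick step with the $\max(r_0,r_1)$ bookkeeping mirroring the case analysis in the proof of Theorem~\ref{th:hierarchy}). No gaps.
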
 

\begin{proof}
	This is an immediate corollary of arguments given in the proof of Theorem \ref{th:slb} and in the proof of Theorem \ref{th:reznick.hierarchy} for the case where $\gamma=0.$
\end{proof}

\section{Poly\'a's theorem and hierarchies for POPs that are optimization-free, LP-based, and SOCP-based}\label{sec:opt.free.LP.SOCP}

In this section, we use a result by Poly\'a on global positivity of even forms to obtain new hierarchies for polynomial optimization problems. In Section~\ref{subsec:opt.free}, we present a hierarchy that is \emph{optimization-free}, in the sense that each level of the hierarchy only requires multiplication of two polynomials and checking if the coefficients of the resulting polynomial are nonnegative. In Section~\ref{subsec:lp.socp}, we use the previous hierarchy to derive linear programming and second-order cone programming-based hierarchies with faster convergence. These rely on the recently developed concepts of dsos and sdsos polynomials (see Definition \ref{def:dsos.sdsos} and \cite{isos_journal}), which are alternatives to sos polynomials that have been used in diverse applications to improve scalability; see \cite[Section 4]{isos_journal}.

\subsection{An optimization-free hierarchy of lower bounds for POPs}\label{subsec:opt.free}

The main theorem in this section presents an optimization-free hierarchy of lower bounds for general POPs with compact feasible sets:

\begin{theorem}\label{th:sms.hierarchy}
	Recall the definition of $f_{\gamma}(z)$ as given in (\ref{eq:f.gamma}), with $z \in \mathbb{R}^n$ and $deg(f_{\gamma})=2d.$ Let $(v,w) \in \mathbb{R}^{2n}$ and define	
	\begin{equation}\label{eq:def.Knd}
	\begin{aligned}
	&Pol_{n,2d}^r\mathrel{\mathop{:}}=&&\{p \in H_{n,2d}~|~\left(p(v^2-w^2)+ \frac{1}{2r}(\sum_{i=1}^n (v_i^4+w_i^4))^d\right) \cdot (\sum_i v_i^2+\sum_i w_i^2)^{r^2} \phantom\}\\
	& &&\phantom\{ \text{ has nonnegative coefficients } \}.
	\end{aligned}
	\end{equation}

	Consider the hierarchy of optimization problems indexed by $r$:
	\begin{equation}\label{eq:sms.hierarchy}
	\begin{aligned}
	l_r\mathrel{\mathop{:}}=&\sup_{\gamma} &&\gamma\\
	&\text{s.t. } && f_{\gamma}(z)-\frac{1}{r}(\sum_{i=1}^n z_i^2)^d \in Pol_{n,2d}^r.
	\end{aligned}
	\end{equation}
	Let $m_r=\max_{i=1,\ldots,r} l_i$. Then $m_r \leq p^*$ for all $r$, $\{m_r\}$ is nondecreasing, and $\lim_{r \rightarrow \infty} m_r=p^*$.
\end{theorem}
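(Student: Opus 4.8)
The statement to prove is that the optimization-free hierarchy in (\ref{eq:sms.hierarchy}) based on Poly\'a's theorem is valid, monotone, and convergent. I would mimic the structure of the proof of Theorem \ref{th:reznick.hierarchy}: it suffices to show that the cones $Pol_{n,2d}^r$ essentially behave like the $K_{n,2d}^r$ of Theorem \ref{th:hierarchy}, except that here properties (c) and (d) (nestedness and absorption of the pd perturbation) may fail at the level of a single $r$, which is exactly why the statement is phrased in terms of $m_r = \max_{i\le r} l_i$ rather than $l_r$ directly. So the proof splits into three parts: (i) $l_r \le p^*$ for every $r$ (validity), (ii) $\{m_r\}$ is nondecreasing (immediate from the definition of $m_r$ as a running max), and (iii) $\lim_{r\to\infty} m_r = p^*$ (convergence).

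\textbf{Validity.} First I would show that $Pol_{n,2d}^r \subseteq P_{n,2d}$. If $p \in Pol_{n,2d}^r$, then the form $\left(p(v^2-w^2) + \tfrac{1}{2r}(\sum_i(v_i^4+w_i^4))^d\right)\cdot(\sum_i v_i^2 + \sum_i w_i^2)^{r^2}$ has nonnegative coefficients, hence is nonnegative on all of $\mathbb{R}^{2n}$; since $(\sum_i v_i^2+\sum_i w_i^2)^{r^2} > 0$ for $(v,w)\ne 0$, this forces $p(v^2-w^2) + \tfrac{1}{2r}(\sum_i(v_i^4+w_i^4))^d \ge 0$ for all $(v,w)$. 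Given any $z\in\mathbb{R}^n$, write $z_i = v_i^2 - w_i^2$ by choosing $v_i = \sqrt{z_i}, w_i = 0$ when $z_i \ge 0$ and $v_i = 0, w_i = \sqrt{-z_i}$ otherwise; then $v_i^4 + w_i^4 = z_i^2$, so we get $p(z) + \tfrac{1}{2r}(\sum_i z_i^2)^d \ge 0$ for all $z$. This is weaker than $p \ge 0$ — it only says $p$ is bounded below by $-\tfrac{1}{2r}(\sum z_i^2)^d$. But in (\ref{eq:sms.hierarchy}) the membership constraint is on $f_\gamma(z) - \tfrac1r(\sum z_i^2)^d$, so the inequality we extract is $f_\gamma(z) - \tfrac1r(\sum z_i^2)^d + \tfrac1{2r}(\sum z_i^2)^d \ge 0$, i.e. $f_\gamma(z) \ge \tfrac1{2r}(\sum z_i^2)^d > 0$ for $z \ne 0$; hence $f_\gamma$ is positive definite, and by Theorem \ref{th:slb}, $\gamma$ is a strict lower bound on (\ref{eq:POP}), so $\gamma < p^*$. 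Taking the supremum gives $l_r \le p^*$, hence $m_r \le p^*$.

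\textbf{Convergence.} This is the heart of the matter and the step I expect to be the main obstacle, since it is where Poly\'a's theorem (Theorem \ref{th:polya}) must be invoked with exactly the right even form. Fix a strict lower bound $\gamma$ on (\ref{eq:POP}); by Theorem \ref{th:slb}, $f_\gamma(z)$ is positive definite, hence by continuity and compactness of the unit sphere there is $r_1 \in \mathbb{N}$ with $f_\gamma(z) - \tfrac{1}{r_1}(\sum z_i^2)^d$ still positive definite. I would then consider, for $r \ge r_1$, the form $g_\gamma^{(r)}(v,w) \mathrel{\mathop:}= \left(f_\gamma(v^2-w^2) - \tfrac1r(\sum_i(v_i^2-w_i^2)^2)^d + \tfrac1{2r}(\sum_i(v_i^4+w_i^4))^d\right)$ in the $2n$ variables $(v,w)$; the key observation is that this is an \emph{even} form (every variable appears to an even power, since $f_\gamma$ is evaluated at $v^2 - w^2$ and we add even forms in $v_i^4, w_i^4$), so Poly\'a's theorem applies provided this even form is positive definite on $\mathbb{R}^{2n}\setminus\{0\}$. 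Checking that positivity is the delicate point: one must show that for $r$ large the negative term $-\tfrac1r(\sum(v_i^2-w_i^2)^2)^d$ is dominated — on the sphere $\sum v_i^2 + \sum w_i^2 = 1$ — by $f_\gamma(v^2-w^2)$ plus the strictly positive even term $\tfrac1{2r}(\sum(v_i^4+w_i^4))^d$, using that $f_\gamma(v^2-w^2) \ge \tfrac1{r_1}(\sum(v_i^2-w_i^2)^2)^d \ge \tfrac1{r_1}(\text{something})$ near points where $\sum(v_i^2-w_i^2)^2$ is bounded away from $0$, while near points where $v_i^2 = w_i^2$ for all $i$ the term $\sum(v_i^4 + w_i^4)$ is bounded away from $0$; a compactness/covering argument on the sphere then yields a single $r_2 \ge r_1$ making $g_\gamma^{(r_2)}$ positive definite. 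Applying Poly\'a's theorem to $g_\gamma^{(r_2)}$ gives an exponent $N$ such that $g_\gamma^{(r_2)}(v,w)\cdot(\sum v_i^2 + \sum w_i^2)^N$ has nonnegative coefficients; choosing $r = \max(r_2, \lceil\sqrt{N}\rceil)$ and noting that multiplying further by $(\sum v_i^2 + \sum w_i^2)^{r^2 - N}$ preserves nonnegativity of coefficients (product of two coefficient-nonnegative forms), we conclude $f_\gamma(z) - \tfrac1r(\sum z_i^2)^d \in Pol_{n,2d}^r$, hence $l_r \ge \gamma$, hence $m_r \ge \gamma$. Since $\gamma$ was an arbitrary strict lower bound on (\ref{eq:POP}) and $\{m_r\}$ is monotone and bounded above by $p^*$, we get $\lim_{r\to\infty} m_r = p^*$. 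The one place I would double-check is the exact form of the perturbation term $\tfrac{1}{2r}(\sum(v_i^4+w_i^4))^d$ versus $\tfrac1r(\sum z_i^2)^d$ under the substitution $z = v^2 - w^2$, since $(\sum(v_i^2-w_i^2)^2)^d \ne (\sum(v_i^4+w_i^4))^d$ in general — the whole point of using $v_i^4 + w_i^4$ rather than $(v_i^2-w_i^2)^2$ is to keep a term that is strictly positive whenever $(v,w)\ne 0$, which is what rescues positive definiteness of the even form, and I would make sure the constants $\tfrac1r$ versus $\tfrac1{2r}$ are reconciled correctly (as in the validity argument above, the $\tfrac1{2r}$ is deliberately smaller so that after cancellation a strictly positive multiple of $(\sum z_i^2)^d$ survives).
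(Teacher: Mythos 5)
Your validity argument and the monotonicity of $\{m_r\}$ are fine and essentially match the paper's: membership of $f_\gamma(z)-\frac1r(\sum_i z_i^2)^d$ in $Pol_{n,2d}^r$ only yields $f_\gamma(z)\ge \frac{1}{2r}(\sum_i z_i^2)^d$, which is exactly enough to conclude positive definiteness of $f_\gamma$ and invoke Theorem \ref{th:slb}. The gap is in the last step of your convergence argument. After fixing $r_2$ and applying Poly\'a's theorem \emph{qualitatively} to the even positive definite form $g_\gamma^{(r_2)}$, you obtain an exponent $N$, set $r=\max(r_2,\lceil\sqrt N\rceil)$, and multiply the certificate by $(\sum_i v_i^2+\sum_i w_i^2)^{r^2-N}$ to conclude $f_\gamma-\frac1r(\sum_i z_i^2)^d\in Pol_{n,2d}^r$. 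But the form whose coefficients must be nonnegative in the definition of $Pol_{n,2d}^r$ is $g_\gamma^{(r)}\cdot(\sum_i v_i^2+\sum_i w_i^2)^{r^2}$, not $g_\gamma^{(r_2)}\cdot(\sum_i v_i^2+\sum_i w_i^2)^{r^2}$: raising $r$ above $r_2$ shrinks both the subtracted term $\frac1r(\sum_i(v_i^2-w_i^2)^2)^d$ and, crucially, the added perturbation $\frac{1}{2r}(\sum_i(v_i^4+w_i^4))^d$. Since $g_\gamma^{(r)}=g_\gamma^{(r_2)}+\bigl(\tfrac{1}{r_2}-\tfrac1r\bigr)(\sum_i(v_i^2-w_i^2)^2)^d-\bigl(\tfrac{1}{2r_2}-\tfrac{1}{2r}\bigr)(\sum_i(v_i^4+w_i^4))^d$, nonnegativity of the coefficients of $g_\gamma^{(r_2)}\cdot(\cdot)^{r^2}$ does not transfer to $g_\gamma^{(r)}\cdot(\cdot)^{r^2}$. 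What your certificate actually shows is membership in a ``mixed'' cone (perturbation indexed by $r_2$, multiplier exponent $r^2$), which is not any $Pol_{n,2d}^{r'}$ of the hierarchy unless $r_2^2\ge N$ already holds, and qualitative Poly\'a gives no control on $N$.

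The paper closes exactly this gap quantitatively. In place of Theorem \ref{th:polya} it uses the Powers--Reznick bound (Theorem \ref{th:reznick}, adapted to even forms in Lemma \ref{th:Reznick.even}) to get an explicit exponent $N(r)$ valid for $p_{\gamma,r}=g_\gamma^{(r)}$, and Lemma \ref{lem:outstrip} shows $N(r)$ grows at most linearly in $r$: the minimum of $p_{\gamma,r}$ on the sphere is bounded below by a constant times $1/r$ while $\|p_{\gamma,r}(\sqrt v,\sqrt w)\|$ is bounded uniformly in $r$, so $N(r)\le C_\gamma\, r$ for $r\ge r_0$ and hence $r^2\ge N(r)$ for all large $r$. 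One then picks a single $r'$ large enough for both requirements, so the same index $r'$ appears in the subtraction, the perturbation, and the multiplier exponent; the padding by $(\sum_i v_i^2+\sum_i w_i^2)^{r'^2-\lceil N(r')\rceil}$ is then legitimate, exactly as you perform it. Your argument becomes correct once you replace qualitative Poly\'a by such a bound (or otherwise show the Poly\'a exponent of $g_\gamma^{(r)}$ is $o(r^2)$); as written, that exponent could a priori grow faster than $r^2$ as the positivity margin of $g_\gamma^{(r)}$ shrinks like $1/r$, and the argument does not close. (Minor point: your covering argument for positive definiteness of $g_\gamma^{(r)}$ is more elaborate than needed---for $r\ge r_1$ the first two terms are nonnegative pointwise by composing $f_\gamma(z)-\frac1r(\sum_i z_i^2)^d\ge 0$ with $z=v^2-w^2$, and the added term is positive definite---but it is not wrong.)
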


As before, we use bisection to obtain the optimal value $l_r$ of the $r^{th}$ level of the hierarchy up to a fixed precision $\epsilon$ (see Remark \ref{rem:bisection}). At each step of the bisection algorithm, one simply needs to multiply two polynomials together and check nonnegativity of the coefficients of the resulting polynomial to proceed to the next step. As a consequence, this hierarchy is optimization-free as we do not need to solve (convex) optimization problems at each step of the bisection algorithm. To the best of our knowledge, no other converging hierarchy of lower bounds for general POPs dispenses altogether with the need to solve convex subprograms. We also provide a Positivstellensatz counterpart to the hierarchy given above (see Corollary \ref{cor:Positiv.LP}). This corollary implies in particular that one can always certify infeasibility of a basic semialgebraic set by recursively multiplying polynomials together and simply checking nonnegativity of the coefficients of the resulting polynomial.

We now make a few remarks regarding the techniques used in the proof of Theorem~\ref{th:sms.hierarchy}. Unlike Theorems \ref{th:reznick.hierarchy} and \ref{th:artin.hierarchy}, we do not show that $Pol_{n,d}^r$ satisfies properties (a)-(d) as given in Theorem~\ref{th:hierarchy} due to some technical difficulties. It turns out however that we can avoid showing properties (c) and (d) by using a result by Reznick and Powers \cite{PR} that we present below. Regarding properties (a) and (b), we show that a slightly modified version of (a) holds and that (b), which is the key property in Theorem \ref{th:hierarchy}, goes through as is. We note though that obtaining (b) from Poly\'a's result (Theorem \ref{th:polya}) is not as immediate as obtaining (b) from Artin's and Reznick's results. Indeed, unlike the theorems by Artin and Reznick (see Theorems \ref{th:artin} and \ref{th:reznick.uniform}) which certify global positivity of \emph{any} form, Poly\'a's result only certifies global positivity of \emph{even} forms. To make this latter result a statement about general forms, we work in an appropriate lifted space. This is done by replacing any form $p(z)$ in variables $z \in \mathbb{R}^n$ by the even form $p(v^2-w^2)$ in variables $(v,w) \in \mathbb{R}^{2n}$. This lifting operation preserves nonnegativity, but unfortunately it does not preserve positivity: even if $p(z)$ is pd, $p(v^2-w^2)$ always has zeros (e.g., when $v=w$). Hence, though we now have access to an even form, we still cannot use Poly\'a's property as $p(v^2-w^2)$ is not positive. This is what leads us to consider the slightly more complicated form $p(v^2-w^2)+\frac{1}{2r}(\sum_i v_i^4+w_i^4)^d$ in (\ref{eq:def.Knd}). 

\begin{theorem}[Powers and Reznick \cite{PR}]\label{th:reznick}
	Let $\alpha=(\alpha_1,\ldots,\alpha_n) \in \mathbb{N}^n$, $x^\alpha=x_1^{\alpha_1}\ldots x_n^{\alpha_n}$, and write $|\alpha|=\alpha_1+\ldots+\alpha_n.$ Denote the standard simplex by $\Delta_n$. Assume that $f$ is a form of degree $2d$ that is positive on $\Delta_n$ and let $$\lambda=\lambda(f)\mathrel{\mathop{:}}=\min_{x \in \Delta_n} f(x).$$
	Define $c(\alpha)=\frac{(2d)!}{\alpha_1! \ldots \alpha_n!}.$ We have:
	$$f(x)=\sum_{|\alpha|=2d} a_{\alpha} x^{\alpha}=\sum_{|\alpha|=2d} b_{\alpha}c(\alpha)x^{\alpha}.$$
	Let $||f(x)||\mathrel{\mathop{:}}=\max_{|\alpha|=2d} |b_{\alpha}|$.\footnote{As defined, $||f||$ is a submultiplicative norm; see \cite{schweighofer2004complexity}.}
	
	Then, the coefficients of $$f(x_1,\ldots,x_n) \cdot (x_1+\ldots+x_n)^N$$ are nonnegative for $N> d(2d-1)\frac{||f(x)||}{\lambda}-2d$.
\end{theorem}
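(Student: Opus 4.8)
The plan is to track the coefficients of $f\cdot(x_1+\cdots+x_n)^N$ explicitly and compare them, after a harmless positive normalization, with values of $f$ on the simplex. Write $f=\sum_{|\alpha|=2d}a_\alpha x^\alpha=\sum_{|\alpha|=2d}b_\alpha c(\alpha)x^\alpha$, set $t:=N+2d$, and fix a multi-index $\beta$ with $|\beta|=t$. Expanding $(x_1+\cdots+x_n)^N=\sum_{|\gamma|=N}\binom{N}{\gamma}x^\gamma$, the coefficient of $x^\beta$ in the product is $c_\beta=\sum_{|\alpha|=2d}a_\alpha\binom{N}{\beta-\alpha}$, where a term vanishes unless $\alpha\le\beta$ componentwise. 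First I would divide by the positive number $\binom{t}{\beta}=t!/\prod_i\beta_i!$ and simplify the factorials: using $N!/t!=1/(t(t-1)\cdots(t-2d+1))$ and $\beta_i!/(\beta_i-\alpha_i)!=\prod_{j=0}^{\alpha_i-1}(\beta_i-j)$, one obtains the exact identity
\[
\frac{c_\beta}{\binom{t}{\beta}}=\frac{t^{2d}}{(t)_{\underline{2d}}}\cdot A_\beta,\qquad A_\beta:=\frac{1}{t^{2d}}\sum_{|\alpha|=2d}a_\alpha\prod_{i=1}^n\prod_{j=0}^{\alpha_i-1}(\beta_i-j),
\]
where $(t)_{\underline{2d}}:=t(t-1)\cdots(t-2d+1)$. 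Since $\binom{t}{\beta}>0$ and $t^{2d}/(t)_{\underline{2d}}>0$, the coefficient $c_\beta$ is nonnegative if and only if $A_\beta\ge 0$; crucially, the positive prefactor may simply be discarded, and this observation is what will avoid a spurious factor of two in the final bound.

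Next I would isolate the leading term. Set $B_\beta:=f(\beta/t)=\tfrac{1}{t^{2d}}\sum_\alpha a_\alpha\beta^\alpha$. Since $\beta/t$ lies on $\Delta_n$, we have $B_\beta\ge\lambda$ by definition of $\lambda$. Because each falling factorial satisfies $0\le\prod_{j=0}^{\alpha_i-1}(\beta_i-j)\le\beta_i^{\alpha_i}$, the difference $B_\beta-A_\beta$ is a signed but controllable quantity, and it suffices to prove $B_\beta-A_\beta<\lambda$ uniformly in $\beta$: for then $A_\beta\ge B_\beta-|B_\beta-A_\beta|>0$, which establishes positivity of every coefficient at once.

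The heart of the argument, and the step I expect to require the most care, is the sharp bound on $B_\beta-A_\beta$. Using $|a_\alpha|=|b_\alpha|c(\alpha)\le\|f\|\,c(\alpha)$ together with the nonnegativity of $\beta^\alpha-\prod_{i,j}(\beta_i-j)$, I would estimate
\[
|B_\beta-A_\beta|\le\frac{\|f\|}{t^{2d}}\sum_{|\alpha|=2d}c(\alpha)\Big(\beta^\alpha-\prod_{i=1}^n\prod_{j=0}^{\alpha_i-1}(\beta_i-j)\Big).
\]
Both resulting sums are closed-form: the multinomial theorem gives $\sum_\alpha c(\alpha)\beta^\alpha=(\sum_i\beta_i)^{2d}=t^{2d}$, while rewriting $c(\alpha)\prod_{i,j}(\beta_i-j)=(2d)!\prod_i\binom{\beta_i}{\alpha_i}$ and applying Vandermonde's identity yields $\sum_\alpha c(\alpha)\prod_{i,j}(\beta_i-j)=(2d)!\binom{t}{2d}=(t)_{\underline{2d}}$. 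Hence the error telescopes to $|B_\beta-A_\beta|\le\|f\|\big(1-(t)_{\underline{2d}}/t^{2d}\big)=\|f\|\big(1-\prod_{j=1}^{2d-1}(1-j/t)\big)$, and the elementary inequality $1-\prod_{j=1}^{2d-1}(1-j/t)\le\sum_{j=1}^{2d-1}\tfrac{j}{t}=\tfrac{d(2d-1)}{t}$ finishes the estimate. Requiring this to be strictly below $\lambda$ gives $t>d(2d-1)\|f\|/\lambda$, i.e. $N>d(2d-1)\|f\|/\lambda-2d$, which is exactly the claimed threshold; since the bound depends only on $t$, $d$, and $\|f\|$, it holds uniformly over all $\beta$. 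The subtlety to flag is that bounding the signed error by $\|f\|\big(t^{2d}-(t)_{\underline{2d}}\big)/t^{2d}$ — rather than by a total-variation-type comparison of the underlying multinomial and hypergeometric weightings, which would cost a factor of two — is precisely what produces the optimal constant $d(2d-1)=\binom{2d}{2}$.
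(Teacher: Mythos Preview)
The paper does not supply its own proof of this statement: it is quoted verbatim as a result of Powers and Reznick and used as a black box in the subsequent arguments. So there is no in-paper proof to compare against.

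Your argument is correct and is, in essence, the original Powers--Reznick proof. The key steps --- normalizing the coefficient of $x^\beta$ by $\binom{t}{\beta}$, recognizing the leading term as $f(\beta/t)\ge\lambda$, bounding the error via $|a_\alpha|\le\|f\|\,c(\alpha)$, closing the two sums with the multinomial theorem and Vandermonde's identity to obtain $\|f\|\bigl(1-(t)_{\underline{2d}}/t^{2d}\bigr)$, and finishing with $1-\prod_{j=1}^{2d-1}(1-j/t)\le \sum_{j=1}^{2d-1} j/t = d(2d-1)/t$ --- are exactly the ingredients of the published proof, and your bookkeeping checks out.
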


Note that here the bound is given in the case where one considers the alternative (but equivalent) formulation of Poly\'a's Positivstellensatz to the one given in Theorem \ref{th:polya}, i.e., when one is concerned with positivity of a form over the simplex. The result can easily be adapted to the formulation where one considers global positivity of an even form as shown below.

\begin{lemma}\label{th:Reznick.even}
	Let $p\mathrel{\mathop{:}}=p(x)$ be an even form of degree $2d$ that is positive definite. Let $\beta>0$ be its minimum on $S_x$. Then, $$p(x_1,\ldots,x_n) \cdot (\sum_i x_i^2)^N$$ has nonnegative coefficients for $N>d(2d-1)\frac{||p(\sqrt{x})||}{\beta}-2d$.
\end{lemma}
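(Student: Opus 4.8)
The plan is to reduce Lemma~\ref{th:Reznick.even} to the simplex version stated in Theorem~\ref{th:reznick} by a standard substitution. First I would make the change of variables $x_i \mapsto \sqrt{y_i}$ (i.e. set $y_i = x_i^2$), which is well-defined on the nonnegative orthant. Since $p$ is an even form of degree $2d$, each monomial $x^\alpha$ appearing in $p$ has all $\alpha_i$ even, so $p(\sqrt{y_1},\ldots,\sqrt{y_n})$ is a genuine form in $y$ of degree $d$; call it $q(y)$. The key observations are: (i) the coefficients of $q$ are exactly the coefficients of $p$ (up to the obvious relabeling), so $\|q\| = \|p(\sqrt{x})\|$ in the notation of Theorem~\ref{th:reznick}; (ii) the multiplier $(\sum_i x_i^2)^N$ becomes $(\sum_i y_i)^N$ under the substitution; and (iii) the product $p(x)\cdot(\sum_i x_i^2)^N$ has nonnegative coefficients if and only if $q(y)\cdot(\sum_i y_i)^N$ does, because the substitution $y_i = x_i^2$ is a coefficient-preserving bijection between even forms in $x$ and forms in $y$.

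Next I would relate the relevant positivity constants. The minimum of $p$ on the unit sphere $S_x = \{x : \|x\|_2 = 1\}$ is $\beta > 0$ by hypothesis (the minimum is attained and positive since $p$ is positive definite). Under $y_i = x_i^2$, the sphere $\{\sum_i x_i^2 = 1\}$ maps onto the standard simplex $\Delta_n = \{y \geq 0, \sum_i y_i = 1\}$, and $p(x) = q(y)$ along this correspondence; hence $\lambda(q) := \min_{y \in \Delta_n} q(y) = \beta$. Now I would apply Theorem~\ref{th:reznick} to the form $q$ of degree $d$ (note: degree $d$, not $2d$ — one must be careful to substitute the correct degree into the bound). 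Wait — here is the one subtle point: Theorem~\ref{th:reznick} is stated for a form of degree $2d$, and its bound reads $N > d(2d-1)\|f\|/\lambda - 2d$. Since $q$ has degree $d$, I should apply the theorem with its ``$2d$'' equal to our $d$; but $q$ has even degree only if $d$ is... actually $p$ has degree $2d$ so $q$ has degree $d$, which need not be even. The cleanest fix is to apply Theorem~\ref{th:reznick} in the form where the degree parameter is arbitrary (Powers--Reznick's result holds for any positive form on the simplex, not just even-degree ones); substituting degree $d$ gives that $q(y)\cdot(\sum_i y_i)^N$ has nonnegative coefficients for $N > \frac{d}{2}(d-1)\frac{\|q\|}{\lambda(q)} - d$. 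Using $\|q\| = \|p(\sqrt{x})\|$ and $\lambda(q) = \beta$, and the fact that $d(2d-1) > \frac{d}{2}(d-1)$ and $2d > d$ so the bound $N > d(2d-1)\frac{\|p(\sqrt{x})\|}{\beta} - 2d$ stated in the lemma is a (weaker, safer) sufficient threshold, the conclusion follows.

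The main obstacle I anticipate is purely bookkeeping: making sure the degree substituted into the Powers--Reznick bound is consistent (the lemma as stated uses $d(2d-1)$ and $2d$, matching the original Theorem~\ref{th:reznick} degree $2d$, so in fact the authors likely intend $q$ to be viewed as having ``degree $2d$'' in the sense that its argument came from a degree-$2d$ form — one should state explicitly which form plays the role of ``$f$'' and what its degree is). A secondary point to verify carefully is claim (iii) above — that multiplying an even form in $x$ by $(\sum_i x_i^2)^N$ and then substituting $y_i = x_i^2$ commutes with first substituting and then multiplying by $(\sum_i y_i)^N$ — but this is immediate since both $(\sum x_i^2)^N$ and the monomials of $p$ are even, so the whole product is even and the substitution acts monomial-by-monomial, preserving the sign of each coefficient. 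Everything else (attainment of the minimum on $S_x$ and on $\Delta_n$ by compactness, positivity of $\beta$) is routine and I would not belabor it.
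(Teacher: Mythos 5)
Your proposal is correct and follows essentially the same route as the paper's proof: set $q(y)=p(\sqrt{y})$, observe that the substitution $y_i=x_i^2$ maps the unit sphere onto the simplex so that $\min_{\Delta_n}q\ge\beta$, invoke the Powers--Reznick theorem (Theorem~\ref{th:reznick}), and substitute $y_i=x_i^2$ back to transfer nonnegativity of coefficients, exactly as the paper does. Your degree bookkeeping is if anything more careful than the paper's (which applies the theorem with the constants $d(2d-1)$ and $2d$ directly to the degree-$d$ form $q$); the only quibble is that in your threshold comparison the ``$-2d$ versus $-d$'' term cuts against you rather than for you, so dominance of the stated bound over the degree-$d$ Powers--Reznick bound needs $\|q\|/\lambda(q)\ge 2/(3d-1)$, which does hold since $\lambda(q)\le\|q\|$ on the simplex.
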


\begin{proof}
	Let $f(x_1,\ldots,x_n)=p(\sqrt{x_1},\ldots,\sqrt{x_n})$. Since $p(x)\geq \beta$ on $S_x$, then $f(x) \geq \beta$ on $\Delta_n.$ Indeed, by contradiction, suppose that there exists $\hat{x} \in \Delta_n$ such that $f(\hat{x}) =\beta -\epsilon$ (where $\epsilon>0$) and let $y=\sqrt{\hat{x}}$. Note that as $\sum_i \hat{x}_i=1$, we have $\sum_i y_i^2=1$. Furthermore, $p(y)=f(\hat{x})=\beta-\epsilon$ which contradicts the assumption. Hence, using Theorem~\ref{th:reznick}, we have that when $N>d(2d-1)\frac{||p(\sqrt{x})||}{\beta}-2d$, $$f(x)(\sum_i x_i)^N $$
	has nonnegative coefficients. Hence,
	$$f(y^2)(\sum_i y_i^2)^N =p(y)(\sum_i y_i^2)^N$$
	also has nonnegative coefficients.
\end{proof}

Before we proceed with the proof of Theorem \ref{th:sms.hierarchy}, we need the following lemma.

\begin{lemma} \label{lem:outstrip}
	Let 	
	\begin{equation}\label{eq:def.p.gamma}
	p_{\gamma,r}(v,w)\mathrel{\mathop{:}}=f_\gamma(v^2-w^2)-\frac{1}{r}(\sum_{i=1}^n (v_i^2-w_i^2)^2)^d+\frac{1}{2r} (\sum_{i=1}^n (v_i^4+w_i^4))^d,
	\end{equation}
	where $f_{\gamma}$ is defined as in (\ref{eq:f.gamma}) and let $$N(r)=d(2d-1) \cdot \frac{||p_{\gamma,r}(\sqrt{v},\sqrt{w})||}{\min_{S_{v,w}}p_{\gamma,r}(v,w)}-2d.$$ If $f_{\gamma}(z)$ is positive definite, there exists $\hat{r}$ such that $r^2 \geq N(r)$,  for all $r \geq \hat{r}$.
\end{lemma}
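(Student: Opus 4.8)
The plan is to show that $N(r)$ grows at most linearly in $r$; since the desired inequality is $r^2 \ge N(r)$, a bound of the form $N(r) \le C' r$ valid for all large $r$ finishes the proof upon taking $\hat r$ larger than $C'$. So the two ingredients I would establish are a uniform (in $r$) upper bound on the numerator $\|p_{\gamma,r}(\sqrt v,\sqrt w)\|$ and a lower bound on the denominator $\min_{S_{v,w}} p_{\gamma,r}(v,w)$ that decays no faster than $1/r$.

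For the denominator bound, which is the crux, I would first use that $f_\gamma$ is a positive definite form of degree $2d$: then $\lambda_f:=\min_{\|z\|_2=1} f_\gamma(z)$ is strictly positive, and homogeneity gives $f_\gamma(z)\ge \lambda_f\|z\|_2^{2d}$ for all $z\in\mathbb{R}^n$. Substituting $z=(v_1^2-w_1^2,\dots,v_n^2-w_n^2)$ and using $\|z\|_2^2=\sum_i(v_i^2-w_i^2)^2$ yields $f_\gamma(v^2-w^2)\ge \lambda_f\bigl(\sum_i(v_i^2-w_i^2)^2\bigr)^d$. Plugging this into the definition (\ref{eq:def.p.gamma}), for every $r\ge 2/\lambda_f$ one gets
\[
p_{\gamma,r}(v,w)\ \ge\ \Bigl(\lambda_f-\tfrac1r\Bigr)\Bigl(\textstyle\sum_{i=1}^n(v_i^2-w_i^2)^2\Bigr)^d+\frac{1}{2r}\Bigl(\textstyle\sum_{i=1}^n(v_i^4+w_i^4)\Bigr)^d\ \ge\ \frac{1}{2r}\Bigl(\textstyle\sum_{i=1}^n(v_i^4+w_i^4)\Bigr)^d .
\]
On the unit sphere $S_{v,w}$ one has $\sum_i(v_i^2+w_i^2)=1$, so Cauchy--Schwarz applied to the $2n$ nonnegative numbers $v_i^2,w_i^2$ gives $\sum_i(v_i^4+w_i^4)\ge \tfrac{1}{2n}$. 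Hence $\min_{S_{v,w}}p_{\gamma,r}(v,w)\ge \tfrac{1}{2r(2n)^d}>0$ for all $r\ge 2/\lambda_f$; in particular $p_{\gamma,r}$ is positive definite, so $N(r)$ is well defined and Lemma~\ref{th:Reznick.even} is applicable to it.

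For the numerator I would simply observe that $p_{\gamma,r}(\sqrt v,\sqrt w)=f_\gamma(v-w)-\tfrac1r\bigl(\sum_i(v_i-w_i)^2\bigr)^d+\tfrac1{2r}\bigl(\sum_i(v_i^2+w_i^2)\bigr)^d$, whose coefficients are those of the fixed form $f_\gamma(v-w)$ plus corrections of size $O(1/r)$; hence $\|p_{\gamma,r}(\sqrt v,\sqrt w)\|\le C$ for all $r\ge 1$, with $C$ depending only on $f_\gamma,n,d$ (this is true for the norm of Theorem~\ref{th:reznick} too, since there $c(\alpha)\ge 1$, so that norm is dominated by the sup-norm of the coefficients). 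Combining the two estimates, for $r\ge\max(1,2/\lambda_f)$,
\[
N(r)=d(2d-1)\,\frac{\|p_{\gamma,r}(\sqrt v,\sqrt w)\|}{\min_{S_{v,w}}p_{\gamma,r}}-2d\ \le\ 2d(2d-1)(2n)^d\,C\cdot r\ =:\ C'r .
\]
Taking $\hat r:=\max\bigl(1,\,2/\lambda_f,\,C'\bigr)$ then gives $r^2\ge C'r\ge N(r)$ for all $r\ge\hat r$, which is the claim (one even gets the strict inequality $r^2>N(r)$ by slightly enlarging $\hat r$, should that be needed in the proof of Theorem~\ref{th:sms.hierarchy}).

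The main obstacle is really just spotting the right way to bound $\min_{S_{v,w}}p_{\gamma,r}$ from below: the term $\tfrac1r(\sum_i(v_i^2-w_i^2)^2)^d$ enters $p_{\gamma,r}$ with a \emph{minus} sign and on its own would destroy positivity, so it must be absorbed into $f_\gamma(v^2-w^2)$ using positive definiteness of $f_\gamma$ (this is exactly where the hypothesis is used), after which the added bump $\tfrac1{2r}(\sum_i(v_i^4+w_i^4))^d$ supplies a strictly positive — but only $\Theta(1/r)$ — floor. This $\Theta(1/r)$ decay is precisely why $N(r)$ is only $\Theta(r)$ rather than $\Theta(1)$, and hence why the hierarchy in Theorem~\ref{th:sms.hierarchy} must raise the multiplier $\sum_i v_i^2+\sum_i w_i^2$ to the power $r^2$ rather than $r$: only a quadratic power can outstrip a linearly growing $N(r)$.
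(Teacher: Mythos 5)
Your proposal is correct and follows essentially the same route as the paper's proof: use positive definiteness of $f_\gamma$ to absorb the $-\tfrac1r(\sum_i(v_i^2-w_i^2)^2)^d$ term once $r$ is large enough, lower-bound the remaining $\tfrac1{2r}(\sum_i(v_i^4+w_i^4))^d$ term on $S_{v,w}$ by a $\Theta(1/r)$ floor via the $\ell_2$--$\ell_4$ comparison, upper-bound $\|p_{\gamma,r}(\sqrt v,\sqrt w)\|$ uniformly in $r$ by the triangle inequality, and conclude $N(r)=O(r)\le r^2$ for $r\ge\hat r$. Your only deviations are cosmetic (an explicit $r_0=\lceil 2/\lambda_f\rceil$ and slightly different constants), so nothing further is needed.
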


\begin{proof}
	As $f_{\gamma}(z)$ is positive definite, there exists a positive integer $r_0$ such that $f_{\gamma}(z)-\frac{1}{r}(\sum_i z_i^2)^d$ is positive definite for all $r \geq r_0$ and hence 
	\begin{equation}\label{eq:nonneg.proof}
	f_{\gamma}(v^2-w^2)-\frac{1}{r} (\sum_i (v_i^2-w_i^2)^2)^d
	\end{equation}
	is nonnegative for all $r \geq r_0$. Recall now that $||x||_p=(\sum_i x_i^p)^{1/p}$ is a norm for $p \geq 1$ and that $$||x||_2 \leq \sqrt{n}||x||_4.$$ This implies that $$(\sum_i v_i^4+\sum_i w_i^4)^d \geq \frac{1}{(2n)^{2d}} (\sum_i v_i^2+\sum_i w_i^2)^{2d}$$ and hence in view of (\ref{eq:nonneg.proof}) and the definition of $p_{\gamma,r}$, we have $$p_{\gamma,r}(v,w) \geq \frac{1}{2^{2d+1}n^{2d}r} (\sum_i v_i^2+\sum_{i} w_i^2)^{2d}, \forall r\geq r_0.$$
	This enables us to conclude that 
	\begin{align}\label{eq:min.p.gamma}
	\min_{S_{v,w}} p_{\gamma,r}(v,w) \geq \frac{1}{2^{2d+1}n^{2d}r}, \text{ for any } r \geq r_0.
	\end{align}
	Further, notice that using properties of the norm, we have the following chain of inequalities for any positive integer $r$:
	\begin{align*}
	||p_{\gamma,r}(\sqrt{v},\sqrt{w})|| &\leq ||f_\gamma(v-w)||+\frac{1}{r}||(\sum_i (v_i-w_i)^2)^d||+\frac{1}{2r} ||(\sum_i (v_i^2+w_i^2))^d||\\
	&\leq ||f_\gamma(v-w)||+||(\sum_i (v_i-w_i)^2)^d||+ ||(\sum_i v_i^2+w_i^2)^d||=\mathrel{\mathop{:}}c_{\gamma}.
	\end{align*}
	As a consequence, combining this with the definition of $N(r)$ and (\ref{eq:min.p.gamma}), we have $$N(r) \leq d(2d-1)2^{{2d+1}}rn^{2d}c_{\gamma}, ~\forall r \geq r_0.$$
	Now taking $\hat{r}=\max(r_0,\lceil d(2d-1)2^{{2d+1}}n^{2d}c_{\gamma}\rceil)$, we have $r^2 \geq N(r), \forall r\geq \hat{r}.$
\end{proof}
We now proceed with the proof of Theorem \ref{th:sms.hierarchy}.

\begin{proof} (Proof of Theorem \ref{th:sms.hierarchy})
	By definition, the sequence $\{m_r\}$ is nondecreasing. We show that it is upperbounded by $p^*$ by showing that if $\gamma$ is such that $$f_{\gamma}(z)-\frac{1}{r}(\sum_i z_i^2)^d \in Pol_{n,2d}^r,$$ for some $r$, then $f_{\gamma}$ must be positive definite. Then Theorem \ref{th:slb} gives us that $\gamma$ is a strict lower bound on (\ref{eq:POP}). As $p^* > \gamma$ for any such $\gamma$, we have that $l_r\leq p^*,\forall r$ and hence $m_r \leq p^*, \forall r.$ 
	
	Assume that $\gamma$ is such that
	$$f_{\gamma}(z)-\frac{1}{r}(\sum_i z_i^2)^d \in Pol_{n,2d}^r$$ for some $r$.
	By definition of $Pol_{n,2d}^r$ and as $(\sum_i v_i^2+\sum_i w_i^2)^{r^2}$ is nonnegative, we get that the form
	$$f_\gamma(v^2-w^2)-\frac{1}{r}(\sum_i (v_i^2-w_i^2)^2)^d+\frac{1}{2r} (\sum_i v_i^4+w_i^4)^d$$ is nonnegative.
	This implies that 
	\begin{align}\label{eq:ineq.tv.screen}
	f_{\gamma}(v^2-w^2)-\frac{1}{r}(\sum_i (v_i^2-w_i^2)^2)^d \geq -\frac{1}{2r} \text{ for } (v,w) \in \{(v,w)~|~ \sum_i v_i^4+\sum_i w_i^4 =1\},
	\end{align} which gives 
	\begin{align}\label{eq:ineq.sphere}
	f_{\gamma}(z)-\frac{1}{r}(\sum_i z_i^2)^d \geq -\frac{1}{2r},~ \forall z\in S_z.
	\end{align}
	Indeed, suppose that there exists $\hat{z} \in S_z$ such that (\ref{eq:ineq.sphere}) does not hold. Then, let $\hat{z}^+=\max(\hat{z},0)$ and $\hat{z}^-=\max(-\hat{z},0)$. Note that both $\hat{z}^+$ and $\hat{z}^-$ are nonnegative so we can take $\hat{v}=\sqrt{\hat{z}^+}$ and $\hat{w}=\sqrt{\hat{z}^-}.$ We further have that as $\hat{z} \in S_z$ and $\hat{z}=\hat{v}^2-\hat{w}^2$, $\sum_i \hat{v}_i^4+\sum_i \hat{w}_i^4=1$. Substituting $\hat{z}$ by $\hat{v}^2-\hat{w}^2$ in (\ref{eq:ineq.sphere}) then violates (\ref{eq:ineq.tv.screen}). Using (\ref{eq:ineq.sphere}), we conclude that $$f_{\gamma}(z)\geq \frac{1}{2r},~\forall z\in S_z$$ and that $f_{\gamma}$ is positive definite. 
	
	We now show that the hierarchy converges, i.e., that $\lim_{r \rightarrow \infty} m_r=p^*$. To do this, we show that if $\gamma$ is a strict lower bound on (\ref{eq:POP}), or equivalently from Theorem \ref{th:slb}, if $f_{\gamma}(z)$ is positive definite, then there exists $r'$ such that $$f_{\gamma}(z)-\frac{1}{r'} (\sum_{i}z_i^2)^d \in Pol_{n,2d}^{r'}.$$ 
	Since $f_{\gamma}$ is pd, there exists a positive integer $r_0$ such that $f_{\gamma}(z)-\frac{1}{r}(\sum_{i=1}^n z_i^2)^d$ is pd for any $r \geq r_0$. This implies that $f_{\gamma}(v^2-w^2)-\frac{1}{r}(\sum_i (v_i^2-w_i^2)^2)^d$ is nonnegative and $$f_\gamma(v^2-w^2)-\frac{1}{r}(\sum_i (v_i^2-w_i^2)^2)^d+\frac{1}{2r} (\sum_i(v_i^4+w_i^4))^d$$ is positive definite for $r \geq r_0$. Using Lemma \ref{th:Reznick.even} and the definition of $N(r)$ in Lemma \ref{lem:outstrip}, for any $r\geq r_0$, we have that $$\left(f_\gamma(v^2-w^2)-\frac{1}{r}(\sum_i (v_i^2-w_i^2)^2)^d+\frac{1}{2r} (\sum_i (v_i^4+w_i^4))^d \right) \cdot (\sum_i v_i^2+\sum_i w_i^2)^{\lceil N(r)\rceil}$$ has nonnegative coefficients. From Lemma \ref{lem:outstrip}, there exists $\hat{r}$ such that $r \geq \hat{r}$ implies $r^2 \geq N(r).$ Taking $r'=\max\{r_0,\hat{r}\}$ and considering $p_{\gamma,r'}$ as defined in (\ref{eq:def.p.gamma}), we get that
	\begin{align*}
	&p_{\gamma,r'}(v,w) (\sum_i v_i^2+\sum_i w_i^2)^{r'^2} \\ &=p_{\gamma,r'}(v,w)(\sum_i v_i^2+\sum_i w_i^2)^{\lceil N(r') \rceil} \cdot (\sum_i v_i^2+\sum_i w_i^2)^{r'^2-\lceil N(r') \rceil} 
	\end{align*}
	has nonnegative coefficients, which is the desired result. This is because $$p_{\gamma,r'}(v,w)(\sum_i v_i^2+\sum_i w_i^2)^{\lceil N(r') \rceil}$$ has nonnegative coefficients as $r' \geq r_0$, and $$(\sum_i v_i^2+\sum_i w_i^2)^{r'^2-\lceil N(r') \rceil}$$ has nonnegative coefficients as $r' \geq \hat{r}$, and that the product of two polynomials with nonnegative coefficients has nonnegative coefficients.
\end{proof}

\begin{corollary}[An optimization-free Positivstellensatz]\label{cor:Positiv.LP} Consider the basic semialgebraic set $$S\mathrel{\mathop{:}}=\{x \in \mathbb{R}^n~|~ g_i(x)\geq 0, i=1,\ldots,m\}$$ and a polynomial $p\mathrel{\mathop{:}}=p(x)$. Suppose that $S$ is contained within a ball of radius $R$. Let $\eta_i$ and $\beta$ be any finite upperbounds on $g_i(x)$ and, respectively, $-p(x)$ over the set $S$.\footnote{Once again, as discussed at the beginning of Section \ref{sec:nonneg.approx}, such bounds are very easily computable.} Let $d$ be such that $2d$ is the smallest even integer larger than or equal to the maximum degree of $p, g_i, i=1,\ldots,m$.  Then, $p(x)>0$ for all $x \in S$ if and only if there exists a positive integer $r$ such that	
	\begin{align*}
	\left(h(v^2-w^2)-\frac{1}{r}(\sum_{i=1}^{n+m+3} (v_i^2-w_i^2)^2)^d+\frac{1}{2r} (\sum_{i=1}^{n+m+3} (v_i^4+w_i^4))^d\right) \\
	\cdot \left(\sum_{i=1}^{n+m+3}v_i^2+\sum_{i=1}^{{n+m+3}} w_i^2\right)^{r^2}
	\end{align*}
	has nonnegative coefficients, where the form $h \mathrel{\mathop{:}}=h(z)$ in variables $$(z_1,\ldots,z_{n+m+3})\mathrel{\mathop{:}}=(x_1,\ldots,x_n,s_0,\ldots,s_{m+1},y)$$ is as follows:
	\begin{align*}
	h(x,s,y)\mathrel{\mathop{:}}=&\left(y^{2d}p(x/y)+s_0^2y^{2d-2} \right)^2+\sum_{i=1}^m\left(y^{2d}g_i(x/y)-s_i^2 y^{2d-2}\right)^2\\
	&+\left((R+\sum_{i=1}^m \eta_i +\beta)^dy^{2d}-(\sum_{i=1}^n x_i^2+\sum_{i=0}^m s_i^2)^d-s_{m+1}^{2d}\right)^2.
	\end{align*}
\end{corollary}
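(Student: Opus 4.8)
The plan is to derive Corollary~\ref{cor:Positiv.LP} as a direct specialization of Theorem~\ref{th:sms.hierarchy} together with Theorem~\ref{th:slb}, at the value $\gamma = 0$, much as Corollary~\ref{cor:SDP.psatz} was obtained from Theorem~\ref{th:reznick.hierarchy}. First I would recall the reduction of Theorem~\ref{th:slb}: applied to $\gamma = 0$, it says that $0$ is a strict lower bound on the POP $\inf\{p(x) : g_i(x)\geq 0\}$ — equivalently, $p(x) > 0$ for all $x \in S$ — if and only if the form $f_0(x,s,y)$ of~(\ref{eq:f.gamma}) is positive definite. Observing that with $\gamma = 0$ the first square of $f_0$ becomes exactly $\left(y^{2d}p(x/y) + s_0^2 y^{2d-2}\right)^2$ and the constant in the third square becomes $(R + \sum_i \eta_i + \beta)^d$, we see that $f_0(x,s,y)$ is precisely the form $h(x,s,y)$ written in the statement of the corollary. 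So the task reduces to: $h$ is positive definite if and only if the displayed lifted polynomial has nonnegative coefficients for some positive integer $r$.

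The second step is to invoke the equivalence established inside the proof of Theorem~\ref{th:sms.hierarchy}. There it is shown — for a general pd form $f_\gamma$ in $n$ variables of degree $2d$ — that (i) if $f_\gamma(z) - \frac{1}{r}(\sum z_i^2)^d \in Pol^r_{n,2d}$ for some $r$, then $f_\gamma$ is positive definite; and conversely (ii) if $f_\gamma$ is positive definite, then there exists $r'$ with $f_\gamma(z) - \frac{1}{r'}(\sum z_i^2)^d \in Pol^{r'}_{n,2d}$. Here one uses the notational overwriting of Remark~\ref{rem:notation}: the form $h$ plays the role of $f_\gamma$, the number of variables is $n + m + 3$, and the degree is the ``$2d$'' of that remark (i.e., twice the original $d$). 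Unwinding the definition of $Pol^r_{n,2d}$ in~(\ref{eq:def.Knd}) with $p = h$ and the membership threshold $1/r$ absorbed, ``$h(z) - \frac{1}{r}(\sum_i z_i^2)^d \in Pol^r$'' literally means that
\[
\left(h(v^2-w^2) - \frac{1}{r}\left(\sum_i (v_i^2 - w_i^2)^2\right)^d + \frac{1}{2r}\left(\sum_i (v_i^4 + w_i^4)\right)^d\right)\cdot\left(\sum_i v_i^2 + \sum_i w_i^2\right)^{r^2}
\]
has nonnegative coefficients, which is exactly the displayed condition of the corollary once the index ranges are written out as $i = 1,\ldots,n+m+3$. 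Thus combining Theorem~\ref{th:slb} (the $\gamma=0$ case) with directions (i) and (ii) above yields both implications of the corollary.

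The main obstacle — really the only substantive point to be careful about — is bookkeeping the notational overwriting of Remark~\ref{rem:notation} correctly, since the ``$n$'' and ``$d$'' appearing in Theorem~\ref{th:sms.hierarchy} are not the original problem data but rather $n+m+3$ and $2d$ respectively. I would state explicitly that the hypotheses of Theorem~\ref{th:slb} are met: $S$ is contained in a ball of radius $R$ by assumption, and $\eta_i, \beta$ are finite upper bounds on $g_i(x)$ and $-p(x) = -p(x) + 0$ over $S$, so the form $f_0$ from~(\ref{eq:f.gamma}) is well-defined and the equivalence ``$p > 0$ on $S$ $\iff$ $f_0$ pd'' holds. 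Everything else is a matter of matching symbols: checking that setting $\gamma = 0$ in~(\ref{eq:f.gamma}) produces precisely the $h$ of the corollary (the only subtlety being that the ``$\gamma$'' term drops out of both the first and third squares), and that the dimension count $n + m + 3$ and degree $2d$ of $h$ are consistent with the lifted variables $(v,w) \in \mathbb{R}^{2(n+m+3)}$ and the exponent $r^2$ appearing in~(\ref{eq:def.Knd}). No new analysis is needed — the result is, as the authors note, ``an immediate corollary of arguments given in the proof of Theorem~\ref{th:slb} and in the proof of Theorem~\ref{th:sms.hierarchy} for the case where $\gamma = 0$.''
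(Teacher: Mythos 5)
Your proposal is correct and follows essentially the same route as the paper: the authors' own proof is precisely the observation that the corollary is immediate from the arguments in the proofs of Theorem~\ref{th:slb} and Theorem~\ref{th:sms.hierarchy} specialized to $\gamma=0$, which is exactly what you spell out (identifying $f_0$ with $h$, invoking both directions established inside the proof of Theorem~\ref{th:sms.hierarchy}, and unwinding the definition of $Pol^r_{n,2d}$ under the notational overwriting of Remark~\ref{rem:notation}). Your explicit bookkeeping of $n\leftarrow n+m+3$, $d\leftarrow 2d$ and of the vanishing $\gamma$-terms in the first and third squares is exactly the matching-of-symbols the paper leaves implicit.
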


\begin{proof} 
	This is an immediate corollary of arguments given in the proof of Theorem~\ref{th:slb} and in the proof of Theorem \ref{th:sms.hierarchy} for the case where $\gamma=0.$
\end{proof}

\subsection{Linear programming and second-order cone programming-based hierarchies for POPs}\label{subsec:lp.socp}

In this section, we present a linear programming and a second-order cone programming-based hierarchy for general POPs which by construction converge faster than the hierarchy presented in Section~\ref{subsec:opt.free}. These hierarchies are based on the recently-introduced concepts of dsos and sdsos polynomials \cite{isos_journal} which we briefly revisit below to keep the presentation self-contained.

\begin{definition}\label{def:dd.sdd}
	A symmetric matrix $M$ is said to be
	\begin{itemize}
		\item \emph{diagonally dominant (dd)} if $M_{ii} \geq \sum_{j \neq i}|M_{ij}|$ for all $i$. 
		\item  \emph{scaled diagonally dominant (sdd)} if there exists a diagonal matrix $D,$ with positive diagonal entries, such that $DAD$ is dd.
	\end{itemize} 
\end{definition}
We have the following implications as a consequence of Gershgorin's circle theorem:
\begin{align}\label{eq:implic.matrices}
\text{M } dd \Rightarrow \text{M } sdd \Rightarrow \text{M } \succeq 0.
\end{align}
Requiring $M$ to be dd (resp. sdd) can be encoded via a linear program (resp. a second-order cone program) (see \cite{isos_journal} for more details). These notions give rise to the concepts of dsos and sdsos polynomials.

\begin{definition}[\cite{isos_journal}]\label{def:dsos.sdsos}
	Let $z(x)=(x_1^d,x_1^{d-1}x_2,\ldots,x_n^d)^T$ be the vector of monomials in $(x_1,\ldots,x_n)$ of degree $d$. A form $p \in H_{n,2d}$ is said to be 
	\begin{itemize}
		\item \emph{diagonally-dominant-sum-of-squares (dsos)} if it admits a representation$$p(x)=z^T(x)Qz(x), \text{ where $Q$ is a dd matrix.}$$
		\item \emph{scaled-diagonally-dominant-sum-of-squares (sdsos)} if it admits a representation$$p(x)=z^T(x)Qz(x), \text{ where $Q$ is a sdd matrix.}$$
	\end{itemize}
\end{definition}

The following implications are a consequence of (\ref{eq:implic.matrices}):
\begin{align}\label{eq:implicsdsos}
p(x) \text{ dsos} \Rightarrow p(x) \text{ sdsos} \Rightarrow p(x) \text{ sos} \Rightarrow p(x) \text{ nonnegative}.
\end{align}
Given the fact that our Gram matrices and polynomials are related to each other via linear equalities, it should be clear that optimizing over the set of dsos (resp. sdsos) polynomials is an LP (resp. SOCP).

We now present our LP and SOCP-based hierarchies for POPs.

\begin{corollary}\label{cor:dsos.sdsos.hierarchy}
	
	Recall the definition of $f_{\gamma}(z)$ as given in (\ref{eq:f.gamma}), with $z \in \mathbb{R}^n$ and $deg(f)=2d$, and let $p_{\gamma,r}$ be as in (\ref{eq:def.p.gamma}). Consider the hierarchy of optimization problems indexed by $r$:
	\begin{equation}\label{eq:dsos.sdsos.hierarchy}
	\begin{aligned}
	l_r\mathrel{\mathop{:}}=&\sup_{\gamma,q} &&\gamma\\
	&\text{s.t. } && p_{\gamma,r}(v,w) \cdot q(v,w) \text{ is s/dsos}\\
	& &&q(v,w) \text{ is s/dsos and of degree $2r^2$}.
	\end{aligned}
	\end{equation}
	Let $m_r=\max_{i=1,\ldots,r} l_i$. Then, $m_r \leq p^*$ for all $r$, $\{m_r\}$ is nondecreasing, and we have $\lim_{r \rightarrow \infty} m_r=p^*$.
\end{corollary}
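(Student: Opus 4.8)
The plan is to deduce Corollary \ref{cor:dsos.sdsos.hierarchy} from Theorem \ref{th:sms.hierarchy} by showing that the dsos/sdsos certificates in \eqref{eq:dsos.sdsos.hierarchy} are ``sandwiched'' between the Poly\'a-type certificate of Theorem \ref{th:sms.hierarchy} and genuine nonnegativity, so that the associated sequence of bounds converges to $p^*$ for the same reasons. First I would recall the chain of implications \eqref{eq:implicsdsos}: a dsos polynomial is sdsos, an sdsos polynomial is sos, and an sos polynomial is nonnegative. The upper bound $m_r \le p^*$ is the easy direction: if $\gamma$ is feasible for \eqref{eq:dsos.sdsos.hierarchy} at level $r$, then $p_{\gamma,r}(v,w)\cdot q(v,w)$ is (s/d)sos, hence nonnegative, and $q$ is (s/d)sos, hence nonnegative; since $q\not\equiv 0$, this forces $p_{\gamma,r}(v,w)\ge 0$ on the set where $q>0$, and by the continuity/density argument already used inside the proof of Theorem \ref{th:sms.hierarchy} (reproducing the passage from \eqref{eq:ineq.tv.screen} to \eqref{eq:ineq.sphere}) one concludes that $f_\gamma(z)-\tfrac1r(\sum_i z_i^2)^d \ge -\tfrac1{2r}$ on $S_z$, whence $f_\gamma$ is positive definite, whence by Theorem \ref{th:slb} $\gamma$ is a strict lower bound on \eqref{eq:POP} and $\gamma<p^*$. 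Monotonicity of $\{m_r\}$ is immediate from its definition as a running maximum.

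The substantive direction is convergence, $\lim_{r\to\infty} m_r = p^*$. Here I would show that any strict lower bound $\gamma$ on \eqref{eq:POP} is feasible for \eqref{eq:dsos.sdsos.hierarchy} at some level $r'$. By Theorem \ref{th:slb}, $f_\gamma$ is positive definite, so the proof of Theorem \ref{th:sms.hierarchy} already produces an $r'=\max\{r_0,\hat r\}$ such that
\[
p_{\gamma,r'}(v,w)\cdot\Bigl(\sum_i v_i^2 + \sum_i w_i^2\Bigr)^{r'^2}
\]
has nonnegative coefficients. The key observation is that a form with nonnegative coefficients is automatically dsos (and hence sdsos): a monomial $v^\alpha w^\beta$ of even degree $2d$ with a nonnegative coefficient is itself a square times a nonnegative constant when all exponents are even, and more generally a nonnegative-coefficient even-degree form can be written as $z^T Q z$ with $Q$ diagonal and nonnegative, which is trivially dd. Actually one must be slightly careful: $p_{\gamma,r'}(v,w)\cdot(\sum_i v_i^2+\sum_i w_i^2)^{r'^2}$ has degree $2d + 2r'^2$, and the monomials appearing need not all have even exponent patterns a priori --- but since it is a polynomial with \emph{nonnegative coefficients} that is also known to be nonnegative (being the product of a pd-plus form and a nonnegative form), and since any polynomial with nonnegative coefficients that is a form of even degree can be written as a sum of nonnegative multiples of squares of monomials after grouping (this needs the standard fact that a nonnegative-coefficient form of even degree lies in the dsos cone --- see the remark below), it is dsos. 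Taking $q(v,w) = (\sum_i v_i^2+\sum_i w_i^2)^{r'^2}$, which is itself dsos (it is a power of a positive definite diagonal form, hence $z^TQz$ with $Q$ dd, in fact $Q$ a nonnegative diagonal after the binomial expansion is regrouped appropriately) and of degree exactly $2r'^2$, we see that $\gamma$ is feasible for the dsos version of \eqref{eq:dsos.sdsos.hierarchy} at level $r'$, hence $l_{r'}\ge\gamma$, hence $m_r\ge\gamma$ for all $r\ge r'$. Letting $\gamma\uparrow p^*$ gives $\liminf_r m_r \ge p^*$, and combined with $m_r\le p^*$ this yields convergence. The sdsos hierarchy converges at least as fast since dsos $\Rightarrow$ sdsos, so feasibility for the dsos program implies feasibility for the sdsos program at the same level.

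The one gap I would need to nail down carefully --- and the step I expect to be the main obstacle --- is the precise claim that a form with nonnegative coefficients is dsos. This is true and standard but requires a short argument: write $p(x) = \sum_\alpha c_\alpha x^\alpha$ with $c_\alpha\ge 0$ and $|\alpha|=2d$. Monomials $x^\alpha$ with all $\alpha_i$ even are perfect squares $(x^{\alpha/2})^2$. Monomials with some odd exponent do not appear in $z^T Q z$ on the diagonal, but they can be produced from off-diagonal entries; the cleanest route is to observe that for any monomial $x^\alpha$ of even total degree there exist monomials $u,v$ of degree $d$ with $uv = x^\alpha$ and then $x^\alpha = \tfrac12\bigl((u+v)^2 - u^2 - v^2\bigr)$ is not manifestly helpful --- instead I would use the simpler fact, stated already in \cite{isos_journal}, that a nonnegative-coefficient even-degree form is dsos because it can be written with a diagonal (hence dd) Gram matrix once we allow the monomial vector $z(x)$ to include all monomials of degree $d$ and note that AM-GM lets us bound the needed off-diagonal-originating monomials. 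To avoid reproving this, I would simply cite the relevant statement from \cite{isos_journal} (dsos polynomials contain all nonnegative-coefficient forms), and similarly cite that powers of $\sum_i x_i^2$ are dsos. With those two citations in hand the corollary follows in a few lines, and the body of the proof is just the reduction to Theorem \ref{th:sms.hierarchy} outlined above.
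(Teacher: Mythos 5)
Your overall strategy is the same as the paper's: reduce to Theorem \ref{th:sms.hierarchy} by taking $q(v,w)=(\sum_i v_i^2+\sum_i w_i^2)^{r'^2}$ and arguing that membership in $Pol_{n,2d}^{r'}$ (nonnegative coefficients) implies the dsos/sdsos constraints of (\ref{eq:dsos.sdsos.hierarchy}); the upper bound and monotonicity parts of your argument are fine. The genuine gap is in the step you yourself flag as the main obstacle. The statement you propose to cite --- ``any form of even degree with nonnegative coefficients is dsos'' --- is false in that generality: $x^3y$ is a quartic form with nonnegative coefficients, yet it is not even nonnegative (take $x=1$, $y=-1$), so it certainly is not dsos, and no AM-GM regrouping can rescue a claim of that scope. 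So your fallback of ``simply citing the relevant statement from \cite{isos_journal}'' does not close the argument, and your worry that ``the monomials appearing need not all have even exponent patterns a priori'' is precisely the point you need to resolve rather than sidestep.

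The resolution, which is what the paper's one-line proof uses, is that the forms arising here are \emph{even} forms in the sense of Section \ref{sec:opt.free.LP.SOCP}: every variable occurs with an even power in every monomial. Indeed, $p_{\gamma,r}(v,w)$ in (\ref{eq:def.p.gamma}) is built entirely from $v_i^2-w_i^2$, $(v_i^2-w_i^2)^2$, and $v_i^4+w_i^4$, hence is even in each $v_i$ and $w_i$, and multiplying by the even form $(\sum_i v_i^2+\sum_i w_i^2)^{r^2}$ preserves this property. Consequently every monomial with a nonzero coefficient has the shape $c\,v^{2\alpha}w^{2\beta}=c\,(v^{\alpha}w^{\beta})^2$ with $c\geq 0$, so the whole product admits a Gram representation $z(v,w)^TQz(v,w)$ with $Q$ diagonal and nonnegative; a nonnegative diagonal matrix is trivially dd and sdd, so the product is dsos and sdsos, and the same observation shows $q(v,w)=(\sum_i v_i^2+\sum_i w_i^2)^{r'^2}$ is dsos/sdsos of degree $2r'^2$. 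With that corrected lemma, your reduction to Theorem \ref{th:sms.hierarchy} goes through exactly as you outline, and your proof coincides with the paper's.
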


\begin{proof}
	This is an immediate consequence of the fact that any even form $p \in H_{n,2d}$ with nonnegative coefficients can be written as $p(x)=z(x)^TQz(x)$ where $Q$ is diagonal and has nonnegative (diagonal) entries. As such a $Q$ is  dd (and also sdd), we conclude that $p$ is dsos (and also sdsos). The corollary then follows from Theorem~\ref{th:sms.hierarchy}.
\end{proof}

Note that similarly to our previous hierarchies, one must proceed by bisection on $\gamma$ to solve the level $r$ of the hierarchy. At each step of the hierarchy, we solve a linear program (resp. second-order cone program) that searches for the coefficients of $q$ that make $q$ dsos (resp. sdsos) and $p_{\gamma,r} \cdot q$ dsos (resp. sdsos).

There is a trade-off between the hierarchies developed in this subsection and the one developed in the previous subsection: the hierarchy of Section \ref{subsec:opt.free} is optimization-free whereas those of Section \ref{subsec:lp.socp} use linear or second-order cone programming. Hence the former hierarchy is faster to run at each step. However, the latter hierarchies could potentially take fewer levels to converge. This is similar to the trade-off observed between the hierarchies presented in Theorems \ref{th:reznick.hierarchy} and \ref{th:artin.hierarchy}.

\section{Open problems}\label{sec:conclusion}

To conclude, we present two open problems spawned by the writing of this chapter. The first one concerns the assumptions needed to construct our hierarchies.

\paragraph{Open problem 1} Theorems \ref{th:slb} and \ref{th:hierarchy} require that the feasible set $S$ of the POP given in (\ref{eq:POP}) be contained in a ball of radius $R$. Can these theorems be extended to the case where there is no compactness assumption on $S$?

\indent The second open problem is linked to the Artin and Reznick cones presented in Definition \ref{def:reznick.artin.cones}. 
\paragraph{Open problem 2} As mentioned before, Reznick cones $R_{n,2d}^r$ are convex for all $r$. We are unable to prove however that Artin cones $A_{n,2d}^r$ are convex (even though they satisfy properties (a)-(d) of Theorem \ref{th:hierarchy} like Reznick cones do). Are Artin cones convex for all $r$? We know that they are convex for $r=0$ and for $r$ large enough as they give respectively the sos and psd cones (see \cite{lombardi2014elementary} for the latter claim). However, we do not know the answer already for $r=1$.

%\bibliographystyle{abbrv}
%\bibliography{ch-basis_pursuit/pablo_amirali,ch-basis_pursuit/elib}

%\bibliographystyle{plain}
%\begin{thebibliography}{10}
%
%\bibitem{am}
%A. A. Ahmadi and A. Majumdar. 
%DSOS and SDSOS optimization: More tractable alternatives to SOS optimization.
%{\em Manuscript.}
%
%\bibitem{cl}
%  M. D. Choi and T. Y. Lam, Extremal positive semidefinite forms, {\em Math. Ann.} {\bf 231} 1--8 (1977).
%  
%\bibitem{dp}
%E. de Klerk and D. sechnik.  Approximation of the stability number of a graph via copositive programming. {\em SIAM Journal on Optimization}, {\bf 12}(4), 875-- 892 (2002).
%
%\bibitem{mot}
%T.S. Motzkin, The arithmetic-geometric inequality. Proc. Symposium on Inequalities (ed. O. Shisha), Academic Press, New York, 1967, pp. 205-–224.
%
%\bibitem{mk}
%K. G. Murty and S. N. Kabadi. Some NP-complete problems in quadratic and nonlinear
%programming. {\em Mathematical Programming}, {\bf 39} 117--129 (1987).
%  
%\bibitem{pp}
%P. A. Parrilo. Semidefinite programming relaxations for semialgebraic problems. Mathematical Programming {\bf 96} 293--320 (2003).
%\end{thebibliography}

\part{Optimizing over Convex Polynomials}

\chapter{DC Decomposition of Nonconvex Polynomials with Algebraic Techniques}\label{ch:dc}

\section{Introduction}\label{sec:intro}

%In this paper, we focus on a class of optimization problems called difference of convex (dc) programs. This corresponds to problems of the type

A difference of convex (dc) program is an optimization problem of the form

\begin{equation}\label{eq:dcP}
\begin{aligned}
&\min f_0(x)\\
&\text{s.t. } f_i(x)\leq 0, i=1,\ldots,m,
\end{aligned}
\end{equation}
where $f_0,\ldots,f_m$ are difference of convex functions; i.e., 
\begin{equation} \label{eq:dceq}
\begin{aligned} 
&f_i(x)=g_i(x)-h_i(x),i=0,\ldots,m, \\
%&g_i:\mathbb{R}^n \rightarrow \mathbb{R},~h_i:\mathbb{R}^n \rightarrow \mathbb{R} \text{ are convex functions.}
\end{aligned}
\end{equation} 
and $g_i:\mathbb{R}^n \rightarrow \mathbb{R},~h_i:\mathbb{R}^n \rightarrow \mathbb{R}$ are convex functions. The class of functions that can be written as a difference of convex functions is very broad containing for instance all functions that are twice continuously differentiable~\cite{Hartman}, \cite{hiriart}. Furthermore, any continuous function over a compact set is the uniform limit of a sequence of dc functions; see, e.g., reference \cite{Horst} where several properties of dc functions are discussed.

%As shown in \cite{Hartman}, the class of functions that can be written as a difference of convex functions is very broad, containing in particular all functions that are twice continuously differentiable. Furthermore, any continuous function over a compact set is the uniform limit of a sequence of dc functions. A more detailed overview of properties of dc functions can be found in \cite{Horst}.

Optimization problems that appear in dc form arise in a wide range of applications. Representative examples from the literature include machine learning and statistics (e.g., kernel selection \cite{Argyriou}, feature selection in support vector machines \cite{le2008dc}, sparse principal component analysis \cite{lipp2014variations}, and reinforcement learning \cite{piot2014difference}), operations research (e.g., packing problems and production-transportation problems \cite{Tuy}), communications and networks \cite{pang},\cite{lou2015}, circuit design \cite{lipp2014variations}, finance and game theory \cite{gulpinar2010}, and computational chemistry \cite{floudas2013}. We also observe that dc programs can encode constraints of the type $x \in \{0,1\}$ by replacing them with the dc constraints $0 \leq x \leq 1, x-x^2 \leq 0$. This entails that any binary optimization problem can in theory be written as a dc program, but it also implies that dc problems are hard to solve in general.
%compressed sensing \cite{lou2015}, mode switching \cite{feng2014}

As described in \cite{tao1997}, there are essentially two schools of thought when it comes to solving dc programs. The first approach is global and generally consists of rewriting the original problem as a concave minimization problem (i.e., minimizing a concave function over a convex set; see \cite{tuy1988}, \cite{tuy1987}) or as a reverse convex problem (i.e., a convex problem with a linear objective and one constraint of the type $h(x)\geq 0$ where $h$ is convex). We refer the reader to \cite{Tuy86} for an explanation on how one can convert a dc program to a reverse convex problem, and to \cite{HJ80b} for more general results on reverse convex programming. These problems are then solved using branch-and-bound or cutting plane techniques (see, e.g., \cite{Tuy} or \cite{Horst}). The goal of these approaches is to return global solutions but their main drawback is scalibility. The second approach by contrast aims for local solutions while still exploiting the dc structure of the problem by applying the tools of convex analysis to the two convex components of a dc decomposition. One such algorithm is the Difference of Convex Algorithm (DCA) introduced by Pham Dinh Tao in \cite{PDT76} and expanded on by Le Thi Hoai An and Pham Dinh Tao. This algorithm exploits the duality theory of dc programming~\cite{toland79} and is popular because of its ease of implementation, scalability, and ability to handle nonsmooth problems.

%contrast is local and applies tools from convex analysis to the two convex components of a dc decomposition. One such algorithm is the Difference of Convex Algorithm (DCA), introduced by Pham Dinh Tao in \cite{PDT76} and expanded on by Le Thi Hoai An and Phan Dinh Tao.

%, which is based on the concept of dc duality \cite{toland79}. 
%Its simplified form \cite{tao1997} constructs a primal program $$(P) \qquad \alpha=\inf \{f(x)\mathrel{\mathop{:}}=g(x)-h(x) : x \in X\}$$ and a dual program $$(D) \qquad \alpha=\inf\{h^*(y)-g^*(y): y \in Y\},$$ where $g^*(y) = \sup\{\langle x, y\rangle - g(x) : x \in X\}$ and similarly for $h^*(y)$. From (P) and (D) two sequences of convex programs are constructed denoted by ($P_k$) and ($D_k$), generating sequences of vectors $\{x_k\}_k$ and $\{y_k\}_k$ in the following way
%\begin{align*}
%&y_k=\text{argmin} \{h^*(y)-[g^*(y^{k-1})+\langle x^k,y-y^{k-1} \rangle] : y \in Y \} \qquad (D_k)\\
%&x_{k+1}=\text{argmin} \{g(x)-[h(x^k)+ \langle x-x^k, y^k \rangle] : x \in X\} \qquad (P_k).
%\end{align*}
%In other words, ($P_k$) (resp. $D_k$) is obtained from ($P$) (resp. ($D$)) by replacing $h$ (resp. $g^*$) with an affine minorization $y_k \in \partial h(x^k)$ (resp. $x^k \in \partial g^*(y^{k-1})$).
%The solutions obtained using DCA are generally local solutions, and the functions $g_i$ and $h_i$ in (\ref{eq:dceq}) are only required to be lower semi-continuous convex.

In the case where the functions $g_i$ and $h_i$ in (\ref{eq:dceq}) are differentiable, DCA reduces to another popular algorithm called the Convex-Concave Procedure (CCP) \cite{Lanckriet}. The idea of this technique is to simply replace the concave part of $f_i$ (i.e., $-h_i$) by a linear overestimator as described in Algorithm \ref{alg:CCP}. By doing this, problem (\ref{eq:dcP}) becomes a convex optimization problem that can be solved using tools from convex analysis. The simplicity of CCP has made it an attractive algorithm in various areas of application. These include statistical physics (for minimizing Bethe and Kikuchi free energy functions \cite{YR}), machine learning \cite{lipp2014variations},\cite{Fung},\cite{Chapelle}, and image processing \cite{Wang}, just to name a few. In addition, CCP enjoys two valuable features: (i) if one starts with a feasible solution, the solution produced after each iteration remains feasible, and (ii) the objective value improves in every iteration, i.e., the method is a descent algorithm.
%
%There are several immediate advantages to CCP. If one starts with a feasible point, then the solution at every iteration is feasible. CCP is also a descent algorithm as the objective value improves at each step.
The proof of both claims readily comes out of the description of the algorithm and can be found, e.g., in \cite[Section 1.3.]{lipp2014variations}, where several other properties of the method are also laid out. Like many iterative algorithms, CCP relies on a stopping criterion to end. This criterion can be chosen amongst a few alternatives. For example, one could stop if the value of the objective does not improve enough, or if the iterates are too close to one another, or if the norm of the gradient of $f_0$ gets small. 

\begin{algorithm}[H]
	\caption{ CCP}
	\label{alg:CCP}
	\begin{algorithmic}[1]
		\Require $x_0,~ f_i=g_i-h_i, i=0,\ldots,m$
		\State $k\leftarrow 0$
		\While{stopping criterion not satisfied}
		\State Convexify: $f_i^{k}(x)\mathrel{\mathop{:}}=g_i(x)-(h_i(x_k)+\nabla h_i(x_k)^T(x-x_k)),~ i=0,\ldots,m$
		\State Solve convex subroutine: $\min f_0^k(x)$, s.t. $f_i^k(x) \leq 0, i=1,\ldots,m$
		\State $x_{k+1}\mathrel{\mathop{:}}= \underset{f_i^{k}(x) \leq 0}{\text{argmin}} f_0^k(x)$
		\State $k \leftarrow k+1$
		\EndWhile
		\Ensure $x_k$
	\end{algorithmic}
\end{algorithm}

%The formulation we use of CCP is similar to the one used by Yuille and Rangarajan in \cite{YR}.
Convergence results for CCP can be derived from existing results found for DCA, since CCP is a subcase of DCA as mentioned earlier. But CCP can also be seen as a special case of the family of majorization-minimization (MM) algorithms. Indeed, the general concept of MM algorithms is to iteratively upperbound the objective by a convex function and then minimize this function, which is precisely what is done in CCP. This fact is exploited by Lanckriet and Sriperumbudur in \cite{Lanckriet} and Salakhutdinov et al. in \cite{Salak} to obtain convergence results for the algorithm, showing, e.g., that under mild assumptions, CCP converges to a stationary point of the optimization problem (\ref{eq:dcP}).

\subsection{Motivation and organization of the chapter}

Although a wide range of problems already appear in dc form (\ref{eq:dceq}), such a decomposition is not always available. In this situation, algorithms of dc programming, such as CCP, generally fail to be applicable. Hence, the question arises as to whether one can (efficiently) compute a difference of convex decomposition (dcd) of a given function. This challenge has been raised several times in the literature. For instance, Hiriart-Urruty~\cite{hiriart} states ``All the proofs [of existence of dc decompositions] we know are ``constructive'' in the sense that they indeed yield [$g_i$] and [$h_i$] satisfying (\ref{eq:dceq}) but could hardly be carried over [to] computational aspects''. As another example, Tuy~\cite{Tuy} writes: ``The dc structure of a given problem is not always apparent or easy to disclose, and even when it is known explicitly, there remains for the problem solver the hard task of bringing this structure to a form amenable to computational analysis.''

%As one can see from the algorithm, CCP requires input functions to be in form (\ref{eq:dceq}), i.e., the input functions are given as a difference of convex functions. Though this is naturally the case for many applications where CCP is used, can we obtain such a decomposition efficiently if this isn't the case?

Ideally, we would like to have not just the ability to find one dc decomposition, but also to optimize over the set of valid dc decompositions. Indeed, dc decompositions are not unique: Given a decomposition $f=g-h$, one can produce infinitely many others by writing $f=g+p-(h+p),$ for any convex function $p$. This naturally raises the question whether some dc decompositions are better than others, for example for the purposes of CCP. 

%Once again, we will consider this problem for polynomials.

In this chapter we consider these decomposition questions for multivariate polynomials. Since polynomial functions are finitely parameterized by their coefficients, they provide a convenient setting for a computational study of the dc decomposition questions. Moreover, in most practical applications, the class of polynomial functions is large enough for modeling purposes as polynomials can approximate any continuous function on compact sets with arbitrary accuracy. It could also be interesting for future research to explore the potential of dc programming techniques for solving the polynomial optimization problem. This is the problem of minimizing a multivariate polynomial subject to polynomial inequalities and is currently an active area of research with applications throughout engineering and applied mathematics. In the case of quadratic polynomial optimization problems, the dc decomposition approach has already been studied~\cite{Bomze},\cite{quadratic_dc}.

With these motivations in mind, we organize the chapter as follows. In Section~\ref{sec:undominated}, we start by showing that unlike the quadratic case, the problem of testing if two given polynomials $g,h$ form a valid dc decomposition of a third polynomial $f$ is NP-hard (Proposition~\ref{prop:dcd.NPhard}). We then investigate a few candidate optimization problems for finding dc decompositions that speed up the convex-concave procedure. In particular, we extend the notion of an undominated dc decomposition from the quadratic case~\cite{Bomze} to higher order polynomials. We show that an undominated dcd always exists (Theorem~\ref{thm:undom.dcd}) and can be found by minimizing a certain linear function of one of the two convex functions in the decomposition. However, this optimization problem is proved to be NP-hard for polynomials of degree four or larger (Proposition~\ref{prop:undominated.NPhard}). To cope with intractability of finding optimal dc decompositions, we propose in Section~\ref{sec:ConvRelax} a class of algebraic relaxations that allow us to optimize over subsets of dcds. These relaxations will be based on the notions of \emph{dsos-convex, sdsos-convex,} and \emph{sos-convex} polynomials (see Definition~\ref{def:alternatives.convexity}), which respectively lend themselves to {\emph{linear, second order cone,}} and \emph{semidefinite programming}. In particular, we show that a dc decomposition can always be found by linear programming (Theorem~\ref{thm:diff.dsos}). Finally, in Section~\ref{sec:numerical.results}, we perform some numerical experiments to compare the scalability and performance of our different algebraic relaxations.

\section{Polynomial dc decompositions and their complexity} \label{sec:undominated}

To study questions around dc decompositions of polynomials more formally, let us start by introducing some notation. A multivariate \emph{polynomial} $p(x)$ in variables
$x\mathrel{\mathop:}=(x_1,\ldots,x_n)^T$ is a function from
$\mathbb{R}^n$ to $\mathbb{R}$ that is a finite linear combination
of monomials:
\begin{equation}
p(x)=\sum_{\alpha}c_\alpha x^\alpha=\sum_{\alpha_1, \ldots,
	\alpha_n} c_{\alpha_1,\ldots,\alpha_n} x_1^{\alpha_1} \cdots
x_n^{\alpha_n} ,
\end{equation}
where the sum is over $n$-tuples of nonnegative
integers $\alpha_i$. 
The \emph{degree} of a monomial $x^\alpha$ is equal to $\alpha_1 +
\cdots + \alpha_n$. The degree of a polynomial $p(x)$ is defined to
be the highest degree of its component monomials. A simple counting
argument shows that a polynomial of degree $d$ in $n$ variables has
$\binom{n+d}{d}$ coefficients. A \emph{homogeneous polynomial} (or a
\emph{form}) is a polynomial where all the monomials have the same
degree. An $n$-variate form $p$ of degree $d$ has $\binom{n+d-1}{d}$ coefficients. {We denote the set of polynomials (resp. forms) of degree $2d$ in $n$ variables by $\tilde{\mathcal{H}}_{n,2d}$ (resp. $\mathcal{H}_{n,2d}$). }

Recall that a symmetric matrix $A$ is positive semidefinite (psd) if $x^TAx \geq 0$ for all $x \in \mathbb{R}^n$; this will be denoted by the standard notation $A \succeq 0.$ Similarly, a polynomial $p(x)$ is said to be \emph{nonnegative} or
positive semidefinite if $p(x)\geq0$ for all $x\in\mathbb{R}^n$.
For a polynomial $p$, we denote its Hessian by $H_p$. The second order characterization of convexity states that $p$ is convex if and only if $H_p(x) \succeq 0$, $\forall x \in\mathbb{R}^n.$

\begin{definition}
	We say a polynomial $g$ is a \emph{dcd} of a polynomial $f$ if $g$ is convex and $g-f$ is convex. 
\end{definition}
Note that if we let $h\mathrel{\mathop{:}}=g-f$, then indeed we are writing $f$ as a difference of two convex functions $f=g-h.$
It is known that any polynomial $f$ has a (polynomial) dcd $g$. A proof of this is given, e.g., in \cite{Wang}, or in Section \ref{subsec:diffconv}, where it is obtained as corollary of a stronger theorem (see Corollary \ref{cor:SDSOSetc}). By default, all dcds considered in the sequel will be of even degree. Indeed, if $f$ is of even degree $2d$, then it admits a dcd $g$ of degree $2d$. If $f$ is of odd degree $2d-1$, it can be viewed as a polynomial $\tilde{f}$ of even degree $2d$ with highest-degree coefficients which are 0. The previous result then remains true, and $\tilde{f}$ admits a dcd of degree $2d$. 

Our results show that such a decomposition can be found efficiently (e.g., by linear programming); see Theorem \ref{th:mainth}. Interestingly enough though, it is not easy to check if a candidate $g$ is a valid dcd of $f$.

\begin{proposition}\label{prop:dcd.NPhard}
	Given two $n$-variate polynomials $f$ and $g$ of degree 4, with $f\neq g$, it is strongly NP-hard \footnote{For a
		strongly NP-hard problem, even a pseudo-polynomial time algorithm cannot exist unless P=NP \cite{GareyJohnson_Book}.} to determine whether $g$ is a dcd of $f$.\footnote{If we do not add the condition on the input that $f\neq g$, the problem would again be NP-hard (in fact, this is even easier to prove). However, we believe that in any interesting instance of this question, one would have $f\neq g$.}
\end{proposition}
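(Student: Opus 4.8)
The plan is to reduce from the problem of deciding whether a quartic form is convex, which by the main theorem of \cite{ahmadi2013complete} is strongly NP-hard (its hardness proof produces quartic forms whose coefficients have polynomially bounded magnitude, so the qualifier ``strong'' is available). The key observation is that, by definition, $g$ is a dcd of $f$ if and only if \emph{both} $g$ and $g-f$ are convex. The idea is to choose $f$ so that the second requirement, ``$g-f$ convex'', becomes vacuous, while the first requirement, ``$g$ convex'', encodes exactly the convexity of the given quartic.

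Concretely, I would fix the quartic form $r(x)\mathrel{\mathop{:}}=\left(\sum_{i=1}^n x_i^2\right)^2$. A direct computation gives, with $s\mathrel{\mathop{:}}=\sum_k x_k^2$,
\begin{equation*}
H_r(x)=4sI+8xx^T,
\end{equation*}
which is positive semidefinite for every $x$ (and positive definite for $x\neq 0$), so $r$ is convex. Given an instance of the quartic-form convexity problem, namely an $n$-variate form $p$ of degree $4$: if $p$ is a scalar multiple of $r$, then convexity of $p$ is trivial to decide and we are done; otherwise set
\begin{equation*}
g\mathrel{\mathop{:}}=p,\qquad f\mathrel{\mathop{:}}=p-r .
\end{equation*}
Both $f$ and $g$ are polynomials of degree $4$ (in fact forms of degree exactly $4$, since $p$ is not a multiple of $r$ and hence $p-r\neq 0$), and $f\neq g$ because $r\neq 0$; thus this is a legitimate instance of the decision problem in the proposition. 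The construction is clearly carried out in polynomial time and only changes coefficients of $p$ by the coefficients of $r$, which are $0$, $1$, or $2$, so the reduction preserves strong NP-hardness.

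It remains to check correctness: $g$ is a dcd of $f$ if and only if $p$ is convex. Indeed, $g$ is a dcd of $f$ iff $g$ is convex and $g-f$ is convex; here $g=p$, so the first condition is exactly ``$p$ convex'', while the second condition reads ``$g-f=r$ convex'', which holds unconditionally by the Hessian computation above. Hence $g$ is a dcd of $f$ precisely when $p$ is convex, and an algorithm for the dcd-verification problem would decide convexity of quartic forms, giving the claimed strong NP-hardness. The only real obstacle is not in the (short) reduction itself but in correctly invoking the hammer of \cite{ahmadi2013complete} and making sure the constructed instance stays within degree $4$ and keeps coefficient magnitudes polynomially controlled, so that strong NP-hardness genuinely transfers; the rest is a one-line use of the definition of a dc decomposition.
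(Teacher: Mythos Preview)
Your reduction is correct and, in fact, a bit more direct than the paper's. Both proofs exploit the same observation---that ``$g$ is a dcd of $f$'' consists of two convexity checks, one of which can be made trivially true so that the other encodes a hard problem---but they place the hardness on opposite sides. The paper reduces from nonnegativity of biquadratic forms: it invokes the construction from \cite{NPhard_Convexity_MathProg} producing a quartic $q$ that is convex iff the biquadratic is nonnegative, and then takes $f=r$ and $g=q+r$ (with $r$ a specific convex quartic), so that ``$g$ convex'' holds automatically while ``$g-f=q$ convex'' is the hard part. You instead treat the NP-hardness of quartic convexity as a black box and take $g=p$, $f=p-r$, so that ``$g-f=r$ convex'' is automatic and ``$g=p$ convex'' is the hard part. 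Your version is shorter because it avoids reproducing the internal structure of the convexity-hardness proof; the paper's version, by going one layer deeper to biquadratic nonnegativity, makes the strong NP-hardness claim slightly more self-contained.

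One small bibliographic point: the result you invoke---strong NP-hardness of testing convexity of quartic forms---is the main theorem of \cite{NPhard_Convexity_MathProg}, not of \cite{ahmadi2013complete} (the latter is the characterization of when convexity and sos-convexity coincide). The mathematical content of your argument is unaffected.
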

\begin{proof}
	We will show this via a reduction from the problem of testing nonnegativity of biquadratic forms, which is already known to be strongly NP-hard \cite{Ling_et_al_Biquadratic}, \cite{NPhard_Convexity_MathProg}. A biquadratic form $b(x,y)$ in the variables $x=(x_1,\ldots,x_n)^T$ and $y=(y_1,\ldots,y_m)^T$ is a quartic form that can be written as $$b(x;y)=\sum_{i\leq j, k\leq l} a_{ijkl} x_ix_jy_ky_l.$$
	Given a biquadratic form $b(x;y)$, define the $n \times n$ polynomial matrix $C(x,y)$ by setting $[C(x,y)]_{ij} \mathrel{\mathop{:}}=\frac{\partial b(x;y)}{\partial x_i \partial y_j},$  and let $\gamma$ be the largest coefficient in absolute value of any monomial present in some entry of $C(x,y)$. Moreover, we define 
	$$r(x;y)\mathrel{\mathop{:}}=\frac{n^2 \gamma}{2}\sum_{i=1}^n x_i^4+\sum_{i=1}^{n} y_i^4+\sum_{1\leq i<j\leq n} x_i^2x_j^2+\sum_{1\leq i<j \leq n} y_i^2y_j^2.$$ 
	It is proven in \cite[Theorem 3.2.]{NPhard_Convexity_MathProg} that $b(x;y)$ is nonnegative if and only if $$q(x,y) \mathrel{\mathop{:}}=b(x;y)+r(x,y)$$ is convex. We now give our reduction. Given a biquadratic form $b(x;y)$, we take 
	$g=q(x,y)+r(x,y)$ and $f=r(x,y)$. If $b(x;y)$ is nonnegative, from the theorem quoted above, $g-f=q$ is convex. Furthermore, it is straightforward to establish that $r(x,y)$ is convex, which implies that $g$ is also convex. This means that $g$ is a dcd of $f$. If $b(x;y)$ is not nonnegative, then we know that $q(x,y)$ is not convex. This implies that $g-f$ is not convex, and so $g$ cannot be a dcd of $f$.  
\end{proof}

Unlike the quartic case, it is worth noting that in the quadratic case, it is easy to test whether a polynomial $g(x)=x^TGx$ is a dcd of $f(x)=x^TFx$. Indeed, this amounts to testing whether $F \succeq 0$ and $G -F \succeq 0$ which can be done in $O(n^3)$ time. 

As mentioned earlier, there is not only one dcd for a given polynomial $f$, but an infinite number. Indeed, if $f=g-h$ with $g$ and $h$ convex then any convex polynomial $p$ generates a new dcd $f=(g+p)-(h+p)$. It is natural then to investigate if some dcds are better than others, e.g., for use in the convex-concave procedure.

Recall that the main idea of CCP is to upperbound the non-convex function $f=g -h$ by a convex function $f^k$. These convex functions are obtained by linearizing $h$ around the optimal solution of the previous iteration. Hence, a reasonable way of choosing a good dcd would be to look for dcds of $f$ that minimize the curvature of $h$ around a point. Two natural formulations of this problem are given below. The first one attempts to minimize the \emph{average}\footnote{ Note that $\mbox{Tr } H_h(\bar{x})$ (resp. $\lambda_{\max} H_h(\bar{x})$) gives the average (resp. maximum) of $y^TH_h(\bar{x})y$ over $\{y~|~||y||=1\}$.} curvature of $h$ at a point $\bar{x}$ over all directions:
\begin{equation} \label{eq:trace.point}
\begin{aligned}
&\min_{g} \mbox{Tr } H_h(\bar{x})\\
&\text{s.t. } f=g-h, g,h \text{ convex}.
\end{aligned}
\end{equation}
The second one attempts to minimize the \emph{worst-case}\footnotemark[\value{footnote}] curvature of $h$ at a point $\bar{x}$ over all directions:
\begin{equation} \label{eq:lambda.max.point}
\begin{aligned}
&\min_{g} \lambda_{\max} H_h(\bar{x})\\
&\text{s.t. } f-g-h, g, h \text{ convex}.
\end{aligned}
\end{equation}
A few numerical experiments using these objective functions will be presented in Section \ref{subsec:scalibility}. 

Another popular notion that appears in the literature and that also relates to finding dcds with minimal curvature is that of \emph{undominated dcds.} These were studied in depth by Bomze and Locatelli in the quadratic case \cite{Bomze}. We extend their definition to general polynomials here. 

\begin{definition}
	Let $g$ be a dcd of $f$. A dcd $g'$ of $f$ is said to dominate $g$ if $g-g'$ is convex and nonaffine.
	A dcd $g$ of $f$ is \emph{undominated} if no dcd of $f$ dominates $g$.
\end{definition}
Arguments for chosing undominated dcds can be found in \cite{Bomze}, \cite[Section 3]{Dur}. One motivation that is relevant to CCP appears in Proposition \ref{th:CCPundom}\footnote{A variant of this proposition in the quadratic case appears in \cite[Proposition 12]{Bomze}.}. Essentially, the proposition shows that if we were to start at some initial point and apply one iteration of CCP, the iterate obtained using a dc decomposition $g$ would always beat an iterate obtained using a dcd dominated by $g$.
%
%The next theorem provides intuition as to why it is relevant to use undominated dcds for CCP. Indeed, when starting with the same iterate, the next iterate obtained using an undominated dcd $g$ always gives tighter bounds than the next iterate obtained using a dcd dominated by $g$.
%
%%In the next theorem, we consider for simplicity the unconstrained version of CCP: given $x_0$ and a dcd $g$ of $f$, the goal is to minimize $f$ over $\mathbb{R}^n$. This is done as in Algorithm \ref{alg:CCP}. At each iteration, we construct the convexified version of $f$, $$f_g^{k}(x)=g(x)-(h(x_k)+\nabla h(x_k)^T(x-x_k)$$ and take $x_{k+1}\mathrel{\mathop{:}}=\text{argmin} f_g^{k}(x).$
\begin{proposition} \label{th:CCPundom}
	Let $g$ and $g'$ be two dcds of $f$. Define the convex functions $h\mathrel{\mathop{:}}=g-f$ and $h'\mathrel{\mathop{:}}=g'-f$, and assume that $g'$ dominates $g$. For a point $x_0$ in $\mathbb{R}^n$, define the convexified versions of $f$
	\begin{align*}
	f_g(x)\mathrel{\mathop{:}}=g(x)-(h(x_0)+\nabla h(x_0)^T(x-x_0)),\\
	f_{g'}(x)\mathrel{\mathop{:}}=g'(x)-(h'(x_0)+\nabla h'(x_0)^T(x-x_0)).
	\end{align*}  
	Then, we have $$f_g'(x) \leq f_{g}(x), \forall x.$$
\end{proposition}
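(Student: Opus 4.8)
Looking at this statement, I want to show that $f_{g'}(x) \leq f_g(x)$ for all $x$, given that $g'$ dominates $g$ (meaning $g - g'$ is convex and nonaffine).

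The plan is to work directly with the definitions and exploit the key relation between the two decompositions. First I would observe that since $h = g - f$ and $h' = g' - f$, we have $h - h' = g - g'$, which is exactly the convex nonaffine function witnessing that $g'$ dominates $g$. Call this function $\phi := g - g' = h - h'$; it is convex. The goal inequality $f_{g'}(x) \leq f_g(x)$ unpacks to
\begin{align*}
g'(x) - h'(x_0) - \nabla h'(x_0)^T(x - x_0) \leq g(x) - h(x_0) - \nabla h(x_0)^T(x-x_0).
\end{align*}
Rearranging, this is equivalent to
\begin{align*}
0 \leq \big(g(x) - g'(x)\big) - \big(h(x_0) - h'(x_0)\big) - \big(\nabla h(x_0) - \nabla h'(x_0)\big)^T(x - x_0),
\end{align*}
i.e., $0 \leq \phi(x) - \phi(x_0) - \nabla \phi(x_0)^T(x - x_0)$.

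The main step is then to recognize that this last inequality is precisely the first-order (gradient) characterization of convexity of $\phi$: a differentiable function $\phi$ is convex if and only if $\phi(x) \geq \phi(x_0) + \nabla\phi(x_0)^T(x-x_0)$ for all $x, x_0$. Since $\phi = g - g'$ is convex by hypothesis (it is a polynomial, hence differentiable, so the gradient inequality applies), the inequality holds for every $x$, which is exactly what we want. I would also note that $\phi$ being nonaffine is not needed for the inequality itself (only convexity is used); nonaffineness is what makes the domination strict and makes the proposition interesting, and one could remark that when $\phi$ is nonaffine the inequality is strict for at least some $x$.

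I don't anticipate a genuine obstacle here — the whole proof is essentially the bookkeeping of unwinding $f_g$ and $f_{g'}$ and spotting that the difference is the supporting-hyperplane inequality for the convex function $g - g'$. The only mild care needed is making sure the gradient terms combine correctly (the linearizations are taken at the same point $x_0$, so $\nabla h(x_0) - \nabla h'(x_0) = \nabla(h - h')(x_0) = \nabla\phi(x_0)$ by linearity of the gradient), and that we are invoking the standard first-order convexity condition cited earlier in the chapter in the form $H_\phi(x) \succeq 0 \iff \phi$ convex $\iff$ gradient inequality. So the write-up will be: define $\phi = g - g'$, note convexity, expand $f_g - f_{g'}$, identify it with the Bregman-type expression $\phi(x) - \phi(x_0) - \nabla\phi(x_0)^T(x-x_0) \geq 0$, and conclude.
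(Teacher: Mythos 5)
Your proof is correct and is essentially the same as the paper's: both expand $f_g - f_{g'}$, identify it as $c(x) - c(x_0) - \nabla c(x_0)^T(x - x_0)$ for the convex function $c = g - g'$ (your $\phi$), and conclude via the first-order characterization of convexity. Your added remark that nonaffineness is not needed for the (non-strict) inequality is accurate but does not change the argument.
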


\begin{proof}
	As $g'$ dominates $g$, there exists a nonaffine convex polynomial $c$ such that $c=g-g'$.
	We then have $g'=g-c$ and $h'=h-c$, and
	\begin{equation*}
	\begin{aligned}
	f_g'(x)&=g(x)-c(x)-h(x_0)+c(x_0)-\nabla h(x_0)^T(x-x_0)+\nabla c(x_0)^T(x-x_0)\\
	&= f_{g}(x)-(c(x)-c(x_0)-\nabla c(x_0)^T (x-x_0)).\\
	\end{aligned}
	\end{equation*}
	The first order characterization of convexity of $c$ then gives us  
	\begin{align*}
	f_{g'}(x) \leq f_{g}(x), \forall x. 
	\end{align*}
\end{proof}

In the quadratic case, it turns out that an optimal solution to (\ref{eq:trace.point}) is an undominated dcd \cite{Bomze}. A solution given by (\ref{eq:lambda.max.point}) on the other hand is not necessarily undominated. Consider the quadratic function $$f(x)=8x_1^2-2x_2^2-8x_3^2$$ and assume that we want to decompose it using (\ref{eq:lambda.max.point}). An optimal solution is given by $g^*(x)=8x_1^2+6x_2^2$ and $h^*(x)=8x_2^2+8x_3^2$ with $\lambda_{\max}H_h=8.$ This is clearly dominated by $g'(x)=8x_1^2$ as $g^*(x)-g'(x)=6x_2^2$ which is convex.

When the degree is higher than 2, it is no longer true however that solving (\ref{eq:trace.point}) returns an undominated dcd. Consider for example the degree-4 polynomial $$f(x)=x^{12}-x^{10}+x^{6}-x^{4}.$$
A solution to (\ref{eq:trace.point}) with $\bar{x}=0$ is given by $g(x)=x^{12}+x^{6}$ and $h(x)=x^{10}+x^{4}$ (as $\mbox{Tr}H_h(0)=0$). This is dominated by the dcd $g(x)=x^{12}-x^8+x^6$ and $h(x)=x^{10}-x^8+x^4$ as $g-g'=x^8$ is clearly convex.

It is unclear at this point how one can obtain an undominated dcd for higher degree polynomials, or even if one exists. In the next theorem, we show that such a dcd always exists and provide an optimization problem whose optimal solution(s) will always be undominated dcds. This optimization problem involves the integral of a polynomial over a sphere which conveniently turns out to be an explicit linear expression in its coefficients.

\begin{proposition}[\cite{intSphere}]
	Let $S^{n-1}$ denote the unit sphere in $\mathbb{R}^n$. For a monomial $x_1^{\alpha_1}\ldots x_n^{\alpha_n}$, define $\beta_j\mathrel{\mathop{:}}=\frac{1}{2} (\alpha_j+1)$.
	Then $$\int_{S^{n-1}} x_1^{\alpha_1}\ldots x_n^{\alpha_n} d\sigma = \begin{cases} 0 &\text{ if some $\alpha_j$ is odd}, \\ \frac{2\Gamma(\beta_1) \ldots \Gamma(\beta_n)}{\Gamma(\beta_1+\ldots+\beta_n)} &\text{ if all $\alpha_j$ are even,} \end{cases}$$
\end{proposition}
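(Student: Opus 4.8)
The plan is to evaluate the Gaussian integral
\[
I_\alpha := \int_{\mathbb{R}^n} x_1^{\alpha_1}\cdots x_n^{\alpha_n}\, e^{-\|x\|^2}\, dx
\]
in two different ways and compare. On the one hand, since $e^{-\|x\|^2}=\prod_{j=1}^n e^{-x_j^2}$ and the domain is a product, Fubini's theorem gives $I_\alpha=\prod_{j=1}^n \int_{\mathbb{R}} t^{\alpha_j} e^{-t^2}\, dt$. A one-variable computation (using the substitution $u=t^2$) shows $\int_{\mathbb{R}} t^{\alpha}e^{-t^2}\,dt=0$ when $\alpha$ is odd and equals $\Gamma\!\left(\tfrac{\alpha+1}{2}\right)=\Gamma(\beta)$ with $\beta=\tfrac12(\alpha+1)$ when $\alpha$ is even. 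Hence $I_\alpha=0$ as soon as some $\alpha_j$ is odd, and $I_\alpha=\prod_{j=1}^n \Gamma(\beta_j)$ when all $\alpha_j$ are even.

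On the other hand, passing to spherical coordinates $x=r\omega$ with $r\ge 0$, $\omega\in S^{n-1}$, and $dx=r^{n-1}\,dr\,d\sigma(\omega)$, and using that the monomial is homogeneous of degree $|\alpha|:=\alpha_1+\cdots+\alpha_n$, we get
\[
I_\alpha=\left(\int_0^\infty r^{|\alpha|+n-1} e^{-r^2}\,dr\right)\left(\int_{S^{n-1}} \omega_1^{\alpha_1}\cdots \omega_n^{\alpha_n}\, d\sigma(\omega)\right).
\]
The radial factor, again via $u=r^2$, equals $\tfrac12\,\Gamma\!\left(\tfrac{|\alpha|+n}{2}\right)$; and since $\beta_1+\cdots+\beta_n=\tfrac12\sum_{j}(\alpha_j+1)=\tfrac12(|\alpha|+n)$, this is exactly $\tfrac12\,\Gamma(\beta_1+\cdots+\beta_n)$, a strictly positive number.

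Comparing the two expressions finishes the proof. If some $\alpha_j$ is odd then $I_\alpha=0$ while the radial factor is nonzero, forcing $\int_{S^{n-1}}\omega^\alpha\,d\sigma=0$ (one can also see this directly: the integrand is odd under $\omega_j\mapsto-\omega_j$, a symmetry of both $S^{n-1}$ and $\sigma$). If all $\alpha_j$ are even, dividing $I_\alpha=\prod_j\Gamma(\beta_j)$ by the radial factor $\tfrac12\,\Gamma(\beta_1+\cdots+\beta_n)$ yields $\int_{S^{n-1}}\omega^\alpha\,d\sigma=\dfrac{2\,\Gamma(\beta_1)\cdots\Gamma(\beta_n)}{\Gamma(\beta_1+\cdots+\beta_n)}$, as claimed. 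This proof requires no heavy machinery; the only points needing care are the routine $u=t^2$ substitutions in the Gamma-function identities and the (standard, justified by absolute convergence) use of Fubini and the spherical change of variables, so I do not anticipate a genuine obstacle here.
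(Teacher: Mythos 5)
Your proof is correct, and it is essentially the canonical argument (Folland's Gaussian-integral trick) from the reference \cite{intSphere} that the thesis cites for this proposition without reproducing a proof, so there is nothing to bridge: the Fubini factorization, the $u=t^2$ substitutions, and the polar decomposition are all carried out accurately, and the odd case is handled correctly both via the vanishing Gaussian moment and via the symmetry $\omega_j \mapsto -\omega_j$. One small remark: your spherical change of variables $dx = r^{n-1}\,dr\,d\sigma(\omega)$ means the formula you prove is for the unnormalized surface (area) measure on $S^{n-1}$ — which is indeed what the stated formula requires (taking all $\alpha_j=0$ gives the sphere's area $2\pi^{n/2}/\Gamma(n/2)$, not $1$) and is consistent with the thesis later dividing by $\mathcal{A}_n$, even though the surrounding text loosely calls $\sigma$ a probability measure.
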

where $\Gamma$ denotes the gamma function, and $\sigma$ is the rotation invariant probability measure on $S^{n-1}.$
%Notice that Theorem \ref{th:CCPundom} only holds for the next iterate when the previous iterate is the same. This guarantees better performance for the first iterate of CCP when using undominated dcds and a fixed initial point. No such guarantee exists however for subsequent iterates as the starting point is not the same.

%\subsection{Existence of undominated dcds}

%It is known that any polynomial $f$ admits a dcd. We will now show existence of an undominated dcd for any polynomial $f$. Recall that a polynomial of degree $2d$ and in $n$ variables is a linear combination of monomials in $x=(x_1,\ldots,x_n)$ of degree up to $2d$. The space of polynomials of degree $2d$ and in $n$ variables is denoted by $\mathbb{R}[x].$  A polynomial of degree $2d$ and in $n$ variables is said to be \emph{homogeneous} if it is a linear combination of monomials in $x$ of degree \emph{exactly} $2d$. Furthermore, if $f$ is a polynomial, its Hessian will be denoted by $H_f(x)$. 

\begin{theorem}\label{thm:undom.dcd}
	Let $f \in \tilde{\mathcal{H}}_{n,2d}.$  Consider the optimization problem
	\begin{equation} \label{eq:undom.dcd}
	\begin{aligned}
	\min_{g \in \tilde{\mathcal{H}}_{n,2d}} &\frac{1}{\mathcal{A}_n}\int_{S^{n-1}} \mbox{Tr } H_g d \sigma\\
	\text{s.t. } &g \text{ convex}, \\
	&g-f \text{ convex},
	\end{aligned}
	\end{equation}
	where $\mathcal{A}_n=\frac{2\pi^{n/2}}{\Gamma(n/2)}$ is a normalization constant which equals the area of $S^{n-1}$. Then, an optimal solution to (\ref{eq:undom.dcd}) exists and any optimal solution is an undominated dcd of $f$.
\end{theorem}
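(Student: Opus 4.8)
\textbf{Proof plan for Theorem~\ref{thm:undom.dcd}.} The objective is to show two things: (i) an optimal solution to \eqref{eq:undom.dcd} exists, and (ii) every optimal solution is an undominated dcd of $f$. The starting observation is that by the preceding proposition on integration over the sphere, the map $g \mapsto \frac{1}{\mathcal{A}_n}\int_{S^{n-1}} \mbox{Tr }H_g\, d\sigma$ is a \emph{linear} functional in the coefficients of $g$: indeed $\mbox{Tr }H_g = \sum_i \partial^2 g/\partial x_i^2$ is linear in the coefficients, and integration over the sphere is linear and sends each monomial to an explicit constant. So \eqref{eq:undom.dcd} is the minimization of a linear function over the set $\mathcal{C}_f \mathrel{\mathop{:}}= \{g \in \tilde{\mathcal{H}}_{n,2d} : g \text{ convex}, g-f \text{ convex}\}$, which is a closed convex set (intersection of two convexity cones, translated), and is nonempty by the known existence of polynomial dcds (Corollary~\ref{cor:SDSOSetc}).

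\textbf{Existence.} The main work for part (i) is to rule out the linear objective being unbounded below or the infimum not being attained. The key point is that the quantity $\ell(g) \mathrel{\mathop{:}}= \frac{1}{\mathcal{A}_n}\int_{S^{n-1}} \mbox{Tr }H_g\, d\sigma$ is nonnegative on the set of \emph{convex} polynomials (since $\mbox{Tr }H_g(x) \geq 0$ pointwise for convex $g$, being a sum of nonnegative eigenvalues of $H_g(x)$), hence bounded below by $0$ on $\mathcal{C}_f$; moreover $\ell(g) = \ell(f) + \ell(g-f) \geq \ell(f)$ for $g \in \mathcal{C}_f$ since $g-f$ is convex. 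So the infimum is finite. For attainment, I would argue that the sublevel set $\{g \in \mathcal{C}_f : \ell(g) \leq \ell(f) + 1\}$ is compact. One route: decompose $g = f + h$ with $h$ convex; then $\ell(h)$ is bounded, and I claim this forces the coefficients of $h$ to lie in a bounded set. For this I would use that a convex form (or polynomial) with a bound on the spherical average of its Laplacian cannot have arbitrarily large coefficients --- e.g.\ via the fact that for a convex polynomial $h$, $\int_{S^{n-1}}\mbox{Tr }H_h\,d\sigma$ controls $\int_{S^{n-1}} h\, d\sigma$ up to normalization/lower-order terms, combined with equivalence of norms on the finite-dimensional space $\tilde{\mathcal{H}}_{n,2d}$ and the fact that a convex polynomial is controlled by its values. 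Then Weierstrass applies: a linear function attains its minimum on a nonempty compact convex set. I expect this compactness argument to be the main obstacle, and the cleanest way to handle it is probably to first reduce to top-degree forms (homogenize / split by degree), where convexity of a form forces positive semidefiniteness of the Hessian and one can invoke that a psd form with bounded integral over the sphere has bounded coefficients.

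\textbf{Undominatedness.} For part (ii), suppose $g^\star$ is optimal but dominated by some dcd $g'$ of $f$, so $c \mathrel{\mathop{:}}= g^\star - g'$ is convex and nonaffine. Then $g'$ is feasible for \eqref{eq:undom.dcd} (it is convex, and $g' - f = (g^\star - f) - c$; wait --- this needs care, since $g'-f$ convex is part of $g'$ being a dcd of $f$, which is exactly the definition, so $g'$ \emph{is} feasible). Compare objectives: $\ell(g^\star) - \ell(g') = \ell(c) = \frac{1}{\mathcal{A}_n}\int_{S^{n-1}}\mbox{Tr }H_c\, d\sigma$. Since $c$ is convex, $\mbox{Tr }H_c(x) \geq 0$ for all $x$, so $\ell(c) \geq 0$, giving $\ell(g') \leq \ell(g^\star)$ --- consistent with optimality only if $\ell(c) = 0$. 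Then I need the sharper claim: if $c$ is convex and $\int_{S^{n-1}}\mbox{Tr }H_c\, d\sigma = 0$, then $\mbox{Tr }H_c \equiv 0$ on $\mathbb{R}^n$ (the integrand is a continuous nonnegative function on the sphere with zero integral, hence identically zero on the sphere; being a polynomial, $\mbox{Tr }H_c$ that vanishes on the sphere vanishes identically if it is a form, and more generally one argues degree by degree). A polynomial $c$ with $\mbox{Tr }H_c = \Delta c \equiv 0$ is harmonic; but a convex harmonic function on $\mathbb{R}^n$ must be affine (e.g.\ because $\mbox{Tr }H_c = 0$ together with $H_c \succeq 0$ forces $H_c \equiv 0$, since a psd matrix with zero trace is zero). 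This contradicts $c$ being nonaffine. Hence $g^\star$ is undominated, and the same argument shows \emph{every} optimal solution is undominated. The one subtlety to nail down carefully is the passage ``$\int \mbox{Tr }H_c\,d\sigma = 0$ and $c$ convex $\Rightarrow$ $H_c \equiv 0$'': I would state it as a short lemma, proved by noting $\mbox{Tr }H_c$ is a nonnegative polynomial whose spherical integral is zero, hence it is the zero polynomial, hence at each $x$ the psd matrix $H_c(x)$ has zero trace and is therefore $0$.
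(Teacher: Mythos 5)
Your overall route matches the paper's: existence via a compactness/Weierstrass argument driven by the fact that the spherical integral of the Laplacian controls the (non-affine) coefficients of a convex polynomial, and undominatedness by noting that a dominating dcd $g'$ is feasible and has strictly smaller objective because $\int_{S^{n-1}}\mbox{Tr }H_{g^*-g'}\,d\sigma>0$ when $g^*-g'$ is convex and nonaffine. Two of the steps you commit to, however, would fail as written.

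First, the sublevel set $\{g \text{ feasible} : \ell(g)\leq \ell(f)+1\}$ is never compact: both constraints and the objective are invariant under adding an affine polynomial to $g$ (the Hessian does not see affine terms), so the set contains $g+t\,a^Tx$ for every $t$. The same invariance kills your claim that a bound on $\ell(h)$ for $h=g-f$ convex forces the coefficients of $h$ into a bounded set. The paper's proof handles exactly this by first restricting, without loss of generality, to $g$ with no affine terms (and adding the constraint $\ell(g)\leq \gamma$ for a known dcd $\tilde g$), and only then proving boundedness: every coefficient of such a $g$ appears in $H_g$, the integral constraint bounds $\mbox{Tr }H_g$ on the sphere, and $H_g\succeq 0$ upgrades the trace bound to a bound on $\|H_g\|$ there. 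Your alternative suggestion to ``split by degree and use that a psd form with bounded spherical integral has bounded coefficients'' is also delicate: the lower-degree homogeneous parts of a convex polynomial need not be convex, and their Laplacians can have negative spherical integral, so bounding $\ell(h)$ does not immediately bound $\ell$ of the top-degree part.

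Second, the lemma proof you sketch is incorrect: a nonnegative polynomial with zero integral over $S^{n-1}$ need not be the zero polynomial --- $(1-\sum_i x_i^2)^2$ is a counterexample --- so you cannot pass from $\int_{S^{n-1}}\mbox{Tr }H_c\,d\sigma=0$ to $\mbox{Tr }H_c\equiv 0$; nor does ``argue degree by degree'' help, since the homogeneous parts of a polynomial vanishing on the sphere need not vanish there. What you legitimately get is $\mbox{Tr }H_c=0$ on the sphere, hence (psd plus zero trace) $H_c=0$ on the sphere; concluding that $c$ is affine then needs an extra argument, for instance: for $n\geq 2$ the sphere is connected, so $H_c=0$ on it forces $\nabla c$ to be constant on the sphere; subtracting that linear function makes every sphere point a global minimizer of a convex polynomial, whose minimizer set is convex and hence contains the whole unit ball, so the polynomial is constant on a full-dimensional set and therefore constant, i.e., $c$ is affine. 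The paper itself only asserts the implication ``nonaffine and convex $\Rightarrow \int_{S^{n-1}}\mbox{Tr }H\,d\sigma>0$'' without proof, so isolating it as a lemma is the right instinct, but it needs the argument above (or an equivalent), not the one-line claim you give.
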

Note that problem (\ref{eq:undom.dcd}) is exactly equivalent to (\ref{eq:trace.point}) in the case where $n=2$ and so can be seen as a generalization of the quadratic case.
\begin{proof}
	We first show that an optimal solution to (\ref{eq:undom.dcd}) exists. As any polynomial $f$ admits a dcd, (\ref{eq:undom.dcd}) is feasible. Let $\tilde{g}$ be a dcd of $f$ and define $\gamma \mathrel{\mathop{:}}= \int_{S^{n-1}} \mbox{Tr }H_{\tilde{g}} d\sigma.$
	Consider the optimization problem given by (\ref{eq:undom.dcd}) with the additional constraints:
	\begin{equation} \label{eq:undom.dcd.new.constraint}
	\begin{aligned}
	\min_{g \in \tilde{\mathcal{H}}_{n,2d}} &\frac{1}{\mathcal{A}_n}\int_{S^{n-1}} \mbox{Tr } H_g d \sigma\\
	\text{s.t. } &g \text{ convex and with no affine terms} \\
	&g-f \text{ convex},\\
	&\int_{S^{n-1}} \mbox{Tr } H_{g} d\sigma \leq \gamma.
	\end{aligned}
	\end{equation}
	Notice that any optimal solution to (\ref{eq:undom.dcd.new.constraint}) is an optimal solution to (\ref{eq:undom.dcd}). Hence, it suffices to show that (\ref{eq:undom.dcd.new.constraint}) has an optimal solution. Let $\mathcal{U}$ denote the feasible set of (\ref{eq:undom.dcd.new.constraint}). Evidently, the set $\mathcal{U}$ is closed and $g \rightarrow \int_{S^{n-1}} \mbox{Tr }H_g d \sigma$ is continuous. If we also show that $\mathcal{U}$ is bounded, we will know that the optimal solution to (\ref{eq:undom.dcd.new.constraint}) is achieved.
	To see this, assume that $\mathcal{U}$ is unbounded. Then for any $\beta$,  there exists a coefficient $c_g$ of some $g \in \mathcal{U}$ that is larger than $\beta$. By absence of affine terms in $g$, $c_g$ features in an entry of $H_g$ as the coefficient of a nonzero monomial. Take $\bar{x} \in S^{n-1}$ such that this monomial evaluated at $\bar{x}$ is nonzero: this entails that at least one entry of $H_g(\bar{x})$ can get arbitrarily large. However, since $g \rightarrow \mbox{Tr }H_g$ is continuous and $\int_{S^{n-1}} \mbox{Tr }H_g d\sigma \leq \gamma$, $\exists \bar{\gamma}$ such that $\mbox{Tr }H_g(x) \leq \bar{\gamma}$, $\forall x \in S^{n-1}$. This, combined with the fact that $H_g(x) \succeq 0~ \forall x$, implies that $||H_g(x)|| \leq \bar{\gamma}, ~\forall x \in S^{n-1}$, which contradicts the fact that an entry of $H_g(\bar{x})$ can get arbitrarily large.

	%The constraint space is then compact. Indeed, it is closed as the set of convex functions is closed. The boundednesss is slightly harder to see. Each undetermined coefficient of $g$ features in its Hessian at some location $(i,j)$. If $i=j$, then the coefficient is obviously bounded as $\int_{S^{n-1}} \mbox{Tr }H_g(x)$ is linear in the coefficients of $g$ that are on the diagonal of the Hessian \cite{intSphere}. If $i \neq j$ consider the principal minor obtained from $(i,j)$. Pick $x$ in such a way that the off-diagonal is nonzero. The positive semidefiniteness of the Hessian for any $x$ implies that the coefficient is bounded by a linear combination of the coefficients on the diagonal, which are themselves bounded as seen previously. As $g \rightarrow \int_{S^{n-1}} \mbox{Tr }H_g(x)$ is continuous, the minimum is achieved, which entails that the minimum of (\ref{eq:undom.dcd}) is achieved.
	
	We now show that if $g^*$ is any optimal solution to (\ref{eq:undom.dcd}), then $g^*$ is an undominated dcd of $f$. Suppose that this is not the case. Then, there exists a dcd $g'$ of $f$ such that $g^*-g'$ is nonaffine and convex. As $g'$ is a dcd of $f$, $g'$ is feasible for (\ref{eq:undom.dcd}). The fact that $g^*-g'$ is nonaffine and convex implies that $$\int_{S^{n-1}} \mbox{Tr } H_{g^*-g'} d\sigma >0 \Leftrightarrow \int_{S^{n-1}} \mbox{Tr} H_{g^*} d \sigma >\int_{S^{n-1}} \mbox{Tr } H_{g'} d\sigma,$$  which contradicts the assumption that $g^*$ is optimal to (\ref{eq:undom.dcd}).  
\end{proof}

Although optimization problem (\ref{eq:undom.dcd}) is guaranteed to produce an undominated dcd, we show that unfortunately it is intractable to solve.

\begin{proposition}\label{prop:undominated.NPhard}
	Given an n-variate polynomial $f$ of degree 4 with rational coefficients, and a rational number $k$, it is strongly NP-hard to decide whether there exists a feasible solution to (\ref{eq:undom.dcd}) with objective value $\leq k$.
	%\begin{align} \label{eq:condition.g}
	%k\mathrel{\mathop{:}}=\frac{1}{\mathcal{A}_S^n} \int_{S^{n-1}} \mbox{Tr } H_f(x),
	%\frac{1}{\mathcal{A}_S^n} \int_{S^{n-1}} \mbox{Tr } H_g(x)\leq k.
	%\end{align}
\end{proposition}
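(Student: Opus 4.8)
The plan is to reduce from the problem of deciding nonnegativity of biquadratic forms, which is strongly NP-hard \cite{Ling_et_al_Biquadratic, NPhard_Convexity_MathProg}, reusing the gadget already used in the proof of Proposition~\ref{prop:dcd.NPhard}. Recall that from a biquadratic form $b(x;y)$ in variables $x\in\mathbb{R}^n,\,y\in\mathbb{R}^m$ one constructs in polynomial time a quartic form $q(x,y)\mathrel{\mathop{:}}=b(x;y)+r(x,y)$ in $N\mathrel{\mathop{:}}=n+m$ variables, with $r$ an explicit convex quartic form, such that $q$ is convex if and only if $b$ is nonnegative (this is \cite[Theorem 3.2]{NPhard_Convexity_MathProg}). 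I would feed problem (\ref{eq:undom.dcd}) the input $f\mathrel{\mathop{:}}=q$ (so $d=2$) together with the threshold
\[
k\mathrel{\mathop{:}}=\frac{1}{\mathcal{A}_N}\int_{S^{N-1}} \mbox{Tr } H_q\, d\sigma .
\]
Since $\mbox{Tr } H_q=\Delta q$ is a quadratic form with rational coefficients, and the integration formula stated just before Theorem~\ref{thm:undom.dcd} gives $\frac{1}{\mathcal{A}_N}\int_{S^{N-1}} z_iz_j\, d\sigma = \frac{1}{N}$ when $i=j$ and $0$ otherwise, the number $k$ is rational and computable in polynomial time from $b$. Hence this is a legitimate polynomial-time reduction with polynomially bounded data, and it only remains to prove the equivalence: problem (\ref{eq:undom.dcd}) with $f=q$ has a feasible point of objective value at most $k$ if and only if $b$ is nonnegative.

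The ``if'' direction is immediate: if $b$ is nonnegative then $q$ is convex, so $g\mathrel{\mathop{:}}=q$ is feasible for (\ref{eq:undom.dcd}) (here $g-f=0$ is convex) and attains objective value exactly $k$. For the ``only if'' direction, suppose $g$ is feasible with $\frac{1}{\mathcal{A}_N}\int_{S^{N-1}}\mbox{Tr } H_g\, d\sigma \le k$, and put $h\mathrel{\mathop{:}}=g-f=g-q$, which is convex by feasibility. Linearity of the Hessian gives $\mbox{Tr } H_g=\mbox{Tr } H_q+\mbox{Tr } H_h$, so $\frac{1}{\mathcal{A}_N}\int_{S^{N-1}}\mbox{Tr } H_h\, d\sigma\le 0$. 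On the other hand, convexity of $h$ gives $H_h(x)\succeq 0$, hence $\mbox{Tr } H_h(x)\ge 0$ for every $x$, so the integral is in fact $0$. I would then upgrade this to ``$h$ is affine'' as follows: $\mbox{Tr } H_h=\Delta h$ is a continuous, everywhere nonnegative polynomial of degree at most $2$ whose average over $S^{N-1}$ is zero, hence it vanishes identically on $S^{N-1}$; comparing the values of a degree-$\le 2$ polynomial at $u$ and $-u$ kills its linear part, and $u^TAu$ is constant over the unit sphere only when $A$ is a multiple of the identity, so $\Delta h$ must be a scalar multiple of $\|x\|_2^2-1$; global nonnegativity of that multiple of $\|x\|_2^2-1$ forces the scalar to vanish, so $\Delta h\equiv 0$; finally, a positive semidefinite matrix of zero trace is the zero matrix, so $H_h\equiv 0$ and $h$ is affine. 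Then $g=q+h$ with $h$ affine, so $g$ convex forces $q$ convex, and by \cite[Theorem 3.2]{NPhard_Convexity_MathProg} this forces $b$ to be nonnegative, completing the reduction.

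The crux of the argument --- and the step I expect to require the most care to write out --- is the passage from ``$\int_{S^{N-1}}\mbox{Tr } H_h\, d\sigma=0$'' to ``$h$ affine''. It relies on two elementary but not-entirely-routine facts: a continuous nonnegative function with zero integral over the sphere vanishes on the sphere, and a quadratic polynomial that vanishes on $S^{N-1}$ and is globally nonnegative is identically zero. The second fact is exactly where the degree-$4$ hypothesis (equivalently, the degree-$\le 2$ Hessian entries) enters and is the reason the reduction is clean. The remaining ingredients are routine: rationality and polynomial-time computability of $k$ from the integration formula, and the ``strong'' qualifier, which follows because biquadratic nonnegativity is strongly NP-hard and the coefficients of $q$ (through $\gamma$, the largest absolute value of an entry of $[\,\partial^2 b/\partial x_i\partial y_j\,]$) and of $k$ stay polynomially bounded in the bit-size of $b$. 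One can alternatively run the same argument with $f$ taken to be any quartic form whose convexity is to be tested (using the strong NP-hardness of testing convexity of quartic forms \cite{ahmadi2013complete}) and $k=\frac{1}{\mathcal{A}_n}\int_{S^{n-1}}\mbox{Tr } H_f\, d\sigma$; the reasoning is verbatim the same.
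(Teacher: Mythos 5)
Your proof is correct and matches the paper's own argument in all essential respects: the same threshold $k=\frac{1}{\mathcal{A}_n}\int_{S^{n-1}}\mbox{Tr } H_f\,d\sigma$, and the same key step that a feasible $g$ with objective value $\leq k$ forces $\int_{S^{n-1}}\mbox{Tr } H_{g-f}\,d\sigma=0$, hence $\mbox{Tr } H_{g-f}\equiv 0$ and $H_{g-f}\equiv 0$, contradicting nonconvexity of $f$. The only cosmetic difference is that you reduce from nonnegativity of biquadratic forms and rebuild the quartic gadget of Proposition~\ref{prop:dcd.NPhard} yourself (while spelling out the degree-$2$ ``vanishes on the sphere hence identically zero'' step that the paper states tersely), whereas the paper reduces directly from the known hardness of deciding convexity of quartics --- the very variant you mention at the end.
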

\begin{proof}
	We give a reduction from the problem of deciding convexity of quartic polynomials. Let $q$ be a quartic polynomial. We take $f=q$ and $k=\frac{1}{\mathcal{A}_n} \int_{S^{n-1}} \mbox{Tr } H_q(x)$. If $q$ is convex, then $g=q$ is trivially a dcd of $f$ and
	\begin{align} \label{eq:condition.g}
	\frac{1}{\mathcal{A}_n} \int_{S^{n-1}} \mbox{Tr } H_g d\sigma \leq k.
	\end{align} 
	If $q$ is not convex, assume that there exists a feasible solution $g$ for (\ref{eq:undom.dcd}) that satisfies (\ref{eq:condition.g}). From (\ref{eq:condition.g}) we have
	\begin{align} \label{eq:ineq1}
	\int_{S^{n-1}} \mbox{Tr } H_g(x) \leq \int_{S^{n-1}} \mbox{Tr } H_f d\sigma \Leftrightarrow \int_{S^{n-1}} \mbox{Tr } H_{f-g} d \sigma \geq 0.
	\end{align}
	But from (\ref{eq:undom.dcd}), as $g-f$ is convex, $\int_{S^{n-1}} \mbox{Tr }H_{g-f} d\sigma \geq 0.$
	Together with (\ref{eq:ineq1}), this implies that $$\int_{S^{n-1}} \mbox{Tr }H_{g-f}d \sigma=0$$ which in turn {implies that $H_{g-f}(x)=H_g(x)-H_f(x)=0$. To see this, note that $\mbox{Tr}(H_{g-f})$ is a nonnegative polynomial which must be identically equal to $0$ since its integral over the sphere is $0$. As $H_{g-f}(x)\succeq 0,\forall x$, we get that $H_{g-f}=0.$ Thus, $H_g(x)=H_f(x), ~\forall x$, which is not possible as $g$ is convex and $f$ is not.} 
\end{proof}
We remark that solving (\ref{eq:undom.dcd}) in the quadratic case (i.e., $2d=2$) is simply a semidefinite program.

\section{Alegbraic relaxations and more tractable subsets of the set of convex polynomials} \label{sec:ConvRelax}

We have just seen in the previous section that for polynomials with degree as low as four, some basic tasks related to dc decomposition are computationally intractable. In this section, we identify three subsets of the set of convex polynomials that lend themselves to polynomial-time algorithms. These are the sets of \emph{sos-convex, sdsos-convex}, and \emph{dsos-convex} polynomials, which will respectively lead to semidefinite, second order cone, and linear programs. The latter two concepts are to our knowledge new and are meant to serve as more scalable alternatives to sos-convexity. All three concepts certify convexity of polynomials via explicit algebraic identities, which is the reason why we refer to them as algebraic relaxations.

%The optimization problems we introduced in the previous section are not computationally tractable as we do not know how to optimize over the set of convex functions. In this section, we appeal to the concept of \emph{sos-convex} polynomials, a well-known subset of the set of convex polynomials, to get around this difficulty. Indeed, optimizing over the set of sos-convex polynomials amounts to solving a semidefinite program, which can be done efficiently. We also introduce the concepts of \emph{dsos-convex} and \emph{sdsos-convex} polynomials, which are other tractable alternatives to convexity based on linear programmming and second order cone programming. Their main advantage when compared to sos-convexity is their scalibility when the degree or the number of variables of the polynomial gets high.

\subsection{DSOS-convexity, SDSOS-convexity, SOS-convexity}

To present these three notions we need to introduce some notation and briefly review the concepts of sos, dsos, and sdsos polynomials.
%In this subsection, we will recall the definition of an sos-convex polynomial and define dsos-convex and sdsos-convex polynomials. To do this, we introduce some notation and briefly review the concepts of sos, dsos, sdsos polynomials.

%Recall that a symmetric matrix $A$ is \emph{positive semidefinite} (psd) if $x^TAx\geq 0$ for all $x\in\mathbb{R}^n$; this will be denoted by the standard notation $A\succeq 0$.
%, and our notation for the set of $n\times n$ psd matrices is $P_n$.
% Similarly, a polynomial $p$ in $n$ variables is \emph{nonnegative} (or positive semidefinite) if $p(x)\geq 0$ for all $x\in\mathbb{R}^n$.
We denote the set of nonnegative polynomials (resp. forms) in $n$ variables and of degree $d$ by $\tilde{PSD}_{n,d}$ (resp. $PSD_{n,d}$).
% a form $p$ in $n$ variables is \emph{nonnegative} (or positive semidefinite) if $p(x)\geq 0$ for all $x\in\mathbb{R}^n$, or equivalently for all $x$ on the unit sphere in $\mathbb{R}^n$. The set of nonnegative forms in $n$ variables and degree $d$ is denoted by $PSD_{n,d}$. 
A polynomial $p$ is a \emph{sum of squares} (sos) if it can be written as $p(x)=\sum_{i=1}^r q_i^2(x)$ for some polynomials $q_1,\ldots,q_r$. The set of sos polynomials (resp. forms) in $n$ variables and of degree $d$ is denoted by $\tilde{SOS}_{n,d}$ (resp. $SOS_{n,d}$).
%forms in $n$ variables and degree $d$ is denoted by $SOS_{n,d}$.
We have the obvious inclusion $\tilde{SOS}_{n,d}\subseteq \tilde{PSD}_{n,d}$ (resp. $SOS_{n,d}\subseteq PSD_{n,d}$), which is strict unless  $d=2$, or $n=1$, or $(n,d)=(2,4)$ (resp. $d=2$, or $n=2$, or $(n,d)=(3,4)$)~\cite{Hilbert_1888},~\cite{Reznick}.

Let $\tilde{z}_{n,d}(x)$ (resp. $z_{n,d}(x)$) denote the vector of all monomials in $x=(x_1,\ldots,x_n)$ of degree up to (resp. exactly) $d$; the length of this vector is $\binom{n+d}{d}$ (resp. $\binom{n+d-1}{d}$). It is well known that a polynomial (resp. form) $p$ of degree $2d$ is sos if and only if it can be written as $p(x)=\tilde{z}^T_{n,d}(x)Q\tilde{z}_{n,d}(x)$ (resp. $p(x)=z_{n,d}^T(x)Qz_{n,d}(x)$), for some psd matrix $Q$~\cite{sdprelax},~\cite{PhD:Parrilo}. The matrix $Q$ is generally called the Gram matrix of $p$. An SOS optimization problem is the problem of minimizing a linear function over the intersection of the convex cone $SOS_{n,d}$ with an affine subspace. The previous statement implies that SOS optimization problems can be cast as semidefinite programs.

We now define dsos and sdsos polynomials, which were recently proposed by Ahmadi and Majumdar~\cite{isos_journal},~\cite{dsos_ciss14} as more tractable subsets of sos polynomials. When working with dc decompositions of $n$-variate polynomials, we will end up needing to impose sum of squares conditions on polynomials that have $2n$ variables (see Definition~\ref{def:alternatives.convexity}). While in theory the SDPs arising from sos conditions are of polynomial size, in practice we rather quickly face a scalability challenge. For this reason, we also consider the class of dsos and sdsos polynomials, which while more restrictive than sos polynomials, are considerably more tractable. For example, Table \ref{tab:dsos} in Section \ref{subsec:scalibility} shows that when $n=14$, dc decompositions using these concepts are about 250 times faster than an sos-based approach. At $n=18$ variables, we are unable to run the sos-based approach on our machine. With this motivation in mind, let us start by recalling some concepts from linear algebra.

%Unfortunately, this quickly limits the size of problems that we can solve by semidefinite programming 

%As we will see, optimizing over these two sets amounts to solving a linear program or a second order cone program, which means that DSOS and SDSOS programming scale better with $n$ and $d$ than SOS programming does. To make the link between dsos/sdsos polynomials and linear/second order cone programming more apparent, we will define dsos and sdsos polynomials via their Gram matrices in the following. This is different to what is done in \cite{isos_journal} where they are defined as subcases of sos polynomials with a very specific structure.

\begin{definition}\label{def:dd.sdd}
	A symmetric matrix $M$ is said to be \emph{diagonally dominant (dd)} if $m_{ii} \geq \sum_{j \neq i}|m_{ij}|$ for all $i$, and \emph{strictly diagonally dominant} if  $m_{ii} > \sum_{j \neq i}|m_{ij}|$ for all $i$. 
	We say that $M$ is \emph{scaled diagonally dominant (sdd)} if there exists a diagonal matrix $D,$ with positive diagonal entries, such that $DAD$ is dd.
\end{definition}
We have the following implications from Gershgorin's circle theorem
\begin{align}\label{eq:implic.matrices}
\text{M } dd \Rightarrow \text{M } sdd \Rightarrow \text{M } psd.
\end{align}
Furthermore, notice that requiring $M$ to be dd can be encoded via a linear program (LP) as the constraints are linear inequalities in the coefficients of $M$. Requiring that $M$ be sdd can be encoded via a second order cone program (SOCP). This follows from the fact that $M$ is sdd if and only if $$M=\sum_{i<j} M^{ij}_{2\times 2},$$ where each $M^{ij}_{2 \times 2}$ is an $n \times n$ symmetric matrix with zeros everywhere except four entries $M_{ii},M_{ij},M_{ji}, M_{jj}$ which must make the $2 \times 2$ matrix $\begin{pmatrix} M_{ii} & M_{ij} \\ M_{ji} & M_{jj} \end{pmatrix}$ symmetric positive semidefinite \cite{isos_journal}. These constraints are \emph{rotated quadratic cone constraints} and can be imposed via SOCP \cite{alizadeh}.

\begin{definition}[\cite{isos_journal}]
	A polynomial $p \in \tilde{\mathcal{H}}_{n,2d}$ is said to be 
	\begin{itemize}
		\item \emph{diagonally-dominant-sum-of-squares (dsos)} if it admits a representation $p(x)=\tilde{z}^T_{n,d}(x)Q\tilde{z}_{n,d}(x)$, where $Q$ is a dd matrix.
		\item \emph{scaled-diagonally-dominant-sum-of-squares (sdsos)} it it admits a representation $p(x)=\tilde{z}^T_{n,d}(x)Q\tilde{z}_{n,d}(x),$ where $Q$ is an sdd matrix.
	\end{itemize}
	Identical conditions involving $z_{n,d}$ instead of $\tilde{z}_{n,d}$ define the sets of dsos and sdsos forms. 
\end{definition}

The following implications are again straightforward:
\begin{align}\label{eq:implicsdsos}
p(x) \text{ dsos} \Rightarrow p(x) \text{ sdsos} \Rightarrow p(x) \text{ sos} \Rightarrow p(x) \text{ nonnegative}.
\end{align}
Given the fact that our Gram matrices and  polynomials are related to each other via linear equalities, it should be clear that optimizing over the set of dsos (resp. sdsos, sos) polynomials is an LP (resp. SOCP, SDP). 

Let us now get back to convexity.

\begin{definition} \label{def:alternatives.convexity}
	Let $y=(y_1,\ldots,y_n)^T$ be a vector of variables. A polynomial $p\mathrel{\mathop:}=p(x)$ is said to be
	\begin{itemize}
		\item \emph{dsos-convex} if $y^TH_p(x)y$ is dsos {(as a polynomial in $x$ and $y$)}.
		\item \emph{sdsos-convex} if $y^TH_p(x)y$ is sdsos {(as a polynomial in $x$ and $y$)}.
		\item \emph{sos-convex} if $y^TH_p(x)y$ is sos {(as a polynomial in $x$ and $y$)}.\footnote{The notion of sos-convexity has already appeared in the study of semidefinite representability of convex sets~\cite{helton2010} and in  applications such as shaped-constrained regression in statistics~\cite{Magnani}.}
	\end{itemize}

	We denote the set of dsos-convex (resp. sdsos-convex, sos-convex, convex) forms in $\mathcal{H}_{n,2d}$ by $\Sigma_DC_{n,2d}$ (resp. $\Sigma_SC_{n,2d}$, $\Sigma C_{n,2d}$, $C_{n,2d}$). Similarly, $\tilde{\Sigma}_DC_{n,2d}$ (resp. $\tilde{\Sigma}_SC_{n,2d}$, $\tilde{\Sigma} C_{n,2d}$, $\tilde{C}_{n,2d}$) denote the set of dsos-convex (resp. sdsos-convex, sos-convex, convex) polynomials in $\tilde{\mathcal{H}}_{n,2d}$.
\end{definition}
The following inclusions 
\begin{align}\label{eq:inclusion.cones}
\Sigma_DC_{n,2d} \subseteq \Sigma_SC_{n,2d} \subseteq \Sigma C_{n,2d} \subseteq C_{n,2d}
\end{align}
are a direct consequence of (\ref{eq:implicsdsos}) and the second-order necessary and sufficient condition for convexity which reads $$ p(x) \text{ is convex } \Leftrightarrow H_p(x) \succeq 0, \forall x \in \mathbb{R}^n \Leftrightarrow y^TH_p(x)y \geq 0, \forall x,y \in \mathbb{R}^n.$$ 
Optimizing over $\Sigma_DC_{n,2d}$ (resp. $\Sigma_S C_{n,2d}$, $\Sigma C_{n,2d}$) is an LP (resp. SOCP, SDP). The same statements are true for $\tilde{\Sigma}_DC_{n,2d}$, $\tilde{\Sigma}_SC_{n,2d}$ and $\tilde{\Sigma} C_{n,2d}$.

Let us draw these sets for a parametric family of polynomials
\begin{align} \label{eq:parampoly}
p(x_1,x_2)=2x_1^4+2x_2^4+ax_1^3x_2+bx_1^2x_2^2+cx_1x_2^3.
\end{align} Here, $a,b$ and $c$ are parameters. It is known that for bivariate quartics, all convex polynomials are sos-convex; i.e., $\Sigma C_{2,4}=C_{2,4}.$\footnote{In general, constructing polynomials that are convex but not sos-convex seems to be a nontrivial task~\cite{AAA_PP_not_sos_convex_journal}. A complete characterization of the dimensions and degrees for which convexity and sos-convexity are equivalent is given in~\cite{ahmadi2013complete}.}  To obtain Figure \ref{fig:cones}, we fix $c$ to some value and then plot the values of $a$ and $b$ for which $p(x_1,x_2)$ is s/d/sos-convex. As we can see, the quality of the inner approximation of the set of convex polynomials by the sets of dsos/sdsos-convex polynomials can be very good (e.g., $c=0$) or less so (e.g., $c=1$).

\begin{figure}[h!]
	\centering
	\includegraphics[scale=0.38]{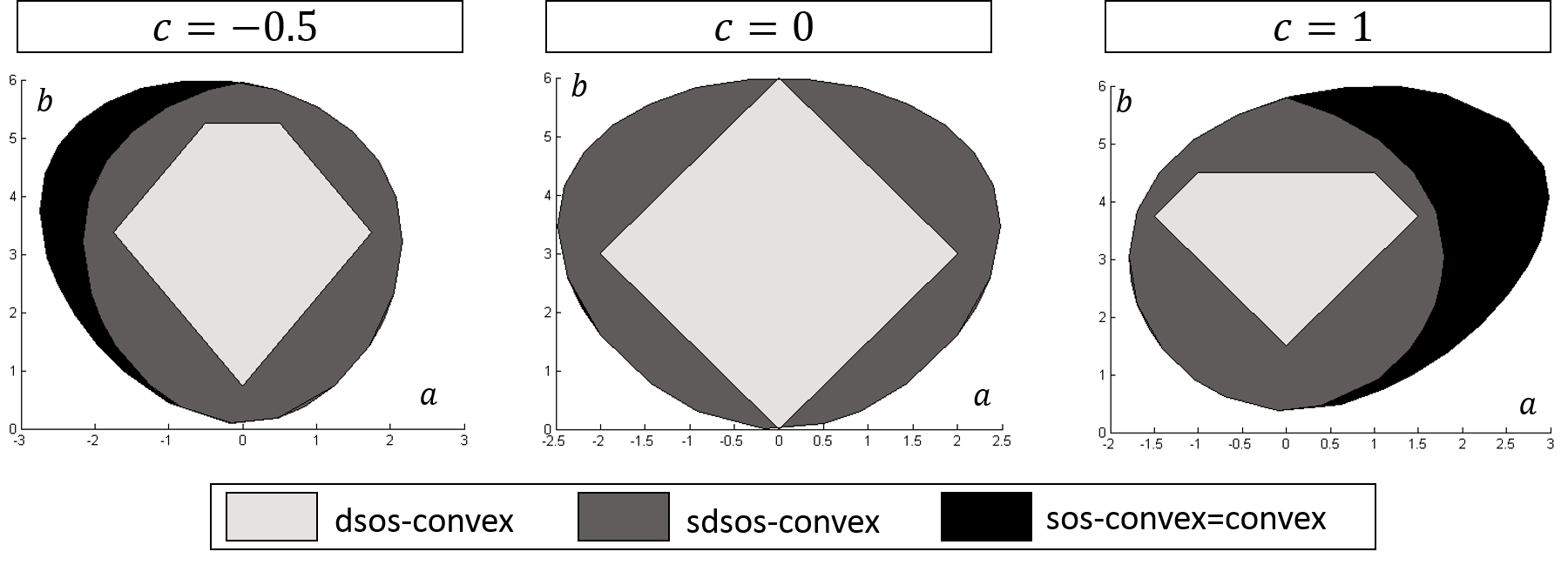}
	\caption{The sets $\Sigma_DC_{n,2d}, \Sigma_SC_{n,2d}$ and $\Sigma C_{n,2d}$ for the parametric family of polynomials in (\ref{eq:parampoly})}
	\label{fig:cones}
	%\vspace{-10mm}
\end{figure}

\subsection{Existence of difference of s/d/sos-convex decompositions of polynomials} \label{subsec:diffconv}
%
%
%[**** Place this notation somewhere appropriate
%
%If $y=(y_1,\ldots,y_n)$ is a vector of variables of length $n$, we define $$w_{n,d}(x,y) \mathrel{\mathop{:}}= y \cdot z_{n,d}(x),$$
%where $y \cdot z_{n,d}(x)=(y_1 z_{n,d}(x), \ldots, y_n  z_{n,d}(x))^T.$ 
%%$y \cdot z_{n,d}(x)=(y_1 \cdot z_{n,d}(x), \ldots, y_n \cdot z_{n,d}(x))^T$ 
%%
%%and $y_k \cdot z_{n,d}(x)$ is simply the vector $$y_k\cdot z_{n,d}(x) \mathrel{\mathop{:}}=(y_k x_1^d, y_k x_1^{d-1}x_2, \ldots, y_kx_n^d)^T.$$
%%
%The length of $w_{n,d}(x,y)$ is then $n \times \binom{n+d-1}{d}$. Similarly, we define $$\tilde{w}_{n,d}(x,y)\mathrel{\mathop{:}}=y \cdot \tilde{z}_{n,d}(x).$$
%
%
%
%****]

The reason we introduced the notions of s/d/sos-convexity is that in our optimization problems for finding dcds, we would like to replace the condition $$f=g-h,~ g,h \text{ convex}$$ with the computationally tractable condition $$f=g-h, ~g,h \text{ s/d/sos-convex.}$$ 
The first question that needs to be addressed is whether for any polynomial such a decomposition exists. In this section, we prove that this is indeed the case. This in particular implies that a dcd can be found efficiently. 

%We have seen previously that for any polynomial $f$, there exist $g$ and $h$ convex such that $f=g-h$. We will see now that this result still holds in the case where we require $g$ and $h$ to be s/d/sos-convex. This ensures that a dcd can be efficiently computed for any polynomial. Furthermore, this enables us to optimize over a large subset of the set of dcds.
% This will ensure that (\ref{eq:undom.dcd}) is always feasible. 

We start by proving a lemma about cones. 

\begin{lemma}\label{lemma:cones}
	Consider a vector space $E$ and a full-dimensional cone $K \subseteq E$. Then, any $v \in E$ can be written as $v=k_1-k_2,$ where $k_1,k_2 \in K.$
\end{lemma}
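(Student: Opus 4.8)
The plan is to exploit two facts about a full-dimensional cone: that it has a nonempty interior, and that it is invariant under scaling by positive reals. We work in the (finite-dimensional) vector space $E$, so full-dimensionality of the convex cone $K$ is equivalent to $\textup{int}(K) \neq \emptyset$; this standard fact about convex sets is the only auxiliary result I would invoke. First I would fix once and for all a point $k_0 \in \textup{int}(K)$.

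Next, given an arbitrary $v \in E$, I would use the defining property of an interior point together with continuity: the affine map $t \mapsto k_0 + t v$ equals $k_0 \in \textup{int}(K)$ at $t = 0$, so there exists a scalar $\epsilon > 0$ small enough that $k_0 + \epsilon v \in K$. The proof then concludes with the algebraic identity
\begin{equation*}
v = \frac{1}{\epsilon}\left(k_0 + \epsilon v\right) - \frac{1}{\epsilon}\, k_0 .
\end{equation*}
Since $K$ is a cone, it is closed under multiplication by the positive scalar $\tfrac{1}{\epsilon}$; hence both $k_1 \mathrel{\mathop{:}}= \tfrac{1}{\epsilon}(k_0 + \epsilon v) \in K$ and $k_2 \mathrel{\mathop{:}}= \tfrac{1}{\epsilon} k_0 \in K$, and $v = k_1 - k_2$, which is exactly the claimed decomposition.

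I do not anticipate any genuine obstacle: the argument is short and self-contained, and the only step requiring a word of justification is the passage from "full-dimensional" to "has an interior point," which is classical. (In the applications of this lemma later in the chapter — for instance with $E = \mathcal{H}_{n,2d}$ and $K$ the cone of sos-convex, sdsos-convex, or dsos-convex forms — full-dimensionality of the relevant cone must of course be established separately, but that is outside the scope of this lemma.)
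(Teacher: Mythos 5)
Your proof is correct and follows essentially the same route as the paper's: both pick an interior point of $K$, perturb it slightly in the direction of $v$ (the paper writes this as a convex combination $(1-\alpha)v+\alpha k$ with $\alpha$ near $1$, which is your $k_0+\epsilon v$ up to a positive rescaling), and then use closure of $K$ under positive scaling to produce the decomposition. No gaps.
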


\begin{proof}
	Let $v \in E$. If $v \in K$, then we take $k_1=v$ and $k_2=0$. Assume now that $v \notin K$ and let $k$ be any element in the interior of the cone $K$. As $k \in int(K)$, there exists $0<\alpha<1$ such that $k'\mathrel{\mathop{:}}=(1-\alpha)v+\alpha k \in K.$ Rewriting the previous equation, we obtain $$v=\frac{1}{1-\alpha}k'-\frac{\alpha}{1-\alpha}k.$$ By taking $k_1\mathrel{\mathop{:}}=\frac{1}{1-\alpha}k'$ and $k_2\mathrel{\mathop{:}}=\frac{\alpha}{1-\alpha}k$, we observe that $v=k_1-k_2$ and $k_1,k_2 \in K$.  
\end{proof}

The following theorem is the main result of the section. 
\begin{theorem}\label{thm:diff.dsos}
	Any polynomal $p \in \tilde{\mathcal{H}}_{n,2d}$ can be written as the difference of two dsos-convex polynomials in $\tilde{\mathcal{H}}_{n,2d}.$
\end{theorem}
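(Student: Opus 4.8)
The plan is to show that the set $\tilde{\Sigma}_DC_{n,2d}$ of dsos-convex polynomials is a \emph{full-dimensional convex cone} inside the vector space $\tilde{\mathcal{H}}_{n,2d}$, and then to invoke Lemma \ref{lemma:cones}, which immediately yields that every $p\in\tilde{\mathcal{H}}_{n,2d}$ is a difference of two elements of $\tilde{\Sigma}_DC_{n,2d}$. The ``cone'' part is routine: writing $y^TH_p(x)y=z^TQz$, where $z$ collects all monomials $x^\alpha y_i$ with $|\alpha|\le d-1$ and $i=1,\dots,n$, one sees from linearity of the Hessian map and the fact that nonnegative combinations of diagonally dominant matrices are diagonally dominant that $\tilde{\Sigma}_DC_{n,2d}$ is closed under nonnegative linear combinations. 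So the whole content is full-dimensionality, equivalently (for a convex cone in a finite-dimensional space) nonemptiness of its interior.

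To produce an interior point I would use the following sufficient condition: if $p_0\in\tilde{\mathcal{H}}_{n,2d}$ is such that $y^TH_{p_0}(x)y$ admits a \emph{strictly} diagonally dominant Gram matrix $Q_0$ in the basis $z$, then $p_0$ is in the interior of $\tilde{\Sigma}_DC_{n,2d}$. Indeed, for any $q\in\tilde{\mathcal{H}}_{n,2d}$ let $R(q)$ be the ``canonical'' Gram matrix of $y^TH_q(x)y$ obtained by splitting the coefficient of each monomial equally among the Gram positions that produce it; the map $q\mapsto R(q)$ is linear, hence $\|R(q)\|\le C(n,d)\|q\|$. Since strict diagonal dominance is an open condition, $Q_0\pm\epsilon R(q)$ is still (strictly, hence) diagonally dominant for $\epsilon$ small, so $p_0\pm\epsilon q$ is dsos-convex; applying this to finitely many $q$ spanning $\tilde{\mathcal{H}}_{n,2d}$ and using convexity of the cone gives a full-dimensional neighborhood of $p_0$ contained in $\tilde{\Sigma}_DC_{n,2d}$.

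It then remains to exhibit such a $p_0$, and this is the technical heart of the argument. The candidate I would use is
\[
p_0(x)=\sum_{i=1}^n x_i^{2d}+\sum_{0<|\beta|\le d} c_\beta\, x^{2\beta},
\]
with all $c_\beta>0$ chosen small relative to $1$ (and with the ``pure'' ones $c_{ke_i}$ not too small relative to the ``mixed'' ones $c_\beta$, $\mathrm{supp}(\beta)\ge 2$). Two features make this work. First, because every even monomial $x^{2\beta}$ with $|\beta|\le d$ occurs in $p_0$ with positive coefficient, every diagonal position $x^\alpha y_i$ of the Gram matrix can be assigned a strictly positive entry (a positive multiple of $c_{\alpha+e_i}$, or of $1$ when $\alpha=(d-1)e_i$), coming from the coefficient of $x^{2\alpha}$ in $\partial^2 p_0/\partial x_i^2$. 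Second, the \emph{off}-diagonal Gram entries can arise only from the cross second derivatives $\partial^2 p_0/\partial x_i\partial x_j$ with $i\ne j$, which in turn come only from the mixed monomials $x^{2\beta}$ of $p_0$, and hence have magnitude $O(\max_\beta c_\beta)$. The hard part will be the bookkeeping: one must choose an allocation of the coefficient of each cross monomial $x^\delta y_i y_j$ among its admissible Gram positions $(x^{\beta}y_i,\,x^{\delta-\beta}y_j)$ so that \emph{every} row of the resulting $Q_0$ is strictly diagonally dominant — the rows indexed by $x_i^{d-1}y_i$ easily (diagonal of order $1$ against off-diagonal mass of order $\max_\beta c_\beta$), and the remaining rows by balancing comparable small quantities via the choice of the $c_\beta$'s. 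This is straightforward in low dimensions — for instance for $(n,d)=(2,2)$ one can take $p_0=x_1^4+x_2^4+\lambda x_1^2x_2^2+\mu(x_1^2+x_2^2)$ with $0<\lambda<3$, $\mu>0$, for which an explicit strictly diagonally dominant Gram matrix is immediate — and I would carry out the general case by tracking the finitely many, dimension-dependent constants. Once such a $p_0$ is in hand, $\tilde{\Sigma}_DC_{n,2d}$ is full-dimensional and Lemma \ref{lemma:cones} closes the proof. Finally, I would observe that, via the inclusions (\ref{eq:inclusion.cones}), the analogous decomposition statements into sdsos-convex, sos-convex, or plain convex polynomials (Corollary \ref{cor:SDSOSetc}) follow at once.
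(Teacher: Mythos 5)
Your overall route is the same as the paper's: show that $\tilde{\Sigma}_DC_{n,2d}$ is a full-dimensional cone by exhibiting a polynomial whose Hessian form $y^TH_p(x)y$ has a \emph{strictly} diagonally dominant Gram matrix (this openness argument is exactly the remark following Theorem~\ref{th:mainth}), and then conclude with Lemma~\ref{lemma:cones}. The problem is that the step you defer as ``bookkeeping'' --- exhibiting such a $p_0$ for all $n$ and $d$ --- is not a routine verification; it is the entire technical content of the theorem, and the paper spends Lemma~\ref{lemma:homogn2}, Lemma~\ref{lemma:induction}, and Theorem~\ref{th:mainth} on it. Your prescription ``take all even monomials $x^{2\beta}$ with positive coefficients, mixed coefficients small relative to pure ones'' does not by itself deliver strict diagonal dominance, because the delicate rows are precisely those indexed by $x^{\alpha}y_i$ with $\alpha+e_i$ mixed: their diagonal is of order $c_{\alpha+e_i}$ (a mixed coefficient), and the off-diagonal mass they receive from cross terms $\partial^2 p_0/\partial x_i\partial x_j$ is also of ``mixed'' order, so making the mixed coefficients uniformly small relative to the pure ones cancels out of the relevant inequalities. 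You can only dump a cross monomial entirely onto pure-diagonal rows when it comes from a $\beta$ supported on $\{i,j\}$ with $2\beta_i-1\le d-1$ and $2\beta_j-1\le d-1$; for $\beta$ with support of size three or more, or for terms like $c_{(d-1,1)}x_1^{2d-2}x_2^2$ with $d\ge 3$, every admissible Gram placement of the cross monomial burdens at least one mixed-diagonal row. Strict dominance then forces chained inequalities \emph{among the mixed coefficients themselves} (e.g., of the form $c_{(d-2,2)}\gtrsim c_{(d-1,1)}$ up to combinatorial constants, and so on down the line), which a generic ``all small, roughly comparable'' choice need not satisfy.

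This is exactly why the paper's construction is structured the way it is: even in the bivariate homogeneous base case the coefficients $a_k$ are defined by a specific recursion $a_{k+1}=\frac{2d-2k}{2k+2}a_k$ chosen so that explicit row-by-row inequalities ($\beta_k>\delta_{k+1}$, $\gamma_{k+1}>\delta_k$, etc.) hold for a particular allocation, and the general case is then handled by an induction on $n$ in which the genuinely multivariate monomials enter only through a term $\alpha v$ with $\alpha$ taken small \emph{after} the inductive Gram matrix is fixed, so their off-diagonal contribution can be absorbed. Your family of ansätze certainly contains a valid interior point (the paper's polynomial is itself a positive combination of even monomials), so your plan is salvageable, but as written the proof has a gap: you have verified only $(n,d)=(2,2)$ and asserted, without an allocation scheme or a scaling hierarchy for the $c_\beta$, that the general case follows by tracking constants. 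To close it you would need either an explicit multi-scale choice of the $c_\beta$ together with a placement rule for every cross monomial, or an inductive argument of the type the paper gives. The surrounding pieces of your write-up (the cone property, strict dd implying interior, Lemma~\ref{lemma:cones}, and the deduction of Corollary~\ref{cor:SDSOSetc} from the inclusions (\ref{eq:inclusion.cones})) are correct and coincide with the paper's.
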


\begin{corollary}\label{cor:SDSOSetc}
	Any polynomial $p \in \tilde{\mathcal{H}}_{n,2d}$ can be written as the difference of two sdsos-convex, sos-convex, or convex polynomials in $\tilde{\mathcal{H}}_{n,2d}$.
\end{corollary}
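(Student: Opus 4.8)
\textbf{Proof plan for Theorem~\ref{thm:diff.dsos}.} The natural strategy is to apply Lemma~\ref{lemma:cones} to the vector space $E = \tilde{\mathcal{H}}_{n,2d}$ with the cone $K = \tilde{\Sigma}_DC_{n,2d}$ of dsos-convex polynomials of degree $2d$. Lemma~\ref{lemma:cones} would then immediately yield that any $p \in \tilde{\mathcal{H}}_{n,2d}$ can be written as $p = k_1 - k_2$ with $k_1, k_2 \in \tilde{\Sigma}_DC_{n,2d}$, which is exactly the claim. Of course, for Lemma~\ref{lemma:cones} to apply, two things must be checked: (i) $\tilde{\Sigma}_DC_{n,2d}$ is a cone, and (ii) it is \emph{full-dimensional} in $\tilde{\mathcal{H}}_{n,2d}$. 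Point (i) is routine: if $y^T H_p(x) y$ is dsos then so is $y^T H_{\lambda p}(x) y = \lambda\, y^T H_p(x) y$ for $\lambda \geq 0$, since scaling the dd Gram matrix by $\lambda \geq 0$ keeps it dd; and the sum of two dsos polynomials is dsos (block-diagonal concatenation of dd Gram matrices is dd). So $\tilde{\Sigma}_DC_{n,2d}$ is a convex cone.

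The crux of the argument — and the step I expect to be the main obstacle — is establishing full-dimensionality of $\tilde{\Sigma}_DC_{n,2d}$ inside $\tilde{\mathcal{H}}_{n,2d}$. The idea is to exhibit a single polynomial $q$ lying in the \emph{interior} of $\tilde{\Sigma}_DC_{n,2d}$; since the ambient space is finite-dimensional, a cone with nonempty interior is full-dimensional. A good candidate is a suitable multiple of $\left(\sum_{i=1}^n x_i^2\right)^d$, or more safely the polynomial $q(x) = \sum_{i=1}^n x_i^{2d} + \left(\sum_{i=1}^n x_i^2\right)^d$, whose Hessian form $y^T H_q(x) y$ should admit a representation $w^T Q w$ (where $w$ is the appropriate monomial vector in $x,y$) with $Q$ \emph{strictly} diagonally dominant. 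Concretely: one writes down $H_q(x)$ explicitly, checks that $y^T H_q(x) y$ is a sum of squares of the basis monomials with positive coefficients plus cross terms that are dominated, so that a strictly dd Gram matrix can be chosen. Strict diagonal dominance of the Gram matrix is an open condition, and small perturbations of the coefficients of $q$ perturb the Gram matrix entries continuously (the map from polynomial coefficients to ``a'' Gram matrix can be taken affine), so strict dd persists — hence $q$ is interior. One must be slightly careful that perturbing $q$ within $\tilde{\mathcal{H}}_{n,2d}$ corresponds to perturbing the Hessian form within a space where the Gram representation stays valid; this is where I would spend the most care, possibly invoking the fact already used in the paper (e.g.\ in the proof of Theorem~\ref{thm:feas.dsos} and Theorem~\ref{thm:polyopt.dsos.bound.finite}) that $\left(\sum_i x_i^2\right)^d$ has a Gram matrix in the strict interior of the dd cone.

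Once full-dimensionality is in hand, the theorem follows directly from Lemma~\ref{lemma:cones}: take $K = \tilde{\Sigma}_DC_{n,2d}$, pick any $p \in \tilde{\mathcal{H}}_{n,2d}$, and the lemma produces $k_1, k_2 \in K$ with $p = k_1 - k_2$. Each $k_i$ being dsos-convex is in particular convex, so this simultaneously recovers the classical fact that $p$ has a polynomial dcd.

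\textbf{Proof plan for Corollary~\ref{cor:SDSOSetc}.} This is immediate from Theorem~\ref{thm:diff.dsos} together with the inclusions in (\ref{eq:inclusion.cones}), namely $\tilde{\Sigma}_DC_{n,2d} \subseteq \tilde{\Sigma}_SC_{n,2d} \subseteq \tilde{\Sigma} C_{n,2d} \subseteq \tilde{C}_{n,2d}$. Indeed, if $p = k_1 - k_2$ with $k_1, k_2$ dsos-convex, then the same $k_1, k_2$ are sdsos-convex, sos-convex, and convex, so the decomposition witnesses all three statements at once. No further work is needed.
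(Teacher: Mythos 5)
Your proof of Corollary \ref{cor:SDSOSetc} is correct and is exactly the paper's argument: the decomposition from Theorem \ref{thm:diff.dsos} into two dsos-convex polynomials is automatically a decomposition into sdsos-convex, sos-convex, and convex polynomials by the inclusions $\tilde{\Sigma}_DC_{n,2d} \subseteq \tilde{\Sigma}_SC_{n,2d} \subseteq \tilde{\Sigma}C_{n,2d} \subseteq \tilde{C}_{n,2d}$, so no further work is needed. As a side caution on your accompanying sketch of Theorem \ref{thm:diff.dsos} (not the statement under review): your proposed interior candidate built from $\left(\sum_i x_i^2\right)^d$ is problematic, since the paper explicitly remarks that this form is \emph{not} dsos-convex (or sdsos-convex) for all $n,d$ (e.g., $n=3$, $2d=8$), which is precisely why the paper establishes full-dimensionality through the explicit inductive construction of Theorem \ref{th:mainth} rather than through such a simple candidate.
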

\begin{proof}
	This is straightforward from the inclusions $$\tilde{\Sigma}_DC_{n,2d} \subseteq \tilde{\Sigma}_SC_{n,2d} \subseteq \tilde{\Sigma}C_{n,2d} \subseteq \tilde{C}_{n,2d}.\ \  $$
\end{proof}
In view of Lemma \ref{lemma:cones}, it suffices to show that $\tilde{\Sigma}_DC_{n,2d}$ is full dimensional in the vector space $\tilde{\mathcal{H}}_{n,2d}$ to prove Theorem \ref{thm:diff.dsos}. We do this by constructing a polynomial in $int(\tilde{\Sigma}_DC_{n,2d})$ for any $n,d$.

%We wish to prove that any polynomial can be written as a difference of two dsos-convex polynomials. Indeed, the results on sdsos-convexity and sos-convexity would follow by the inclusions $\tilde{\Sigma}_DC_{n,2d} \subseteq \tilde{\Sigma}_SC_{n,2d} \subseteq \tilde{\Sigma}C_{n,2d}$. Given Lemma \ref{lemma:cones}, it only remains to show that the cone $\tilde{\Sigma}_D C_{n,2d}$ is full dimensional in the vector space $\tilde{\mathcal{H}}_{n,2d}$.  This can be obtained by proving existence of a polynomial $p \in \tilde{\mathcal{H}}_{n,2d}$ that is in the interior of $\tilde{\Sigma}_D C_{n,2d}$. We briefly review some notation on monomials to do this. 

Recall that $z_{n,d}$ (resp. $\tilde{z}_{n,d}$) denotes the vector of all monomials in $x=(x_1,\ldots,x_n)$ of degree exactly (resp. up to) $d$. If $y=(y_1,\ldots,y_n)$ is a vector of variables of length $n$, we define $$w_{n,d}(x,y) \mathrel{\mathop{:}}= y \cdot z_{n,d}(x),$$
where $y \cdot z_{n,d}(x)=(y_1 z_{n,d}(x), \ldots, y_n  z_{n,d}(x))^T.$ 
%$y \cdot z_{n,d}(x)=(y_1 \cdot z_{n,d}(x), \ldots, y_n \cdot z_{n,d}(x))^T$ 
%
%and $y_k \cdot z_{n,d}(x)$ is simply the vector $$y_k\cdot z_{n,d}(x) \mathrel{\mathop{:}}=(y_k x_1^d, y_k x_1^{d-1}x_2, \ldots, y_kx_n^d)^T,$$
%
%The length of $w_{n,d}(x,y)$ is then $n \times \binom{n+d-1}{d}$.
Analogously, we define $$\tilde{w}_{n,d}(x,y)\mathrel{\mathop{:}}=y \cdot \tilde{z}_{n,d}(x).$$

\begin{theorem} \label{th:mainth}
	For all $n,d$, there exists a polynomial $p \in \tilde{\mathcal{H}}_{n,2d}$ such that 
	\begin{align}\label{eq:mainth}
	y^TH_p(x)y=\tilde{w}^T_{n,d-1}(x,y)Q\tilde{w}_{n,d-1}(x,y),
	\end{align} where $Q$ is strictly dd.
\end{theorem}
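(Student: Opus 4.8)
\textbf{Proof proposal for Theorem \ref{th:mainth}.}

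The plan is to construct an explicit polynomial $p$ whose Hessian, after the substitution into the monomial-type vector $\tilde{w}_{n,d-1}(x,y)$, produces a strictly diagonally dominant Gram matrix. The natural candidate is a separable polynomial of the form $p(x) = \sum_{\alpha} c_\alpha x^{2\alpha}$ where the sum ranges over a carefully chosen set of exponent vectors $\alpha$ with $|\alpha| \leq d$, and $c_\alpha > 0$. The key point is that for a monomial $x^{2\alpha}$, the Hessian contributes terms $y^T H_{x^{2\alpha}}(x) y$ that are themselves (up to positive constants) perfect squares of monomials in $\tilde{w}_{n,d-1}(x,y)$ along the diagonal, plus cross terms that can be controlled. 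First I would write down, for a single monomial $m(x) = x_1^{2\beta_1} \cdots x_n^{2\beta_n}$, the explicit form of $y^T H_m(x) y = \sum_{i,j} \frac{\partial^2 m}{\partial x_i \partial x_j} y_i y_j$, noting that each second partial derivative is a nonnegative constant times a monomial, and that the diagonal entries $\frac{\partial^2 m}{\partial x_i^2} y_i^2$ dominate when $m$ is a pure even power like $x_i^{2d}$.

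The concrete construction I would use: take $p(x) = \sum_{i=1}^n x_i^{2d} + \sum_{i=1}^n \left(\sum_{k} x_i^{2k}\right)$-type terms, or more cleanly, a positive combination of all squared monomials $z_{n,d}(x)^2$ componentwise, i.e. consider $p$ built so that $y^T H_p(x) y$ contains, for every entry $w_\ell$ of $\tilde{w}_{n,d-1}(x,y)$, a term $q_\ell w_\ell^2$ with $q_\ell$ strictly larger than the sum of absolute values of all coefficients of cross terms $w_\ell w_{\ell'}$ appearing in the expansion. Since there are only finitely many monomials involved and we have full freedom in choosing the (finitely many) positive coefficients $c_\alpha$ of $p$, one can first fix a "base" decomposition that realizes every needed $w_\ell^2$ and every cross term $w_\ell w_{\ell'}$, record the (finite) matrix of coefficients, and then add a large positive multiple of a polynomial whose Hessian quadratic form equals $\sum_\ell w_\ell^2$ exactly (for instance a suitable separable even polynomial of degree $2d$, such as $\lambda \sum_{j} (\text{appropriate powers})$, designed so its Hessian form is diagonal in the $\tilde{w}_{n,d-1}$ basis). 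Adding enough of this diagonal piece makes the diagonal of $Q$ strictly dominate each row, giving strict diagonal dominance.

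The steps in order: (1) fix $n,d$ and the vector $\tilde{w}_{n,d-1}(x,y)$; (2) identify a separable polynomial $p_0 \in \tilde{\mathcal{H}}_{n,2d}$ whose Hessian quadratic form $y^T H_{p_0}(x) y$ is a positive-coefficient diagonal form in the basis $\tilde{w}_{n,d-1}$ — here one checks that, e.g., $p_0 = \sum_{i,k} a_{i,k} x_i^{2k}$ with $k$ ranging appropriately and $a_{i,k}>0$ yields diagonal contributions $2k(2k-1) a_{i,k} x_i^{2k-2} y_i^2$, and more generally a product $\prod_i x_i^{2\beta_i}$ with all $\beta_i \geq 1$ gives a strictly positive-definite diagonal-plus-small-offdiagonal block; (3) argue that \emph{some} finite positive combination covers all diagonal basis elements $w_\ell^2$; (4) then take $p = p_0 + \epsilon \cdot p_1$ where $p_1$ is an arbitrary fixed element of $\tilde{\mathcal{H}}_{n,2d}$ — actually we do not even need this, since we just need one witness — and choose all coefficients large enough relative to the off-diagonal coefficients; (5) conclude $Q$ is strictly dd.

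The main obstacle I anticipate is verifying cleanly that the diagonal terms can be made to cover \emph{all} components $w_\ell^2$ of $\tilde{w}_{n,d-1}(x,y)$: the entries of $\tilde{w}_{n,d-1}$ are $y_i \cdot x^\gamma$ with $|\gamma| \leq d-1$, and one must exhibit, for each such pair $(i,\gamma)$, a monomial $x^{2\alpha}$ in $p$ whose second partial $\partial^2/\partial x_i^2$ is a positive multiple of $x^{2\gamma}$ — i.e. $2\alpha - 2e_i = 2\gamma$, so $\alpha = \gamma + e_i$, which has $|\alpha| \leq d$ and all entries nonnegative, hence $x^{2\alpha} \in \tilde{\mathcal{H}}_{n,2d}$. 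So in fact the clean choice is $p(x) = \sum_{|\gamma| \leq d-1} \sum_{i=1}^n c_{i,\gamma}\, x^{2(\gamma + e_i)}$ with $c_{i,\gamma} > 0$ large; each such term contributes $c_{i,\gamma}\cdot(2\gamma_i+2)(2\gamma_i+1)\, x^{2\gamma} y_i^2 = c_{i,\gamma} \cdot (\text{pos const}) \cdot w_{(i,\gamma)}^2$ to the diagonal, plus finitely many off-diagonal cross terms with bounded coefficients independent of the $c_{i,\gamma}$ once normalized. Choosing each $c_{i,\gamma}$ large enough makes $Q$ strictly diagonally dominant, and then Lemma \ref{lemma:cones} together with Theorem \ref{th:mainth} establishes full-dimensionality of $\tilde{\Sigma}_DC_{n,2d}$ and hence Theorem \ref{thm:diff.dsos}. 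The remaining care is just bookkeeping: showing the off-diagonal contributions are a \emph{fixed} finite set of monomials in $(x,y)$ so that "large enough $c_{i,\gamma}$" makes sense uniformly.
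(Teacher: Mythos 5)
There is a genuine gap at the crucial step, namely the claim that the monomial $c_{i,\gamma}\,x^{2(\gamma+e_i)}$ contributes the diagonal term $c_{i,\gamma}(2\gamma_i+2)(2\gamma_i+1)x^{2\gamma}y_i^2$ ``plus finitely many off-diagonal cross terms with bounded coefficients independent of the $c_{i,\gamma}$,'' so that taking each $c_{i,\gamma}$ large yields strict diagonal dominance. The cross terms produced by $x^{2\alpha}$ with $\alpha=\gamma+e_i$ are $8c_{i,\gamma}\,\alpha_i\alpha_k\,x^{2\alpha-e_i-e_k}y_iy_k$, i.e.\ they scale \emph{linearly in the same coefficient} $c_{i,\gamma}$ as the diagonal term. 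Moreover, since you restrict $p$ to even monomials, the diagonal Gram entry at the basis element $y_ix^{\gamma}$ can only come from this one monomial $x^{2(\gamma+e_i)}$. Hence, under the natural placement of cross terms, the row indexed by $(i,\gamma)$ has diagonal $(2\gamma_i+2)(2\gamma_i+1)c_{i,\gamma}$ and off-diagonal mass $4(\gamma_i+1)(|\gamma|-\gamma_i)c_{i,\gamma}$; the ratio is independent of $c_{i,\gamma}$, and dominance fails whenever $4\gamma_i+1\le 2|\gamma|$ (e.g.\ $\gamma=(d-1)e_k$ with $k\neq i$ and $d\ge 2$, where the diagonal is $2c$ against off-diagonal $4(d-1)c$). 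So ``choose the coefficients large enough'' cannot work as stated. A repair is possible in principle but is not mere bookkeeping: one must exploit the non-uniqueness of the Gram matrix to re-route each cross term $y_iy_kx^{2\alpha-e_i-e_k}$ into rows whose diagonals come from \emph{other} monomials (typically pure powers $x_j^{2k}$), and then balance the coefficients so that mixed monomials are weighted \emph{small} relative to the pure-power diagonals — the opposite of the proposal's ``large $c_{i,\gamma}$'' heuristic — and one must verify that this global redistribution can always be carried out consistently. That verification is exactly the nontrivial content of the theorem.

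For contrast, the paper's proof handles this by an explicit bivariate homogeneous construction with recursively defined coefficients $a_k$ chosen to satisfy the row-dominance inequalities exactly, followed by an induction on $n$ in which the new polynomial is $q+\alpha v$: the monomials involving few variables inherit strict dominance from the lower-dimensional construction, the all-variable even monomials in $v$ supply the missing diagonal entries, and their problematic cross terms are kept harmless by taking the multiplier $\alpha$ \emph{small}; the non-homogeneous statement then follows by summing the homogeneous pieces into a block-diagonal Gram matrix. Your direct, non-homogeneous approach could conceivably be pushed through with a careful redistribution-and-balancing argument of this kind, but as written the key quantitative claim is false and the conclusion does not follow.
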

Any such polynomial will be in $int(\tilde{\Sigma}_D C_{n,2d})$. Indeed, if we were to pertub the coefficients of $p$ slightly, then each coefficient of $Q$ would undergo a slight perturbation. As $Q$ is strictly dd, $Q$ would remain dd, and hence $p$ would remain dsos-convex. 

We will prove Theorem \ref{th:mainth} through a series of lemmas. First, we show that this is true in the homogeneous case and when $n=2$ (Lemma \ref{lemma:homogn2}). By induction, we prove that this result still holds in the homogeneous case for any $n$ (Lemma \ref{lemma:induction}). We then extend this result to the nonhomogeneous case.

\begin{lemma} \label{lemma:homogn2}
	For all $d$, there exists a polynomial $p \in \tilde{\mathcal{H}}_{2,2d}$ such that 
	\begin{align}\label{eq:DSOS.Conv.int}
	y^TH_p(x)y=w^T_{2,d-1}(x,y)Qw_{2,d-1}(x,y), 
	\end{align}
	for some strictly dd matrix $Q$.
\end{lemma}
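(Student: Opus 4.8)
\textbf{Proof plan for Lemma \ref{lemma:homogn2}.}

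The goal is to exhibit, for each $d$, a bivariate form $p$ of degree $2d$ whose Hessian admits a Gram representation in the basis $w_{2,d-1}(x,y)$ with a \emph{strictly} diagonally dominant Gram matrix $Q$; any such $p$ then lies in the interior of $\Sigma_D C_{2,2d}$. The natural candidate to try first is the ``most diagonal'' form one can think of, namely a high even power of the Euclidean norm, $p(x_1,x_2) = (x_1^2 + x_2^2)^d$, or a suitable positive multiple/variant of it. The plan is to compute $y^T H_p(x) y$ explicitly for this $p$, write it out in the monomial basis $w_{2,d-1}(x,y)$ (whose entries are $y_i x_1^{a} x_2^{b}$ with $a+b = d-1$, $i \in \{1,2\}$), and read off a Gram matrix $Q$. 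Since $p$ is built from $x_1^2 + x_2^2$, its Hessian entries are themselves combinations of powers of $x_1^2 + x_2^2$ times low-degree monomials, so the resulting quadratic form in $(x,y)$ is highly structured; I expect that after symmetrizing one obtains a $Q$ that is diagonal plus small off-diagonal terms, and in fact one can likely choose $p$ so that $Q$ is a positive multiple of the identity on the relevant subspace, which is trivially strictly dd.

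More concretely, first I would record $H_p(x)$ for $p = (x_1^2+x_2^2)^d$: a direct differentiation gives $\partial_{ij} p = 2d(x_1^2+x_2^2)^{d-1}\delta_{ij} + 4d(d-1)(x_1^2+x_2^2)^{d-2} x_i x_j$, so that
\begin{equation}
y^T H_p(x) y = 2d (x_1^2+x_2^2)^{d-1}(y_1^2+y_2^2) + 4d(d-1)(x_1^2+x_2^2)^{d-2}(x_1 y_1 + x_2 y_2)^2.
\end{equation}
The first term, $2d(x_1^2+x_2^2)^{d-1}(y_1^2+y_2^2)$, when expanded via the binomial theorem, is a nonnegative combination of the squared monomials $(y_i x_1^a x_2^b)^2$ with $a+b=d-1$; these squares correspond exactly to putting positive numbers on the diagonal of $Q$ in the $w_{2,d-1}$ basis. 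The second term is a single square of the linear form $\ell(x,y) = (x_1 y_1 + x_2 y_2)$ times the nonnegative polynomial $(x_1^2+x_2^2)^{d-2}$, hence is itself sos and contributes a psd (in fact rank-controlled) piece to the Gram matrix. The key point I would then verify is that the diagonal contribution from the first term \emph{strictly} dominates whatever off-diagonal entries are forced by the cross terms of the second term: since I have the freedom to scale, I would instead take $p_\epsilon = (x_1^2+x_2^2)^d + \epsilon\, q(x_1,x_2)$ for a fixed convenient $q$, or simply note that the first term alone already yields a diagonal $Q$ and the second term's Gram matrix can be absorbed, after possibly enlarging the diagonal entries by a bounded amount, while keeping strict diagonal dominance. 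If the clean identity choice does not immediately give strict dominance, a robust fallback is to add a small multiple of $\sum_{a+b=d-1} (y_1 x_1^a x_2^b)^2 + (y_2 x_1^a x_2^b)^2$ — which is the Hessian form of an explicit dsos-convex polynomial — to boost the diagonal.

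The main obstacle I anticipate is bookkeeping rather than conceptual: one must be careful that $y^T H_p(x) y$, although a polynomial in $x$ of degree $2d-2$ and in $y$ of degree $2$, is expressed in the \emph{correct} basis $w_{2,d-1}(x,y)$ (monomials $y_i$ times monomials in $x$ of degree exactly $d-1$), and that the homogeneity degrees match — this forces $p$ to be a genuine form of degree $2d$, which $(x_1^2+x_2^2)^d$ is. A secondary subtlety is that the Gram representation of a given form is not unique, so I should exhibit one explicit symmetric $Q$ (not merely argue existence) and then check the finitely many diagonal-dominance inequalities $Q_{kk} > \sum_{l \neq k} |Q_{kl}|$; because of the $(x_1^2+x_2^2)$-structure these inequalities reduce to a single parametrized family that is easily seen to hold (strictly) for all $d$, possibly after the harmless diagonal boost described above. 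Once this is in hand, strict diagonal dominance is an open condition on the coefficients of $p$, so $p \in \mathrm{int}(\Sigma_D C_{2,2d})$, completing the lemma; the inductive step to general $n$ (Lemma \ref{lemma:induction}) and the dehomogenization to $\tilde{\mathcal{H}}_{n,2d}$ (Theorem \ref{th:mainth}) then proceed on top of this base case.
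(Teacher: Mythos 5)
Your route is viable and genuinely different from the paper's. The paper does not use $(x_1^2+x_2^2)^d$: it constructs a symmetric even form $\sum_k a_k x_1^{2d-2k}x_2^{2k}$ with coefficients defined by a recursion (in effect a multiple of $(x_1^2+x_2^2)^d$ plus a positive multiple of $x_1^{2d}+x_2^{2d}$), treats the cases $d$ even and $d$ odd separately, and then checks strict diagonal dominance of a large, explicitly displayed Gram matrix with entries $\beta_k,\gamma_k,\delta_k$. Your single candidate $p=(x_1^2+x_2^2)^d$ avoids both the parity case analysis and the correction term, and its verification can be made to reduce to one uniform family of trivially true inequalities; this does not conflict with the remark following Theorem \ref{th:mainth}, which only asserts that $(\sum_i x_i^2)^d$ fails to be dsos-convex for \emph{some} $(n,d)$ (e.g., $n=3$, $2d=8$) — for $n=2$ it does lie in the interior, as your plan predicts.

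Two points in your verification sketch need repair, though. First, $Q$ can never be a positive multiple of the identity: the monomials $x_1^{2j+1}x_2^{2(d-2-j)+1}y_1y_2$, $j=0,\ldots,d-2$, occur in $y^TH_p(x)y$ with coefficients $8d(d-1)\binom{d-2}{j}$ and can only arise from off-diagonal entries coupling the $y_1$- and $y_2$-blocks. Second, those off-diagonal entries are not ``forced,'' and the naive (e.g., symmetric) placement fails: the rows indexed by $y_1x_2^{d-1}$ and $y_2x_1^{d-1}$ have diagonal entry only $2d$, while the adjacent cross coefficient grows like $d^2$, so dominance breaks already for $d\geq 3$. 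The fix is to use the splitting freedom: keep both blocks diagonal and assign the entire weight of the $j$-th cross monomial to the single entry pairing $y_1x_1^{j+1}x_2^{d-2-j}$ with $y_2x_1^{j}x_2^{d-1-j}$, namely $4d(d-1)\binom{d-2}{j}$. Then the row of $y_1x_1^kx_2^{d-1-k}$ has diagonal $2d\binom{d-1}{k}+4d(d-1)\binom{d-2}{k-1}$ and a unique off-diagonal entry $4d(d-1)\binom{d-2}{k-1}$ (with $\binom{d-2}{-1}\mathrel{\mathop{:}}=0$), and symmetrically the row of $y_2x_1^kx_2^{d-1-k}$ has diagonal $2d\binom{d-1}{k}+4d(d-1)\binom{d-2}{k}$ and unique off-diagonal $4d(d-1)\binom{d-2}{k}$; strict dominance thus holds in every row with slack $2d\binom{d-1}{k}>0$. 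Finally, drop the fallback: $(y_1^2+y_2^2)\sum_{a+b=d-1}x_1^{2a}x_2^{2b}$ is not the Hessian form of any polynomial (it would force $\partial^2 q/\partial x_1\partial x_2=0$, hence $\partial^2 q/\partial x_1^2$ independent of $x_2$), and exhibiting a polynomial whose Hessian form has such a diagonal-heavy dd Gram matrix is precisely what the lemma asks for, so invoking one is circular — fortunately, with the explicit $Q$ above it is unnecessary.
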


{We remind the reader that Lemma \ref{lemma:homogn2} corresponds to the base case of a proof by induction on $n$ for Theorem \ref{th:mainth}.}

\begin{proof}
	{In this proof, we show that there exists a polynomial $p$ that satisfies (\ref{eq:DSOS.Conv.int}) for some strictly dd matrix $Q$ in the case where $n=2$, and for any $d\geq 1.$}
	
	First, if $2d=2$, we simply take $p(x_1,x_2)=x_1^2+x_2^2$ as $y^TH_p(x)y=2y^T I y$ {and the identity matrix is strictly dd}. Now, assume $2d>2$.  We consider two cases depending on whether $d$ is divisible by $2$.
	
	In the case that it is, we construct $p$ as
	\scalefont{0.7}
	\begin{align*}
	p(x_1,x_2) \mathrel{\mathop{:}}=a_0x_1^{2d}+a_1x_1^{2d-2}x_2^2+a_2x_1^{2d-4}x_2^4+\ldots+a_{d/2}x_1^{d}x_2^{d}+\ldots+a_1x_1^2x_2^{2d-2}+a_0x_2^{2d},
	\end{align*}
	\normalsize
	with the sequence $\{a_k\}_{k=0,\ldots,\frac{d}{2}}$ defined as follows
	\begin{equation}\label{eq:ak.even}
	\begin{aligned}
	a_1&=1\\
	a_{k+1}&=\left( \frac{2d-2k}{2k+2}\right)a_k,~ k=1,\ldots, \frac{d}{2}-1 \text{ (for } 2d>4)\\
	a_0&=\frac{1}{d}+\frac{d}{2(2d-1)}a_{\frac{d}{2}}.
	\end{aligned}
	\end{equation}
	
	%We want to show that $p$ as defined above verifies (\ref{eq:DSOS.Conv.int}) where we recall that $w_{2,d-1}(x,y)$ is given by $$\begin{pmatrix}y_1x_1^{d-1} & \cdots & y_1x_1^{d-1-k}x_2^{k} & \cdots & y_1x_2^{d-1} & y_2x_1^{d-1} & \cdots & y_2x_1^{d-k-1}x_2^{k} & \cdots & y_2x_2^{d-1} \end{pmatrix}^T.$$
	
	Let 
	\begin{equation}\label{eq:def.beta.gamma.delta}
	\begin{aligned}
	\beta_k&=a_k(2d-2k)(2d-2k-1), k=0,\ldots,\frac{d}{2}-1,\\
	\gamma_k&=a_k\cdot 2k(2k-1), k=1,\ldots,\frac{d}{2},\\
	\delta_k&=a_k(2d-2k)\cdot 2k, k=1,\ldots,\frac{d}{2}.
	\end{aligned}
	\end{equation}
	We claim that the matrix $Q$ defined as  
	\setcounter{MaxMatrixCols}{29}
	\begin{align*}
	\scalemath{0.7}{
	\begin{pmatrix}
	\beta_0 & & & & & &  &  & &  & & \vline & 0 & \delta_1 & & & & & & & & &\delta_{\frac{d}{2}}\\
	& \ddots & & & & &  & & & & & \vline &  & \ddots & \ddots & &  & & & & & &\\
	&  & \beta_{k} & & & & & & & & &\vline & & & 0 & \delta_{k+1}   & & & & & &\\
	& & & \ddots& & & & & & & & \vline & & & & \ddots & \ddots& & & & &\\
	& & & & \beta_{\frac{d}{2}-2} & & & & & & & \vline & & & & & \ddots& \delta_{\frac{d}{2}-1} & & & &\\
	& & & & & \beta_{\frac{d}{2}-1} & & & & & &\vline & & &  & & &0 & 0& & & &\\
	\hline
	&  & & & & &\gamma_{\frac{d}{2}} & & & & &\vline & &   & & & & & 0 & \delta_{\frac{d}{2}-1} & & & \\
	&  & & & & & & \ddots & & & & \vline & & &  & & & & &\ddots  &\ddots& & \\
	&  & & & & & & & \gamma_k&  & &\vline & & &  & & & & &  & 0 &\delta_{k-1} & \\
	&  & & &  & &  & &  &  \ddots & &\vline & & &  & & & & & & &\ddots & \ddots \\
	&  & &   & & &   & &  & & \gamma_1 &\vline & & &  & & & & & &  & & 0 \\
	\hline \hline
	0& & & & & & & & &  & & \vline & \gamma_1 &  & & & & & & & & & &\\
	\ddots & \ddots & & & & & & & & & & \vline &  &\ddots  & & & & & & & & & &\\
	& \delta_{k-1}& 0  & & & & & & & &  & \vline & &  &\gamma_k  & & & & & & & & & &\\
	& & \ddots & \ddots & & & & & &  & &\vline & &  & & \ddots & & & & & & & &\\
	& &  & \delta_{\frac{d}{2}-1}& 0 & & & & & & & \vline & & &  & &   \gamma_{\frac{d}{2}} & & & & & \\
	\hline & &  &  &0 &0 & & & &  & & \vline & &  & & &  &   \beta_{\frac{d}{2}-1} & & & & & &\\
	& &  &  & &\delta_{\frac{d}{2}-1} & 0 & & & & & \vline & &  & & &  &   & \beta_{\frac{d}{2}-2} & & & & & &\\
	& &  & & & &\ddots & \ddots & & &  & \vline & &  & & &  & & &  \ddots & & & & &\\
	& &  & & & & & \delta_{k+1} &0 & & & \vline & &  & & &  & &  & &\beta_k & & & &\\
	& &  &  & & & &  &\ddots &\ddots &  &\vline & &  & & &  &  & & & &\ddots & & &\\
	\delta_{\frac{d}{2}}& &  &  & & &  & & &  \delta_1&  0& \vline & &  & & & &  &  & & & &\beta_0 & &\\
	\end{pmatrix}}
	\normalsize
	\end{align*}
	is strictly dd and satisfies (\ref{eq:DSOS.Conv.int}) with $w_{2,d-1}(x,y)$ ordered as $$\begin{pmatrix}y_1x_1^{d-1}, y_1 x_1^{d-2}x_2, \ldots, y_1x_1x_2^{d-2}, y_1x_2^{d-1},y_2x_1^{d-1}, y_2 x_1^{d-2}x_2, \ldots, y_2 x_1x_2^{d-2}, y_2x_2^{d-1} \end{pmatrix}^T.$$
	To show (\ref{eq:DSOS.Conv.int}), one can derive the Hessian of $p$, expand both sides of the equation, and verify equality. To ensure that the matrix is strictly dd, we want all diagonal coefficients to be strictly greater than the sum of the elements on the row. This translates to the following inequalities
	\begin{align*}
	\beta_0 &>\delta_1+\delta_{\frac{d}{2}}\\
	\beta_k &>\delta_{k+1}, \forall k=1,\ldots,\frac{d}{2}-2\\
	\beta_{\frac{d}{2}-1}&>0, \gamma_1>0 \\
	\gamma_{k+1} &>\delta_{k}, \forall k=1,\ldots, \frac{d}{2}-1.
	\end{align*}
	Replacing the expressions of $\beta_k,\gamma_k$ and $\delta_k$ in the previous inequalities using (\ref{eq:def.beta.gamma.delta}) and the values of $a_k$ given in (\ref{eq:ak.even}), one can easily check that these inequalities are satisfied.
	
	We now consider the case where $d$ is not divisable by 2 and take
	\scalefont{0.60}
	\begin{align*}
	p(x_1,x_2)\mathrel{\mathop{:}}=a_0x_1^{2d}+a_1x_1^{2d-2}x_2^2+\ldots+a_{(d-1)/2}x_1^{d+1}x_2^{d-1}+a_{(d-1)/2}x_1^{d-1}x_2^{d+1}
	+\ldots+a_1x_1^2x_2^{2d-2}+a_0x_2^{2d},
	\end{align*}
	\normalsize
	with the sequence $\{a_k\}_{k=0,\ldots,\frac{d-1}{2}}$ defined as follows
	\begin{equation}\label{eq:ak.odd}
	\begin{aligned}
	a_1&=1\\
	a_{k+1}&=\left( \frac{2d-2k}{2k+2}\right)a_k,~ k=1,\ldots, \frac{d-3}{2}\\
	a_0&=1+\frac{2(2d-2)}{2d(2d-1)}.
	\end{aligned}
	\end{equation}
	Again, we want to show existence of a strictly dd matrix $Q$ that satisfies (\ref{eq:DSOS.Conv.int}). Without changing the definitions of the sequences $\{\beta_k\}_{k=1,\ldots,\frac{d-3}{2}}$,$\{\gamma_k\}_{k=1,\ldots,\frac{d-1}{2}}$ and $\{\delta_k\}_{k=1,\ldots,\frac{d-1}{2}}$, we claim this time that the matrix $Q$ defined as
	\begin{align*}
	\scalemath{0.7}{
	\begin{pmatrix}
	\beta_0 & & & & & &  &  & &  & \vline & 0 & \delta_1 & & & & & & & &\textbf{0}\\
	& \ddots & & & & &  & & & & \vline &  & \ddots & \ddots & &  & & & & & &\\
	&  & \beta_{k} & & & & & & & &\vline & & & 0 & \delta_{k+1}   & & & & & &\\
	& & & \ddots& & & & & & & \vline & & & &\ddots & \ddots& & & & &\\
	& & & &\beta_{\frac{{d-3}}{2}} & & & & & &\vline & &  & & &0 & \delta_{\frac{{d-1}}{{2}}}& & & &\\
	\hline
	&  & & & &\gamma_{\frac{{d-1}}{2}} & & & & &\vline & &   & & & & 0 & \delta_{\frac{{d-1}}{2}-1} & & & \\
	&  & & & & & \ddots & & & & \vline & & &  & & &  &\ddots  &\ddots& & \\
	&  & & & & & & \gamma_k&  & &\vline & & &  & & & &  & 0 &\delta_{k-1} & \\
	&  & & &  & & &  &  \ddots & &\vline & & &  & & & & & &\ddots & \ddots \\
	&  & &   & & &   &  & & \gamma_1 &\vline & & &  & & & & &  & & 0 \\
	\hline \hline
	0& & & & & & & & &  & \vline & \gamma_1 &  & & & & & & & & & &\\
	\ddots & \ddots & & & & & & & &  & \vline &  &\ddots  & & & & & & & & & &\\
	& \delta_{k-1}& 0  & & & & & & &  & \vline & &  &\gamma_k  & & & & & & & & & &\\
	& & \ddots & \ddots & & & & & &  & \vline & &  & & \ddots & & & & & & & &\\
	& &  & \delta_{\frac{d-1}{2}-1}& 0 & & & & & & \vline & & &  & &   \gamma_{\frac{d-1}{2}} & & & & & \\
	\hline & &  &  &  \delta_{\frac{{d-1}}{\textbf{2}}}&0 & & & &  & \vline & &  & & &  &   \beta_{\frac{{d-3}}{2}} & & & & & &\\
	& &  &  &&\ddots & \ddots & & &  & \vline & &  & & &  &  &\ddots & & & & &\\
	& &  &  & & & \delta_{k+1} &0 & &  & \vline & &  & & &  &  & &\beta_k & & & &\\
	& &  &  & & &  &\ddots &\ddots &  & \vline & &  & & &  &  & & &\ddots & & &\\
	\textbf{0}& &  &  & & &  & &  \delta_1&  0& \vline & &  & & &  &  & & & &\beta_0 & &\\
	\end{pmatrix}}
	\normalsize
	\end{align*}
	satisfies (\ref{eq:DSOS.Conv.int}) and is strictly dd. Showing (\ref{eq:DSOS.Conv.int}) amounts to deriving the Hessian of $p$ and checking that the equality is verified. To ensure that $Q$ is strictly dd, the inequalities that now must be verified are 
	\begin{align*}
	\beta_k &>\delta_{k+1}, \forall k=0,\ldots,\frac{d-1}{2}-1\\
	\gamma_{k} &>\delta_{k-1}, \forall k=2,\ldots, \frac{d-1}{2} \\
	\gamma_{1}&>0.
	\end{align*}
	These inequalities can all be shown to hold using (\ref{eq:ak.odd}).  
	
\end{proof}

\begin{lemma}\label{lemma:induction}
	For all $n,d,$ there exists a form $p_{n,2d} \in \mathcal{H}_{n,2d}$ such that  
	\begin{align*}
	y^TH_{p_{n,2d}}(x)y=w^T_{n,d-1}(x,y)Q_{p_{n,2d}}w_{n,d-1}(x,y) 
	\end{align*}
	and $Q_{p_{n,2d}}$ is a strictly dd matrix.
\end{lemma}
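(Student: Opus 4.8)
The plan is to prove Lemma~\ref{lemma:induction} by induction on the number of variables $n$, the base case $n=2$ being exactly Lemma~\ref{lemma:homogn2}. As noted in the paragraph following Theorem~\ref{th:mainth}, producing a form $p$ with $y^{T}H_{p}(x)y=w_{n,d-1}^{T}(x,y)\,Q\,w_{n,d-1}(x,y)$ for some strictly dd $Q$ is the same as producing a form in the interior of $\tilde{\Sigma}_{D}C_{n,2d}$, so it suffices to exhibit, for every pair $(n,d)$, one such form.

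For the inductive step I would start from the ``symmetrized restriction'' of the form $p_{n-1,2d}$ supplied by the inductive hypothesis:
\[
q_{n,2d}(x_{1},\dots,x_{n})\mathrel{\mathop{:}}=\sum_{k=1}^{n}p_{n-1,2d}(x_{1},\dots,x_{k-1},x_{k+1},\dots,x_{n}).
\]
Since Hessians add and the $k$-th summand contributes a Hessian biform whose Gram matrix (in the variables $\{x_{j},y_{j}:j\neq k\}$) is strictly dd by hypothesis, the Hessian biform of $q_{n,2d}$ has, in the full basis $w_{n,d-1}(x,y)$, Gram matrix $\sum_{k}\iota_{k}(Q_{k})$, where $\iota_{k}$ pads $Q_{k}$ with zero rows/columns in every index $(i,\alpha)$ for which $k\in\operatorname{supp}(\alpha)\cup\{i\}$. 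For an index $a=(i,\alpha)$ that survives in at least one $Q_{k}$, a ``superadditivity of strict diagonal dominance'' estimate,
\[
\sum_{b\neq a}\Big|\textstyle\sum_{k}[\iota_{k}(Q_{k})]_{ab}\Big|\le\sum_{k}\sum_{b\neq a}|[Q_{k}]_{ab}|<\sum_{k}[Q_{k}]_{aa}=\Big[\sum_{k}\iota_{k}(Q_{k})\Big]_{aa},
\]
shows strict dominance is inherited on row $a$.

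The main obstacle is that this argument leaves untouched exactly the ``deep'' basis monomials $y_{i}x^{\alpha}$ with $|\alpha|=d-1$ and $\operatorname{supp}(\alpha)\cup\{i\}=\{1,\dots,n\}$ (these occur precisely when $d\ge n$): no $\iota_{k}(Q_{k})$ has a nonzero entry in such a row, so $q_{n,2d}$ alone has a vanishing diagonal there. To repair this I would set $p_{n,2d}\mathrel{\mathop{:}}=q_{n,2d}+\varepsilon\,r_{n,2d}$ with $\varepsilon>0$ small and $r_{n,2d}$ an explicitly constructed form — a suitably weighted combination $\sum_{|\mu|=d}c_{\mu}x^{2\mu}$ of even monomials — whose Hessian Gram matrix is diagonally dominant and has strictly positive diagonal entries on the deep indices. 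The rows already handled by $q_{n,2d}$ retain a positive dominance slack, so they survive the addition of $\varepsilon r_{n,2d}$ once $\varepsilon$ is taken below that slack divided by the maximum absolute row sum of $r_{n,2d}$'s Gram matrix; on the deep rows the dominance is supplied entirely by $\varepsilon$ times $r_{n,2d}$'s block. Choosing the weights $c_{\mu}$ correctly is the crux of the proof: twice differentiating $x^{2\mu}$ produces diagonal contributions proportional to $\mu_{i}(2\mu_{i}-1)$ and off-diagonal contributions proportional to $\mu_{i}\mu_{j}$, and one must balance the whole family over $\{\mu:|\mu|=d\}$ so that every diagonal term dominates — a multi-index analogue of the explicit weight recursions $\{a_{k}\}$ in the proof of Lemma~\ref{lemma:homogn2}. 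A self-contained alternative that bypasses the induction entirely is to construct a single weighted even form for arbitrary $(n,d)$ whose Hessian Gram matrix is strictly dd on \emph{all} rows; this removes the superadditivity step at the cost of a heavier combinatorial verification.
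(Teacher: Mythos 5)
Your architecture is the same as the paper's: induction on $n$ with Lemma~\ref{lemma:homogn2} as the base case, a ``shallow'' part built by summing copies of the lower-dimensional form (the paper's $q$ in (\ref{eq:defqv})), the observation that padded strictly dd Gram matrices add up to a matrix that is strictly dd on every row covered by at least one summand, and a small multiple of an explicit even form to fix the rows no summand touches. Your identification of the uncovered rows (those $y_ix^{\alpha}$ with $\operatorname{supp}(\alpha)\cup\{i\}=\{1,\dots,n\}$, which exist iff $d\ge n$) and the superadditivity estimate are correct. The genuine gap is that the decisive step is left undone: you do not construct the correction form $r_{n,2d}$, i.e.\ you do not exhibit weights $c_{\mu}$ for which a Gram matrix with the needed properties exists, and you yourself label this ``the crux.'' Moreover, the property you ask of $r$'s Gram matrix --- diagonally dominant with strictly positive diagonal on the deep indices --- is not sufficient as stated: on a deep row the $q$-part contributes an identically zero row, so strict diagonal dominance of $\hat{Q}_q+\varepsilon Q_r$ there must come from $\varepsilon Q_r$ alone, and weak dominance (which permits equality of the diagonal with the off-diagonal row sum) only yields a dd, not strictly dd, row. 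As written, the proof is therefore incomplete precisely where the work lies.

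The paper closes this step without any balancing of coefficients, and the difference is worth internalizing: the freedom is in the \emph{choice of Gram matrix}, not in the weights. It takes the correction to be the unit-coefficient sum $v$ of all even monomials of degree $2d$ in which every variable appears with positive exponent (see (\ref{eq:defqv})), and chooses a Gram representation of $y^TH_v(x)y$ (its Claim~2) in which all ``square'' terms $y_k^2x^{2\beta}$ are accounted for by diagonal entries --- these diagonal entries are strictly positive exactly on the problematic rows, since any such row $y_ix^{\alpha}$ arises from the fully supported exponent $\mu=\alpha+e_i$ --- while every cross term $y_jy_kx^{\gamma}$ with $j\neq k$ is written as a product of two basis monomials each omitting at least one $x$-variable and is routed into an off-diagonal entry of the block already handled by $q$. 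With this choice the rows missed by $q$ carry only (positive) diagonal entries of the correction, so they are strictly dd for \emph{every} $\alpha>0$, and the covered rows retain their dominance slack for $\alpha$ small, which is exactly the conclusion you were after. To complete your version you should either prove strict dominance of your $r$'s Gram matrix on the uncovered rows for explicit weights (the multivariate analogue of the recursions in Lemma~\ref{lemma:homogn2}, which is not immediate), or simply adopt the paper's device and make the uncovered block diagonal by an appropriate choice of Gram matrix for the plain sum of fully supported even monomials.
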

\begin{proof}
	We proceed by induction on $n$ with fixed and arbitrary $d$. The property is verified for $n=2$ by Lemma \ref{lemma:homogn2}. 
	Suppose that there exists a form $p_{n,2d} \in \mathcal{H}_{n,2d}$ such that 
	\begin{align}\label{eq:induction.hyp}
	y^TH_{p_{n,2d}}y=w^T_{n,d-1}(x,y)Q_{p_{n,2d}}w_{n,d-1}(x,y),
	\end{align}
	for some strictly dd matrix $Q_{p_{n,2d}}.$  We now show that 
	$$p_{n+1,2d}\mathrel{\mathop{:}}=q+\alpha v $$
	with
	\begin{equation} \label{eq:defqv}
	\begin{aligned}
	q&\mathrel{\mathop{:}}=\sum_{\{i_1,\ldots,i_n\} \in \{1,\ldots,n+1\}^n} p_{n,2d}(x_{i_1},\ldots,x_{i_n})\\
	v &\mathrel{\mathop{:}}=\sum_{2i_1+\ldots 2i_{n+1}=2d, i_1,\ldots,i_{n+1}>0} x_1^{2i_1}x_2^{2i_2}\ldots x_{n+1}^{2i_{n+1}},
	\end{aligned}
	\end{equation}
	and $\alpha>0$ small enough, verifies 
	\begin{align} \label{eq:induction.result}
	y^TH_{p_{n+1,2d}}y=w^T_{n+1,d-1}(x,y)Q_{p_{n+1,2d}}w_{n+1,d-1}(x,y),
	\end{align}
	for some strictly dd matrix $Q_{p_{n+1,2d}}$. Equation (\ref{eq:induction.result}) will actually be proved using an equivalent formulation that we describe now. {Recall that $$w_{n+1,d-1}(x,y)=y\cdot z_{n+1,d-1},$$ where $z_{n+1,d-1}$ is the standard vector of monomials in $x=(x_1,\ldots,x_{n+1})$ of degree exactly $d-1$. Let $\hat{w}_n$ be a vector containing all monomials from $w_{n+1,d-1}$  that include up to $n$ variables in $x$ and $\hat{w}_{n+1}$ be a vector containing all monomials from $w_{n+1,d-1}$ with exactly $n+1$ variables in $x$.} Obviously, $w_{n+1,d-1}$ is equal to $$\hat{w}\mathrel{\mathop{:}}=\begin{pmatrix} \hat{w}_n \\ \hat{w}_{n+1} \end{pmatrix}$$ up to a permutation of its entries. If we show that there exists a strictly dd matrix $\hat{Q}$ such that 
	\begin{align} \label{eq:induction.permutation}
	y^TH_{p_{n+1,2d}}(x)y=\hat{w}^T(x,y)\hat{Q}\hat{w}(x,y)
	\end{align}
	{then one can easily construct a strictly dd matrix $Q_{p_{n+1,2d}}$ such that (\ref{eq:induction.result}) will hold by simply permuting the rows of $\hat{Q}$ appropriately.}
	%then (\ref{eq:induction.result}) will hold, as the rows of $Q_{p_{n+1,2d}}$ will be a permutation of the rows of $\hat{Q}$. 
	
	%Denote by $\hat{Q}_q$ (resp. $\hat{Q}_{v}$) a matrix such that 
	%\begin{align}
	%y^TH_q(x)y&=\hat{w}^T(x,y)\hat{Q}_q\hat{w}(x,y) \label{eq:induction.q}\\
	%\text{ (resp. } y^TH_v (x) y&=\hat{w}^T(x,y)\hat{Q}_{v}\hat{w}(x,y)).\label{eq:induction.gam}
	%\end{align}
	{We now show the existence of such a $\hat{Q}$. To do this, we claim and prove the following:}
	\begin{itemize}
		\item \emph{Claim 1:} there exists a strictly dd matrix $\hat{Q}_q$ such that
		\begin{align}\label{eq:permut.q}
		y^TH_{q}(x)y=\begin{pmatrix} \hat{w}_n \\ \hat{w}_{n+1} \end{pmatrix}^T \begin{pmatrix} \hat{Q}_q & 0 \\ 0 & 0 \end{pmatrix} \begin{pmatrix} \hat{w}_n \\ \hat{w}_{n+1} \end{pmatrix}.
		\end{align} 
		\item \emph{Claim 2:} there exist a symmetric matrix $\hat{Q}_{v}$,  and $q_1,\ldots,q_m>0$ (where $m$ is the length of $\hat{w}_{n+1}$) such that
		\begin{align}\label{eq:permut.gam}
		y^TH_{v}(x)y=\begin{pmatrix} \hat{w}_n \\ \hat{w}_{n+1} \end{pmatrix}^T \begin{pmatrix} \hat{Q}_{v} & 0 \\ 0 & \text{diag}(q_1,\ldots,q_m)\end{pmatrix} \begin{pmatrix} \hat{w}_n \\ \hat{w}_{n+1} \end{pmatrix}.
		\end{align}
	\end{itemize}
	
	{Using these two claims and the fact that $p_{n+1,2d}=q+\alpha v$, we get that $$y^TH_{p_{n+1,2d}}(x)y=y^TH_q(x)y+\alpha y^T H_v(x)y=\hat{w}^T(x,y)\hat{Q}\hat{w}(x,y)$$ where $$\hat{Q}=\begin{pmatrix} \hat{Q}_q+\alpha \hat{Q}_v & 0 \\ 0 & \alpha \text{ diag}(q_1,\ldots,q_m) \end{pmatrix}.$$} As $\hat{Q}_q$ is strictly dd, we can pick $\alpha>0$ small enough such that $\hat{Q}_q+\alpha \hat{Q}_v$ is strictly dd. This entails that $\hat{Q}$ is strictly dd, and  (\ref{eq:induction.permutation}) holds.
	
	{It remains to prove the two claims to be done.}\\
	
	{\emph{Proof of Claim 1:} Claim 1 concerns the polynomial $q$, defined as the sum of polynomials $p_{n,2d}(x_{i_1},\ldots,x_{i_n})$. Note from (\ref{eq:induction.hyp}) that the Hessian of each of these polynomials has a strictly dd Gram matrix in the monomial vector $w_{n,d-1}.$ However, the statement of Claim 1 involves the monomial vector $\hat{w}_n$. So, we start by linking the two monomial vectors.}
	If we denote by $$M=\mathop{\cup}_{(i_1,\ldots,i_n) \in \{1,\ldots,n+1\}^n} \{\text{monomials in } w_{n,d-1}(x_{i_1},\ldots, x_{i_n},y)\},$$ then $M$ is exactly equal to $\hat{M}=\{\text{monomials in }\hat{w}_n(x,y)\}$ { as the entries of both are monomials of degree 1 in $y$ and of degree $d-1$ and in $n$ variables of $x=(x_1,\ldots,x_{n+1}).$
		
		By definition of $q$, we have that 
		\scalefont{0.69}
		\begin{align*}
		y^TH_qy=\sum_{(i_1,\ldots,i_n) \in \{1,\ldots,n+1\}^n} w_{n,d-1}(x_{i_1}, \ldots,x_{i_n},y)^T Q_{p_{n,2d}(x_{i_1},\ldots,x_{i_n})} w_{n,d-1}(x_{i_1}, \ldots,x_{i_n},y)
		\end{align*}
		\normalsize
		
		We now claim that there exists a strictly dd matrix $\hat{Q}_q$ such that $$y^TH_qy=\hat{w}_n^T \hat{Q}_q \hat{w}_n.$$ This matrix is constructed by padding the strictly dd matrices $Q_{p_{n,2d}(x_{i_1},\ldots,x_{i_n})}$ with rows of zeros and then adding them up. The sum of two rows that verify the strict diagonal dominance condition still verifies this condition. So we only need to make sure that there is no row in $\hat{Q}_q$ that is all zero. This is indeed the case because $\hat{M} \subseteq M.$}

	{ \emph{Proof of Claim 2:}} Let $I \mathrel{\mathop{:}}=\{i_1,\ldots,i_n~|~ i_1+\ldots+i_{n+1}=d, i_1,\ldots,i_{n+1}>0\}$ and $\hat{w}_{n+1}^i$ be the $i^{th}$ element of $\hat{w}_{n+1}$. To prove (\ref{eq:permut.gam}), we need to show that
	\begin{equation}\label{eq:hessian.gam.ondiag}
	\begin{aligned}
	y^TH_{v}(x)y=\sum_{i_1,\ldots,i_n \in I}\sum_{k=1}^{n+1}2i_k(2i_{k}-1)x_1^{2i_1}\ldots x_k^{2i_k-2}\ldots x_{n+1}^{2i_{n+1}}y_k^2 \\
	+4\sum_{i_1,\ldots,i_m \in I} \sum_{j \neq k} i_k i_j x_1^{2i_1}\ldots x_j^{2i_j-1}\ldots x_k^{2i_k-1}\ldots x_{n+1}^{2i_{n+1}}y_jy_k.
	\end{aligned}
	\end{equation}
	can equal
	\begin{align} \label{eq:expansion}
	\hat{w}_n^T(x,y)\hat{Q}_v \hat{w}_n^T(x,y)+\sum_{i=1}^m q_{i}(\hat{w}_{n+1}^i)^2
	\end{align}
	for some symmetric matrix $\hat{Q}_{v}$ and positive scalars $q_1,\ldots, q_m$.
	{ We first argue that all monomials contained in $y^TH_v(x)y$ appear in the expansion (\ref{eq:expansion}). This means that we do not need to use any other entry of the Gram matrix in (\ref{eq:permut.gam}). Since every monomial appearing in the first double sum of (\ref{eq:hessian.gam.ondiag}) involves only even powers of variables, it can be obtained via the diagonal entries of $Q_v$ together with the entries $q_1,\ldots,q_m.$ Moreover, since the coefficient of each monomial in this double sum is positive and since the sum runs over all possible monomials consisting of even powers in $n+1$ variables, we conclude that $q_i>0$, for $i=1,\ldots,m.$

		%It is clear that any monomial in the first term of (\ref{eq:hessian.gam.ondiag}) will feature in (\ref{eq:expansion}) via the diagonal terms of the matrix $\hat{Q}_{v}$ and the entries $q_1,\ldots,q_m$. Furthermore, it is not hard to see from the structure of $v$ in (\ref{eq:defqv}) that each $q_i>0$.
		
		Consider now any monomial contained in the second double sum  of (\ref{eq:hessian.gam.ondiag}). We claim that any such monomial can be obtained from off-diagonal entries in $\hat{Q}_v.$ To prove this claim, we show that it can be written as the product of two monomials $m'$ and $m''$ with $n$ or fewer variables in $x=(x_1,\ldots,x_{n+1})$. Indeed, at least two variables in the monomial must have degree less than or equal to $d-1$.} Placing one variable in $m'$ and the other variable in $m''$ and then filling up $m'$ and $m''$ with the remaining variables (in any fashion as long as the degrees at $m'$ and $m''$ equal $d-1$) yields the desired result.  

\end{proof}
\begin{proof}[of Theorem \ref{th:mainth}] Let $p_{n,2k} \in \mathcal{H}_{n,2k}$ be the form constructed in the proof of Lemma \ref{lemma:induction} which is in the interior of $\Sigma_D C_{n,2k}.$ Let $Q_{k}$ denote the strictly diagonally dominant matrix which was constructed to satisfy $$y^TH_{p_{n,2k}}y=w_{n,2k}^T(x,y)Q_{k}w_{n,2k}.$$ To prove Theorem \ref{th:mainth}, we take $$p \mathop{\mathrel{:}}=\sum_{k=1}^{d} p_{n,2k} \in \tilde{\mathcal{H}}_{n,2d}.$$ We have
	\begin{align*}
	y^TH_p(x)y&=\begin{pmatrix} w_{n,1}(x,y) \\ \vdots \\ w_{n,d-1}(x,y) \end{pmatrix}^T \begin{pmatrix} Q_1 & & \\ & \ddots &  \\ & & Q_d \end{pmatrix} \begin{pmatrix} w_{n,1}(x,y) \\ \vdots \\ w_{n,d-1}(x,y) \end{pmatrix}\\
	&=\tilde{w}_{n,d-1}(x,y)^TQ\tilde{w}_{n,d-1}(x,y).
	\end{align*} We observe that $Q$ is strictly dd, which shows that $p \in int(\tilde{\Sigma}_DC_{n,2d}).$  
\end{proof}

\begin{remark}
	If we had only been interested in showing that any polynomial in $\tilde{\mathcal{H}}_{n,2d}$ could be written as a difference of two sos-convex polynomials, this could have been easily done by showing that $p(x)=\left( \sum_i x_i^2 \right)^d \in int(\Sigma C_{n,2d})$. However, this form is not dsos-convex or sdsos-convex for all $n,d$ (e.g., for $n=3$ and $2d=8$). We have been unable to find a simpler proof for existence of sdsos-convex dcds that does not go through the proof of existence of dsos-convex dcds.
\end{remark}
\begin{remark}
	If we solve problem (\ref{eq:undom.dcd}) with the convexity constraint replaced by a dsos-convexity (resp. sdsos-convexity, sos-convexity) requirement, the same arguments used in the proof of Theorem \ref{thm:undom.dcd} now imply that the optimal solution $g^*$ is not dominated by any dsos-convex (resp. sdsos-convex, sos-convex) decomposition.
\end{remark} 

\section{Numerical results}\label{sec:numerical.results}
In this section, we present a few numerical results to show how our algebraic decomposition techniques affect the convex-concave procedure. The objective function $p \in \tilde{\mathcal{H}}_{n,2d}$ in all of our experiments is generated randomly following the ensemble of \cite[Section 5.1.]{Minimize_poly_Pablo}. This means that $$p(x_1,\ldots,x_n)=\sum_{i=1}^n x_i^{2d}+g(x_1,\ldots,x_n),$$ where $g$ is a random polynomial of total degree $\leq 2d-1$ whose coefficients are random integers uniformly sampled from $[-30,30].$ An advantage of polynomials generated in this fashion is that they are bounded below and that their minimum $p^*$ is achieved over $\mathbb{R}^n.$ {We have intentionally restricted ourselves to polynomials of degree equal to $4$ in our experiments as this corresponds to the smallest degree for which the problem of finding a dc decomposition of $f$ is hard, without being too computationally expensive. Experimenting with higher degrees however would be a worthwhile pursuit in future work.} The starting point of CCP was generated randomly from a zero-mean Gaussian distribution. 

One nice feature of our decomposition techniques is that all the polynomials $f_i^k, i=0,\ldots, m$ in line 4 of Algorithm \ref{alg:CCP} in the introduction are sos-convex. This allows us to solve the convex subroutine of CCP exactly via a single SDP \cite[Remark 3.4.]{Monique_Etienne_Convex}, \cite[Corollary 2.3.]{lasserre2009convexity}:
\begin{equation} \label{eq:Lasserre.hierarchy}
\begin{aligned}
&\min \gamma \\
&\text{s.t. } f_0^k -\gamma=\sigma_0+\sum_{j=1}^m \lambda_j f_j^k\\
&\sigma_0 \text{ sos}, ~\lambda_j \geq 0, j=1,\ldots,m.
\end{aligned}
\end{equation}
The degree of $\sigma_0$ here is taken to be the maximum degree of $f_0^k,\ldots, f_m^k$. We could have also solved these subproblems using standard descent algorithms for convex optimization. However, we are not so concerned with the method used to solve this convex problem as it is the same for all experiments. All of our numerical examples were done using MATLAB, the polynomial optimization library SPOT \cite{SPOT_Megretski}, and the solver MOSEK \cite{mosek}.

\subsection{Picking a good dc decomposition for CCP} \label{subsec:numexpOneDecomp}
In this subsection, we consider the problem of minimizing a random polynomial $f_0 \in \tilde{\mathcal{H}}_{8,4}$ over a ball of radius $R$, where $R$ is a random integer in $[20,50].$ The goal is to compare the impact of the dc decomposition of the objective on the performance of CCP. To monitor this, we decompose the objective in 4 different ways and then run CCP using the resulting decompositions. These decompositions are obtained through different SDPs that are listed in Table~\ref{tab:diff.objs}.

\begin{table}[h!]
	\centering
	\begin{tabular}{|c|c|c|c|}
		\hline
		Feasibility & $\lambda_{\max}H_h(x_0)$ & $\lambda_{\max,B} H_h$ & Undominated\\
		\hline
		& $\min t $ & $\min_{g,h} t$ & \\
		$\min 0$ &  $\text{s.t. } f_0=g-h,$ &  $\text{s.t. } f_0=g-h,$ & $\min\frac{1}{\mathcal{A}_n}\int \mbox{Tr} H_g d\sigma$ \\
		$\text{s.t. } f_0=g-h,$ & $g,h$ sos-convex & $g,h$ sos-convex &  $\text{s.t. } f_0=g-h,$ \\
		$g,h$ sos-convex & $tI-H_{h}(x_0) \succeq 0 $ & $y^T(tI-H_h(x)+f_1 \tau(x))y$ sos & $g,h$ sos-convex\\
		& & $y^T \tau(x) y$ \tablefootnote{Here, $\tau(x)$ is an $n \times n$ matrix where each entry is in $\tilde{\mathcal{H}}_{n,2d-4}$}  sos& \\
		\hline
	\end{tabular}
	\caption{Different decomposition techniques using sos optimization}
	\label{tab:diff.objs}
	\vspace{-10pt}
\end{table}

The first SDP in Table \ref{tab:diff.objs} is simply a feasibility problem. The second SDP minimizes the largest eigenvalue of $H_h$ at the initial point $x_0$ inputed to CCP. The third minimizes the largest eigenvalue of $H_h$ over the ball $B$ of radius $R$. Indeed, let $f_1 \mathrel{\mathop{:}}=\sum_i x_i^2-R^2.$ Notice that $\tau(x) \succeq 0, \forall x$ and if $x \in B$, then $f_1(x) \leq 0$. This implies that $tI \succeq H_h(x), \forall x \in B.$ The fourth SDP searches for an undominated dcd.

Once $f_0$ has been decomposed, we start CCP. After $4$ mins of total runtime, the program is stopped and we recover the objective value of the last iteration. This procedure is repeated on 30 random instances of $f_0$ and $R$, and the average of the results is presented in Figure \ref{fig:Onedecomp}.
\begin{figure}[h!]
	\centering
	\includegraphics[scale=0.45]{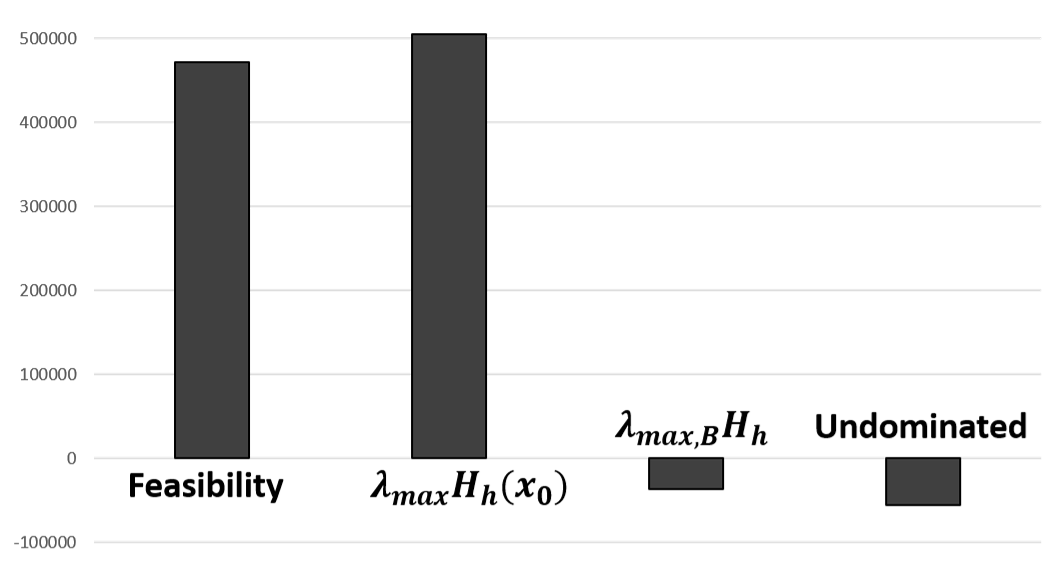}
	\caption{Impact of choosing a good dcd on CCP ($n=8,2d=4$)}
	\label{fig:Onedecomp}
\end{figure}
From the figure, we can see that the choice of the initial decomposition impacts the performance of CCP considerably, with the region formulation of $\lambda_{\max}$ and the undominated decomposition giving much better results than the other two. It is worth noting that all formulations have gone through roughly the same number of iterations of CCP (approx. 400). Furthermore, these results seem to confirm that it is best to pick an undominated decomposition when applying CCP.

\subsection{Scalibility of s/d/sos-convex dcds and the multiple decomposition CCP}\label{subsec:scalibility}

While solving the last optimization problem in Table \ref{tab:diff.objs} usually gives very good results, it relies on an sos-convex dc decomposition. However, this choice is only reasonable in cases where the number of variables and the degree of the polynomial that we want to decompose are low. When these become too high, obtaining an sos-convex dcd can be too time-consuming. The concepts of dsos-convexity and sdsos-convexity then become interesting alternatives to sos-convexity.
This is illustrated in Table \ref{tab:dsos}, where we have reported the time taken to solve the following decomposition problem:
\begin{equation} \label{eq:decomp.relax}
\begin{aligned}
&\min \frac{1}{\mathcal{A}_n} \int_{S^{n-1}} \mbox{Tr } H_g d\sigma\\
&\text{s.t. } f=g-h, g,h \text{ s/d/sos-convex}
\end{aligned}
\end{equation}
In this case, $f$ is a random polynomial of degree $4$ in $n$ variables. We also report the optimal value of (\ref{eq:decomp.relax}) (we know that (\ref{eq:decomp.relax}) is always guaranteed to be feasible from Theorem \ref{th:mainth}).
\begin{table}[h!] 
	\begin{tabular}{|c|c|c|c|c|c|c|c|c|}
		\hline
		&  \multicolumn{2}{|c|}{n=6} & \multicolumn{2}{|c|}{n=10} & \multicolumn{2}{|c|}{n=14} & \multicolumn{2}{|c|}{n=18}\\
		& Time & Value & Time & Value & Time & Value & Time & Value\\
		\hline dsos-convex & $<1s$ & 62090 & $<$1s & 168481 & 2.33s &136427 & 6.91s & 48457 \\
		\hline sdsos-convex & $<1s$ & 53557 & 1.11 s& 132376& 3.89s &99667 & 12.16s & 32875 \\
		\hline sos-convex & $<1s$ & 11602 & 44.42s &18346  &800.16s &9828 & 30hrs+ & ------\\
		\hline
	\end{tabular}
	\caption{Time and optimal value obtained when solving (\ref{eq:decomp.relax})}
	\label{tab:dsos}
	\vspace{-15pt}
\end{table}
Notice that for $n=18$, it takes over 30 hours to obtain an sos-convex decomposition, whereas the run times for s/dsos-convex decompositions are still in the range of 10 seconds. This increased speed comes at a price, namely the quality of the decomposition. For example, when $n=10$, the optimal value obtained using sos-convexity is nearly 10 times lower than that of sdsos-convexity. 

Now that we have a better quantitative understanding of this tradeoff, we propose a modification to CCP that leverages the speed of s/dsos-convex dcds for large $n$.
The idea is to modify CCP in such a way that one would compute a new s/dsos-convex decomposition of the functions $f_i$ after each iteration. Instead of looking for dcds that would provide good global decompositions (such as undominated sos-convex dcds), we look for decompositions that perform well locally. From Section \ref{sec:undominated}, candidate decomposition techniques for this task can come from formulations (\ref{eq:trace.point}) and (\ref{eq:lambda.max.point}) that minimize the maximum eigenvalue of the Hessian of $h$ at a point or the trace of the Hessian of $h$ at a point. This modified version of CCP is described in detail in Algorithm \ref{alg:itCCP}. We will refer to it as \emph{multiple decomposition CCP}.

We compare the performance of CCP and multiple decomposition CCP on the problem of minimizing a polynomial $f$ of degree 4 in $n$ variables, for varying values of $n$. In Figure \ref{fig:sdsos.vs.sos}, we present the optimal value (averaged over 30 instances) obtained after 4 mins of total runtime. The ``SDSOS" columns correspond to multiple decomposition CCP (Algorithm \ref{alg:itCCP}) with sdsos-convex decompositions at each iteration. The ``SOS" columns correspond to classical CCP where the first and only decomposition is an undominated sos-convex dcd. From Figure \ref{fig:Onedecomp}, we know that this formulation performs well for small values of $n$. This is still the case here for $n=8$ and $n=10$. However, this approach performs poorly for $n=12$ as the time taken to compute the initial decomposition is too long. In contrast, multiple decomposition CCP combined with sdsos-convex decompositions does slightly worse for $n=8$ and $n=10$, but significantly better for $n=12$.

\begin{algorithm}[h]
	\caption{ Multiple decomposition CCP ($\lambda_{\max}$ version)}
	\label{alg:itCCP}
	\begin{algorithmic}[1]
		\Require $x_0,~ f_i, i=0,\ldots,m$
		\State $k\leftarrow 0$
		\While{stopping criterion not satisfied}
		\State Decompose: $\forall i$ find $g_i^k,h_i^k$ s/d/sos-convex that min. $t$, s.t. $tI-H_{h_i^k}(x_k)$ s/dd\footnotemark and $f_i=g_i^k-h_i^k$
		\State Convexify: $f_i^{k}(x)\mathrel{\mathop{:}}=g_i^k(x)-(h_i^k(x_k)+\nabla h_i^k(x_k)^T(x-x_k)),~ i=0,\ldots,m$
		\State Solve convex subroutine: $\min f_0^k(x)$, s.t. $f_i^k(x) \leq 0, i=1,\ldots,m$
		\State $x_{k+1}\mathrel{\mathop{:}}= \underset{f_i^{k}(x) \leq 0}{\text{argmin}} f_0^k(x)$
		\State $k \leftarrow k+1$
		\EndWhile
		\Ensure $x_k$
	\end{algorithmic}
\end{algorithm}
\footnotetext{Here dd and sdd matrices refer to notions introduced in Definition \ref{def:dd.sdd}. Note that any $t$ which makes $tI-A$ dd or sdd gives an upperbound on $\lambda_{\max}(A).$ By formulating the problem this way (instead of requiring $tI\succeq A$) we obtain an LP or SOCP instead of an SDP.}

\begin{figure}[h!]
	\centering
	\includegraphics[scale=0.5]{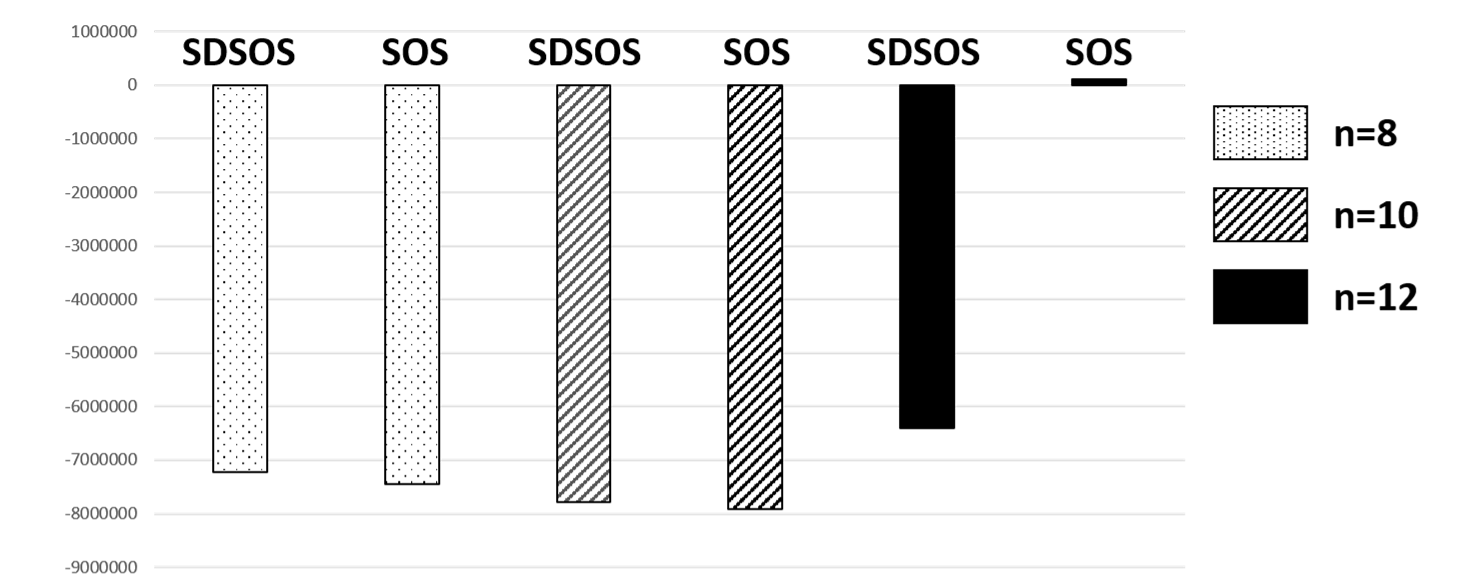}
	\caption{Comparing multiple decomposition CCP using sdsos-convex decompositions against CCP with a single undominated sos-convex decomposition}
	\label{fig:sdsos.vs.sos}
	\vspace{-15pt}
\end{figure}

In conclusion, our overall observation is that picking a good dc decomposition noticeably affects the perfomance of CCP. While optimizing over all dc decompositions is intractable for polynomials of degree greater or equal to $4$, the algebraic notions of sos-convexity, sdsos-convexity and dsos-convexity can provide valuable relaxations. The choice among these options depends on the number of variables and the degree of the polynomial at hand. Though these classes of polynomials only constitute subsets of the set of convex polynomials, we have shown that even the smallest subset of the three contains dcds for any polynomial.

%\bibliographystyle{abbrv}
%\bibliography{ch-basis_pursuit/pablo_amirali,ch-basis_pursuit/elib}

%\bibliographystyle{plain}
%\begin{thebibliography}{10}
%
%\bibitem{am}
%A. A. Ahmadi and A. Majumdar. 
%DSOS and SDSOS optimization: More tractable alternatives to SOS optimization.
%{\em Manuscript.}
%
%\bibitem{cl}
%  M. D. Choi and T. Y. Lam, Extremal positive semidefinite forms, {\em Math. Ann.} {\bf 231} 1--8 (1977).
%  
%\bibitem{dp}
%E. de Klerk and D. V. Pasechnik.  Approximation of the stability number of a graph via copositive programming. {\em SIAM Journal on Optimization}, {\bf 12}(4), 875-- 892 (2002).
%
%\bibitem{mot}
%T.S. Motzkin, The arithmetic-geometric inequality. Proc. Symposium on Inequalities (ed. O. Shisha), Academic Press, New York, 1967, pp. 205-–224.
%
%\bibitem{mk}
%K. G. Murty and S. N. Kabadi. Some NP-complete problems in quadratic and nonlinear
%programming. {\em Mathematical Programming}, {\bf 39} 117--129 (1987).
%  
%\bibitem{pp}
%P. A. Parrilo. Semidefinite programming relaxations for semialgebraic problems. Mathematical Programming {\bf 96} 293--320 (2003).
%\end{thebibliography}

\chapter{Polynomials Norms}\label{ch:polynorms}

%\keywords{Column generation, conic optimization, sum of squares programming }

%============================================================
\section{Introduction}
%============================================================

A function $f:\mathbb{R}^n \rightarrow \mathbb{R}$ is a \emph{norm} if it satisfies the following three properties:
\begin{enumerate}[(i)]
	\item positive definiteness: $f(x)>0, ~\forall x \neq 0,$ and $f(0)=0$.
	\item $1$-homogeneity: $f(\lambda x)=|\lambda| f(x),~ \forall x\in \mathbb{R}^n, ~\forall \lambda \in \mathbb{R}$.
	\item triangle inequality: $f(x+y)\leq f(x)+f(y), ~\forall x,y \in \mathbb{R}^n.$
\end{enumerate}
Some well-known examples of norms include the $1$-norm, $f(x)=\sum_{i=1}^n |x_i|$, the $2$-norm, $f(x)=\sqrt{\sum_{i=1}^n x_i^2}$, and the $\infty$-norm, $f(x)=\max_{i} |x_i|.$ Our focus throughout this chapter is on norms that can be derived from multivariate polynomials. More specifically, we are interested in establishing conditions under which the $d^{th}$ root of a homogeneous polynomial of degree $d$ is a norm, where $d$ is an even number. We refer to the norm obtained when these conditions are met as \emph{a polynomial norm}. It is easy to see why we restrict ourselves to $d^{th}$ roots of degree-$d$ homogeneous polynomials. Indeed, nonhomogeneous polynomials cannot hope to satisfy the homogeneity condition and homogeneous polynomials of degree $d>1$ are not 1-homogeneous unless we take their $d^{th}$ root. The question of when the square root of a homogeneous quadratic polynomial is a norm (i.e., when $d=2$) has a well-known answer (see, e.g., \cite[Appendix A]{BoydBook}): a function $f(x)=\sqrt{x^TQx}$ is a norm if and only if the symmetric $n \times n$ matrix $Q$ is positive definite. In the particular case where $Q$ is the identity matrix, one recovers the $2$-norm. Positive definiteness of $Q$ can be checked in polynomial time using for example Sylvester's criterion (positivity of the $n$ leading principal minors of $Q$). This means that testing whether the square root of a quadratic form is a norm can be done in polynomial time. A similar characterization in terms of conditions on the coefficients are not known for polynomial norms generated by forms of degree greater than 2. In particular, it is not known whether one can efficiently test membership or optimize over the set of polynomial norms. 

\paragraph{Outline and contributions.} In this chapter, we study polynomial norms from a computational perspective. In Section \ref{sec:eq.charac.comp}, we give two different necessary and sufficient conditions under which the $d^{th}$ root of a degree-$d$ form $f$ will be a polynomial norm: namely, that $f$ be strictly convex (Theorem \ref{th:norm.str.conv}), or (equivalently) that $f$ be convex and postive definite (Theorem \ref{th:norm.conv.pd}). Section \ref{sec:approx.norms} investigates the relationship between general norms and polynomial norms: while many norms are polynomial norms (including all $p$-norms with $p$ even), some norms are not (consider, e.g., the $1$-norm). We show, however, that any norm can be approximated to arbitrary precision by a polynomial norm (Theorem \ref{th:approx.poly.norm.sphere}). In Section \ref{sec:sos.approx}, we move on to complexity results and show that simply testing whether the $4^{th}$ root of a quartic form is a norm is strongly NP-hard (Theorem \ref{th:NP.hard}). We then provide a semidefinite programming-based test for checking whether the $d^{th}$ root of a degree $d$ form is a norm (Theorem \ref{th:test.poly.norm}) and a semidefinite programming-based hierarchy to optimize over a subset of the set of polynomial norms (Theorem \ref{th:opt.poly.norms}). The latter is done by introducing the concept of $r$-sum of squares-convexity (see Definition \ref{def:r.sos.convex}). We show that any form with a positive definite Hessian is $r$-sos-convex for some value of $r$, and present a lower bound on that value (Theorem \ref{th:r.sos.convex}). We also show that the level $r$ of the semidefinite programming hierarchy cannot be bounded as a function of the number of variables and the degree only (Theorem \ref{th:unif.bnd}). Finally, we cover a few applications of polynomial norms in statistics and dynamical systems in Section \ref{sec:apps}. In Section \ref{sec:norm.reg}, we compute approximations of two different types of norms, polytopic gauge norms and $p$-norms with $p$ noneven, using polynomial norms. The techniques described in this section can be applied to norm regression. In Section \ref{sec:JSR.comp}, we use polynomial norms to prove stability of a switched linear system, a task which is equivalent to computing an upperbound on the joint spectral radius of a family of matrices.

%==============================================
\section{Two equivalent characterizations of polynomial norms} \label{sec:eq.charac.comp}
%==================================================
We start this section with two theorems that provide conditions under which the $d^{th}$ root of a degree-$d$ form is a norm. These will be useful in Section \ref{sec:sos.approx} to establish semidefinite programming-based approximations of polynomial norms. Note that throughout this chapter, $d$ is taken to be an even positive integer. 

\begin{theorem}\label{th:norm.conv.pd}
	The $d^{th}$ root of a degree-$d$ form $f$ is a norm if and only if $f$ is convex and positive definite.
\end{theorem}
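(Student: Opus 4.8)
The plan is to prove both directions of the equivalence, with most of the work going into showing that convexity plus positive definiteness of $f$ implies that $g\mathrel{\mathop{:}}=f^{1/d}$ satisfies the triangle inequality; the other properties and the reverse implication are comparatively quick.

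First I would dispatch the easy direction. Suppose $g=f^{1/d}$ is a norm. Positive definiteness of $g$ is exactly positive definiteness of $f$ (since $g(x)>0 \iff f(x)>0$, and $g(0)=0 \iff f(0)=0$). For convexity of $f$, note that $f=g^d$ is the composition of the convex nondecreasing function $t\mapsto t^d$ on $[0,\infty)$ with the nonnegative convex function $g$ (convex because it is a norm), hence $f$ is convex. This handles ``only if.''

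For the ``if'' direction, assume $f$ is convex and positive definite. Properties (i) and (ii) of a norm are immediate: positive definiteness of $g$ follows from positive definiteness of $f$, and $1$-homogeneity follows from $d$-homogeneity of $f$ together with $d$ being even, so $g(\lambda x)=(f(\lambda x))^{1/d}=(|\lambda|^d f(x))^{1/d}=|\lambda|\,g(x)$. The substantive step is the triangle inequality $g(x+y)\le g(x)+g(y)$. I would argue as follows: since $f$ is positive definite and homogeneous, $g$ is a well-defined nonnegative, continuous, $1$-homogeneous function that vanishes only at the origin. It then suffices to show $g$ is convex, because a nonnegative, $1$-homogeneous convex function automatically satisfies the triangle inequality (for $x,y$, write $x+y = 2\left(\tfrac12 x + \tfrac12 y\right)$ and use $1$-homogeneity and convexity: $g(x+y) = 2g\left(\tfrac12 x+\tfrac12 y\right)\le 2\left(\tfrac12 g(x)+\tfrac12 g(y)\right)=g(x)+g(y)$). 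So the crux reduces to: \emph{if $f$ is a convex positive definite form of even degree $d$, then $f^{1/d}$ is convex.} To prove this, I would show that $g = f^{1/d}$ is the pointwise supremum of a family of linear functions, namely $g(x)=\sup\{\langle c,x\rangle : c \in C\}$ where $C\mathrel{\mathop{:}}=\{c\in\mathbb{R}^n : \langle c,x\rangle \le g(x)\ \forall x\}$ is the polar-type set associated to $g$; any supremum of linear functions is convex. The key point needed here is that $C$ is ``rich enough'' to recover $g$, i.e. that for every $x_0\neq 0$ there is $c\in C$ with $\langle c,x_0\rangle = g(x_0)$. This is where convexity and positive definiteness of $f$ get used: positive definiteness guarantees the sublevel set $K\mathrel{\mathop{:}}=\{x : f(x)\le 1\}=\{x:g(x)\le 1\}$ is compact and has nonempty interior (it contains a neighborhood of the origin), and convexity of $f$ makes $K$ a convex body. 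Then $g$ is precisely the gauge (Minkowski functional) of the convex body $K$, and it is a standard fact that the gauge of a convex body containing the origin in its interior is a convex, $1$-homogeneous, nonnegative function; supporting hyperplanes of $K$ at boundary points (which exist by the supporting hyperplane theorem for convex sets) furnish the required elements of $C$. I would present this via the gauge-function route rather than the explicit supporting-hyperplane construction, citing the relevant convex analysis facts (e.g.\ from \cite{BoydBook}).

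The main obstacle I anticipate is making the logical flow airtight around the equivalence ``$f$ convex and positive definite'' $\iff$ ``$K=\{f\le 1\}$ is a convex body with $0$ in its interior'' $\iff$ ``$g = $ gauge of $K$.'' In particular one must be careful that $d$-homogeneity of $f$ (with $d$ even) correctly translates into $1$-homogeneity of the gauge — concretely, that $g(x)\le 1 \iff f(x)\le 1$ and $g(tx)=t\,g(x)$ for $t\ge 0$, which uses $f(tx)=t^d f(x)$ and the monotonicity of $t\mapsto t^{1/d}$. A secondary subtlety is that convexity of $f$ alone does not give convexity of its sublevel set unless one also knows $f$ is, say, quasiconvex, but convexity implies quasiconvexity, so $K$ is indeed convex; and boundedness of $K$ needs positive definiteness (a convex positive definite form is coercive, since its minimum on the unit sphere is positive and it is $d$-homogeneous). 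Once these elementary but slightly delicate points are nailed down, the proof is complete. I would close by remarking that this characterization, combined with Theorem~\ref{th:norm.str.conv}, shows convexity-plus-positive-definiteness and strict convexity of a form are equivalent conditions for its $d^{th}$ root to be a norm.
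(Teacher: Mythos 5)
Your proof is correct, and both directions rest on the same substance as the paper's: the ``only if'' direction is the identical composition argument, and for the ``if'' direction both proofs pivot on the observation that $\{x \,:\, f(x)\le 1\}=\{x \,:\, f^{1/d}(x)\le 1\}$ is convex. Where you genuinely differ is in how the triangle inequality is extracted from that observation. The paper does it in two lines: with $g=f^{1/d}$, the points $x/g(x)$ and $y/g(y)$ lie in the unit sublevel set, the convex combination with weights $g(x)/(g(x)+g(y))$ and $g(y)/(g(x)+g(y))$ stays in it, and $1$-homogeneity of $g$ immediately yields $g(x+y)\le g(x)+g(y)$. You instead identify $g$ with the Minkowski gauge of the convex body $K=\{x\,:\,f(x)\le 1\}$ (compact by positive definiteness and coercivity, with $0$ in its interior), prove convexity of $g$ by writing it as a supremum of supporting linear functionals, and then deduce subadditivity from convexity plus $1$-homogeneity. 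Your route is sound: the identification of $g$ with the gauge does follow from $f(tx)=t^d f(x)$ and monotonicity of $t\mapsto t^{1/d}$, and the supporting-hyperplane step works because $0\in \mathrm{int}(K)$ guarantees that a supporting hyperplane at a boundary point of $K$ does not pass through the origin, so it can be normalized as $\langle c,x\rangle=1$ and scaled by homogeneity to give $\langle c,x\rangle\le g(x)$ everywhere with equality at the chosen point. The trade-off is that you invoke more convex-analysis machinery (gauges, the supporting hyperplane theorem) for a fact the paper obtains by an elementary self-contained computation; in exchange, your argument makes explicit the conceptual picture that polynomial norms are exactly the gauges of convex bodies arising as unit sublevel sets of convex positive definite forms, which dovetails nicely with the equivalence to strict convexity in Theorem~\ref{th:norm.str.conv} that you note at the end.
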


\begin{proof}
	If $f^{1/d}$ is a norm, then $f^{1/d}$ is positive definite, and so is $f$. Furthermore, any norm is convex and the $d^{th}$ power of a nonnegative convex function remains convex.
	
	Assume now that $f$ is convex and positive definite. We show that $f^{1/d}$ is a norm. Positivity and homogeneity are immediate. It remains to prove the triangle inequality. Let $g\mathrel{\mathop{:}}=f^{1/d}$. Denote by $S_f$ and $S_g$ the 1-sublevel sets of $f$ and $g$ respectively. It is clear that $$S_g=\{x~|~f^{1/d}(x)\leq 1\}=\{x~|~f(x)\leq 1\}=S_f,$$
	and as $f$ is convex, $S_f$ is convex and so is $S_g$. Let $x,y \in \mathbb{R}^n.$ We have that $\frac{x}{g(x)} \in S_g$ and $\frac{y}{g(y)} \in S_g$. From convexity of $S_g$,
	$$g \left( \frac{g(x)}{g(x)+g(y)} \cdot \frac{x}{g(x)}+\frac{g(y)}{g(x)+g(y)} \cdot \frac{y}{g(y)}  \right) \leq 1.$$
	Homogeneity of $g$ then gives us $$\frac{1}{g(x)+g(y)}g(x+y)\leq 1$$ which shows that triangle inequality holds.
\end{proof}

\begin{theorem}\label{th:norm.str.conv}
	The $d^{th}$ root of a degree-$d$ form $f$ is a norm if and only if $f$ is strictly convex, i.e., $$f(\lambda x+ (1-\lambda)y)<\lambda f(x)+(1-\lambda)f(y),~\forall x\neq y,~\forall \lambda \in (0,1).$$
\end{theorem}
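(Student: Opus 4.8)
The plan is to deduce this theorem from the already-established Theorem~\ref{th:norm.conv.pd}, which characterizes polynomial norms as $d^{th}$ roots of convex positive definite forms. So the real content is to show, for a degree-$d$ form $f$ (with $d$ even), that
\[
f \text{ is strictly convex} \quad \Longleftrightarrow \quad f \text{ is convex and positive definite.}
\]
Once this equivalence is in hand, the theorem follows immediately by chaining it with Theorem~\ref{th:norm.conv.pd}.

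For the forward direction, suppose $f$ is strictly convex. Strict convexity trivially implies convexity, so it remains to show positive definiteness. Here I would use homogeneity crucially: for any $x \neq 0$, consider the points $x$ and $-x$, which are distinct. Since $f$ is a form of even degree, $f(-x) = f(x)$, and the midpoint is $0$, so strict convexity gives
\[
f(0) = f\!\left(\tfrac{1}{2}x + \tfrac{1}{2}(-x)\right) < \tfrac{1}{2}f(x) + \tfrac{1}{2}f(-x) = f(x).
\]
Since $f(0) = 0$ (every form of positive degree vanishes at the origin), this yields $f(x) > 0$ for all $x \neq 0$, i.e., positive definiteness.

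For the reverse direction, suppose $f$ is convex and positive definite; I want strict convexity. Suppose for contradiction that $f$ is affine on some segment, i.e., there exist $x \neq y$ and $\lambda \in (0,1)$ with $f(\lambda x + (1-\lambda)y) = \lambda f(x) + (1-\lambda)f(y)$. A standard convexity fact then gives that $f$ restricted to the whole line through $x$ and $y$ is affine on the segment $[x,y]$ (in fact the equality propagates to the entire segment). Restrict $f$ to this line: writing $\phi(t) = f(x + t(y-x))$, we get that $\phi$ is affine on $[0,1]$ while being a univariate polynomial of degree at most $d$. The main obstacle — and the step requiring the most care — is to extract a genuine contradiction with positive definiteness from this. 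The idea is that an affine piece of a convex function forces the convex function to stay below its affine extension on one side; more precisely, I would use that $\phi$ convex and affine on $[0,1]$ implies $\phi$ is affine on all of $[0, \infty)$ or exhibits a specific monotonic behavior, and then push to where the line $x + t(y-x)$ crosses a ray through the origin. Concretely: pick a direction $v$ on this affine segment such that the line $\{sv : s \in \mathbb{R}\}$ through the origin is parallel to $y - x$ only if $x, y, 0$ are collinear; in general one reduces to the collinear case by a limiting/translation argument, or more cleanly, one invokes that strict convexity of a form is equivalent to the Hessian being positive definite off the origin — but since we only have convexity here, I would instead argue directly: convexity plus the existence of an affine segment, combined with $f(0)=0$ and homogeneity, forces $f$ to be affine on a ray emanating from a nonzero point, and evaluating at a point of that ray scaled toward $0$ contradicts either positive definiteness or the growth rate $f(\mu z) = \mu^d f(z)$ of a degree-$d$ form. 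The cleanest route is: if $f$ is not strictly convex, a known result (e.g., \cite{ahmadi2013complete} or elementary) shows its Hessian $H_f(x_0)$ is singular at some $x_0 \neq 0$ with null vector $v$; then along $t \mapsto f(x_0 + tv)$ the second derivative vanishes at $t=0$, and by analyzing this univariate polynomial together with Euler's identity $x^T H_f(x) x = d(d-1) f(x)$, one finds a nonzero point where $f$ vanishes, contradicting positive definiteness. I will need to verify this Hessian-degeneracy argument carefully, as it is the crux.

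Assembling the pieces: strict convexity $\Rightarrow$ convexity $+$ positive definiteness (via the midpoint-of-$\pm x$ argument), and convexity $+$ positive definiteness $\Rightarrow$ strict convexity (via the Hessian/Euler argument above). Combined with Theorem~\ref{th:norm.conv.pd}, which states $f^{1/d}$ is a norm iff $f$ is convex and positive definite, this gives $f^{1/d}$ is a norm iff $f$ is strictly convex, as claimed.
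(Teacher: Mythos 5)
Your overall strategy is exactly the paper's: reduce the theorem to the equivalence ``strictly convex $\Leftrightarrow$ convex and positive definite'' and then invoke Theorem~\ref{th:norm.conv.pd}. Your forward direction is fine (the paper uses the first-order strict convexity inequality at $x=0$ with $\nabla f(0)=0$, while you use the midpoint of $x$ and $-x$ together with evenness of $d$; both are valid).

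The reverse direction, however, has a genuine gap, and it lies precisely in the step you flagged as the crux. Your ``cleanest route'' is: if $f$ is convex, positive definite, but not strictly convex, then $H_f(x_0)$ is singular at some $x_0\neq 0$ with null vector $v$, and from $\phi(t)=f(x_0+tv)$ having $\phi''(0)=0$ plus Euler's identity you hope to produce a nonzero zero of $f$. This inference cannot work: singularity of the Hessian at a single nonzero point is perfectly compatible with convexity and positive definiteness, and even with strict convexity. The paper's own Remark~\ref{rem:pd.hess.str.conv} gives the counterexample $f(x_1,x_2)=x_1^4+x_2^4$, which is strictly convex and positive definite, yet $H_f(1,0)=\mathrm{diag}(12,0)$ is singular with null vector $v=e_2$; here $\phi(t)=1+t^4$ has $\phi''(0)=0$ and $f$ never vanishes off the origin. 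For the same reason, your parenthetical claim that ``strict convexity of a form is equivalent to the Hessian being positive definite off the origin'' is false (only the implication from a positive definite Hessian to strict convexity holds). The information you must exploit is not degeneracy of the Hessian at one point but the full failure of strict convexity: equality in the convexity inequality at one interior $\lambda$ forces $f$ to be affine on the whole segment $[x,y]$, hence the univariate polynomial $\phi(t)=f(x+t(y-x))$ agrees with an affine function on an interval and is therefore affine on all of $\mathbb{R}$. Then positive definiteness gives $\phi\geq 0$ everywhere, so an affine $\phi$ must be constant; but positive definiteness plus degree-$d$ homogeneity makes $f$ coercive, so $\phi(t)\to\infty$ as $t\to\infty$, a contradiction. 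This is exactly how the paper closes the argument, and it is the part your earlier sketch gestures at before you pivot to the Hessian route; as written, your proposal does not complete it.
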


\begin{proof}
	We will show that a degree-d form $f$ is strictly convex if and only $f$ is convex and positive definite. The result will then follow from Theorem~\ref{th:norm.conv.pd}.
	
	Suppose $f$ is strictly convex, then the first-order characterization of strict convexity gives us that $$f(y)>f(x)+\nabla f(x)^T(y-x), ~\forall y\neq x.$$ For $x=0$, the inequality becomes $f(y)>0, ~\forall y\neq 0 $, as $f(0)=0$ and $\nabla f(0)=0$. Hence, $f$ is positive definite. Of course, a strictly convex function is also convex. 
	
	Suppose now that $f$ is convex, positive definite, but not strictly convex, i.e., there exists $\bar{x},\bar{y} \in \mathbb{R}^n$ with $\bar{x}\neq \bar{y}$, and $\gamma \in (0,1)$ such that $$f\left(\gamma \bar{x}+(1-\gamma)\bar{y} \right)=\gamma f(\bar{x})+ (1-\gamma) f(\bar{y}).$$ Let $g(\alpha)\mathrel{\mathop{:}}=f(\bar{x}+\alpha(\bar{y}-\bar{x})).$ Note that $g$ is a restriction of $f$ to a line and, consequently, $g$ is a convex, positive definite, univariate polynomial in $\alpha$. We now define \begin{align}\label{eq:expression.g}
	h(\alpha)\mathrel{\mathop{:}}=g(\alpha)-(g(1)-g(0))\alpha-g(0).
	\end{align} Similarly to $g$, $h$ is a convex univariate polynomial as it is the sum of two convex univariate polynomials. We also know that $h(\alpha)\geq 0, \forall \alpha \in (0,1)$. Indeed, by convexity of $g$, we have that $g(\alpha x+(1-\alpha)y)\geq \alpha g(x)+(1-\alpha)g(y), \forall x,y \in \mathbb{R}$ and $\alpha \in (0,1)$. This inequality holds in particular for $x=1$ and $y=0$, which proves the claim. Observe now that $h(0)=h(1)=0$. By convexity of $h$ and its nonnegativity over $(0,1)$, we have that $h(\alpha)=0$ on $(0,1)$ which further implies that $h=0$. Hence, from (\ref{eq:expression.g}), $g$ is an affine function. As $g$ is positive definite, it cannot be that $g$ has a nonzero slope, so $g$ has to be a constant. But this contradicts that $\lim_{\alpha \rightarrow \infty} g(\alpha) =\infty.$ To see why this limit must be infinite,  we show that $\lim_{||x|| \rightarrow \infty} f(x)=\infty.$ As $\lim_{\alpha \rightarrow \infty} ||\bar{x}+\alpha(\bar{y}-\bar{x})||=\infty$ and $g(\alpha)=f(\bar{x}+\alpha(\bar{y}-\bar{x}))$, this implies that $\lim_{\alpha \rightarrow \infty} g(\alpha)=\infty.$ To show that $\lim_{||x|| \rightarrow \infty} f(x)=\infty$, let $$x^*=\underset{||x||=1}{\text{argmin }} f(x).$$ By positive definiteness of $f$, $f(x^*)>0.$ Let $M$ be any positive scalar and define $R\mathrel{\mathop{:}}=(M/f(x^*))^{1/d}$. Then for any $x$ such that $||x||=R$, we have $$f(x)\geq \min_{||x||=R} f(x) \geq R^d f(x^*)=M,$$ where the second inequality holds by homogeneity of $f.$ Thus $\lim_{||x|| \rightarrow \infty} f(x)=\infty$.
\end{proof}

%===================================================
\section{Approximating norms by polynomial norms}  \label{sec:approx.norms}
%===================================================

It is easy to see that not all norms are polynomial norms. For example, the 1-norm $||x||_1=\sum_{i=1}^n |x_i|$ is not a polynomial norm. Indeed, all polynomial norms are differentiable at all but one point (the origin) whereas the 1-norm is nondifferentiable whenever one of the components of $x$ is equal to zero. In this section, we show that, though not every norm is a polynomial norm, any norm can be approximated to arbitrary precision by a polynomial norm (Theorem \ref{th:approx.poly.norm.sphere}). The proof of this theorem is inspired from a proof by Ahmadi and Jungers in \cite{sosconvex_Lyap_cdc,Ahmadi_Jungers}. A related result is given by Barvinok in \cite{barvinok2003}. In that chapter, he shows that any norm can be approximated by the $d$-th root of a nonnegative  degree-$d$ form, and quantifies the quality of the approximation as a function of $n$ and $d$. The form he obtains however is not shown to be convex. In fact, in a later work \cite[Section 2.4]{barvinok2006computational}, Barvinok points out that it would be an interesting question to know whether any norm can be approximated by the $d^{th}$ root of a convex form with the same quality of approximation as for $d$-th roots of nonnegative forms. The result below is a step in that direction though no quantitative result on the quality of approximation is given. Throughout, $S^{n-1}$ denotes the unit sphere in $\mathbb{R}^n.$

\begin{theorem} \label{th:approx.poly.norm.sphere}
	Let $||\cdot ||$ be any norm on $\mathbb{R}^n.$ For any $\epsilon>0$, there exist an even integer $d$ and a convex positive definite form $f$ of degree $d$ such that $$\max_{x \in S^{n-1}} |~f^{1/d}(x)-||x||~| \leq \epsilon.$$
\end{theorem}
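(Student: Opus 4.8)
The plan is to approximate the unit ball $K$ of $\|\cdot\|$ by a smooth, strictly convex body whose boundary is an algebraic hypersurface, and then read off the polynomial norm from that body. First I would fix $\epsilon>0$ and use homogeneity to reduce the statement to approximating $K=\{x:\|x\|\le 1\}$ in the Hausdorff sense: if a convex positive definite form $f$ of even degree $d$ has $1$-sublevel set $K_f$ satisfying $(1-\delta)K\subseteq K_f\subseteq (1+\delta)K$, then $|f^{1/d}(x)-\|x\||\le \delta'$ uniformly on $S^{n-1}$ with $\delta'\to 0$ as $\delta\to 0$ (this is just the standard equivalence between Hausdorff closeness of sublevel sets and uniform closeness of the associated gauges, using that all norms on $\mathbb{R}^n$ are comparable). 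So it suffices to produce, for every $\delta>0$, a convex positive definite form $f$ whose $1$-sublevel set is sandwiched between $(1-\delta)K$ and $(1+\delta)K$.

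Next I would construct a candidate form by an averaging/smoothing argument. Pick finitely many unit vectors $a_1,\dots,a_m$ whose convex hull of $\{\pm a_i\}$ approximates $K^\circ$ (the polar body) from inside to within $\delta$; equivalently, the polytope $P=\{x: |a_i^Tx|\le 1,\ i=1,\dots,m\}$ satisfies $K\subseteq P\subseteq (1+\delta)K$ after suitable scaling. The polytopic norm $\max_i |a_i^Tx|$ is not smooth, but for large even $d$ the form $g_d(x)=\sum_{i=1}^m (a_i^Tx)^d$ satisfies $\big(\sum_i (a_i^Tx)^d\big)^{1/d}\to \max_i|a_i^Tx|$ uniformly on $S^{n-1}$ as $d\to\infty$. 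Thus for $d$ large enough $g_d^{1/d}$ is within $\delta$ of the polytopic norm, hence within $O(\delta)$ of $\|\cdot\|$. The form $g_d$ is clearly nonnegative and, as a sum of $d$-th powers of linear forms with the $a_i$ spanning $\mathbb{R}^n$, it is positive definite. The remaining issue is convexity: $g_d$ itself need not be convex. To fix this I would instead take $f(x) = g_d(x) + \eta\, (x_1^2+\cdots+x_n^2)^{d/2}$ for a small $\eta>0$; the term $(\sum_i x_i^2)^{d/2}$ is strictly convex (its $d$-th root is the Euclidean norm), but adding a fixed small multiple of it to $g_d$ does not restore convexity of the sum in general either.

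The honest route to convexity, and the step I expect to be the main obstacle, is therefore to smooth the polytope $P$ directly at the level of convex bodies rather than to patch up $g_d$. Concretely, I would convolve the indicator-type support function of $P$, or better, take the convex body $K_f$ defined as the $1$-sublevel set of a mollified gauge, and then invoke a result that a $C^\infty$ strictly convex body with positive curvature everywhere can be approximated arbitrarily well (in Hausdorff distance) by an algebraic one whose defining polynomial is convex; this is essentially the content of the Ahmadi--Jungers argument \cite{sosconvex_Lyap_cdc,Ahmadi_Jungers} adapted to the homogeneous setting, where one first smooths $\|\cdot\|$ to a norm $N$ that is $C^\infty$ away from the origin and has a positive definite Hessian on $S^{n-1}$, and then approximates the smooth positive function $N^d$ on $S^{n-1}$ by a polynomial $q$ via Weierstrass, homogenizes $q$ to a form $f$ of degree $d$, and checks that $f$ inherits strict convexity from $N^d$ because the Hessian of $N^d$ is positive definite on the sphere by a margin and a small enough polynomial perturbation preserves this. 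Strict convexity of $f$ then gives, via Theorem \ref{th:norm.str.conv}, that $f^{1/d}$ is a norm, and the uniform closeness $\max_{x\in S^{n-1}}|f^{1/d}(x)-N(x)|\le \epsilon/2$ together with $\max_{x\in S^{n-1}}|N(x)-\|x\||\le\epsilon/2$ finishes the proof. The delicate points to get right are (a) that a general norm can be $C^2$-approximated on the sphere by a norm with everywhere positive definite Hessian, which requires a genuine convex mollification and not a naive one, and (b) the quantitative statement that positive definiteness of the Hessian of $N^d$ on the compact set $S^{n-1}$ is stable under $C^2$-small perturbations, so that the Weierstrass approximant, once homogenized, is still convex.
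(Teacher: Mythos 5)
The central obstacle you identify is illusory, and this is the crux of the matter. For even $d$, the form $g_d(x)=\sum_{i=1}^m (a_i^Tx)^d$ \emph{is} convex: each summand $(a_i^Tx)^d$ is the composition of the convex univariate function $t\mapsto t^d$ with a linear map, hence convex, and a sum of convex functions is convex. Your claim that ``$g_d$ itself need not be convex'' (and likewise that adding $\eta(\sum_i x_i^2)^{d/2}$ ``does not restore convexity'') is false, and it is exactly what derails the proposal. Once this is corrected, your first construction essentially \emph{is} the paper's proof: the paper picks finitely many supporting hyperplanes of a slightly shrunken unit ball, sets $f(y)=\sum_i \bigl(v_i^Ty/v_i^Tx_i\bigr)^d$, observes it is convex precisely because it is a sum of even powers of linear forms, chooses $d$ large enough (the $1/N$ threshold, playing the role of your $m^{1/d}\to 1$ factor) so that the $1$-sublevel set is sandwiched between the shrunken ball and the unit ball, and gets positive definiteness from boundedness of that sublevel set (your spanning argument for the $a_i$ works equally well). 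Your reduction from uniform closeness on $S^{n-1}$ to the two-sided gauge inequality is fine and matches the paper's Lemma on sublevel-set sandwiching.

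The fallback route you propose instead (mollify the norm to get a positive definite Hessian on the sphere, approximate $N^d$ by Weierstrass, homogenize, argue stability of strict convexity under $C^2$-small perturbations) is not a proof as written: you yourself flag steps (a) and (b) as the delicate points and do not supply them, and homogenizing a Weierstrass approximant is not a $C^2$-small perturbation of $N^d$ in any automatic sense (homogenization alters the Hessian radially, and degree/parity bookkeeping with powers of $\sum_i x_i^2$ is needed). So as submitted, the proposal abandons a correct and nearly complete argument because of a false convexity claim, and replaces it with a sketch containing acknowledged gaps.
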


Note that, from Theorem~\ref{th:norm.conv.pd}, $f^{1/d}$ is a polynomial norm as $f$ is a convex positive definite form. To show this result, we start with the following lemma.

\begin{lemma}\label{lem:approx.level.set}
	Let $|| \cdot||$ be any norm on $\mathbb{R}^n$. For any $\epsilon>0$, there exist an even integer $d$ and an $n$-variate convex positive definite form $f$ of degree $d$ such that 
	\begin{align}\label{eq:norm.approx.level.set}
	||x|| \leq f^{1/d}(x) \leq (1+\epsilon) ||x||,~ \forall x \in \mathbb{R}^n.
	\end{align}
\end{lemma}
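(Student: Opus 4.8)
\textbf{Proof strategy for Lemma \ref{lem:approx.level.set}.} The plan is to approximate the unit ball $B\mathrel{\mathop{:}}=\{x~|~||x|| \leq 1\}$ of the given norm from outside by a smooth, strictly convex, symmetric body whose gauge (Minkowski functional) will be the polynomial norm we seek. First I would fix $\epsilon>0$ and consider the slightly inflated ball $(1+\epsilon/2)B$. Since $B$ is a convex body symmetric about the origin, I can cover its boundary by finitely many supporting halfspaces: for each $y$ on the boundary of $B$ there is a linear functional $\ell_y$ with $\ell_y(y)=1$ and $\ell_y(x) \leq 1$ on $B$; by compactness of $\partial B$ finitely many of these, say $\ell_1,\ldots,\ell_m$ (closed under $\ell \mapsto -\ell$), already satisfy $\max_i \ell_i(x) \geq 1/(1+\epsilon/4)$ for all $x \in \partial B$, hence $P(x)\mathrel{\mathop{:}}=\max_i \ell_i(x)$ satisfies $||x|| \leq (1+\epsilon/4)\, P(x) \leq \ldots$ — more precisely, $P$ is a polytopic norm with $\frac{1}{1+\epsilon/4}||x|| \leq P(x) \leq ||x||$. (This is the standard polytopic sandwiching of an arbitrary norm.)

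Next I would smooth the polytopic norm $P$ by a power-mean / $\ell_d$-type aggregation. Replace $\max_i \ell_i(x)$ by
\begin{equation*}
f_d(x)\mathrel{\mathop{:}}=\sum_{i=1}^m \ell_i(x)^d,
\end{equation*}
which is an $n$-variate form of even degree $d$ (the exponent $d$ is chosen even so that each $\ell_i(x)^d \geq 0$). One has the elementary inequalities
\begin{equation*}
\max_i \ell_i(x)^d \leq f_d(x) \leq m \cdot \max_i \ell_i(x)^d,
\end{equation*}
so taking $d$-th roots, $P(x) \leq f_d^{1/d}(x) \leq m^{1/d} P(x)$. Since $m^{1/d} \to 1$ as $d \to \infty$, I can pick $d$ large enough that $m^{1/d} \leq (1+\epsilon/4)(1+\epsilon/2)/(1+\epsilon/4)=\ldots$, i.e. so that combining with the polytopic sandwich gives $||x|| \leq f_d^{1/d}(x) \leq (1+\epsilon)||x||$ for all $x$. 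This establishes the two-sided bound \eqref{eq:norm.approx.level.set} with $f\mathrel{\mathop{:}}=f_d$. It remains to check that $f$ is a \emph{convex positive definite} form: positive definiteness follows since $f^{1/d}(x) \geq ||x||>0$ for $x \neq 0$; convexity follows because each $x \mapsto \ell_i(x)^d$ is convex (it is the $d$-th power, $d$ even, of a linear function, equivalently the composition of the convex nondecreasing function $t \mapsto t^{d}$ on $[0,\infty)$... — more carefully, $\ell_i(x)^d=|\ell_i(x)|^d$ is convex as the composition of the convex function $|\cdot|$ with the convex increasing-on-$\mathbb{R}_{\geq 0}$ function $t\mapsto t^d$), and a sum of convex functions is convex.

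The one genuinely delicate point — and the main obstacle — is \emph{quantifying the polytopic approximation step}: producing finitely many functionals $\ell_1,\ldots,\ell_m$ such that $\max_i \ell_i$ is within factor $1+O(\epsilon)$ of $||\cdot||$ uniformly. I would handle this by a compactness argument on $S^{n-1}$: for each unit vector $u$ choose a supporting functional $\ell_u$ of the unit ball $B$ at the point $u/||u||$, normalized so $\ell_u(u/||u||)=1$; then $\ell_u(x) \leq ||x||$ everywhere and $\ell_u(x)$ is close to $||x||$ for $x$ near $u$. The sets $\{x \in S^{n-1}~|~\ell_u(x)>\frac{1}{1+\epsilon/4}||x||\}$ form an open cover of $S^{n-1}$; extract a finite subcover indexed by $u_1,\ldots,u_k$, let $\{\ell_i\}$ be $\{\ell_{u_1},\ldots,\ell_{u_k}\}$ together with their negatives, and $m\mathrel{\mathop{:}}=2k$. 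Homogeneity then extends the estimate from $S^{n-1}$ to all of $\mathbb{R}^n$. Everything else (choosing $d$, taking roots, verifying convexity and positive definiteness) is routine once this sandwich is in place, and Theorem \ref{th:approx.poly.norm.sphere} then follows immediately by restricting \eqref{eq:norm.approx.level.set} to $S^{n-1}$ and noting $0 \leq f^{1/d}(x)-||x|| \leq \epsilon||x||=\epsilon$ there.
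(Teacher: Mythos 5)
Your overall construction---a sum of large even powers of finitely many supporting linear functionals of the unit ball, chosen by a finite-subcover argument on the sphere, with convexity coming from the fact that an even power of a linear form is convex---is essentially the proof the paper gives (there, the functionals are the normalized forms $v_i^Ty/v_i^Tx_i$ attached to supporting hyperplanes of a shrunken copy of $B$, and $f=\sum_i (v_i^Ty/v_i^Tx_i)^d$ with $d$ chosen so that the maximum of each term over the smaller ball is below $1/N$). So the route is the same; the issue is a normalization slip that, as written, breaks the lower bound in \eqref{eq:norm.approx.level.set}. Since your $\ell_i$ support the unit ball $B$ itself, you have $\ell_i(x)\leq ||x||$ for every $i$ and $x$, and your two estimates only yield
\begin{equation*}
\frac{1}{1+\epsilon/4}\,||x|| \;\leq\; P(x) \;\leq\; f_d^{1/d}(x)\;\leq\; m^{1/d}\,||x||,
\end{equation*}
which no choice of $d$ upgrades to $||x||\leq f_d^{1/d}(x)$. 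In fact the claimed bound genuinely fails for large $d$: for the Euclidean norm, at a direction $x$ strictly between the contact points of your finitely many supporting hyperplanes one has $f_d^{1/d}(x)\leq m^{1/d}P(x)\to P(x)<||x||$ as $d\to\infty$.

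The repair is immediate and stays inside your argument: replace each $\ell_i$ by $\tilde{\ell}_i\mathrel{\mathop{:}}=(1+\epsilon/4)\ell_i$ (equivalently, take supporting functionals of the shrunken ball $\frac{1}{1+\epsilon/4}B$---this is exactly the role played in the paper by working with supporting hyperplanes of $B_{1-\bar{\epsilon}/2}$ rather than of $B$). Then $\max_i\tilde{\ell}_i(x)\geq ||x||$ while $\tilde{\ell}_i(x)\leq (1+\epsilon/4)||x||$, so with $f=\sum_i \tilde{\ell}_i^{\,d}$ you get $||x||\leq f^{1/d}(x)\leq m^{1/d}(1+\epsilon/4)||x||$, and picking $d$ even and large enough that $m^{1/d}(1+\epsilon/4)\leq 1+\epsilon$ gives \eqref{eq:norm.approx.level.set}. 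The remaining points of your write-up (closing the family under negation so that $\max_i\tilde{\ell}_i=\max_i|\tilde{\ell}_i|$, convexity as a sum of even powers of linear forms, positive definiteness from $f^{1/d}\geq ||\cdot||$, and the compactness argument producing the finite family) are correct and match the paper.
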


\begin{proof} Throughout, we let $B_{\alpha}\mathrel{\mathop{:}}=\{x~|~ ||x|| \leq \alpha\}.$ When $\alpha=1$, we drop the subscript and simply denote by $B$ the unit ball of $||\cdot||$. We will also use the notation $\partial S$ to denote the boundary of a set $S$ and $int(S)$ to denote its interior. Let $\bar{\epsilon}\mathrel{\mathop{:}}=\frac{\epsilon}{1+\epsilon}$. The crux of the proof lies in proving that there exists an integer $d$ and a positive definite convex form $f$ of degree $d$ such that \begin{align} \label{eq:level.set.incl}
	B_{1-\bar{\epsilon}} \subseteq \{x~|~ f(x)\leq 1\} \subseteq B.
	\end{align}
	If we prove this, then Lemma \ref{lem:approx.level.set} can be obtained as follows. Let $x \in \mathbb{R}^n.$ To show the first inequality in (\ref{eq:norm.approx.level.set}), we proceed by contradiction. Suppose that $||x|| > f^{1/d}(x)$. If $f^{1/d}(x)\neq 0$, then $||x/f^{1/d}(x)||>1$ while $f(x/f^{1/d}(x))=1$. (If $f^{1/d}(x)=0$ then $x=0$ and the inequality holds.) Hence, $$x/f^{1/d}(x) \in \{x~|~ f(x)\leq 1\}$$ but $x/f^{1/d}(x) \notin B$ which contradicts (\ref{eq:level.set.incl}). To prove the second inequality in (\ref{eq:norm.approx.level.set}), note that the first inclusion of $(\ref{eq:level.set.incl})$ gives us $f^{1/d}((1-\bar{\epsilon})x/||x||) \leq 1$, which is equivalent to $f^{1/d}(x/||x||) \leq \frac{1}{1-\bar{\epsilon}}=1+\epsilon.$ Multiplying by $||x||$ on both sides gives us the result.\\
	
	We now focus on showing the existence of a positive definite convex form $f$ that satisfies  (\ref{eq:level.set.incl}). The proof is a simplification of the proof of Theorem 3.2.$  $ in \cite{sosconvex_Lyap_cdc,Ahmadi_Jungers} with some modifications.
	
	Let $x \in \partial B_{1-\bar{\epsilon}/2}$. To any such $x$, we associate a dual vector $v(x)$ orthogonal to a supporting hyperplane of $B_{1-\bar{\epsilon}/2}$ at $x$. By definition of a supporting hyperplane, we have that $v(x)^Ty \leq v(x)^Tx$,  $\forall y \in B_{1-\bar{\epsilon}/2}$, and, as $B_{1-\bar{\epsilon}} \subset B_{1-\bar{\epsilon}/2}$, we have \begin{align} \label{eq:prop.C.eps}
	v(x)^Ty < v(x)^Tx, \forall y \in B_{1-\bar{\epsilon}}.
	\end{align} 
	Let $$S(x)\mathrel{\mathop{:}}=\{y~|~ v(x)^Ty>v(x)^Tx\text{ and } ||y||=1\}.$$ It is easy to see that $S(x)$ is an open subset of the boundary $\partial B$ of $B$. Futhermore, since $x \in int(B)$, $x/||x|| \in S(x)$ which implies that $S(x)$ is nonempty and that the family of sets $S(x)$ (as $x$ ranges over $\partial B_{1-\bar{\epsilon}/2})$ is a covering of $\partial B$. As $\{S(x)\}_{x \in \partial B_{1-\bar{\epsilon}/2}}$ is an open covering of the compact set $\partial B$, there exists a finite covering of $\partial B$, i.e., one can choose $x_1, \ldots, x_N \in \partial B_{1-\bar{\epsilon}/2}$ in such a way that $\cup_{i=1}^N S(x_i)=\partial B.$
	
	\begin{figure}[H]
		\centering
		\includegraphics[scale=0.5]{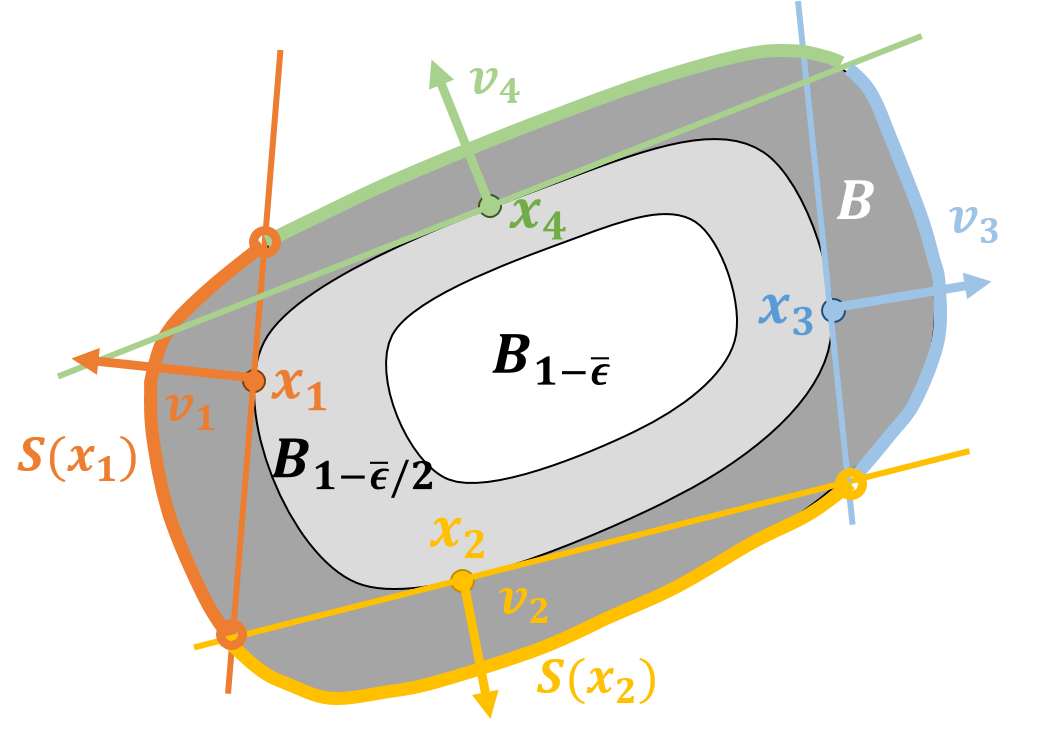}
		\caption{An illustration of the construction of the open covering of $\partial B$.}
		\label{fig:proof.part.1}
	\end{figure}
	
	For ease of notation, we now let $v_i\mathrel{\mathop{:}}=v(x_i)$ for all $i=1,\ldots,N$. From (\ref{eq:prop.C.eps}), we have that $\frac{v_i^Ty}{v_i^Tx_i}<1$ for any $i$ and for any $y \in B_{1-\bar{\epsilon}}$\footnote{Note that $v_i^Tx_i \neq 0$, $\forall i$. In fact, we have $v_i^Tx_i>0, \forall i.$ To see this, recall that by definition of a supporting hyperplane, $v_i \neq 0$ and $v_i^Tx_i \geq v_i^Ty$, for all $y \in B_{1-\bar{\epsilon}/2}$. In particular, there exists $\alpha_i>0$ such that $\alpha_i v_i \in B_{1-\bar{\epsilon}/2}$. Hence, $v_i^Tx_i \geq \alpha ||v_i||_2^2>0.$}. Since $B_{1-\bar{\epsilon}}$ is compact, we get $ \max_{i} \max_{y \in B_{1-\bar{\epsilon}}} \left( \frac{v_i^Ty}{v_i^Tx_i}\right)<1$. Hence, there exists an integer $d$ such that 
	\begin{align}\label{eq:def.degree}
	\left( \max_{i} \max_{y \in B_{1-\bar{\epsilon}}} \left( \frac{v_i^Ty}{v_i^Tx_i}\right)\right)^{d}<\frac{1}{N}.
	\end{align}
	We now define
	\begin{align} \label{eq:poly.def}
	f(y)\mathrel{\mathop{:}}=\sum_{i=1}^N \left( \frac{v_i^Ty}{v_i^Tx_i}\right)^{d},
	\end{align}
	where $d$ is any integer satisfying (\ref{eq:def.degree}). The form $f$ is convex as a sum of even powers of linear forms. Let $$\mathcal{L}\mathrel{\mathop{:}}=\{y~|~ f(y)\leq 1\}.$$ 
	By (\ref{eq:def.degree}), it is straightforward to see that $B_{1-\bar{\epsilon}} \subseteq \mathcal{L}.$ 
	
	We now show that $\mathcal{L} \subseteq int(B).$ Let $y \in \mathcal{L}$, then $f(y) \leq 1.$ If a sum of nonnegative terms is less than or equal to 1, then each term has to be less than or equal to $1$, which implies that $\frac{v_i^Ty}{v_i^Tx_i} \leq 1$, for all $i=1,\ldots,N.$ From this, we deduce that $y \notin \partial B$. Indeed, if $y \in \partial B$, there exists $i \in \{1,\ldots,N\}$ such that $y \in S(x_i)$ as $\{S(x_i)\}_i$ is a cover of $\partial B$. But, by definition of $S(x_i)$, we would have $v_i^T y>v_i^Tx_i$ which contradicts the previous statement. We have that $\partial B \cap \mathcal{L}= \emptyset$ as a consequence. However, as $\mathcal{L}$ and $B$ both contain the zero vector, this implies that $\mathcal{L} \subseteq int(B).$ Note that the previous inclusion guarantees positive definiteness of $f$. Indeed, if $f$ were not positive definite, $\mathcal{L}$ would be unbounded and could not be a subset of $B$ (which is bounded). 
\end{proof}

\begin{proof}[Proof of Theorem \ref{th:approx.poly.norm.sphere}]
	Let $\epsilon >0$ and denote by $\alpha\mathrel{\mathop{:}}=\max_{x\in S^{n-1}} ||x||$.
	By Lemma \ref{lem:approx.level.set}, there exists an integer $d$ and a convex form $f$ such that
	$$||x|| \leq f^{1/d}(x) \leq \left(1+\frac{\epsilon}{\alpha}\right)||x||, ~\forall x.$$
	This is equivalent to
	$$0 \leq f^{1/d}(x)-||x|| \leq \frac{\epsilon}{\alpha} ||x||, \forall x.$$
	For $x \in S^{n-1}$, as $||x||/\alpha\leq1$, this inequality becomes  
	$$0 \leq f^{1/d}(x)-||x|| \leq \epsilon.$$
\end{proof}

\begin{remark}
	We remark that the polynomial norm constructed in Theorem \ref{th:approx.poly.norm.sphere} is the $d^{th}$-root of an \emph{sos-convex} polynomial. Hence, one can approximate any norm on $\mathbb{R}^n$ by searching for a polynomial norm using semidefinite programming. To see why the polynomial $f$ in (\ref{eq:poly.def}) is sos-convex, observe that linear forms are sos-convex and that an even power of an sos-convex form is sos-convex.
\end{remark}

%=================================================
\section{Semidefinite programming-based approximations of polynomial norms} \label{sec:sos.approx}
%====================================================

\subsection{Complexity}
It is natural to ask whether testing if the $d^{th}$ root of a given degree-$d$ form is a norm can be done in polynomial time. In the next theorem, we show that, unless $P=NP$, this is not the case even when $d=4$.
\begin{theorem}\label{th:NP.hard}
	{Deciding} whether the $4^{th}$ root of a quartic form is a norm is strongly NP-hard.
\end{theorem}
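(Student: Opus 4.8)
The plan is to reduce from a problem already known to be strongly NP-hard, and the natural candidate is testing nonnegativity (equivalently, positive semidefiniteness) of a biquadratic form, which is used elsewhere in this thesis (see the hardness proof of Proposition~\ref{prop:dcd.NPhard}) and is established in \cite{Ling_et_al_Biquadratic, NPhard_Convexity_MathProg}. By Theorem~\ref{th:norm.conv.pd}, the $4^{th}$ root of a quartic form $f$ is a norm if and only if $f$ is convex and positive definite, so it suffices to produce, from a biquadratic form $b(x;y)$, a quartic form $f$ (in polynomially many variables, with polynomially sized rational coefficients) such that $f$ is convex and positive definite if and only if $b$ is nonnegative.

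First I would recall the construction from \cite[Theorem 3.2]{NPhard_Convexity_MathProg}: given a biquadratic form $b(x;y)$ in variables $x\in\mathbb{R}^n$, $y\in\mathbb{R}^m$, one can explicitly write down (in polynomial time) a quartic form $r(x,y)$ — the ``padding'' term built from fourth powers and cross terms of the $x_i$ and $y_j$ — such that $q(x,y)\mathrel{\mathop{:}}= b(x;y)+r(x,y)$ is convex if and only if $b$ is nonnegative, and such that $r$ itself is convex. The natural attempt is then to take $f\mathrel{\mathop{:}}= q + r = b + 2r$. If $b\geq 0$ then $q$ is convex, hence $f=q+r$ is a sum of two convex forms and therefore convex; one then checks positive definiteness of $f$ directly from the structure of $r$ (the $\sum_i x_i^4 + \sum_j y_j^4$ terms dominate, so $f$ vanishes only at the origin). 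Conversely, if $b$ is not nonnegative, then $q$ is not convex; since $f - r = q$ and $r$ is convex, $f$ cannot be convex (a convex form minus a convex form need not be convex, but here: if $f$ were convex, then using strict convexity margins we would need a slightly more careful argument).

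The step I expect to be the main obstacle is precisely this converse direction: deducing non-convexity of $f$ from non-convexity of $q = f - r$. Convexity of $f$ and $r$ does \emph{not} in general force convexity of $f-r$, so the argument ``$q$ not convex $\Rightarrow$ $f$ not convex'' needs more than a one-line subtraction. There are two ways I would try to handle this. Option (a): strengthen the reduction so that $f$ is designed to be convex iff $b\geq 0$ directly — for instance, scale the padding term, taking $f = b + c\cdot r$ for a suitable constant $c>0$ computable from the coefficients of $b$, and re-run the Hessian computation of \cite[Theorem 3.2]{NPhard_Convexity_MathProg} to show $H_f(x,y)\succeq 0$ for all $(x,y)$ iff $b(x;y)\geq 0$ for all $(x;y)$; the key identity there expresses the relevant block of $H_f$ in terms of $b$ plus a strictly positive definite remainder coming from $r$. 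Option (b): keep $f = q + r$ but prove directly that when $b$ takes a negative value at some $(\bar x;\bar y)$, the restriction of $f$ to an appropriate two-dimensional affine slice through a suitable point fails the one-variable second-order convexity test, using the fact that $H_q(\hat x,\hat y)$ has a negative eigenvalue with an explicit eigenvector and that $H_r$ contributes a controlled amount along that direction. I would pursue option (a) first, since it mirrors the cleanest known reductions and keeps positive definiteness easy to verify; if the Hessian bookkeeping becomes unwieldy, option (b) is the fallback.

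Finally, I would assemble the pieces: the reduction is clearly polynomial-time (writing down $r$ and forming $f$ involves only copying and scaling coefficients), all coefficients remain rational of polynomial bit-size, and strong NP-hardness is inherited because the source problem (biquadratic nonnegativity) is strongly NP-hard. Combined with Theorem~\ref{th:norm.conv.pd}, which turns ``$f^{1/4}$ is a norm'' into the conjunction ``$f$ convex and $f$ positive definite,'' and noting that positive definiteness of $f$ will hold automatically in our construction regardless of $b$, the whole question collapses to deciding convexity of $f$, which we have arranged to be equivalent to nonnegativity of $b$. This yields the claimed strong NP-hardness of deciding whether the $4^{th}$ root of a quartic form is a norm.
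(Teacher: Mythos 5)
Your overall route (combine Theorem~\ref{th:norm.conv.pd} with the quartic-convexity reduction of \cite{NPhard_Convexity_MathProg}) is the same one the paper uses, but the specific construction you commit to has a genuine gap, and it is one you created yourself. By taking $f = q + r = b + 2r$ you leave the regime where the cited result applies: \cite[Theorem 3.2]{NPhard_Convexity_MathProg} says that $b\geq 0$ if and only if $b+r$ is convex, with the padding coefficient $\tfrac{n^2\gamma}{2}$ chosen exactly so that the ``only if'' direction holds; once you double the padding (or rescale it by a generic $c\neq 1$ as in your option (a)), non-nonnegativity of $b$ no longer forces non-convexity of $f$ --- extra padding can convexify a quartic whose $b$ is only slightly negative --- so the equivalence you need is simply not available, and your fallback option (b) would amount to reproving the hard direction of that theorem by hand. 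Your side remark that positive definiteness of $f$ holds ``regardless of $b$'' is also unjustified for $f=b+2r$ (a sufficiently negative $b$ can destroy it), though this does not matter for the reduction.

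The fix is to delete the extra copy of $r$: take $f = q = b + r$. Then convexity of $f$ is equivalent to $b\geq 0$ verbatim by the cited theorem, and positive definiteness in the case $b \geq 0$ is free because $r$ is itself positive definite (it vanishes only at the origin, since it contains $\sum_i x_i^4 + \sum_j y_j^4$), so $f \geq r > 0$ away from the origin; when $b\not\geq 0$, non-convexity alone already rules out $f^{1/4}$ being a norm by Theorem~\ref{th:norm.conv.pd}, and no positive definiteness claim is needed. With that correction your argument coincides with the paper's proof in all essentials. The only remaining difference is the choice of source problem: the paper reduces from CLIQUE, instantiating $b(x;y)=-2k\sum_{\{i,j\}\in E}x_ix_jy_iy_j-(1-k)(\sum_i x_i^2)(\sum_i y_i^2)$, which makes strong NP-hardness immediate since all data are small integers; reducing from nonnegativity of general biquadratic forms, as you propose, is also fine, since that problem is strongly NP-hard \cite{Ling_et_al_Biquadratic}, \cite{NPhard_Convexity_MathProg} (this is exactly the route taken in Proposition~\ref{prop:dcd.NPhard}), provided you note that $\gamma$ and all coefficients of $f$ stay polynomially bounded.
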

\begin{proof}
	The proof of this result is adapted from a proof in \cite{NPhard_Convexity_MathProg}. Recall that the CLIQUE problem can be described thus: given a graph $G=(V,E)$ and a positive integer $k$, decide whether $G$ contains a clique of size at least $k$. The CLIQUE problem is known to be NP-hard \cite{GareyJohnson_Book}. We will give a reduction from CLIQUE to the problem of testing convexity and positive definiteness of a quartic form. The result then follows from Theorem \ref{th:norm.conv.pd}. Let $\omega(G)$ be the clique number of the graph at hand, i.e., the number of vertices in a maximum clique of $G$. Consider the following quartic form $$b(x;y)\mathrel{\mathop{:}}=-2k\sum_{i,j \in E} x_ix_jy_iy_j-(1-k)\left(\sum_i x_i^2\right)\left(\sum_i y_i^2\right).$$ In \cite{NPhard_Convexity_MathProg}, using in part a result in \cite{Ling_et_al_Biquadratic}, it is shown that 
	\begin{align} \label{eq:clique.str.conv}
	&\omega(G)\leq k \Leftrightarrow b(x;y)+\frac{n^2 \gamma}{2} \left( \sum_{i=1}^n x_i^4+\sum_{i=1}^n y_i^4 +\sum_{1\leq i<j \leq n} (x_i^2x_j^2+ y_i^2y_j^2)\right)
	\end{align}
	is convex and $b(x;y)$ is positive semidefinite. Here, $\gamma$ is a positive constant defined as the largest coefficient in absolute value of any monomial present in some entry of the matrix $\left[\frac{\partial^2 b(x;y)}{\partial x_i \partial y_j}\right]_{i,j}$. As $\sum_i x_i^4+\sum_i y_i^4$ is positive definite and as we are adding this term to a positive semidefinite expression, the resulting polynomial is positive definite. Hence, the equivalence holds if and only if the quartic on the righthandside of the equivalence in (\ref{eq:clique.str.conv}) is convex and positive definite. 
\end{proof}

Note that this also shows that strict convexity is hard to test for quartic forms (this is a consequence of Theorem \ref{th:norm.str.conv}). A related result is Proposition 3.5. in \cite{NPhard_Convexity_MathProg}, which shows that testing strict convexity of a polynomial of even degree $d\geq 4$ is hard. However, this result is not shown there for \emph{forms}, hence the relevance of the previous theorem.

Theorem \ref{th:NP.hard} motivates the study of tractable sufficient conditions to be a polynomial norm. The sufficient conditions we consider next are based on semidefinite programming.

%We now present a series of sufficient conditions under which $d^{th}$ roots of degree-$d$ forms become norms. Contrary to the conditions described in Section \ref{sec:eq.charac.comp}, these conditions are computationally tractable; we show in particular that they can be reformulated as semidefinite programs. However, as one could expect due to differences in computational difficulty, the conditions we present are not necessary. We formally quantify how much is lost from inner approximating the set of polynomial norms by semidefinite programming-based approximations.

\subsection{Sum of squares polynomials and semidefinite programming review} \label{sec:sos.review}

We start this section by reviewing the notion of \emph{sum of squares polynomials} and related concepts such as \emph{sum of squares-convexity}. We say that a polynomial $f$ is a \emph{sum of squares} (sos) if $f(x)=\sum_i q_i^2(x)$, for some polynomials $q_i$. Being a sum of squares is a sufficient condition for being nonnegative. The converse however is not true, as is exemplified by the Motzkin polynomial
\begin{align}\label{eq:motzkin}
M(x,y)=x^4y^2 + x^2y^4 − 3x^2y^2 + 1
\end{align}
which is nonnegative but not a sum of squares \cite{MotzkinSOS}.
The sum of squares condition is a popular surrogate for nonnegativity due to its tractability. Indeed, while testing nonnegativity of a polynomial of degree greater or equal to 4 is a hard problem, testing whether a polynomial is a sum of squares can be done using \emph{semidefinite programming.} This comes from the fact that a polynomial $p$ of degree $d$ is a sum of squares if and only if there exists a positive semidefinite matrix $Q$ such that $f(x)=z(x)^TQz(x)$, where $z(x)$ is the standard vector of monomials of degree up to $d$ (see, e.g., \cite{PhD:Parrilo}). As a consequence, any optimization problem over the coefficients of a set of polynomials which includes a combination of affine constraints and sos constraints on these polynomials, together with a linear objective can be recast as a semidefinite program. These type of optimization problems are known as \emph{sos programs}.

Though not all nonnegative polynomials can be written as sums of squares, the following theorem by Artin \cite{Artin_Hilbert17} circumvents this problem using sos multipliers.
\begin{theorem}[Artin \cite{Artin_Hilbert17}]\label{th:artin}
	For any nonnegative polynomial $f$, there exists an sos polynomial $q$ such that $q \cdot f$ is sos. 
\end{theorem}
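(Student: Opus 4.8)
\textbf{Proof proposal for Theorem \ref{th:artin} (Artin).}

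The plan is to deduce this from the solution to Hilbert's 17th problem, which is already mentioned in the excerpt (Artin's theorem that every nonnegative polynomial is a sum of squares of rational functions). First I would invoke that result: given a nonnegative polynomial $f$, write $f = \sum_i (a_i/b_i)^2$ for polynomials $a_i, b_i$ with $b_i \not\equiv 0$. Clearing denominators by setting $b \mathrel{\mathop{:}}= \prod_i b_i$, one obtains $b^2 f = \sum_i c_i^2$ for suitable polynomials $c_i$ (namely $c_i = a_i b / b_i$), so that $b^2 f$ is a sum of squares. Taking $q \mathrel{\mathop{:}}= b^2$, which is itself trivially a sum of squares, yields that $q \cdot f$ is sos, as desired.

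The one subtlety to address is that the rational function identity $f = \sum_i (a_i/b_i)^2$ holds only away from the common zero set of the denominators, not necessarily everywhere as a polynomial identity after clearing denominators. I would handle this by noting that $b^2 f$ and $\sum_i c_i^2$ are two polynomials that agree on the complement of the (measure-zero, or lower-dimensional) variety $\{b = 0\}$, hence agree as polynomials by continuity (two polynomials agreeing on a dense set are identical). This makes $b^2 f = \sum_i c_i^2$ a genuine polynomial identity, so $q \cdot f$ with $q = b^2$ is literally a sum of squares of polynomials.

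I expect the main obstacle — or rather, the main thing to be careful about — is simply making sure the reduction from the rational-function statement to the polynomial statement is airtight, including the remark that $q = b^2$ is sos (so that the conclusion is stated in the clean symmetric form ``there exists an sos polynomial $q$ such that $q \cdot f$ is sos''). Everything else is immediate once Hilbert's 17th problem is granted. Since the excerpt explicitly cites \cite{Artin_Hilbert17} and even states the rational-function form of the result in the introduction, it is legitimate to treat that as known input; no independent proof of Hilbert's 17th problem is needed here, and indeed attempting one would be well outside the scope of this chapter.
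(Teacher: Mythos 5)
Your derivation is correct, and it matches how the thesis itself treats this result: the theorem is quoted from Artin with no proof given, and the paper already records the equivalent denominator-cleared formulation $\left(\sum_{i=1}^r g_i^2(x)\right)p(x) = \sum_{i=1}^t q_i^2(x)$ as a restatement of Hilbert's 17th problem in Chapter 2. Your clearing-of-denominators step (with $q=b^2$ sos and the identity upgraded to a polynomial identity, which also follows simply from equality in the field of rational functions) is exactly the standard reduction the paper implicitly relies on, so there is nothing to add.
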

This theorem in particular implies that if we are given a polynomial $f$, then we can always check its nonnegativity using an sos program that searches for $q$ (of a fixed degree). However, the condition does not allow us to optimize over the set of nonnegative polynomials using an sos program (as far as we know). This is because, in that setting, products of decision varibles arise from multiplying polymomials $f$ and $q$, whose coefficients are decision variables.
%This theorem in particular implies that if we fix the degree of the multiplier $q$ and if we are given a fixed polynomial $f$, nonnegativity of $f$ can be tested using sos programs which will search for $q$. By progressively pushing up the degree of $q$, one can hope to recover a certificate of nonnegativity of $f$. If $f$ is not given though, both the coefficients of $f$ and the coefficients of $q$ are decision variables and the problem becomes nonconvex. 

By adding further assumptions on $f$, Reznick showed in \cite{Reznick_Unif_denominator} that one could further pick $q$ to be a power of $\sum_i x_i^2$.

\begin{theorem}[Reznick \cite{Reznick_Unif_denominator}]\label{th:reznick}
	Let $f$ be a positive definite form of degree $d$ in $n$ variables and define $$\epsilon(f)\mathrel{\mathop{:}}=\frac{\min\{f(u)~|~ u \in S^{n-1} \}}{\max\{f(u)~|~ u \in S^{n-1}\}}.$$ If $r\geq  \frac{nd(d-1)}{4 \log(2)\epsilon(f)}-\frac{n+d}{2}$, then $(\sum_{i=1}^n x_i^2)^r \cdot f$ is a sum of squares.
\end{theorem}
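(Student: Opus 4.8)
The plan is to follow Reznick's original argument, which actually establishes the \emph{stronger} conclusion that $(\sum_{i=1}^n x_i^2)^r f$ is a sum of $2k$-th powers of real linear forms, where $2k := 2r+d$ (note $d$ must be even for $(\sum_i x_i^2)^r f$ to be a candidate sum of squares at all); since $\ell^{2k}=(\ell^k)^2$, this implies the sum of squares property. The purely existential statement — that \emph{some} $r$ works — is already available as Theorem~\ref{th:reznick.uniform}, so the entire content to be produced here is the explicit bound. Write $\|x\|^2 := \sum_{i=1}^n x_i^2$. On the space $H_{n,2k}$ of forms of degree $2k$, I would introduce the apolar (Bombieri) inner product $\langle\cdot,\cdot\rangle$, normalized so that $\langle p,\ell_u^{2k}\rangle = p(u)$ for every $p\in H_{n,2k}$ and $u\in\mathbb{R}^n$, where $\ell_u(x):=\langle u,x\rangle$. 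Let $Q_{n,2k}\subseteq H_{n,2k}$ be the closed convex cone of finite sums of $2k$-th powers of real linear forms; its extreme rays are the $\ell_u^{2k}$ with $u\in S^{n-1}$, and by Carath\'eodory any form of the shape $\int_{S^{n-1}}\ell_u^{2k}\,d\nu(u)$ with $\nu\ge 0$ a Borel measure lies in $Q_{n,2k}$. One also has the classical identity $(\|x\|^2)^k = c_{n,k}\int_{S^{n-1}}\ell_u^{2k}\,d\phi(u)$, with $\phi$ the rotation-invariant probability measure on $S^{n-1}$ and $c_{n,k}$ an explicit ratio of Gamma functions.

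The main construction proceeds as follows. After rescaling $f$ so that $\max_{S^{n-1}}f=1$, positive definiteness gives $\min_{S^{n-1}}f=\epsilon(f)=:\epsilon>0$, i.e.\ the measure $f\,d\phi$ dominates $\epsilon\,d\phi$. Set $w(x):=c_{n,k}\int_{S^{n-1}}\ell_u^{2k}\,f(u)\,d\phi(u)\in Q_{n,2k}$. Because the density of $f\,d\phi$ with respect to $\phi$ is bounded below by $\epsilon$, the form $w$ lies in the \emph{interior} of $Q_{n,2k}$, and Reznick's estimate for how deep $(\|x\|^2)^k$ sits inside $Q_{n,2k}$ yields an explicit radius $\rho=\rho(n,k,\epsilon)>0$ with $\{g\in H_{n,2k}:\|g-w\|\le\rho\}\subseteq Q_{n,2k}$. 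On the other hand, on $S^{n-1}$ one has $(\|x\|^2)^r f(x)=f(x)$, while $w$ restricted to $S^{n-1}$ is $f$ convolved against the kernel $c_{n,k}\ell_{(\cdot)}^{2k}$, which concentrates at $u=\pm x$ as $k\to\infty$. Hence the defect $E:=(\|x\|^2)^r f-w$ has apolar norm bounded by an explicitly decaying function of $k$, governed by the modulus of continuity of $f$ on $S^{n-1}$, the spread of the kernel, and the comparison between the apolar norm of a degree-$2k$ form and its sup norm on the sphere. The theorem then follows by showing that the hypothesis $r\ge \frac{nd(d-1)}{4\log 2\,\epsilon(f)}-\frac{n+d}{2}$ forces $\|E\|\le\rho$, for then $(\|x\|^2)^r f = w+E\in Q_{n,2k}$.

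I expect the genuine difficulty to be concentrated entirely in that last balancing step, in two coupled estimates that each need to be carried out sharply rather than softly: (a) a lower bound on the inradius $\rho(n,k,\epsilon)$, which requires Reznick's precise analysis of the position of $(\|x\|^2)^k$ in $Q_{n,2k}$ — a computation with ratios of binomial/Gamma quantities; and (b) an upper bound on $\|E\|$, combining the kernel-concentration estimate with the delicate apolar-versus-sup-norm comparison on $S^{n-1}$ for forms of degree $2k$. It is the simultaneous optimization of (a) and (b) over $k$ (equivalently over $r$) that produces the constant $\tfrac{1}{4\log 2}$ and the shift $\tfrac{n+d}{2}$; recovering exactly these requires pushing the Stirling-type asymptotics all the way, whereas a cruder argument would only reprove Theorem~\ref{th:reznick.uniform}.

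Finally, a remark on an equivalent route Reznick also uses, which has the same analytic heart (and the same obstacle) but may be cleaner to write up: instead of invoking an "inradius of $Q_{n,2k}$", one solves directly for a signed correction to the measure. Since the map sending a spherical polynomial $g$ to $\int_{S^{n-1}}\ell_u^{2k}g(u)\,d\phi(u)$ is invertible onto degree-$2k$ forms (restricted to $S^{n-1}$) with operator norm controlled by the same $c_{n,k}/\delta_{n,k}$ type quantity, one can choose $g$ with $c_{n,k}\int_{S^{n-1}}\ell_u^{2k}(f(u)+g(u))\,d\phi(u) = (\|x\|^2)^r f(x)$ exactly, and then the task reduces to verifying $\|g\|_{\infty,S^{n-1}}<\epsilon$ so that $f+g\ge 0$ on $S^{n-1}$; the nonnegative measure $(f+g)\,d\phi$ then exhibits $(\|x\|^2)^r f$ as an integral of $2k$-th powers, hence (by Carath\'eodory) as a sum of $2k$-th powers of linear forms.
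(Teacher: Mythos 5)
Your proposal does not actually prove the theorem; it is an outline in which the entire quantitative content is deferred. The only thing this statement adds beyond the qualitative Theorem~\ref{th:reznick.uniform} (which you invoke) is the explicit threshold $r\geq \frac{nd(d-1)}{4\log(2)\epsilon(f)}-\frac{n+d}{2}$, and that threshold would have to come out of precisely the two estimates you label (a) and (b) — the inradius of the cone $Q_{n,2k}$ around the moment form $w$, and the apolar-norm bound on the defect $E$ via kernel concentration and an apolar-versus-sup comparison — together with their joint optimization in $k$. None of these is carried out, and you say so yourself; as written, the argument establishes nothing beyond Theorem~\ref{th:reznick.uniform}. It is moreover doubtful that this particular route reproduces the stated constants: a modulus-of-continuity/concentration estimate for the kernel $c_{n,k}\ell_u^{2k}$ combined with a sup-norm-to-apolar-norm comparison for degree-$2k$ forms loses factors that grow with $k$, so recovering exactly $\frac{1}{4\log 2}$ and the shift $\frac{n+d}{2}$ would require redoing the sharp computation that is the actual substance of Reznick's theorem. (For what it is worth, the paper itself does not prove this statement either — it quotes it — but it does quote the ingredients of Reznick's proof later in the chapter.)

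The clean route, which is the one Reznick actually takes and which the paper reproduces as Propositions~\ref{prop:sq2lin}, \ref{prop:diff2sum}, \ref{prop:def.Phi} and \ref{prop:pd.Phi} in its proof of Theorem~\ref{th:r.sos.convex}, is through the operator $\Phi_s$ rather than an inradius-plus-concentration argument. Set $s\mathrel{\mathop{:}}=r+d$ and $g\mathrel{\mathop{:}}=\Phi_s^{-1}(f)$; the hypothesis $r\geq \frac{nd(d-1)}{4\log(2)\epsilon(f)}-\frac{n+d}{2}$ is exactly $s\geq \frac{nd(d-1)}{4\log(2)\epsilon(f)}-\frac{n-d}{2}$, so Proposition~\ref{prop:pd.Phi} gives $g\in P_{n,d}$. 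Writing $(\sum_i x_i^2)^s=\sum_k\lambda_k(\alpha_k^Tx)^{2s}$ with $\lambda_k\geq 0$ (Proposition~\ref{prop:sq2lin}) and applying Propositions~\ref{prop:def.Phi} and \ref{prop:diff2sum},
$$f(x)\left(\sum_i x_i^2\right)^{r}=\Phi_s(g)(x)\left(\sum_i x_i^2\right)^{s-d}=g(D)\left(\sum_i x_i^2\right)^{s}=(2s)_d\sum_k\lambda_k\,g(\alpha_k)\,(\alpha_k^Tx)^{2s-d},$$
a nonnegative combination of even powers of linear forms, hence a sum of squares (indeed of $(2r+d)$-th powers of linear forms, the stronger conclusion you mention). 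Your second, "signed correction" route is essentially this inverse-operator mechanism in dual form, but again its crux — showing the correction has sup norm below $\epsilon(f)$ for the stated $s$, i.e.\ Proposition~\ref{prop:pd.Phi} (Reznick's Theorem~3.12) — is exactly the step you leave unproven; a self-contained write-up would have to supply that estimate, which in Reznick's paper is an explicit computation with $\Phi_s^{-1}$ and falling factorials rather than the soft concentration bounds you sketch.
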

Motivated by this theorem, the notion of $r$-sos polynomials can be defined: a polynomial $f$ is said to be $r$-sos if $(\sum_i x_i^2)^r \cdot f$ is sos. Note that it is clear that any $r$-sos polynomial is nonnegative and that the set of $r$-sos polynomials is included in the set of $(r+1)$-sos polynomials. The Motzkin polynomial in (\ref{eq:motzkin}) for example is $1$-sos although not sos.

To end our review, we briefly touch upon the concept of sum of squares-convexity (sos-convexity), which we will build upon in the rest of the section. Let $H_f$ denote the Hessian matrix of a polynomial $f$. We say that $f$ is \emph{sos-convex} if $y^TH_f(x)y$ is a sum of squares (as a polynomial in $x$ and $y$). As before, optimizing over the set of sos-convex polynomials can be cast as a semidefinite program. Sum of squares-convexity is obviously a sufficient condition for convexity via the second-order characterization of convexity. However, there are convex polynomials which are not sos-convex (see, e.g., \cite{AAA_PP_not_sos_convex_journal}). For a more detailed overview of sos-convexity including equivalent characterizations and settings in which sos-convexity and convexity are equivalent, refer to \cite{ahmadi2013complete}.

\subsubsection{Notation}

Throughout, we will use the notation $H_{n,d}$ (resp. $P_{n,d}$) to denote the set of forms (resp. positive semidefinite, aka nonnegative, forms) in $n$ variables and of degree $d$. We will futhermore use the falling factorial notation $(t)_0=1$ and $(t)_k=t(t-1)\ldots(t-(k-1))$ for a positive integer $k$.

\subsection{A test for validity of polynomial norms} \label{sec:test}

In this subsection, we assume that we are given a form $f$ of degree $d$ and we would like to test whether $f^{1/d}$ is a norm using semidefinite programming. 

\begin{theorem} \label{th:test.poly.norm}
	Let $f$ be a degree-$d$ form. Then $f^{1/d}$ is a polynomial norm if and only if there exist $c>0$, $r \in \mathbb{N}$, and an sos form $q(x,y)$ such that $q(x,y) \cdot y^T H_f(x,y) y$ is sos and $\left(f(x)-c(\sum_i x_i^2)^{d/2}\right) (\sum_i x_i^2)^r$ is sos. Furthermore, this condition can be checked using semidefinite programming.
\end{theorem}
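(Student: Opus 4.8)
The plan is to leverage the two characterizations established earlier (Theorems \ref{th:norm.conv.pd} and \ref{th:norm.str.conv}) together with the Artin-type and Reznick-type Positivstellensätze (Theorems \ref{th:artin} and \ref{th:reznick}). By Theorem \ref{th:norm.conv.pd}, $f^{1/d}$ is a polynomial norm if and only if $f$ is convex and positive definite. So the task reduces to showing that these two conditions are jointly equivalent to the existence of $c>0$, $r \in \mathbb{N}$, and an sos form $q(x,y)$ with $q(x,y)\cdot y^TH_f(x)y$ sos and $\left(f(x)-c(\sum_i x_i^2)^{d/2}\right)(\sum_i x_i^2)^r$ sos. (I will write $y^TH_f(x)y$ rather than the typo'd $y^TH_f(x,y)y$.)

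For the ``if'' direction: if such $c,r,q$ exist, then $q(x,y)\cdot y^TH_f(x)y$ being sos forces $y^TH_f(x)y \geq 0$ for all $x,y$, hence $H_f(x)\succeq 0$ for all $x$, so $f$ is convex by the second-order characterization of convexity. Likewise, $\left(f(x)-c(\sum_i x_i^2)^{d/2}\right)(\sum_i x_i^2)^r$ being sos forces $f(x) \geq c(\sum_i x_i^2)^{d/2} \geq 0$ for all $x\neq 0$ with $c>0$, so $f$ is positive definite. Then Theorem \ref{th:norm.conv.pd} gives that $f^{1/d}$ is a norm. For the ``only if'' direction: suppose $f^{1/d}$ is a polynomial norm. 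By Theorem \ref{th:norm.conv.pd}, $f$ is convex and positive definite. Positive definiteness of $f$ on the (compact) unit sphere means $f$ attains a positive minimum $c_0>0$ there; by homogeneity $f(x)\geq c_0(\sum_i x_i^2)^{d/2}$ everywhere, so for $c\mathrel{\mathop{:}}=c_0/2>0$ the form $g(x)\mathrel{\mathop{:}}=f(x)-c(\sum_i x_i^2)^{d/2}$ is positive definite; applying Theorem \ref{th:reznick} to $g$ yields an $r\in\mathbb{N}$ such that $g(x)(\sum_i x_i^2)^r$ is sos. Convexity of $f$ means $y^TH_f(x)y$ is a nonnegative form in the $2n$ variables $(x,y)$; applying Theorem \ref{th:artin} to this nonnegative form produces an sos form $q(x,y)$ with $q(x,y)\cdot y^TH_f(x)y$ sos. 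This establishes the equivalence.

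Finally, for the semidefinite programming claim: once we fix the degree of $q$ and the exponent $r$, searching for $c>0$, the coefficients of $q$, and the Gram matrices witnessing the two sos constraints is a semidefinite feasibility problem, since an sos constraint on a polynomial whose coefficients depend affinely on decision variables translates into a linear matrix inequality (as reviewed in Section \ref{sec:sos.review}), and the constraint $c>0$ together with the affine dependence of the coefficients of $f(x)-c(\sum_i x_i^2)^{d/2}$ on $c$ is handled by a strict inequality (or, to stay within a closed SDP, by fixing $c$ to be any positive constant and only then checking feasibility, or by a standard homogenization). Iterating over increasing values of $\deg(q)$ and $r$ gives a family of SDPs, one of which is guaranteed to be feasible precisely when $f^{1/d}$ is a polynomial norm.

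The main obstacle I anticipate is the ``only if'' direction's reliance on Theorem \ref{th:reznick} requiring strict positive definiteness rather than mere positive semidefiniteness — but this is exactly why the $c>0$ slack term $c(\sum_i x_i^2)^{d/2}$ is introduced, and one must be careful to verify that subtracting it preserves positive definiteness (which it does for $c$ strictly less than the sphere-minimum of $f$). A secondary subtlety is that the statement, as phrased, does not bound $r$ or $\deg(q)$ a priori, so the ``semidefinite programming'' check is really a (possibly infinite) hierarchy of SDPs rather than a single one; I would make this explicit in the write-up, noting that Theorems \ref{th:artin} and \ref{th:reznick} only guarantee existence of finite $r$ and $\deg(q)$, with Theorem \ref{th:reznick} furnishing an explicit (though generally large) bound on $r$ in terms of the eccentricity $\epsilon(g)$ of $g$.
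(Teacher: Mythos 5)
Your proposal is correct and follows essentially the same route as the paper: reduce via Theorem \ref{th:norm.conv.pd} to convexity plus positive definiteness, invoke Artin's theorem (Theorem \ref{th:artin}) for the Hessian biform, shave off $c=f_{\min}/2$ times $(\sum_i x_i^2)^{d/2}$ and apply Reznick's theorem (Theorem \ref{th:reznick}) to the resulting positive definite form, and observe that for fixed $r$ and $\deg(q)$ the search is an SDP feasibility problem. Your added remarks (the easy ``if'' direction spelled out, and the caveat that without a priori bounds on $r$ and $\deg(q)$ one really has a family of SDPs) are consistent with how the paper treats these points.
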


\begin{proof}
	It is immediate to see that if there exist such a $c$, $r$, and $q$, then $f$ is convex and positive definite. From Theorem \ref{th:norm.conv.pd}, this means that $f^{1/d}$ is a polynomial norm.
	
	Conversely, if $f^{1/d}$ is a polynomial norm, then, by Theorem \ref{th:norm.conv.pd}, $f$ is convex and positive definite. As $f$ is convex, the polynomial $y^TH_f(x)y$ is nonnegative. Using Theorem \ref{th:artin}, we conclude that there exists an sos polynomial $q(x,y)$ such that $q(x,y) \cdot y^TH_f(x)y$ is sos. We now show that, as $f$ is positive definite, there exist $c>0$ and $r \in \mathbb{N}$ such that  $\left(f(x)-c(\sum_i x_i^2)^{d/2}\right) (\sum_i x_i^2)^r$ is sos. Let $f_{min}$ denote the minimum of $f$ on the sphere. As $f$ is positive definite, $f_{min}>0.$ We take $c\mathrel{\mathop{:}}=\frac{f_{min}}{2}$ and consider $g(x)\mathrel{\mathop{:}}=f(x)-c(\sum_i x_i^2)^{d/2}$. We have that $g$ is a positive definite form: indeed, if $x$ is a nonzero vector in $\mathbb{R}^n$, then $$\frac{g(x)}{||x||^d}=\frac{f(x)}{||x||^d}-c=f \left( \frac{x}{||x||}\right)-c>0, $$
	by homogeneity of $f$ and definition of $c$. Using Theorem \ref{th:reznick}, $\exists r \in \mathbb{N}$ such that $g(x)(\sum_i x_i^2)^r$ is sos.
	
	For fixed $r$, a given form $f$, and a fixed degree $d$, one can search for $c>0$ and an sos form $q$ of degree $d$ such that $q(x,y) \cdot y^T H_f(x,y) y$ is sos and $\left(f(x)-c(\sum_i x_i^2)^{d/2}\right) (\sum_i x_i^2)^r$ is sos using semidefinite programming. This is done by solving the following semidefinite feasibility problem:
	\begin{equation}\label{eq:SDP.test.poly.norm}
	\begin{aligned}
	&q(x,y) \text{ sos}\\
	&c \geq 0 \\
	& q(x,y) \cdot y^TH_f(x,y)y \text{ sos}\\
	& \left(f(x)-c \left(\sum_i x_i^2\right)^{d/2}\right) \left(\sum_i x_i^2\right)^r \text{ sos},
	\end{aligned}
	\end{equation} 
	where the unknowns are the coefficients of $q$ and the real number $c$.
\end{proof}

\begin{remark}
	We remark that we are not imposing $c>0$ in the semidefinite program above. This is because, in practice, especially if the semidefinite program is solved with interior point methods, the solution returned by the solver will be in the interior of the feasible set, and hence $c$ will automatically be positive.
	One can slightly modify (\ref{eq:SDP.test.poly.norm}) however to take the constraint $c>0$ into consideration explicitely. Indeed, consider the following semidefinite feasibility problem where both the degree of $q$ and the integer $r$ are fixed:
	\begin{align}
	&q(x,y) \text{ sos} \nonumber\\
	& \gamma \geq 0 \nonumber \\
	& q(x,y) \cdot y^TH_f(x,y)y \text{ sos} \nonumber \\
	& \left(\gamma f(x)-\left(\sum_i x_i^2\right)^{d/2}\right) \left(\sum_i x_i^2\right)^r \text{ sos}. \label{eq:remark4.5}\\ \nonumber
	\end{align} 
	It is easy to check that (\ref{eq:remark4.5}) is feasible with $\gamma \geq 0$ if and only if the last constraint of (\ref{eq:SDP.test.poly.norm}) is feasible with $c>0$. To see this, take $c=1/\gamma$ and note that $\gamma$ can never be zero.
\end{remark}

To the best of our knowledge, we cannot use the approach described in Theorem \ref{th:test.poly.norm} to optimize over the set of polynomial norms with a semidefinite program. This is because of the product of decision variables in the coefficients of $f$ and $q$. The next subsection will address this issue. 

\subsection{Optimizing over the set of polynomial norms}\label{sec:opt}

In this subsection, we consider the problem of optimizing over the set of polynomial norms. To do this, we introduce the concept of $r$-sos-convexity. Recall that the notation $H_f$ references the Hessian matrix of a form $f$.

\subsubsection{Positive definite biforms and r-sos-convexity}

\begin{definition}\label{def:r.sos.convex}
	For an integer $r$, we say that a polynomial $f$ is $r$-sos-convex if $y^TH_f(x)y \cdot (\sum_i x_i^2)^r$ is sos.
\end{definition}

Observe that, for fixed $r$, the property of $r$-sos-convexity can be checked using semidefinite programming (though the size of this SDP gets larger as $r$ increases). Any polynomial that is $r$-sos-convex is convex. Note that the set of $r$-sos-convex polynomials is a subset of the set of $(r+1)$-sos-convex polynomials and that the case $r=0$ corresponds to the set of sos-convex polynomials.

It is natural to ask whether any convex polynomial is $r$-sos-convex for some $r$. Our next theorem shows that this is the case under a mild assumption. 

\begin{theorem}\label{th:r.sos.convex}
	Let $f$ be a form of degree $d$ such that $y^TH_f(x)y > 0$ for $(x,y) \in S^{n-1} \times S^{n-1}$. Let $$\eta(f)\mathrel{\mathop{:}}= \frac{\min\{y^TH_f(x)y~|~ (x,y) \in S^{n-1} \times S^{n-1} \}}{\max\{y^TH_f(x)y~|~ (x,y) \in S^{n-1} \times S^{n-1}\}}.$$
	If $r\geq  \frac{n(d-2)(d-3)}{4 \log(2)\eta(f)}-\frac{n+d-2}{2}-d$, then $f$ is $r$-sos-convex.
\end{theorem}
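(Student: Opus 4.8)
The plan is to mimic the proof of Reznick's theorem (Theorem~\ref{th:reznick}) applied not to $f$ itself but to the biform $F(x,y) \mathrel{\mathop{:}}= y^T H_f(x) y$, which is a form of degree $d-2$ in the $x$-variables and degree $2$ in the $y$-variables (total degree $d$). The hypothesis $y^T H_f(x) y > 0$ on $S^{n-1} \times S^{n-1}$ says exactly that $F$ is a positive definite biform. First I would recall the key quantitative ingredient behind Reznick's result: if $p$ is a positive definite form of degree $D$ in $N$ variables with $\epsilon(p) = \frac{\min_{S^{N-1}} p}{\max_{S^{N-1}} p}$, then $(\sum_i x_i^2)^r \cdot p$ has all nonnegative coefficients in a suitable basis — more precisely, Reznick works with the operator that multiplies by $(\sum x_i^2)^r$ and shows positivity of the resulting Bernstein-type coefficients once $r$ exceeds the stated threshold; nonnegative-coefficient forms of this type are sums of squares (they are sums of squares of monomials times the even-power structure). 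The bound $r \geq \frac{ND(D-1)}{4\log 2\, \epsilon(p)} - \frac{N+D}{2}$ comes from a careful estimate of a remainder term in a multinomial expansion.

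The main step is to run this machinery on $F(x,y)$ while being careful that we only want to multiply by powers of $\sum_i x_i^2$ (not $\sum_i x_i^2 + \sum_j y_j^2$), since $r$-sos-convexity as defined in Definition~\ref{def:r.sos.convex} uses the multiplier $(\sum_i x_i^2)^r$ only. So I would treat $F(x,y)$ as a form in $x$ of degree $d-2$ whose coefficients are quadratic forms in $y$, and apply the Reznick-type argument "uniformly in $y$". Concretely: for fixed $y \in S^{n-1}$, the form $x \mapsto F(x,y)$ is a positive definite form of degree $d-2$ in $n$ variables; its "condition number" $\epsilon(F(\cdot,y))$ is bounded below by $\eta(f)$ (because the numerator $\min_x F(x,y) \geq \min_{x,y} F(x,y)$ and the denominator $\max_x F(x,y) \leq \max_{x,y} F(x,y)$, both over the respective spheres). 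Plugging $N = n$, $D = d-2$, and $\epsilon = \eta(f)$ into Reznick's threshold gives $r \geq \frac{n(d-2)(d-3)}{4\log 2\,\eta(f)} - \frac{n+(d-2)}{2}$, i.e. $r \geq \frac{n(d-2)(d-3)}{4\log 2\,\eta(f)} - \frac{n+d-2}{2}$; for any such $r$, Reznick's argument shows $(\sum_i x_i^2)^r \cdot F(x,y)$ has, for each fixed $y$, the nonnegative-coefficient property — but since the expansion is algebraic and the multiplier does not touch $y$, the resulting certificate is uniform: $(\sum_i x_i^2)^r \cdot F(x,y)$ is a nonnegative combination of terms of the form (even monomial in $x$) $\times$ (quadratic form in $y$ evaluated via the coefficients of $F$). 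One then needs each such quadratic-in-$y$ coefficient to itself be PSD, which follows because the Bernstein coefficients of $F(\cdot,y)$ are positive \emph{values} of the positive semidefinite form $y \mapsto F(u,y)$ at grid points $u$, hence PSD matrices in $y$; a PSD-matrix-weighted sum of even $x$-monomials times $y^T(\cdot)y$ is sos. This yields that $y^T H_f(x) y \cdot (\sum_i x_i^2)^r$ is sos, i.e. $f$ is $r$-sos-convex. Finally I would reconcile the extra "$-d$" in the statement's bound: the threshold I derived, $r \geq \frac{n(d-2)(d-3)}{4\log 2\,\eta(f)} - \frac{n+d-2}{2}$, is \emph{weaker} than $r \geq \frac{n(d-2)(d-3)}{4\log 2\,\eta(f)} - \frac{n+d-2}{2} - d$ (the latter demands less), so the stated bound is implied; I would double-check the bookkeeping but expect the "$-d$" to be harmless slack or to come from a slightly sharper accounting of the degree of the $y$-part when applying the multinomial estimate.

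The hard part will be making the "uniform in $y$" version of Reznick's estimate rigorous — Reznick's theorem is a statement about a single positive definite form, and here I need the certificate to respect the bipartite structure and produce PSD matrix coefficients rather than just nonnegative scalar coefficients. The cleanest route is probably to invoke the polarization/substitution trick: a biform $F(x,y) = y^T M(x) y$ with $M(x)$ a symmetric matrix of forms is a sum of squares (in $x,y$) if and only if $M(x) \succeq 0$ can be certified by the corresponding matrix-sos argument, and Reznick's scalar argument extends verbatim to the matrix setting because all the multinomial/remainder estimates are about the scalar multiplier $(\sum x_i^2)^r$ and the "smallest eigenvalue vs largest eigenvalue" ratio of $M(x)$ on the sphere, which is exactly $\eta(f)$. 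So I would state and use a matrix-valued version of Theorem~\ref{th:reznick} (this is standard and due to Reznick / Scheiderer-type arguments, and a matrix Positivstellensatz of this flavor appears in the literature), with the constants tracked exactly as in the scalar case but with $N \leftarrow n$, $D \leftarrow d-2$, $\epsilon \leftarrow \eta(f)$. Modulo citing or reproving that matrix analogue, the rest is just substitution into the known bound.
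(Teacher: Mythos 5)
Your overall strategy is the same as the paper's: view $F(x;y)\mathrel{\mathop{:}}=y^TH_f(x)y$ as a biform, fix $y\in S^{n-1}$ to reduce to Reznick's scalar setting with $\epsilon(F(\cdot,y))\geq\eta(f)$, and exploit the fact that the multiplier $(\sum_i x_i^2)^r$ involves only $x$, so that the Reznick-type certificate has coefficients which are quadratic forms in $y$; once those are PSD the product is jointly sos. The "matrix-valued Reznick theorem" you propose to cite or reprove is exactly what the paper establishes in Lemmas \ref{lem:F.sos}, \ref{lem:phi.def}, and \ref{lem:Phi.psd}, using Reznick's operator machinery (Propositions \ref{prop:sq2lin}, \ref{prop:diff2sum}, \ref{prop:def.Phi}, \ref{prop:pd.Phi}).

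Two points in your sketch are genuinely off, though. First, the mechanism: the PSD quadratic-in-$y$ coefficients of the certificate are \emph{not} values of $F(u,\cdot)$ at grid points $u$; if they were, the conclusion would hold for every $r\geq 0$ with no hypothesis on $\eta(f)$ at all, which cannot be correct. In Reznick's argument one writes $F(x;y)(\sum_i x_i^2)^{s-d'}=G(D;y)(\sum_i x_i^2)^{s}$, where $d'$ is the $x$-degree of $F$ and $G=\Psi_{s,x}^{-1}(F)$ is the coefficientwise inverse image under $\Phi_s$ (Lemma \ref{lem:phi.def}); expanding $(\sum_i x_i^2)^s$ into $2s$-th powers of linear forms makes the coefficients the evaluations $y\mapsto G(\alpha_l;y)$, and the entire quantitative content of the theorem is that for $s$ above the threshold these evaluations are PSD (Lemma \ref{lem:Phi.psd}, obtained by fixing $y$ and applying Proposition \ref{prop:pd.Phi}). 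Your "nonnegative Bernstein coefficient" description is a Polya-type mechanism, not Reznick's, and as written it does not constitute a proof. Second, the constant: your fixed-$y$ application of Theorem \ref{th:reznick} gives the threshold without the extra $-d$, and your reconciliation runs backwards: the stated threshold is \emph{smaller}, so the theorem asserts $r$-sos-convexity for values of $r$ that your bound does not cover, and since $r$-sos-convexity only becomes easier as $r$ grows, proving it above a larger threshold does not imply it above a smaller one. The $-d$ is not slack; it arises from the shift between the multiplier power $r$ and the operator degree parameter $s$ (the paper takes $s=r+d$ and uses the $s$-parametrized form of Reznick's bound from Proposition \ref{prop:pd.Phi}), so to reach the stated bound you must run the argument through that parametrization rather than quoting Theorem \ref{th:reznick} directly.
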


\begin{remark}
	Note that $\eta(f)$ can also be interpreted as $$\eta(f)  = \frac{\min_{x \in S^{n-1}} \lambda_{\min} (H_f(x)) }{\max_{x \in S^{n-1}} \lambda_{\max}(H_f(x))}=\frac{1}{ \max_{x \in S^{n-1}} \|H^{-1}_f(x)\|_2 \cdot \max_{x \in S^{n-1}} \|H_f(x)\|_2 }.$$

\end{remark}

\begin{remark}
	Theorem \ref{th:r.sos.convex} is a generalization of Theorem \ref{th:reznick} by Reznick. Note though that this is not an immediate generalization. First, $y^TH_f(x)y$ is not a positive definite form (consider, e.g., $y=0$ and any nonzero $x$). Secondly, note that the multiplier is $(\sum_i x_i^2)^r$ and does not involve the $y$ variables. (As we will see in the proof, this is essentially because $y^TH_f(x)y$ is quadratic in $y$.)
\end{remark}

\begin{remark}
	Theorem \ref{th:r.sos.convex} can easily be adapted to biforms of the type $\sum_j f_j(x)g_j(y)$ where $f_j$'s are forms of degree $d$ in $x$ and $g_j$'s are forms of degree $\tilde{d}$ in $y$. In this case, there exist integers $s,r$ such that $$\sum_j f_j(x)g_j(y) \cdot (\sum_i x_i^2)^r \cdot (\sum_i y_i^2)^s$$ is sos. For the purposes of this chapter however and the connection to polynomial norms, we will show the result in the particular case where the biform of interest is $y^TH_f(x)y.$
\end{remark}

We associate to any form $f \in H_{n,d}$, the $d$-th order differential operator $f(D)$, defined by replacing each occurence of $x_j$ with $\frac{\partial }{\partial x_j}$. For example, if $f(x_1,\ldots,x_n)\mathrel{\mathop{:}}=\sum_i c_i x_1^{a_1^i}\ldots x_{n}^{a_i^n}$ where $c_i \in \mathbb{R}$ and $a_{j}^i \in \mathbb{N}$, then its differential operator will be $$f(D)=\sum_{i} c_i \frac{\partial^{a_1^i}}{\partial x_1^{a_1^i}}\ldots \frac{\partial^{a_n^i}}{\partial x_n^{a_n^i}}.$$

Our proof will follow the structure of the proof of Theorem \ref{th:reznick} given in \cite{Reznick_Unif_denominator} and reutilize some of the results given in the chapter which we quote here for clarity of exposition.

\begin{prop}[\cite{Reznick_Unif_denominator}, see Proposition 2.6]\label{prop:sq2lin}
	For any nonnegative integer $r$, there exist nonnegative rationals $\lambda_k$ and integers $\alpha_{kl}$ such that
	$$(x_1^2+\ldots+x_n^2)^r=\sum_k \lambda_k(\alpha_{k1}x_1+\ldots+\alpha_{kn}x_n)^{2r}.$$
\end{prop}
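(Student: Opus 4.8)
The statement is a classical fact due to Reznick (it is Proposition 2.6 of \cite{Reznick_Unif_denominator}), so one legitimate option is simply to cite it; what follows is how I would give a self-contained argument. The plan is to combine an orthogonal-invariance identity with a rational cubature formula on the sphere. First I would record the integral representation
$$\left(\sum_{i=1}^n x_i^2\right)^r = c_{n,r} \int_{S^{n-1}} (u_1 x_1 + \cdots + u_n x_n)^{2r}\, d\sigma(u),$$
where $\sigma$ is the rotation-invariant probability measure on $S^{n-1}$. This holds because the right-hand side is a form of degree $2r$ in $x$ that is invariant under every orthogonal change of variables on $x$ (since $\sigma$ is rotation invariant), and the only $O(n)$-invariant forms of degree $2r$ are scalar multiples of $(\sum_i x_i^2)^r$; evaluating both sides at $x = e_1$ identifies $c_{n,r}$ as the reciprocal of $\int_{S^{n-1}} u_1^{2r}\, d\sigma(u)$. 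By the standard closed-form for monomial moments over the sphere, this number is a ratio of values of $\Gamma$ at half-integers, hence a positive \emph{rational} number, so $c_{n,r}\in\mathbb{Q}$, $c_{n,r}>0$.

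Next I would invoke a cubature formula for $\sigma$ that is exact on forms of degree $\le 2r$ and whose nodes and weights are rational: there exist finitely many $u^{(k)}$ with rational coordinates (scaled onto $S^{n-1}$) and positive rationals $w_k$ such that $\sum_k w_k\, g(u^{(k)}) = \int_{S^{n-1}} g\, d\sigma$ for every form $g$ of degree $\le 2r$. Existence of \emph{some} finitely supported positive cubature follows from a Tchakaloff-type argument (the moments of $\sigma$ up to degree $2r$ lie in the interior of the moment cone, which is the conic hull of point evaluations). Rationality is then obtained by perturbation: move the nodes to nearby rational points keeping the associated moment matrix square and nonsingular, and solve the resulting linear moment system — whose matrix and right-hand side are rational — for the weights; the unique solution is rational and, by continuity from the already-known positive solution, still has positive entries. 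Applying this cubature to $g(u) = (u_1 x_1 + \cdots + u_n x_n)^{2r}$ (a form of degree $2r$ in $u$ for each fixed $x$) and substituting into the integral representation yields
$$\left(\sum_{i=1}^n x_i^2\right)^r = c_{n,r}\sum_k w_k\, (u^{(k)}_1 x_1 + \cdots + u^{(k)}_n x_n)^{2r}.$$
Clearing the common denominator of the coordinates of each $u^{(k)}$ and absorbing the resulting positive rational factor into the weight turns this into exactly the claimed identity, with $\lambda_k\in\mathbb{Q}$, $\lambda_k\ge 0$, and $\alpha_{kl}\in\mathbb{Z}$.

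The orthogonal-invariance identification and the final denominator clearing are routine. \textbf{The main obstacle is the rationality of the cubature data}: producing a finitely supported formula with positive weights is standard, but guaranteeing that the weights and nodes can be taken \emph{simultaneously positive and rational} requires the general-position/perturbation argument above, and one must verify that replacing the nodes by rational ones keeps the moment system nonsingular so that exactness is preserved and the weights stay positive. An equivalent and purely algebraic alternative is to note that $(\sum_i x_i^2)^r$ lies in the interior of the closed convex cone in $H_{n,2r}$ generated by $2r$-th powers of linear forms, that this cone coincides with the closed conic hull of $2r$-th powers of linear forms with integer coefficients (which are dense), and that a point in the interior of such a cone is a \emph{finite} nonnegative combination of such generators; solving the corresponding rational linear system then yields rational coefficients. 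This is, in spirit, the route taken in \cite{Reznick_Unif_denominator}.
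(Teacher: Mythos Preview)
The thesis does not prove this proposition; it is quoted verbatim from Reznick's paper \cite{Reznick_Unif_denominator} as one of several cited facts used in the proof of Theorem~\ref{th:r.sos.convex}, with no argument supplied beyond the citation. So there is nothing in the paper itself to compare your write-up against.

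Your second, ``purely algebraic'' route is correct and is in spirit Reznick's own argument: $(\sum_i x_i^2)^r$ lies in the interior of the full-dimensional cone $Q_{n,2r}\subset H_{n,2r}$ generated by $2r$-th powers of linear forms; powers of integer linear forms are dense among the generators; Carath\'eodory then lets you express $(\sum_i x_i^2)^r$ as a positive combination of a \emph{linearly independent} family of at most $\dim H_{n,2r}$ such integer-coefficient generators, whereupon the weights are the unique solution of a square rational linear system and hence rational and positive. Your first route via rational cubature, however, has a gap that your own caveat nearly names. Tchakaloff yields a formula with $K\le N=\dim H_{n,2r}$ nodes and strictly positive weights, but nothing forces $K=N$ or the evaluation matrix to be square; if $K<N$, then after you nudge the nodes to rational points the moment system becomes overdetermined and may have \emph{no} solution at all, so the continuity argument (``the unique solution \ldots still has positive entries'') does not apply. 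Padding with extra zero-weight nodes to reach $N$ does not help either, since continuity then only guarantees the perturbed weights are close to a vector with some zero entries, not a strictly positive one. The clean repair is precisely the Carath\'eodory-plus-linear-independence step from your algebraic paragraph, applied before any perturbation.
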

For simplicity of notation, we will let $\alpha_k\mathrel{\mathop{:}}=(\alpha_{k1},\ldots,\alpha_{kl})^T$ and $x\mathrel{\mathop{:}}=(x_1,\ldots,x_n)^T$. Hence, we will write $\sum_k \lambda_k(\alpha_k^Tx)^{2r}$ to mean $\sum_k \lambda_k (a_{k1}x_1+\ldots+a_{kn}x_n)^{2r}$.

\begin{prop}[\cite{Reznick_Unif_denominator}, see Proposition 2.8]\label{prop:diff2sum}
	If $g \in H_{n,e}$ and $h=\sum_k \lambda_k (\alpha_k^Tx)^{d+e} \in H_{n,d+e}$, then $$g(D)h=(d+e)_e \sum_k \lambda_k g(\alpha_k) (\alpha_k^Tx)^d.$$
\end{prop}

\begin{prop}[\cite{Reznick_Unif_denominator}, see Theorem 3.7 and 3.9]\label{prop:def.Phi} For $f \in H_{n,d}$ and $s \geq d$, we define $\Phi_s(f) \in H_{n,d}$ by 
	\begin{align}\label{eq:def.Phi}
	f(D) (x_1^2+\ldots+x_n^2)^s=\mathrel{\mathop{:}} \Phi_s(f) (x_1^2+\ldots+x_n^2)^{s-d}.
	\end{align}
	The inverse $\Phi_s^{-1}(f)$ of $\Phi_s(f)$ exists and this is a map verifying $\Phi_s(\Phi_s^{-1}(f))=f.$
\end{prop}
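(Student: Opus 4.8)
The plan is to prove both assertions of Proposition~\ref{prop:def.Phi} — that $\Phi_s(f)$ is a well-defined element of $H_{n,d}$, and that the resulting linear map $\Phi_s : H_{n,d} \to H_{n,d}$ is invertible — by diagonalizing $\Phi_s$ with respect to the harmonic (Fischer) decomposition of $H_{n,d}$. Write $\rho := x_1^2 + \cdots + x_n^2$ and let $\Delta := \sum_{i=1}^n \partial^2/\partial x_i^2$ denote the Laplacian, so that $\rho(D) = \Delta$. First I would record the degree bookkeeping that makes the statement plausible: $f(D)$ is a constant-coefficient operator homogeneous of order $d$, so $f(D)\rho^s$ is a form of degree $2s-d$, and dividing by $\rho^{s-d}$ (degree $2s-2d$) leaves a form of degree $d$. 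The content of the first assertion is therefore exactly the \emph{divisibility} of $f(D)\rho^s$ by $\rho^{s-d}$.

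The key structural tool is the decomposition $H_{n,d} = \bigoplus_{j\ge 0} \rho^{j}\,\mathcal{H}_{d-2j}$, where $\mathcal{H}_m := \{h \in H_{n,m} : \Delta h = 0\}$ is the space of harmonic forms of degree $m$. I would combine this with two classical identities. First, the assignment $p \mapsto p(D)$ is an algebra homomorphism sending $\rho \mapsto \Delta$, so the operator attached to $\rho^{j}h$ is $\Delta^{j}\,h(D)$. Second, for $h \in \mathcal{H}_m$ and $s \ge m$ one has $h(D)\rho^{s} = c_{s,m}\,\rho^{\,s-m}\,h(x)$ with an explicit constant $c_{s,m}>0$, while for harmonic $h\in\mathcal H_m$ and $p\ge 1$ one has $\Delta(\rho^{p}h) = 2p\,(2p+2m+n-2)\,\rho^{\,p-1}h$. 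Expanding $f = \sum_j \rho^{j} h_{d-2j}$ and applying these identities gives
\[
f(D)\rho^{s} \;=\; \sum_j \Delta^{j} h_{d-2j}(D)\,\rho^{s} \;=\; \rho^{\,s-d}\sum_j \gamma_j(s)\,\rho^{j} h_{d-2j}(x),
\]
where each $\gamma_j(s)$ is a product of the constants above. This simultaneously establishes divisibility — so $\Phi_s(f) = \sum_j \gamma_j(s)\rho^{j}h_{d-2j} \in H_{n,d}$ is well defined and linear — and shows that $\Phi_s$ acts as the scalar $\gamma_j(s)$ on the summand $\rho^{j}\mathcal H_{d-2j}$.

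Invertibility then reduces to checking that each eigenvalue $\gamma_j(s)$ is nonzero, and this is the step I expect to be the main obstacle, since it is precisely where the hypothesis $s \ge d$ is used. Each $\gamma_j(s)$ factors as $c_{s,d-2j}$ times a product of Laplacian constants $2p\,(2p+2(d-2j)+n-2)$ with $p$ running over $s-d+2j,\, s-d+2j-1,\, \dots,\, s-d+j+1$. These Laplacian constants are strictly positive as long as the relevant $p$'s are $\ge 1$, which holds because $s\ge d$ (the smallest such $p$ is $s-d+j+1$); and $c_{s,d-2j}\neq 0$ provided $s\ge d-2j$. The binding case is $j=0$, the top harmonic component $\mathcal H_d$, where there is no Laplacian factor and nonvanishing of $c_{s,d}$ requires exactly $s \ge d$. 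Hence $\gamma_j(s)\neq 0$ for all $j$ as soon as $s \ge d$, so $\Phi_s$ is a diagonal, hence invertible, operator; its inverse $\Phi_s^{-1}$ scales $\rho^{j}\mathcal H_{d-2j}$ by $\gamma_j(s)^{-1}$ and manifestly satisfies $\Phi_s(\Phi_s^{-1}(f))=f$.

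Finally, I would note that the same conclusion can be reached along Reznick's own route using the apolar pairing $[p,q] := p(D)q$ on $H_{n,d}$ together with Propositions~\ref{prop:sq2lin} and \ref{prop:diff2sum}: writing $\rho^{s} = \sum_k \lambda_k(\alpha_k^T x)^{2s}$ and applying $f(D)$ termwise via Proposition~\ref{prop:diff2sum} yields $f(D)\rho^{s} = (2s)_d\sum_k \lambda_k f(\alpha_k)(\alpha_k^T x)^{2s-d}$, after which one verifies divisibility by $\rho^{s-d}$ and nonsingularity of $\Phi_s$ by showing that $\Phi_s$ is self-adjoint and positive definite for the apolar inner product when $s\ge d$. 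The harmonic-decomposition argument above is self-contained modulo standard facts of harmonic analysis, and is in my view the cleanest way to expose both the divisibility and the role of the bound $s \ge d$.
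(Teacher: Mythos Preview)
The paper does not supply its own proof of this proposition: it is quoted verbatim from Reznick's paper \cite{Reznick_Unif_denominator} (Theorems~3.7 and~3.9 there) as one of several background results assembled to prove Theorem~\ref{th:r.sos.convex}. So there is no in-paper argument to compare against.

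Your sketch via the harmonic (Fischer) decomposition is correct and is, in fact, essentially the route Reznick himself takes in \cite{Reznick_Unif_denominator}: one writes $f=\sum_j \rho^{j}h_{d-2j}$ with $h_{d-2j}$ harmonic, uses the identities $h(D)\rho^{s}=2^{m}(s)_{m}\,\rho^{s-m}h$ for $h\in\mathcal H_m$ and $\Delta(\rho^{p}h)=2p(2p+2m+n-2)\rho^{p-1}h$, and reads off that $\Phi_s$ is diagonal on the decomposition with strictly positive eigenvalues once $s\ge d$. Your identification of the binding case ($j=0$, where nonvanishing of $c_{s,d}=2^{d}(s)_{d}$ is exactly the condition $s\ge d$) is the right explanation for the hypothesis. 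The alternative you mention---expanding $\rho^{s}$ via Proposition~\ref{prop:sq2lin} and applying Proposition~\ref{prop:diff2sum}---does give $f(D)\rho^{s}=(2s)_d\sum_k\lambda_k f(\alpha_k)(\alpha_k^{T}x)^{2s-d}$ immediately, but establishing divisibility by $\rho^{s-d}$ from that expression alone still requires something like the harmonic analysis you already did, so the first route is indeed the cleaner one.
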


\begin{prop}[\cite{Reznick_Unif_denominator}, see Theorem 3.12 ]\label{prop:pd.Phi}
	Suppose $f$ is a positive definite form in $n$ variables and of degree $d$ and let $$\epsilon(f)=\frac{\min\{f(u)~|~u \in S^{n-1}\}}{\max\{f(u)~|~u \in S^{n-1}\}}.$$ If $s\geq \frac{nd(d-1)}{4\log(2)\epsilon(f)}-\frac{n-d}{2}$, then $\Phi^{-1}_s(f) \in P_{n,d}.$
	
\end{prop}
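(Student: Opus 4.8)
The plan is to prove the statement directly, working with $g := \Phi_s^{-1}(f)$, which exists and satisfies $\Phi_s(g) = f$ by Proposition \ref{prop:def.Phi}. Since $g \in H_{n,d}$, showing $g \in P_{n,d}$ amounts (by homogeneity) to showing $g(v) > 0$ for every $v \in S^{n-1}$. The strategy is perturbative: I will argue that $g$ differs from $f$ by an amount that is small relative to the positivity margin of $f$ quantified by $\epsilon(f)$. By definition $f(u) \geq \epsilon(f)\,\max_{S^{n-1}} f$ for all $u \in S^{n-1}$, and $\epsilon(f) > 0$ because $f$ is positive definite; so it will suffice to show that, for a suitable reference scalar $\lambda(s) > 0$, one has $\max_{S^{n-1}}|g - \lambda(s) f| < \lambda(s)\,\epsilon(f)\,\max_{S^{n-1}} f$, which forces $g(v) > 0$ everywhere on the sphere.

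The engine of the argument is an explicit diagonalization of the linear map $\Phi_s : H_{n,d} \to H_{n,d}$. Let $\Delta := \sum_{i=1}^n \partial^2/\partial x_i^2$, write $\|x\|^2 = \sum_i x_i^2$, and let $\mathcal{H}_{n,k} \subseteq H_{n,k}$ denote the space of degree-$k$ harmonic forms (those annihilated by $\Delta$). Using the Fischer decomposition $H_{n,d} = \bigoplus_{j=0}^{\lfloor d/2\rfloor} \|x\|^{2j}\,\mathcal{H}_{n,d-2j}$, I will show that each summand is an eigenspace of $\Phi_s$. The key algebraic observation is that the differential operator attached to a product of forms is the composition of the individual operators, so $(\|x\|^{2j}h)(D) = \Delta^j\,h(D)$. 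For harmonic $h$ of degree $m$, applying $h(D)$ to $\|x\|^{2s}$ yields a scalar multiple of $\|x\|^{2(s-m)}h$ (this constant is computed from Propositions \ref{prop:sq2lin} and \ref{prop:diff2sum} by expanding $\|x\|^{2s} = \sum_k \lambda_k(\alpha_k^T x)^{2s}$ and differentiating), and a further application of $\Delta^j$ yields a scalar multiple of $\|x\|^{2(s-m-j)}h$. Chasing the defining relation of Proposition \ref{prop:def.Phi} then gives $\Phi_s(\|x\|^{2j}h) = \mu_j(s)\,\|x\|^{2j}h$, where each eigenvalue $\mu_j(s)$ is an explicit product of factors linear in $s$ and is positive once $s \geq d$. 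Consequently $\Phi_s^{-1}$ scales the $j$-th Fischer summand by $\mu_j(s)^{-1}$, and if $f = \sum_j \|x\|^{2j}h_j$, then $g = \sum_j \mu_j(s)^{-1}\,\|x\|^{2j}h_j$.

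With the eigenvalues in hand, the comparison of the first paragraph reduces to estimating the spread of the numbers $\mu_j(s)^{-1}$. Taking $\lambda(s) = \mu_0(s)^{-1}$ (the reference being the purely harmonic component $j=0$), on $S^{n-1}$ one has $\|x\|^{2j} = 1$, so $g(v) - \mu_0(s)^{-1} f(v) = \sum_{j\geq 1}\bigl(\mu_j(s)^{-1} - \mu_0(s)^{-1}\bigr)h_j(v)$. Bounding $\max_{S^{n-1}}|g - \mu_0(s)^{-1} f|$ then needs two ingredients: a bound on $\max_j |1 - \mu_0(s)/\mu_j(s)|$, and a bound on $\max_{S^{n-1}}|h_j|$ in terms of $\max_{S^{n-1}}|f|$, the latter following from the boundedness in supremum norm of the Fischer/harmonic projection operators on $S^{n-1}$. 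Combining these with $f(u) \geq \epsilon(f)\max_{S^{n-1}} f$ closes the argument: for $s$ large enough that the eigenvalue spread is smaller than $\epsilon(f)$ (up to the fixed projection constants), we get $g(v) > 0$ on the sphere, hence $g = \Phi_s^{-1}(f) \in P_{n,d}$.

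The hard part will be the quantitative step of extracting precisely the threshold $s \geq \frac{nd(d-1)}{4\log 2\,\epsilon(f)} - \frac{n-d}{2}$, rather than merely ``$s$ sufficiently large.'' Each ratio $\mu_j(s)/\mu_0(s)$ is a product of factors of the form $\frac{s-a}{s-b}$ with $a,b$ depending on $n,d,j$, and controlling the full product amounts to forcing a quantity of the shape $\prod_i\bigl(1 - t_i(s)\bigr)$ to stay above $1/2$; after taking logarithms and applying the elementary inequality $-\log(1-u) \leq 2\log 2\cdot u$ on $[0,\tfrac12]$, this produces a linear-in-$s$ sufficient condition with $\log 2$ in the denominator. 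Matching the combinatorial constant $nd(d-1)/4$ and the exact shift $(n-d)/2$ requires carefully tracking the constants coming from the two applications $h(D)$ and $\Delta^j$ and summing their contributions over $j$; this bookkeeping, which runs in close parallel to Reznick's estimate underlying Theorem \ref{th:reznick} (the shift differing there by $d$, reflecting the passage from $\Phi_s^{-1}(f)\in P_{n,d}$ to a sum of squares multiplier identity), is the delicate and technically demanding component of the proof.
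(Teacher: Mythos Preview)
The paper does not give its own proof of this proposition: it is quoted verbatim from Reznick's article \cite{Reznick_Unif_denominator} (Theorem~3.12 there) and used as a black box in the proof of Lemma~\ref{lem:Phi.psd}. So there is no ``paper's proof'' to compare against.

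That said, your outline is essentially the route Reznick himself takes. The diagonalization of $\Phi_s$ in the Fischer decomposition $H_{n,d} = \bigoplus_j \|x\|^{2j}\mathcal{H}_{n,d-2j}$, the explicit eigenvalues $\mu_j(s)$, and the passage from eigenvalue ratios to a threshold on $s$ via a $\log 2$ inequality are all the correct structural ingredients. Two cautions: first, your bound on $\max_{S^{n-1}}|h_j|$ via ``boundedness of the harmonic projection in sup norm'' introduces a dimension-dependent constant that you have not tracked, and which must in the end be absorbed into the precise constant $nd(d-1)/4$; Reznick handles this by working instead with a different basis-dependent norm on $H_{n,d}$ in which the projections are better behaved, and relating it to sup norm only at the end. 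Second, you concede that matching the exact shift $(n-d)/2$ is ``delicate bookkeeping'' still to be done; since the entire content of the proposition is the quantitative threshold (the qualitative statement ``for $s$ large enough'' is much easier), this is not a detail but the heart of the argument, and you should not expect it to fall out automatically from the outline as written.
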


We will focus throughout the proof on biforms of the following structure 
\begin{align} \label{eq:def.F}
F(x;y)\mathrel{\mathop{:}}=\sum_{1\leq i,j \leq n} y_iy_jp_{ij}(x),
\end{align}
where $p_{ij}(x) \in H_{n,d}$, for all $i,j$, and some even integer $d$. Note that the polynomial $y^TH_f(x)y$ (where $f$ is some form) has this structure. We next present three lemmas which we will then build on to give the proof of Theorem \ref{th:r.sos.convex}.

\begin{lemma}\label{lem:F.sos}
	For a biform $F(x;y)$ of the structure in (\ref{eq:def.F}), define the operator $F(D;y)$ as $$F(D;y)=\sum_{ij} y_iy_jp_{ij}(D).$$ If $F(x;y)$ is positive semidefinite (i.e., $F(x;y)\geq 0,~ \forall x,y$), then, for any $s \geq 0$, the biform $$F(D;y)(x_1^2+\ldots+x_n^2)^s$$ is a sum of squares.
\end{lemma}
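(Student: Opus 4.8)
The plan is to write $F(D;y)$ applied to $(\sum_i x_i^2)^s$ as a sum of squares by combining two ideas from Reznick's toolkit quoted above: first, expand the power $(\sum_i x_i^2)^s$ as a nonnegative combination of $2s$-th powers of linear forms via Proposition \ref{prop:sq2lin}, and second, use Proposition \ref{prop:diff2sum} to evaluate the action of a differential operator on such a power. Concretely, write $(x_1^2+\dots+x_n^2)^s = \sum_k \lambda_k (\alpha_k^T x)^{2s}$ with $\lambda_k \geq 0$. The key observation is that for each fixed $(i,j)$, applying $p_{ij}(D)$ to $(\alpha_k^T x)^{2s}$ produces (by Proposition \ref{prop:diff2sum}, with $e = 2s - d$ and degree $d$) a scalar multiple $(2s)_d \, p_{ij}(\alpha_k)$ of $(\alpha_k^T x)^{2s-d}$. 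Therefore
\begin{align*}
F(D;y)(x_1^2+\dots+x_n^2)^s &= \sum_{i,j} y_i y_j \, p_{ij}(D)\Big(\sum_k \lambda_k (\alpha_k^T x)^{2s}\Big) \\
&= (2s)_d \sum_k \lambda_k (\alpha_k^T x)^{2s-d} \sum_{i,j} y_i y_j \, p_{ij}(\alpha_k) \\
&= (2s)_d \sum_k \lambda_k (\alpha_k^T x)^{2s-d} \, F(\alpha_k; y).
\end{align*}
Since $F$ is positive semidefinite, each $F(\alpha_k; y) \geq 0$ as a form in $y$ alone — but more is needed: I must argue that each $F(\alpha_k; y)$ is actually a sum of squares in $y$, not merely nonnegative.

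**Handling the nonnegativity-to-sos gap in $y$.** Here the crucial point is that $F(\alpha_k; y) = \sum_{i,j} y_i y_j p_{ij}(\alpha_k)$ is a \emph{quadratic} form in $y$ (the entries $p_{ij}(\alpha_k)$ are just real numbers once $x = \alpha_k$ is plugged in). A nonnegative quadratic form is always a sum of squares — this is the elementary fact that a PSD symmetric matrix admits a factorization $M = \sum_\ell v_\ell v_\ell^T$, equivalently $y^T M y = \sum_\ell (v_\ell^T y)^2$. So each term $\lambda_k (\alpha_k^T x)^{2s-d} F(\alpha_k; y)$ is a product of three sos factors: the nonnegative scalar $\lambda_k$, the even power $(\alpha_k^T x)^{2s-d}$ of a linear form (note $2s - d$ is even since $d$ is even and we may take $s$ so that... actually $2s$ is always even and $d$ is even, so $2s-d$ is even — good), and the sos quadratic form $F(\alpha_k;y)$. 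A product of sos polynomials is sos, and the whole expression is then a nonnegative combination of sos polynomials, hence sos. One edge case to dispatch: if $2s < d$ the identity degenerates; but the lemma only needs $s \geq 0$ and the interesting regime is $s$ large, so I would simply note that for $2s < d$ the operator $F(D;y)$ annihilates $(\sum x_i^2)^s$ (degree reasons) giving the zero polynomial, which is trivially sos, and for $2s \geq d$ the argument above applies. Also the falling factorial $(2s)_d$ is a nonnegative integer, so it does not affect the sos property.

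**Main obstacle and remaining care.** The substantive step — and the one I would be most careful about — is the bookkeeping in applying Proposition \ref{prop:diff2sum}: matching the degrees ($g = p_{ij}$ has degree $e := d$ in the proposition's notation, or rather one sets things up so the total degree $d+e$ of the power matches $2s$), and making sure the constant $(d+e)_e$ comes out as claimed and is $y$-independent. I would restate Proposition \ref{prop:diff2sum} with the substitution $e \leftarrow 2s-d$, $d \leftarrow d$ carefully. A second, smaller point is linearity: $F(D;y)$ is a sum over $(i,j)$ of the scalar-coefficient (in $y$) operators $p_{ij}(D)$, each of which is linear, so we may push the sum $\sum_k$ and the operator through each other freely — this is routine but worth a sentence. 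Once these are in place, the conclusion that $F(D;y)(x_1^2+\dots+x_n^2)^s$ is a sum of squares (in the variables $x$ and $y$ jointly) follows immediately. This lemma is then presumably the engine for the main $r$-sos-convexity theorem (Theorem \ref{th:r.sos.convex}): taking $F(x;y) = y^T H_f(x) y$ and invoking Proposition \ref{prop:pd.Phi}-style duality between $\Phi_s$ and $\Phi_s^{-1}$ to convert the positive-definiteness hypothesis on $F$ into the existence of a suitable $s$, though that is the content of the next result rather than this lemma.
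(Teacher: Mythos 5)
Your proposal is correct and follows essentially the same argument as the paper: expand $(\sum_i x_i^2)^s$ via Proposition \ref{prop:sq2lin}, apply Proposition \ref{prop:diff2sum} termwise to get $(2s)_d \sum_k \lambda_k (\alpha_k^Tx)^{2s-d}\sum_{i,j}y_iy_jp_{ij}(\alpha_k)$, and conclude since each quadratic form $\sum_{i,j}y_iy_jp_{ij}(\alpha_k)$ in $y$ is psd hence sos, while $\lambda_k(\alpha_k^Tx)^{2s-d}$ is an even power of a linear form. The degree-bookkeeping and degenerate-case remarks you add are fine but not needed beyond what the paper records.
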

\begin{proof}
	Using Proposition \ref{prop:sq2lin}, we have $$(x_1^2+\ldots+x_n^2)^s=\sum_l \lambda_l(\alpha_{l1}x_1+\ldots \alpha_{ln}x_n)^{2s},$$ where $\lambda_l \geq 0$ and $\alpha_l \in \mathbb{Z}^n.$ Hence, applying Proposition \ref{prop:diff2sum}, we get 
	\begin{align}
	F(D;y)(x_1^2+\ldots+x_n^2)^s &= \sum_{i,j} y_iy_j(p_{ij}(D)(x_1^2+\ldots+x_n^2)^s) \nonumber\\
	&= \sum_{i,j} y_iy_j \left( (2s)_d \sum_l \lambda_l p_{ij}(\alpha_l) (\alpha_l ^Tx)^{2s-d} \right) \nonumber\\
	&=(2s)_d \sum_l \lambda_l (\alpha_l ^Tx)^{2s-d} \sum_{i,j} y_iy_jp_{ij}(\alpha_l). \label{eq:proof.operator.sos}
	\end{align}
	Notice that $\sum_{i,j} y_iy_jp_{ij}(\alpha_l)$ is a quadratic form in $y$ which is positive semidefinite by assumption, which implies that it is a sum of squares (as a polynomial in $y$). Furthermore, as $\lambda_l \geq 0 ~\forall l$ and $(\alpha_l^Tx)^{2s-d}$ is an even power of a linear form, we have that $\lambda_l (\alpha_l^Tx)^{2s-d}$ is a sum of squares (as a polynomial in $x$). Combining both results, we get that (\ref{eq:proof.operator.sos}) is a sum of squares. 
\end{proof}

We now extend the concept introduced by Reznick in Proposition \ref{prop:def.Phi} to biforms. 
\begin{lemma} \label{lem:phi.def}
	For a biform $F(x;y)$ of the structure as in (\ref{eq:def.F}), we define the biform $\Psi_{s,x}(F(x;y))$ as
	\begin{align*}
	\Psi_{s,x}(F(x;y))\mathrel{\mathop{:}}=\sum_{i,j} y_iy_j \Phi_{s}(p_{ij}(x)),
	\end{align*}
	where $\Phi_s$ is as in (\ref{eq:def.Phi}). Define
	\begin{align*}
	\Psi_{s,x}^{-1}(F(x;y))\mathrel{\mathop{:}}=\sum_{i,j} y_iy_j \Phi_{s}^{-1}(p_{ij}(x)),
	\end{align*}
	where $\Phi_s^{-1}$ is the inverse of $\Phi_s$. Then, we have 
	\begin{align}
	F(D;y)(x_1^2+\ldots+x_n^2)^s=\Psi_{s,x} (F)(x_1^2+\ldots+x_n^2)^{s-d} \label{eq:def.Phi.s.x}
	\end{align}
	and 
	\begin{align}
	\Psi_{s,x}(\Psi_{s,x}^{-1}(F))=F. \label{eq:Phi.s.x.inv}
	\end{align}
\end{lemma}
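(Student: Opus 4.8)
\textbf{Proof proposal for Lemma \ref{lem:phi.def}.}

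The plan is to prove the two identities \eqref{eq:def.Phi.s.x} and \eqref{eq:Phi.s.x.inv} by reducing each to the corresponding single-form statement in Reznick's Proposition \ref{prop:def.Phi}, exploiting the fact that the operator $F(D;y)$ acts on the $x$-variables only and is linear in the coefficient polynomials $p_{ij}$. First I would write $F(D;y)(x_1^2+\ldots+x_n^2)^s$ out using the definition of $F(D;y)=\sum_{i,j} y_iy_j\, p_{ij}(D)$ from Lemma \ref{lem:F.sos}, so that
\[
F(D;y)(x_1^2+\ldots+x_n^2)^s = \sum_{i,j} y_iy_j \left( p_{ij}(D)(x_1^2+\ldots+x_n^2)^s \right).
\]
Now for each fixed pair $(i,j)$, $p_{ij}$ is a form in $H_{n,d}$, so Reznick's defining relation \eqref{eq:def.Phi} in Proposition \ref{prop:def.Phi} applies verbatim: $p_{ij}(D)(x_1^2+\ldots+x_n^2)^s = \Phi_s(p_{ij})(x_1^2+\ldots+x_n^2)^{s-d}$. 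Substituting this into each summand and pulling the common factor $(x_1^2+\ldots+x_n^2)^{s-d}$ out of the sum gives
\[
F(D;y)(x_1^2+\ldots+x_n^2)^s = \left( \sum_{i,j} y_iy_j\, \Phi_s(p_{ij}) \right)(x_1^2+\ldots+x_n^2)^{s-d} = \Psi_{s,x}(F)(x_1^2+\ldots+x_n^2)^{s-d},
\]
which is exactly \eqref{eq:def.Phi.s.x}; here the last equality is just the definition of $\Psi_{s,x}$. For the second identity, I would again argue coefficientwise: $\Psi_{s,x}$ and $\Psi_{s,x}^{-1}$ are both defined by applying $\Phi_s$ (resp.\ $\Phi_s^{-1}$) to each $p_{ij}$ while leaving the $y_iy_j$ factors untouched, so
\[
\Psi_{s,x}(\Psi_{s,x}^{-1}(F)) = \Psi_{s,x}\!\left( \sum_{i,j} y_iy_j\, \Phi_s^{-1}(p_{ij}) \right) = \sum_{i,j} y_iy_j\, \Phi_s(\Phi_s^{-1}(p_{ij})) = \sum_{i,j} y_iy_j\, p_{ij} = F,
\]
using $\Phi_s(\Phi_s^{-1}(f))=f$ from Proposition \ref{prop:def.Phi} in the middle step. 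One small point to check is that the decomposition $F(x;y)=\sum_{i,j} y_iy_jp_{ij}(x)$ is well-defined enough for this coefficientwise manipulation — i.e.\ that the operators are applied to a fixed chosen representation of $F$ in the form \eqref{eq:def.F}, which is consistent with how $F(D;y)$, $\Psi_{s,x}$ and $\Psi_{s,x}^{-1}$ were all defined in terms of the $p_{ij}$.

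I do not expect any real obstacle here: the lemma is essentially a bookkeeping statement that lifts Reznick's $\Phi_s$ machinery from forms to biforms that are quadratic in $y$, and the only thing that makes it work is the linearity of differential operators and the fact that nothing in the construction touches the $y$ variables. The mildly delicate aspect — and the one I would state carefully — is simply ensuring that $\Phi_s$ is applied degreewise to objects genuinely in $H_{n,d}$ (so that $s\geq d$ suffices for $\Phi_s$ to be defined, matching the hypothesis of Proposition \ref{prop:def.Phi}), and that the two sums over $(i,j)$ are finite so that interchanging the operator with the sum is trivially justified.
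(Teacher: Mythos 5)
Your proposal is correct and follows essentially the same route as the paper's proof: expand $F(D;y)$ by linearity, apply the defining relation (\ref{eq:def.Phi}) to each $p_{ij}$ to get (\ref{eq:def.Phi.s.x}), and verify (\ref{eq:Phi.s.x.inv}) coefficientwise using $\Phi_s(\Phi_s^{-1}(p_{ij}))=p_{ij}$. The extra remarks on working with a fixed representation of $F$ and on $s\geq d$ are harmless bookkeeping that the paper leaves implicit.
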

\begin{proof}
	We start by showing that (\ref{eq:def.Phi.s.x}) holds:
	\begin{align}
	F(D;y)(x_1^2+\ldots+x_n^2)^s &= \sum_{i,j}y_i y_jp_{ij}(D)(x_1^2+\ldots+x_n^2)^s \nonumber \\
	&\underset{\text{using (\ref{eq:def.Phi})}}{=} \sum_{i,j} y_iy_j\Phi_s(p_{ij}(x))(x_1^2+\ldots x_n^2)^{s-d} \nonumber \\
	&=\Psi_{s,x}(F)(x_1^2+\ldots+x_n^2)^{s-d}. \nonumber
	\end{align}
	We now show that (\ref{eq:Phi.s.x.inv}) holds: $$\Psi_{s,x}(\Psi_{s,x}^{-1}(F))=\Psi_{s,x} \left( \sum_{i,j}y_iy_j \Phi_{s}^{-1}(p_{ij}(x))\right)=\sum_{i,j}y_iy_j \Phi_s \Phi_{s}^{-1}(p_{ij})=\sum_{i,j} y_iy_j p_{ij}=F.$$
\end{proof}

\begin{lemma} \label{lem:Phi.psd}
	For a biform $F(x;y)$ of the structure in (\ref{eq:def.F}), which is positive on the bisphere, let $$\eta(F)\mathrel{\mathop{:}}=\frac{\min\{F(x;y)~|~(x,y) \in S^{n-1} \times S^{n-1}\}}{\max\{F(x;y)~|~(x,y) \in S^{n-1} \times S^{n-1}\}}.$$
	If $s \geq \frac{nd(d-1)}{4\log(2)\eta(F)}-\frac{n-d}{2}$, then $\Psi_{s,x}^{-1}(F)$ is positive semidefinite.
\end{lemma}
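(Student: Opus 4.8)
\textbf{Proof plan for Lemma \ref{lem:Phi.psd}.}

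The plan is to mimic closely the strategy Reznick uses to prove Proposition \ref{prop:pd.Phi} (his Theorem 3.12 in \cite{Reznick_Unif_denominator}), working coefficient-wise in the $y$ variables. First I would observe that checking positive semidefiniteness of $\Psi_{s,x}^{-1}(F)(x;y)$ amounts to checking that, for every fixed $\bar{y} \in \mathbb{R}^n$, the form $x \mapsto \Psi_{s,x}^{-1}(F)(x;\bar{y})$ is nonnegative on $S^{n-1}$. Since $\Psi_{s,x}^{-1}$ acts on $F(x;y)=\sum_{i,j} y_iy_j p_{ij}(x)$ by replacing each $p_{ij}$ with $\Phi_s^{-1}(p_{ij})$, linearity in the $p_{ij}$'s gives $\Psi_{s,x}^{-1}(F)(x;\bar{y}) = \Phi_s^{-1}\big( F(\cdot;\bar{y}) \big)(x)$, i.e.\ for each fixed $\bar{y}$ the operator $\Psi_{s,x}^{-1}$ is just Reznick's $\Phi_s^{-1}$ applied to the $x$-form $F(x;\bar{y})$. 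Therefore it would suffice to show that $\Phi_s^{-1}\big(F(\cdot;\bar{y})\big) \in P_{n,d}$ for all $\bar{y} \in S^{n-1}$ (by homogeneity in $y$ it is enough to treat $\bar{y}$ on the sphere).

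The natural move is then to invoke Proposition \ref{prop:pd.Phi} with the single form $f_{\bar{y}}(x) \mathrel{\mathop{:}}= F(x;\bar{y})$. However there are two gaps to fill. The first is that Proposition \ref{prop:pd.Phi} requires $f_{\bar{y}}$ to be positive definite, which is automatic here: since $F$ is positive on the bisphere, $F(x;\bar{y})>0$ for all $x \in S^{n-1}$ whenever $\bar{y}\neq 0$. The second, and more delicate, gap is uniformity of the threshold on $s$: Proposition \ref{prop:pd.Phi} gives, for each $\bar{y}$, a bound in terms of $\epsilon(f_{\bar{y}})=\frac{\min_{u\in S^{n-1}} F(u;\bar{y})}{\max_{u\in S^{n-1}} F(u;\bar{y})}$, and I need a single $s$ that works for all $\bar{y}\in S^{n-1}$ simultaneously. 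The key estimate to establish is that $\epsilon(f_{\bar y}) \ge \eta(F)$ for every $\bar y \in S^{n-1}$: indeed,
\begin{align*}
\min_{u \in S^{n-1}} F(u;\bar{y}) \;\ge\; \min_{(u,v) \in S^{n-1}\times S^{n-1}} F(u;v), \qquad
\max_{u \in S^{n-1}} F(u;\bar{y}) \;\le\; \max_{(u,v)\in S^{n-1}\times S^{n-1}} F(u;v),
\end{align*}
so dividing gives $\epsilon(f_{\bar y}) \ge \eta(F)$. Since the bound in Proposition \ref{prop:pd.Phi} is decreasing in $\epsilon$, the value $s \ge \frac{nd(d-1)}{4\log(2)\eta(F)} - \frac{n-d}{2}$ dominates the per-$\bar y$ threshold $\frac{nd(d-1)}{4\log(2)\epsilon(f_{\bar y})} - \frac{n-d}{2}$ for all $\bar y$. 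Hence for such $s$, $\Phi_s^{-1}(f_{\bar{y}}) \in P_{n,d}$ for every $\bar y \in S^{n-1}$, which is exactly what we need.

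I would then assemble the pieces: fix $s$ as in the statement; for arbitrary $(x,\bar y)$, if $\bar y = 0$ the form vanishes, and otherwise scale so $\bar y/\|\bar y\| \in S^{n-1}$ and apply the above to conclude $\Psi_{s,x}^{-1}(F)(x;\bar y) = \|\bar y\|^2\, \Phi_s^{-1}(F(\cdot; \bar y/\|\bar y\|))(x) \ge 0$. Since this holds for all $x$ and all $\bar y$, $\Psi_{s,x}^{-1}(F)$ is positive semidefinite, completing the proof. The main obstacle I anticipate is being careful about the reduction $\Psi_{s,x}^{-1}(F)(x;\bar y) = \Phi_s^{-1}(F(\cdot;\bar y))(x)$ — one must check that $\Phi_s^{-1}$ is linear over the coefficient forms and commutes with substituting a numerical value for $y$ (this follows because $\Phi_s$ and hence $\Phi_s^{-1}$ act only on the $x$-variables and are linear maps on $H_{n,d}$, but it deserves an explicit line) — together with verifying the degree bookkeeping so that Proposition \ref{prop:pd.Phi} applies with degree exactly $d$ (recall $\deg_x p_{ij} = d$, even, as in (\ref{eq:def.F})). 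Everything else is a direct transcription of Reznick's argument, now carried out uniformly in $y$.
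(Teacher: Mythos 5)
Your proposal is correct and follows essentially the same route as the paper's proof: fix $\bar y$ on the sphere, apply Proposition \ref{prop:pd.Phi} to $F(\cdot;\bar y)$, and use $\epsilon(F(\cdot;\bar y)) \geq \eta(F)$ to get a threshold on $s$ that is uniform in $\bar y$. The extra details you flag (linearity of $\Phi_s^{-1}$ so that evaluation at $\bar y$ commutes with $\Psi_{s,x}^{-1}$, and homogeneity in $y$) are left implicit in the paper but are exactly the right points to make explicit.
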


\begin{proof} 
	Fix $y \in S^{n-1}$ and consider $F_y(x)=F(x;y)$, which is a positive definite form in $x$ of degree $d$. From Proposition \ref{prop:pd.Phi}, if $$s\geq \frac{nd(d-1)}{4\log(2)\epsilon(F_y)}-\frac{n-d}{2},$$ then $\Phi^{-1}_s(F_y)$ is positive semidefinite. As $\eta(F) \leq \epsilon(F_{y})$ for any $y \in S^{n-1}$, we have that if $$s \geq \frac{nd(d-1)}{4\log(2)\eta(F)}-\frac{n-d}{2},$$ then $\Phi_s^{-1}(F_y)$ is positive semidefinite, regardless of the choice of $y.$ Hence, $\Psi_{s,x}^{-1}(F)$ is positive semidefinite (as a function of $x$ and $y$).
	
\end{proof}

\begin{proof}[Proof of Theorem \ref{th:r.sos.convex}]
	Let $F(x;y)=y^TH_f(x)y$, let $r\geq \frac{n(d-2)(d-3)}{4\log(2)\eta(f)}-\frac{n+d-2}{2}-d$, and let $$G(x;y)=\Psi_{r+d,x}^{-1}(F).$$ We know by Lemma \ref{lem:Phi.psd} that $G(x;y)$ is positive semidefinite. Hence, using Lemma \ref{lem:F.sos}, we get that $$G(D,y)(x_1^2+\ldots+x_n^2)^{r+d}$$ is sos. Lemma \ref{lem:phi.def} then gives us: 
	\begin{align*}
	G(D;y)(x_1^2+\ldots+x_n^2)^{r+d}&\underset{\text{using } (\ref{eq:def.Phi.s.x})}{=}\Psi_{r+d,x}(G)
	(x_1^2+\ldots+x_n^2)^r\\
	&\underset{\text{using } (\ref{eq:Phi.s.x.inv})}{=}F(x;y)(x_1^2+\ldots+x_n^2)^r.
	\end{align*}
	
	As a consequence, $F(x;y)(x_1^2+\ldots+x_n^2)^r$ is sos.
	
\end{proof}
The last theorem of this section shows that one cannot bound the integer $r$ in Theorem \ref{th:r.sos.convex} as a function of $n$ and $d$ only.
\begin{theorem}\label{th:unif.bnd}
	For any integer $r \geq 0$, there exists a form $f$ in 3 variables and of degree 8 such that $H_f(x) \succ 0, \forall x \neq 0$, but $f$ is not $r$-sos-convex.
\end{theorem}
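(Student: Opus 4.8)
The plan is to exhibit an explicit one-parameter family of forms in $3$ variables and degree $8$ whose Hessians are positive definite away from the origin, but for which the minimal $r$ making them $r$-sos-convex grows without bound. The natural candidate comes from a classical construction of strictly positive forms that are "barely" positive, i.e., forms $g$ in $3$ variables and degree $d$ that are positive definite but whose ratio $\epsilon(g) = \min_{S^{n-1}} g / \max_{S^{n-1}} g$ can be made arbitrarily small — for instance perturbations of the Motzkin-type form, such as $g_t(x_1,x_2,x_3) = x_1^6 + x_2^6 + x_3^6 + t\,(x_1^4 x_2^2 + x_2^4 x_3^2 + x_3^4 x_1^2 - 3 x_1^2 x_2^2 x_3^2)$, which is positive definite for $t$ slightly less than some threshold but has $\epsilon(g_t) \to 0$ as $t$ approaches that threshold. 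The key known fact I would invoke is the companion lower bound to Reznick's theorem: there exist positive definite ternary forms for which $(\sum_i x_i^2)^r g$ fails to be sos unless $r$ is large, with the required $r$ tending to infinity as $\epsilon(g) \to 0$ (this is exactly the phenomenon that makes the bound in Theorem \ref{th:reznick} essentially tight; see the discussion and examples in Reznick \cite{Reznick_Unif_denominator}, and the non-sos examples built from the Motzkin and Choi--Lam polynomials referenced earlier in this thesis).

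The first step is to pass from such a form $g \in H_{3,6}$ to a form $f \in H_{3,8}$ whose Hessian "contains" $g$. The cleanest route: given $g$ positive definite of degree $6$ in $x = (x_1,x_2,x_3)$, I want a degree-$8$ form $f$ and a direction so that a suitable entry (or a suitable scalar second directional derivative) of $H_f$ reproduces $g$ up to positive-definite lower-order clutter. Concretely, one can try $f(x) = \int\!\!\int g$ along a coordinate, e.g. choose $f$ so that $\partial^2 f / \partial x_1^2 = c\, g + (\text{p.d. terms})$; more robustly, one can use the construction already present in this thesis (Theorem~\ref{th:mainth} and Lemma~\ref{lemma:induction}) which builds forms with prescribed Hessian structure, or simply add to a fixed $r_0$-sos-convex "core" form $h_0$ (e.g. a high power of $\sum x_i^2$, which is sos-convex) a small multiple of a form whose Hessian's restriction to some fixed direction $y = e_j$ equals $g$. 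The second step is the quantitative heart: show that if $f$ were $r$-sos-convex, i.e. $y^T H_f(x) y \cdot (\sum_i x_i^2)^r$ sos, then restricting to $y = e_j$ forces $g(x) \cdot (\sum_i x_i^2)^r$ (plus controllable sos perturbation) to be sos, contradicting the Reznick-type lower bound for $r$ below the threshold associated to $\epsilon(g_t)$. Choosing $t = t_r$ close enough to the threshold so that this threshold exceeds $r$ then gives, for each $r$, a form $f = f_r$ in $3$ variables and degree $8$ with $H_{f_r} \succ 0$ everywhere but $f_r$ not $r$-sos-convex. One must separately verify $H_{f_r}(x) \succ 0$ for $x \neq 0$: this follows because $f_r$ is $h_0$ (strictly sos-convex, hence with positive definite Hessian on the sphere) plus a term of small enough norm, and positive definiteness of the Hessian on the compact sphere is an open condition.

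The main obstacle I anticipate is the second step — making the reduction from "$f$ is $r$-sos-convex" to "$g(\sum x_i^2)^r$ is (almost) sos" both clean and tight enough that the known lower bounds on $r$ for the ternary sextic $g_t$ transfer with the right dependence on $\epsilon(g_t)$. Two technical points need care here: (i) restricting the sos certificate of $y^T H_f(x) y \cdot (\sum x_i^2)^r$ to a fixed $y = e_j$ does preserve the sos property (substitution of constants into an sos polynomial keeps it sos), so this direction is fine, but I must make sure the "p.d. clutter" added to $g$ in the Hessian entry is itself small enough in norm that it cannot rescue sos-ness — this is where I would use a continuity/perturbation argument in the coefficient space together with the fact that the set of $r$-sos polynomials of fixed degree is closed; (ii) I should double-check that degree $8$ (rather than some larger even degree) genuinely suffices, i.e. that a degree-$6$ positive definite ternary form with arbitrarily small $\epsilon$ and arbitrarily large sos-denominator exponent is available in the literature — if only degree-$6$ examples of the "not sos, needs high $r$" type are cleanly documented for $n=3$, then a second antiderivative lands exactly in $H_{3,8}$, which is why the statement is phrased for degree $8$. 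If a fully explicit $g_t$ with a provable quantitative lower bound on $r$ is awkward to pin down, the fallback is a compactness/non-uniformity argument: suppose for contradiction some fixed $r^\star$ worked for all $f \in H_{3,8}$ with $H_f \succ 0$; then the cone of $r^\star$-sos-convex forms would equal the cone of forms with positive definite Hessian, and one derives a contradiction from known strict inclusions (e.g. the existence of convex-but-not-sos-convex forms, \cite{AAA_PP_not_sos_convex_journal}, suitably homogenized/iterated), though turning "convex but not sos-convex" into "p.d. Hessian but not $r$-sos-convex for every $r$" still requires the same kind of denominator lower bound, so the explicit route is preferable.
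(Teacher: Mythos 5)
Your overall architecture (produce, for each $r$, a ternary octic with positive definite Hessian whose failure of $r$-sos-convexity is detected by restricting the certificate to a fixed direction $y=e_j$, which is valid since substituting constants preserves sos-ness) matches the paper's strategy, but the two steps you leave as "the quantitative heart" are exactly where the argument breaks. First, the "companion lower bound to Reznick's theorem" you invoke --- that positive definite ternary sextics with $\epsilon(g)\to 0$ necessarily require an sos-multiplier exponent $r\to\infty$ --- is not a documented result, and small $\epsilon$ alone does not force a large exponent (Reznick's bound in Theorem \ref{th:reznick} is an upper bound; its form does not make it necessary, and e.g.\ positive definite perturbations of the Motzkin form have tiny $\epsilon$ yet small Reznick exponent since Motzkin itself is $1$-sos). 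The correct known statement, and the one the paper actually uses, is Reznick's scaling theorem from \cite{Reznick1}: if $p$ is psd but not sos, then for every $r$ there exists a scaling $s$ such that $(\sum_i x_i^2)^r\, p(x_1,sx_2,\ldots,sx_n)$ is not sos. This qualitative scaling result, not a quantitative $\epsilon$-dependence, is what defeats each fixed $r$.

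Second, your construction of $f$ from a degree-$6$ witness $g$ has the perturbation direction backwards. If you take $f=h_0+\lambda\,(\text{bad part})$ with $h_0$ a strictly sos-convex core and $\lambda$ small (which is what you need to keep $H_f\succ 0$ cheaply), then the distinguished Hessian entry is a small non-sos perturbation of something in the interior of the sos cone, and it remains sos --- closedness of the $r$-sos cone protects the \emph{complement} only when the non-sos form is the dominant term, in which case positive definiteness of the full Hessian is no longer easy to arrange. The paper sidesteps this tension entirely: it starts from the explicit trivariate octic of \cite{AAA_PP_not_sos_convex_journal}, already known to have $H_f(x)\succ 0$ for $x\neq 0$ while its $(1,1)$ Hessian entry is $1$-sos but not sos, sets $g_s(x_1,x_2,x_3)=f(x_1,sx_2,sx_3)$ (a change of variables that manifestly preserves positive definiteness of the Hessian), applies the scaling theorem of \cite{Reznick1} to the psd-not-sos entry $H_f^{(1,1)}$ to choose $s$ so that $(x_1^2+x_2^2+x_3^2)^r\,H_{g_s}^{(1,1)}(x)$ is not sos, and then restricts $y=(1,0,0)^T$ as you proposed. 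To repair your write-up you would need to replace the unproved $\epsilon$-based lower bound by the scaling theorem and to start from a form whose positive definite Hessian and non-sos Hessian entry are established simultaneously, rather than trying to graft a non-sos entry onto an sos-convex core by a small perturbation.
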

\begin{proof}
	Consider the trivariate octic:
	\begin{align*}
	f(x_1,x_2,x_3)&= 32x_1^8+118x_1^6x_2^2+40x_1^6x_3^2+25x_1^4x_2^2x_3^2-35x_1^4x_3^4+3x_1^2x_2^4x_3^2-16x_1^2x_2^2x_3^4+24x_1^2x_3^6\\
	&+16x_2^8+44x_2^6x_3^2+70x_2^4x_3^4+60x_2^2x_3^6+30x_3^8.
	\end{align*}
	It is shown in \cite{AAA_PP_not_sos_convex_journal} that $f$ has positive definite Hessian, and that the $(1,1)$ entry of $H_f(x)$, which we will denote by $H_f^{(1,1)}(x)$, is 1-sos but not sos. We will show that for any $r \in \mathbb{N}$, one can find $s \in \mathbb{N} \backslash \{0\}$ such that $$g_s(x_1,x_2,x_3)=f(x_1,sx_2,sx_3)$$ satisfies the conditions of the theorem.
	
	We start by showing that for any $s$, $g_s$ has positive definite Hessian. To see this, note that for any $(x_1,x_2,x_3) \neq 0, (y_1,y_2,y_3) \neq 0$, we have:
	$$(y_1, y_2,y_3) H_{g_s}(x_1,x_2,x_3)(y_1,y_2,y_3)^T=(y_1,sy_2,sy_3) H_f(x_1,sx_2,sx_3)(y_1,sy_2,sy_3)^T.$$
	As $y^TH_f(x)y>0$ for any $x \neq 0, y\neq 0$, this is in particular true when $x=(x_1, sx_2,sx_3)$ and when $y=(y_1, sy_2,sy_3)$, which gives us that the Hessian of $g_s$ is positive definite for any $s \in \mathbb{N} \backslash \{0\}.$
	
	We now show that for a given $r\in \mathbb{N}$, there exists $s \in \mathbb{N}$ such that $(x_1^2+x_2^2+x_3^2)^r y^TH_{g_s}(x)y$ is not sos. We use the following result from \cite[Theorem 1]{Reznick1}: for any positive semidefinite form $p$ which is not sos, and any $r\in \mathbb{N}$, there exists $s \in \mathbb{N} \backslash \{0\}$ such that $(\sum_{i=1}^n x_i^2)^r\cdot p(x_1,sx_2,\ldots,sx_n)$ is not sos. As $H_f^{(1,1)}(x)$ is 1-sos but not sos, we can apply the previous result. Hence, there exists a positive integer $s$ such that $$(x_1^2+x_2^2+x_3^2)^r \cdot H_f^{(1,1)}(x_1,sx_2,sx_3)=(x_1^2+x_2^2+x_3^2)^r \cdot H_{g_s}^{(1,1)}(x_1,x_2,x_3)$$
	is not sos. This implies that $(x_1^2+x_2^2+x_3^2)^r \cdot y^TH_{g_s}(x)y$ is not sos. Indeed, if $(x_1^2+x_2^2+x_3^2)^r \cdot y^TH_{g_s}(x)y$ was sos, then $(x_1^2+x_2^2+x_3^2)^r \cdot y^TH_{g_s}(x)y$ would be sos with $y=(1,0,0)^T.$ But, we have $$(x_1^2+x_2^2+x_3^2)^r \cdot (1,0,0)H_{g_s}(x)(1,0,0)^T=(x_1^2+x_2^2+x_3^2)^r \cdot H_{g_s}^{(1,1)}(x),$$
	which is not sos. Hence, $(x_1^2+x_2^2+x_3^2)^r \cdot y^TH_{g_s}(x)y$ is not sos, and $g$ is not $r$-sos-convex.
\end{proof}

\begin{remark}\label{rem:pd.hess.str.conv}
	Any form $f$ with $H_f(x) \succ 0, \forall x \neq 0$ is strictly convex but the converse is not true.
	
	To see this, note that any form $f$ of degree $d$ with a positive definite Hessian is convex (as $H_f(x)\succeq 0, \forall x$) and positive definite (as, from a recursive application of Euler's theorem on homogeneous functions, $f(x)=\frac{1}{d(d-1)}x^TH_f(x)x$). From the proof of Theorem \ref{th:norm.str.conv}, this implies that $f$ is strictly convex.
	
	To see that the converse statement is not true, consider the strictly convex form $f(x_1,x_2)\mathrel{\mathop{:}}=x_1^4+x_2^4$. We have $$H_f(x)=12\cdot \begin{bmatrix} x_1^2 & 0 \\ 0 & x_2^2 \end{bmatrix}$$ which is not positive definite e.g., when $x=(1,0)^T$.
\end{remark}

\subsubsection{Optimizing over a subset of polynomial norms with $r$-sos-convexity}

In the following theorem, we show how one can efficiently optimize over the set of forms $f$ with $H_f(x)\succ 0$, $\forall x \neq 0.$ Comparatively to Theorem \ref{th:test.poly.norm}, this theorem allows us to impose as a constraint that the $d^{th}$ root of a form be a norm, rather than simply testing whether it is. This comes at a cost however: in view of Remark \ref{rem:pd.hess.str.conv} and Theorem \ref{th:norm.str.conv}, we are no longer considering all polynomial norms, but a subset of them whose $d^{th}$ power has a positive definite Hessian.

\begin{theorem}\label{th:opt.poly.norms}
	Let $f$ be a degree-$d$ form. Then $H_f(x) \succ 0, \forall x\neq 0$ if and only if $\exists c>0, r \in \mathbb{N}$ such that $f(x)-c(\sum_i x_i^2)^{d/2}$ is $r$-sos-convex. Furthermore, this condition can be imposed using semidefinite programming.
\end{theorem}

\begin{proof}
	
	If there exist $c>0, r\in \mathbb{N}$ such that $g(x)=f(x)-c(\sum_i x_i^2)^{d/2}$ is $r$-sos-convex, then $y^TH_g(x)y \geq 0$, $\forall x,y.$ As the Hessian of $(\sum_i x_i^2)^{d/2}$ is positive definite for any nonzero $x$ and as $c>0$, we get $H_f(x)\succ 0$, $\forall x\neq 0.$

	Conversely, if $H_f(x)\succ 0$, $\forall x\neq 0$, then $y^TH_f(x)y>0$ on the bisphere (and conversely). Let $$f_{\min}\mathrel{\mathop{:}}= \min_{||x||=||y||=1} y^TH_f(x)y.$$ We know that $f_{\min}$ is attained and is positive. Take $c\mathrel{\mathop{:}}=\frac{f_{\min}}{2d(d-1)}$ and consider $$g(x)\mathrel{\mathop{:}}= f(x)-c(\sum_i x_i^2)^{d/2}.$$
	Then $$y^TH_g(x)y=y^TH_f(x)y-c \cdot \left( d(d-2)(\sum_i x_i^2)^{d/2-2} (\sum_i x_iy_i)^2 + d\sum_i (x_i^2)^{d/2-1} (\sum_i y_i^2) \right).$$
	Note that, by Cauchy-Schwarz, we have $(\sum_i x_i y_i)^2 \leq ||x||^2||y||^2$. If $||x||=||y||=1$, we get $$y^TH_g(x)y\geq y^TH_f(x)y-c(d(d-1))>0.$$
	Hence, $H_g(x)\succ 0, \forall x\neq 0$ and there exists $r$ such that $g$ is $r$-sos-convex from Theorem \ref{th:r.sos.convex}.

	For fixed $r$, the condition that there be $c>0$ such that $f(x)-c(\sum_i x_i^2)^{d/2}$ is $r$-sos-convex can be imposed using semidefinite programming. This is done by searching for coefficients of a polynomial $f$ and a real number $c$ such that 
	\begin{equation}\label{eq:SDP.opt.poly.norm}
	\begin{aligned}
	&y^TH_{f-c (\sum_i x_i^2)^{d/2}}y \cdot (\sum_i x_i^2)^r \text{ sos}\\
	&c\geq 0.
	\end{aligned} 
	\end{equation}
	Note that both of these conditions can be imposed using semidefinite programming.
\end{proof}

\begin{remark}
	Note that we are not imposing $c>0$ in the above semidefinite program. As mentioned in Section~\ref{sec:test}, this is because in practice the solution returned by interior point solvers will be in the interior of the feasible set.
	
	In the special case where $f$ is completely free\footnote{This is the case of our two applications in Section \ref{sec:apps}.} (i.e., when there are no additional affine conditions on the coefficients of $f$), one can take $c \geq 1$ in (\ref{eq:SDP.opt.poly.norm}) instead of $c \geq 0$. Indeed, if there exists $c>0$, an integer $r$, and a polynomial $f$ such that $f -c(\sum_i x_i^2)^{d/2}$ is $r$-sos-convex, then $\frac1c f$ will be a solution to (\ref{eq:SDP.opt.poly.norm}) with $c \geq 1$ replacing $c \geq 0$.
\end{remark}

%\begin{openprob}
%	A set is set to be SDP-representable if it can be written as the projection of a Linear Matrix Inequality representation \cite{Helton_Nie_SDP_repres}. Is the 1-sublevel set of any polynomial norm SDP-representable?
%\end{openprob}
%
%It is clear that the 1-sublevel set of any polynomial norm where the Hessian of the $d^{th}$ power is positive definite is SDP-representable. In \cite{Helton_Nie_SDP_repres_2}, Helton and Nie further show that any set with nonsingular positively curved boundary has an SDP representation. This is a weaker statement than requiring that the set be strictly convex (which corresponds to the case at hand) and to our knowledge, the previous problem remains open.

%======================================================
\section{Applications} \label{sec:apps}
%===================================================

\subsection{Norm approximation and regression}\label{sec:norm.reg}

In this section, we study the problem of approximating a (non-polynomial) norm by a polynomial norm. We consider two different types of norms: $p$-norms with $p$ noneven (and greater than 1) and gauge norms with a polytopic unit ball. For $p$-norms, we use as an example $||(x_1,x_2)^T||=(|x_1|^{7.5}+|x_2|^{7.5})^{1/7.5}$. For our polytopic gauge norm, we randomly generate an origin-symmetric polytope and produce a norm whose 1-sublevel corresponds to that polytope. This allows us to determine the value of the norm at any other point by homogeneity (see \cite[Exercise 3.34]{BoydBook} for more information on gauge norms, i.e., norms defined by convex, full-dimensional, origin-symmetric sets). To obtain our approximations, we proceed in the same way in both cases. We first sample $N=200$ points $x_1,\ldots,x_N$ on the sphere $S$ that we denote by $x_1,\ldots,x_{N}$. We then solve the following optimization problem with $d$ fixed:
\begin{equation}\label{eq:opt.norm.reg}
\begin{aligned}
&\min_{f\in H_{2,d}} &&\sum_{i=1}^{N} (||x_i||^d-f(x_i))^2\\
&\text{s.t. } &&f \text{ sos-convex}.
\end{aligned}
\end{equation}
Problem (\ref{eq:opt.norm.reg}) can be written as a semidefinite program as the objective is a convex quadratic in the coefficients of $f$ and the constraint has a semidefinite representation as discussed in Section \ref{sec:sos.review}. The solution $f$ returned is guaranteed to be convex. Moreover, any sos-convex form is sos (see \cite[Lemma 8]{helton2010}), which implies that $f$ is nonnegative. One can numerically check to see if the optimal polynomial is in fact positive definite (for example, by checking the eigenvalues of the Gram matrix of a sum of squares decomposition of $f$). If that is the case, then, by Theorem \ref{th:norm.conv.pd}, $f^{1/d}$ is a norm. Futhermore, note that we have
\begin{align*}
\left(\sum_{i=1}^N (||x_i||^d-f(x_i))^2 \right)^{1/d} &\geq \frac{N^{1/d}}{N} \sum_{i=1}^N (||x_i||^d-f(x_i))^{2/d}\\
&\geq \frac{N^{1/d}}{N} \sum_{i=1}^N (||x_i||-f^{1/d}(x_i))^2,
\end{align*}
where the first inequality is a consequence of concavity of $z \mapsto z^{1/d}$ and the second is a consequence of the inequality $|x-y|^{1/d} \geq ||x|^{1/d}-|y|^{1/d}|$. This implies that if the optimal value of (\ref{eq:opt.norm.reg}) is equal to $\epsilon$, then the sum of the squared differences between $||x_i||$ and $f^{1/d}(x_i)$ over the sample is less than or equal to $N \cdot (\frac{\epsilon}{N})^{1/d}$.

It is worth noting that in our example, we are actually searching over the entire space of polynomial norms of a given degree. Indeed, as $f$ is bivariate, it is convex if and only if it is sos-convex \cite{ahmadi2013complete}. In Figure \ref{fig:norm.approx}, we have drawn the 1-level sets of the initial norm (either the $p$-norm or the polytopic gauge norm) and the optimal polynomial norm obtained via (\ref{eq:opt.norm.reg}) with varying degrees $d$. Note that when $d$ increases, the approximation improves.

\begin{figure}[h]
	\begin{center}
		\mbox{
			\subfigure[p-norm approximation]

			{\label{fig:p.norm.approx}\scalebox{0.25}{\includegraphics{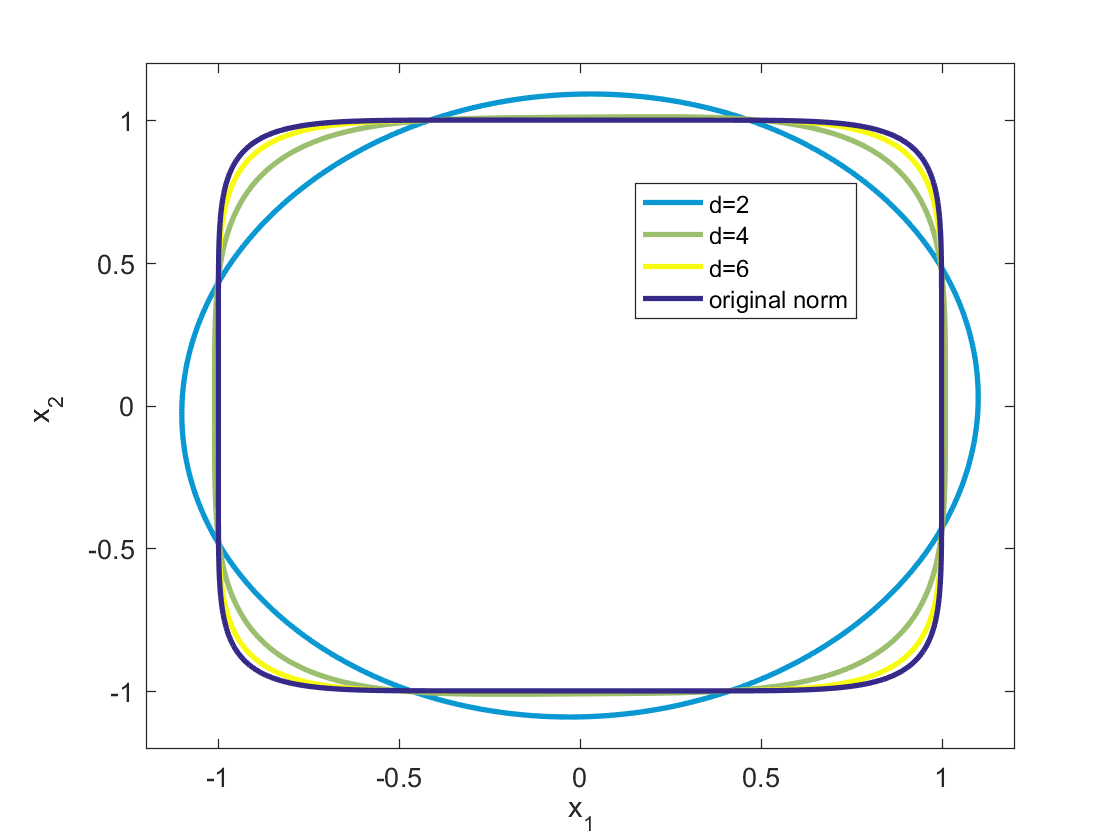}}}}
		\mbox{
			\subfigure[Polytopic norm approximation]
			{\label{fig:polytop.approx}\scalebox{0.25}{\includegraphics{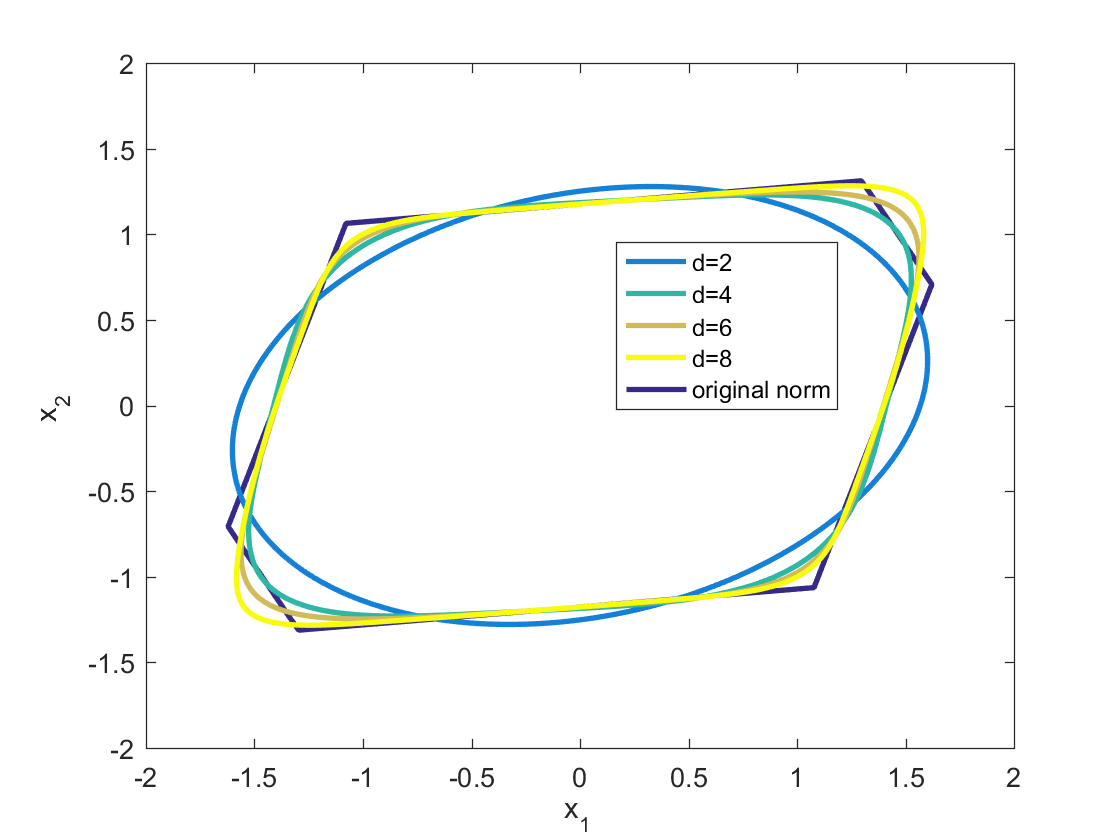}}}
		}
		\caption{Approximation of non-polynomial norms by polynomial norms}
		\label{fig:norm.approx}
	\end{center}
\end{figure} 

%\begin{figure}[h]
%	\centering
%	\begin{subfigure}{0.49\textwidth}
%		\includegraphics[width=\textwidth]{ch-polynorms/pnorm_regression_higher_degree_v2}
%		\caption{p-norm approximation}
%	\end{subfigure}
%	~ %add desired spacing between images, e. g. ~, \quad, \qquad, \hfill etc. 
%	%(or a blank line to force the subfigure onto a new line)
%	\begin{subfigure}{0.49\textwidth}
%		\includegraphics[width=\textwidth]{ch-polynorms/polytopic_norm_ex_2_v1}
%		\caption{Polytopic norm approximation}
%	\end{subfigure}
%	\caption{Approximation of non-polynomial norms by polynomial norms}\label{fig:norm.approx}
%\end{figure}

A similar method could be used for \emph{norm regression}. In this case, we would have access to data points $x_1,\ldots,x_N$ corresponding to noisy measurements of an underlying unknown norm function. We would then solve the same optimization problem as the one given in (\ref{eq:opt.norm.reg}) to obtain a polynomial norm that most closely approximates the noisy data.

\subsection{Joint spectral radius and stability of linear switched systems}\label{sec:JSR.comp}

As a second application, we revisit a result from Ahmadi and Jungers from \cite{sosconvex_Lyap_cdc,Ahmadi_Jungers} on upperbounding the joint spectral radius of a finite set of matrices. We first review a few notions relating to dynamical systems and linear algebra.
The spectral radius $\rho$ of a matrix $A$ is defined as $$\rho(A)=\lim_{k \rightarrow \infty} ||A^k||^{1/k}.$$ The spectral radius happens to coincide with the eigenvalue of $A$ of largest magnitude. Consider now the discrete-time linear system $x_{k+1}=Ax_k$, where $x_k$ is the $n \times 1$ state vector of the system at time $k$. This system is said to be \emph{asymptotically stable} if for any initial starting state $x_0 \in \mathbb{R}^n$, $x_k \rightarrow 0,$ when $k \rightarrow \infty.$ A well-known result connecting the spectral radius of a matrix to the stability of a linear system states that the system $x_{k+1}=Ax_k$ is asymptotically stable if and only if $\rho(A)<1$. 

In 1960, Rota and Strang introduced a generalization of the spectral radius to a \emph{set} of matrices. The \emph{joint spectral radius (JSR)} of a set of matrices $\mathcal{A} \mathrel{\mathop{:}}=\{A_1,\ldots,A_m\}$ is defined as 
\begin{align}\label{eq:JSR.def}
\rho(\mathcal{A})\mathrel{\mathop{:}}=\lim_{k\rightarrow \infty} \max_{\sigma \in \{1,\ldots,m\}^k}||A_{\sigma_k} \ldots A_{\sigma_1}||^{1/k}.
\end{align}
Analogously to the case where we have just one matrix, the value of the joint spectral radius can be used to determine stability of a certain type of system, called \emph{a switched linear system.} A switched linear system models an uncertain and time-varying linear system, i.e., a system described by the dynamics
$$x_{k+1}=A_{k}x_k,$$
where the matrix $A_k$ varies at each iteration within the set $\mathcal{A}$. As done previously, we say that a switched linear system is asymptotically stable if $x_k \rightarrow \infty$ when $k \rightarrow \infty$, for any starting state $x_0 \in \mathbb{R}^n$ and any sequence of products of matrices in $\mathcal{A}$. One can establish that the switched linear system $x_{k+1}=A_{k}x_k$ is asymtotically stable if and only if $\rho(\mathcal{A})<1$ \cite{Raphael_Book}.

Though they may seem similar on many points, a key difference between the spectral radius and the joint spectral radius lies in difficulty of computation: testing whether the spectral radius of a matrix $A$ is less than equal (or strictly less) than $1$ can be done in polynomial time. However, already when $m=2$, the problem of testing whether $\rho(A_1,A_2)\leq 1$ is undecidable \cite{BlTi2}. An active area of research has consequently been to obtain sufficient conditions for the JSR to be strictly less than one, which, for example, can be checked using semidefinite programming. The theorem that we revisit below is a result of this type. We start first by recalling a Theorem linked to stability of a linear system. 
\begin{theorem}[see, e.g., Theorem 8.4 in \cite{hespanha2009linear}]\label{th:AAA.Jungers}
	Let $A \in \mathbb{R}^{n \times n}$. Then, $\rho(A)<1$ if and only if there exists a contracting quadratic norm; i.e., a function $V:\mathbb{R}^n \rightarrow \mathbb{R}$ of the form $V(x)=\sqrt{x^TQx}$ with $Q\succ 0$, such that $V(Ax)<V(x), \forall x\neq 0.$
\end{theorem}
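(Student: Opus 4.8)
This is a classical Lyapunov-type equivalence, and the plan is to prove the two directions separately. For the ``if'' direction, suppose there exists $Q \succ 0$ with $V(x) = \sqrt{x^TQx}$ satisfying $V(Ax) < V(x)$ for all $x \neq 0$. Since the unit sphere $S^{n-1}$ is compact and $x \mapsto V(Ax)/V(x)$ is continuous there (note $V(x) > 0$ on $S^{n-1}$ by positive definiteness of $Q$), the supremum $\gamma \mathrel{\mathop{:}}= \max_{x \in S^{n-1}} V(Ax)/V(x)$ is attained and satisfies $\gamma < 1$. By $1$-homogeneity of $V$, this gives $V(A x) \le \gamma V(x)$ for all $x \in \mathbb{R}^n$, hence by iteration $V(A^k x) \le \gamma^k V(x)$ for all $k$. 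Using the equivalence of norms on $\mathbb{R}^n$ (there exist constants $c_1, c_2 > 0$ with $c_1 \|x\|_2 \le V(x) \le c_2 \|x\|_2$), we get $\|A^k x\|_2 \le (c_2/c_1)\gamma^k \|x\|_2$, so $\|A^k\| \le (c_2/c_1)\gamma^k$ and therefore $\rho(A) = \lim_{k\to\infty}\|A^k\|^{1/k} \le \gamma < 1$.

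For the ``only if'' direction, suppose $\rho(A) < 1$. The standard approach is to first pick $\beta$ with $\rho(A) < \beta < 1$; then $\|A^k\|/\beta^k \to 0$, so in particular $\sum_{k=0}^{\infty} \|A^k\|^2/\beta^{2k}$ converges (the terms go to zero geometrically). Define $Q \mathrel{\mathop{:}}= \sum_{k=0}^{\infty} (A^T)^k A^k / \beta^{2k}$; this series converges entrywise, $Q$ is symmetric, and $Q \succeq I$ (from the $k=0$ term) hence $Q \succ 0$. Setting $V(x) = \sqrt{x^TQx}$, a direct computation gives
\[
V(Ax)^2 = x^T A^T Q A x = \sum_{k=0}^{\infty} \frac{x^T (A^T)^{k+1} A^{k+1} x}{\beta^{2k}} = \beta^2 \sum_{k=1}^{\infty} \frac{x^T (A^T)^{k} A^{k} x}{\beta^{2k}} = \beta^2\left(V(x)^2 - \|x\|_2^2\right).
\]
Thus $V(Ax)^2 = \beta^2 V(x)^2 - \beta^2\|x\|_2^2 < \beta^2 V(x)^2 \le V(x)^2$ for all $x \neq 0$, which yields the desired strict contraction $V(Ax) < V(x)$.

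\textbf{Main obstacle.} There is no deep obstacle here; this is a foundational result whose proof is routine. The only point requiring a little care is the convergence of the matrix series defining $Q$ in the ``only if'' direction: one must justify it via the inequality $\|A^k\| \le C\beta^k$ (valid for $k$ large by definition of $\rho(A)$ as a limit, and absorbing small $k$ into the constant $C$), and then note that the entrywise bounds $|((A^T)^k A^k)_{ij}| \le \|A^k\|^2$ give absolute convergence of each entry. Everything else is elementary manipulation of homogeneity and norm equivalence. I would present both directions compactly, emphasizing the explicit construction of $Q$ since it is the ``algorithmic'' content that makes the result useful (and is the prototype for the polynomial-norm generalization that presumably follows in the chapter).
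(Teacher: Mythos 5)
Your proof is correct: both directions are sound, the compactness/homogeneity argument for the ``if'' part works, and the construction $Q=\sum_{k\geq 0}(A^T)^kA^k/\beta^{2k}$ together with the identity $A^TQA=\beta^2(Q-I)$ is the standard Lyapunov-equation argument for the ``only if'' part. Note that the paper itself offers no proof of this statement --- it is quoted as a known classical result (Theorem 8.4 in the cited textbook of Hespanha) --- and your argument is essentially the same textbook proof, so there is nothing to reconcile.
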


The next theorem (from \cite{sosconvex_Lyap_cdc,Ahmadi_Jungers}) can be viewed as an extension of Theorem \ref{th:AAA.Jungers} to the joint spectral radius of a finite set of matrices. It is known that the existence of a contracting quadratic norm is no longer necessary for stability in this case. This theorem show however that the existence of a contracting polynomial norm is.

\begin{theorem}[adapted from \cite{sosconvex_Lyap_cdc,Ahmadi_Jungers}, Theorem 3.2 ]\label{th:poly.norms.JSR}
	Let $\mathcal{A}\mathrel{\mathop{:}}=\{A_1,\ldots,A_m\}$ be a family of $n \times n$ matrices. Then, $\rho(A_1,\ldots,A_m)<1$ if and only if there exists a contracting polynomial norm; i.e., a function $V(x)=f^{1/d}(x)$, where $f$ is an n-variate convex and positive definite form of degree $d$, such that $V(A_ix)<V(x),~\forall x\neq 0$ and $\forall i=1,\ldots,m.$
\end{theorem}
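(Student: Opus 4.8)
The plan is to prove the two directions separately, leveraging the characterizations of polynomial norms established earlier in the excerpt (Theorems~\ref{th:norm.conv.pd} and~\ref{th:norm.str.conv}) together with the Konvergenz/compactness reasoning that typically underlies joint-spectral-radius results. For the easy direction (existence of a contracting polynomial norm implies $\rho(\mathcal{A})<1$): suppose $V(x)=f^{1/d}(x)$ is a norm with $V(A_i x) < V(x)$ for all $x \neq 0$ and all $i$. Since the unit sphere $S^{n-1}$ is compact and each map $x \mapsto V(A_ix)/V(x)$ is continuous and strictly less than $1$ on $S^{n-1}$, there exists $\gamma < 1$ with $V(A_i x) \le \gamma V(x)$ for all $x$ and all $i$ (by homogeneity this extends from the sphere to all of $\mathbb{R}^n$). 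Iterating, $V(A_{\sigma_k}\cdots A_{\sigma_1} x) \le \gamma^k V(x)$ for any switching sequence $\sigma$. Because $V$ is a norm, it is equivalent to the Euclidean norm: there are constants $0 < a \le b$ with $a\|x\|_2 \le V(x) \le b\|x\|_2$. Hence $\|A_{\sigma_k}\cdots A_{\sigma_1}\| \le (b/a)\gamma^k$, and taking $k$-th roots and the limit in the definition \eqref{eq:JSR.def} gives $\rho(\mathcal{A}) \le \gamma < 1$.

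For the converse (the substantive direction): assume $\rho(\mathcal{A}) < 1$. First pick $\hat\rho$ with $\rho(\mathcal{A}) < \hat\rho < 1$ and rescale, working with $B_i := A_i/\hat\rho$, so that $\rho(\{B_1,\dots,B_m\}) < 1$ as well; it suffices to build a polynomial norm contracted by all the $B_i$, since then $V(A_i x) = \hat\rho\, V(B_i x) < \hat\rho\, V(x) \cdot (\text{something} \le 1) < V(x)$ after a further small adjustment — more cleanly, I would directly use that $\rho < 1$ to get, for some norm $\|\cdot\|_*$ and some $\tau < 1$, the bound $\|A_{\sigma_k}\cdots A_{\sigma_1}\|_* \le C\tau^k$ uniformly in $\sigma$ (a standard consequence of $\rho(\mathcal{A})<1$). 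The classical construction of an \emph{extremal} (or nearly extremal) norm then defines
\[
W(x) := \sup_{k \ge 0}\ \max_{\sigma \in \{1,\dots,m\}^k}\ \frac{1}{\tau^k}\,\bigl\|A_{\sigma_k}\cdots A_{\sigma_1} x\bigr\|_*,
\]
which the bound above shows is finite, is a genuine norm (positive homogeneous, subadditive, positive definite), and is $\tau$-contractive: $W(A_i x) \le \tau\, W(x)$ for all $i$. This $W$ need not be a polynomial norm, so the final step invokes Theorem~\ref{th:approx.poly.norm.sphere}: approximate $W$ uniformly on $S^{n-1}$ by $f^{1/d}$ for a convex positive definite form $f$, to within $\epsilon$ chosen small relative to the contraction margin. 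Concretely, if $W(A_i x) \le \tau W(x)$ and $|f^{1/d}(x) - W(x)| \le \epsilon$ on the sphere (hence, by $1$-homogeneity, $|f^{1/d}(x)-W(x)| \le \epsilon \|x\|$ everywhere with $\|\cdot\|$ some fixed reference norm), then for $x \in S^{n-1}$,
\[
f^{1/d}(A_i x) \le W(A_i x) + \epsilon\|A_i x\| \le \tau W(x) + \epsilon \|A_i\|\,\|x\| \le \tau\bigl(f^{1/d}(x)+\epsilon\bigr) + \epsilon\max_i\|A_i\|,
\]
and since $f^{1/d}(x) \ge W(x) - \epsilon \ge w_{\min} - \epsilon$ with $w_{\min} := \min_{S^{n-1}} W > 0$, choosing $\epsilon$ small enough forces $f^{1/d}(A_i x) < f^{1/d}(x)$ on $S^{n-1}$, and homogeneity extends this to all $x \neq 0$. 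By Theorem~\ref{th:norm.conv.pd}, $V := f^{1/d}$ is the desired contracting polynomial norm. (One should verify that $f$ can additionally be taken positive definite, not merely nonnegative; since $W$ is a genuine norm, the sublevel set approximation in the proof of Theorem~\ref{th:approx.poly.norm.sphere} already yields positive definiteness of $f$, so nothing extra is needed.)

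\textbf{Main obstacle.} The delicate point is not the approximation step itself but making sure the contraction margin survives it: $W$ is only \emph{non-strictly} contractive in general ($W(A_ix)\le \tau W(x)$ with $\tau<1$ uniform), which is exactly what saves us — the uniform gap $1-\tau$ is a fixed positive quantity, so the $\epsilon$ in Theorem~\ref{th:approx.poly.norm.sphere} can be chosen afterwards, small relative to $(1-\tau)w_{\min}$ and the operator norms $\|A_i\|$. The subtlety to be careful about is the order of quantifiers: we must fix $\tau$, $w_{\min}$, and $\max_i\|A_i\|$ \emph{first}, and only then apply the approximation theorem with a sufficiently small $\epsilon$; the degree $d$ of the resulting polynomial norm is then whatever Theorem~\ref{th:approx.poly.norm.sphere} provides and is not controlled a priori. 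A secondary technical point is justifying that $\rho(\mathcal{A})<1$ indeed yields the uniform geometric bound $\|A_{\sigma_k}\cdots A_{\sigma_1}\|_*\le C\tau^k$ over all switching sequences — this is standard (it follows from the definition of the JSR as a limsup of the $k$-step worst-case product norms, together with a subadditivity/Fekete argument on $\log \max_\sigma \|A_{\sigma_k}\cdots A_{\sigma_1}\|$), and I would cite it rather than reprove it.
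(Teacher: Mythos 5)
Your proposal is correct and follows essentially the same route as the proof this theorem is adapted from (and which the chapter defers to via the citation): the easy direction by compactness and homogeneity, and the converse by first extracting a conventional norm $W$ that is uniformly $\tau$-contractive from $\rho(\mathcal{A})<1$ and then invoking the approximation result (Theorem~\ref{th:approx.poly.norm.sphere}) with $\epsilon$ chosen small relative to the contraction gap $1-\tau$, $\min_{S^{n-1}}W$, and $\max_i\|A_i\|$ so that strict contraction survives. Your handling of the quantifier order and of extending the sphere estimate by $1$-homogeneity is exactly the care the argument requires, so no gap remains.
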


We remark that in \cite{ahmadi2016lower}, Ahmadi and Jungers show that the degree of $f$ cannot be bounded as a function of $m$ and $n$. This is expected from the undecidability result mentioned before.

\begin{example}
	We consider a modification of Example 5.4. in \cite{ahmadi2011analysis} as an illustration of the previous theorem. We would like to show that the joint spectral radius of the two matrices $$A_1=\frac{1}{3.924}\begin{bmatrix} -1 & -1 \\ 4 & 0 \end{bmatrix}, \quad A_2=\frac{1}{3.924}\begin{bmatrix} 3 & 3 \\ -2 & 1 \end{bmatrix}$$ is strictly less that one.
	
	To do this, we search for a nonzero form $f$ of degree $d$ such that 
	\begin{equation} \label{eq:JSR.opt.prob}
	\begin{aligned}
	&f -(\sum_{i=1}^n x_i^2)^{d/2}\text{ sos-convex}\\
	&f(x)-f(A_ix)- (\sum_{i=1}^n x_i^2)^{d/2} \text{ sos}, \text{ for } i=1,2.
	\end{aligned}
	\end{equation}
	If problem (\ref{eq:JSR.opt.prob}) is feasible for some $d$, then $\rho(A_1,A_2)<1$. A quick computation using the software package YALMIP \cite{yalmip} and the SDP solver MOSEK \cite{mosek} reveals that, when $d=2$ or $d=4$, problem (\ref{eq:JSR.opt.prob}) is infeasible. When $d=6$ however, the problem is feasible and we obtain a polynomial norm $V=f^{1/d}$ whose 1-sublevel set is the outer set plotted in Figure \ref{fig:level.sets}. We also plot on Figure \ref{fig:level.sets} the images of this 1-sublevel set under $A_1$ and $A_2$. Note that both sets are included in the 1-sublevel set of $V$ as expected. From Theorem \ref{th:poly.norms.JSR}, the existence of a polynomial norm implies that $\rho(A_1,A_2)<1$ and hence, the pair $\{A_1,A_2\}$ is asymptotically stable.

	\begin{figure}[h]
		\centering
		\includegraphics[scale=0.3]{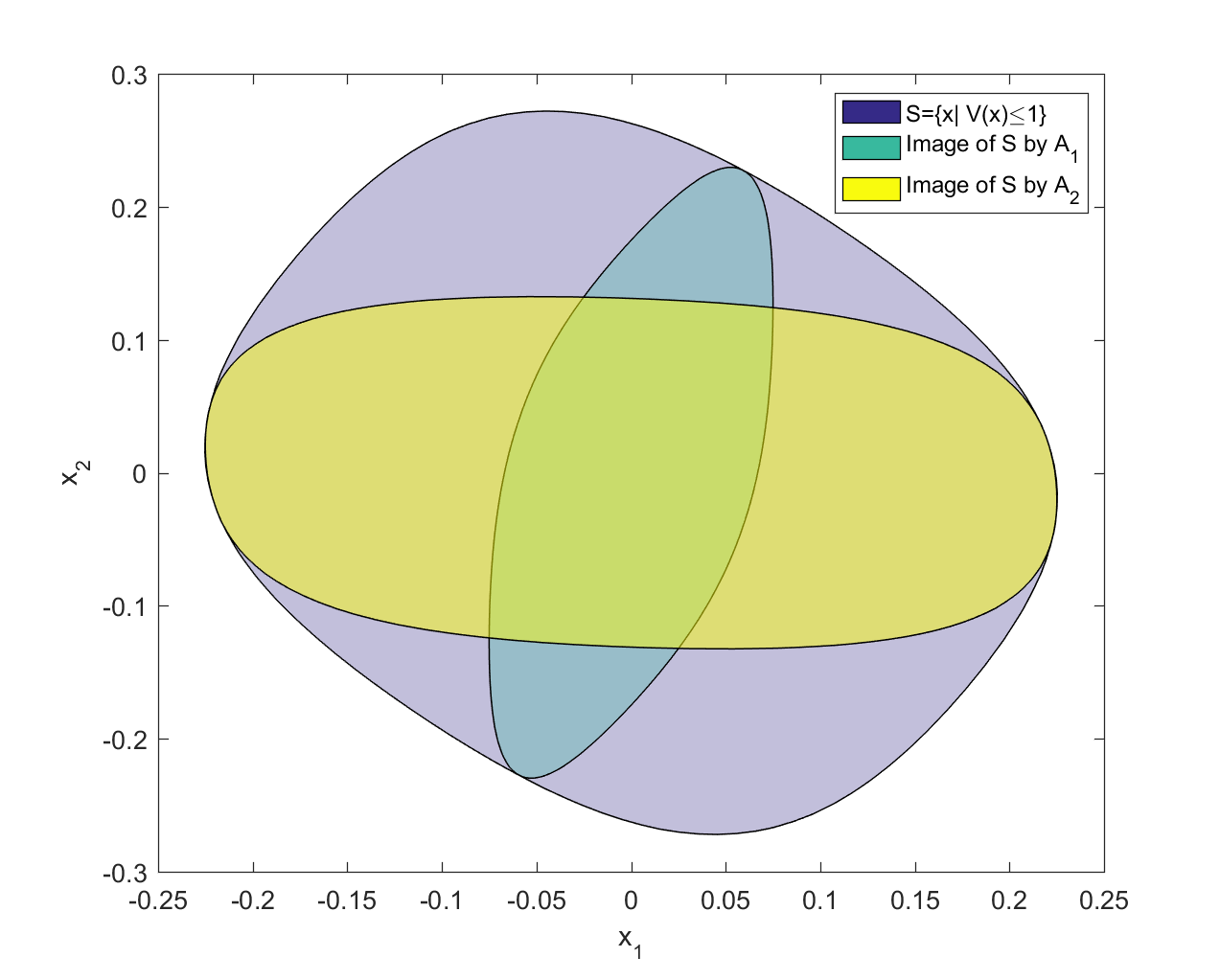}
		\caption{Image of the sublevel set of $V$ under $A_1$ and $A_2$}
		\label{fig:level.sets}
	\end{figure}

\end{example}

\begin{remark}
	As mentioned previously, problem (\ref{eq:JSR.opt.prob}) is infeasible for $d=4$. Instead of pushing the degree of $f$ up to 6, one could wonder whether the problem would have been feasible if we had asked that $f$ of degree $d=4$ be $r$-sos-convex for some fixed $r \geq 1$. As mentioned before, in the particular case where $n=2$ (which is the case at hand here), the notions of convexity and sos-convexity coincide; see \cite{ahmadi2013complete}. As a consequence, one can only hope to make problem (\ref{eq:JSR.opt.prob}) feasible by increasing the degree of $f$.
\end{remark}

%==============================================
\section{Future directions}\label{sec:open.problem}
%=============================================

In this chapter, we provided semidefinite programming-based conditions under which we could test whether the $d^{th}$ root of a degree-$d$ form is a polynomial norm (Section \ref{sec:test}), and semidefinite programming-based conditions under which we could optimize over the set of forms with positive definite Hessians (Section \ref{sec:opt}). A clear gap emerged between forms which are strictly convex and those which have a positive definite Hessian, the latter being a sufficient (but not necessary) condition for the former. This leads us to consider the following two open problems.

\begin{openprob}
	We have given a semidefinite programming hierarchy for optimizing over a subset of polynomial norms. Is there a semidefinite programming hierarchy that optimizes over all polynomial norms? 
\end{openprob}

\begin{openprob}
	Helton and Nie have shown in \cite{helton2010} that sublevel sets of forms that have positive definite Hessians are SDP-representable. This means that we can optimize linear functions over these sets using semidefinite programming. Is the same true for sublevel sets of all polynomial norms?
\end{openprob}

%Another open problem that we think would be of interest relates to the quality of approximation of norms by polynomial norms.
%
%\begin{openprob}
%	As mentioned previously, Barvinok has shown in \cite{barvinok2003} that for any norm $||.||$, there exists a nonnegative form $f$ of degree $2d$ such that 
%	$$f^{1/2d}(x) \leq ||x|| \leq \binom{n+d-1}{d}^{1/2d} f^{1/2d}(x).$$ We have shown in Section \ref{sec:approx.norms} that for any $\epsilon>0$, there exists an sos-convex and positive definite polynomial $f$ of degree $d$ such that $$f^{1/2d}(x) \leq ||x|| \leq (1+\epsilon) f^{1/2d}(x).$$ Is it possible to quantify the degree $d$ needed to obtain an approximation of precision $\epsilon$ as a function of $\epsilon$ and $n$ only? How would the degree be impacted if we searched for an $r$-sos-convex polynomial instead? In this case, could we express $d$ as a function of $\epsilon$, $n$, and $r$ only?
%\end{openprob}
On the application side, it might be interesting to investigate how one can use polynomial norms to design \emph{regularizers} in machine learning applications. Indeed, a very popular use of norms in optimization is as regularizers, with the goal of imposing additional structure (e.g., sparsity or low-rankness) on optimal solutions. One could imagine using polynomial norms to design regularizers that are based on the data at hand in place of more generic regularizers such as the 1-norm. Regularizer design is a problem that has already been considered (see, e.g., \cite{bach2012optimization,venkat}) but not using polynomial norms. This can be worth exploring as we have shown that polynomial norms can approximate any norm with arbitrary accuracy, while remaining differentiable everywhere (except at the origin), which can be beneficial for optimization purposes.

\chapter{Geometry of 3D Environments and Sum of Squares Polynomials}\label{ch:vikas}
\begin{figure}
		%\begin{figure*}[!ht]
		\centering
		\includegraphics[width=0.3\linewidth]{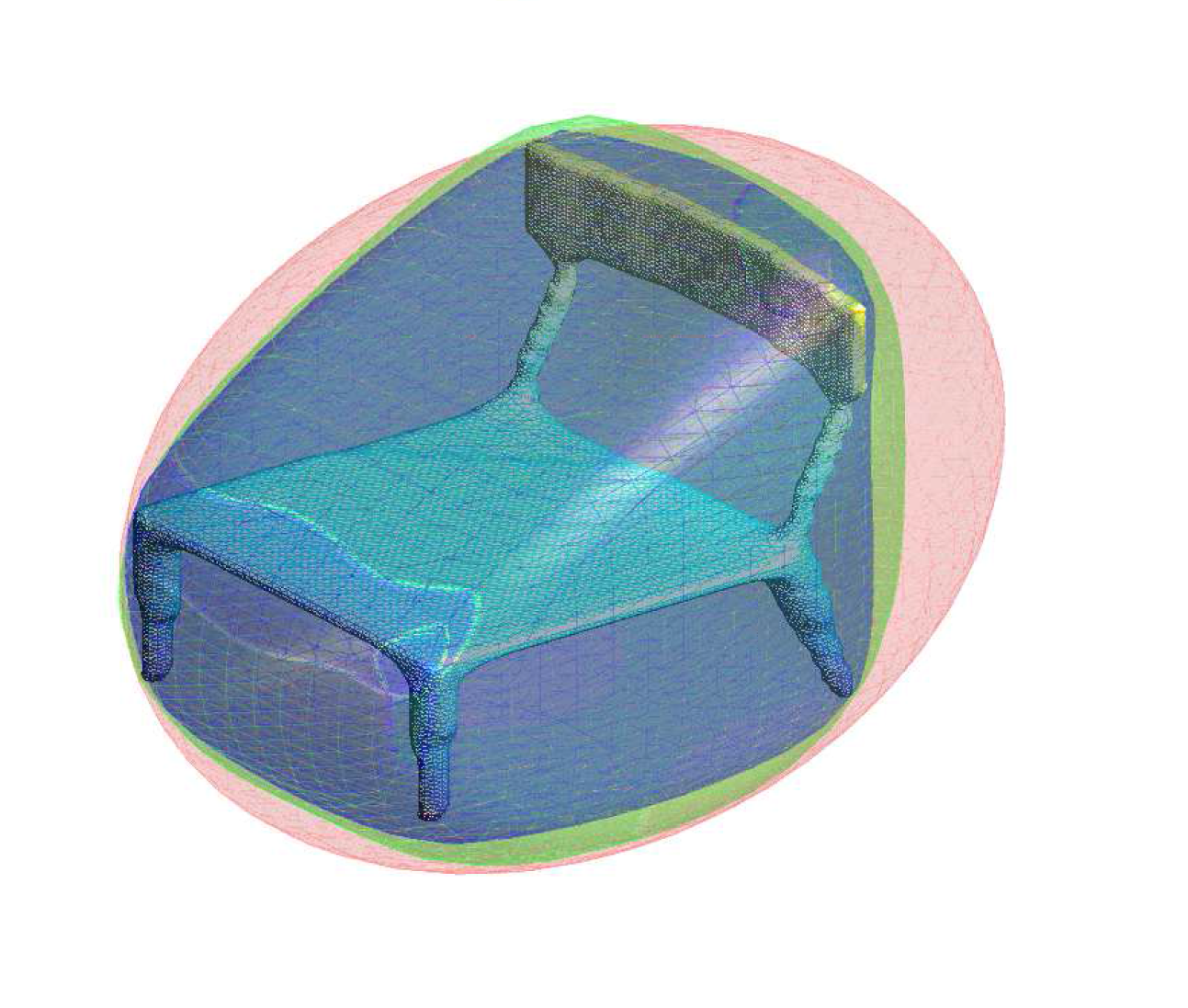}
		\includegraphics[width=0.3\linewidth]{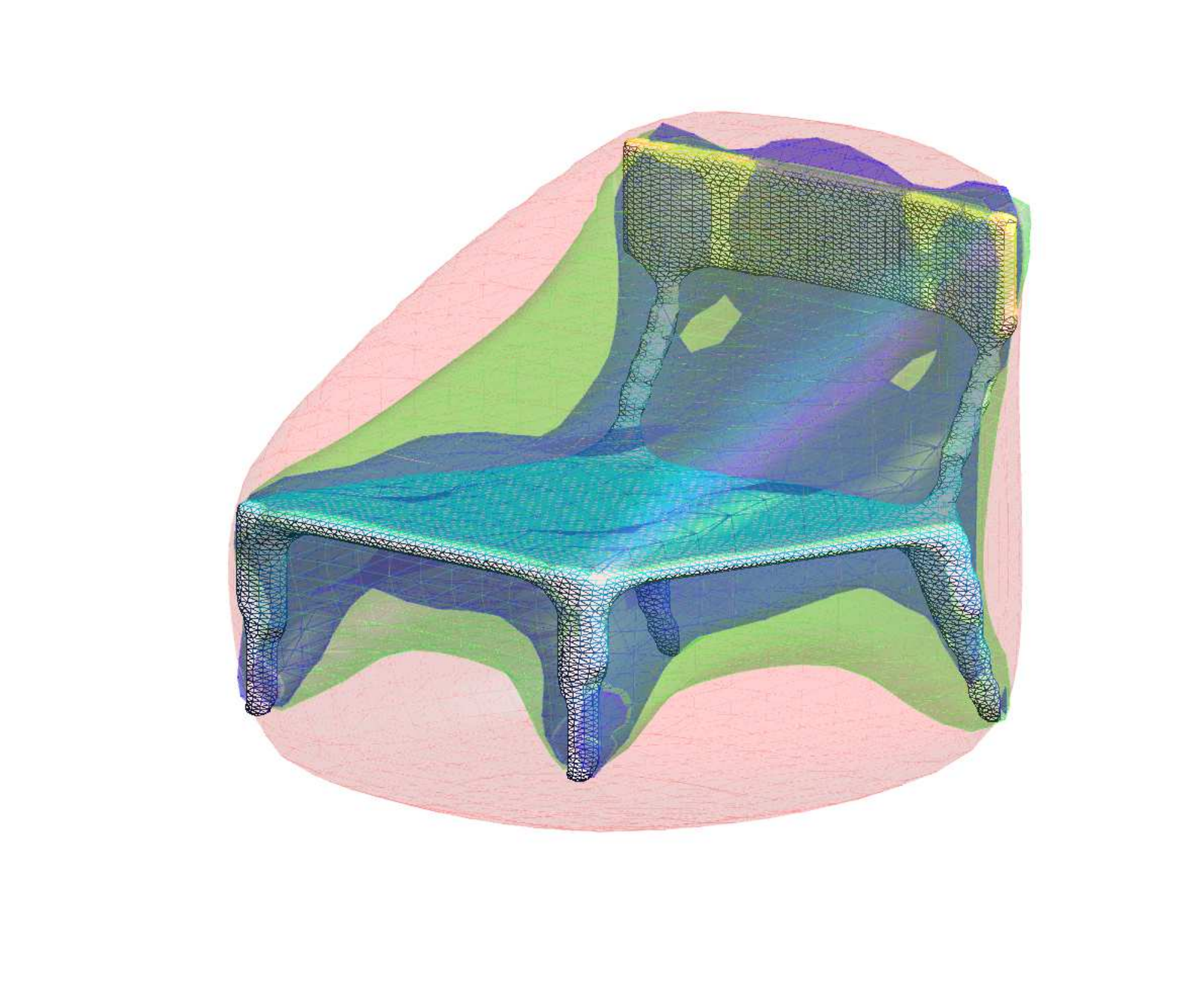}
		\includegraphics[width=0.3\linewidth]{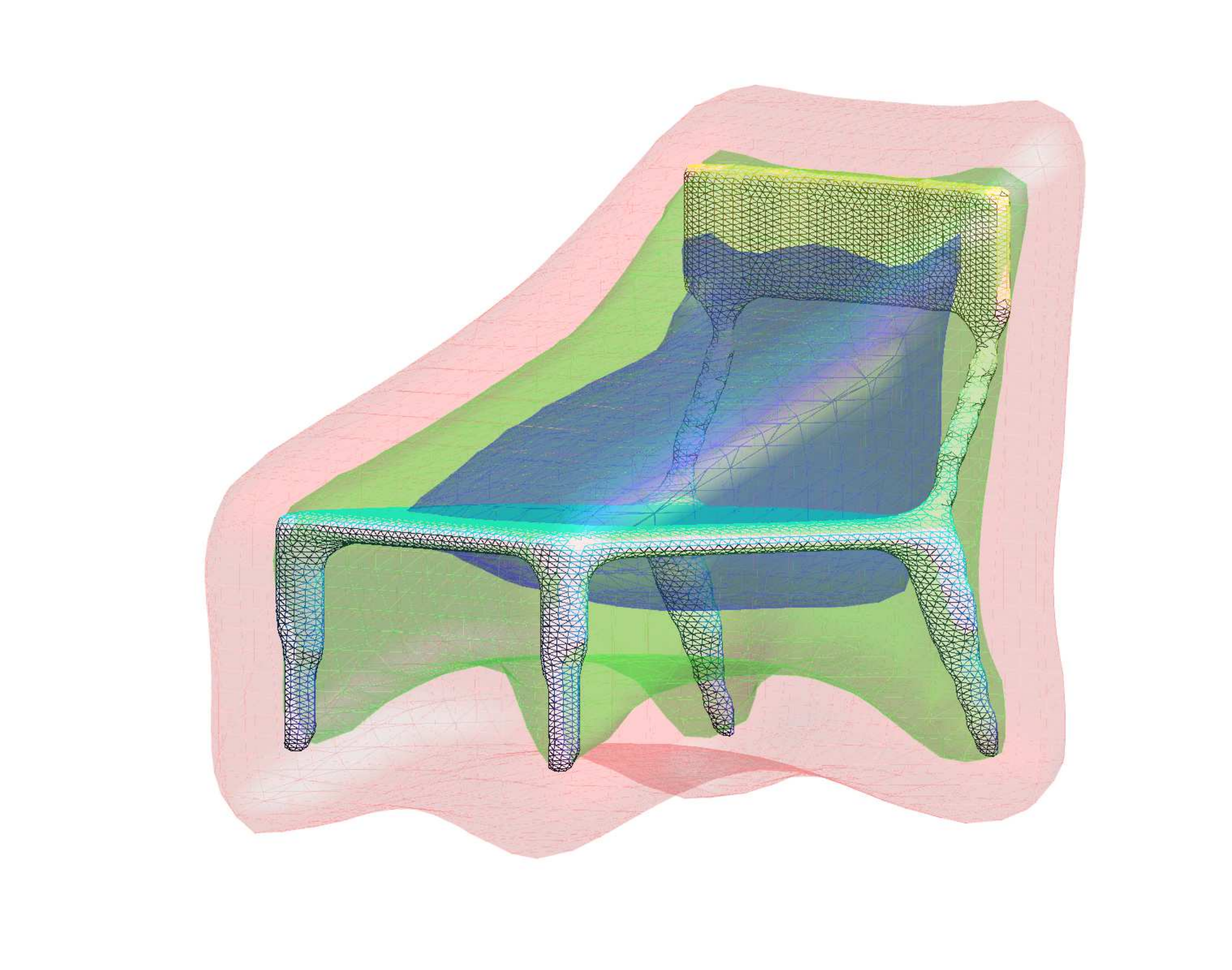}
		\caption{Sublevel sets of sos-convex polynomials of increasing degree (left); sublevel sets of sos polynomials of increasing nonconvexity (middle); growth and shrinkage of an sos-body with sublevel sets (right)}
		\label{fig:intro_pic}
		%\end{figure*}
	\end{figure}%
%}% ... an image
%
%
%%}%]

\section{Introduction}
A central problem in robotics, computer graphics, virtual and augmented reality (VR/AR), and many applications involving complex physics simulations is the  accurate, real-time determination of proximity relationships between three-dimensional objects~\cite{EricsonBook} situated in a cluttered environment. In robot navigation and manipulation tasks, path planners need to compute a dynamically feasible trajectory connecting an initial state to a goal configuration while avoiding obstacles in the environment. In VR/AR applications, a human immersed in a virtual world may wish to touch computer generated objects that must respond to contacts in physically realistic ways. Likewise, when collisions are detected, 3D gaming engines and physics simulators (e.g., for molecular dynamics) need to activate appropriate directional forces on interacting entities. All of these applications require geometric notions of separation and penetration between  representations of three-dimensional objects to be continuously monitored.

A rich class of computational geometry problems arises in this context, when 3D objects are outer approximated by convex or nonconvex bounding volumes~\cite{gottschalk1996obbtree,manocha2004collision,larsen2000fast}. In the case where the bounding volumes are convex, the Euclidean distance between them can be computed very precisely, providing a reliable certificate of safety for the objects they enclose. In the case where the bounding volumes are nonconvex, distance computation can be done either approximately via convex decomposition heuristics~\cite{ConvexDecomposition1,ConvexDecomposition2} which cover the volumes by a finite union of convex shapes, or exactly by using more elaborate algebraic optimization hierarchies that we discuss in this chapter. When 3D objects overlap, quantitative measures of degree of penetration are needed in order to optimally resolve collisions, e.g., by a gradient-based trajectory optimizer. Multiple such measures have been proposed in the literature. The {\it penetration depth} is the minimum magnitude translation that brings the overlapping objects out of collision.  The {\it growth distance}~\cite{GrowthDistance} is the minimum shrinkage of the two bodies required to reduce volume penetration down to merely surface touching. Efficient computation of penetration measures is also a problem of interest to this chapter.

\subsection{Contributions and organization of the chapter}

In this work, we propose to represent the geometry of a given 3D environment comprising multiple static or dynamic rigid bodies using sublevel sets of  polynomials. The chapter is organized as follows: In Section \ref{sec:sos.convex}, we provide an overview of the algebraic concepts of sum of squares (sos) and sum of squares-convex (sos-convex) polynomials as well as their relation to semidefinite programming and polynomial optimization. In Section \ref{sec:3D.point.cont}, we consider the problem of containing a cloud of 3D points with tight-fitting convex or nearly convex sublevel sets of polynomials. In particular, we propose and justify a new volume minimization heuristic for these sublevel sets which empirically results in tighter fitting polynomials than previous proposals~\cite{Magnani},~\cite{lasserre2016_inverse_moment}. Additionally, we give a procedure for explicitly tuning the extent of convexity imposed on these sublevel set bounding volumes using sum of squares optimization techniques. If convexity is imposed, we refer to them as {\it sos-convex bodies}; if it is not, we term them simply as {\it sos-bodies}. (See Section~\ref{sec:sos.convex} for a more formal definition.) We show that the bounding volumes we obtain are highly compact and adapt to the shape of the data in more flexible ways than canned convex primitives typically used in standard bounding volume hierarchies; see Table~\ref{tab:bounding.volumes}. The construction of our bounding volumes involves small-scale semidefinite programs (SDPs) that can fit, in an offline preprocessing phase, 3D meshes with tens of thousands of data points in a few seconds. In Section \ref{sec:distance}, we give sum of squares algorithms for measuring notions of separation or penetration, including  Euclidean distance and growth distance~\cite{GrowthDistance}, of two bounding volumes representing obstacles. We show that even when convexity is lacking, we can efficiently compute (often tight) lower bounds on these measures.
%In particular, one can naturally define a ``growth distance"~\cite{GrowthDistance} for these objects.
In Section~\ref{sec:cont.poly.sub}, we consider the problem of grouping several obstacles (i.e., bounding volumes) within one, with the idea of making a map of the 3D environment with a lower level of resolution. A semidefinite programming based algorithm for this purpose is proposed and demonstrated via an example. 
%We end in Section~\ref{sec:conclusions} with some future directions. 

\subsection{Preview of some experiments} Figure \ref{fig:intro_pic} gives a preview of some of the methods developed in this chapter using as an example a 3D chair point cloud. On the left, we enclose the chair within the 1-sublevel set of three sos-convex polynomials with increasing degree ($2$, $4$ and $6$) leading to correspondingly tighter fits. The middle plot presents the 1-sublevel set of three degree-6 sos polynomials with increasing nonconvexity showing how tighter representations can be obtained by relaxing convexity. The right plot shows the 2, 1, and 0.75 sublevel sets of a single degree-6 sos polynomial; the 1-sublevel set colored green encloses the chair, while greater or lower values of the level set define grown and shrunk versions of the object. The computation of Euclidean distances and sublevel-based measures of separation and penetration can be done in a matter of milliseconds with techniques described in this chapter. % between such bodies is a tiny convex optimization problem that can be solved in a matter of milliseconds.

%Figure 1 shows bounding volumes enclosing a 3D chair point cloud. On the left, the 1-sublevel set of three sos-convex bodies is shown, enclosing the chair more tightly with increasing degree ($2$, $4$ and $6$). The middle plot shows the 1-sublevel set of three sos-bodies with increasing non-convexity showing how tighter representations are estimated by relaxing convexity.  The right plot shows the 2, 1 and 0.75 sublevel sets of a single sos-body; the 1-sublevel set colored green encloses the chair, while greater or lower values of the level set define grown and shrunk versions of the object. The computation of Euclidean distances, and sublevel-based measures of separation and penetration between such bodies is a tiny convex optimization problem that can be solved in a matter of milliseconds.

% We are given as input, an environment comprising of multiple static or dynamic three-dimensional rigid bodies. The moving objects represent either obstacles whose movement cannot be influenced, or a body (i.e., robots) whose motion can be explicitly controlled. The initial configuration of each object is described as a raw point cloud in $\reals^3$, which can be processed offline. The goal is to compute at each time point whether a pair of objects is colliding or not, and report associated separation and penetration distances, which may be fed into a trajectory optimizer for finding collision-free paths, or used by a physics/gaming engine to switch between interaction modes. 

\section{Sum of squares and sos-convexity}\label{sec:sos.convex}

In this section, we briefly review the notions of \emph{sum of squares polynomials}, \emph{sum of squares-convexity,} and \emph{polynomial optimization} which will all be central to the geometric problems we discuss later. We refer the reader to the recent monograph~\cite{lasserreBook} for a more detailed overview of the subject. 

Throughout, we will denote the set of $n \times n$ symmetric matrices by $S^{n \times n}$ and the set of degree-$2d$ polynomials with real coefficients by $\mathbb{R}_{2d}[x]$. We say that a polynomial $p(x_1,\ldots,x_n) \in \mathbb{R}_{2d}[x]$ is \emph{nonnegative} if $p(x_1,\ldots,x_n)\geq 0, \forall x\in\mathbb{R}^n$. In many applications (including polynomial optimization that we will cover later), one would like to constrain certain coefficients of a polynomial so as to make it nonnegative. Unfortunately, even testing whether a given polynomial (of degree $2d\geq 4$) is nonnegative is NP-hard. As a consequence, we would like to replace the intractable condition that $p$ be nonnegative by a sufficient condition for it that is more tractable. One such condition is for the polynomial to have a sum of squares decomposition. We say that a polynomial $p$ is a \emph{sum of squares (sos)} if there exist polynomials $q_i$ such that $p=\sum_{i} q_i^2$. From this definition, it is clear that any sos polynomial is nonnegative, though not all nonnegative polynomials are sos; see, e.g., \cite{Reznick},\cite{laurent2009sums} for some counterexamples. Furthermore, requiring that a polynomial $p$ be sos is a computationally tractable condition as a consequence of the following characterization: A polynomial $p$ of degree $2d$ is sos if and only if there exists a positive semidefinite matrix $Q$ such that $p(x)=z(x)^TQz(x),$ where $z(x)$ is the vector of all monomials of degree up to $d$ \cite{PhD:Parrilo}. The matrix $Q$ is sometimes called the Gram matrix of the sos decomposition and is of size $\binom{n+d}{d}\times \binom{n+d}{d}$. (Throughout the chapter, we let $N\mathcal{\mathop{:}}=\binom{n+d}{d}.$) The task of finding a positive semidefinite matrix $Q$ that makes the coefficients of $p$ all equal to the coefficients of $z(x)^TQz(x)$ is a semidefinite programming problem, which can be solved in polynomial time to arbitrary accuracy~\cite{vandenberghe1996semidefinite}.

% {\gh As semidefinite programs can be slow to solve when $n$ and $d$ get big, stronger (but cheaper) sufficient conditions for nonnegativity of polynomials have been given which can be tested using linear programming or second order programming instead of semidefinite programming. In \cite{iSOS_journal}, Ahmadi and Majumdar introduced the concept of \emph{dsos} (resp. \emph{sdsos}) polynomials. A polynomial $p$ is said to be dsos (resp. sdsos) if it can be written as $p(x)=z(x)^TQz(x),$ where $Q$ is diagonally dominant (resp. scaled diagonally dominant). Enforcing diagonal dominance (resp. scaled diagonal dominance) of a matrix can de done using linear programming (LP) (resp. second order programming (SOCP)), thus ensuring that optimizing over the set of dsos (resp. sdsos) polynomials is a linear program (resp. second order cone program). }

%This lemma states that if we wish to obtain a sum of squares decomposition of a fixed polynomial $p$, we need to search for a positive semidefinite matrix $Q$ whose entries are such that the coefficients of the polynomial $p$ and the polynomial $z(x)^TQz(x)$ are equal -- this is a semidefinite program and can be solved in polynomial time up to arbitrary accuracy~\cite{vandenberghe1996semidefinite}.

The concept of sum of squares can also be used to define a sufficient condition for convexity of polynomials known as \emph{sos-convexity}. We say that a polynomial $p$ is sos-convex if the polynomial $y^T \nabla^2 p(x)y$ in $2n$ variables $x$ and $y$ is a sum of squares. Here, $\nabla^2 p(x)$ denotes the Hessian of $p$, which is a symmetric matrix with polynomial entries.
%is a sum of squares in $x$ and $y$, where $\nabla^2 p(x)$ is the Hessian of $p$.
For a polynomial of degree $2d$ in $n$ variables, one can check that the dimension of the Gram matrix associated to the sos-convexity condition is $\tilde{N}\mathcal{\mathop{:}}=n \cdot \binom{n+d-1}{d-1}$. It follows from the second order characterization of convexity that any sos-convex polynomial is convex, as $y^T\nabla^2 p(x)y$ being sos implies that $\nabla^2 p(x) \succeq 0, ~\forall x.$ The converse however is not true, though convex but not sos-convex polynomials are hard to find in practice; see \cite{ahmadi2013complete}. Through its link to sum of squares, it is easy to see that testing whether a given polynomial is sos-convex is a semidefinite program. By contrast, testing whether a polynomial of degree $2d \geq 4$ is convex is NP-hard \cite{NPhard_Convexity_MathProg}. 

%{\gh Note that much as one could define LP and SOCP-based analogs to sum of squares polynomials using dsos and sdsos polynomials, one can define dsos-convex and sdsos-convex polynomials which are cheaper (but less exact) inner approximations of the set of convex polynomials (see \cite{DCP}, Section 3, for a more detailed overview).}

A \emph{polynomial optimization problem} is a problem of the form
\begin{align}
\min_{x \in K} p(x), \label{eq:basic.opt}
\end{align}
where the objective $p$ is a (multivariate) polynomial and the feasible set $K$ is a basic semialgebraic set; i.e., a set defined by polynomial inequalities: $$K:=\{x~|~g_i(x)\geq 0, i=1,\ldots,m\}.$$

%One area where the concepts of sum of squares and sos-convexity are widely used is \emph{polynomial optimization.} Let $K:=\{x~|~g_i(x)\geq 0, i=1,\ldots,m\},$ where $p$ and $g_i$ are multivariate polynomials, be a basic semi algebraic set, i.e., a set defined by polynomial inequalities.  A polynomial optimization problem is then an optimization problem of the form 
%\begin{align}
%   \min_{x \in K} p(x). \label{eq:basic.opt}
%\end{align}

It is straightforward to see that problem (\ref{eq:basic.opt}) can be equivalently formulated as that of finding the largest constant $\gamma$ such that  $p(x)-\gamma\geq 0,\forall x\in K.$ It is known that, under mild conditions (specifically, under the assumption that $K$ is Archimedean \cite{laurent2009sums}), the condition $p(x)-\gamma > 0, \forall x \in K$, is equivalent to the existence of sos polynomials $\sigma_i(x)$ such that $p(x)-\gamma=\sigma_0(x)+\sum_{i=1}^m \sigma_i(x) g_i(x)$. Indeed, it is at least clear that if $x \in K$, i.e., $g_i(x)\geq 0$, then $\sigma_0(x)+\sum_{i=1}^m \sigma_i(x)g_i(x) \geq 0$ which means that $p(x)-\gamma \geq 0$. The converse is less trivial and is a consequence of the Putinar Positivstellensatz \cite{putinar1993positive}. Using this result, problem (\ref{eq:basic.opt}) can be rewritten as
\begin{align} 
&\underset{\gamma, \sigma_i}{\max}~ \gamma \nonumber \\
&\text{s.t. } p(x)-\gamma=\sigma_0+\sum_{i=1}^m \sigma_i(x)g_i(x),\label{eq:basic.opt.sos}\\
&\sigma_i \text{ sos, } i=0,\ldots,m. \nonumber
\end{align}

For any fixed upper bound on the degrees of the polynomials $\sigma_i$, this is a semidefinite programming problem which produces a lower bound on the optimal value of (\ref{eq:basic.opt}). As the degrees of $\sigma_i$ increase, these lower bounds are guaranteed to converge to the true optimal value of (\ref{eq:basic.opt}). Note that we are making \emph{no convexity assumptions} about the polynomial optimization problem and yet solving it \emph{globally} through a sequence of semidefinite programs.

\textbf{Sum of squares and polynomial optimization in robotics.} We remark that sum of squares techniques have recently found increasing applications to a whole host of problems in robotics, including constructing Lyapunov functions \cite{ahmadi2014towards}, locomotion planning \cite{kuindersma2016optimization}, design and verification of provably safe controllers \cite{majumdar2013control,majumdar2014control}, grasping and manipulation \cite{dai2015synthesis,posa2016stability, zhou2016convex}, robot-world calibration \cite{heller2014hand}, and inverse optimal control \cite{pauwels2014inverse}, among others. 

We also remark that a different use of sum of squares optimization for finding minimum bounding volumes that contain semialgebraic sets has been considered in \cite{Henrion,Henrion1} along with some interesting control applications (see Section~\ref{sec:cont.poly.sub} for a brief description).

%Nothing is known a priori about the degree of the polynomials $\sigma_i$ we are looking for and one has to resort to bounding the degree of $\sigma_i$ by some integer $l$ to search over a finitely parametrized space. Bounding the degree of $\sigma_i$ leads to a relaxation of problem (\ref{eq:basic.opt}) and lower bounds on the optimal solution. When we increase the value of the degree of $\sigma_i$, we define a hierarchy of improving lower bounds on the problem. 

\section{3D point cloud containment} \label{sec:3D.point.cont}

%Let $\{x_1,\ldots,x_m\}$ be points in $\mathbb{R}^3$. 
Throughout this section, we are interested in finding a body of minimum volume, parametrized as the 1-sublevel set of a polynomial of degree $2d$, which encloses a set of given points  $\{x_1,\ldots,x_m\}$ in $\mathbb{R}^n$.

\subsection{Convex sublevel sets}\label{subsec:conv.cont}

We focus first on finding a \emph{convex} bounding volume. Convexity is a common constraint in the bounding volume literature and it makes certain tasks (e.g., distance computation among the different bodies) simpler. In order to make a set of the form $\{x\in\mathbb{R}^3| \ p(x)\leq 1\}$ convex, we will require the polynomial $p$ to be convex. (Note that this is a sufficient but not necessary condition.) Furthermore, to have a tractable formulation, we will replace the convexity condition with an sos-convexity condition as described previously. Even after these relaxations, the problem of minimizing the volume of our sublevel sets remains a difficult one. The remainder of this section discusses several heuristics for this task.

\begin{comment}

Optimizing over convex bodies (as opposed to non-convex bodies) can be advantageous for a few reasons. First, if we replace the notion of convexity by sos-convexity, one can easily compute certain geometric properties of the problem such as distance between sets (see Section \ref{sec:distance}). Secondly, for purposes such as collision detection, convexity can ensure stricter guarantees than non convexity, e.g., in the case of a convex body, if two points of the body do not come into contact with the object we want to avoid, then the segment between the two points does not either. As mentioned earlier, we will restrict ourselves to bodies described as level sets of polynomials. To make these convex, we will require that our polynomials be convex. 

Finding minimum volume convex polynomial level sets containing a 3D point cloud is a hard problem however, for multiple reasons. First, as seen in Section \ref{sec:sos.convex}, optimizing over the set of convex polynomials is intractable: we replace the convexity condition by the stronger but tractable condition of being sos-convex. Second, beyond the case of degree-2 polynomials, there is no known closed form expression for volume; hence, one needs to consider heuristic measures of volume of the sublevel sets. 

\end{comment}

\subsubsection{The Hessian-based approach}\label{subsec:Boyd.method}

In \cite{Magnani}, Magnani et al. propose the following heuristic to minimize the volume of the 1-sublevel set of an sos-convex polynomial:
\begin{equation}
\begin{aligned}
& &&\min_{p \in \mathbb{R}_{2d}[x],H \in S^{\tilde{N} \times \tilde{N}}} -\log \det(H) \\
&\text{s.t. } &&p \text{ sos}, \\
& &&y^T \nabla^2 p(x)y=w(x,y)^THw(x,y),~H\succeq 0, \label{eq:Magnani.log.det}\\
& && p(x_i)\leq 1, i=1,\ldots,m,
\end{aligned}
\end{equation}
where $w(x,y)$ is a vector of monomials in $x$ and $y$ of degree $1$ in $y$ and $d-1$ in $x$.
This problem outputs a polynomial $p$ whose 1-sublevel set corresponds to the bounding volume that we are interested in. A few remarks on this formulation are in order:
\begin{itemize}
	\item The last constraint simply ensures that all the data points are within the 1-sublevel set of $p$ as required. 
	\item The second constraint imposes that $p$ be sos-convex. The matrix $H$ is the Gram matrix associated with the sos condition on $y^T\nabla^2 p(x)y$. 
	\item The first constraint requires that the polynomial $p$ be sos. This is a necessary condition for boundedness of (\ref{eq:Magnani.log.det}) when $p$ is parametrized with affine terms. To see this, note that for any given positive semidefinite matrix $Q$, one can always pick the coefficients of the affine terms in such a way that the constraint $p(x_i)\leq 1$ for $i=1,\ldots,m$ be trivially satisfied. Likewise one can pick the remaining coefficients of $p$ in such a way that the sos-convexity condition is satisfied.
	The restriction to sos polynomials, however, can be done without loss of generality. Indeed, suppose that the minimum volume sublevel set was given by $\{x~|~ p(x)\leq 1\}$ where $p$ is an sos-convex polynomial. As $p$ is convex and nonaffine, $\exists \gamma\geq 0$ such that $p(x)+\gamma\geq 0$ for all $x$. Define now $q(x)\mathrel{\mathop{:}}=\frac{p(x)+\gamma}{1+\gamma}.$ We have that $\{x~|~ p(x)\leq 1\}=\{ x~|~ q(x)\leq 1\}$, but here, $q$ is sos as it is sos-convex and nonnegative \cite[Lemma 8]{helton2010}. 
\end{itemize}

%To understand how the objective corresponds to the heuristic suggested in \cite{Magnani} to measure volume.

The objective function of the above formulation is motivated in part by the degree $2d=2$ case. Indeed, when $2d=2$, the sublevel sets of convex polynomials are ellipsoids of the form $\{x~|~ x^TPx+b^Tx+c\leq 1\}$ and their volume is given by $\frac43 \pi \cdot \sqrt{\det(P^{-1})}$. Hence, by minimizing $-\log \det(P)$, we would exactly minimize volume. As the matrix $P$ above is none other than the Hessian of the quadratic polynomial $x^TPx+b^Tx+c$ (up to a multiplicative constant), this partly justifies the formulation given in \cite{Magnani}. Another justification for this formulation is given in \cite{Magnani} itself and relates to curvature of the polynomial $p$. Indeed, the curvature of $p$ at a point $x$ along a direction $y$ is proportional to $y^T\nabla^2 p(x)y$. By imposing that $y^T\nabla^2 p(x)y=w(x,y)^THw(x,y),$ with $H \succeq 0$, and then maximizing $\log(\det(H))$, this formulation seeks to increase the curvature of $p$ along all directions so that its 1-sublevel set can get closer to the points $x_i$. Note that curvature maximization in all directions without regards to data distribution can be counterproductive in terms of tightness of fit, particularly in regions where the data geometry is flat (an example of this is given in Figure \ref{fig:comparison.with.Boyd}).

%As the Hessian is a polynomial matrix for degrees higher than 2, the generalization to higher degrees does not involve the Hessian of the polynomial itself but the Gram matrix associated to it.

A related minimum volume heuristic that we will also experiment with replaces the $\log \det$ objective with a linear one. More specifically, we introduce an extra decision variable $V \in S^{\tilde{N}\times \tilde{N}}$ and minimize $\mbox{trace}(V)$ while adding an additional constraint $\begin{bmatrix} V & I \\ I & H \end{bmatrix} \succeq 0.$
%\begin{align}
%&\min_{p \in \mathbb{R}_{2d}[x],H \in S^{\tilde{N} \times \tilde{N}},V \in S^{\tilde{N} \times \tilde{N}}} \mbox{trace}(V) \nonumber \\
%&\text{s.t. } \nonumber \\
%&p \text{ sos}, \nonumber \\
%&y^T \nabla^2 p(x) y=w(x,y)^THw(x,y), H\succeq 0, \label{eq:Magnani.trace}\\
%& p(x_i)\leq 1, i=1,\ldots,m, \nonumber \\
%&\begin{bmatrix} V & I \\ I & H \end{bmatrix} \succeq 0. \nonumber
%\end{align}
Using the Schur complement, the latter constraint can be rewritten as $V\succeq H^{-1}$. As a consequence, this trace formulation minimizes the \emph{sum} of the inverse of the eigenvalues of $H$ whereas the $\log \det$ formulation described in (\ref{eq:Magnani.log.det}) minimizes the \emph{product} of the inverse of the eigenvalues.

\subsubsection{Our approach} \label{subsec:our.approach}

We propose here an alternative heuristic for obtaining a tight-fitting convex body containing points in $\mathbb{R}^n.$ Empirically, we validate that it tends to consistently return convex bodies of smaller volume than the ones obtained with the methods described above (see Figure~\ref{fig:comparison.with.Boyd} below for an example). It also generates a relatively smaller convex optimization problem. Our formulation is as follows:
\begin{align}
&\min_{p \in \mathbb{R}_{2d}[x],P \in S^{N \times N}} -\log \det(P) \nonumber\\
&\text{s.t. } \nonumber \\
&p(x)=z(x)^TP z(x), P\succeq 0, \nonumber\\
&p \text{ sos-convex},\label{eq:VAG.log.det}\\
& p(x_i)\leq 1, i=1,\ldots,m. \nonumber
\end{align}
One can also obtain a trace formulation of this problem by replacing the $\log \det$ objective by a trace one as it was done in the previous paragraph.
%\begin{align}
%&\min_{p \in \mathbb{R}_{2d}[x],P \in S^{N \times N}, V \in S^{N \times N}} \mbox{trace}(V) \nonumber\\
%&\text{s.t. } \nonumber \\
%&p(x)=z(x)^TP z(x), P\succeq 0, \nonumber\\
%&p \text{ sos-convex},\label{eq:VAG.trace}\\
%& p(x_i)\leq 1, i=1,\ldots,m, \nonumber \\
%&\begin{bmatrix} V & I \\ I & P \end{bmatrix} \succeq 0. \nonumber
%\end{align}

Note that the main difference between (\ref{eq:Magnani.log.det}) and (\ref{eq:VAG.log.det}) lies in the Gram matrix chosen for the objective function. In (\ref{eq:Magnani.log.det}), the Gram matrix comes from the sos-convexity constraint, whereas in (\ref{eq:VAG.log.det}), the Gram matrix is generated by the sos constraint.

In the case where the polynomial is quadratic and convex, we saw that the formulation (\ref{eq:Magnani.log.det}) is exact as it finds the minimum volume ellipsoid containing the points. It so happens that the formulation given in (\ref{eq:VAG.log.det}) is also exact in the quadratic case, and, in fact, both formulations return the same optimal ellipsoid. As a consequence, the formulation given in (\ref{eq:VAG.log.det}) can also be viewed as a natural extension of the quadratic case. 

To provide more intuition as to why this formulation performs well, we interpret the 1-sublevel set $$S\mathrel{\mathop{:}}=\{x~|~p(x)\leq 1\}$$ of $p$ as the preimage of some set whose volume is being minimized. More precisely, consider the set $$T_1=\{z(x) \in \mathbb{R}^N~|~ x \in \mathbb{R}^n\}$$ which corresponds to the image of $\mathbb{R}^n$ under the monomial map $z(x)$ and the set $$T_2=\{y \in \mathbb{R}^N ~|~ y^TPy \leq 1 \},$$ for a positive semidefinite matrix $P$ such that $p(x)=z(x)^TPz(x).$ Then, the set $S$ is simply the preimage of the intersection of $T_1$ and $T_2$ through the mapping $z$. Indeed, for any $x \in S$, we have $p(x)=z(x)^TPz(x) \leq 1$. The hope is then that by minimizing the volume of $T_2$, we will minimize volume of the intersection $T_1 \cap T_2$ and hence that of its preimage through $z$, i.e., the set $S.$ 

\begin{figure}[h]
	\centering
	\includegraphics[scale=0.8]{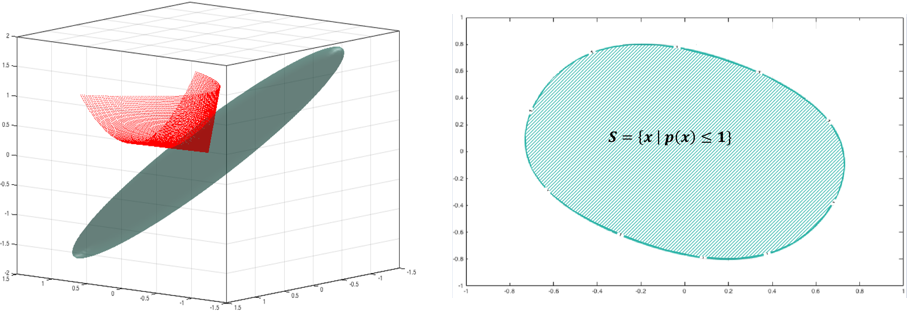}
	\caption{An illustration of the intuition behind the approach in Section \ref{subsec:our.approach}: the sets $T_1$ and $T_2$ (left) and $S$ (right)}
	\label{fig:illustration.proof}
\end{figure}

We illustrate this idea in Figure \ref{fig:illustration.proof}. Here, we have generated a random $3\times 3$ positive semidefinite matrix $P$ and a corresponding bivariate degree-4 sos polynomial $p(x_1,x_2)=z(x_1,x_2)^TPz(x_1,x_2)$, where $z(x_1,x_2)=(x_1^2,x_1x_2,x_2^2)^T$ is a map from $\mathbb{R}^2$ to $\mathbb{R}^3$. We have drawn in red the image of $\mathbb{R}^2$ under $z$ and in green the ellipsoid $\{y \in \mathbb{R}^3~|~y^TPy \leq 1\}.$ The preimage of the intersection of both sets seen in Figure~\ref{fig:illustration.proof} on the right corresponds to the 1-sublevel set of $p.$

\subsection{Relaxing convexity} \label{subsec:nonconvex}

Though containing a set of points with a convex sublevel set has its advantages, it is sometimes necessary to have a tighter fit than the one provided by a convex body, particularly if the object of interest is highly nonconvex. One way of handling such scenarios is via convex decomposition methods~\cite{ConvexDecomposition1, ConvexDecomposition2}, which would enable us to represent the object as a union of sos-convex bodies. Alternatively, one can aim for problem formulations where convexity of the sublevel sets is not imposed. In the remainder of this subsection, we first review a recent approach from the literature to do this and then present our own approach which allows for controlling the level of nonconvexity of the sublevel set. 

%Removing the sos-convexity constraint however can lead to optimal solutions for (\ref{eq:VAG.sos}) which have unbounded level sets. A way of compactifying these level sets is to impose a condition on the higher order terms of $p$. Namely, if we denote by $p_{2d}$ the polynomial containing all higher order terms of $p$, we can require that the matrix $\tilde{P}$ defined as $$p_{2d}(x)= \mathrel{\mathop{:}}\tilde{z}(x)^T \tilde{P} \tilde{z}(x)$$ where $\tilde{z}(x)$ is the monomials vector of degree exactly $d$, be positive semidefinite. The polynomial $p$ is then coercive and its level sets will be compact.

\subsubsection{The inverse moment approach}\label{subsubsec:Lasserre} In very recent work~\cite{lasserre2016_inverse_moment}, Lasserre and Pauwels propose an approach for containing a cloud of points with sublevel sets of polynomials (with no convexity constraint). Given a set of data points $x_1,\ldots,x_m\in\mathbb{R}^n$, it is observed in that paper that the sublevel sets of the degree $2d$ sos polynomial \begin{equation}\label{eq:inverse.moment.poly}
p_{\mu,d}(x)\mathrel{\mathop:}=z(x)^T M_d(\mu(x_1,\ldots,x_m))^{-1} z(x),
\end{equation}
tend to take the shape of the data accurately. Here, $z(x)$ is the vector of all monomials of degree up to $d$ and $M_d(\mu(x_1,\ldots,x_m))$ is the moment matrix of degree $d$ associated with the empirical measure $\mu\mathrel{\mathop:}=\frac{1}{m}\sum_{i=1}^{m}\delta_{x_i}$ defined over the data. This is an $\binom{n+d}{d} \times \binom{n+d}{d}$ symmetric positive semidefinite matrix which can be cheaply constructed from the data $x_1,\ldots,x_m\in\mathbb{R}^n$ (see~\cite{lasserre2016_inverse_moment} for details). One very nice feature of this method is that to construct the polynomial $p_{\mu, d}$ in (\ref{eq:inverse.moment.poly}) one only needs to invert a matrix (as opposed to solving a semidefinite program as our approach would require) after a single pass over the point cloud. The approach however does not a priori provide a particular sublevel set of $p_{\mu, d}$ that is guaranteed to contain all data points. Hence, once $p_{\mu, d}$ is constructed, one could slowly increase the value of a scalar $\gamma$ and check whether the $\gamma$-sublevel set of $p_{\mu, d}$ contains all points.

\subsubsection{Our approach and controlling convexity} \label{subsubsec:conv.control.ours} An advantage of our proposed formulation (\ref{eq:VAG.log.det}) is that one can easily drop the sos-convexity assumption in the constraints and thereby obtain a sublevel set which is not necessarily convex. %The problem then becomes:
%\begin{align}
%&\min_{p \in \mathbb{R}_{2d}[x],P \in S^{N \times N}} -\log \det(P) \nonumber\\
%&\text{s.t. } \nonumber\\
%&p=z(x)^TP z(x), P\succeq 0 \label{eq:VAG.sos}\\
%& p(x_i)\leq 1, i=1,\ldots,m.\nonumber
%\end{align}
This is not an option for formulation (\ref{eq:Magnani.log.det}) as the Gram matrix associated to the sos-convexity constraint intervenes in the objective.

Note that in neither this formulation nor the inverse moment approach of Lasserre and Pauwels, does the optimizer have control over the shape of the sublevel sets produced, which may be convex or far from convex. For some applications, it is useful to control in some way the degree of convexity of the sublevel sets obtained by introducing a parameter which when increased or decreased would make the sets more or less convex. This is what our following proposed optimization problem does via the parameter $c$, which corresponds in some sense to a measure of convexity:
\begin{align}
&\min_{p \in \mathbb{R}_{2d}[x],P \in S^{N \times N}} -\log \det(P) \nonumber\\
&\text{s.t. }\nonumber \\
&p=z(x)^TP z(x), P\succeq 0 \label{eq:VAG.param.convex}\\
& p(x)+ c (\sum_i x_i^2)^d \text{ sos-convex}. \nonumber \\
& p(x_i)\leq 1, i=1,\ldots,m.\nonumber
\end{align}
Note that when $c=0$, the problem we are solving corresponds exactly to (\ref{eq:VAG.log.det}) and the sublevel set obtained is convex. When $c>0$, we allow for nonconvexity of the sublevel sets. Note that this is a consequence of $ (\sum_i x_i^2)^d$ being a strictly convex function, which can offset the nonconvexity of $p$. 
%{\gh Indeed, adding a multiple of the term $(\sum_i x_i^2)^d+(\sum_i x_i^2)$ (which is in the interior of the set of sos-convex polynomials) to $p$ (which may not be convex), counterbalances the potential nonconvexity of $p$. In fact, for any polynomial $p$, there exists $c>0$ such that $p-c((\sum_i x_i^2)^d+(\sum_i x_i^2))$ is sos-convex (see, e.g., Lemma 1 in \cite{DCP} for a proof).} 
As we decrease $c$ towards zero, we obtain sublevel sets which get progressively more and more convex.

\subsection{Bounding volume numerical experiments} 
Figure \ref{fig:intro_pic} (left) shows the 1-sublevel sets of sos-convex bodies with degrees $2$, $4$, and $6$. A degree-$6$ polynomial gives a much tighter fit than an ellipsoid (degree 2). In the middle figure, we freeze the degree to be $6$ and increase the convexity parameter $c$ in the relaxed convexity formulation of problem~(\ref{eq:VAG.param.convex}); the 1-sublevel sets of the resulting sos polynomials with $c=0, 10, 100$ are shown. It can be seen that the sublevel sets gradually bend to better adapt to the shape of the object. The right figure shows the $2, 1,$ and $0.75$ sublevel sets of a degree-$6$ polynomial obtained by fixing $c=10$ in problem~(\ref{eq:VAG.param.convex}): the shape is retained as the body is expanded or contracted. 

Figure \ref{fig:comparison.with.Boyd} shows 1-sublevel sets of two degree-6 sos-convex polynomials. In red, we have plotted the sublevel set corresponding to maximizing curvature as explained in Section \ref{subsec:Boyd.method}. In green, we have plotted the sublevel set generated by our approach as explained in Section \ref{subsec:our.approach}. Note that our method gives a tighter-fitting sublevel set, which is in part a consequence of the flat data geometry for which the maximum curvature heuristic does not work as well.

\begin{figure}[h]
	\centering
	\includegraphics[scale=0.4]{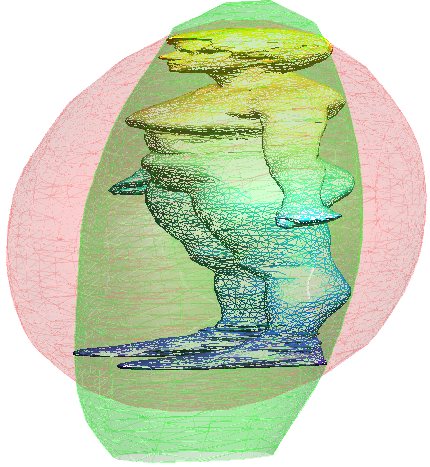}
	\caption{Comparison of degree-6 bounding volumes: our approach as described in Section \ref{subsec:our.approach} (green sublevel set) produces a tighter fitting bounding volume than the approach given in \cite{Magnani} and reviewed in Section \ref{subsec:Boyd.method} (red sublevel set). }
	\label{fig:comparison.with.Boyd}
\end{figure}

In Table \ref{tab:bounding.volumes}, we provide a comparison of various bounding volumes on Princeton Shape Benchmark datasets~\cite{ShapeData}. It can be seen that sos-convex bodies generated by higher degree polynomials provide much tighter fits than spheres or axis-aligned bounding boxes (AABB) in general. The proposed minimum volume heuristic of our formulation in (\ref{eq:VAG.log.det}) works better than that proposed in \cite{Magnani} (see (\ref{eq:Magnani.log.det})). In both formulations, typically, the log-determinant objective outperforms the trace objective. The convex hull is the tightest possible convex body. However, for smooth objects like the vase, the number of vertices describing the convex hull can be a substantial fraction of the original number of points in the point cloud. When convexity is relaxed, a degree-6 sos polynomial compactly described by just $84$ coefficients gives a tighter fit than the convex hull. For the same degree, solutions to our formulation (\ref{eq:VAG.param.convex}) with a positive value of $c$ outperform the inverse moment construction of~\cite{lasserre2016_inverse_moment}.

The bounding volume construction times are shown in Figure~\ref{fig:bv_construction_time} for sos-convex chair models. In comparison to the volume heuristics of~\cite{Magnani}, our heuristic runs noticeably faster as soon as degree exceeds $6$. We believe that this may come from the fact that the decision variable featuring in the objective in our case is a matrix of size $N \times N$, where $N=\binom{n+d}{d}$, whereas the decision variable featuring in the objective of~\cite{Magnani} is of size $\tilde{N} \times \tilde{N},$ where $\tilde{N}=n\cdot \binom{n+d-1}{d-1} > N.$
%Note that the SDP can be run on the vertices of the convex hull instead of the original point cloud: due to convexity the estimated polynomials are the same modulo numerical differences. 
Our implementation uses YALMIP~\cite{yalmip} with the splitting conic solver (SCS)~\cite{scs} as its backend SDP solver (run for 2500 iterations). Note that the inverse moment approach of~\cite{lasserre2016_inverse_moment} is the fastest as it does not involve any optimization and makes just one pass over the point cloud. However, this approach is not guaranteed to return a convex body, and for nonconvex bodies, tighter fitting polynomials can be estimated using log-determinant or trace objectives on our problem (\ref{eq:VAG.param.convex}).

%In Section *, we also show some new uses of this method in a scenario where the number of data points is large and one is interested in guaranteeing convexity of the containing sublevel set.
\begin{table*}[t!]
	\begin{center}
		\scalebox{0.68}{
		\begin{tabular}{|c|c|c|c|c|c|c|c|}
			\hline 
			& Object (id in \cite{ShapeData}) & & Human (10) & Chair (101) & Hand (181) & Vase (361) & Octopus (121)\\
			& $\#$ points/vertices in cvx hull&  & 9508/364 & 8499/320 & 7242/ 652 & 14859/1443 & 5944/414\\
			\hline
			Section & Bounding Body $\downarrow$& Objective fcn $\downarrow$ & \multicolumn{5}{c|}{Volume $\downarrow$} \\
			\hline
			&Convex-Hull &  & 0.29 & 0.66 & 0.36 & 0.91 & 0.5 \\
			& Sphere & & 3.74 & 3.73 & 3.84 & 3.91 & 4.1\\
			& AABB & & 0.59 & 1.0 & 0.81 & 1.73 & 1.28\\
			\hline 
			&\multirow{2}{*}{sos-convex ($2d=2$)} &$logdet$ & 0.58 & 1.79 & 0.82 & 1.16 & 1.30\\
			& &$trace$ & 0.97 & 1.80 & 1.40 & 1.2 &1.76\\
			%& $logdet(\vv{P}^{-1})$ & 0.57 & 1.74 & 0.81& 1.15 &1.28\\
			%& $trace(\vv{P}^{-1})$ & 0.91 & 2.67 & 1.65 & 1.16 &1.23\\
			\cline{2-8}
			& \multirow{4}{*}{sos-convex ($2d=4$)} & $logdet(\vv{H}^{-1})$ & 0.57 & 1.55 & 0.69& 1.13 & 1.04\\
			& & $trace(\vv{H}^{-1})$ & 0.56 & 2.16 & 1.28& 1.09 &3.13 \\
			& & $logdet(\vv{P}^{-1})$ & 0.44 & 1.19 & 0.53& 1.05 &0.86\\
			\rot{\rlap{~\ref{subsec:conv.cont}}} & & $trace(\vv{P}^{-1})$ & 0.57& 1.25 & 0.92 & 1.09  &1.02\\
			\cline{2-8}
			& \multirow{4}{*}{sos-convex ($2d=6$)} & $logdet(\vv{H}^{-1})$ & 0.57 & 1.27 & 0.58& 1.09& 0.93\\
			& & $trace(\vv{H}^{-1})$ & 0.56 & 1.30 & 0.57 & 1.09 & 0.87\\
			& & $logdet(\vv{P}^{-1})$ & 0.41 &  1.02 & 0.45& 0.99 &0.74\\
			& & $trace(\vv{P}^{-1})$ & 0.45 & 1.21 & 0.48 & 1.03  &0.79\\
			\hline
			\rule{0pt}{8pt}
			%			\hline
			& Inverse-Moment ($2d=2$) & & 4.02 & 1.42   & 2.14 & 1.36 &1.74\\
			& Inverse-Moment ($2d=4$) & & 1.53 & 0.95  & 0.90 & 1.25 &0.75\\
			\rot{\rlap{~\ref{subsubsec:Lasserre}}}	& Inverse-Moment ($2d=6$) & & 0.48 & 0.54   & 0.58 & 1.10 &0.57\\
			\hline
			%\multirow{2}{*}{SoS-ncvx (d=2, c=-10)} & $logdet(\vv{P}^{-1})$  & 0.57 & 1.74&0.81 &1.16 &1.28\\
			%			%& $trace(\vv{P}^{-1})$ & 0.90 & 2.72 & 1.63& 1.16 &1.23\\
			%			%\hline
			& \multirow{2}{*}{sos ($2d=4, c=10$)}  
			& $logdet(\vv{P}^{-1})$ & 0.38 & 0.72 & 0.42 & 1.05 &0.63\\
			& & $trace(\vv{P}^{-1})$ & 0.51 & 0.78& 0.48 & 1.11 &0.71\\
			\cline{2-8}
			&	\multirow{2}{*}{sos ($2d=6, c=10$)}  
			& $logdet(\vv{P}^{-1})$ & 0.35 & 0.49& 0.34 &0.92& 0.41\\
			&	& $trace(\vv{P}^{-1})$ & 0.37 & 0.56 & 0.39 & 0.99&0.54\\
			\cline{2-8}
			%\hline
			%			%\multirow{2}{*}{SoS-ncvx (d=2, c=-100)}  
			%			%& $logdet(\vv{P}^{-1})$  & 0.57& 1.74 & 0.81 &1.15&1.28\\
			%			%& $trace(\vv{P}^{-1})$ & 0.87 & 2.63 & 1.63  & 1.16&1.25\\
			%			%\hline
			&\multirow{2}{*}{sos ($2d=4, c=100$)}  
			& $logdet(\vv{P}^{-1})$  & 0.36 & 0.64 & 0.39 &1.05& 0.46\\
			\rot{\rlap{~\ref{subsubsec:conv.control.ours}}}  & & $trace(\vv{P}^{-1})$ & 0.42 & 0.74  & 0.46 & 1.10 &0.54\\
			\cline{2-8}
			& \multirow{2}{*}{sos ($2d=6, c=100$)}  
			& $logdet(\vv{P}^{-1})$  & 0.21 & 0.21 & 0.26 & 0.82 &0.28\\
			& & $trace(\vv{P}^{-1})$ & 0.22 & 0.30 & 0.29 & 0.85 &0.37\\
			\hline
		\end{tabular}
	}
		\caption{ Comparison of the volume of various bounding bodies obtained from different techniques}
		\label{tab:bounding.volumes}
	\end{center}
\end{table*}

\begin{figure}[h]
	\centering
	\includegraphics[width=0.65\linewidth]{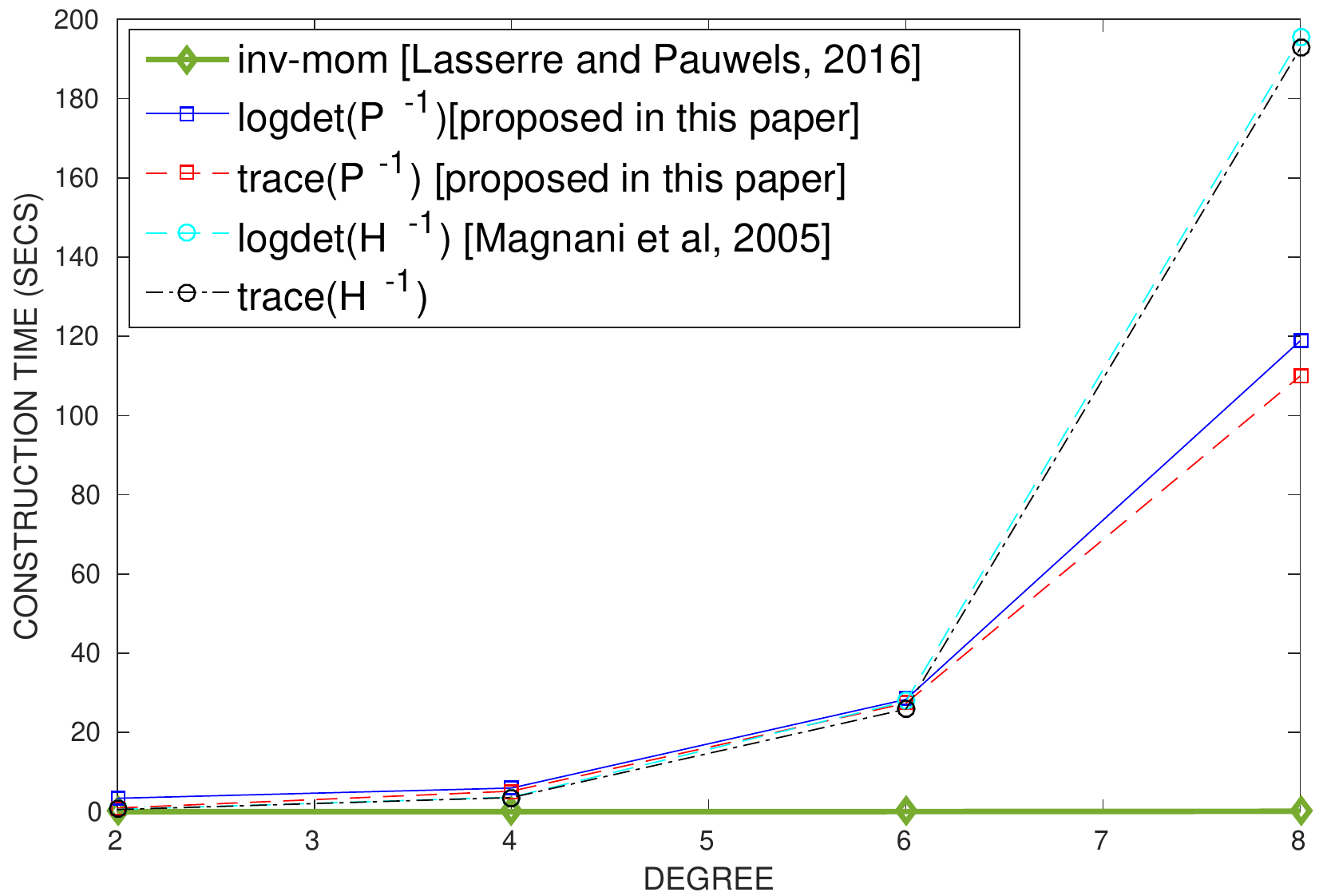}
	\caption{Bounding volume construction times}
	\label{fig:bv_construction_time}
\end{figure}

\section{Measures of separation and penetration}\label{sec:distance}

\subsection{Euclidean distance} \label{subsec:eucl.dist}
In this section, we are interested in computing the Euclidean distance between two basic semialgebraic sets $$\mathcal{S}_1 \mathcal{\mathop{:}}=\{x \in \mathbb{R}^n ~|~ g_1(x)\leq 1, \ldots, g_m \leq 1\},$$ and $$\mathcal{S}_2\mathcal{\mathop{:}}=\{x \in \mathbb{R}^n ~|~ h_1(x) \leq 1, \ldots, h_r \leq 1\}$$ (where $g_1,\ldots,g_m$ and $h_1,\ldots,h_r$ are polynomials). This can be written as the following polynomial optimization problem:
\begin{align}
&\min_{x \in \mathcal{S}_1, y \in \mathcal{S}_2} ||x-y||_2^2. \label{eq:distance}
\end{align}

We will tackle this problem by applying the sos hierarchy described at the end of Section \ref{sec:sos.convex}. This will take the form of the following hierarchy of semidefinite programs
\begin{equation} \label{eq:distance.lasserre}
\begin{aligned}
&\max_{\gamma \in \mathbb{R},\tau_i, \xi_j} \gamma\\
&||x-y||_2^2-\gamma-\sum_{i=1}^m \tau_i(x,y) (1-g_i(x))  \\
& \hspace{20mm} -\sum_{j=1}^r \xi_j(x,y)(1-h_j(y)) \text{ sos},\\
&\tau_i(x,y), ~\xi_j(x,y) \text{ sos }, \forall i,\forall j,
\end{aligned}
\end{equation}
where in the $d$-th level of the hierarchy, the degree of all polynomials $\tau_i$ and $\xi_j$ is upper bounded by $d$. Observe that the optimal value of each SDP produces a \emph{lower bound} on (\ref{eq:distance}) and that when $d$ increases, this lower bound can only improve.

Amazingly, in all examples we tried (independently of convexity of $\mathcal{S}_1$ and $\mathcal{S}_2$), the 0-th level of the hierarchy was already exact (though we were unable to prove this). By this we mean that the optimal value of (\ref{eq:distance.lasserre}) exactly matched that of (\ref{eq:distance}), already when the degree of the polynomials $\tau_i$ and $\xi_j$ was zero; i.e., when $\tau_i$ and $\xi_j$ were nonnegative scalars. An example of this phenomenon is given in Figure~\ref{fig:ex.distance.nonconvex} where the green bodies are each a (highly nonconvex) sublevel set of a quartic polynomial.

When our SDP relaxation is exact, we can recover the points $x^*$ and $y^*$ where the minimum distance between sets is achieved from the eigenvector corresponding to the zero eigenvalue of the Gram matrix associated with the first sos constraint in (\ref{eq:distance.lasserre}). This is what is done in Figure~\ref{fig:ex.distance.nonconvex}.

\begin{figure}[h] 
	\centering
	\includegraphics[scale=0.35]{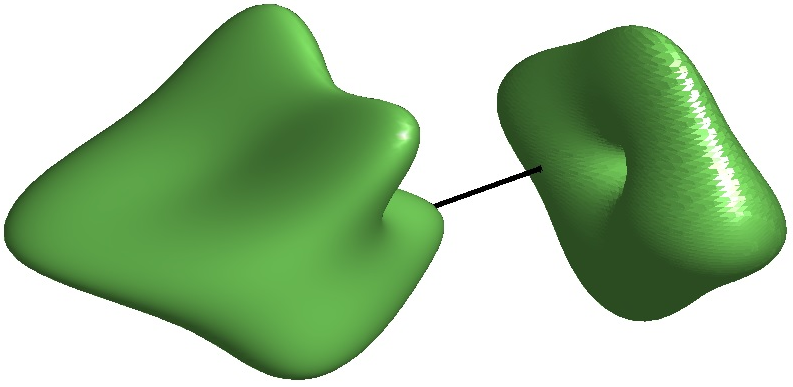}
	\caption{Minimum distance between two (nonconvex) sublevel sets of degree-4 polynomials}
	\label{fig:ex.distance.nonconvex}
\end{figure}

%\textbf{The sos-convex case.} In the particular case where all polynomials involved in the expressions of $S_1$ and $S_2$ are sos-convex, problem (\ref{eq:distance}) can be solved exactly using the first level of the sos hierarchy given in (\ref{eq:distance.lasserre}). In other words, if we take $\tau_i$ and $\xi_j$ to be polynomials of degree $0$ (i.e., constants) and the degree of $\sigma$ to be the maximum degree of all polynomials $\{||x-y||_2^2; ~1-g_i(x), ~i=1,\ldots,m; 1-h_j(y), j=1,\ldots,r\}$, then the optimal value of the above semidefinite program will provide the exact solution to problem (\ref{eq:distance}). This is a corollary of a more general result due to Lasserre \cite{lasserre2009convexity} to which our problem conforms.

\textbf{The sos-convex case.} One important special case where we know that the 0-th level of the sos hierarchy in (\ref{eq:distance.lasserre}) is \emph{guaranteed} to be exact is when the defining polynomials $g_i$ and $h_i$ of $\mathcal{S}_1$ and $\mathcal{S}_2$ are \emph{sos-convex}. This is a corollary of the fact that the 0-th level sos relaxation is known to be tight for the general polynomial optimization problem in (\ref{eq:basic.opt}) if the polynomials $p$ and $-g_i$ involved in the description of $K$ there are sos-convex; see~\cite{lasserre2009convexity}. An example of the computation of the minimum distance between two degree-6 sos-convex bodies enclosing human and chair 3D point clouds is given below, together with the points achieving the minimum distance.
\begin{figure}[h] 
	\centering
	\includegraphics[scale=0.35]{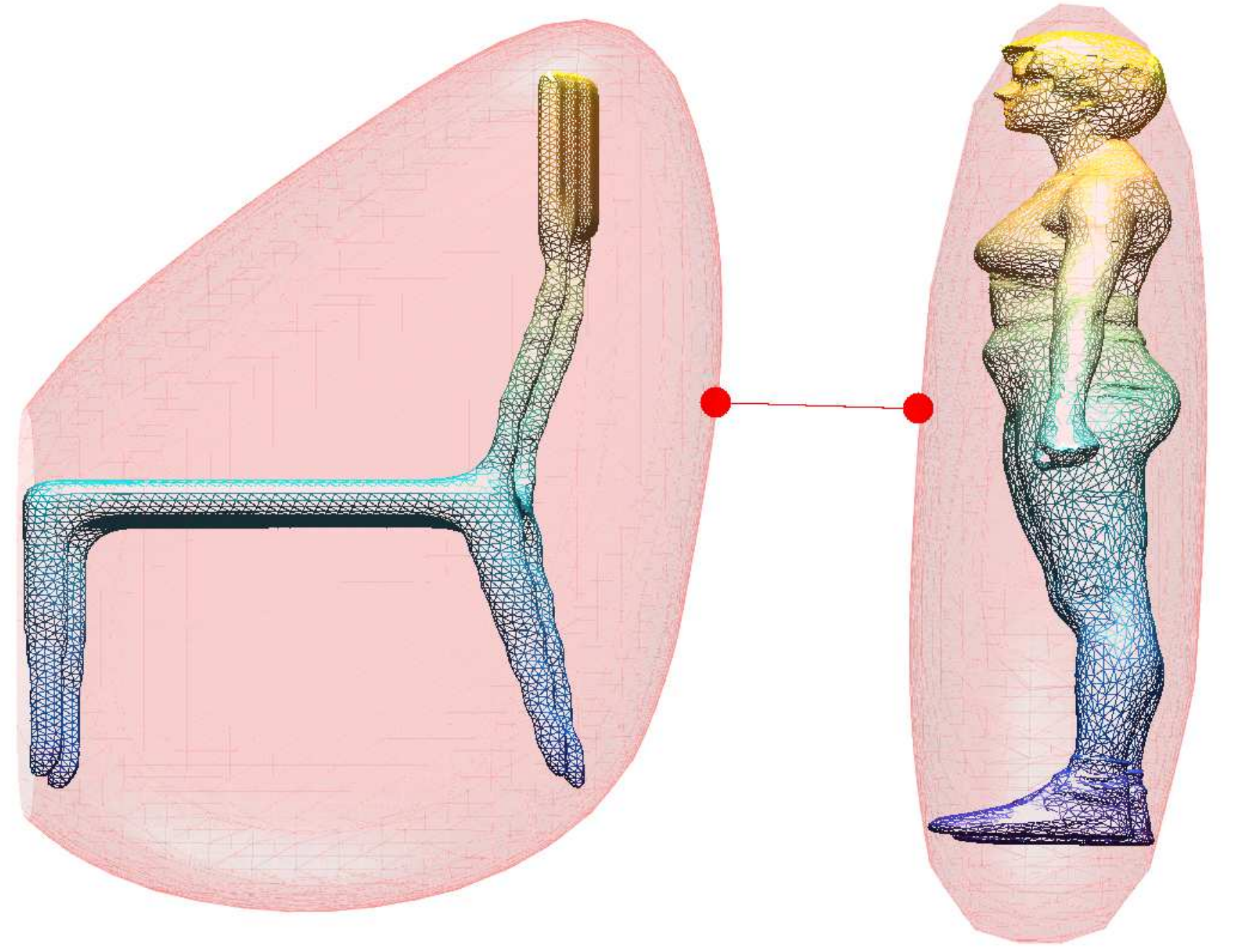}
	\caption{Minimum distance between two convex sublevel sets of degree-6 polynomials}
	\label{fig:ex.distance}
\end{figure}

Using MATLAB's fmincon active-set solver, the time required to compute the distance between two sos-convex bodies ranges from around 80 milliseconds to 340 milliseconds seconds as the degree is increased from $2$ to $8$; see Table~\ref{tab:distance.times}. We believe that the execution time can be improved by an order of magnitude with more efficient polynomial representations, warm starts for repeated queries, and reduced convergence tolerance for lower-precision results.
\begin{table}[H]
	\begin{center}
		\begin{tabular}{ccccc}
			\hline
			degree & 2 & 4 & 6 & 8\\ 
			time (secs) & 0.08 & 0.083 & 0.13 & 0.34 \\
			\hline
		\end{tabular}
		\caption{Euclidean distance query times for sos-convex sets.}
		\label{tab:distance.times}
	\end{center}
\end{table}

\subsection{Penetration measures for overlapping bodies}
\begin{figure*}[t] 
	\includegraphics[height=4cm,width=0.24\linewidth]{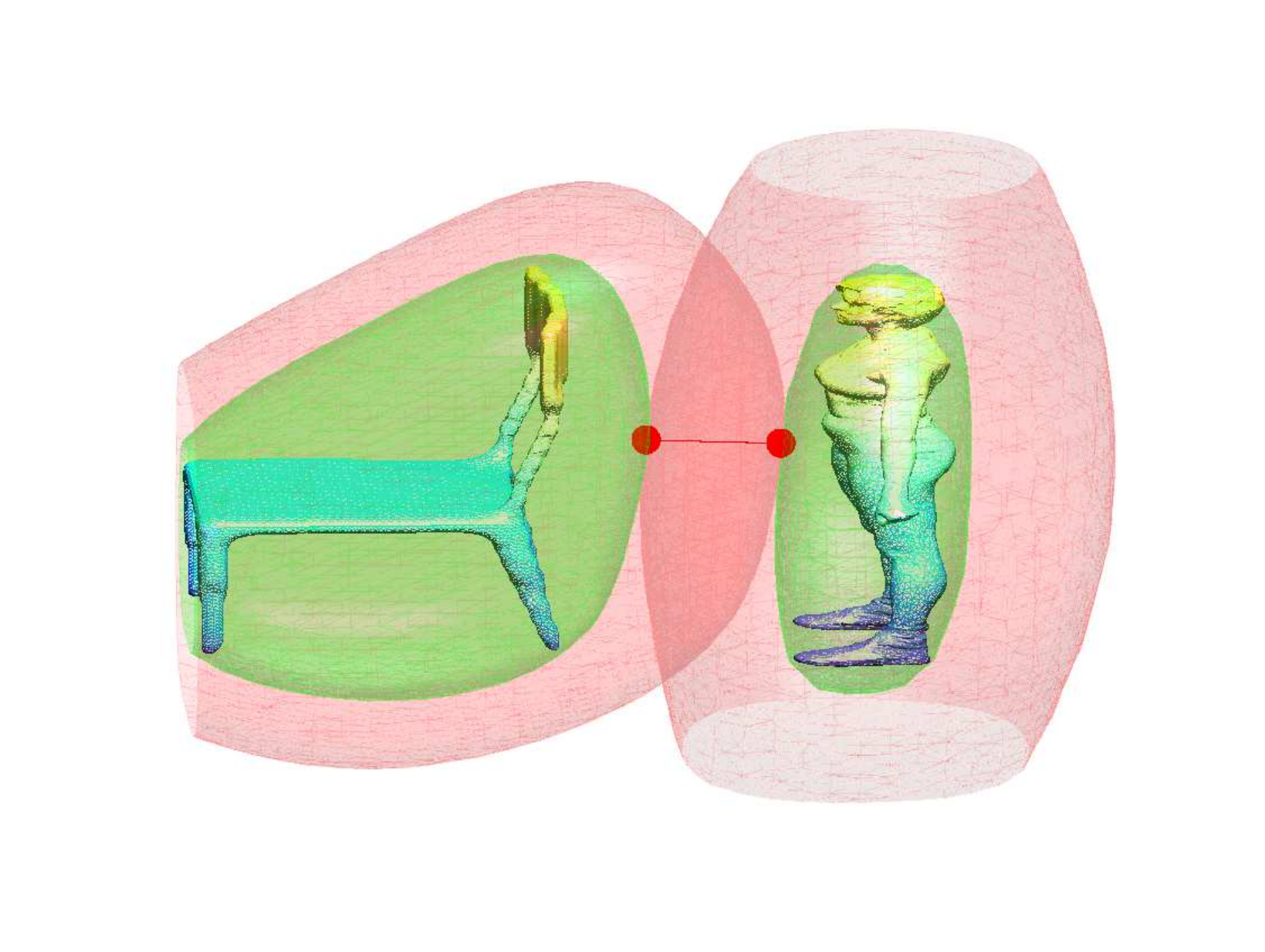}
	\includegraphics[height=4cm,width=0.24\linewidth]{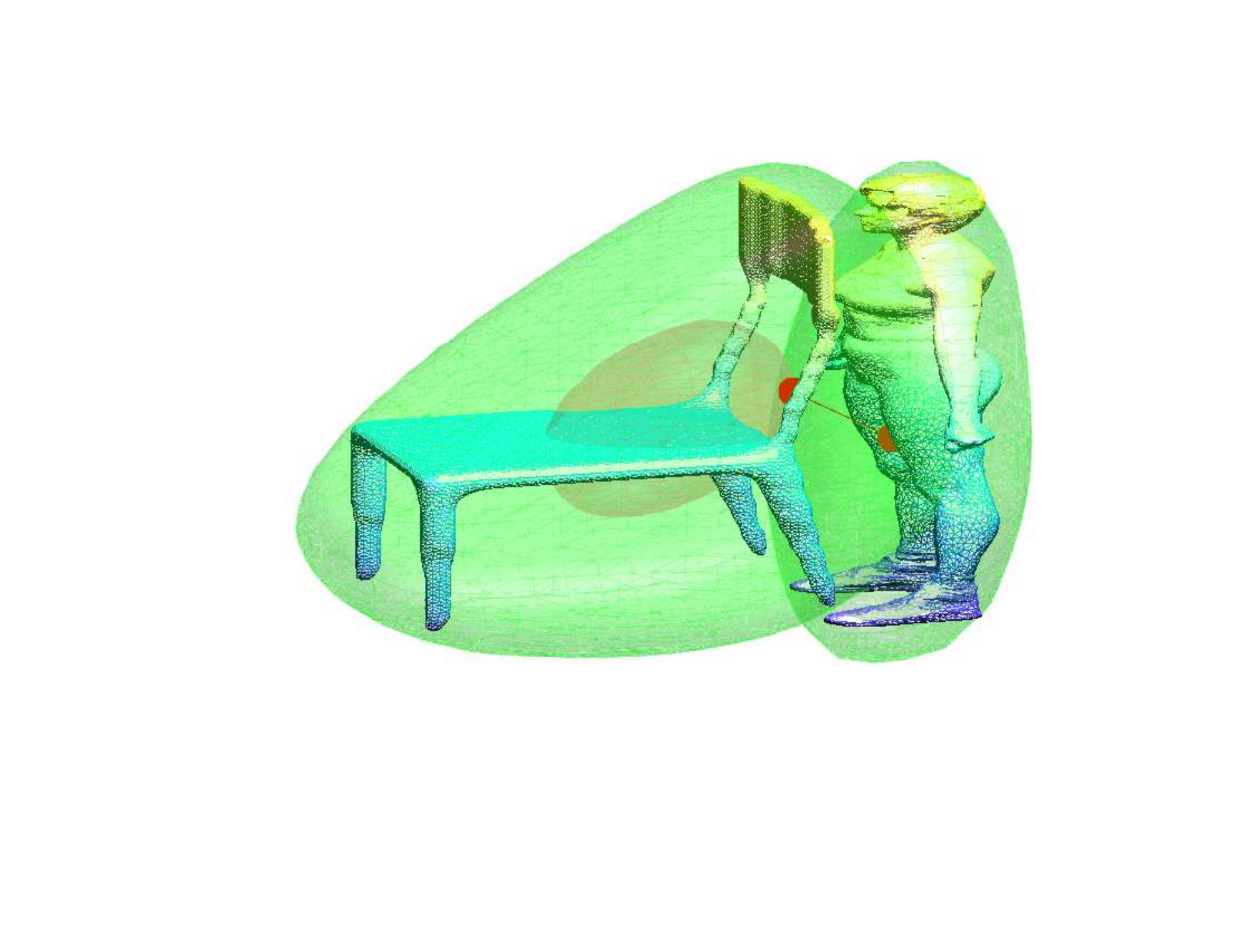}
	\includegraphics[height=4cm, width=0.24\linewidth]{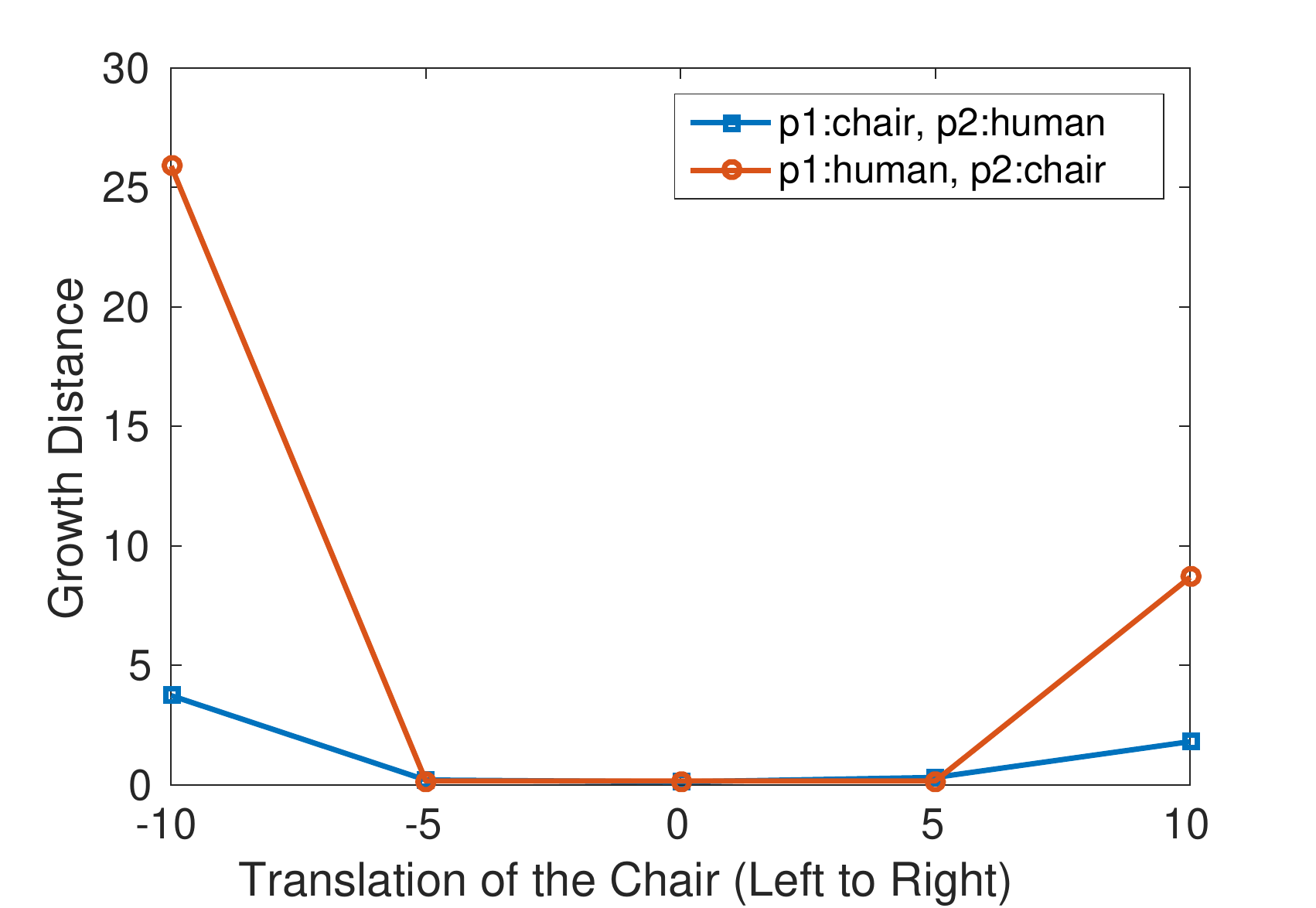}
	\includegraphics[height=4cm, width=0.24\linewidth]{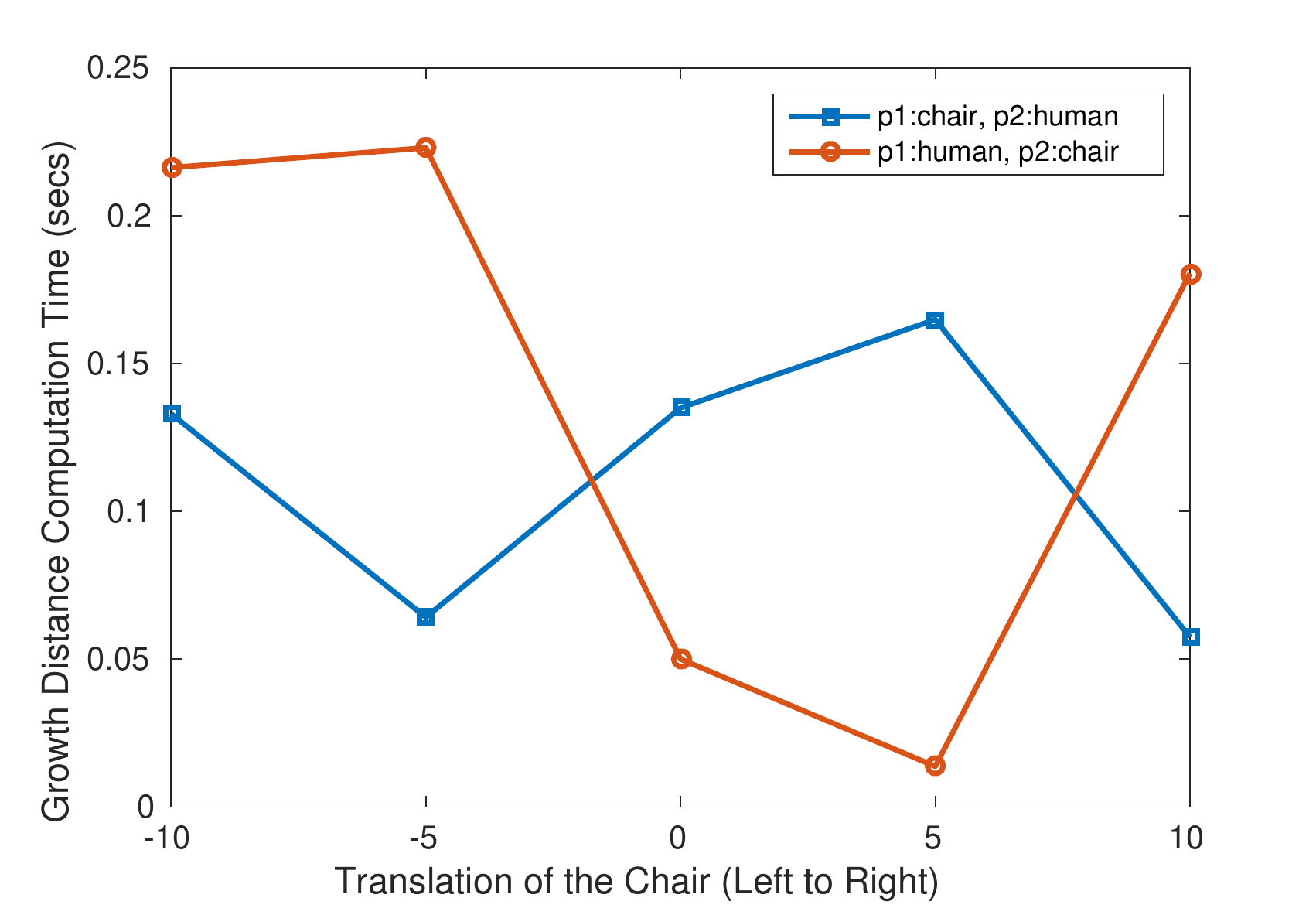}
	\caption{Growth distances for separated (left) or overlapping (second-left) sos-convex bodies; growth distance as a function of the position of the chair (second-right); time taken to solve (\ref{eq:overlap}) with warm-start (right)}
	\label{fig:penetration}
\end{figure*}
As another application of sos-convex polynomial optimization problems, we discuss a problem relevant to collision avoidance. Here, we assume that our two bodies $\mathcal{S}_1$, $\mathcal{S}_2$ are of the form $\mathcal{S}_1\mathrel{\mathop{:}}=\{x~|~ p_1(x)\leq 1\}$ and $\mathcal{S}_2\mathrel{\mathop{:}}=\{x~|~ p_2(x)\leq 1\},$ where $p_1,p_2$ are sos-convex. As shown in Figure \ref{fig:intro_pic} (right), by varying the sublevel value, we can grow or shrink the sos representation of an object. The following convex optimization problem, with optimal value denoted by $d(p_1||p_2)$, provides a measure of separation or penetration between the two bodies:
\begin{align}
&d(p_1||p_2) = \min p_1(x) \nonumber \\
&\text{s.t. } p_2(x) \leq 1. \label{eq:overlap}
\end{align} Note that the measure is asymmetric, i.e., $d(p_1||p_2)\neq d(p_2||p_1)$. It is clear that $$p_2(x) \leq 1 \Rightarrow p_1(x) \geq d(p_1||p_2).$$ In other words, the sets $\{x~|~ p_2(x) \leq 1\}$ and $\{x~|~ p_1(x) \leq d(p_1||p_2)\}$ do not overlap. As a consequence, the optimal value of (\ref{eq:overlap}) gives us a measure of how much we need to shrink the level set defined by $p_1$ to eventually move out of contact of the set $\mathcal{S}_2$ assuming that the ``seed point", i.e., the minimum of $p_1$, is outside ${\cal S}_2$. It is clear that,
\setlength{\parindent}{0em}
\begin{itemize}
	\item if $d(p_1||p_2) > 1$, the bounding volumes are separated.
	\item if $d(p_1||p_2) = 1$, the bounding volumes touch.
	\item if $d(p_1||p_2) < 1$, the bounding volumes overlap.
\end{itemize}

These measures are closely related to the notion of growth models and growth distances~\cite{GrowthDistance}. Note that similarly to what is described for the sos-convex case in Section \ref{subsec:eucl.dist},
%as a consequence of Theorem \ref{th:lasserre}, 
the optimal solution $d(p_1||p_2)$ to (\ref{eq:overlap}) can be computed exactly using semidefinite programming, or using a generic convex optimizer. The two leftmost subfigures of Figure~\ref{fig:penetration} show a chair and a human bounded by 1-sublevel sets of degree 6 sos-convex polynomials (in green). In both cases, we compute $d(p_1||p_2)$ and $d(p_2||p_1)$ and plot the corresponding minimizers. In the first subfigure, the level set of the chair needs to grow in order to touch the human and vice-versa, certifying separation. In the second subfigure, we translate the chair across the volume occupied by the human so that they overlap. In this case, the level sets need to contract. In the third subfigure, we plot the optimal value of the problem in (\ref{eq:overlap}) as the chair is translated from left to right, showing how the growth distances dip upon penetration and rise upon separation. The final subfigure shows the time taken to solve (\ref{eq:overlap}) when warm started from the previous solution. The time taken is of the order of 150 milliseconds without warm starts to 10 milliseconds with warm starts.

%\subsection{Separation and penetration under rigid body motion}
%Suppose $\{x ~|~ p({x})\leq 1\}$ is a minimum-volume sos-convex body enclosing a rigid 3D object. If the object is rotated by ${R}\in SO(3)$ and translated by ${t}\in \reals^3$, then the polynomial $p'({x}) = p({R}^T {x} - {R}^T {t})$ encloses the transformed object. This is because, if $p({x}) \leq 1$, then $p'({R}{x} + {t}) \leq 1$. For continuous motion, the optimization for Euclidean distance or sublevel-based separation/penetration distances can be warm started from the previous solution. The computation of the gradient of these measures %with respect to pose 
%using parametric convex optimization, and exploring the potential of this idea for motion planning is left for future work.

\section{Containment of polynomial sublevel sets} \label{sec:cont.poly.sub}

In this section, we show how the sum of squares machinery can be used in a straightforward manner to contain polynomial sublevel sets (as opposed to point clouds) with a convex polynomial level set. More specifically, we are interested in the following problem: Given a basic semialgebraic set 

\begin{equation}\label{eq:basic.semialgebraic.set.S}
\mathcal{S}\mathrel{\mathop:}=\{x\in\mathbb{R}^n| \ g_1(x)\leq 1,\ldots, g_m(x)\leq 1   \},
\end{equation}
find a convex polynomial $p$ of degree $2d$ such that 
\begin{equation}\label{eq:S.in.sublevelset.p}
\mathcal{S}\subseteq \{x\in\mathbb{R}^n|\ p(x)\leq 1\}.
\end{equation} 
Moreover, we typically want the unit sublevel set of $p$ to have small volume. Note that if we could address this question, then we could also handle a scenario where the unit sublevel set of $p$ is required to contain the union of several basic semialgebraic sets (simply by containing each set separately). For the 3D geometric problems under our consideration, we have two applications of this task in mind:

\begin{itemize}
	\item {\bf Convexification:} In some scenarios, one may have a nonconvex outer approximation of an obstacle (e.g., obtained by the computationally inexpensive inverse moment approach of Lasserre and Pauwels as described in Section \ref{subsec:nonconvex}) and be interested in containing it with a convex set. This would e.g. make the problem of computing distances among obstacles more tractable; cf. Section \ref{sec:distance}. 
	
	\item {\bf Grouping multiple obstacles:} For various navigational tasks involving autonomous agents, one may want to have a mapping of the obstacles in the environment in varying levels of resolution. A relevant problem here is therefore to group obstacles: this would lead to the problem of containing several polynomial sublevel sets with one.
\end{itemize}

In order to solve the problem laid out above, we propose the following sos program:
\begin{align}
&\min_{p \in \mathbb{R}_{2d}[x],\tau_i \in \mathbb{R}_{2\hat{d}}[x],P \in S^{N \times N}} -\log \det(P) \nonumber\\
\text{s.t. } &p(x)=z(x)^TPz(x),  P\succeq 0, \nonumber\\
&p(x) \quad \text{sos-convex,} \label{eq:p.sosconvex}\\
& 1-p(x)-\sum_{i=1}^m \tau_i(x)(1-g_i(x)) \quad \text{sos,} \label{eq:s.procedure}\\ 
& \tau_i(x) \quad \text{sos,} \label{eq:tau.sos}\quad i=1,\ldots,m.
\end{align}

%$\tilde{N}=\binom{n+d-1}{d-1}$

It is straightforward to see that constraints (\ref{eq:s.procedure}) and (\ref{eq:tau.sos}) imply the required set containment criterion in (\ref{eq:S.in.sublevelset.p}). As usual, the constraint in (\ref{eq:p.sosconvex}) ensures convexity of the unit sublevel set of $p$. The objective function attempts to minimize the volume of this set. A natural choice for the degree $2\hat{d}$ of the polynomials $\tau_i$ is $2\hat{d}=2d-\min_i deg(g_i)$, though better results can be obtained by increasing this parameter.

An analoguous problem is discussed in recent work by Dabbene, Henrion, and Lagoa~\cite{Henrion1, Henrion}. In the paper, the authors want to find a polynomial $p$ of degree $d$ whose 1-superlevel set $\{x~|~ p(x)\geq 1\}$ contains a semialgebraic set $\mathcal{S}$ and has minimum volume. Assuming that one is given a set $B$ containing $\mathcal{S}$ and over which the integrals of polynomials can be efficiently computed, their method involves searching for a polynomial $p$ of degree $d$ which minimizes $\int_B p(x)dx$ while respecting the constraints $p(x)\geq 1$ on $\mathcal{S}$ and $p(x) \geq 0$ on $B$. Note that the objective is linear in the coefficients of $p$ and that these last two nonnegativity conditions can be made computationally tractable by using the sum of squares relaxation. The advantage of such a formulation lies in the fact that when the degree of the polynomial $p$ increases, the objective value of the problem converges to the true volume of the set $\mathcal{S}$.

%However, when the degree of the polynomial is fixed, one is not guaranteed to obtain the minimum-volume sub/super-level set (among all sub/super-level sets generated by polynomials of that degree) enclosing $\mathcal{S}$. In the quadratic case for example, this method may not return the minimum-volume ellipsoid containing $\mathcal{S}$ whereas our approach will.}

\textbf{Example.} In Figure~\ref{fig:mult.cont}, we have drawn in black three random ellipsoids and a degree-4 convex polynomial sublevel set (in yellow) containing the ellipsoids. This degree-4 polynomial was the output of the optimization problem described above where the sos multipliers $\tau_i(x)$ were chosen to have degree $2$.
\begin{figure}[ht]
	\begin{center}
		\includegraphics[scale=0.5]{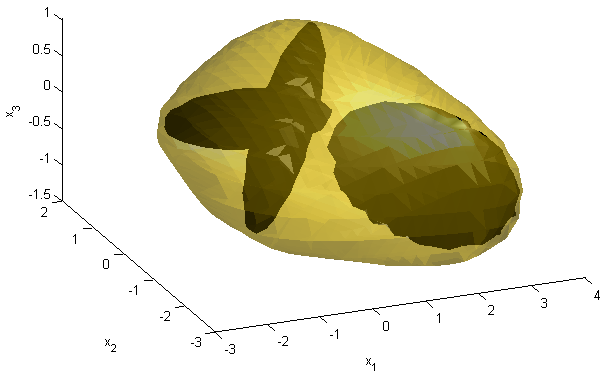}
		\caption{Containment of 3 ellipsoids using a sublevel set of a convex degree-4 polynomial}
		\label{fig:mult.cont}
	\end{center}
\end{figure}

We end by noting that the formulation proposed here is backed up theoretically by the following converse result.

\begin{theorem}
	Suppose the set $\mathcal{S}$ in (\ref{eq:basic.semialgebraic.set.S}) is Archimedean
	%\footnote{See, e.g., \cite{laurent2009sums} for a formal definition. This is a mild assumption that can always be met, e.g., if the set $\mathcal{S}$ is compact and the radius of a ball containing it is known.}
	and that $\mathcal{S}\subset \{x\in\mathbb{R}^n|\ p(x)\leq 1\}.$ Then there exists an integer $\hat{d}$ and sum of squares polynomials $\tau_1,\ldots,\tau_m$ of degree at most $\hat{d}$ such that 
	\begin{equation}
	1-p(x)-\sum_{i=1}^m \tau_i(x)(1-g_i(x)) 
	\end{equation}
	is a sum of squares.
\end{theorem}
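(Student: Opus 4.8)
The plan is to recognize the claimed identity as a direct instance of Putinar's Positivstellensatz (Theorem~\ref{th:putinar}). Writing $\tilde g_i(x) := 1 - g_i(x)$, the set $\mathcal S$ in (\ref{eq:basic.semialgebraic.set.S}) is exactly $\{x : \tilde g_i(x) \ge 0,\ i = 1,\ldots,m\}$, and the conclusion we must establish is that $1 - p$ lies in the quadratic module generated by $\tilde g_1,\ldots,\tilde g_m$: indeed, ``$1 - p - \sum_i \tau_i(1-g_i) = \sigma_0$ is a sum of squares'' is literally the statement $1 - p = \sigma_0 + \sum_i \tau_i \tilde g_i$ with $\sigma_0, \tau_i$ sos, after which we just set $\hat d := \max_i \deg(\tau_i)$. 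So the whole proof reduces to verifying the two hypotheses of Theorem~\ref{th:putinar} for the polynomial $q := 1-p$ on the set $\mathcal S$ presented by the $\tilde g_i$.

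First I would dispose of the Archimedean hypothesis: by definition, ``$\mathcal S$ is Archimedean'' means precisely that the family $\{\tilde g_1,\ldots,\tilde g_m\}$ satisfies the Archimedean condition in the statement of Theorem~\ref{th:putinar}, i.e. there exist $N \in \mathbb N$ and sos polynomials $\sigma_0,\ldots,\sigma_m$ with $N - \sum_i x_i^2 = \sigma_0 + \sum_i \sigma_i \tilde g_i$; so nothing is needed here. Next comes the positivity hypothesis, which requires $q = 1-p > 0$ on $\mathcal S$. The containment $\mathcal S \subseteq \{x : p(x) \le 1\}$ only gives $q \ge 0$ on $\mathcal S$. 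On the other hand, the Archimedean identity forces $\sum_i x_i^2 \le N$ on $\mathcal S$, so $\mathcal S$ is bounded, and being also closed it is compact; hence $p$ attains a maximum $p^\star \le 1$ on $\mathcal S$. When $p^\star < 1$ we have $q \ge 1 - p^\star > 0$ on $\mathcal S$, so Theorem~\ref{th:putinar} applies verbatim and produces $1 - p = \sigma_0 + \sum_i \tau_i (1-g_i)$ with $\sigma_0,\tau_i$ sos, completing the proof.

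The remaining case, $p^\star = 1$, is where I expect the real difficulty. Here $\mathcal S$ touches the boundary of the unit sublevel set of $p$, so $q = 1-p$ is only nonnegative on $\mathcal S$ rather than strictly positive, Putinar's theorem does not apply directly, and membership in a quadratic module can genuinely fail for polynomials that are merely nonnegative on an Archimedean set. Two routes are natural. (i) Read the hypothesis as strict containment $\mathcal S \subseteq \{x : p(x) < 1\}$ — the reading that makes the statement a clean converse to the SOS program (\ref{eq:s.procedure})--(\ref{eq:tau.sos}), since any $p$ produced there with slack in the containment constraint satisfies it — which reduces everything to the case $p^\star < 1$ treated above. (ii) Attempt a perturbation argument, applying Theorem~\ref{th:putinar} to $1 - p + \epsilon > 0$ on $\mathcal S$ for each $\epsilon > 0$ and passing to the limit; the obstruction is that the degrees of the sos multipliers returned by Putinar are not uniformly bounded as $\epsilon \to 0$, so one cannot simply take $\epsilon = 0$ without either an a priori degree bound or an extra structural assumption on $p$ (e.g. convexity, which holds in the intended application). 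I would therefore carry out the proof under strict containment and flag this boundary subtlety explicitly.
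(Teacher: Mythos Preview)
Your approach is exactly what the paper intends: its entire proof reads ``The proof follows from a standard application of Putinar's Positivstellensatz and is omitted,'' and you have carried out precisely that application. Your observation about the boundary case $p^\star = 1$ is a genuine subtlety the paper's one-line proof does not address; the shift from $\subseteq$ in the setup (\ref{eq:S.in.sublevelset.p}) to $\subset$ in the theorem hypothesis is consistent with your reading (i), and in any case the intended use is as a converse to the feasibility of the SOS program (\ref{eq:p.sosconvex})--(\ref{eq:tau.sos}), where strict containment is the relevant regime.
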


\begin{proof}
	The proof follows from a standard application of Putinar's Positivstellensatz \cite{putinar1993positive} and is omitted.
\end{proof}

\chapter{Nonnegative polynomials and shape-constrained regression}\label{ch:mihaela}

Unlike the other chapters in this thesis, the paper on which this chapter is based is still in preparation. We recommend that future readers read the submitted version of this chapter if it is available at the time of reading.

\section{Introduction}

Regression is a key problem in statistics and machine learning. Its goal is to estimate relationships between an \emph{explained variable} (e.g., the price of a second-hand car) and a \emph{vector of explanatory variables} (e.g., the make, brand, mileage, power, or age of this car). In many applications, one can observe a monotonous dependency between the explained variable and the explanatory variables. Examples arise in many different areas, including medicine, e.g., loss of hippocampus gray matter with respect to age \cite{hippo} or survival rate with respect to white blood cell count in patients fighting leukemia \cite{leukemia}; biology and environmental engineering, e.g., frequency of occurrence of a specific plant as a function of environment pollution~\cite{plants}; electrical and computer engineering, e.g., failure rate of software as a function of number of bugs \cite{software_failure}; economics, e.g., production output of a competitive firm as a function of its inputs, \cite{pricing}; and civil engineering, e.g., total shaded area on the floor of a room as a function of length of a blind over the window in that room \cite{shade}, to name a few.

In addition or in parallel to monotonicity, one may also wish to impose convexity or concavity constraints on the regressor. Examples where such a need arises are given, e.g., in \cite{xu2016faithful}. They include geometric programming \cite{BoydBook}, computed tomography \cite{prince1990reconstructing}, target reconstruction \cite{lele1992convex}, circuit design \cite{hannah2012ensemble}, queuing theory \cite{chen2016generalized}, and utility function estimation in economics \cite{meyer1968consistent}.

In the following, we refer to the problem of fitting a convex or monotonous regressor to data as \emph{shape-constrained regression}. As evidenced above, this problem appears ubiquitously in applications and has consequently been widely studied. We review prior literature on both monotone and convex regression below. We focus on \emph{polynomial} regression as this will be the subject of interest throughout this chapter.

\paragraph{Prior work on monotone regression.} Past work on monotonically-constrained polynomial regression has by and large focused on univariate polynomials. Methods that enforce monotonicity include specific parametrizations of polynomial families (see \cite{elphinstone1983target} and \cite{mckay2011variable}) or iterative algorithms that leverage geometric properties of univariate polynomials (in \cite{hawkins1994fitting} for example, the derivative of the polynomial is constrained to be zero at inflection points). Extensions to multivariate polynomials involve adding univariate polynomials together to get a (separable) multivariate polynomial, which ignores interactions between explanatory variables (see \cite{mckay2011variable}). Furthermore, all the methods considered in this paragraph impose monotonicity of the regressor globally, as opposed to over a given set, which may be too restrictive.

Another way of obtaining monotonous (but not necessarily polynomial) predictive models is via the use of artificial neural networks (ANNs). The easiest way to guarantee that an ANN outputs an increasing function with respect to all features is to keep the edge weights in the neural net nonnegative, see \cite{wang1994neural,kay2000estimating,dugas2001incorporating,dugas2009incorporating,zhang1999feedforward}. However, it has been shown in \cite{daniels2010monotone} that in order for a neural network with nonnegative weights to approximate any monotonically increasing function in $n$ features arbitrarily well, the ANN must have $n$ fully connected hidden layers, which can lead to computational limitations and requires a large training dataset.

\emph{Interpolated look-up tables} are another popular approach to monotone regression (see, e.g., \cite{gupta2016monotonic}). Here, the feature space is discretized into different cells, and each point in the feature space $x$ is associated to a vector of linear interpolation weights $\phi(x)$, which reflects the distance of $x$ to each vertex of the specific cell it belongs to. The function we wish to learn is then given by a linear combination of $\phi(x)$, i.e., $f(x)=\theta^T\phi(x)$, and the parameter $\theta$ is obtained by solving $\min_{\theta} l(y_i, \theta^T\phi(x_i))$, where $l$ is a convex loss function. If the entries of $\theta$ satisfy some pairwise constraints, then the function $f$ is guaranteed to be monotonous. We remark that in this approach, the size of $\theta$, and so the number of variables, is exponential in the number of features.

Finally, we mention two other research directions which also involve breaking down the feature domain into smaller subsets. These are \emph{regression trees} and \emph{isotonic regression}. In the first, the feature domain is recursively partitioned into smaller subdomains, where interactions between features are more manageable. On each subdomain, a fit to the data is computed, and to obtain a function over the whole domain, the subdomain fits are aggregated, via, e.g., gradient boosting; see \cite{breiman1984classification,freund1999short,ridgeway1999state,friedman2001greedy}. To obtain monotone regressors, one enforces monotonicity on each subregion, as aggregation maintains this structural property \cite{chipman2016high,hofner2011boosting}. In the second method, a piecewise constant function $f$ is fitted to the data in such a way that $f(x_i)\leq f(x_j)$ if $x_i$ and $x_j$ are breakpoints of the function and $x_i \succeq x_j$, where $\succeq$ is some partial or total ordering. Both of these methods present some computational challenges in the sense that, much like interpolated look-up tables, they scale poorly in the number of features. In the case of the second method, the function produced also lacks some desirable analytic properties, such as smoothness and differentiability.

\paragraph{Prior work on convex regression.} The work by Magnani, Lall, and Boyd in \cite{Magnani} is the closest to what is presented in this chapter. Similarly to what is done here, a sum of squares approach to impose convexity of their polynomial regressor is used in that reference. However, contrarily to us, convexity is imposed globally, and not locally. Furthermore, our focus in this chapter is on approximation results and computational complexity analysis, which is not a focus of their work. Other methods for computationally efficient convex regression involve fitting a piecewise linear model to data. This is done, e.g., in \cite{hannah2013multivariate,magnani2009convex}. Other related work in the area consider convex regression from a more statistical viewpoint. The reference in \cite{guntuboyina2015global} for example, studies maximum likelihood estimation for univariate convex regression whereas \cite{seijo2011nonparametric}, \cite{lim2012consistency}, and more recently \cite{mazumder2017computational} study the multivariate case. In particular, the first two papers show consistency of the maximum likelihood estimator whereas the latter paper provides a more efficient and scalable framework for its computation.

\subsection{Outline}
The outline of the chapter is as follows. In Section \ref{sec:problem.formulation}, we specify our problem formulation in more detail. In particular, we define the notion of monotonicity profile (which encodes how the polynomial regressor varies depending on each variable) in Definition \ref{def:mon.prof}. In Section \ref{sec:np.hard}, we show that both the problem of testing whether a polynomial has a certain monotonicity profile over a box and the problem of testing whether a polynomial is convex over a box are NP-hard already for cubic polynomials (Theorems \ref{th:mon.np.hard} and \ref{th:np.hardness.convex}). This motivates our semidefinite programming-based relaxations for fitting a polynomial that is constrained to be monotone or convex to data. These are presented in Section \ref{sec:sdp}. Among other things, we show that any monotone (resp. convex) function can be approximated to arbitrary accuracy by monotone (resp. convex) polynomials, with sum of squares certificates of these properties (Theorems \ref{th:mono.existence} and \ref{th:convex.existence}). In Section \ref{sec:experiments}, we show how our methods perform on synthetic regression problems as well as real-world problems (namely predicting interest rates for personal loans and predicting weekly wages). In particular, we show that in both real-world problems, the shape-constrained regressor provides a lower root mean squared error on testing data than the unconstrained regressor. 

\subsection{Notation}
We briefly introduce some notation that will be used in the rest of the chapter. A matrix $M$ is said to be positive semidefinite (psd) if $x^TMx \geq 0$ for all $x \in \mathbb{R}^n.$ We write $M \succeq 0$ to signify that $M$ is psd. We will denote by $\lambda_{\max}(M)$ (resp. $\lambda_{\min}(M)$) the largest (resp. smallest) eigenvalue of $M$. Given positive integers $m$ and $n$, we let $0_{m \times n}$ and $1_{m \times n}$ be the matrices of dimension $m \times n$ which contain respectively all zeros, or all ones. We will write $I$ for the identity matrix and $e_j$ for the $j^{th}$ basis vector, i.e., a vector in $\mathbb{R}^n$ of all zeros, except for the $j^{th}$ component which is equal to 1. Finally, we denote the Hessian matrix of a twice continuously differentiable function $f$ by $H_f$.

\section{Problem formulation}\label{sec:problem.formulation}

In this chapter, we consider the problem of \emph{polynomial regression}, i.e., the problem of fitting a polynomial function $p:\mathbb{R}^n \rightarrow \mathbb{R}$ to data points $(x_i,y_i)$, $i=1,\ldots,m$. Here, $x_i$ is a vector in $\mathbb{R}^n$, often called the \emph{feature vector} or \emph{vector of explanatory variables}, and $y_i$ is a scalar corresponding to the response. To obtain our regressor $p$, we fix its degree and search for its coefficients such that $p$ minimizes some convex loss function. This could be, e.g., the \emph{least squares error}, $$\min_{p} \sum_{i=1}^m (y_i-p(x_i))^2,$$ or, the \emph{least absolute deviation error}, $$\min_{p} \sum_{i=1}^m |y_i-p(x_i)|.$$
In our setting, we would additionally like to add shape constraints to our regressor, such as monotonicity or convexity. More specifically, we consider the model we outline next. We assume that $y_i$ is a measurement of an underlying unknown (not necessarily polynomial) function $f:\mathbb{R}^n \mapsto \mathbb{R}$ at point $x_i$ corrupted by some noise $\epsilon_i$. In other words, we have
\begin{align}\label{eq:f.noise}
y_i=f(x_i)+\epsilon_i,~\forall i=1,\ldots,m.
\end{align}
We further assume that we possess prior knowledge regarding the shape of $f$, e.g., increasing in variable $j$ or convex over a certain region. We would then like our regressor $p$ to have the same attributes. This is a very natural problem when considering applications such as those discussed in the introduction of this chapter.

Throughout the chapter, we assume that our feature vectors $x_i$ belong to a \emph{box}
\begin{align}\label{def:box}
B\mathrel{\mathop{:}}=[b_1^-,b_1^+]\times \cdots \times [b_n^-,b_n^+],
\end{align} 
where $b_1^-,b_1^+,\ldots,b_n^-,b_n^+$ are real numbers satisfying $b_i^-\leq b_i^+, \forall i=1,\ldots,n$. In practice, this is often, if not always, the case, as features are generally known to lie within certain ranges. We would like to mention nevertheless that the techniques presented in this chapter can be extended to any feature domain that is \emph{basic semialgebraic}, i.e., defined by a finite number of polynomial equalities and inequalities. The shape constraints we define next are assumed to hold over this box: they are, respectively, monotonicity over $B$ with respect to a feature and convexity over $B$.

\begin{definition}[Monotonicity over a box with respect to a variable]
	We say that a function $f:\mathbb{R}^n \rightarrow \mathbb{R}$ is monotonically increasing\footnote{Throughout this chapter, we will use the terminology \emph{increasing} (resp. decreasing) to describe a property which is perhaps more commonly referred to as \emph{nondecreasing} (resp. nonincreasing). This is to avoid potential confusion arising from the use of a negation.}  over a box $B$ with respect to a variable $x_j$ if $$f(x_1,\ldots,x_j,\ldots,x_n)\leq f(x_1,\ldots,y_j,\ldots,x_n)$$ for any fixed $(x_1,\ldots,x_j,\ldots,x_n)$, $(x_1,\ldots,y_j,\ldots, x_n) \in B$ with $x_j \leq y_j$.
	Similarly, we say that $f$ is monotonically decreasing over $B$ with respect to variable $x_j$ if $$f(x_1,\ldots,x_j,\ldots,x_n)\geq f(x_1,\ldots,y_j,\ldots,x_n)$$ for any fixed $(x_1,\ldots,x_j,\ldots,x_n)$, $(x_1,\ldots,y_j,\ldots, x_n) \in B$ with $x_j \leq y_j$.
\end{definition}

For differentiable functions, an equivalent definition of monotonicity with respect to a variable---and one we will use more frequently---is given below.

\begin{lemma}
	A differentiable function $f$ is monotonically increasing (resp. decreasing) over a box $B$ with respect to a variable $x_j$ if and only if $\frac{\partial f(x)}{\partial x_j}\geq 0$ (resp. $\frac{\partial f(x)}{\partial x_j} \leq 0$) for all $x \in B.$
\end{lemma}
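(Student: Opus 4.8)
This lemma is a completely standard fact from multivariable calculus, so the plan is to give a short, self-contained argument rather than invoke any of the heavier machinery developed earlier in the chapter. The key observation is that monotonicity with respect to the variable $x_j$ is really a \emph{one-dimensional} statement: it only concerns how $f$ varies along line segments parallel to the $x_j$-axis that remain inside $B$. So the strategy is to reduce to the classical characterization of monotonicity for differentiable functions of one real variable, namely that a differentiable function $g$ on an interval is increasing if and only if $g' \geq 0$ on that interval.

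First I would fix an arbitrary point $(x_1,\ldots,x_{j-1},x_{j+1},\ldots,x_n)$ such that the segment $\{(x_1,\ldots,t,\ldots,x_n) : t \in [b_j^-,b_j^+]\}$ lies in $B$ (by the product structure of $B$ in \eqref{def:box}, this is exactly the condition that each of the other coordinates lies in its own interval), and define the univariate function $g(t) \mathrel{\mathop{:}}= f(x_1,\ldots,t,\ldots,x_n)$ on $[b_j^-,b_j^+]$. Since $f$ is differentiable, $g$ is differentiable with $g'(t) = \frac{\partial f}{\partial x_j}(x_1,\ldots,t,\ldots,x_n)$. Now for the ``if'' direction: assuming $\frac{\partial f}{\partial x_j} \geq 0$ on all of $B$, we get $g'(t) \geq 0$ on $[b_j^-,b_j^+]$, hence $g$ is increasing on that interval by the mean value theorem (for $x_j \leq y_j$, $g(y_j)-g(x_j) = g'(\xi)(y_j-x_j) \geq 0$ for some $\xi$ between them); this is precisely $f(x_1,\ldots,x_j,\ldots,x_n) \leq f(x_1,\ldots,y_j,\ldots,x_n)$, and since the choice of the other coordinates was arbitrary, $f$ is monotonically increasing over $B$ with respect to $x_j$.

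For the ``only if'' direction, suppose $f$ is monotonically increasing over $B$ with respect to $x_j$, and fix any $x \in B$. If $x_j < b_j^+$, then for small $h>0$ the point $x + h e_j \in B$ (again using the box structure), and monotonicity gives $\frac{f(x+he_j)-f(x)}{h} \geq 0$; letting $h \to 0^+$ yields $\frac{\partial f}{\partial x_j}(x) \geq 0$. If instead $x_j = b_j^+$, the same argument with a left difference quotient ($h < 0$, $x + h e_j \in B$) gives the conclusion. Either way $\frac{\partial f}{\partial x_j}(x) \geq 0$, and since $x \in B$ was arbitrary we are done. The decreasing case is identical after replacing $\geq$ by $\leq$ throughout (or by applying the increasing case to $-f$).

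There is no real obstacle here; the only point requiring a small amount of care is making sure the one-dimensional reduction is legitimate, i.e., that the segments and the perturbed points $x \pm h e_j$ actually stay inside $B$ — this is where the assumption that the feature domain is a \emph{box} (a Cartesian product of intervals) is used, and it is what makes the partial-derivative characterization clean. For a general basic semialgebraic domain one would have to be more careful about which directions are admissible, but under \eqref{def:box} the argument above is complete.
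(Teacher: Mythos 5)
Your proof is correct, and it differs from the paper's in one of the two directions. For the ``only if'' direction (monotonicity implies sign of the partial derivative) you do essentially what the paper does: take a difference quotient along $e_j$ and pass to the limit; your version is in fact slightly more careful, since you explicitly handle points with $x_j=b_j^+$ via a left difference quotient, a boundary case the paper's argument glosses over (there the set of admissible $\epsilon>0$ is empty and the limit argument is vacuous). For the ``if'' direction, the paper writes $f(x+\epsilon e_j)-f(x)=\epsilon\int_0^1 \frac{\partial f}{\partial x_j}(x+\epsilon t e_j)\,dt$ and concludes from nonnegativity of the integrand, whereas you restrict $f$ to a line parallel to the $x_j$-axis and invoke the one-variable mean value theorem. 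Both are elementary, but your route has a small advantage in generality: the mean value theorem needs only differentiability of the restriction $g$, which is exactly what the lemma assumes, while the integral-remainder identity implicitly requires the partial derivative to be integrable along the segment (automatic for the $C^1$ or polynomial functions the paper actually uses, but not for a merely differentiable $f$). The paper's integral form, on the other hand, matches the style of computation it reuses elsewhere (e.g., in the approximation arguments), which is presumably why it was chosen. Either way, your reduction to the univariate case and your use of the product structure of $B$ to justify that the segments $x+he_j$ stay in $B$ are exactly the right points to make, and the proof is complete.
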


\begin{proof}
	We prove the increasing version of the theorem as the decreasing version is analogous. 
	Suppose that $f$ is monotonically increasing with respect to variable $x_j$. This implies that for any fixed $(x_1,\ldots, x_n) \in B$ and for any $\epsilon>0$ with $x_j+\epsilon e_j \in B$, we have $$f(x_1,\ldots,x_j,\ldots,x_n) \leq f(x_1,\ldots,x_j+\epsilon e_j,\ldots,x_n),$$ 
 which is equivalent to
	$$\frac{1}{\epsilon}\left(f(x_1,\ldots,x_j+\epsilon e_j,\ldots,x_n) - f(x_1,\ldots,x_j,\ldots,x_n)\right)\geq 0.$$ 
	By taking the limit as $\epsilon \rightarrow 0$, we obtain that $\frac{\partial f(x)}{\partial x_j} \geq 0$ for all $x \in B.$ 
	
	Suppose now that $\frac{\partial f(x)}{\partial x_j}\geq 0$ for all $x \in B.$ 
	%First note that for any point in $B$ of the form $(x_1,\ldots,x_{j-1},b_j^+,\ldots,x_n)$, we have $$f(x_1,\ldots,b_j^+,\ldots,x_n)\leq f(x_1,\ldots,y_j,\ldots,x_n)$$ for any $(x_1,\ldots,y_j,\ldots,x_n) \in B$ such that $b_j^+ \leq y_j$. Indeed, the only possible value $y_j$ can take is $b_j^+$ for which this inequality trivially holds. 
	Fix any point $x=(x_1,\ldots,x_n)\in B$. There exists $\epsilon\geq 0$ such that $x_\epsilon=x+\epsilon e_j\in B.$ By Taylor's formula with an integral remainder, we have
	$$f(x_{\epsilon})=f(x)+\int_{t=0}^1 \nabla f(x+t(x_\epsilon-x))^T (x_\epsilon-x)dt,$$ which is equivalent to $$f(x_{\epsilon})=f(x)+ \epsilon \int_{t=0}^1 \frac{ \partial f(x+\epsilon t e_j)}{\partial x_j} dt.$$
	Since $x_\epsilon \in B$ and $B$ is a box, $x+\epsilon t e_j \in B$ for any $t \in [0,1]$. Hence $$\frac{\partial f(x+\epsilon t e_j)}{\partial x_j} \geq 0, \forall t\in [0,1].$$ As we are integrating a nonnegative integrand, we get that $f(x_\epsilon)-f(x) \geq 0$ for any nonnegative $\epsilon$. This concludes our proof as $y_j \geq x_j$ implies that $y_j=x_j+\epsilon$ for some $\epsilon\geq0$.
\end{proof}

We now define a notion that encapsulates how a differentiable function varies with respect to each of its variables.

\begin{definition}[Monotonicity profile] \label{def:mon.prof}
	For any differentiable function $f$, its \emph{ monotonicity profile} over a box $B$ is a vector in $\mathbb{R}^n$ with entries defined as follows: 
	\begin{align*}
	\rho_j=\begin{cases} 1 \text{ if $f$ is monotonically increasing over $B$ with respect to $x_j$}\\ -1 \text{ if $f$ is monotonically decreasing over $B$ with respect to $x_j$}\\ 0 \text{ if there are no monotonicity requirements on $f$ with respect to $x_j$.}  \end{cases}
	\end{align*}
\end{definition}

When we assume that we have prior knowledge with respect to monotonicity of our underlying function $f$ in (\ref{eq:f.noise}), we in fact mean that we have access to the monotonicity profile of $f$. 

We now consider another type of shape constraint that we are interested in: convexity over a box.

\begin{definition}[Convexity over a box]
	We say that a function $f:\mathbb{R}^n \rightarrow \mathbb{R}$ is convex over a box $B$ if $$f(\lambda x+(1-\lambda)y) \leq \lambda f(x)+(1-\lambda) f(y), \forall x,y \in B, \forall \lambda \in (0,1).$$	
\end{definition}

\begin{proposition}
	A twice-differentiable function $f$ is convex over a box $B$ if and only if $H_f(x)\succeq 0$ for all $x \in B$.
\end{proposition}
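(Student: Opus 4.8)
The plan is to prove the standard equivalence between convexity of a twice-differentiable function over a convex set and positive semidefiniteness of its Hessian on that set, specialized here to the case where the set is the box $B$. The key structural fact I will exploit is that $B$ is convex, so the restriction of $f$ to any line segment lying in $B$ is a well-defined twice-differentiable univariate function, and I can reduce the multivariate statement to the familiar univariate characterization of convexity via the second derivative.

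First I would prove the ``if'' direction. Assume $H_f(x) \succeq 0$ for all $x \in B$. Fix $x, y \in B$ and $\lambda \in (0,1)$, and set $z = \lambda x + (1-\lambda) y$, which lies in $B$ by convexity of the box. Define the univariate function $g:[0,1] \to \mathbb{R}$ by $g(t) = f(y + t(x-y))$; since $y + t(x-y) \in B$ for all $t \in [0,1]$, $g$ is well-defined and twice differentiable, with $g''(t) = (x-y)^T H_f(y + t(x-y)) (x-y) \geq 0$ by the assumed positive semidefiniteness of the Hessian along the segment. A twice-differentiable univariate function with nonnegative second derivative on an interval is convex on that interval; applying this to $g$ gives $g(\lambda \cdot 1 + (1-\lambda)\cdot 0) \leq \lambda g(1) + (1-\lambda) g(0)$, i.e., $f(\lambda x + (1-\lambda) y) \leq \lambda f(x) + (1-\lambda) f(y)$, which is exactly convexity of $f$ over $B$.

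Next I would prove the ``only if'' direction by contraposition. Suppose $H_f(\bar x) \not\succeq 0$ for some $\bar x \in B$; I must produce a violation of the convexity inequality. By continuity of $H_f$ and of its eigenvalues, if $\bar x$ lies in the interior of $B$ there is a direction $v$ and a radius $\delta>0$ such that $v^T H_f(w) v < 0$ for all $w$ in a ball of radius $\delta$ about $\bar x$ contained in $B$; restricting $f$ to the segment $t \mapsto \bar x + t v$ for $|t| < \delta'$ (with $\delta'$ small enough that the segment stays in this ball and in $B$) yields a univariate function $g$ with $g'' < 0$ on its domain, hence strictly concave and non-convex, contradicting convexity of $f$ over $B$. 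The only subtlety is when $\bar x$ lies on the boundary of $B$; here I would note that $B$ has nonempty interior (since $b_i^- \le b_i^+$, and the degenerate case where some $b_i^- = b_i^+$ can be handled by passing to the affine subspace the box spans, or alternatively the statement is read over the relevant variables), so by continuity of $H_f$ one can find an interior point $\bar x'$ arbitrarily close to $\bar x$ with $H_f(\bar x') \not\succeq 0$, and then apply the interior argument at $\bar x'$.

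I do not anticipate a genuine obstacle here, as this is a classical result; the only care needed is (i) making the reduction-to-a-line argument clean by verifying that segments between points of $B$ stay in $B$ (which is just convexity of the box, immediate from its product-of-intervals structure), and (ii) handling the boundary case in the converse direction, which is dispatched by a continuity/perturbation argument. Accordingly I would keep the writeup short, citing the one-variable fact ``$g'' \ge 0$ on an interval $\iff g$ convex on that interval'' as standard, and emphasizing only the passage between the multivariate and univariate pictures.
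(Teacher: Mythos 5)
Your argument is the standard one, and it is essentially what the paper relies on: the paper gives no proof of this proposition, simply observing that it follows from the classical global second-order characterization of convexity (citing Theorem 22.5 of Chong--Zak). Your ``if'' direction---restricting to segments, which stay in $B$ by convexity of the box, and invoking the univariate second-derivative test---is exactly right, and you correctly identified that the boundary case of the converse needs separate treatment (at a corner of $B$ the inward directions only span an orthant, and nonnegativity of a quadratic form on an orthant does not imply positive semidefiniteness, so one cannot simply restrict to inward segments).

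The one wrinkle is that the proposition assumes $f$ is merely twice differentiable, which does not imply that $H_f$ is continuous, yet your contrapositive argument uses continuity of $H_f$ twice: to get a whole ball on which $v^T H_f(\cdot)\,v<0$ around an interior witness, and to push a boundary witness into the interior. Under a $C^2$ hypothesis your proof is complete (and that is the setting of the rest of the chapter), but to match the stated hypothesis one can avoid continuity altogether: at any $\bar x\in B$ and any direction $w$, choose $a=\bar x+\alpha$ and $b=\bar x+\beta$ in $B$ with $\alpha-\beta=tw$ (for each coordinate, put the displacement $t w_i$ on whichever side of $\bar x_i$ has room in $[b_i^-,b_i^+]$), apply midpoint convexity $f\bigl(\tfrac{a+b}{2}\bigr)\le\tfrac12 f(a)+\tfrac12 f(b)$, and expand all three terms to second order at $\bar x$ using the Peano form of Taylor's theorem, which is valid at a point of twice differentiability; the affine terms cancel and one is left with $\tfrac{t^2}{8}\,w^T H_f(\bar x)\,w+o(t^2)\ge 0$, hence $w^T H_f(\bar x)\,w\ge 0$ for every $w$. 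Finally, you are right to flag the degenerate case $b_i^-=b_i^+$: there the ``only if'' direction fails as literally stated (convexity over a lower-dimensional box says nothing about the Hessian in the collapsed directions), a caveat the paper glosses over.
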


The proof of this proposition readily follows from the proof of the analogous proposition for global convexity; see, e.g., Theorem 22.5 in \cite{chong.zak}.

%\begin{definition}[Convexity over a box]
%	A twice-differentiable function $f$ is said to be \emph{convex over a box $B\subseteq \mathbb{R}^n$} if $H_f(x) \succeq 0$ for all $x \in B.$
%\end{definition}
%Note that this notion can easily be extended to convexity over an arbitrary set $S$.

\section{Computational complexity results}\label{sec:np.hard}

As mentioned previously, we would like to optimize some convex loss function over the set of polynomial regressors constrained to be convex or monotonous over a box $B$. In this section, we show that, unless P=NP, one has no hope of doing this in a tractable fashion as even the problem of testing if a given polynomial has these properties is NP-hard.
\begin{theorem}\label{th:mon.np.hard}
	Given a cubic polynomial $p$, a box $B$, and a monotonicity profile $\rho$, it is NP-hard to test whether $p$ has profile $\rho$ over $B$.
\end{theorem}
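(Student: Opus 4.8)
The plan is to reduce from a known NP-hard problem. The natural candidate, given the structure of the excerpt and the fact that the paper explicitly mentions the prior result in \cite{ahmadi2013complete} that testing convexity of a quartic form is NP-hard, is to reduce from a problem about \emph{quadratic forms}. Testing whether a quadratic form $q(x) = x^T Q x$ is nonnegative on the nonnegative orthant — i.e. whether $Q$ is copositive — is strongly NP-hard (this is quoted earlier in the excerpt via Murty--Kabadi and de Klerk--Pasechnik). Copositivity is exactly a ``nonnegativity over a box-like region'' statement after a change of variables, so it is well suited to producing a hardness result for a \emph{local} (box-constrained) monotonicity test. First I would set up the reduction: given a symmetric $n \times n$ matrix $Q$, I want to build a cubic polynomial $p$, a box $B$, and a monotonicity profile $\rho$ such that $p$ has profile $\rho$ over $B$ if and only if $Q$ is copositive (equivalently, not copositive if and only if $p$ fails to have the profile).

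The key idea is to encode copositivity as a constraint on a single partial derivative. Introduce one extra variable, say $t$, and consider the polynomial
\[
p(x_1,\ldots,x_n,t) \;=\; t \cdot \sum_{i,j} Q_{ij}\, x_i x_j \;=\; t\, x^T Q x,
\]
which is cubic in the $n+1$ variables $(x,t)$. Then $\frac{\partial p}{\partial t} = x^T Q x$, which does not depend on $t$. Taking the box $B = [0,1]^n \times [0,1]$ (or $[0,1]^n \times [-1,1]$), the condition ``$p$ is monotonically increasing in $t$ over $B$'' is precisely ``$x^T Q x \ge 0$ for all $x \in [0,1]^n$'', which by homogeneity of $x^T Q x$ is equivalent to $x^T Q x \ge 0$ for all $x \ge 0$, i.e. copositivity of $Q$. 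So I would take $\rho$ to be the profile that requires $p$ to be increasing in $t$ and imposes no constraint on the $x_i$'s ($\rho_{n+1} = 1$, $\rho_j = 0$ for $j \le n$). This gives: $p$ has profile $\rho$ over $B$ $\iff$ $Q$ is copositive. Since building $p$, $B$, $\rho$ from $Q$ is clearly polynomial-time (and the coefficients of $p$ are exactly the entries of $Q$, preserving \emph{strong} NP-hardness from the strong NP-hardness of copositivity testing), the reduction is complete, and the theorem follows — in fact with a profile that uses only a single nonzero entry.

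I would then double-check a couple of technical points so the argument is airtight. First, I should verify the degree count: $t \cdot x^T Q x$ is genuinely degree $3$ (not lower) as long as $Q \neq 0$, and the case $Q = 0$ is trivially copositive, so it can be handled separately or simply excluded since it is not an interesting instance. Second, I should make sure the profile-membership definition (Definition \ref{def:mon.prof}) matches what the reduction needs: entries $\rho_j = 0$ mean ``no monotonicity requirement,'' so they impose nothing, and $\rho_{n+1} = 1$ is exactly the increasing-in-$t$ requirement; hence ``$p$ has profile $\rho$'' reduces exactly to the single derivative condition, with no spurious extra obligations. Third, I should confirm the equivalence ``copositive over $[0,1]^n$ $\iff$ copositive over $\mathbb{R}^n_{\ge 0}$'': this is immediate because $x^T Q x$ is homogeneous of degree $2$, so scaling any $x \ge 0$ into the unit box does not change the sign of $x^T Q x$.

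I do not anticipate a genuine ``hard part'' here in the sense of a deep obstacle — the construction is short once one has the right idea of multiplying a quadratic form by a fresh variable to turn copositivity into a partial-derivative sign condition. The main thing to be careful about is bookkeeping: making sure the reduction is stated cleanly (one extra variable, the box $[0,1]^{n+1}$, the profile $e_{n+1}$), that the polynomial is exactly cubic, and that strong NP-hardness is inherited (the numbers appearing in $p$ are literally the numbers appearing in $Q$, so there is no blow-up in bit-length). An alternative presentation, if one prefers to avoid introducing an extra variable, would be to reduce directly from testing nonnegativity of a cubic polynomial on a box, but the cleanest and most self-contained route is the copositivity reduction described above, and that is what I would write up.
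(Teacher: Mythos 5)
Your reduction is correct, but it takes a genuinely different route from the paper. You reduce from copositivity: given a symmetric matrix $Q$, you form the cubic $p(x,t)=t\,x^TQx$ in $n+1$ variables, the box $[0,1]^{n}\times[0,1]$, and the profile with a single nonzero entry requiring monotone increase in $t$; since $\partial p/\partial t=x^TQx$ is independent of $t$ and homogeneous of degree two, the profile holds over the box if and only if $Q$ is copositive, and the entries of $p$ are literally the entries of $Q$, so (strong) hardness transfers. The paper instead reduces directly from MAX-CUT: it builds a cubic in the original $n$ variables whose partial derivative with respect to $x_1$ equals $\tfrac14 x^T(A-\gamma I)x$ plus a constant, and exploits the fact that a concave quadratic over $[-1,1]^n$ attains its minimum at the vertices $\{-1,1\}^n$, so the derivative's nonnegativity over the box encodes the cut value. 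Your argument is shorter and more modular (it outsources all combinatorial work to the known hardness of copositivity, which this thesis itself invokes elsewhere), at the cost of one auxiliary variable; the paper's argument is self-contained from a canonical NP-hard problem, keeps the number of variables at $n$, and makes the "concave minimization over a box is attained at vertices" mechanism explicit. One small point of care: profile testing is a universally quantified property, so its hardness is really co-NP-type hardness, exactly as with copositivity membership; you are using the same (loose) convention the paper itself uses when it calls copositivity checking NP-hard, so your statement is consistent with the paper's, but it is worth being aware of the distinction. Also handle the degenerate instance $Q=0$ (where $p$ is not genuinely cubic) separately, as you note.
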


\begin{proof}
	We provide a reduction from the MAX-CUT problem, which is well known to be NP-hard~\cite{GareyJohnson_Book}. Consider an unweighted undirected graph $G=(V,E)$ with no self-loops. A cut in $G$ is a partition of the $n$ nodes of the graph into two sets, $S$ and $\bar{S}$. The size of the cut is the number of edges connecting a node in $S$ to a node in $\bar{S}$. MAX-CUT is the following decision problem: given a graph $G$ and an integer $k$, test whether $G$ has a cut of size at least $k$. We denote the adjacency matrix of the graph $G$ by $A$, i.e., $A$ is an $n \times n$ matrix such that $A_{ij}=1$ if $\{i,j\} \in E$ and $A_{ij}=0$ otherwise. We let $$\gamma\mathrel{\mathop{:}}=\max(0,\max_{i} \{A_{ii}+\sum_{j\neq i} |A_{ij}|\}).$$ Note that $\gamma$ is an integer and an upper bound on the largest eigenvalue of $A$ from Gershgorin's circle theorem \cite{gersh}.
	
	We will show that testing whether $G$ has a cut of size at least $k$ is equivalent to testing whether the polynomial 
	\begin{align*}
	p(x_1,\ldots,x_n)=&\frac{1}{4} \sum_{j=2}^n x_1^2A_{1j}x_j+\frac{1}{2}x_1\cdot (\sum_{1<i<j\leq n} x_i A_{ij}x_j)-\frac{\gamma}{12} x_1^3-\frac{\gamma}{4} x_1\sum_{i=2}^n x_i^2\\
	&+x_1 \cdot \left( k+\frac{n\gamma}{4}-\frac14 e^TAe\right)
	\end{align*}
	has monotonicity profile $\rho=(1,0,\ldots,0)^T$ over $B=[-1,1]^n$.
	
	First, note that $p$ has profile $\rho$ over $B$ if and only if $$\frac{\partial{p}(x)}{\partial x_1} \geq 0,~ \forall x\in B.$$ We have
	\begin{align*}
	\frac{\partial{p}(x)}{\partial x_1}&=\frac{1}{2}\sum_{j=2}^n x_1A_{1j}x_j+\frac{1}{2}\sum_{1<i<j \leq n} x_iA_{ij}x_j-\frac{\gamma}{4} x_1^2-\frac{\gamma}{4} \sum_{i=2}^n x_i^2 +(k+\frac{n\gamma}{4}-\frac14 e^TAe)\\
	&=\frac{1}{4}\sum_{i,j} x_iA_{ij}x_j-\frac{\gamma}{4} \sum_{i=1}^n x_i^2+(k+\frac{n\gamma}{4}-\frac14 e^TAe)\\
	&=\frac{1}{4} x^T(A-\gamma I)x+(k+\frac{n\gamma}{4}-\frac14 e^TAe).
	\end{align*}
	Hence, testing whether $p$ has profile $\rho$ over $B$ is equivalent to testing whether the optimal value of the quadratic program 
	\begin{equation}\label{eq:testing.profile}
	\begin{aligned}
	&\min_{x\in \mathbb{R}^n} &&\frac{1}{4}x^T(A-\gamma I)x\\
	&\text{s.t. } &&x \in [-1,1]^n
	\end{aligned}
	\end{equation}
	is greater or equal to $\frac{1}{4}e^TAe-k-\frac{n\gamma}{4}$. As $\gamma$ is an upperbound on the maximum eigenvalue of $A$, we have $A-\gamma I \preceq 0$, which implies that $x^T(A-\gamma I)x$ is a concave function. It is straightforward to show (see, e.g., \cite[Property 12]{bensonconcave}) that the minimum of a concave function over a compact set is attained at an extreme point of the set. As a consequence, one can rewrite (\ref{eq:testing.profile}) as
	\begin{equation*}
	\begin{aligned}
	&\min_{x\in \mathbb{R}^n} &&\frac{1}{4}x^T(A-\gamma I)x\\
	&\text{s.t. } &&x_i \in \{-1,1\}, \forall i=1,\ldots,n.
	\end{aligned}
	\end{equation*}
	As $x^Tx=n$ when $x\in \{-1,1\}^n$, testing whether $p$ has profile $\rho$ over $B$ is in fact equivalent to testing whether 
	\begin{equation}\label{eq:opt.pb.max.cut}
	\begin{aligned}
	p^*\mathrel{\mathop{:}}=&\min_{x\in \mathbb{R}^n} &&\frac{1}{4}x^TAx\\
	&\text{s.t. } &&x_i \in \{-1,1\}, \forall i=1,\ldots,n,
	\end{aligned}
	\end{equation}
	is greater or equal to $\frac{1}{4}e^TAe-k-\frac{n\gamma}{4}+\frac{n\gamma}{4}=\frac{1}{4}e^TAe-k$.

	It is easy to check that the size of the maximum cut in $G$ is equal to $\frac{1}{4}e^TAe-p^*$. Testing whether $G$ contains a cut of size at least $k$ is hence equivalent to testing whether $$p^* \geq \frac{1}{4}e^TAe-k.$$ As shown above, this is exactly equivalent to testing whether $p$ has profile $\rho$ over $B$, which concludes the proof.	
	
\end{proof}

We remark that this theorem is minimal in the degree of the polynomial in the sense that testing whether a quadratic polynomial $q$ has a given monotonicity profile $\rho$ over a box $B$ is a computationally tractable problem. Indeed, this simply amounts to testing whether the linear function $\rho_i \frac{\partial q(x)}{\partial x_i}$ is nonnegative over $B$ for all $i=1,\ldots,n.$ This can be done by solving a sequence of linear programs (in polynomial time) indexed by $i$---where the objective is $\rho_i \frac{\partial q(x)}{\partial x_i}$ and the constraints are given by the box---and testing if the optimal value is negative for some $i$.

\begin{theorem}\label{th:np.hardness.convex}
	Given a cubic polynomial $p$ and a box $B$, it is NP-hard to test whether $p$ is convex over $B$.
\end{theorem}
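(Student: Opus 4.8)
The plan is to reduce from the problem of testing convexity of a cubic polynomial over a box to... wait, that is circular. Instead, I would reduce from the matrix copositivity problem, or more precisely, I would reduce from the NP-hard problem that was just used in the proof of Theorem~\ref{th:mon.np.hard}, namely MAX-CUT (equivalently, from the problem of testing whether a quadratic form $x^T A x$ attains a value $\geq \gamma$ over $[-1,1]^n$, or from checking copositivity-type statements). The key observation linking convexity over a box to a quadratic optimization problem is the second-order characterization: a twice-differentiable $p$ is convex over $B$ if and only if $H_p(x) \succeq 0$ for all $x \in B$, i.e., if and only if $y^T H_p(x) y \geq 0$ for all $x \in B$ and all $y \in \mathbb{R}^n$. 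For a cubic polynomial $p$, the Hessian $H_p(x)$ is an affine (degree-1) matrix-valued function of $x$, so $y^T H_p(x) y$ is a polynomial that is quadratic in $y$ and affine (linear) in $x$.

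First I would fix the structure: given a graph $G$ with adjacency matrix $A$ and the integer $\gamma$ defined as in the proof of Theorem~\ref{th:mon.np.hard} (an integer upper bound on $\lambda_{\max}(A)$ from Gershgorin), I would build a cubic polynomial $p$ in $n+1$ variables $(x_0, x_1, \dots, x_n)$ whose Hessian, restricted appropriately, encodes the matrix $A - \gamma I$ plus a shift. Concretely, one wants a cubic $p$ so that $y^T H_p(x) y$, evaluated at a suitable $y$ and with $x$ ranging over $[-1,1]^{n+1}$, reproduces (up to affine terms and a positive scaling) the quantity $\tfrac14 x^T(A - \gamma I)x + c$ that appeared in the monotonicity reduction — this is natural because a partial derivative of a cubic is quadratic, and the Hessian of a cubic is linear, so taking $p$ to contain a term like $x_0 \cdot (\text{quadratic form in } x_1,\dots,x_n)$ makes one entry or one quadratic-form slice of $H_p$ behave like that quadratic form. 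Then convexity of $p$ over the box would be equivalent to nonnegativity of that quadratic form over $[-1,1]^n$, which by the concavity argument (the form $x^T(A-\gamma I)x$ is concave since $A - \gamma I \preceq 0$, so its minimum over the box is attained at a vertex, and $x^Tx = n$ at vertices) reduces exactly to a MAX-CUT threshold question, mirroring the chain of equivalences in the proof of Theorem~\ref{th:mon.np.hard}.

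The steps, in order, would be: (i) recall $p$ convex over $B$ $\iff$ $y^T H_p(x) y \geq 0$ for all $x \in B$, $y \in \mathbb{R}^{n+1}$; (ii) design the cubic $p$ so that the ``dangerous'' direction of the Hessian is a single coordinate direction $y = e_j$ for some $j$, reducing the bilinear-in-$x$-and-quadratic-in-$y$ condition to a purely affine-in-$x$ condition $\frac{\partial^2 p}{\partial x_j^2}(x) \geq 0$ on $B$ along with the requirement that the remaining Hessian block stay psd (which we arrange to hold automatically, e.g.\ by adding a large multiple of a separable convex quartic-free term like $\sum_{i \neq j} x_i^2 \cdot x_0$ handled carefully, or by simply making all other second derivatives constants forming a psd matrix); (iii) compute $\frac{\partial^2 p}{\partial x_j^2}$ and arrange for it to equal $\tfrac14 x^T(A - \gamma I) x + (\text{affine in } x) + (k\text{-dependent constant})$; (iv) invoke the vertex-optimality of concave minimization and $x^Tx = n$ on $\{-1,1\}^n$ to turn nonnegativity over the box into a MAX-CUT threshold statement; (v) conclude NP-hardness, and note minimality since convexity of a quadratic is trivially checkable (its Hessian is a constant matrix, test $\succeq 0$ by eigenvalue computation / semidefinite feasibility in polynomial time).

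The main obstacle I anticipate is step (ii): ensuring that the \emph{entire} Hessian matrix $H_p(x)$ — not just one diagonal entry — is controlled, so that $p$ is convex over the box if and only if that one scalar polynomial is nonnegative over the box. Because $H_p(x)$ is affine in $x$, its off-diagonal entries vary with $x$, and one must prevent those from destroying positive semidefiniteness when $x$ ranges over $[-1,1]^{n+1}$; the clean way is to make the cross second-partials either identically zero or constants chosen so that a Schur-complement / diagonal-dominance argument keeps the full matrix psd precisely when the single designated entry is nonnegative. Getting this bookkeeping right while keeping $p$ cubic (degree exactly three, so that the reduction is ``minimal in the degree'' as claimed) is the delicate part; everything downstream is a verbatim replay of the concave-minimization-to-MAX-CUT argument already established for Theorem~\ref{th:mon.np.hard}.
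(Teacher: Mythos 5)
There is a genuine gap, and it sits at step (iii) of your plan, not at step (ii) where you anticipated trouble. For a \emph{cubic} polynomial $p$, every entry of the Hessian $H_p(x)$ — in particular every pure second partial $\frac{\partial^2 p}{\partial x_j^2}(x)$ — is an \emph{affine} function of $x$ (degree at most one). So it is impossible to arrange $\frac{\partial^2 p}{\partial x_j^2}(x) = \tfrac14 x^T(A-\gamma I)x + (\text{affine}) + c$: the left-hand side can never reproduce a genuinely quadratic function of $x$. This is exactly the structural difference from Theorem~\ref{th:mon.np.hard}, where the relevant object was a \emph{first} partial derivative of a cubic, which is quadratic in $x$ and can therefore host $x^T(A-\gamma I)x$. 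Moreover, even your fallback idea of using a term like $x_0\cdot q(x_1,\dots,x_n)$ does not rescue the MAX-CUT route: in $y^T H_p(x)y$ the quadratic form then appears in the direction variables $y$, which range over all of $\mathbb{R}^n$ and are \emph{not} confined to the box, so the concave-minimization-at-vertices argument (which crucially uses $x\in[-1,1]^n$ and $x^Tx=n$ at vertices) has nothing to bite on; nonnegativity of a fixed quadratic form in unconstrained $y$ is just a constant-matrix PSD test. And if you did succeed in reducing everything to nonnegativity of a single affine-in-$x$ scalar over a box, that is a linear program, hence poly-time — no hardness can be extracted that way.

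The real source of hardness is precisely the feature you hoped to neutralize: the Hessian of a cubic is an affine matrix pencil $x\mapsto M_0+\sum_i x_i M_i$, and deciding whether such a pencil is PSD for all $x$ in a box is the NP-hard ``interval matrix'' problem of Nemirovskii. That is the problem the paper reduces from. Concretely, the paper takes Nemirovskii's matrix $L(x)=\begin{bmatrix} A & x\\ x^T & \mu\end{bmatrix}$ and builds the cubic $f(x,y)=\tfrac12 y^TL(x)y+\tfrac{n^2\alpha}{2}x^Tx+\tfrac{\gamma}{6}\sum_i y_i^3$ over the product box $[-1,1]^m\times[\tfrac1n,1]^n$, choosing the constants $\alpha,\gamma$ so that a Schur-complement and eigenvalue-bound argument shows $H_f(x,y)\succeq 0$ on the box if and only if $L(x)\succeq 0$ on $[-1,1]^m$ (the $\tfrac{\gamma}{6}\sum y_i^3$ term supplies a small positive diagonal shift $\gamma\,\mathrm{diag}(y)$ on the $y$-block, and the large $\tfrac{n^2\alpha}{2}x^Tx$ term dominates the cross block $H(y)$, while the dichotomy in Nemirovskii's lemma — either $x^TA^{-1}x\le m-d(a)^{-2}$ on the box or some point gives $x^TA^{-1}x\ge m$ — gives the slack needed for the failure direction). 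If you want to repair your proof, you should abandon the MAX-CUT reduction and target this interval-matrix PSD problem (or prove its hardness yourself), since a cubic's Hessian can only encode affine matrix families.
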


The proof of this theorem will use the following result of Nemirovskii~\cite{Nemirovskii_interval_matrix_NPhard}.

\begin{lemma}[cf. proof of Proposition 2.1. in \cite{Nemirovskii_interval_matrix_NPhard}]\label{lemma:nem}
	Given a positive integer $m$ and an $m$-dimensional vector $a$ with rational positive entries and with $||a||_2\leq 0.1$, let $A=(I_m-aa^T)^{-1}$ and $\mu=m-d(a)^{-2},$ where $d(a)$ is the smallest common denominator of all entries of $a$. Then, it is NP-hard to decide whether 
	\begin{align}\label{def:L}
	L(x)\mathrel{\mathop{:}}=\begin{bmatrix} A & x \\ x^T & \mu \end{bmatrix}\succeq 0, \text{ for all } x \in \mathbb{R}^m \text{ with } ||x||_{\infty} \leq 1.
	\end{align} 
	Furthermore, for any vector $a$, either (\ref{def:L}) holds (i.e., $x^TA^{-1}x \leq m-d^{-2}(a)$, $\forall ||x||_{\infty}\leq 1$) or there exists $x \in \mathbb{R}^m$ with $||x||_{\infty}\leq 1$ such that $x^TA^{-1}x \geq m.$
	%	Let $n$ be a positive integer and let $Q$ be a set of rational numbers $\{\hat{q}_{ij}, \bar{q}_{ij}, i,j=1,\ldots,n\}$ such that $\hat{q}_{ij}\leq \bar{q}_{ij}$, $\hat{q}_{ij}=\hat{q}_{ji}$, and $\bar{q}_{ij}=\bar{q}_{ji}$, for all $i,j.$ Define an interval matrix $A[Q]$ to be the family of symmetric $n \times n$ matrices with real entries $A_{ij}$ belonging to the segments $[\hat{q}_{ij}, \bar{q}_{ij}],$ for $i, j = 1,\ldots, n$. 
	%	
	%	Given $n$ and $Q$, it is NP-hard to test whether all matrices in $A[Q]$ are positive semidefinite.		
\end{lemma}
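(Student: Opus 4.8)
The plan is to establish NP-hardness by a polynomial-time reduction from PARTITION, which asks, given positive integers $n_1,\ldots,n_m$, whether there is a sign vector $v\in\{-1,1\}^m$ with $\sum_i n_i v_i = 0$. Given such an instance, I would construct the vector $a$ by setting $a_i = n_i/D$, where $D$ is a prime number chosen in the range $10\sqrt{\sum_i n_i^2} \leq D \leq 20\sqrt{\sum_i n_i^2}$. Such a prime exists, can be found in polynomial time, and has polynomial bit-length. This choice guarantees both that $||a||_2 = \frac{1}{D}\sqrt{\sum_i n_i^2} \leq 0.1$, as required, and that $a$ has positive rational entries. Crucially, since $D$ is prime and $D > \max_i n_i \geq 1$, each fraction $n_i/D$ is already in lowest terms, so the smallest common denominator satisfies $d(a) = D$. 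With $A = (I_m - aa^T)^{-1}$ and $\mu = m - d(a)^{-2} = m - D^{-2}$, the triple $(a, A, \mu)$ is the output of the reduction.

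The next step is to rewrite condition \eqref{def:L} as a combinatorial statement. Since $||a||_2 \leq 0.1 < 1$, we have $I_m - aa^T \succ 0$, so $A$ is well-defined and positive definite with $A^{-1} = I_m - aa^T$. Applying a Schur complement to the $A$-block of $L(x)$ shows that $L(x) \succeq 0$ is equivalent to $x^T A^{-1} x \leq \mu$, and a direct computation gives $x^T A^{-1} x = ||x||_2^2 - (a^T x)^2$. The function $f(x) := ||x||_2^2 - (a^Tx)^2 = x^T(I_m - aa^T)x$ is convex (indeed strictly convex), so its maximum over the box $\{x : ||x||_\infty \leq 1\}$ is attained at a vertex $v \in \{-1,1\}^m$. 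At any vertex $||v||_2^2 = m$, hence $\max_{||x||_\infty \leq 1} f(x) = m - \min_{v\in\{-1,1\}^m}(a^Tv)^2$. Writing $a^Tv = \frac{1}{D}\sum_i n_i v_i$ and using that $\sum_i n_i v_i$ is an integer, I would conclude that $\min_{v}(a^Tv)^2$ is either $0$ (if some vertex makes the integer sum vanish) or at least $D^{-2}$ (otherwise). Therefore \eqref{def:L}, which reads $\max f \leq m - D^{-2}$, holds if and only if $\sum_i n_i v_i \neq 0$ for every $v\in\{-1,1\}^m$, i.e., if and only if the PARTITION instance is infeasible. Since PARTITION is NP-complete and the reduction is polynomial, deciding \eqref{def:L} is intractable unless P $=$ NP.

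The same computation yields the ``Furthermore'' dichotomy and its promise gap. For an arbitrary $a$, either \eqref{def:L} holds, or it fails, in which case $\max_{||x||_\infty\leq 1} f(x) > m - d(a)^{-2}$. In the reduction, failure corresponds exactly to the existence of a vertex $v^*$ with $\sum_i n_i v_i^* = 0$; for this $v^*$ we have $a^Tv^* = 0$ and thus $v^{*T} A^{-1} v^* = ||v^*||_2^2 = m$, exhibiting the claimed point with $x^T A^{-1} x \geq m$. Thus the two alternatives are separated by the gap $d(a)^{-2}$ between $\mu = m - d(a)^{-2}$ and $m$, which is what makes the condition robust enough to be fed into the downstream reduction to convexity testing.

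The main obstacle, and the step requiring the most care, is the control of the denominator $d(a)$: the clean threshold $\mu = m - D^{-2}$ only does its job if $d(a)$ equals the scaling parameter $D$ exactly, which is why $D$ must be chosen prime and larger than every $n_i$ so that no fraction $n_i/D$ reduces. One must simultaneously verify that this prime is large enough to meet the normalization $||a||_2 \leq 0.1$ while remaining of polynomial size, and confirm (via the integrality of $\sum_i n_i v_i$) that the no-partition case really does push $\min_v(a^Tv)^2$ up to at least $D^{-2}$ rather than to some smaller positive value. A secondary point worth stating explicitly is the complexity bookkeeping: the reduction maps PARTITION onto the negation of \eqref{def:L}, so the result is most naturally a coNP-hardness statement, which we report as NP-hardness in the customary sense that a polynomial-time decision procedure would collapse P and NP.
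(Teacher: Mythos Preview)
The paper does not supply its own proof of this lemma; it simply cites Nemirovski's paper and uses the statement as a black box in the subsequent proof of Theorem~\ref{th:np.hardness.convex}. Your reduction from PARTITION is correct and is essentially the argument Nemirovski gives: the Schur complement rewrites \eqref{def:L} as $\max_{\|x\|_\infty\le 1} x^T(I-aa^T)x \le \mu$, convexity pushes the maximum to a vertex $v\in\{-1,1\}^m$, and integrality of $d(a)\,a^Tv$ creates the gap between $m-d(a)^{-2}$ and $m$. Your choice of a prime $D$ to force $d(a)=D$ is a clean way to handle the denominator issue.

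One small point worth tightening: the ``Furthermore'' clause is asserted for \emph{any} admissible $a$, not only those produced by the reduction, but your paragraph establishing it specializes immediately to the reduction case. The general argument is the same one-line observation you already have: for arbitrary rational $a$ with smallest common denominator $d(a)$, the number $d(a)\,a^Tv$ is an integer for every $v\in\{-1,1\}^m$, so $(a^Tv)^2\in\{0\}\cup[d(a)^{-2},\infty)$; either the minimum over vertices is at least $d(a)^{-2}$ and \eqref{def:L} holds, or some $v^*$ gives $a^Tv^*=0$ and then $v^{*T}A^{-1}v^*=m$. Stating this once for general $a$ would close the gap. Your remark on coNP- versus NP-hardness phrasing is also appropriate.
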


\begin{proof}[Proof of Theorem \ref{th:np.hardness.convex}]
	
	We show this result via a reduction from the NP-hard problem given in Lemma \ref{lemma:nem}. Let $m$ be a positive integer and $a$ be an $m$-dimensional vector with rational entries. Let $L(x)$ to be an $(m+1) \times (m+1)$ matrix as defined in (\ref{def:L}) and set $H(y)$ to be the $m \times (m+1)$ matrix of mixed partial derivatives of the cubic polynomial $y^TL(x)y$, i.e.,  $$H_{ij}(y)=\frac{\partial^2 y^TL(x)y}{\partial x_i \partial y_j},~\forall i=1,\ldots,m,~\forall j=1,\ldots,m+1.$$ Note that the entries of $H(y)$ are linear in $y$. Consider now the matrix $H(y)^TH(y)$, which is a symmetric matrix with entries quadratic in $y$, i.e., its $(i,j)$-entry is given by $y^TQ_{ij}y$, where $Q_{ij}$, for all $i,j$, is an $(m+1) \times (m+1)$ matrix. Denote by $q_{ij}$ the maximum entry in absolute value of the matrix $Q_{ij}$ and set 
	\begin{equation}\label{def:n.alpha.gamma}
	\begin{aligned} 
	&n\mathrel{\mathop{:}}=m+1, \\
	&\alpha\mathrel{\mathop{:}}=
	\max_i \{q_{ii}+\sum_{j \neq i}q_{ij}\} \cdot 4d^2(a)(1+m)n\\
	&\gamma\mathrel{\mathop{:}}=\frac{1}{2}\cdot \frac{1}{d^2(a)(1+m)}.
	\end{aligned} 
	\end{equation} Consider the cubic polynomial $$f(x,y)=\frac{1}{2}y^TL(x)y+\frac{n^2 \alpha}{2} x^Tx+\frac{\gamma}{6} \sum_{i=1}^{n} y_i^3,$$ and the box $$B\mathrel{\mathop{:}}= \left\{(x,y) \in \mathbb{R}^m \times \mathbb{R}^{n}~|~ x \in [-1,1]^m, y \in \left[\frac{1}{n},1\right]^{n}\right\}.$$ We will show that $f(x,y)$ is convex over $B$ if and only if $L(x)\succeq 0$ for all $x \in \tilde{B}\mathrel{\mathop{:}}=[-1,1]^m.$ Note that the Hessian of $f$ is given by $$H_f(x,y)=\begin{bmatrix}n^2 \alpha I_m  & H(y) \\ H(y)^T & L(x)+\gamma \cdot \text{diag}(y)\end{bmatrix}, $$ where $\text{diag}(y)$ is an $n\times n$ diagonal matrix with the vector $y$ on its diagonal. Hence, we need to show that $H_f(x,y) \succeq 0$ over $B$ if and only if $L(x)\succeq 0$ over $\tilde{B}.$
	
	We start by showing that if $L(x)$ is not positive semidefinite over $\tilde{B}$, then $H_f(x,y)$ is not positive semidefinite over $B$. As $L(x)$ is not positive semidefinite over $\tilde{B}$, from Lemma \ref{lemma:nem}, there exists $x_0 \in \tilde{B}$ such that $x_0^TA^{-1}x_0 \geq m.$ Let $y_0=1_{n\times 1}$ and observe that $(x_0,y_0) \in B.$ Let $$z=\frac{1}{||(-A^{-1}x_0,1)^T||_2}\begin{pmatrix} 0_{m \times 1}\\ -A^{-1}x_0\\ 1
	\end{pmatrix}.$$ We have 
	\begin{equation}\label{eq:expand.hessian}
	\begin{aligned}
	z^TH_f(x_0,y_0)z&= \frac{1}{1+||A^{-1}x_0||_2^2} \cdot\begin{bmatrix} -A^{-1}x_0\\ 1 \end{bmatrix}^T (L(x_0)+\gamma \cdot \text{diag}(y_0))\begin{bmatrix} -A^{-1}x_0\\ 1 \end{bmatrix} \\
	&= \frac{1}{1+||A^{-1}x_0||_2^2} \left( \mu -x_0^TA^{-1}x_0 +\gamma (1+||A^{-1}x_0||_2^2) \right)\\
	&=\frac{\mu-x_0^TA^{-1}x_0}{1+||A^{-1}x_0||_2^2}+\gamma.
	\end{aligned}
	\end{equation}
	Since $x_0^TA^{-1}x_0 \geq m$ and $\mu=m-d^{-2}(a)$, we have 
	\begin{align}\label{eq:ub.schur}
	-\frac{1}{d^2(a)}\geq \mu -x_0^T A^{-1}x_0.
	\end{align} Furthermore, 
	\begin{align*}
	1+||A^{-1}x_0||_2^2&\leq 1+x_0^T(I_m-aa^T)^2 x_0\\
	&=1+x_0^T (I_m-2aa^T+||a||_2^2aa^T)x_0\\
	&=1+||x_0||^2-2||a^Tx_0||_2^2+||a||_2^2||a^Tx_0||^2\\
	&\leq 1+m||x_0||_\infty^2\\
	&\leq 1+m,
	\end{align*}
	where we have used in the last two inequalities the facts that $||a||_2\leq 0.1,$ $||x_0||_{\infty}\leq 1$, and $||x_0||_2\leq \sqrt{m} ||x_0||_{\infty}$. Combining this with (\ref{eq:expand.hessian}) and (\ref{eq:ub.schur}), we get
	$$z^TH_f(x_0,y_0)z\leq -\frac{1}{d^2(a)\cdot (1+m)}+\gamma.$$
	Replacing $\gamma$ by its expression in (\ref{def:n.alpha.gamma}), we obtain:
	$$z^TH_f(x_0,y_0)z \leq -\frac{1}{2} \cdot \frac{1}{d^2(a)\cdot (1+m)}<0,$$
	and conclude that $H_f(x,y)$ is not positive semidefinite over $B$.
	
	Suppose now that $L(x) \succeq 0$ for all $x \in \tilde{B}.$ We will show that $H_f(x,y) \succeq 0$ for all $(x,y) \in B.$ As $\alpha>0$, we equivalently show, using the Schur complement, that $$L(x)+\gamma \cdot\text{diag}(y) -\frac{1}{n^2 \alpha} H(y)^TH(y) \succeq 0, \text{ for all } (x,y) \in B.$$ As $L(x) \succeq 0$ for any $x \in \tilde{B}$, it remains to show that $$\gamma \cdot \text{diag}(y)-\frac{1}{n^2 \alpha} H(y)^TH(y) \succeq 0$$ for all $y \in [\frac{1}{n},1]^n.$ Fix $y\in [\frac{1}{n},1]^n$. Note that $$\gamma \cdot \text{diag}(y) \succeq  \gamma \cdot \frac{1}{n}I=\frac{1}{2d^2(a)(1+m)n}I$$ and that $H(y)^TH(y)\succeq 0$. Recall that entry $(i,j)$ of $H(y)^TH(y)$ is given by $y^TQ_{ij}y$ and that $q_{ij}$ is the maximum entry in absolute value of $Q_{ij}$. Simple algebra and the fact that $y \in [1/n,1]^n$ show that $|y^TQ_{ij}y| \leq q_{ij}n^2$. We then have 
	\begin{align*}
	\lambda_{\max} (H(y)^TH(y)) &\leq \max_i \{y^TQ_{ii}y+\sum_{j \neq i} y^TQ_{ij}y \}\\
	&\leq  \max_i \{q_{ii}n^2+\sum_{j \neq i} q_{ij}n^2 \}\\
	&=n^2 \cdot \max_i \{q_{ii} +\sum_{j \neq i}q_{ij}\},
	\end{align*}
	where the first inequality is a consequence of Gershgorin's circle theorem \cite{gersh}.
	We deduce that 
	\begin{align*}
	\gamma \cdot \text{diag}(y)-\frac{1}{n^2\alpha} H(y)^TH(y) \succeq \left(\frac{1}{2d^2(a)(1+m)n}-\frac{1}{\alpha} \cdot \max_i \{q_{ii}+\sum_{j \neq i} q_{ij}\}\right)I.
	\end{align*}
	Replacing $\alpha$ by its expression given in (\ref{def:n.alpha.gamma}), we get that $$\gamma \cdot \text{diag}(y)-\frac{1}{n^2\alpha} H(y)^TH(y) \succeq \frac{1}{4d^2(a)(1+m)n}I \succeq 0,$$
	which concludes the proof.
\end{proof}

Note that again this theorem is minimal in the degree of the polynomial. Indeed testing whether a quadratic polynomial is convex over a box is equivalent to testing if a quadratic form is convex over $\mathbb{R}^n$ (this is a consequence of the Hessian being constant). The latter condition can be tested by checking if its (constant) Hessian matrix is positive semidefinite. This can be done in polynomial time~\cite{nonnegativity_NP_hard}.

Independently of shape-constrained regression, we would like to remark that Theorem \ref{th:np.hardness.convex} is interesting in its own right. It has been shown in \cite{NPhard_Convexity_MathProg} that testing whether a quartic polynomial is convex over $\mathbb{R}^n$ is an NP-hard problem. One could wonder if this problem would get any easier over a region. This theorem answers the question in the negative, and shows that this problem is hard even for lower-degree polynomials. This is particularly relevant as subroutines of some optimization software (e.g., BARON \cite{baron}) involve testing convexity of functions over a set (typically a box). The result presented here shows that efficient algorithms for testing convexity over a box are very unlikely to always return the correct answer.

\section{Semidefinite programming-based relaxations}\label{sec:sdp}

In light of the previous hardness results, we provide tractable relaxations of the previous concepts, i.e., monotonocity over a box and convexity over a box, involving semidefinite programming. These relaxations are based on the notion of \emph{sum of squares} polynomials, which we provide a brief exposition of below.

\subsection{Review of sum of squares polynomials}

A polynomial $p$ is a sum of squares (sos) if it can be written as a sum of squares of other polynomials, i.e., $p(x)=\sum_i q_i(x)^2,$ where $q_i(x)$ are some polynomials. Being a sum of squares is obviously a sufficient condition for nonnegativity. It is not however necessary, as the Motzkin polynomial (which is nonnegative but not sos) can attest to~\cite{MotzkinSOS}. Sum of squares polynomials are widely used as a surrogate for nonnegative polynomials as one can optimize over the set of sos polynomials using semidefinite programming (SDP) contrarily to nonnegative polynomials, which form an intractable set to optimize over. The fact that one can optimize over the set of sos polynomials using semidefinite programming is the consequence of the following theorem: a polynomial $p$ of degree $2d$ is a sum of squares if and only if there exists a positive semidefinite matrix $Q$ such that $p(x)=z(x)^TQz(x)$, where $z(x)=(1,x_1,\ldots,x_n,x_1x_2\ldots,x_n^d)$ is the vector of standard monomials of degree $\leq d$. We say that an $m \times m$ polynomial matrix $M(x)$ is an \emph{sos-matrix} if there exists a polynomial matrix $V(x)$ of size $q \times m$, where $q$ is some integer, such that $M(x)=V(x)^TV(x)$. This is equivalent to the polynomial $y^TM(x)y$ in $2n$ variables $(x,y)$ being a sum of squares.

\subsection{Relaxations and approximation results}

In this section, we revisit the task of fitting a polynomial function $p$ to data $(x_i,y_i) \in \mathbb{R}^n \times \mathbb{R}$ generated from noisy measurements of a function $f$:
\begin{align}\label{eq:f.def.2}
y_i=f(x_i)+\epsilon_i, ~i=1,\ldots,m.
\end{align}
With no constraints on the regressor, this fit can be obtained by minimizing some convex loss function such as the least squares error $\sum_{i=1}^m (p(x_i)-y_i)^2.$ Here, we consider two different cases of constrained regression, corresponding to two shape constraints on the function $f$ in (\ref{eq:f.def.2}) that generates the data. For concreteness, we will throughout use the least squares error as our convex loss function, though our algorithms can be extended to hold for other convex loss functions such as the least absolute deviation function or any sos-convex polynomial loss function.

\subsubsection{Monotonically-constrained polynomial regression }
We assume that the monotonicity profile $\rho$ of $f$ in (\ref{eq:f.def.2}) as well as a box $B$ which contains the feature vectors are given. We wish to fit a polynomial $p$ to data $(x_i,y_i),~i=1,\ldots,m$ generated using $f$, such that $p$ also has monotonicity profile $\rho$ over $B$. In other words, we are interested in solving the following optimization problem:
\begin{equation}\label{eq:opt.solve.monoton.hard}
\begin{aligned}
f_{mon}\mathrel{\mathop{:}}=&\inf_{p \text{ of degree } d} &&\sum_{i=1}^m (p(x_i)-y_i)^2\\
&\text{s.t. } &&\rho_j \frac{\partial p(x)}{\partial x_j}\geq 0,~\forall j=1,\ldots,n \text{ and for all } x \in B.
\end{aligned}
\end{equation}

Theorem \ref{th:mon.np.hard} suggests that this problem cannot be solved efficiently unless $P=NP$. We present here a relaxation of this problem with some formal guarantees.

\begin{theorem}\label{th:mono.existence}
	Let $f$ be a $C^1$ function with monotonicity profile $\rho$ over a box 
	\begin{align}\label{def:box.mono.ex}
	B=\{(u_1,\ldots,u_n) \in \mathbb{R}^n ~|~ (b_i^+-u_i)(u_i-b_i^-)\geq 0, \forall i=1,\ldots,n\}.
	\end{align} 
	For any $\epsilon>0$, there exists an integer $d$ and a polynomial $p$ of degree $d$ such that $$\max_{x \in B} |f(x)-p(x)|<\epsilon$$
	and such that $p$ has same monotonicity profile $\rho$ over $B$.	Furthermore, this monotonicity profile can be certified using sums of squares certificates. 
\end{theorem}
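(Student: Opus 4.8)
The plan is to prove this by combining a classical $C^{1}$-approximation theorem for polynomials with Putinar's Positivstellensatz on the box. Fix $\epsilon>0$. First I would approximate $f$ \emph{together with its gradient} by a polynomial, then add a small linear correction that upgrades the (possibly weak) monotonicity inherited from $f$ into \emph{strict} monotonicity of the approximant while keeping the uniform error below $\epsilon$, and finally invoke a Positivstellensatz to extract the sum of squares certificates.

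For the first step, perform an affine change of variables sending $B$ to $[0,1]^{n}$ and use the classical fact that the multivariate Bernstein polynomials $B_{m}(f)$ of a $C^{1}$ function converge to $f$ uniformly on $[0,1]^{n}$ together with all their first-order partial derivatives as $m\to\infty$ (alternatively, mollify a $C^{1}$ extension of $f$ to a slightly larger box and take a Taylor polynomial of the mollification). Transporting back to $B$, this shows that for any $\delta>0$ there is a polynomial $p_{0}$ with $\|f-p_{0}\|_{\infty,B}<\epsilon/2$ and $\left\|\frac{\partial f}{\partial x_{j}}-\frac{\partial p_{0}}{\partial x_{j}}\right\|_{\infty,B}<\delta$ for every $j=1,\dots,n$, the value of $\delta$ to be chosen below.

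Next, let $R\mathrel{\mathop{:}}=\max_{j}\max(|b_{j}^{-}|,|b_{j}^{+}|)$ and set
\[
p(x)\mathrel{\mathop{:}}=p_{0}(x)+2\delta\sum_{j\,:\,\rho_{j}\neq 0}\rho_{j}\,x_{j}.
\]
For each $k$ with $\rho_{k}\neq 0$ we have, on $B$,
\[
\rho_{k}\,\frac{\partial p}{\partial x_{k}}=\rho_{k}\,\frac{\partial p_{0}}{\partial x_{k}}+2\delta\rho_{k}^{2}\;\geq\;\rho_{k}\,\frac{\partial f}{\partial x_{k}}-\delta+2\delta\;\geq\;\delta\;>\;0,
\]
since $\rho_{k}\frac{\partial f}{\partial x_{k}}\geq 0$ on $B$ by hypothesis and Definition \ref{def:mon.prof}; for $k$ with $\rho_{k}=0$ there is no requirement. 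Hence $p$ has monotonicity profile $\rho$ over $B$, and in fact with strict inequalities. Moreover $\|f-p\|_{\infty,B}\leq\|f-p_{0}\|_{\infty,B}+2\delta nR<\epsilon/2+2\delta nR$, so choosing $\delta<\epsilon/(4nR)$ gives $\|f-p\|_{\infty,B}<\epsilon$, which is the desired uniform approximation.

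It remains to certify monotonicity of $p$ with sums of squares. Write $B=\{x:g_{i}(x)\geq 0,\ i=1,\dots,n\}$ with $g_{i}(x)=(b_{i}^{+}-x_{i})(x_{i}-b_{i}^{-})$ as in (\ref{def:box.mono.ex}); the quadratic module generated by $g_{1},\dots,g_{n}$ is Archimedean (each $g_{i}$ already certifies boundedness of $x_{i}$, so $N-\sum_{i}x_{i}^{2}$ lies in the module for $N$ large). Since for every $j$ with $\rho_{j}\neq 0$ the polynomial $\rho_{j}\frac{\partial p}{\partial x_{j}}$ is \emph{strictly} positive on $B$, Putinar's Positivstellensatz (Theorem \ref{th:putinar}) yields sos polynomials $\sigma_{0}^{(j)},\dots,\sigma_{n}^{(j)}$ with
\[
\rho_{j}\,\frac{\partial p}{\partial x_{j}}(x)=\sigma_{0}^{(j)}(x)+\sum_{i=1}^{n}\sigma_{i}^{(j)}(x)\,g_{i}(x),
\]
which is exactly the required certificate; if one prefers to avoid the Archimedean hypothesis, compactness of $B$ alone suffices to invoke Schm\"udgen's Positivstellensatz (Theorem \ref{th:schmudgen}) instead, at the cost of also allowing products $g_{i}g_{j}$. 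The main obstacle is the first step: one genuinely needs the approximant to be close to $\nabla f$ as well as to $f$ — a merely $C^{0}$-close polynomial can have a large, wrong-signed gradient — which is why $C^{1}$ convergence of Bernstein polynomials is used rather than plain Weierstrass, and the small linear perturbation $2\delta\sum_{j}\rho_{j}x_{j}$ is the device that converts weak into strict monotonicity without spoiling the uniform bound, enabling the application of Putinar's theorem.
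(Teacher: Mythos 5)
Your proposal is correct and matches the paper's own argument essentially step for step: simultaneous $C^1$ approximation via Bernstein polynomials (the paper's Lemma~\ref{lem:approx.poly}), a small linear perturbation in the monotone coordinates to make the relevant partial derivatives strictly positive while keeping the uniform error under $\epsilon$, and then Putinar's Positivstellensatz on the Archimedean box description to produce the sos certificates. The only cosmetic differences are that the paper reduces WLOG to the profile $(1,0,\ldots,0)^T$ and writes out the Archimedean identity explicitly, neither of which changes the substance.
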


Note that the definition of the box given here is slightly different to the one given in (\ref{def:box}). The way the feature box is described actually comes into play in the structure of the certificate of nonnegativity of the derivative of $p$ which we wish to obtain for Theorem \ref{th:mono.existence}. A similar result to the one given above can be obtained using the definition of the box given in (\ref{def:box}). We will discuss this distinction further in Remark \ref{rem:box.rep}.

The proof of this theorem uses Putinar's Positivstellensatz \cite{putinar1993positive}, which we repeat for completeness after the following lemma.

\begin{lemma}\label{lem:approx.poly}
	
	Let $m$ be a nonnegative integer and assume that $f \in C^m(\mathbb{R}^n)$, i.e., $f$ has continuous derivatives of order up to $m$. Let $k=(k_1,\ldots,k_n)$ be a multi-index such that $\sum_{i=1}^n |k_i| \leq m$ and let $$\partial^k f \mathrel{\mathop{:}}=\frac{\partial^k f(x)}{\partial x_1^{k_1}\ldots \partial x^{k_n}}.$$
	
	Then, for any $\epsilon>0$, there exists a positive integer $d$ and a polynomial $p$ of degree $d$ such that for any $k$, with $\sum_{i=1}^n |k_i|\leq m$, we have $$\max_{x \in B} |\partial^k p(x)-\partial^k f(x)|<\epsilon.$$
\end{lemma}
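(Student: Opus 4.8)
\textbf{Proof proposal for Lemma \ref{lem:approx.poly}.}

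The plan is to reduce the statement to a classical simultaneous-approximation result of Weierstrass type on a compact set, namely that for $f \in C^m$ one can find a polynomial whose derivatives of all orders up to $m$ uniformly approximate the corresponding derivatives of $f$ on any compact box. The cleanest route is through the \emph{multivariate Bernstein polynomial operator} (or, equivalently, convolution with a smooth mollifier followed by a truncated Taylor expansion). Concretely, after an affine rescaling that maps the box $B$ onto the unit cube $[0,1]^n$ (which is harmless since it transforms polynomials into polynomials of the same degree and scales derivatives by fixed nonzero constants), I would introduce the degree-$N$ Bernstein operator $B_N$ and appeal to the known fact that, for $f \in C^m([0,1]^n)$ and each multi-index $k$ with $\sum_i |k_i| \le m$, one has $\partial^k (B_N f) \to \partial^k f$ uniformly on $[0,1]^n$ as $N \to \infty$. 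This is a standard result; see, e.g., the literature on Bernstein approximation of smooth functions. Given $\epsilon > 0$, pick $N$ large enough that every one of the finitely many multi-indices $k$ with $\sum_i|k_i| \le m$ satisfies $\max_{x \in [0,1]^n} |\partial^k(B_N f)(x) - \partial^k f(x)| < \epsilon$ (after undoing the affine rescaling and absorbing the constants into the choice of $N$), and set $p := B_N f$, with $d := nN$ its degree.

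The key steps, in order, would be: (i) perform the affine change of variables sending $B$ to $[0,1]^n$ and record how degrees and derivatives transform; (ii) recall the explicit form of the multivariate Bernstein operator $B_N$ and the commutation/convergence statement for its derivatives $\partial^k(B_N f) \to \partial^k f$; (iii) since the set of multi-indices $\{k : \sum_i |k_i| \le m\}$ is finite, use uniform convergence for each one and take the maximum of the finitely many thresholds on $N$; (iv) translate the conclusion back to the original box $B$ and define $p$ and $d$. Alternatively, if one prefers to avoid invoking derivative-convergence of Bernstein operators as a black box, one can mollify: let $f_\delta = f * \phi_\delta$ for a standard compactly supported smooth mollifier $\phi_\delta$, note $\partial^k f_\delta = (\partial^k f) * \phi_\delta \to \partial^k f$ uniformly on $B$ for each $k$ with $\sum_i|k_i|\le m$ (using uniform continuity of $\partial^k f$ on a slightly enlarged compact box), then Taylor-expand $f_\delta$ to sufficiently high order around the center of $B$ and control the remainder together with all its derivatives on $B$; this yields a polynomial $p$ with the desired property.

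The main obstacle is the simultaneous control of \emph{all} derivatives up to order $m$, not just the function values: a naive application of the one-dimensional Weierstrass theorem gives $C^0$ approximation only, and differentiating a good $C^0$ approximant need not give a good approximant of the derivative. This is precisely why the argument must go through an operator (Bernstein or mollification) that is known to commute well with differentiation and to converge in the $C^m$ norm, rather than through an arbitrary polynomial approximation. A secondary, purely bookkeeping, obstacle is keeping track of the constants introduced by the affine rescaling of $B$ to the unit cube when they multiply the derivatives $\partial^k$; since there are finitely many multi-indices and the constants depend only on the (fixed) box $B$ and on $m$, these can be absorbed uniformly into the final choice of $N$ (equivalently $d$). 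Once these points are handled, the statement follows, and it is exactly the tool needed to feed into the proof of Theorem \ref{th:mono.existence}: applying the lemma with $m=1$ produces a polynomial whose value and gradient both approximate those of $f$ on $B$, after which a small perturbation (adding $\epsilon'$ times a strictly monotone polynomial in the appropriate variables) upgrades ``approximately monotone'' to ``strictly monotone,'' and Putinar's Positivstellensatz then supplies the sum of squares certificates.
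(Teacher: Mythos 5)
Your proposal is correct and follows essentially the same route as the paper: the paper's proof also invokes the known uniform convergence of derivatives of the multivariate Bernstein operator on $[0,1]^n$ for $f\in C^m$ (citing Fellhauer and Veretennikov), takes the maximum over the finitely many multi-indices, and then translates/scales the variables so the Bernstein polynomial is defined over $B$ instead of $[0,1]^n$. The mollification alternative you sketch is extra, but the core argument matches.
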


\begin{proof}
	This lemma is a straightforward consequence of Theorem 6.7 in \cite{fellhauer2016approximation},or equivalently, Theorem 4 in \cite{veretennikov2016partial}. These theorems state that under the assumptions of the lemma, for any $k$ such that $\sum_{i=1}^n |k_i|\leq m$, we have	
	$$\max_{x\in [0,1]^n}|\partial^k B_{f,d}(x)-\partial^k f(x)| \underset{d \rightarrow \infty}{\rightarrow} 0,$$ where $B_{f,d}(x)$ is the Bernstein polynomial approximation to $f$ of order $d$, defined over $[0,1]^n$. This is the following polynomial
	$$B_{f,d}(x)=\sum_{j_1,\ldots,j_n=0}^n f\left(\frac{j_1}{d},\ldots,\frac{j_n}{d}\right) C_d^{j_1}\ldots C_d^{j_n}x_1^{j_1}(1-x_1)^{d-j_1}\ldots x_n^{j_n}(1-x_n)^{d-j_n},$$
	where $$C_d^{j_i}=\frac{d!}{j_i!(d-j_i)!}.$$
	
	To obtain the lemma, we let $d_0=\max_i d_i$, where $nd_i$ is the degree of the Bernstein polynomial needed to obtain $\max_{x \in [0,1]^n}|\partial^{k^i} B_{f,d}(x)-f(x)|<\epsilon$ when $k^i$ is a given set of indices. The result then follows by translating and scaling the variables that define the multivariate Bernstein polynomial so that it is defined over the box $B$ rather than $[0,1]^n$.	
%	
%	Let $\epsilon>0$ and let $k^i, i=1,\ldots,N$ be the set of multi-indexes such that $\sum_{j=1}^n |k^i_j|\leq m.$ For each fixed $k^i$, by definition of a limit and using Lemma \ref{lem:Bernstein}, there exists $d_i$ such that for any $d \geq d_i$, $$\max_{x \in [0,1]^n}|\partial^{k^i} B_{d}(f,x)-f(x)|<\epsilon.$$
%	Let $d_0=\max_i d_i$. It follows that there exists $d_0$ and a polynomial $p$ of degree $nd_0$  such that for all $k$ where $\sum_i |k_i| \leq m,$ we have $$\max_{x \in [0,1]^n}|\partial^{k^i} B_{d}(f,x)-f(x)|<\epsilon.$$
%	To extend the result over to any box, one can simply translate and scale the variables that define the multivariate Bernstein polynomial.
\end{proof}

\begin{theorem}[Putinar's Positivstellensatz \cite{putinar1993positive}]\label{lem:putinar}
	Let $$S=\{x \in \mathbb{R}^n ~|~ g_1(x)\geq 0, \ldots, g_m(x)\geq 0\}$$ and define by $$M(g)=\{\sigma_0(x)+\sum_i \sigma_i(x)g_i(x)~|~ \sigma_i \text{ sos},~\forall i=0,\ldots,m\}.$$ Assume that $\{g_1,\ldots,g_m\}$ satisfy the Archimedean property, i.e., that there exists $N \in~\mathbb{N}$ such that $$N-\sum_i x_i^2\in M(g).$$ If a polynomial $p$ is positive on $S$, then $p(x) \in M(g).$
\end{theorem}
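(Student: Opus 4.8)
The plan is to prove this classical statement by contradiction, using the standard functional-analytic (Hilbert-space and spectral) machinery for representation theorems of Archimedean quadratic modules. Assume $p>0$ on $S$ but $p\notin M(g)$; I will produce a probability measure supported on $S$ against which $p$ integrates to a nonpositive number, contradicting positivity of $p$ on $S$. The first step is to record the algebraic consequences of the hypotheses. Writing $\Sigma$ for the cone of sum of squares polynomials, note that $M(g)$ is a convex cone containing $\Sigma$ (so $1\in M(g)$), and that $\Sigma g_j\subseteq M(g)$ for each $j$ (take $\sigma_j$ to be the sos multiplier and all other terms zero). The Archimedean assumption $N-\sum_i x_i^2\in M(g)$ upgrades to the statement that $1$ is an algebraic interior (core) point of $M(g)$: given any $r\in\mathbb{R}[x]$ one finds $k\in\mathbb{N}$ with $k\pm r\in M(g)$, and then $1+tr=(1-tk)\cdot 1+t(k+r)\in M(g)$ for all sufficiently small $t>0$, since $M(g)$ is closed under addition and nonnegative scaling.

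Next I would carry out the separation. Because $p\notin M(g)$ and $1$ is a core point of the convex cone $M(g)$, the Eidelheit separation theorem (which requires only a core point, not closedness of $M(g)$, a subtlety worth flagging since $M(g)$ need not be closed) yields a nonzero linear functional $L\colon\mathbb{R}[x]\to\mathbb{R}$ with $L\ge 0$ on $M(g)$ and $L(p)\le 0$. Using the core-point property one checks $L(1)>0$, so after rescaling we may assume $L(1)=1$. Since $\Sigma\subseteq M(g)$, the symmetric bilinear form $\langle h_1,h_2\rangle:=L(h_1h_2)$ is positive semidefinite on $\mathbb{R}[x]$.

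The technical heart is the GNS construction together with the spectral theorem. Quotienting $\mathbb{R}[x]$ by the kernel $\{h:L(h^2)=0\}$ and completing produces a Hilbert space $\mathcal{H}$ on which the multiplication-by-$x_i$ operators $X_i$ act. The Archimedean inequality $L\big((N-\sum_i x_i^2)h^2\big)\ge 0$ rearranges to $\sum_i\lVert X_i h\rVert^2\le N\lVert h\rVert^2$, so each $X_i$ extends to a bounded self-adjoint operator with $\lVert X_i\rVert\le\sqrt{N}$; moreover the $X_i$ commute. Applying the spectral theorem for the commuting tuple $(X_1,\dots,X_n)$ gives a projection-valued measure and hence a positive Borel measure $\mu$ with compact support contained in $\prod_i[-\sqrt{N},\sqrt{N}]$, total mass $\mu(\mathbb{R}^n)=L(1)=1$, and $L(q)=\int q\,d\mu$ for every polynomial $q$. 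To conclude, I would locate the support of $\mu$: for each $j$ and each $h$ we have $\int h^2 g_j\,d\mu=L(h^2 g_j)\ge 0$, and approximating the indicator of $\{g_j\le -\epsilon\}$ by polynomials in $L^2(\mu)$ (valid because $\mu$ has compact support, so polynomials are dense) forces $\mu(\{g_j<0\})=0$ for each $j$; thus $\operatorname{supp}(\mu)\subseteq S$. Then $L(p)=\int p\,d\mu\ge\big(\min_{\operatorname{supp}\mu}p\big)\cdot 1>0$, since $p>0$ on $S$ and $\mu$ is a probability measure. This contradicts $L(p)\le 0$, so $p\in M(g)$.

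The main obstacle will be the spectral/GNS step: constructing $\mathcal{H}$, deducing boundedness of the multiplication operators from the Archimedean inequality, and extracting the representing measure via the spectral theorem, followed by the delicate $L^2(\mu)$-density argument that confines the support to $S$. A secondary point demanding care is the separation step, where the possible non-closedness of $M(g)$ must be handled through the core-point/Eidelheit route rather than a naive closed-cone separation.
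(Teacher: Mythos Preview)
The paper does not supply its own proof of this statement: both here and earlier (Theorem~\ref{th:putinar}) it is quoted as a classical result from \cite{putinar1993positive} and used as a black box. There is therefore nothing in the paper to compare your argument against.

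For what it is worth, your outline is the standard functional-analytic proof (separation by a state using that $1$ is an order unit of the Archimedean quadratic module, GNS construction, boundedness of the multiplication operators from the Archimedean inequality, spectral theorem to extract a representing probability measure, and a density argument to confine the support to $S$). The steps you flag as delicate---avoiding closedness of $M(g)$ via the core-point version of Eidelheit, and the $L^2(\mu)$ approximation to pin down the support---are indeed the places where care is required, and you have identified them correctly.
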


%\begin{corollary}\label{lem:putinar.box}
%	If a polynomial $p$ is positive on a box 
%	\begin{align} \label{def:box.arch}
%B=\{x \in \mathbb{R}^n~| (b_i^+-x_i)(x_i-b_i^-)\geq 0\},
%	\end{align} then there exist sum of squares polynomials $s_0(x),\ldots,s_{n}(x)$ such that 
%	\begin{align}\label{eq:box.odd}
%	p(x)=s_0(x)+\sum_{i=1}^n s_i(x)(b_i^+-x_i)(x_i-b_i^-).
%	\end{align}
%\end{corollary}
%
%\begin{proof}
%	This lemma is a direct consequence of Lemma \ref{lem:putinar}.
%	We simply need to show that $B$ as defined in (\ref{def:box.arch}) is Archimedean. As
%	$$(b_i^+)^2+(b_i^-)^2-x_i^2=(x_i-(b_i^++b_i^-))^2+2(b_i^+-x_i)(x_i-b_i^-) \in M(g),$$
%	if $N=\lceil \sum_i (b_i^+)^2+\sum_i (b_i^-)^2 \rceil$, we get 
%	$$N-\sum_i x_i^2=N-\sum_i (b_i^+)^2-\sum_i (b_i^-)^2+\sum_i ((b_i^+)^2+(b_i^-)^2-x_i^2)\in M(g).$$
%	
%	
%\end{proof}

We now prove Theorem~\ref{th:mono.existence} using these results.

\begin{proof}[Proof of Theorem \ref{th:mono.existence}]
	Let $f$ be a function in $C^1$, $B$ be a box as in (\ref{def:box.mono.ex}), and $\epsilon>0.$ Without loss of generality, we will assume that $\rho=(1,0,\ldots,0)^T$, i.e., $$\frac{\partial f(x)}{\partial x_1}\geq 0, \forall x \in B.$$ The same results can be obtained for any other monotonicity profile.
	
Let $C\mathrel{\mathop{:}}=\max_{x\in B} |x_1|.$ From Theorem~\ref{lem:approx.poly}, there must exist a polynomial $q$ of degree $d$ such that $$	\max_{x\in B} |f(x)-q(x)|\leq \frac{\epsilon}{2(1+2C)} $$ and $$\max_{x\in B} \left|\frac{\partial f(x)}{\partial x_i} - \frac{\partial q(x)}{\partial x_i}\right|\leq \frac{\epsilon}{2(1+2C)},~\forall i=1,\ldots,n.$$
	Let $p(x)\mathrel{\mathop{:}}=q(x)+\frac{\epsilon}{1+2C} \cdot x_1.$ 
	For all $x \in B$, we have
	\begin{align*}
	|f(x)-p(x)|&\leq |f(x)-q(x)|+ |q(x)-p(x)|\\
	&\leq \frac{\epsilon}{2(1+2C)}+\frac{\epsilon}{(1+2C)} \cdot C\\
	&=\frac{\epsilon/2+C\epsilon}{1+2C}\\
	&=\frac{\epsilon}{2}<\epsilon.
	\end{align*}
	Furthermore, as $\frac{\partial f(x)}{\partial x_1}\geq 0$, we have
	\begin{align*}
	\frac{\partial p(x)}{\partial x_1}&=\frac{\partial p(x)}{\partial x_1}-\frac{\partial q(x)}{\partial x_1}+\frac{\partial q(x)}{\partial x_1}-\frac{\partial f(x)}{\partial x_1}+\frac{\partial f(x)}{\partial x_1}\\
	&\geq \frac{\epsilon}{1+2C}-\frac{\epsilon}{2(1+2C)}\\
	&=\frac{\epsilon}{2(1+2C)}>0,
	\end{align*}
	for all $x \in B$. Hence, there exists a polynomial $p$ with the same monotonicity profile as $f$ such that $\max_{x \in B} |f(x)-p(x)|<\epsilon.$
	
	Furthermore, as $\frac{\partial p(x)}{\partial x_1}>0$ over $B$, there exists an integer $r$ and sum of squares polynomials $s_0,\ldots,s_n$ of degree $r$ such that
	\begin{align} \label{eq:box.odd}
	\frac{\partial p(x)}{\partial x_1}=s_0(x)+\sum_{i=1}^n s_i(x)(b_i^+-x_i)(x_i -b_i^-).
	\end{align}
	This is a consequence of Theorem \ref{lem:putinar} as $B$ as defined is Archimedean. Indeed,
	$$(b_i^+)^2+(b_i^-)^2-x_i^2=(x_i-(b_i^++b_i^-))^2+2(b_i^+-x_i)(x_i-b_i^-) \in M(g),$$ hence, if $N=\lceil \sum_i (b_i^+)^2+\sum_i (b_i^-)^2 \rceil$, we get that
	$$N-\sum_i x_i^2=N-\sum_i (b_i^+)^2-\sum_i (b_i^-)^2+\sum_i ((b_i^+)^2+(b_i^-)^2-x_i^2)\in M(g).$$

	$$\frac{\partial p(x)}{\partial x_1}=s_0(x)+\sum_{i=1}^n s_i(x)(b_i^+-x_i)(x_i -b_i^-).$$ 
\end{proof}

\begin{remark}\label{rem:box.rep}
	The format of the sum of squares certificate of positivity of $p$ over the box $B$ depends on the representation that one uses to represent the box. 
	If the box had been defined instead as $$B=\{x \in \mathbb{R}^n~| b_i^-\leq x_i \leq b_i^+\},$$ then we would have had 	\begin{align}\label{eq:box.even}
	p(x)=s_0(x)+\sum_{i=1}^n s_i(x)(b_i^+-x_i)+\sum_{i=1}^n t_i(x)(x_i-b_i^-),
	\end{align}
	where $s_1(x),\ldots,s_{n}(x)$ and $t_1(x),\ldots,t_n(x)$ are sos. Indeed the set of polynomials $$\{x_i-b_i^-, b_i^+-x_i\}$$ satisfy the Archimdean property as well. To see this assume wlog that $b_i^+\geq b_i^-\geq 0$ and note that:
	$$b_i^++x_i=(x_i-b_i^-)+(b_i^++b_i^-) \in M(g)$$
	and hence $$(b_i^+)^2-x_i^2=(b_i^+-x_i)\frac{(b_i^++x_i)^2}{2b_i^+}+(b_i^++x_i)\frac{(b_i^+-x_i)^2}{2b_i^+} \in M(g).$$
	If $N=\lceil \sum_i (b_i^+)^2 \rceil$, we then have $$N-\sum_i x_i^2=N-\sum_i (b_i^+)^2+\sum_i ((b_i^+)^2- x_i^2) \in M(g).$$
	We have chosen to use the formulation given in (\ref{eq:box.odd}) rather than the one in (\ref{eq:box.even}) as one need only search for $n$ sos polynomials in (\ref{eq:box.odd}) rather than $2n$, in (\ref{eq:box.even}).
	
	%	We would like to note however that in the univariate case, formulation (\ref{eq:box.even}) is known to be exact for even polynomials and formulation (\ref{eq:box.odd}) is exact for odd polynomials. As a consequence, in our applications, we tend to change representations depending on the evenness of the degree of the polynomial at hand.
\end{remark} 

\begin{corollary}\label{cor:prob.solve.monoton}
	Recall the definition of $f_{mon}$ as given in (\ref{eq:opt.solve.monoton.hard}). Consider the following hierarchy of semidefinite programs indexed by $r$:
	\begin{equation}\label{eq:opt.prob.sos}
	\begin{aligned}
	f_{mon}^r\mathrel{\mathop{:}}=&\inf_{p \text{ of degree } d, ~\sigma_j^k} \sum_{i=1}^m (p(x_i)-y_i)^2\\
	&\text{s.t. } \rho_j\frac{\partial p(x)}{\partial x_j}=\sigma_j^0(x)+\sum_{k=1}^{n}\sigma_{j}^k(x)(b^+_k-x_k)(x_k-b^-_k), \forall j=1,\ldots,n \\
	&\sigma_j^k \text{ are sos and have degree }r.
	\end{aligned}
	\end{equation}
	We have $$f_{mon}^r \rightarrow f_{mon} \text{ as } r \rightarrow \infty.$$
\end{corollary}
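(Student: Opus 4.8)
The plan is to sandwich the limit of the hierarchy $\{f_{mon}^r\}$ between $f_{mon}$ and itself, using monotonicity of the hierarchy for one direction and the approximation result of Theorem \ref{th:mono.existence} for the other. First I would observe that for every fixed $r$, any feasible $p$ for the semidefinite program \eqref{eq:opt.prob.sos} is also feasible for the (harder) problem \eqref{eq:opt.solve.monoton.hard}: indeed, if $\rho_j \frac{\partial p(x)}{\partial x_j} = \sigma_j^0(x) + \sum_{k=1}^n \sigma_j^k(x)(b_k^+ - x_k)(x_k - b_k^-)$ with all $\sigma_j^k$ sos, then $\rho_j \frac{\partial p(x)}{\partial x_j} \geq 0$ for all $x \in B$ (each term on the right is nonnegative on $B$). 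Hence $f_{mon}^r \geq f_{mon}$ for all $r$. Moreover, since the set of sos polynomials of degree $r$ is contained in the set of sos polynomials of degree $r+2$, any feasible solution at level $r$ is feasible at level $r+2$, so the sequence $\{f_{mon}^r\}$ (along even $r$, and separately along odd $r$, or simply by noting the feasible sets are nested in $r$) is nonincreasing and bounded below by $f_{mon}$; therefore it converges to some limit $\ell \geq f_{mon}$.

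The substance of the proof is showing $\ell \leq f_{mon}$, equivalently that for every $\epsilon > 0$ there is an $r$ with $f_{mon}^r \leq f_{mon} + \epsilon$. I would fix $\epsilon > 0$ and take a polynomial $p^*$ (of some degree $d^*$) that is feasible for \eqref{eq:opt.solve.monoton.hard} and satisfies $\sum_{i=1}^m (p^*(x_i) - y_i)^2 \leq f_{mon} + \epsilon/2$; such a $p^*$ exists by definition of the infimum $f_{mon}$ (note: one must first check $f_{mon}$ is finite and attained-or-approached — feasibility of \eqref{eq:opt.solve.monoton.hard} is guaranteed, e.g. by a constant polynomial, so the infimum is well-defined and finite). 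The issue is that $p^*$ need only satisfy $\rho_j \frac{\partial p^*}{\partial x_j} \geq 0$ on $B$, not $> 0$, so Putinar's Positivstellensatz (Theorem \ref{lem:putinar}) does not directly apply. To fix this I would perturb $p^*$ exactly as in the proof of Theorem \ref{th:mono.existence}: for each index $j$ with $\rho_j \neq 0$, add a small multiple $\delta \rho_j x_j$ of the corresponding linear term to obtain $p_\delta(x) := p^*(x) + \delta \sum_{j : \rho_j \neq 0} \rho_j x_j$. Then $\rho_j \frac{\partial p_\delta}{\partial x_j} = \rho_j \frac{\partial p^*}{\partial x_j} + \delta \geq \delta > 0$ on $B$ for each such $j$, so by Theorem \ref{lem:putinar} (the box $B$ is Archimedean, as shown in the proof of Theorem \ref{th:mono.existence}) there is an integer $r_\delta$ and sos polynomials $\sigma_j^k$ of degree $r_\delta$ giving the required representation \eqref{eq:opt.prob.sos}; for indices $j$ with $\rho_j = 0$ there is no constraint. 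Hence $p_\delta$ is feasible for \eqref{eq:opt.prob.sos} at level $r = r_\delta$, so $f_{mon}^{r_\delta} \leq \sum_{i=1}^m (p_\delta(x_i) - y_i)^2$.

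Finally I would bound the loss of $p_\delta$: since $p_\delta \to p^*$ pointwise (indeed uniformly on $B$) as $\delta \to 0$ and the loss $\sum_i (p(x_i) - y_i)^2$ is continuous in the coefficients of $p$, there is a $\delta > 0$ small enough that $\sum_{i=1}^m (p_\delta(x_i) - y_i)^2 \leq \sum_{i=1}^m (p^*(x_i) - y_i)^2 + \epsilon/2 \leq f_{mon} + \epsilon$. For the corresponding $r_\delta$ we then have $f_{mon}^{r_\delta} \leq f_{mon} + \epsilon$, and since $\{f_{mon}^r\}$ is nonincreasing in $r$ this gives $\ell \leq f_{mon} + \epsilon$. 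As $\epsilon > 0$ was arbitrary, $\ell = f_{mon}$, completing the proof. The main obstacle is the strict-versus-nonstrict positivity gap that prevents a direct application of Putinar's theorem; the linear-perturbation trick from Theorem \ref{th:mono.existence} is exactly what resolves it, and one should be careful to state that the degree $d$ of the polynomial variable in \eqref{eq:opt.prob.sos} must be allowed to be at least $d^* + 1$ (or, more cleanly, that the hierarchy is also understood to be taken with $d \to \infty$, or $d$ large enough to accommodate $p_\delta$) — a minor bookkeeping point worth flagging.
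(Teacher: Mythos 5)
Your proposal is correct and follows essentially the same route as the paper's proof: perturb a near-optimal regressor by a small multiple of the linear form $\rho^Tx$ so that the relevant partial derivatives become strictly positive on the (Archimedean) box, invoke Putinar's Positivstellensatz to get feasibility of the perturbed polynomial at some finite level $r$, and control the change in the least-squares loss by continuity; the paper merely packages this as a proof by contradiction rather than your direct $\epsilon$-argument. The degree bookkeeping you flag is unnecessary: the perturbation $\delta\,\rho^Tx$ is linear, so for $d\geq 1$ it does not raise the degree of $p^*$ beyond $d$, and no enlargement of the degree of $p$ in (\ref{eq:opt.prob.sos}) is required.
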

\begin{proof}
	As $\{f^r_{mon}\}_r$ is decreasing and lower bounded by $f_{mon}$, it converges to some constant $c \geq f_{mon}$. Suppose by way of contradiction that $c=f_{mon}+\epsilon$ for some $\epsilon>0.$ By definition of $f_{mon}$ in (\ref{eq:opt.solve.monoton.hard}), there exists $q$ of degree $d$ such that $$f_{mon} \leq \sum_{i=1}^m (q(x_i)-y_i)^2 < f_{mon}+\epsilon/2. $$
	Consider the continuous function $$g(\alpha)=\alpha^2 (\sum_{i=1}^m \rho^Tx_i)^2+2\alpha \sum_{i=1}^m \rho^Tx_i \cdot (q(x_i)-y_i) $$ and note that $g(0)=0$ and $\lim_{\alpha \rightarrow \infty}g(\alpha)=+\infty$. Hence there exists $\alpha_0>0$ such that $g(\alpha_0)=\epsilon/2$. Similarly to the proof of Theorem \ref{th:mono.existence}, we now define $$p(x)=q(x)+\alpha_0 \rho^Tx.$$
	We have, for all $j=1,\ldots,m$,
	$$\rho_j \frac{\partial p(x)}{\partial x_j}=\rho_j \frac{\partial q(x)}{\partial x_j}+\alpha_0\rho_j^2 >0, \forall x \in B,$$ which implies from Theorem \ref{lem:putinar} that $p$ is feasible for (\ref{eq:opt.prob.sos}). But we have 
	\begin{align*}
	\sum_{i=1}^m (p(x_i)-y_i)^2&=\sum_{i=1}^m (q(x_i)-y_i)^2+\alpha_0^2 (\sum_{i=1}^m \rho^Tx_i)^2+2\alpha_0 \sum_{i=1}^m \rho^Tx_i \cdot (q(x_i)-y_i)\\
	&< f_{mon}+\epsilon/2+\epsilon/2=f_{mon}+\epsilon,
	\end{align*}
which contradicts the fact that $c=f_{mon}+\epsilon$.
\end{proof}

\subsubsection{Polynomial regressors constrained to be convex}

In this section, we assume that it is known that $f$ is convex over a box $B$, which is given to us. The goal is then to fit a polynomial $p$ to the data $(x_i,y_i), i=1,\ldots,m$ such that $p$ is also convex over $B$. In other words, we wish to solve the following optimization problem:
\begin{equation}\label{eq:conv.hard.problem}
\begin{aligned}
f_c\mathrel{\mathop{:}}=&\inf_{p \text{ of degree } d} &&\sum_{i=1}^m (p(x_i)-y_i)^2\\
&\text{s.t. } &&H_p(x)\succeq 0, \forall x \in B.
\end{aligned}
\end{equation}
Again, Theorem \ref{th:np.hardness.convex} suggests that this problem cannot be solved efficiently unless $P=NP.$

\begin{theorem}\label{th:convex.existence}
	Let $f$ be a $C^2$ function which is convex over a box 
	\begin{align}\label{def:box.convex.ex}
	B=\{(u_1,\ldots,u_n) \in \mathbb{R}^n ~|~ (b_i^+-u_i)(u_i-b_i^-)\geq 0, \forall i=1,\ldots,n\}.
	\end{align}
	For any $\epsilon>0$, there exists an integer $d$ and a polynomial $p$ of degree $d$ such that $$\max_{x \in B} |f(x)-p(x)|<\epsilon$$
	and such that $p$ is also convex over $B$.	Furthermore, convexity of $p$ over $B$ can be certified using a sum of squares certificate. 
\end{theorem}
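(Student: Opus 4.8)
## Proof proposal for Theorem \ref{th:convex.existence}

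The plan is to mirror the structure of the proof of Theorem \ref{th:mono.existence} very closely, with the role of ``positive derivative'' replaced by ``positive definite Hessian'' and the role of the scalar multiplier $\epsilon/(1+2C)\cdot x_1$ replaced by a strongly convex perturbation such as $\lambda(\sum_i x_i^2)$. First I would invoke Lemma \ref{lem:approx.poly} with $m=2$: since $f\in C^2$, for any $\delta>0$ there is an integer $d$ and a polynomial $q$ of degree $d$ such that $\max_{x\in B}|f(x)-q(x)|\leq \delta$ and, crucially, $\max_{x\in B}\bigl|\partial^k q(x)-\partial^k f(x)\bigr|\leq \delta$ for every multi-index $k$ with $|k|\leq 2$. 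The second family of estimates controls all second partials, hence controls $\|H_q(x)-H_f(x)\|$ uniformly on $B$ (say in the operator norm, up to a dimensional constant), so $\lambda_{\min}(H_q(x)) \geq \lambda_{\min}(H_f(x)) - c_n\delta \geq -c_n\delta$ on $B$, using convexity of $f$ (i.e. $H_f(x)\succeq 0$ on $B$).

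Next I would define $p(x) \mathrel{\mathop{:}}= q(x) + \lambda\bigl(\sum_{i=1}^n x_i^2\bigr)$ for a suitably chosen $\lambda>0$. Its Hessian is $H_p(x) = H_q(x) + 2\lambda I$, so $\lambda_{\min}(H_p(x)) \geq 2\lambda - c_n\delta$ on $B$. Choosing, for a target accuracy $\epsilon$, first $\lambda$ small enough that $\lambda\cdot\max_{x\in B}\sum_i x_i^2 < \epsilon/2$, and then $\delta$ small enough that $\delta < \epsilon/2$ and $c_n\delta < 2\lambda$, we get simultaneously $\max_{x\in B}|f(x)-p(x)| \leq \delta + \lambda\max_{x\in B}\sum_i x_i^2 < \epsilon$ and $H_p(x) \succ 0$ for all $x\in B$. (Note the order of the quantifiers matters: $\lambda$ is fixed first, depending only on $\epsilon$ and $B$, and then $\delta$ — hence $d$ and $q$ — is chosen depending on $\lambda$; this is exactly the structure used in the proof of Theorem \ref{th:mono.existence} with its constant $C$.)

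Finally, to produce the sum of squares certificate, I would apply Putinar's Positivstellensatz (Theorem \ref{lem:putinar}) to the polynomial $y^T H_p(x) y$ viewed as a polynomial in the $2n$ variables $(x,y)$. Since $H_p(x)\succ 0$ for all $x\in B$, the polynomial $y^TH_p(x)y$ is strictly positive on the set $B' \mathrel{\mathop{:}}= B \times \{y : \|y\|^2 \leq 1\} \subseteq \mathbb{R}^{2n}$ (restricting $y$ to the unit ball loses no generality by homogeneity in $y$, and is needed only to get a compact Archimedean description; alternatively one argues directly that $y^TH_p(x)y$ is positive on $B\times(\mathbb{R}^n\setminus\{0\})$, but the Hessian does vanish at $y=0$, so the ball restriction is the clean route). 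The constraint set $B'$ is Archimedean — the argument that $\{(b_k^+-x_k)(x_k-b_k^-)\}$ is Archimedean is exactly the one given in the proof of Theorem \ref{th:mono.existence}, and one appends the constraint $1-\|y\|^2\geq 0$. Hence there exist an integer $r$ and sos polynomials $\sigma_0,\dots,\sigma_n,\sigma_{n+1}$ in $(x,y)$ of degree at most $r$ with
\begin{equation*}
y^TH_p(x)y = \sigma_0(x,y) + \sum_{k=1}^n \sigma_k(x,y)\,(b_k^+-x_k)(x_k-b_k^-) + \sigma_{n+1}(x,y)\,(1-\|y\|^2),
\end{equation*}
which is the desired certificate of convexity of $p$ over $B$.

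The routine parts here are the $\epsilon$-management and the verification that the perturbed polynomial stays close to $f$; these are essentially copied from Theorem \ref{th:mono.existence}. The one genuinely new technical point — and the step I would be most careful with — is the passage from ``$H_p(x)\succ 0$ on $B$'' to a Positivstellensatz certificate: one must homogenize/restrict in the $y$ variables correctly so that $y^TH_p(x)y$ is \emph{strictly} positive on a \emph{compact Archimedean} set, since it necessarily vanishes on the line $y=0$. Intersecting with the unit ball $\|y\|\leq 1$ resolves this, and strict positivity there follows from $\lambda_{\min}(H_p(x))\geq 2\lambda - c_n\delta > 0$ uniformly on the compact box $B$ (the minimum over the compact set $B\times\{\|y\|=1\}$ is attained and positive). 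I would also double-check that the Archimedean property for the product constraints $(b_k^+-x_k)(x_k-b_k^-)$ transfers verbatim from the monotonicity proof, which it does since the $y$-ball constraint only adds to the quadratic module.
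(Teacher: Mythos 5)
Your approximation-and-perturbation step is essentially the paper's: approximate $f$ and its second partials uniformly on $B$ by a polynomial $q$ via Lemma \ref{lem:approx.poly}, bound $\lambda_{\min}\bigl(H_q(x)-H_f(x)\bigr)$ from below by a dimensional constant times the uniform error, and add a small multiple of $\sum_i x_i^2$ so that $H_p(x)\succ 0$ on $B$ while $p$ stays $\epsilon$-close to $f$. Your ordering of the choices ($\lambda$ first, then $\delta$ and $q$) is a harmless variant of the paper's explicit constant $\tfrac{n\epsilon}{2(1+2nC)}$.

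The gap is in the certificate step. You invoke the scalar Putinar Positivstellensatz (Theorem \ref{lem:putinar}) for $y^TH_p(x)y$ on $B'=B\times\{y:\|y\|^2\leq 1\}$, claiming strict positivity there. That claim is false: the polynomial vanishes at $y=0$, and $y=0$ lies in $B'$ (you note the vanishing, but then conflate the ball with the sphere when you argue that the minimum over $B\times\{\|y\|=1\}$ is positive). Putinar requires strict positivity on the entire set cut out by the constraints, so it cannot be applied as stated, and there is no guarantee that a polynomial which is nonnegative but has a zero on the set belongs to the quadratic module generated by $\{(b_k^+-x_k)(x_k-b_k^-),\,1-\|y\|^2\}$; your displayed identity with an sos multiplier on $(1-\|y\|^2)$ is therefore unjustified. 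Two repairs: (i) the paper's route, which applies the matrix version of Putinar (Lemma \ref{lem:matrix.approx}, due to Scherer and Hol) directly to the positive definite matrix $H_p(x)$ on $B$, yielding sos-matrices $S_k$ with $H_p(x)=S_0(x)+\sum_k S_k(x)(b_k^+-x_k)(x_k-b_k^-)$, hence sos polynomials $\sigma_k(x,y)=y^TS_k(x)y$, quadratic in $y$, exactly in the form used in Corollary \ref{cor:prob.solve.convex}; or (ii) keep the scalar Putinar but work on $B\times\{y:\|y\|^2=1\}$, treating $1-\|y\|^2=0$ as an equality (two inequalities), where $y^TH_p(x)y$ is genuinely strictly positive; the multiplier of $(1-\|y\|^2)$ is then a general polynomial rather than sos, and you must add the homogeneity-in-$y$ argument to conclude $H_p(x)\succeq 0$ on all of $B$. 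Either fix closes the gap; as written, the final step does not go through.
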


This proof uses the following lemma, which is a generalization of Putinar's Positivstellensatz for matrices.

\begin{lemma}[Theorem 2 in \cite{scherer2006matrix}]\label{lem:matrix.approx}
	Let $$S=\{x \in \mathbb{R}^n ~|~ g_1(x)\geq 0, \ldots, g_m(x)\geq 0\}$$ and assume that $\{g_1,\ldots,g_m\}$ satisfy the Archimedean property (see Theorem \ref{lem:putinar}). If the symmetric-valued polynomial matrix $H(x)$ is positive definite on $S$, then there exist sos-matrices $S_0(x),\ldots, S_m(x)$ such that $$H(x)=S_0(x)+\sum_{i=1}^m S_i(x)g_i(x).$$
\end{lemma}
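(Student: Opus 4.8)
The plan is to mirror the structure of the proof of Theorem~\ref{th:mono.existence}, replacing the scalar monotonicity argument with a matrix (Hessian) argument and invoking the matrix Positivstellensatz of Lemma~\ref{lem:matrix.approx} in place of the scalar one. Fix $\epsilon>0$ and set $C\mathrel{\mathop{:}}=\max_{x\in B}\max_i|x_i|$. First I would apply Lemma~\ref{lem:approx.poly} with $m=2$ to obtain, for a tolerance $\tau>0$ to be fixed later, a polynomial $q$ of degree $d$ with $\max_{x\in B}|f(x)-q(x)|<\tau$ and $\max_{x\in B}|\partial^k q(x)-\partial^k f(x)|<\tau$ for every multi-index $k$ with $\sum_i|k_i|\le 2$. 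In particular, each entry of the matrix $H_q(x)-H_f(x)$ is bounded in absolute value by $\tau$ uniformly over $B$, since these entries are exactly the second-order partials being approximated.

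The key step is to convert this entrywise closeness into a spectral lower bound. For a symmetric $n\times n$ matrix $M$ with $|M_{ij}|\le\tau$ we have $\|M\|_2\le\|M\|_F\le n\tau$ (one may alternatively use Gershgorin's theorem), so by Weyl's inequality together with the convexity of $f$ over $B$ (which gives $H_f(x)\succeq 0$, i.e. $\lambda_{\min}(H_f(x))\ge 0$ for all $x\in B$), we obtain $\lambda_{\min}(H_q(x))\ge -n\tau$ for every $x\in B$. I would then define the perturbed polynomial
\begin{equation*}
p(x)\mathrel{\mathop{:}}=q(x)+n\tau\sum_{i=1}^n x_i^2,
\end{equation*}
whose Hessian satisfies $H_p(x)=H_q(x)+2n\tau I\succeq n\tau I\succ 0$ for all $x\in B$. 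Thus $p$ has a positive definite Hessian on $B$ and hence is convex over $B$ by the second-order characterization of convexity over a box. The approximation bound follows from $|p(x)-f(x)|\le\tau+n\tau\sum_i x_i^2\le\tau(1+n^2C^2)$ on $B$; choosing $\tau<\epsilon/(1+n^2C^2)$ at the outset then yields $\max_{x\in B}|f(x)-p(x)|<\epsilon$ as required.

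Finally, to produce the sum of squares certificate, I would invoke Lemma~\ref{lem:matrix.approx}. Writing the box as $B=\{x\mid g_i(x)\ge 0,\ i=1,\dots,n\}$ with $g_i(x)=(b_i^+-x_i)(x_i-b_i^-)$, the set $\{g_1,\dots,g_n\}$ is Archimedean---this is exactly the verification already carried out in the proof of Theorem~\ref{th:mono.existence}. Since $H_p(x)\succ 0$ uniformly on $B$, Lemma~\ref{lem:matrix.approx} supplies sos-matrices $S_0(x),\dots,S_n(x)$ with
\begin{equation*}
H_p(x)=S_0(x)+\sum_{i=1}^n S_i(x)\,(b_i^+-x_i)(x_i-b_i^-),
\end{equation*}
which is the desired sos certificate of convexity; equivalently, $y^TH_p(x)y=y^TS_0(x)y+\sum_i\big(y^TS_i(x)y\big)g_i(x)$ exhibits $y^TH_p(x)y$ as an element of the quadratic module generated by the $g_i$. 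The main obstacle I anticipate is the spectral-norm step together with the careful bookkeeping of quantifiers: the perturbation magnitude $n\tau$ must simultaneously dominate the Hessian approximation error (to force positive definiteness) and remain small enough to preserve the $\epsilon$-approximation. This is achievable precisely because refining $q$ by increasing $d$ drives $\tau$, and hence the perturbation, to zero, so a single choice of $\tau$ depending only on $\epsilon$, $n$, and $C$ makes both requirements hold at once.
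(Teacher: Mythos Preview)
You have proved the wrong statement. The statement you were asked to prove is Lemma~\ref{lem:matrix.approx}, the matrix Positivstellensatz of Scherer: given the Archimedean property, any polynomial matrix $H(x)$ that is positive definite on $S$ admits a decomposition $H(x)=S_0(x)+\sum_i S_i(x)g_i(x)$ with sos-matrices $S_i$. What you have written is instead a proof of Theorem~\ref{th:convex.existence}, the approximation result for convex functions that \emph{uses} Lemma~\ref{lem:matrix.approx} as a black box in its final step. Your argument never addresses the existence of the sos-matrix decomposition for a general positive definite polynomial matrix; it simply invokes that existence at the end.

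In the paper, Lemma~\ref{lem:matrix.approx} is quoted from \cite{scherer2006matrix} and is not given a proof; it is an external result. If one were to prove it, the argument is a matrix extension of Putinar's theorem and is of a quite different character from anything you wrote---it involves localization and algebraic arguments on the quadratic module, not Bernstein approximation or Hessian perturbation. As a side remark, your write-up \emph{is} essentially the paper's proof of Theorem~\ref{th:convex.existence}: same Bernstein approximation of second-order partials, same entrywise-to-spectral bound (the paper uses $\|M\|_2\le n\|M\|_{\max}$, you use $\|M\|_2\le\|M\|_F\le n\tau$), same quadratic perturbation $q(x)+\text{const}\cdot x^Tx$, and the same appeal to Lemma~\ref{lem:matrix.approx} at the end. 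But none of that constitutes a proof of the lemma itself.
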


%\begin{remark}
%	It follows immediately from Lemma \ref{lem:putinar.box} that if $$B=\{x \in \mathbb{R}^n ~|~ (b_i^+-x_i)(x_i-b_i^-)\geq 0\}$$
%	and $H(x)\succ 0$ on $B$ then there exist sos-matrices $S_0(x),\ldots,S_n(x)$ such that $$H(x)=S_0(x)+\sum_{i=1}^n S_i(x)(b_i^+-x_i)(x_i-b_i^-).$$
%\end{remark}

%\begin{theorem}\label{th:convex.existence}
%Let $f$ be a $C^2$ function, convex over a box 
%$$B=\{(u_1,\ldots,u_n)\in \mathbb{R}^n ~|~ (b_i^+-u_i)(u_i-b_i^-)\geq 0, \forall i=1,\ldots,n\}.$$
%Consider a noisy model of generating data 
%$$y_i=f(x_i)+\epsilon_i,~\forall i=1,\ldots,m,$$
%	where $x_i \in \mathbb{R}^n$ is a feature vector and $\epsilon_i, i=1,\ldots,n$ are independently sampled, and follow a normal distribution with mean zero and standard deviation $\frac{\sigma}{m}$. Let $y \in \mathbb{R}^n$ and consider \begin{equation}\label{eq:opt.conv.sos}
%\begin{aligned}
%s_d\mathrel{\mathop{:}}=&\inf_p \sum_{i=1}^p \mathbb{E}|y_i-p(x_i)|\\
%&\text{s.t. } y^TH_p(x)y=\sigma_0(x,y)+\sum_{k=1}^{n}\sigma_k(x,y)(b^+_k-x_k)(x_k-b^-_k) \\
%&\sigma_k \text{ are sos of degree }d-2\text{ in x and quadratic in y}.
%\end{aligned}
%\end{equation}
% For any $\epsilon>0$, there exists a polynomial $p$ of degree $d$, solution to $(\ref{eq:opt.conv.sos})$, such that $$\max_{x\in B}|f(x)-p(x)|<\epsilon.$$
%This implies that  $\lim_{d\rightarrow \infty} s_d \leq \sigma \sqrt{\frac{2}{\pi}}$. In particular $s_d\rightarrow 0$ if we consider the noiseless case, i.e., $\epsilon_i=0, \forall i=1,\ldots,m.$
%\end{theorem}

\begin{proof}[Proof of Theorem \ref{th:convex.existence}]
	Let $f$ be a function in $C^2$, $B$ be a box as in (\ref{def:box.convex.ex}), and $\epsilon>0.$ Assume that $$H_f(x)\succeq 0, \forall x\in B$$ 
	and let $C\mathrel{\mathop{:}}=\max_{x \in B} \frac12 \sum_{i=1}^n x_i^2.$ From Lemma \ref{lem:approx.poly}, we know that there exists a polynomial $q$ of degree $d$ such that $$\max_{x \in B} |f(x)-q(x)|\leq \frac{\epsilon}{2(1+2nC)}$$ 
	and 
	\begin{align}\label{eq:second.derivative.bound}
	\max_{x \in B} \left|\frac{\partial^2 f(x)}{\partial x_i \partial x_j}- \frac{\partial^2 q(x)}{\partial x_i \partial x_j}\right| \leq \frac{\epsilon}{2(1+2nC)},\forall i,j=1,\ldots,n.
	\end{align}
	We denote by $M(x)=H_q(x)-H_f(x).$ As $f$ and $q$ are in $C^2$, the entries of $M(x)$ are continuous in $x$. This implies that $$x \mapsto \lambda_{\min}(M(x))$$ is continuous since the minimum eigenvalue of a matrix is continuous with respect to its entries \cite[Corollary VI.1.6]{bhatia2013matrix}. Let $$\Lambda\mathrel{\mathop{:}}=\min_{x\in B} \lambda_{\min}(M(x))$$ and note that $M(x)\succeq \Lambda I$, for all $x \in B.$ As the minimum is attained over $B$, there exists $x_0 \in B$ such that $\Lambda=\lambda_{\min} M(x_0).$ From (\ref{eq:second.derivative.bound}), we know that the absolute value of each entry of $M(x_0)$ is upperbounded by $\frac{\epsilon}{2(1+2nC)}.$ Recalling that for a matrix $A$ with entries $a_{ij}$, $||A||_{\max}=\max_{i,j} |a_{ij}|$, this implies that $$||M(x_0)||_{\max} \leq \frac{\epsilon}{2(1+2nC)}.$$ 
	By equivalence of norms, we have 
	$$||M(x_0)||_2\leq n ||M(x_0)||_{\max} \leq  \frac{n\epsilon}{2(1+2nC)},$$
	and as $||M(x_0)||_2=\max \{|\lambda_{\min}(M(x_0))|, |\lambda_{\max}(M(x_0))| \},$ we deduce that $$\max \{|\lambda_{\min}(M(x_0))|, |\lambda_{\max}(M(x_0))| \} \leq \frac{n\epsilon}{2(1+2nC)}.$$ This implies that $$-\frac{n\epsilon}{2(1+2nC)}\leq \Lambda \leq \frac{n\epsilon}{2(1+2nC)}$$ and so $M(x)\succeq -\frac{n\epsilon}{2(1+2nC)}$ for all $x \in B.$ Let $$p(x)=q(x)+\frac{n\epsilon}{2(1+2nC)}x^Tx.$$
	We have, for any $x \in B$,
	\begin{align*}
	|f(x)-p(x)|&\leq |f(x)-q(x)|+|q(x)-p(x)|\\
	&\leq \frac{\epsilon}{2(1+2nC)}+\frac{n\epsilon}{2(1+2nC)}\cdot 2C\\
	&=\frac{\epsilon}{2}<\epsilon.
	\end{align*}
	As $M(x) \succeq \Lambda I$, $\Lambda \geq -\frac{n\epsilon}{2(1+2nC)}$, and $H_f(x)\succeq 0$, we also have
	\begin{align*}
	H_p(x)&=H_p(x)-H_q(x)+H_q(x)-H_f(x)+H_f(x)\\
	&\succeq \frac{2n\epsilon}{2(1+2nC)}I-\frac{n\epsilon}{2(1+2nC)}I\\
	&\succeq \frac{n \epsilon}{2(1+2nC)} \succ 0.
	\end{align*}
	We conclude that there exists a polynomial $p$ which is convex over $B$ and such that $$\max_{x \in B} |f(x)-p(x)|<\epsilon.$$
	
	Furthermore, from Lemma \ref{lem:matrix.approx}, this implies that there exist sum of squares polynomials $\sigma_k(x,y)$, $k=1,\ldots,n$ of degree $r$ in $x$ and quadratic in $y$ such that
	$$y^TH_p(x)y=\sigma_0(x,y)+\sum_{k=1}^n \sigma_k(x,y)(b_k^+-x_k)(x_k-b_k^-).$$
\end{proof}

\begin{corollary}\label{cor:prob.solve.convex} Recall the definition of $f_c$ as given in (\ref{eq:conv.hard.problem}). Consider the following hierarchy of semidefinite programs indexed by $r$:
	\begin{equation}\label{eq:opt.prob.conv}
	\begin{aligned}
	f_c^r\mathrel{\mathop{:}}=&\inf_{p \text{ of degree } d, ~\sigma_k} \sum_{i=1}^m (p(x_i)-y_i)^2)\\
	&\text{s.t. } y^TH_p(x)y=\sigma_0(x,y)+\sum_{k=1}^n \sigma_k(x,y)(b_k^+-x_k)(x_k-b_k^-) \\
	&\sigma_k \text{ are sos, and of degree }\leq r \text{ in } x \text{ and  } 2 \text{ in } y.
	\end{aligned}
	\end{equation} We have $$f_c^r \rightarrow f_c \text{ as } r \rightarrow \infty.$$
\end{corollary}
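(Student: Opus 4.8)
The plan is to follow the same line of argument used to prove Corollary~\ref{cor:prob.solve.monoton}, with the strictly convex perturbation $x^Tx$ playing the role that the linear term $\rho^Tx$ played there.

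First I would dispense with the two easy facts. As $r$ increases, any sos multiplier $\sigma_k$ of degree $\le r$ in $x$ remains admissible at level $r+1$, so the feasible set of (\ref{eq:opt.prob.conv}) grows with $r$ and hence $f_c^{r+1}\le f_c^r$. Moreover, the relaxation is nonempty already at $r=0$: for instance $p(x)=x^Tx$ has $y^TH_p(x)y=2\|y\|_2^2$, which is of the required form with $\sigma_0=2\|y\|_2^2$ and $\sigma_k=0$. Next, if $p$ is feasible for (\ref{eq:opt.prob.conv}) then for every $x\in B$ we have $(b_k^+-x_k)(x_k-b_k^-)\ge 0$ and $\sigma_k(x,y)\ge0$, so $y^TH_p(x)y\ge0$ for all $y$, i.e. $H_p(x)\succeq0$; thus $p$ is feasible for (\ref{eq:conv.hard.problem}) and its objective is $\ge f_c$. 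Consequently $\{f_c^r\}$ is nonincreasing and bounded below by $f_c$, so it converges to some limit $c\ge f_c$.

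It remains to rule out $c>f_c$. Suppose $c=f_c+\epsilon$ for some $\epsilon>0$. By definition of $f_c$ as an infimum there is a polynomial $q$ of degree $d$ with $H_q(x)\succeq0$ on $B$ and $\sum_{i=1}^m(q(x_i)-y_i)^2<f_c+\epsilon/2$. For $\alpha>0$ set $p_\alpha(x)\mathrel{\mathop{:}}=q(x)+\alpha\, x^Tx$, which has degree $\le \max(d,2)=d$ in the relevant regime $d\ge 2$ (for $d\le 1$ every regressor is affine, hence convex, and $f_c^r=f_c$ trivially). Its Hessian satisfies $H_{p_\alpha}(x)=H_q(x)+2\alpha I\succ0$ for \emph{all} $x$, in particular on $B$; hence, since the box description $\{(b_k^+-x_k)(x_k-b_k^-)\ge0\}_{k=1}^n$ is Archimedean (as shown in the proof of Theorem~\ref{th:mono.existence}), Lemma~\ref{lem:matrix.approx} produces sos-matrices $S_0,\dots,S_n$ with $H_{p_\alpha}(x)=S_0(x)+\sum_{k=1}^n S_k(x)(b_k^+-x_k)(x_k-b_k^-)$, and writing $\sigma_k(x,y)=y^TS_k(x)y$ shows that $p_\alpha$ is feasible for (\ref{eq:opt.prob.conv}) once $r$ is at least the (finite) degree in $x$ of these multipliers. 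Finally, expanding the objective gives $\sum_{i=1}^m(p_\alpha(x_i)-y_i)^2=\sum_{i=1}^m(q(x_i)-y_i)^2+g(\alpha)$, where $g(\alpha)=\alpha^2\sum_i(x_i^Tx_i)^2+2\alpha\sum_i(q(x_i)-y_i)x_i^Tx_i$ is continuous with $g(0)=0$; choosing $\alpha_0>0$ small enough that $g(\alpha_0)<\epsilon/2$ yields $\sum_i(p_{\alpha_0}(x_i)-y_i)^2<f_c+\epsilon=c$. Since $p_{\alpha_0}$ is feasible for (\ref{eq:opt.prob.conv}) at some level $r$, we get $f_c^r<c$, contradicting $f_c^r\ge c$. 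Hence $c=f_c$.

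The main obstacle — though a mild one given the machinery already in place — is precisely the point where strictness is needed: Lemma~\ref{lem:matrix.approx} applies only to a matrix-valued polynomial that is positive \emph{definite} on $B$, whereas the approximant $q$ coming from the definition of $f_c$ need only have a positive \emph{semidefinite} Hessian. Adding the strictly convex term $\alpha x^Tx$ (with Hessian $2\alpha I\succ0$) closes this gap, and the only remaining care is quantifying, via continuity of $g$ at $\alpha=0$, how much the least-squares objective moves so that the perturbed value still undercuts $c$.
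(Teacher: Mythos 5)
Your proposal is correct and is essentially the paper's intended argument: the paper simply states that the proof is analogous to that of Corollary~\ref{cor:prob.solve.monoton}, and your write-up carries out exactly that analogy, replacing the linear perturbation $\alpha\rho^Tx$ by the strictly convex one $\alpha x^Tx$ and invoking Lemma~\ref{lem:matrix.approx} on the Archimedean box description, just as in the proof of Theorem~\ref{th:convex.existence}.
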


\begin{proof}
	The proof of this theorem is analogous to that of Corollary \ref{cor:prob.solve.monoton} and hence left to the reader.
\end{proof}

\subsection{Cases where the semidefinite programming-based relaxations are exact}

In Corollaries \ref{cor:prob.solve.monoton} and  \ref{cor:prob.solve.convex}, we have replaced the original problem of finding polynomial regressors which are convex or monotone over $B$ with sum of squares-based relaxations. In both cases, we have asymptotic guarantees on the quality of these relaxations, i.e., we are guaranteed to recover the solutions of (\ref{eq:opt.solve.monoton.hard}) and (\ref{eq:conv.hard.problem}) if the degree of the sos polynomials involved is arbitrarily high. (We remark that no explicit bound on this degree can be given as a function of the number of variables and the degree only~\cite{Reznick_Unif_denominator}.) In two particular cases (which we cover below), one can in fact come up with semidefinite programming-based relaxations which are \emph{exact}: this means that the degree of the sum of squares polynomials needed to recover the true solution is explicitly known. Hence, one can write a semidefinite program that exactly solves (\ref{eq:opt.solve.monoton.hard}) and (\ref{eq:conv.hard.problem}). We review these two cases below.

\subsubsection{The quadratic case}

In this particular case, we wish to solve (\ref{eq:opt.solve.monoton.hard}) and (\ref{eq:conv.hard.problem}) with $d=2$. 

We first consider the case where we would like to constrain $p$ to have a certain monotonicity profile, i.e., we would like to solve (\ref{eq:opt.solve.monoton.hard}). As $p$ is quadratic, each of its partial derivatives is a linear function. Requiring that a linear function be nonnegative over a box can be done using the following lemma, which is a variant of the Farkas lemma.
\begin{lemma}[See, e.g., Proposition I.1 in \cite{handelman}]
	Let $K$ be a bounded polyhedron with nonempty interior defined by $\beta_i \geq 0, i=1,\ldots,s$, where $\beta_i=\alpha_i^Tx+\gamma_i$ are linear forms ($\alpha_i \in \mathbb{R}^n$ and $\gamma_i \in \mathbb{R}$). If $\beta$ is a linear form, nonnegative over $K$, then there exist nonnegative scalars $\lambda_1,\ldots,\lambda_s$ such that $$\beta=\sum_{i=1}^s \lambda_i \beta_i.$$
\end{lemma}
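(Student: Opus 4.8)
The statement is a classical affine Farkas-type lemma (this is Handelman's Proposition~I.1 in \cite{handelman}; note that ``linear form'' here means a polynomial of degree at most $1$, so the $\beta_i$ and $\beta$ are affine functions). The plan is to prove it by linear programming duality, preceded by one preliminary observation that crucially exploits the boundedness of $K$. Write $\beta(x)=\alpha^\top x+\gamma$ and $\beta_i(x)=\alpha_i^\top x+\gamma_i$.

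First I would show that the constant function $\mathbf 1$ belongs to the cone generated by $\beta_1,\dots,\beta_s$. Since $K$ is nonempty and bounded, its recession cone $\{d\in\mathbb R^n : \alpha_i^\top d\ge 0,\ i=1,\dots,s\}$ equals $\{0\}$; but this cone is exactly the dual cone of the closed, finitely generated cone $\text{cone}\{\alpha_1,\dots,\alpha_s\}$, so that cone must be all of $\mathbb R^n$. (The degenerate case $\alpha_1=\dots=\alpha_s=0$ cannot occur when $n\ge 1$, since then $K$ would be $\emptyset$ or $\mathbb R^n$; for $n=0$ the lemma is trivial.) In particular $0$ is a nontrivial nonnegative combination $\sum_i\mu_i\alpha_i=0$ with $\mu\ge 0$, $\mu\neq 0$, which forces $\sum_i\mu_i\beta_i$ to be a constant $c_0$. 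Evaluating at an interior point $x_0$ of $K$, where $\beta_i(x_0)>0$ for all $i$, gives $c_0=\sum_i\mu_i\beta_i(x_0)>0$; rescaling, $\mathbf 1=\sum_i(\mu_i/c_0)\beta_i$.

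Next I would apply LP duality to $p^\star:=\min_x\{\beta(x): \beta_i(x)\ge 0,\ i=1,\dots,s\}$. This program is feasible ($K\neq\emptyset$), its optimum is attained ($K$ is compact), and $p^\star\ge 0$ because $\beta\ge 0$ on $K$. Its Lagrangian dual is $\max_{\lambda\ge 0}\{\gamma-\sum_i\lambda_i\gamma_i : \sum_i\lambda_i\alpha_i=\alpha\}$, and strong duality (Slater holds since $K$ has nonempty interior) yields a dual optimal $\lambda^\star\ge 0$ with $\sum_i\lambda_i^\star\alpha_i=\alpha$ and $\gamma-\sum_i\lambda_i^\star\gamma_i=p^\star$; a one-line computation then gives the identity of functions $\beta-\sum_i\lambda_i^\star\beta_i\equiv p^\star$. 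Combining the two ingredients, $\beta=\sum_i\lambda_i^\star\beta_i+p^\star\cdot\mathbf 1=\sum_i\bigl(\lambda_i^\star+(p^\star/c_0)\mu_i\bigr)\beta_i$, which is a nonnegative combination of the $\beta_i$ since $p^\star\ge 0$, $\lambda^\star\ge 0$, $\mu\ge 0$, $c_0>0$. The main obstacle is precisely the handling of this leftover constant $p^\star$: LP duality by itself only expresses $\beta$ as a nonnegative combination of the $\beta_i$ plus a nonnegative constant, and it is exactly here that boundedness of $K$ is indispensable, via the auxiliary fact $\mathbf 1\in\text{cone}\{\beta_1,\dots,\beta_s\}$ that lets that constant be absorbed.
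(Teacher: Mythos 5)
The paper never proves this lemma: it is quoted as a known result (Proposition I.1 in \cite{handelman}) and used as a black box, so there is no in-paper argument to measure your proof against. Your derivation is correct and self-contained, and it follows what is essentially the standard route to this affine Farkas-type statement: strong LP duality applied to $\min\{\beta(x) : \beta_i(x)\ge 0,\ i=1,\dots,s\}$ writes $\beta=\sum_i\lambda_i^\star\beta_i+p^\star$ with $\lambda^\star\ge 0$ and $p^\star\ge 0$, and your preliminary step — boundedness of $K$ forces the recession cone to be trivial, hence $\operatorname{cone}\{\alpha_1,\dots,\alpha_s\}=\mathbb{R}^n$, hence $\mathbf{1}\in\operatorname{cone}\{\beta_1,\dots,\beta_s\}$ — is exactly the mechanism that lets the leftover constant be absorbed. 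You correctly identify that this is the one place where boundedness is indispensable; without it only the weaker representation $\beta=\lambda_0+\sum_i\lambda_i\beta_i$ with $\lambda_0\ge 0$ is available.

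Two small points of precision, neither of which is a genuine gap. First, Slater's condition is not needed for strong duality of a linear program (primal feasibility plus boundedness of the optimal value already suffices), so invoking the nonempty interior there is harmless but superfluous; the interior is genuinely used only in your positivity argument for $c_0$. Second, the parenthetical claim that $\beta_i(x_0)>0$ for \emph{every} $i$ at an interior point $x_0$ fails in the degenerate case where some $\beta_i$ is identically zero (such a vacuous constraint is allowed by the statement). This does not break the proof: either discard identically zero $\beta_i$'s (they can simply receive $\lambda_i=0$), or observe that your construction of $\mu$ — obtained by expressing some $v\neq 0$ and $-v$ in $\operatorname{cone}\{\alpha_i\}$ — necessarily puts positive weight on an index with $\alpha_i\neq 0$, and for such an index an interior point does satisfy $\beta_i(x_0)>0$, which already yields $c_0>0$. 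With that qualification the argument is complete.
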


From this lemma, it follows that, when $p$ is quadratic, solving (\ref{eq:opt.solve.monoton.hard}) is exactly equivalent to solving
\begin{equation*}
\begin{aligned}
f_m^r\mathrel{\mathop{:}}=&\inf_{p \text{ of degree } 2, ~\lambda_j^k,~\tau_j^k} \sum_{i=1}^m (p(x_i)-y_i)^2\\
&\text{s.t. } \rho_j\frac{\partial p(x)}{\partial x_j}=\lambda_j^0+\sum_{k=1}^{n} \lambda_j^k(b^+_k-x_k)+\sum_{k=1}^n \tau_{j}^k(x_k-b^-_k), \forall j=1,\ldots,n, \forall x\in \mathbb{R}^n, \\
&\lambda_j^k \geq 0, k=0,\ldots,n,~ \tau_j^k \geq 0, k=1,\ldots,n,
\end{aligned}
\end{equation*}
%\begin{lemma}[Farkas' Lemma, see Corollary 7.1d. in \cite{SchrijverFarkas}]
%	Let $A$ be a matrix and let $b$ be a vector. There exists a vector $x \geq 0$ with $Ax=b$ if and only if $yb\geq 0$ for each row vector $y$ with $yA\geq 0.$
%\end{lemma}
which is a convex quadratic program.

In the case where we would like to solve (\ref{eq:conv.hard.problem}), note that the Hessian of any quadratic function is constant. Hence, as written, problem (\ref{eq:conv.hard.problem}) is a semidefinite program.

\subsubsection{The separable case}

Recall that a function $f:\mathbb{R}^n \rightarrow \mathbb{R}$ is said to be separable if $$f(x)=\sum_{i=1}^{n}f_i(x_i)$$ for some univariate functions $f_i:\mathbb{R}\mapsto \mathbb{R}.$

We first consider the case where we would like to solve (\ref{eq:opt.solve.monoton.hard}), assuming that $p$ is separable, i.e., $p(x)=\sum_{i=1}^n p_i(x_i)$. Note that we have $$\frac{\partial p(x)}{\partial x_j}=p_j'(x_j).$$ In other words, one can replace (\ref{eq:opt.solve.monoton.hard}) by
\begin{equation*}
\begin{aligned}
f_{mon}\mathrel{\mathop{:}}=&\inf_{p \text{ separable of degree } d} &&\sum_{i=1}^m (p(x_i)-y_i)^2\\
&\text{s.t. } &&\rho_j p_j'(x_j)\geq 0, \forall x_j \in [b_j^-,b_j^+], j=1,\ldots,n.
\end{aligned}
\end{equation*}
where $x_j\mapsto p_j'(x_j)$ is a univariate polynomial.
We then use the following lemma.
\begin{lemma}[Theorem 3.72 in \cite{blekherman2012semidefinite}]\label{lem:pablo.univ}
	Let $a<b$. Then the univariate polynomial $p(x)$ is nonnegative over $[a,b]$ if and only if it can be written as 
	$$\begin{cases}
	&p(x)=s(x)+(x-a)\cdot (b-x)\cdot t(x), \text{ if $deg(p)$ is even}\\
	&p(x)=s(x)\cdot(x-a)+t(x)\cdot(b-x), \text{ if $deg(p)$ is odd},
	\end{cases}$$
	where $t(x),s(x)$ are sum of squares polynomials. In the first case, we have $deg(p)=2d$ and $deg(t)\leq 2d-2$ and $deg(s)\leq 2d$. In the second case, we have $deg(p)=2d+1$ and $deg(t)\leq 2d$ and $deg(s)\leq 2d.$
\end{lemma}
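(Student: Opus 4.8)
The plan is to establish both implications, with the ``if'' direction being immediate and the ``only if'' direction being the classical Markov--Luk\'acs argument based on real factorization of univariate polynomials. For the ``if'' direction: on $[a,b]$ we have $x-a\ge 0$ and $b-x\ge 0$, and every sum of squares polynomial is nonnegative, so each summand of the claimed representation is $\ge 0$ on $[a,b]$ and hence $p\ge 0$ there; the degree bounds are read off directly from the identity. So the work is entirely in the converse, where I would first dispose of the trivial case $p\equiv 0$ and otherwise write $p$ as a product of its real irreducible factors.

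The first step of the converse is to peel off a sum of squares factor and reduce to a product of simple linear factors. Any real root $r$ of $p$ lying in the \emph{open} interval $(a,b)$ must have even multiplicity, since otherwise $p$ changes sign at $r$ and, $(a,b)$ being open, $p$ would take negative values on $[a,b]$. Collecting all even powers of $(x-r)$ (for every real root $r$) together with all positive-definite quadratic factors $(x-\alpha)^2+\beta^2$ into a single polynomial $\Sigma(x)$, this $\Sigma$ is a product of squares and of sums of two squares; using multiplicativity of ``sum of two squares'' (i.e.\ $|z|^2|w|^2=|zw|^2$) it is itself a sum of squares, and $\deg\Sigma$ is even. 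We are left with $p(x)=\Sigma(x)L(x)$ where $L(x)=c_L\prod_j(x-r_j)$ is a constant times a product of \emph{distinct} simple linear factors whose roots all lie outside $(a,b)$. Since $\Sigma>0$ off the finite zero set of $p$, nonnegativity of $p$ on $[a,b]$ forces $L\ge 0$ on a dense subset of $[a,b]$, hence on all of $[a,b]$ by continuity. Writing $(x-r_j)=-(r_j-x)$ for the roots $r_j\ge b$ makes every surviving linear factor nonnegative on $[a,b]$, so the resulting leading constant $\lambda$ satisfies $\lambda\ge 0$; we absorb it.

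The second step handles the linear product $L$, and this is where parity enters. Because $\deg L=\deg p-\deg\Sigma$ has the same parity as $\deg p$, and $L$ has exactly $\deg L$ distinct linear factors, we can pair the linear factors of $L$ two at a time, with exactly one factor left over precisely when $\deg p$ is odd. Each pair is a quadratic $Q_i(x)\ge 0$ on $[a,b]$, and the degree-$2$ case of the lemma is elementary (checking, by a case analysis on the location of the roots of $Q_i$, that $Q_i=s_i+(x-a)(b-x)t_i$ with $s_i$ a positive semidefinite---hence SOS---quadratic and $t_i\ge 0$ a constant). The class of polynomials of the form $s+(x-a)(b-x)t$ with $s,t$ SOS is closed under multiplication, since with $g=(x-a)(b-x)$ one has $(s_1+gt_1)(s_2+gt_2)=(s_1s_2+g^2t_1t_2)+g(s_1t_2+s_2t_1)$ and $g^2$ is a square; so $\prod_i Q_i=S+(x-a)(b-x)T$ with $S,T$ SOS. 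When $\deg p$ is even this finishes $L$, and then $p=\Sigma S+(x-a)(b-x)\,\Sigma T$ with $\Sigma S,\Sigma T$ SOS gives the representation. When $\deg p$ is odd we must additionally multiply by the leftover factor, say $(x-r)$ with $r\le a$; writing $(x-r)=(x-a)+(a-r)$ with $a-r\ge 0$ and using the identities
\[
(x-a)(b-x)=\tfrac{1}{b-a}\bigl[(b-x)(x-a)^2+(x-a)(b-x)^2\bigr],\qquad \sigma=\tfrac{1}{b-a}\bigl[\sigma(x-a)+\sigma(b-x)\bigr]
\]
(the latter valid for any SOS $\sigma$), every resulting term is put into the form $(\mathrm{SOS})(x-a)+(\mathrm{SOS})(b-x)$; multiplying through by $\Sigma$ and absorbing $\lambda$ yields $p=s(x-a)+t(b-x)$. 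Throughout, the degree bounds $\deg s\le 2d$, $\deg t\le 2d-2$ (even case) and $\deg s,\deg t\le 2d$ (odd case) follow by adding degrees, with $\le$ since cancellation can only lower them.

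The routine-but-delicate points are the bookkeeping that the number of surviving linear factors has the right parity for the pairing, and keeping the degree bounds tight; I expect the latter to be the main obstacle in a careful write-up, in particular verifying that the ``pure SOS $\to s(x-a)+t(b-x)$'' rewriting used in the odd case does not push $\deg s$ or $\deg t$ beyond $2d$. A non-quantitative alternative for the mere existence of \emph{some} certificate is Putinar's theorem (Lemma~\ref{lem:putinar}), which applies since $\{x-a,\,b-x\}$ is Archimedean, but it does not deliver the two-term structure or the explicit degree bounds, so the factorization proof above is the one I would pursue. One could also argue by induction on $\deg p$ (peeling off a double root in $(a,b)$, or a factor $(x-a)$ or $(b-x)$ at an endpoint, and treating $p>0$ on $[a,b]$ separately via its real roots outside $[a,b]$), but the endpoint/positivity case analysis there is messier, which is why I favor the route via real factorization.
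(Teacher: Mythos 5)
The paper offers no proof of this lemma to compare against: it is quoted directly as Theorem 3.72 of \cite{blekherman2012semidefinite} (the classical Markov--Luk\'acs theorem), so your blind write-up is supplying an argument the paper simply cites. Your proof is correct and is essentially the standard factorization proof of that cited result: the ``if'' direction is immediate; for ``only if'' you peel off an SOS factor $\Sigma$ (even multiplicities of roots in $(a,b)$, conjugate quadratic factors, even parts of exterior roots), reduce to a product of linear factors each nonnegative on $[a,b]$ with a nonnegative leading constant, settle the degree-$2$ base case by a root-location case analysis, and propagate the representation through products via $(s_1+gt_1)(s_2+gt_2)=(s_1s_2+g^2t_1t_2)+g(s_1t_2+s_2t_1)$ with $g=(x-a)(b-x)$, treating the single leftover factor in the odd case through $(x-r)=(x-a)+(a-r)$ and your two rewriting identities. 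The points you flag as delicate are indeed routine and go through: the parity bookkeeping is forced by $\deg\Sigma$ being even; the leftover root $\ge b$ is symmetric via $(r-x)=(b-x)+(r-b)$; the base case works in all three root configurations (I checked the straddling case, where the required nonnegative constant $t$ exists because the relevant discriminant condition always has a nonnegative solution); and the degree bounds close under the multiplication step, since bounds of the form $(D_1,D_1-2)$ and $(D_2,D_2-2)$ combine to $(D_1+D_2,D_1+D_2-2)$, which together with $\deg\Sigma+\deg L=\deg p$ yields exactly $\deg s\le 2d$, $\deg t\le 2d-2$ in the even case and $\deg s,\deg t\le 2d$ in the odd case. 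Your observation that Putinar's Positivstellensatz would certify nonnegativity but not the two-term structure or the explicit degree bounds is also accurate.
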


Depending on the degrees of $p_i$, we use Lemma \ref{lem:pablo.univ} to rewrite the previous optimization problem as a semidefinite program. For example, in the case where the degrees of $p_i$ are all odd and equal to $d=2d'+1$, we would get: 
\begin{equation*}
\begin{aligned}
f_m\mathrel{\mathop{:}}=&\inf_{p \text{ separable and of degree } d} \sum_{i=1}^m (p(x_i)-y_i)^2\\
&\text{s.t. } \rho_jp_j'(x_j)=s_j(x)+(x_j-b_j^-)(b_j^+-x_)\cdot t_j(x),~ j=1,\ldots,n,\\
&s_j \text{ sos and of degree } \leq 2d',~t_j \text{ sos and of degree } \leq 2d'-2.
\end{aligned}
\end{equation*}

To illustrate this, we have generated data $(x_i,y_i)\in [-2,2]\times \mathbb{R}$ with $ i=1,\ldots, 40$ which we would like to fit a univariate polynomial $p$ of degree $3$ to. (Note that the univariate case is a special case of the separable case.) For visualization purposes, we restrict ourselves to a parametric family of polynomials whose coefficients are indexed by $a$ and $b$: 
\begin{align}\label{def:pab}
p_{a,b}(x)=a\cdot x^3+b \cdot x^2+(a+2b)\cdot x+\frac12.
\end{align}

We have plotted in Figure \ref{fig:monotone.sep} the values of $a$ and $b$ for which:
\begin{enumerate}[(i)]
	\item $\sum_{i=1}^{40} (p_{a,b}(x_i)-y_i)^2 \leq 250$ in dark gray,
	\item $\sum_{i=1}^{40} (p_{a,b}(x_i)-y_i)^2 \leq 250$ and $p_{a,b}$ is nondecreasing over $[-2,2]$ in light gray.
\end{enumerate}

\begin{figure}[h]
	\centering
	\includegraphics[scale=0.25]{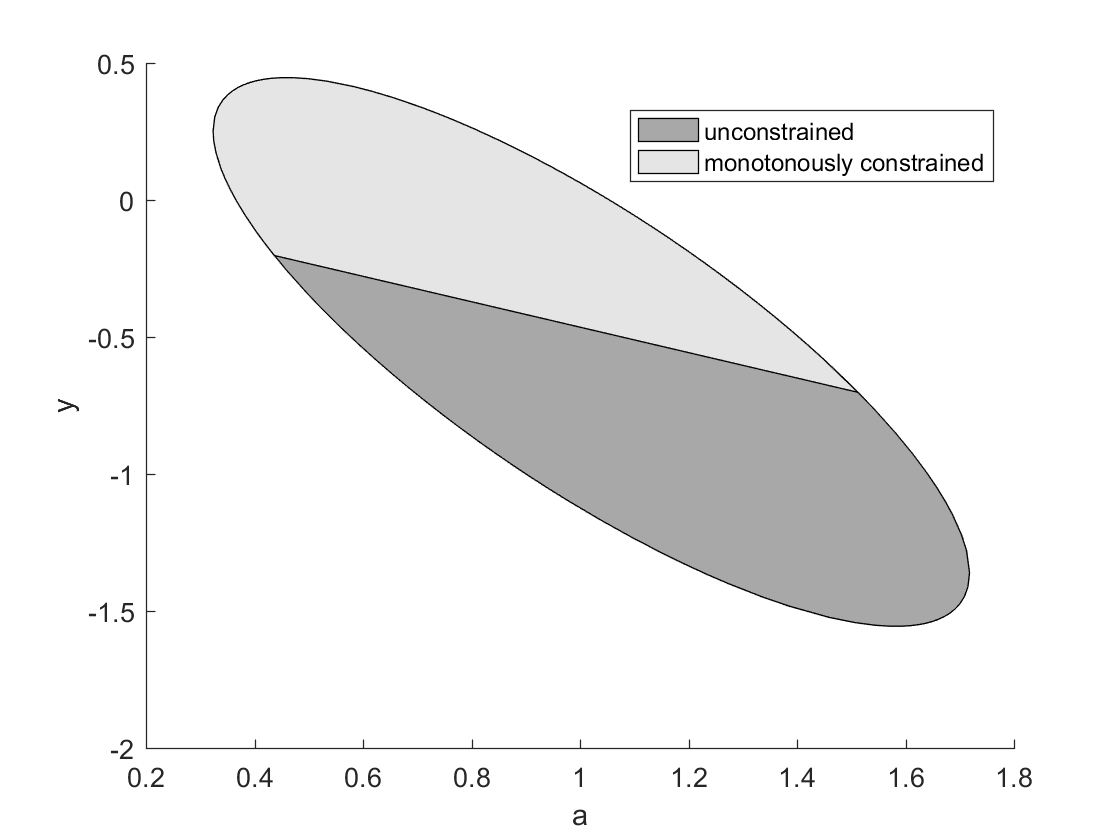}
	\caption{Values of $a$ and $b$ for which $p_{a,b}(x)$ in (\ref{def:pab}) has mean squared error less than 250 in the unconstrained and the monotonically-constrained settings}
	\label{fig:monotone.sep}
\end{figure}
As a sanity check, we plot in Figure \ref{fig:monotone.check} the fits that we obtain when $(a,b)=(1.6,-1.5)$ and when $(a,b)=(0.6,0)$. Note that the first fit is not monotonous, whereas the second one is, which is what we expect from Figure \ref{fig:monotone.sep}.

\begin{figure}[h!]
	\centering
	\subfigure[Fit for $(a,b)=(1.6,-1.5)$]{\includegraphics[width = 0.49\textwidth]{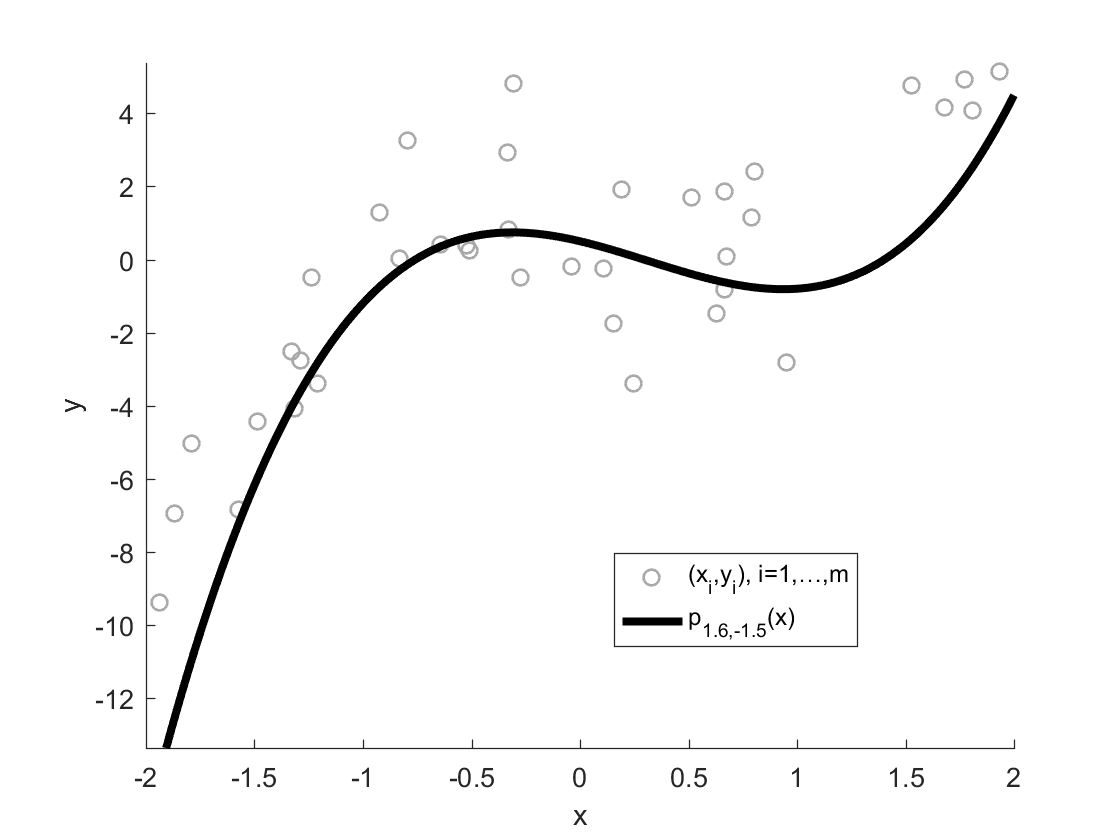}}
	\subfigure[Fit for $(a,b)=(0.6,0)$]{\includegraphics[width=0.49\textwidth]{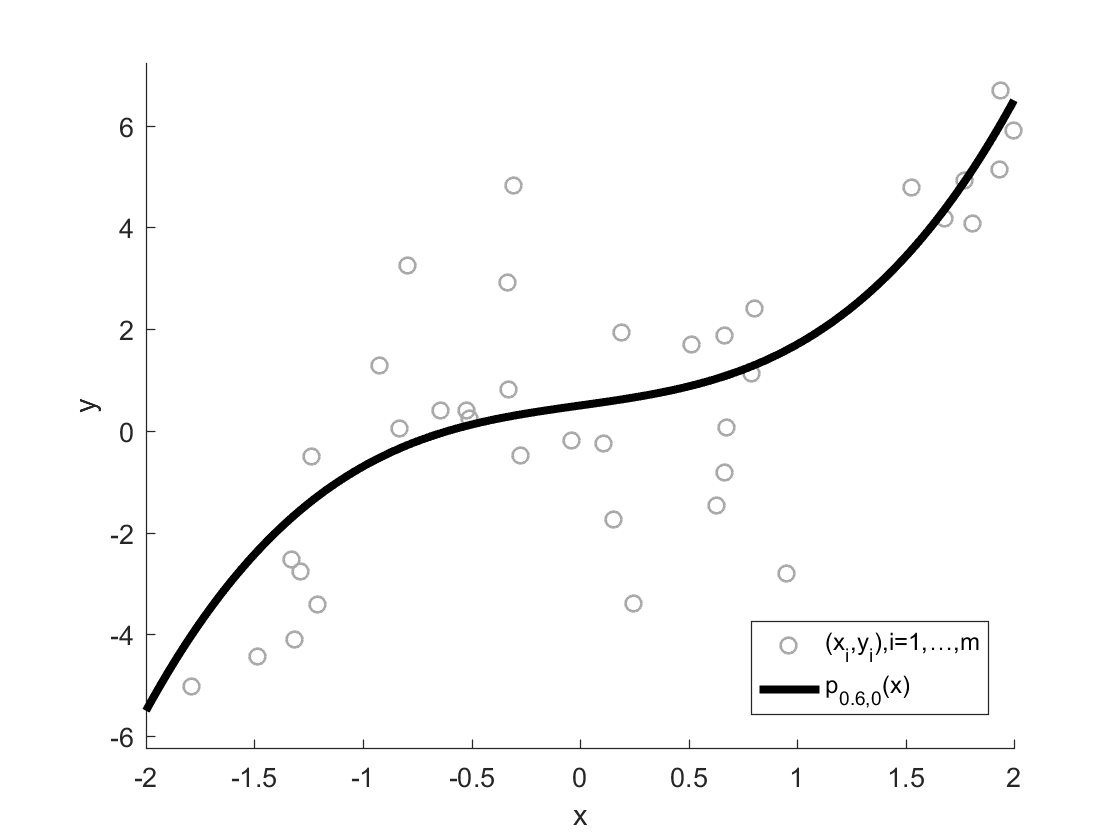}} 
	\caption{Plots of the polynomial $p_{a,b}$ in (\ref{def:pab}) for different values of $(a,b)$ in the monotonous case}
	\label{fig:monotone.check}
\end{figure}

We now consider the case where we would like to solve (\ref{eq:conv.hard.problem}), i.e., where we constrain $p$ to be convex over $B$. We assume that we are searching over the set of separable polynomials of degree $d$. Note here that if $p(x)=\sum_{i=1}^n p_i(x_i)$, then $H_p(x)$ is a diagonal matrix with diagonal entry $i$ corresponding to $p_i''(x_i).$ Hence, the condition $H_p(x)\succeq 0, \forall x \in B$ is equivalent to requiring that $p_i''(x_i)\geq 0$, for all $x_i \in [b_i^-,b_i^+]$, $i=1,\ldots,n$. Once again, we use Lemma \ref{lem:pablo.univ} to rewrite the previous optimization problem as a semidefinite program. For example, in the case where the degrees of $p_i$ are all even and equal to $d=2d'$, we get 
\begin{equation*}
\begin{aligned}
f_m\mathrel{\mathop{:}}=&\inf_{p \text{ separable of degree } d} \sum_{i=1}^m (p(x_i)-y_i)^2\\
&\text{s.t. } p_j''(x_j)= s_j(x)+(x_j-b_j^-)(b_j^+-x_)\cdot t_j(x),~ j=1,\ldots,n,\\
&s_j \text{ sos and of degree } \leq 2d'-2,~t_j \text{ sos and of degree } \leq 2d'-4.
\end{aligned}
\end{equation*}

To illustrate these results, we have generated data $(x_i,y_i)\in [-2,2]\times \mathbb{R}$ with $ i=1,\ldots, 40$ which we would like to fit a univariate polynomial $p$ of degree $4$ to. (Note again that the univariate case is a special case of the separable case.) For visualization purposes, we restrict ourselves again to a parametric family of polynomials whose coefficients are indexed by $a$ and $b$: 
\begin{align}\label{def:pab.2}
p_{a,b}(x)=\frac{1}{10}x^4+a\cdot x^3+b\cdot x^2-(a+b)\cdot x-\frac{2}{10}.
\end{align}

We have plotted in Figure \ref{fig:convex.sep} the values of $a$ and $b$ for which:
\begin{enumerate}[(i)]
	\item $\sum_{i=1}^{40} (p_{a,b}(x_i)-y_i)^2 \leq 7$ in dark gray,
	\item $\sum_{i=1}^{40} (p_{a,b}(x_i)-y_i)^2 \leq 7$ and $p_{a,b}$ is convex over $[-2,2]$ in light gray.
\end{enumerate}

\begin{figure}[h]
	\centering
	\includegraphics[scale=0.25]{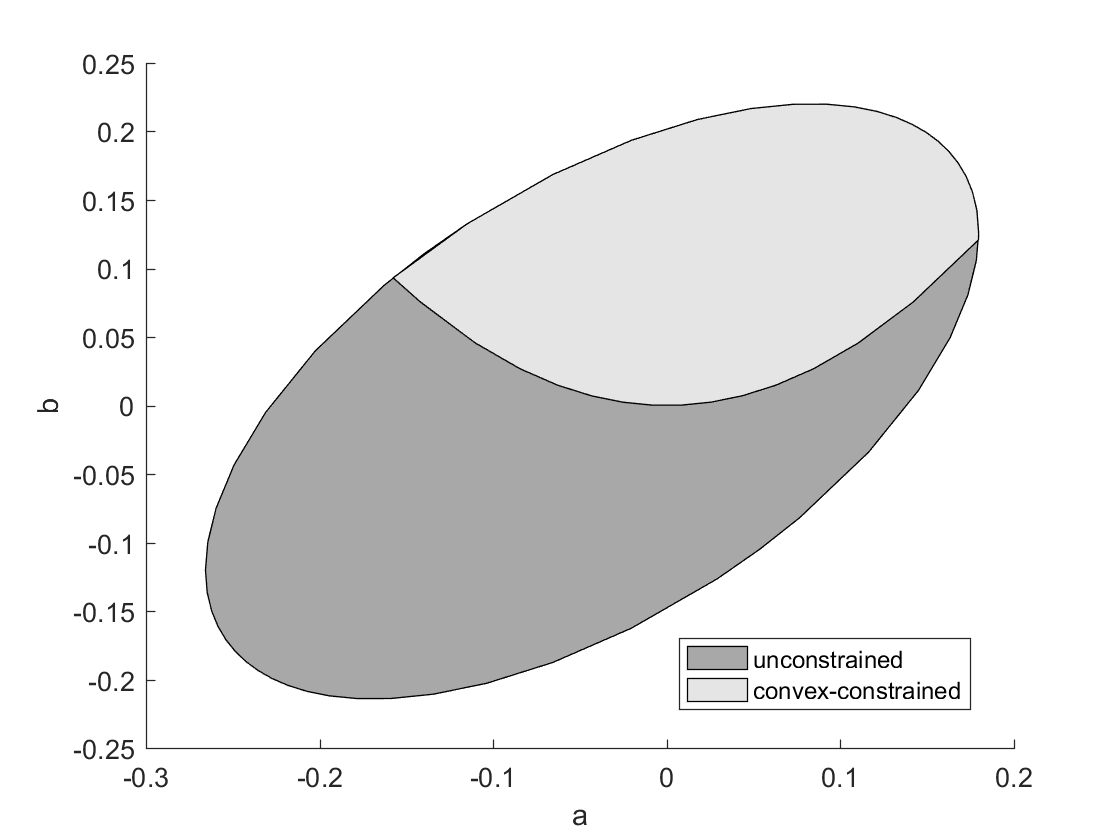}
	\caption{Values of $a$ and $b$ for which $p_{a,b}(x)$ in (\ref{def:pab.2}) has mean squared error less than 7 in the unconstrained and the convexity-constrained settings}
	\label{fig:convex.sep}
\end{figure}
As a sanity check, we plot in Figure \ref{fig:convex.check} the fits that we obtain when $(a,b)=(-0.2,-0.1)$ and when $(a,b)=(0.15,0.1)$. Note that the first fit is not convex, whereas the second one is, which is what we expect from Figure \ref{fig:convex.sep}.

\begin{figure}[h!]
	\centering
	\subfigure[Fit for $(a,b)=(-0.2,-0.1)$]{\includegraphics[width = 0.49\textwidth]{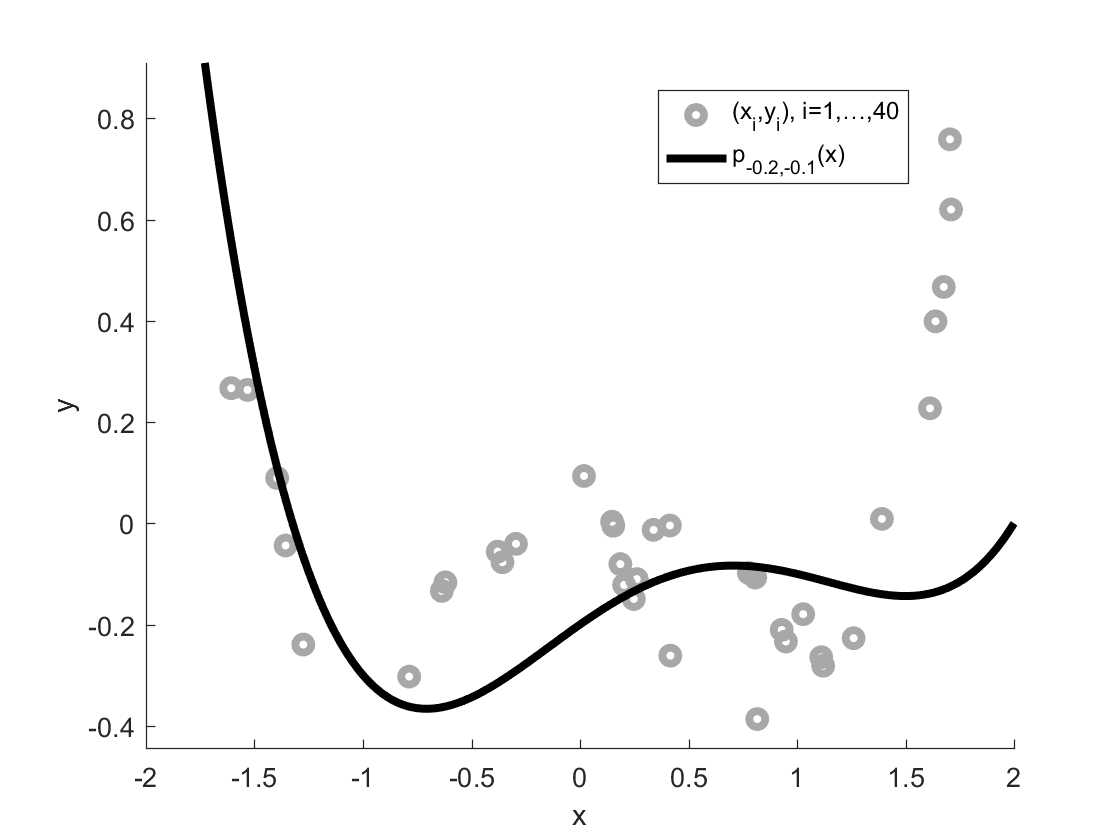}}
	\subfigure[Fit for $(a,b)=(0.15,0.1)$]{\includegraphics[width=0.49\textwidth]{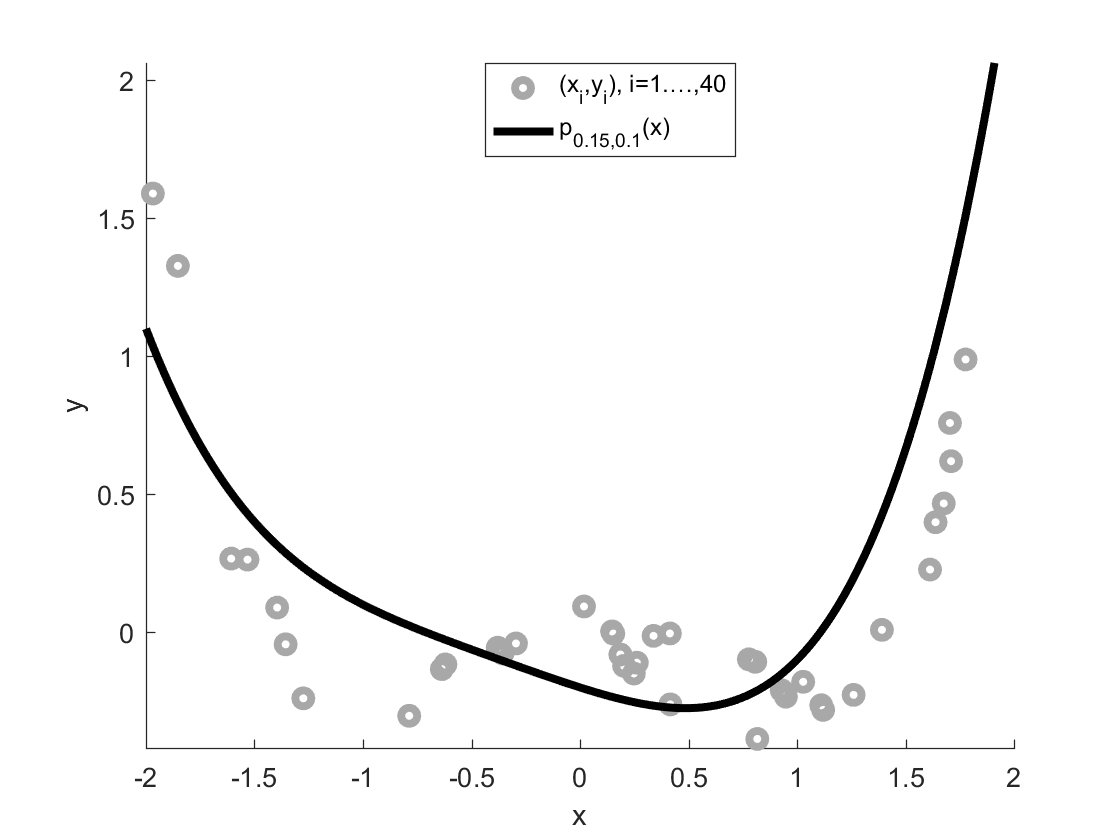}} 
	\caption{Plot of the polynomial $p_{a,b}$ in (\ref{def:pab.2}) for different values of $(a,b)$}
	\label{fig:convex.check}
\end{figure}

\section{Experimental results}\label{sec:experiments}

We now provide some illustrations of our methods on different datasets. In the first part of this section, we consider synthetic datasets. This will enable us to compare the advantages and limitations of our relaxations in terms of performance metrics such as training and testing accuracy, robustness, flexibility and scalability. In the second part of this section, we look at how our methods perform on real-life datasets.

\subsection{Synthetic regression problems}
For the synthetic experiments, we analyze the performance of 4 different algorithms: UPR, which corresponds to unconstrained polynomial regression, MCPR, which corresponds to polynomial regression with monotonicity constraints, CCPR which corresponds to polynomial regression with convexity constraints, and MCPR+CCPR, which corresponds to polynomial regression with both monotonicity and convexity constraints. The underlying function for this experiment as described in (\ref{eq:f.def.2}) is a multivariate exponential:
$$f(x) = e^{\left\Vert x \right\Vert_2}$$
The function $f:\mathbb{R}^n \rightarrow \mathbb{R}$ is monotonically increasing in all directions, thus, it has a monotonicity profile $\rho_i = 1, \forall i$. Furthermore, $f$ is convex.
\subsubsection{Data Generation}
We denote by $X$ the feature matrix, i.e., the matrix obtained by concatenating the $m$ feature vectors $x_i$ of length $n$. Each column or $X$ corresponds to a feature and each row is an observation of all the $n$ features. Hence, $X$ is an $m \times n$ matrix. For our synthetic datasets, we generate each entry of $X$ uniformly at random in an interval $[b^-,b^+]$, where $b^-=0.5$ and $b^+=2$. The feature domain in this case is taken to be $$B = \{x \in \mathbb{R}^n~|~b^{-} \leq x_i \leq b^+,\ i =1,\ldots,n\}.$$ We compute the response variable $y_i$ by evaluating $f$ at each column $x_i$ of $X$, which we corrupt by some noise, whose scaling $\epsilon$ we vary in order to test for robustness. As a consequence, if we denote by $y$ the $m \times 1$ vector containing $y_1,\ldots,y_m$ and by $f(X)$ the $m\times 1$ vector obtained by applying $f$ to each row of $X$, we have $y = f(X) + \epsilon,$
where $\epsilon$ is a vector with each entry taken to be iid and Gaussian of mean zero and standard deviation $\alpha \sqrt{var(f(X))}$. Here $var(f(X))$ is the variance of the set of random points obtained when varying the input $X$ to $f$ and $\alpha$ is a fixed constant, which we use to parametrize noise (e.g., $\alpha=1$ is low noise, whereas $\alpha=10$ is high noise).

In the following, we wish to fit a polynomial $p$ of degree $d$ to the data, such that the mean squared error (which is a normalization of the least squared error) $$\frac{1}{m}||p(X)-y||^2$$ is minimized.

\subsubsection{Comparative performance}
One of the biggest drawbacks of unconstrained polynomial regression is the algorithmic instability to noise. Here we want to compare the four algorithms listed above with respect to robustness to noise. To do this, we fit polynomials of varying degrees to the data in both high-noise ($\alpha=10$ as described previously) and low-noise ($\alpha=1$) settings. We then compare the Root Mean Squared Error (RMSE)
$$RMSE(X,y)=\sqrt{\frac{||p(X)-y||^2}{m}}$$
on the testing and training samples. The results are given in Figure \ref{compare_by_degree}. Note that the thin light blue constant line listed as ``Reference'' is the reference RMSE, i.e., the value obtained when one computes the RMSE for the function $f$ itself. 
%$$\mathcal{L}= RMSE(X,Y) =\sqrt{\frac{\left \Vert p(X) - Y \right \Vert ^2}{N}}$$

%In Fig. \ref{compare_by_degree} we vary the degree of the decision polynomials for all algorithms and compare their performance in low noise and  MCPR and high noise settings.
\begin{figure}[h!]
	\centering
	\subfigure[Comparison of RMSE on the training set in a low noise setting]{\includegraphics[width = 0.49\textwidth]{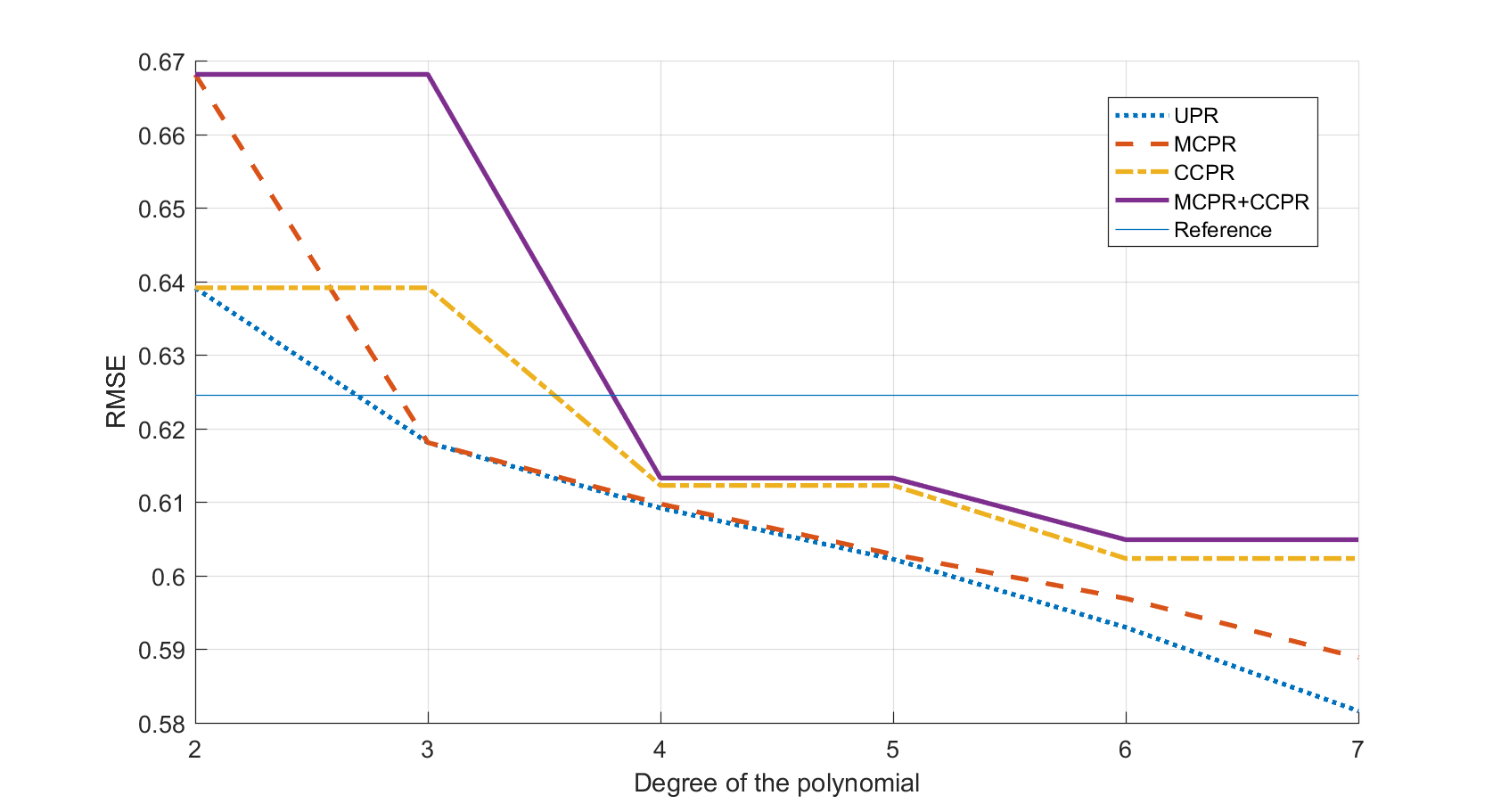}}
	\subfigure[Comparison of RMSE on the testing set in a low noise setting]{\includegraphics[width = 0.49\textwidth]{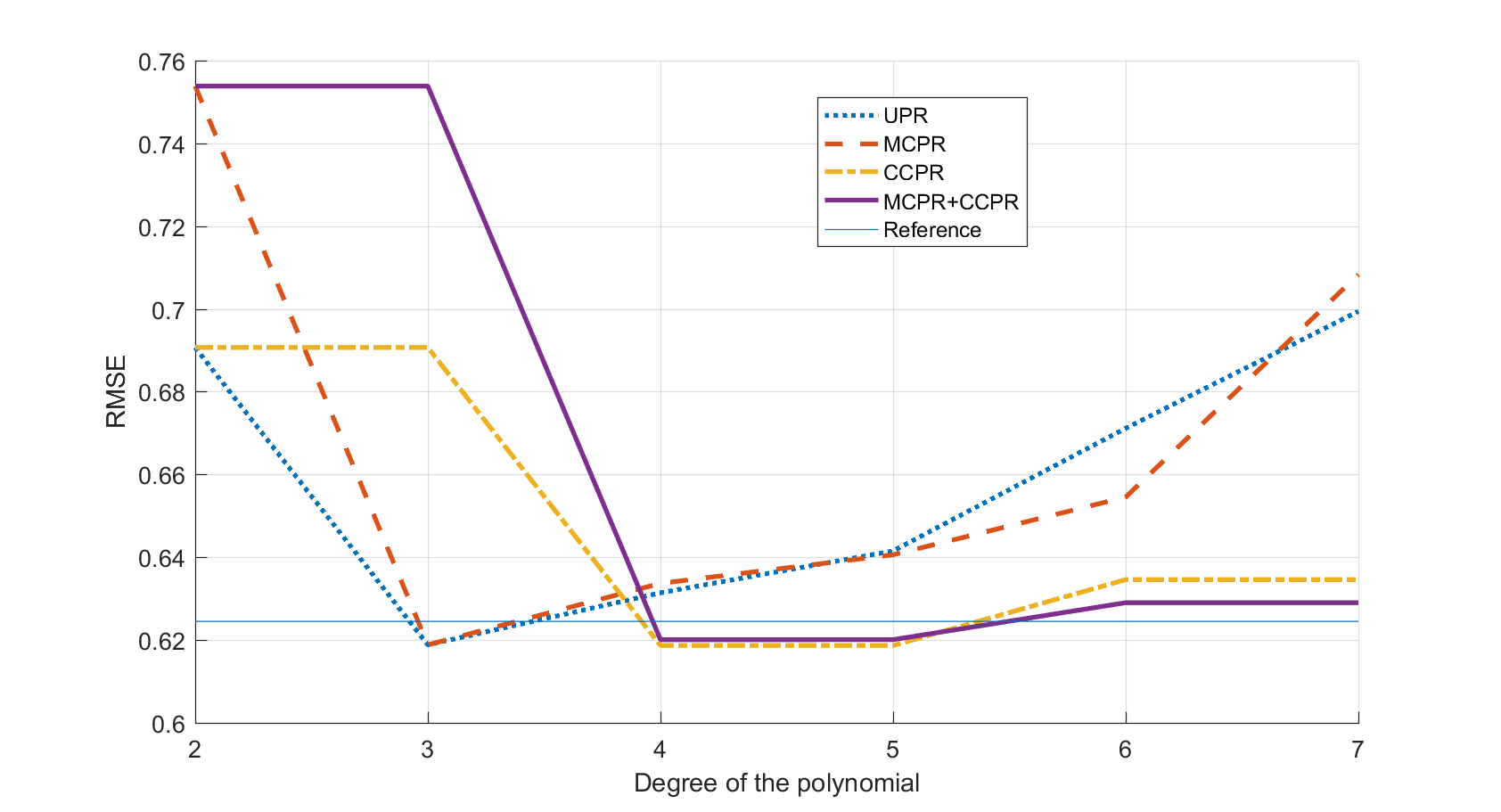}} \\
	\subfigure[Comparison of RMSE on the training set in a high noise setting]{\includegraphics[width = 0.49\textwidth]{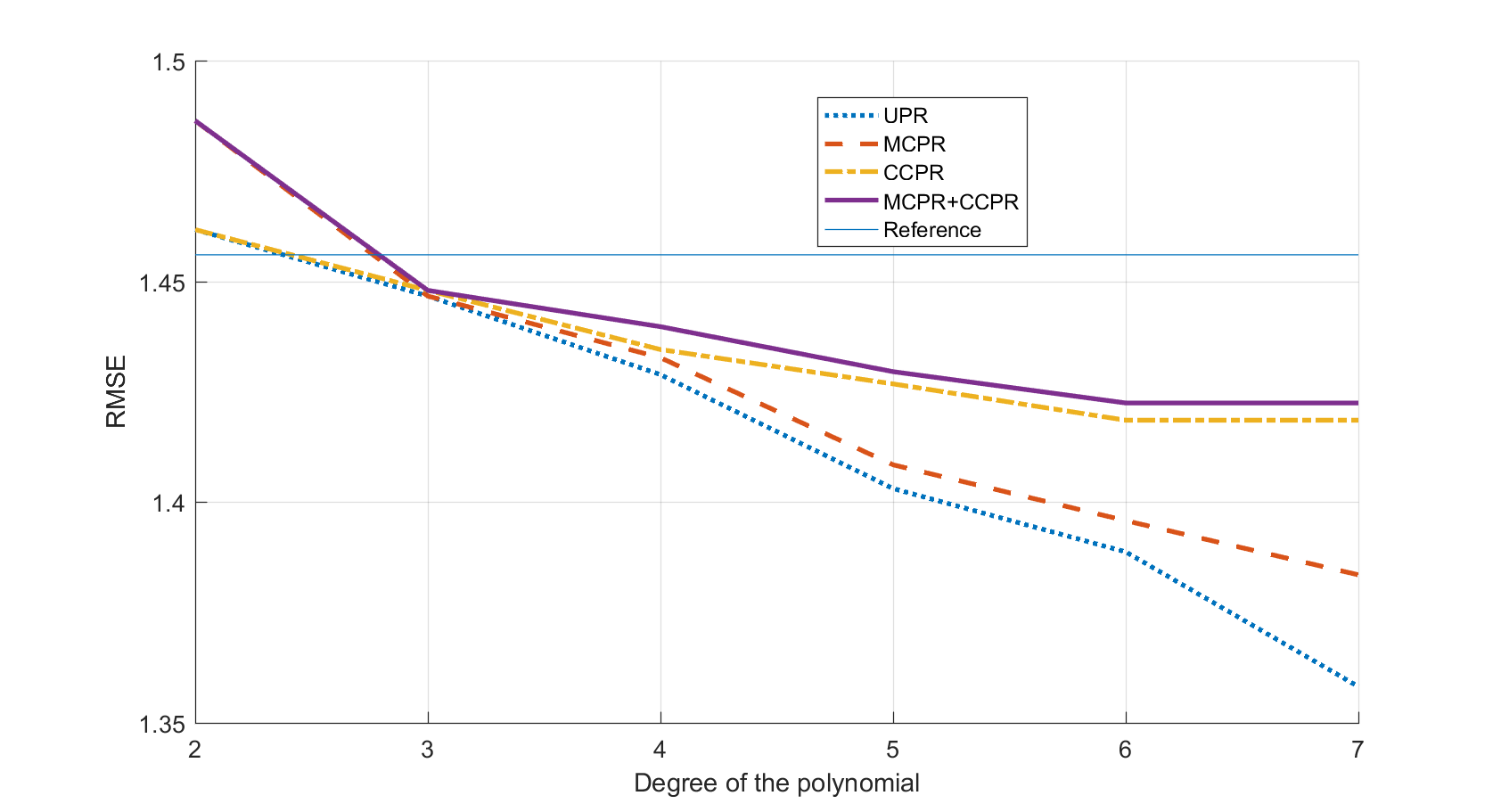}}
	\subfigure[Comparison of RMSE on the testing set in a high noise setting]{\includegraphics[width = 0.49\textwidth]{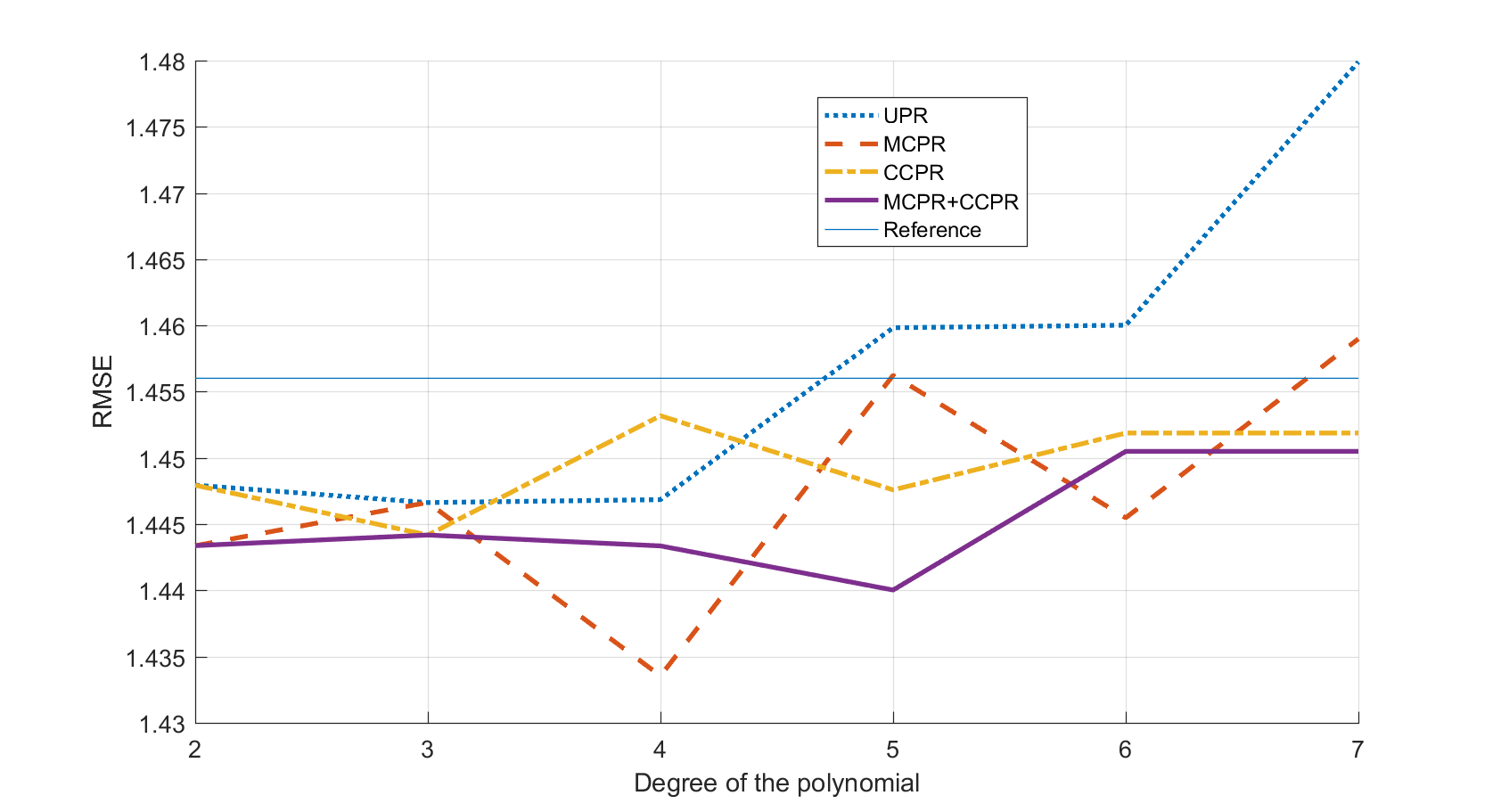}} 
	\caption{RMSEs of the fitted functions for different noise scaling factors and polynomial degrees}
	\label{compare_by_degree}
\end{figure}
\par As expected, from Figure \ref{compare_by_degree}, we see that UPR tends to overfit. This can be observed by comparing the RMSE of UPR to the Reference RMSE: anything below the reference can be considered to be overfitting. Note that for both training sets, and particularly when the degree of the polynomials is high, the data points corresponding to UPR are well below those given by the Reference. Introducing monotonicity or convexity constraints improves both the accuracy on the test data as well as robustness to noise, in the sense that the RMSE of these algorithms remains moderate, even in high noise environments. When both monotonicity and convexity are imposed, the benefits compound. Indeed, MCPR+CCPR has similar performance for both the testing and the training data, and the RMSE obtained with this algorithm is the closest to the reference line. Note that MCPR+CCPR performs well both in low noise as well as high noise settings, which indicates the ability to robustly learn the true underlying distribution.

Lastly we compare qualitatively the robustness of UCR, MCPR, CCPR, and MCPR+CCPR with respect to the true underlying function. The plots in Figure \ref{projections} are obtained by projecting the 4 fitted functions and the underlying function onto one of the features (this is done by fixing all the other features to some arbitrary values in their range). We consider the case where the polynomials are of degree $4$ and of degree $7$. 

\begin{figure}[h!]
	\centering
	\subfigure[Projections of degree 4 fits and the underlying function in a low noise setting]{\includegraphics[width = 0.49\textwidth]{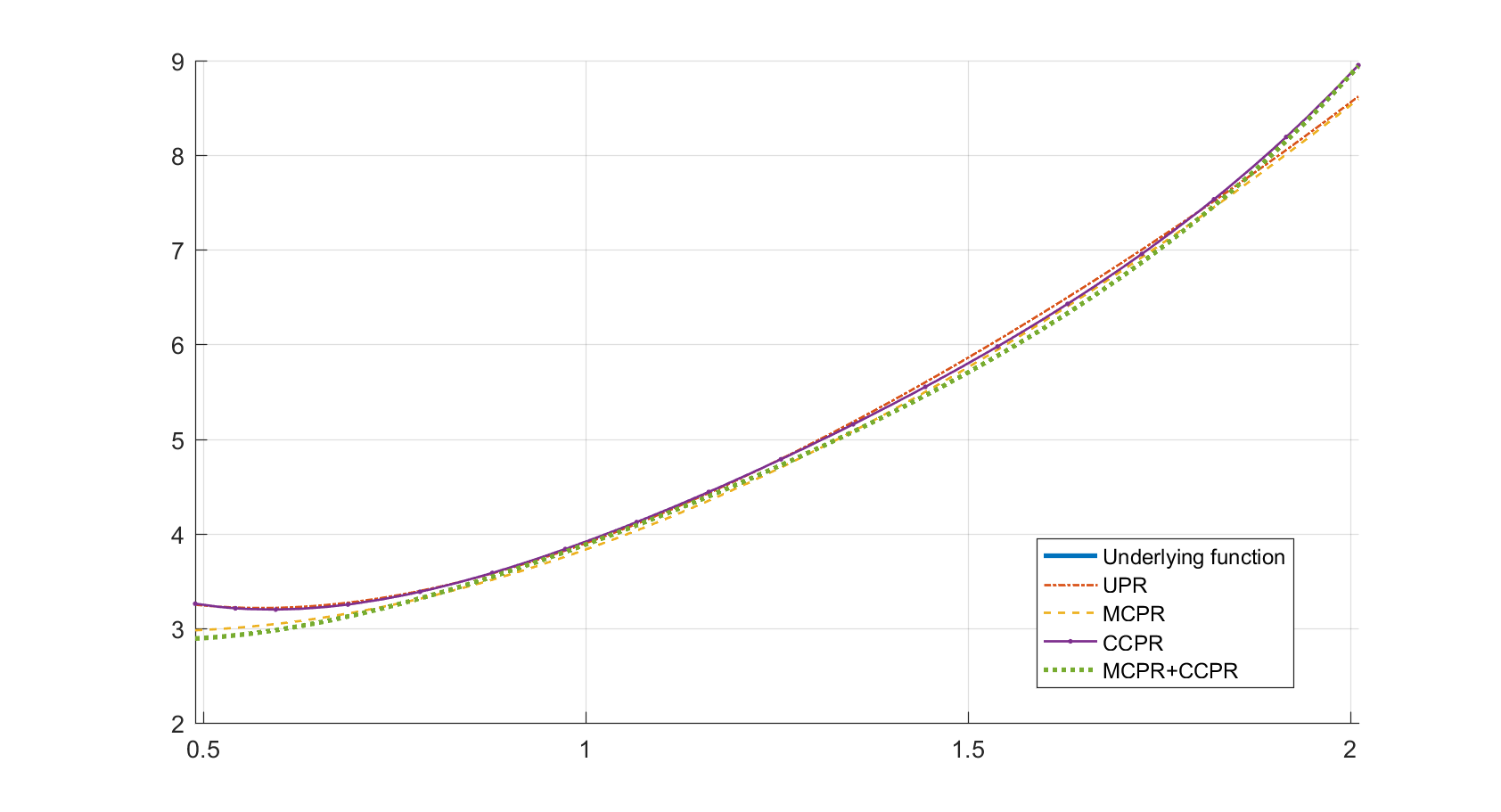}}
	\subfigure[Projections of degree 7 fits and the underlying function in a low noise setting]{\includegraphics[width = 0.49\textwidth]{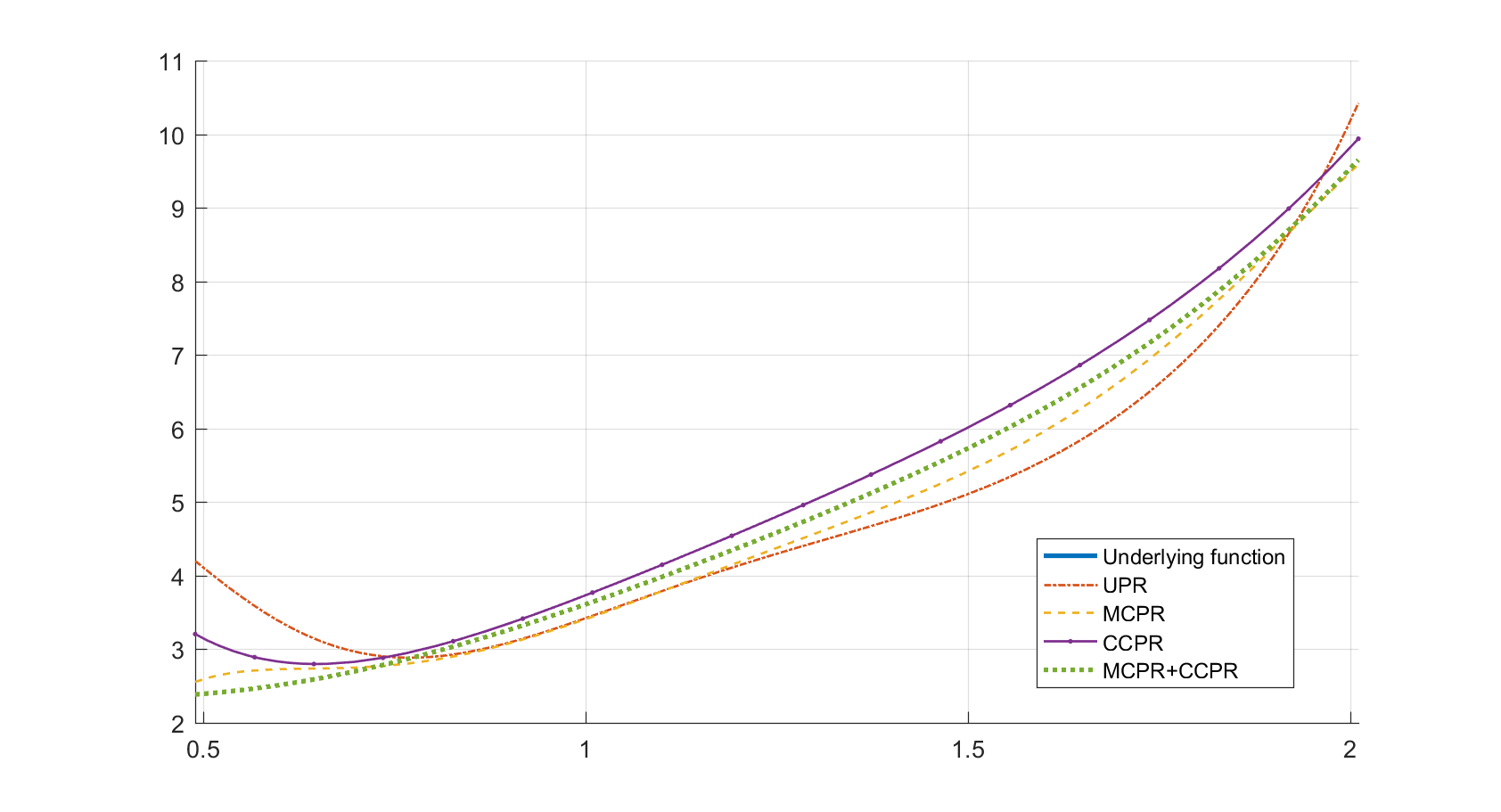}} \\
	\subfigure[Projections of degree 4 fits and the underlying function in a high noise setting]{\includegraphics[width = 0.49\textwidth]{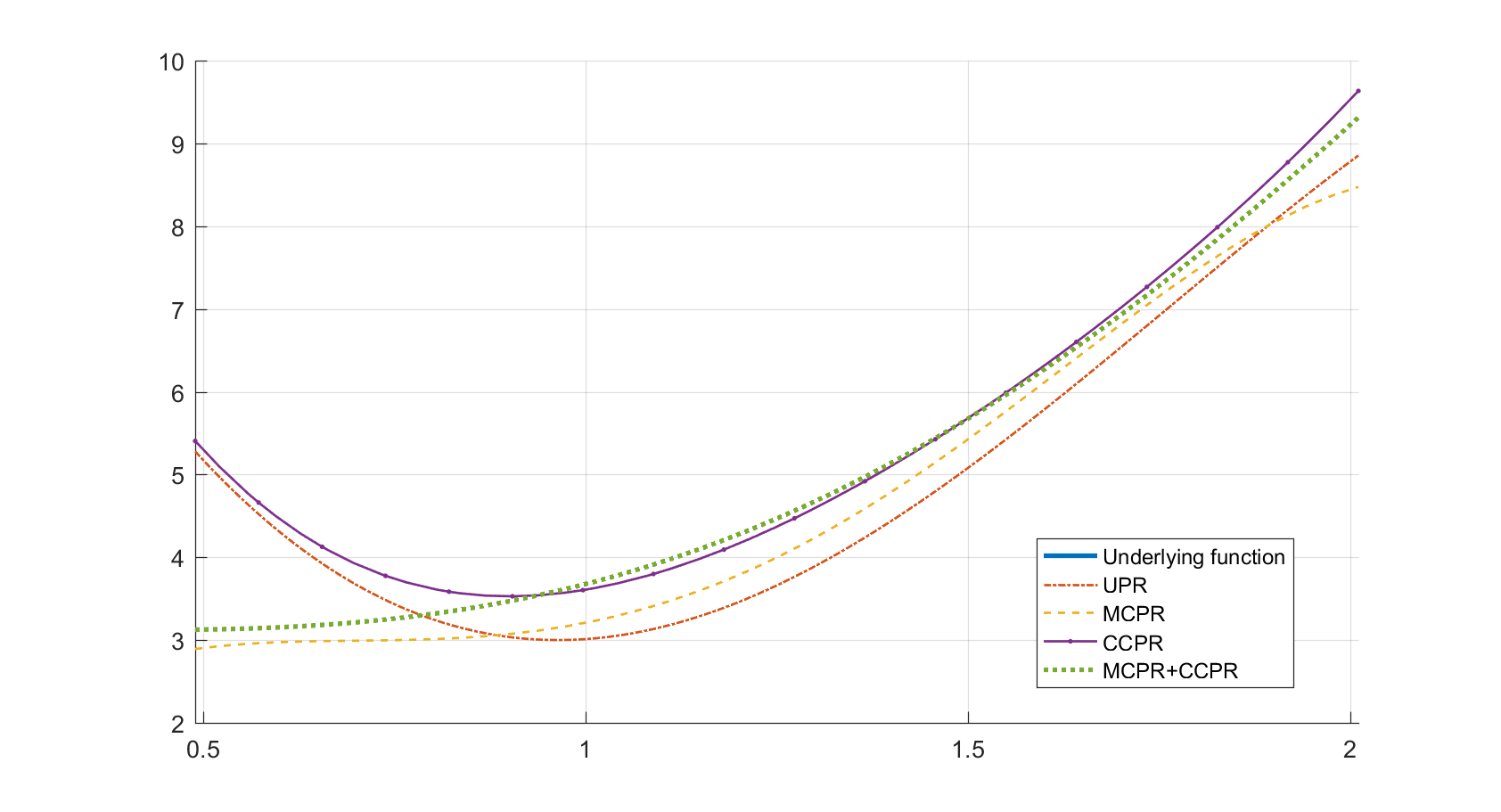}}
	\subfigure[Projections of degree 7 fits and the underlying function in a high noise setting]{\includegraphics[width = 0.49\textwidth]{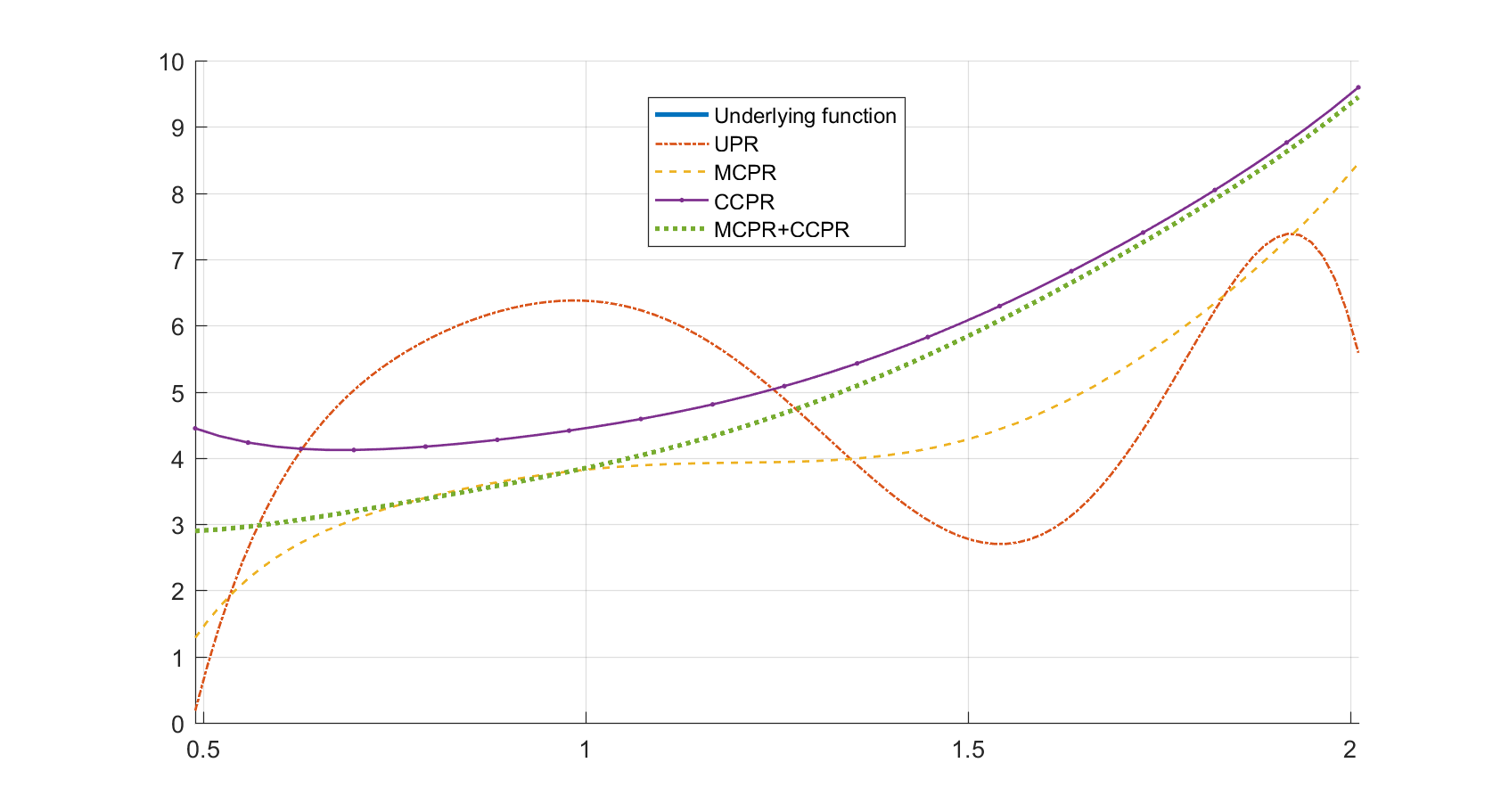}} 
	\caption{The projection of the fitted functions for different noise scaling factors and polynomial degrees}
	\label{projections}
\end{figure}

The results obtained confirm our previous observations. First, UPR tends to overfit, particularly when the noise scaling factor is high and when the degree of the polynomial fit is large (this is because, as the degree increases, the polynomials gain in expressiveness). Having monotonicity and convexity constraints proves to be a very efficient way of regularizing the polynomial fit, even in high noise settings: the fits obtained are very close to the true function. Furthermore, though their performance does deteriorate slightly in the high noise and high degree regime, the overall shape of the projection stays close to that of the underlying function, and that of lower degrees. This in contrast to the unconstrained fit whose shape is very unstable when the degree and the noise varies. 

%Finally, we conclude:
%
%\begin{itemize}
%	\item \textit{MCPR+CCPR is more robust:}
%	As the noise scaling factor increases, the training dataset becomes increasingly noisy. Polynomials are a very expressive family of function, therefore, as expected, the unconstrained polynomial overfits the noise. Having monotonicity and convexity constraints proves to be a great way to regularize a polynomial, thus leading to a much more robust outcome. Moreover the performance of the algorithm is deteriorating "gracefully", whereas the unconstrained polynomial displays a very unstable behavior.
%	\item \textit{Performance of MCPR+CCPR improves for higher degree without overfitting:} In contrast, due to overfitting, the performance of the uncostrained case deteriorates fast for higher degree polynomials. 
%\end{itemize}

\subsection{Applications to real regression problems}

In this section we present two applications of our methods to real datasets. Our first example uses monotonically constrained polynomial regression (MPCR) to predict interest rates for personal loans. The second example is a hybrid regression setting with a mixture of monotonicity and convexity constraints which is used to predict weekly wages from a set of features.

\subsection{Predicting interest rates for personal loans}
In this subsection, we study data for loans issued between the years 2007-2011 by Lending Club \cite{lendingclub}. We decided to focus on the particular category of home loans so as to avoid having to deal with categorical variables such as loan type. The updated dataset has $N=3707$ observations and 32 numerical features. Though the MCPR algorithm has run time polynomial in the number of features, we encounter issues with memory for too large a number of features. Hence, some data preprocessing is necessary to reduce the number of features. This was done by eliminating highly correlated covariates and running some canonical feature selection procedures. In the end, we consider six features. The response variable in this case is the interest rate on home loans. The features along with their monotonicity signs and their descriptions are presented below: 
\begin{itemize}
	\item \texttt{dti:+1} - Ratio of the borrower's total monthly
	debt payments an the self-reported monthly income.  A borrower with high dti is perceived to be riskier, which typically corresponds to higher interest rates. 
	\item \texttt{delinq\_2yrs:+1} - The number of past-due delinquencies in the past 2 years. The interest rate is monotonically increasing with respect to the number of delinquencies.
	\item \texttt{pub\_rec:+1}  - Number of derogatory public records. The  interest rate is monotonically increasing with respect to this feature.
	\item \texttt{out\_prncp:+1} - Remaining outstanding principal. This feature has a monotonically increasing relationship with the interest rate. 
	\item \texttt{total\_rec\_prncp:-1}  - Principal received to date with a monotonically decreasing dependency.
	\item \texttt{total\_rec\_int:-1} - Interest received to date. The interest rate is monotonically decreasing with respect to this feature.
\end{itemize}

We compute the average RMSE for testing and training sets through a 10-fold cross validation. We compare in Figure \ref{rmse_credit} the results for fitting polynomials of different degrees in both the unconstrained and monotonically constrained settings.
\begin{figure}[h!]
	\centering
	\subfigure[Values taken by the RMSE on training data]{\includegraphics[width = 0.49\textwidth]{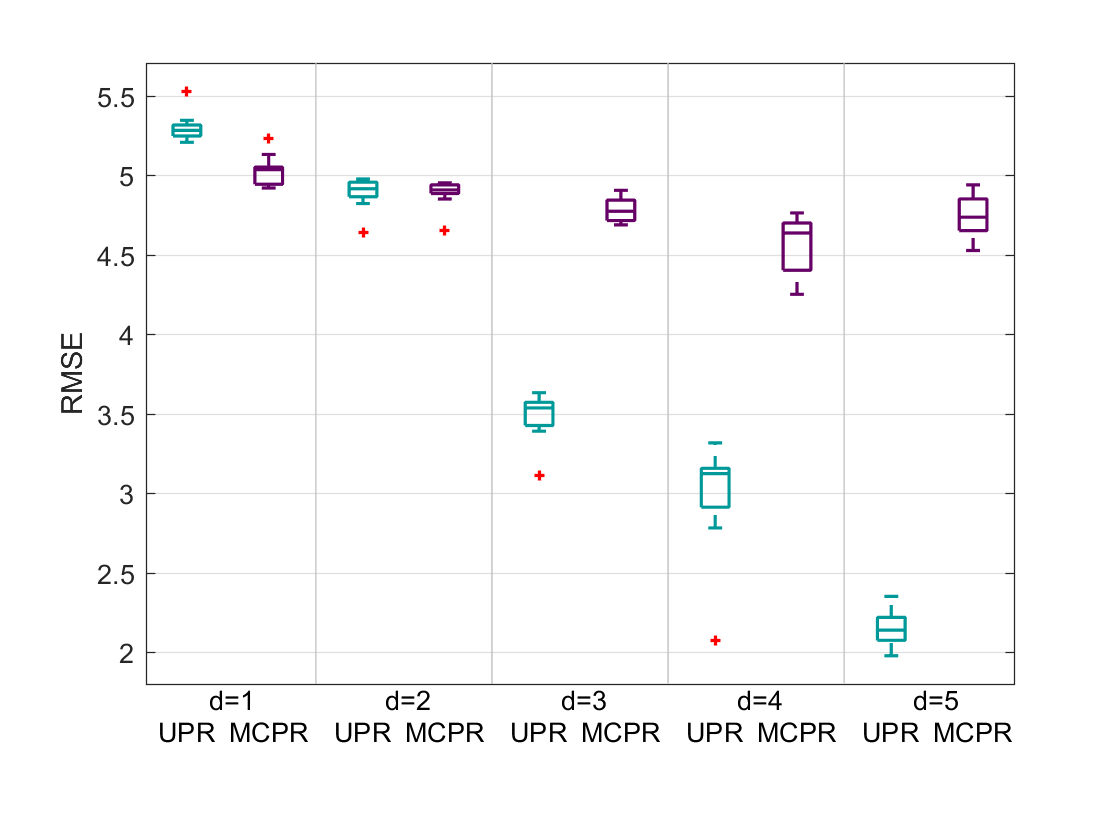}}
	\subfigure[Values taken by the RMSE on testing data]{\includegraphics[width = 0.49\textwidth]{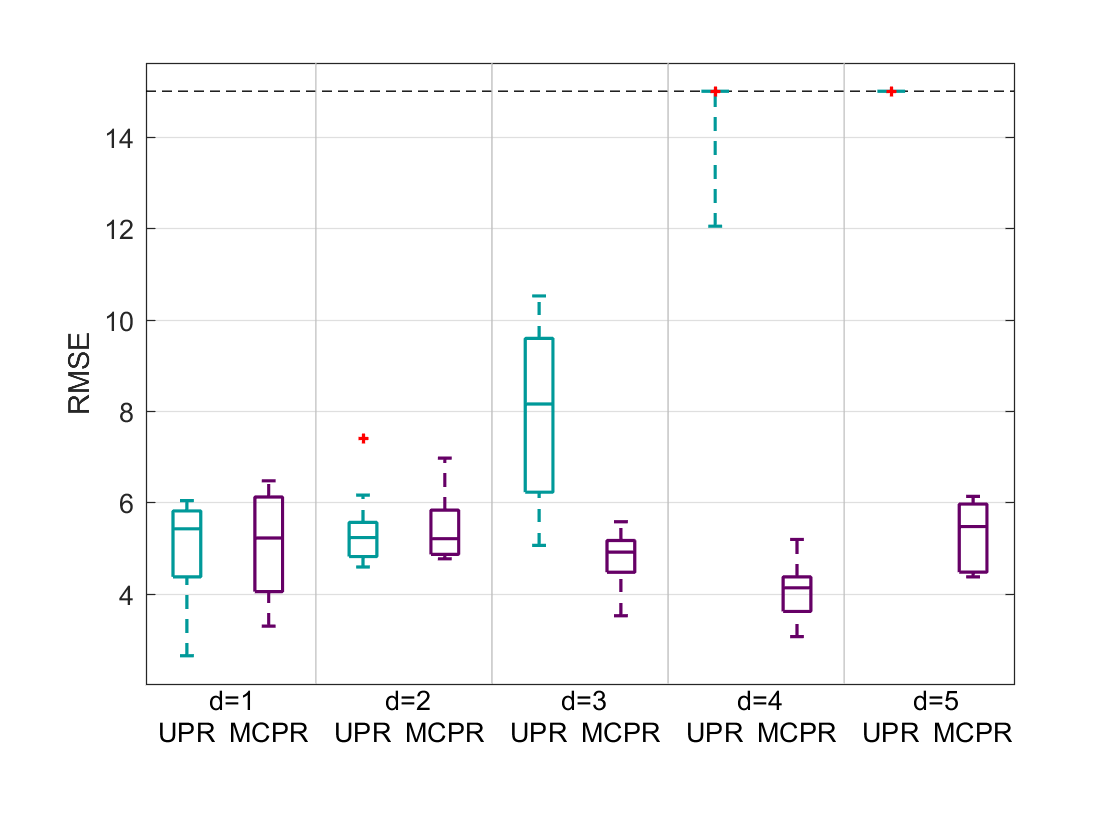}} 
	\caption{Comparative performance of testing and training sets for 10 fold cross validation.}
	\label{rmse_credit}
\end{figure}
The best performance was achieved by a degree 4, monotonically constrained polynomial regression with average RMSE of $4.09$ and standard error of $0.20$. Already for degree $5$, the unconstrained regression runs into numerical problems as it becomes rank deficient, i.e., the number of coefficients that needs to be determined is larger than the number of data points. Therefore, monotonicity constraints can be an efficient way of ensuring robustness in settings where the number of datapoints is relatively small, but the relationship between the covariates is complex.

\subsection{Predicting weekly wages}
In this section, we analyze data from the 1988 Current Population Survey. This data is freely available under the name \texttt{ex1029} in the Sleuth2 R package \cite{sleuth}. The data contains $N=25361$ observations and 2 numerical features: years of experience and years of education. We expect wages to increase with respect to years of education and be concave with respect to years of experience. We compare the performance of this hybrid constrained regression problem with the unconstrained case, as well as the CAP algorithm proposed by Hannah \cite{hannah2013multivariate}. Similarly to the previous example we compute the RMSEs with 10-fold cross validation. In addition we time our algorithm in order to compare the runtimes with the CAP algorithm.
The results are presented in Figure \ref{rmse_wage}.
\begin{figure}[h!]
	\centering
	\subfigure[Values taken by the RMSE on training data]{\includegraphics[width = 0.49\textwidth]{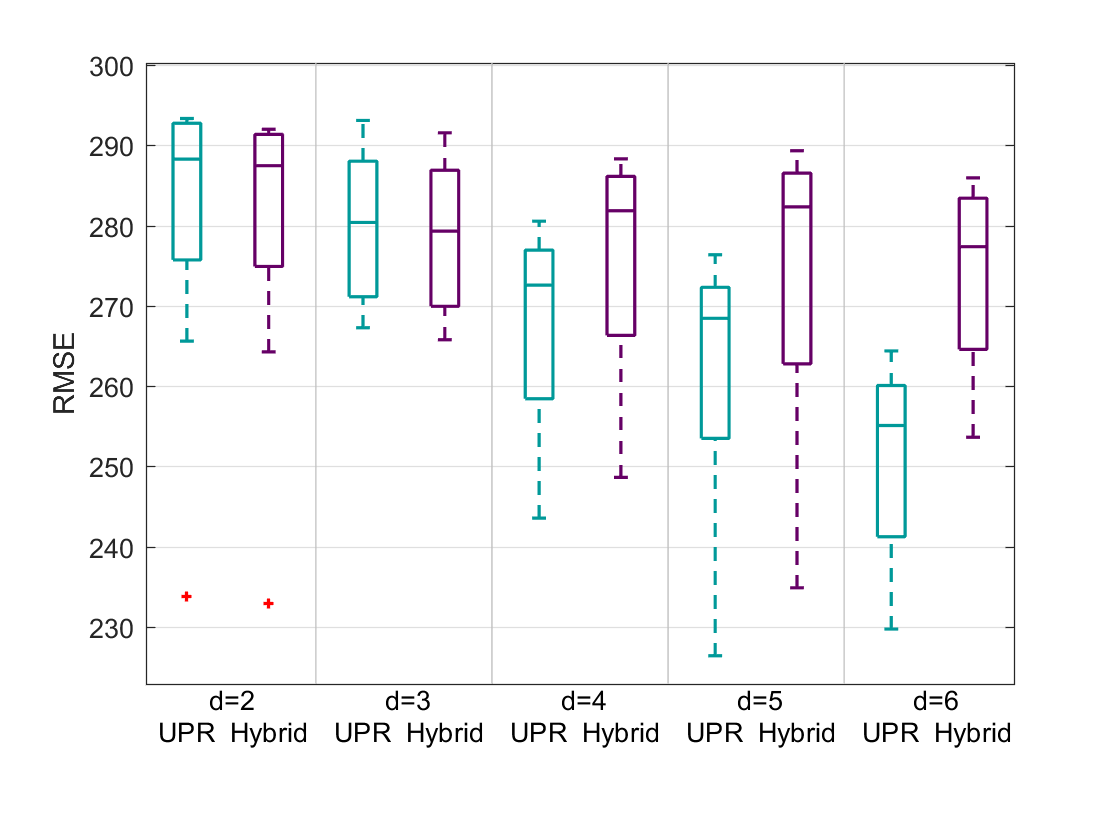}}
	\subfigure[Values taken by the RMSE on testing data]{\includegraphics[width = 0.49\textwidth]{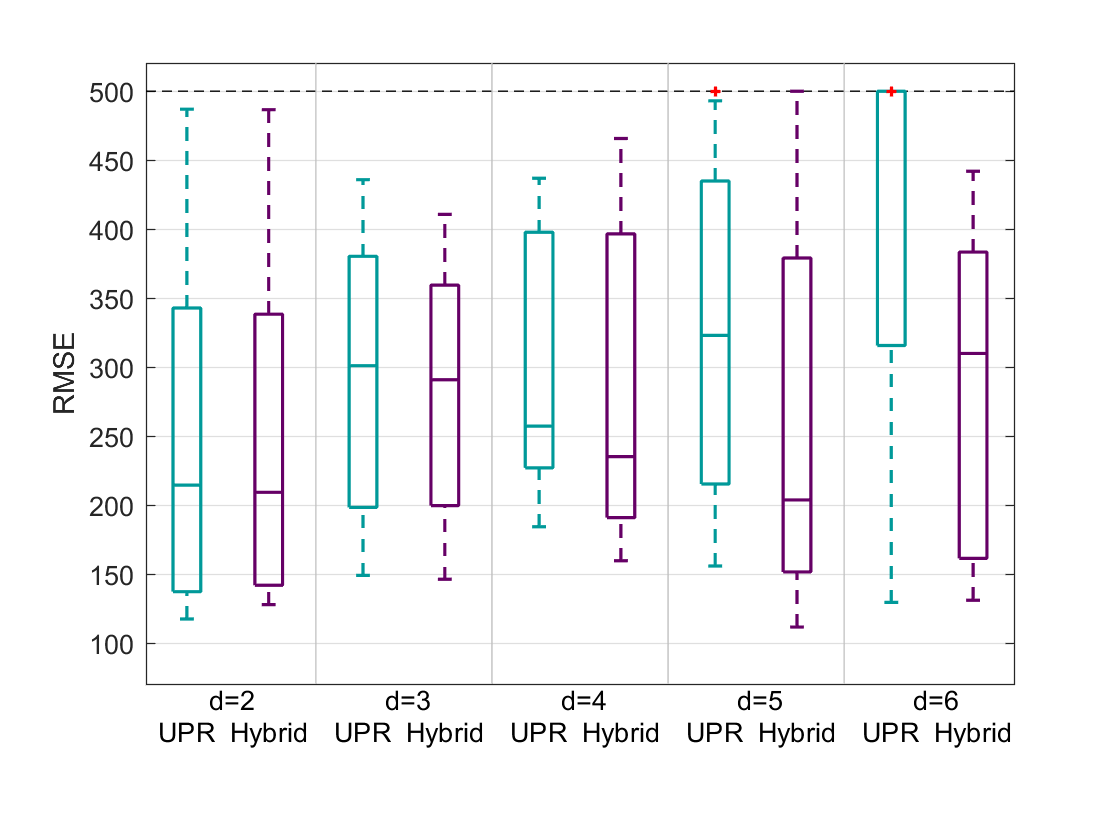}} 
	\caption{Comparative performance of testing and training sets for 10 fold cross validation.}
	\label{rmse_wage}
\end{figure}

The best performing algorithm is the monotonically and convexly constrained degree 2 polynomial with average test RMSE: $250.0$ and standard error $39.2$. The algorithm with the smallest standard error, therefore the one with the most consistent performance is the degree 3 hybrid polynomial with test RMSE: $285.0 \pm 29.9$. In comparison, the CAP and Fast CAP algorithm have test RMSE: $385.7 \pm 20.8$. Our algorithm does not only perform better in terms of RMSE, it also has a better runtime performance. For the degree 2 hybrid regression, the run time is $0.24\pm0.01$ seconds, and for degree 3, the hybrid regresion runtime is $0.26 \pm 0.01$ seconds. In contrast, the CAP algorithm takes $12.8 \pm 0.8$ seconds and the Fast CAP algorithm takes $1.9 \pm 0.2$ seconds.

%
%%\include{ch-usage/chapter-usage}
%%\include{ch-conclusion/chapter-conclusion}
%%\appendix % all chapters following will be labeled as appendices
%%\include{ch-appendicies/implementation}
%%\include{ch-appendicies/printing}

% Make the bibliography single spaced

\bibliographystyle{plain}

% add the Bibliography to the Table of Contents
\cleardoublepage
\ifdefined\phantomsection
  \phantomsection  % makes hyperref recognize this section properly for pdf link
\else
\fi
\addcontentsline{toc}{chapter}{Bibliography}

% include your .bib file
\bibliography{thesis}

\end{document}